\newtheorem{definition}{Definition}[section]
\newtheorem{example}[definition]{Example}
\newtheorem{notation}[definition]{Notation}
\newtheorem{remark}[definition]{Remark}
\newtheorem{theorem}{Theorem}[section]
\newtheorem{claim}[definition]{Claim}
\newtheorem{subclaim}[definition]{Subclaim}
\newtheorem{proposition}[definition]{Proposition}
\newtheorem{lemma}[definition]{Lemma}
\newtheorem{corollary}[definition]{Corollary}
\newcommand{\po}{\mathbb{P}}
\newcommand{\qo}{\mathbb{Q}}
\newcommand{\la}{\langle}
\newcommand{\ra}{\rangle}
\newcommand{\fr}{{}^{\frown}}
\newcommand{\name}{\dot}
\newcommand{\elem}{\prec}
\newcommand{\uhr}{\upharpoonright}
\newcommand{\power}{\mathcal{P}}
\renewcommand{\S}{\mathcal{S}}
\newcommand{\E}{\bar{E}}
\newcommand{\mo}{\mathbb{M}}
\newcommand{\co}{\mathbb{C}}
\newcommand{\ro}{\mathbb{R}}
\newcommand{\C}{\mathcal{C}}
\DeclareMathOperator{\dom}{dom}
\DeclareMathOperator{\cf}{cof}
\DeclareMathOperator{\cp}{cp}
\DeclareMathOperator{\Ult}{ult}
\DeclareMathOperator{\supp}{supp}
\DeclareMathOperator{\otp}{otp}
\newcommand{\MU}{M}
\newcommand{\jU}{j}
\newcommand{\NU}{N}
\newcommand{\iU}{i}
\newcommand{\kappaU}{\kappa}
\newcommand{\Mx}{M^*}
\newcommand{\jx}{j^*}
\newcommand{\kappax}{\kappa^*}
\newcommand{\Ex}{E^*}
\newcommand{\hqo}{\hat{\qo}}
\newcommand{\bqo}{\overline{\qo}}
\title{A Kunen-Like Model with a Critical Failure of the Continuum Hypothesis}
\author{Omer Ben Neria\footnote{The first author would like to thank the Israel Science Foundation (Grants 1832/19 and 1302/23) for their support.}\\ Hebrew University\\ Jerusalem, Israel 
\and Eyal Kaplan\\ UC Berkeley\\ Berkeley, CA}
\date{\today}
\begin{document}

\maketitle
\begin{abstract}
We construct a model of the form  $L[A,U]$  that exhibits the simplest structural behavior of  $\sigma$-complete ultrafilters in a model of set theory with a single measurable cardinal  $\kappa$ , yet satisfies  $2^\kappa = \kappa^{++}$. This result establishes a limitation on the extent to which structural properties of ultrafilters can determine the cardinal arithmetic at large cardinals, and answers a question posed by Goldberg concerning the failure of the Continuum Hypothesis at a measurable cardinal in a model of the Ultrapower Axiom. 
The construction introduces several methods in extensions of embeddings theory and fine-structure-based forcing, designed to control the behavior of non-normal ultrafilters in generic extensions.
\end{abstract}

\section{Introduction}
A hallmark of large cardinal properties in set theory is their capacity to determine fundamental aspects of the set-theoretic universe.
For example: 
\begin{enumerate}
    \item In cardinal arithmetic, Solovay (\cite{solovay1974SCHabovestronglycompact}) proved that the Singular Cardinal Hypothesis (SCH) holds above a strongly compact cardinal $\kappa$. 

    \item In the study of compactness in set theory, various large cardinal principles have been shown to secure (or are equivalent to) various compactness principles. In particular, incompactness combinatorial principles such as squares, nonreflecting stationary sets, or Aronszajn trees have been shown to fail around or above large cardinals (e.g., \cite{MR520190},\cite{KanamoriBook},\cite{CumForMag},\cite{10.1305/ndjfl/1125409326} provide a comprehensive account to the subject).

\end{enumerate}

The effect of large cardinal hypotheses on other key properties is  amplified under further structural assumption regarding the behavior of large cardinal ultrafilters. These are typically given in either terms of  properties of ultrafilters,
or in terms of the connections between them. 
An example of the first is the normality assumption that separates ultrafilters witnessing supercompactness from those witnessing strong compactness. 
An example of the second is the linearity of the Mitchell order 
$\triangleleft$
on $\sigma$-complete ultrafilters.

A remarkable demonstration of the extent structural assumptions about ultrafilters shed light on other key properties is Goldberg's work about the Ultrapower Axiom  \cite{GoldbergUABook}. The Ultrapower Axiom (UA) states that for every two $\sigma$-complete ultrafilters $U,W$ with associated ultrapower maps $j_U : V \to M_U$, $j_W : V \to M_W$, there are $\sigma$-complete ultrafilters $W' \in M_U$ and $U' \in M_W$ such that the internal ultrapowers $\Ult(M_U,W')$ of $M_U$ and $\Ult(M_{W},U')$ of $M_W$ are isomorphic to the same transitive class $P$, and the associated ultrapower maps  $k_{U'} : M_{W} \to P$ and $k_{W'} : M_{U} \to P$ generate a commutative system with $j_W,j_U$ respectively, i.e., $k_{U'} \circ j_{W} = k_{W'} \circ j_U.$
It is shown in \cite{GoldbergUABook} that under the Ultrapower Axiom every strongly compact cardinal is either supercompact or a measurable limit of supercompacts, and that the Generalized Continuum hypothesis (GCH) holds at and above a supercompact cardinal $\kappa$.\footnote{I.e., $2^\lambda =\lambda^+$ for every cardinal $\lambda \geq \kappa$.}
The prospects of these result are promising in light of a recent Theorem of Woodin, showing that for every $A \subseteq \mathbb{R}$ such that $L(A,\mathbb{R}) \models AD^+$, $HOD^{L(A,\mathbb{R})} \models UA$. The last implies that the axiom $V = \text{Ultimate}-L$ implies $UA$.\\

These results involving the Ultrapower Axiom can be viewed as extensions of fundamental results in inner model theory. Indeed, all known fine-structural based extender models $L[\vec{E}]$ satisfy the UA as well as many other strong properties such as the GCH. For example, by results of Kunen and Silver on the structure of a model of the form $L[U]$, where $U$ is normal\footnote{An ultrafilter $U$ on $\kappa$ is normal if for every $A\in U$ and $f\colon \kappa\to \kappa$ such that for every $\alpha\in A$, $f(\alpha)<\alpha$, there exists $B\in U$, $B\subseteq A$ and $\gamma<\kappa$ such that for every $\alpha\in B$, $f(\alpha) = \gamma$.} $\kappa$-complete ultrafilter on a measurable cardinal $\kappa$, nowadays known as a ``\emph{Kunen model}", $L[U]$ satisfies 
 \begin{itemize}
     \item The General Continuum Hypothesis (\cite{Silver-KunenModel}).

     \item Every $\sigma$-complete ultrafilter $W \in L[U]$ is isomorphic to a finite power $U^n$ of $U$ (\cite{KunenModel}). In particular, UA holds. 
 \end{itemize} 

 The results from UA + a supercompact cardinal $\kappa$, and the broader indications given by inner model theory, suggest that the ability to derive instances of GCH from structural properties of  ultrafilters can go beyond UA and a supercompact cardinal, and can possibly extend to ``local" versions in which structural assumptions such as UA around a measurable or a strong cardinal $\kappa$ could yield local instances of GCH around $\kappa$. 

 The main result of this paper provides a limitation to this perspective by constructing a ``Kunen-like" model $L[A,U]$ that satisfies the simplest possible (nontrivial) behavior of $\sigma$-complete ultrafilter, yet fails to satisfy GCH at its unique measurable cardinal $\kappa$.

 \begin{theorem}\label{Thm:Main}
 There exists a ``Kunen-Like" model of set theory, of the form $L[A,U]$ satisfying 
 \begin{enumerate}
 \item There is a unique measurable cardinal $\kappa$ and a unique $\kappa$-complete normal ultrafilter $U$
 \item Every $\sigma$-complete ultrafilter is isomorphic to a finite power $U^n$ of $U$
     \item $2^\kappa = \kappa^{++}$
 \end{enumerate} \end{theorem}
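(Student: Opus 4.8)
\emph{Strategy.} The model will be produced by forcing over a canonical fine-structural extender model $W$ and then passing to a definable inner model $L[A,U]$ of the extension. The plan breaks into: (i) choosing $W$, (ii) designing the forcing $\mathbb{P}$ so that $2^\kappa$ is blown up while the embedding witnessing measurability lifts and all other normal measures of $W$ are destroyed, (iii) verifying clauses (1) and (3), and (iv) running a Kunen/Silver-style analysis of \emph{all} $\sigma$-complete ultrafilters of the extension to obtain clause (2).

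First I would fix $W=L[\vec E]$ to be a (minimal) iterable fine-structural model containing a cardinal $\kappa$ of Mitchell order large enough to blow $2^\kappa$ up to $\kappa^{++}$. By Gitik's analysis of the failure of SCH this forces $o^W(\kappa)=\kappa^{++}$ (equivalently, in the presentation we use, there is $j\colon W\to M$ with $\mathrm{crit}(j)=\kappa$ that captures $H_{\kappa^{++}}^W$); by minimality we arrange $W\cap V_\kappa=L_\kappa$ (so no measurables below $\kappa$), $W\models\mathrm{GCH}$, and $\kappa$ the unique measurable of $W$. The point of starting in $W$ is that its iterated ultrapowers and elementary embeddings are completely controlled by iterability and comparison — this is exactly the Kunen–Silver theory of $L[U]$, upgraded to $\vec E$ — and this control must survive, in a weakened but usable form, into the extension. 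Next I would design a reverse-Easton iteration $\mathbb{P}=\langle \mathbb{P}_\alpha,\dot{\mathbb{Q}}_\alpha : \alpha\le\kappa\rangle$ with Easton support, nontrivial only at (certain) inaccessibles, where $\dot{\mathbb{Q}}_\kappa$ adds $\kappa^{++}$ subsets of $\kappa$ — here, rather than plain $\mathrm{Add}(\kappa,\kappa^{++})$, one wants a fine-structure-guided forcing (a Silver-/Sacks-style poset threaded along $\vec E$, and similarly below $\kappa$) engineered with three goals: (a) $\mathbb{P}_{\kappa+1}$ forces $2^\kappa=\kappa^{++}$ and preserves cardinals $\ge\kappa^+$; (b) $\mathbb{P}$ admits a lift of $j$, i.e. from $G_\kappa*g$ generic over $W$ one builds (by the usual master-condition, generic-construction, and surgery arguments, using the closure of $M$ and $\kappa^{++}<j(\kappa)$) a generic $G^*$ over $M$ with $j[G_\kappa*g]\subseteq G^*$, yielding $j^*\colon W[G]\to M[G^*]$ with $\mathrm{crit}(j^*)=\kappa$, whence $U:=\{X\subseteq\kappa : \kappa\in j^*(X)\}$ is a normal measure in $W[G]$; and (c) every normal measure of $W$ other than the canonical preimage of $U$ \emph{fails} to lift — exploiting that such measures' ultrapowers omit $H_{\kappa^{++}}^W$ and hence cannot absorb the $\kappa^{++}$ generic sets — and, finally, $W[G]=L[A,U]$ for a single $A\subseteq\kappa^{++}$ coding $(\vec E\restriction\kappa^{++},G)$, which is possible because $\mathbb{P}$ has size $\kappa^{++}$ and so $W[G]$ has enough GCH above $\kappa^{++}$.

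Clause (3) is then immediate from (a). For clause (1): standard preservation arguments show $\mathbb{P}_\kappa$ creates no measurable below $\kappa$ over $W\cap V_\kappa=L_\kappa$; no measurable lies in $(\kappa,\kappa^{++}]$ since those are successor cardinals or non-cardinals; and no measurable above $\kappa^{++}$ survives/appears since $\mathbb{P}$ is small relative to it and $W$ has none. That $\kappa$ is measurable is (b). Uniqueness of the normal measure on $\kappa$ is then the first delicate point: given any normal measure $U^\dagger$ of $L[A,U]$, form $j_{U^\dagger}\colon L[A,U]\to M^\dagger\cong L[j_{U^\dagger}(A),j_{U^\dagger}(U)]$; since $A$ codes the master structure and $L[A,U]$ is $L$-like relative to $(A,U)$, a comparison/condensation argument in the style of Kunen identifies $M^\dagger$ with the canonical $j^*$-image and forces $j_{U^\dagger}=j^*\restriction L[A,U]$, hence $U^\dagger=U$; crucially this uses (c), namely that the only $W$-measure that survived $\mathbb{P}$ is the one used, so the lifted structure is rigid.

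The heart, and the main obstacle, is clause (2). Let $W'$ be a $\sigma$-complete nonprincipal ultrafilter in $L[A,U]$; its completeness is a measurable, hence $\kappa$, so $j_{W'}$ has critical point $\kappa$ and $M_{W'}\cong L[j_{W'}(A),j_{W'}(U)]$. One wants: $j_{W'}\restriction W$ is an iterated ultrapower of $W$ by (images of) the preimage $U_0$ of $U$, of \emph{finite} length $n$ (finiteness by Kunen's argument, as in \cite{KunenModel}, since $W'$ is an honest ultrafilter and the generators of an iterated ultrapower are its critical sequence); then $j_{W'}$ is the lift of that finite iterated ultrapower, so $j_{W'}=j^n$ for the $n$-fold ultrapower $j\colon L[A,U]\to\Ult^n(L[A,U],U)$, and therefore $W'\cong U^n$. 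The obstruction is precisely the step ``$j_{W'}\restriction W$ is a \emph{finite} iterated ultrapower of $W$, and $G$ is transported along it canonically'': the extension has $\kappa^{++}$ new subsets of $\kappa$, giving enormous a priori freedom to manufacture $\sigma$-complete ultrafilters — on $\kappa$, on $\kappa^+$, on $\kappa^{++}$, and sums/products of these — that need not visibly originate in $W$, and one must show each such $W'$ is \emph{captured}, i.e. that $M_{W'}$ really is the canonical lift $L[j^n(A),j^n(U)]$ rather than some alien $L[\bar A,\bar U]$, and that no $\sigma$-complete ultrapower of the extension ``sees'' more than a finite ground-model iteration plus its canonically lifted generic. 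Making $\mathbb{P}$ rigid enough — via the fine-structure-guided design of its iterands — to control exactly these non-normal $\sigma$-complete ultrafilters in the generic extension is the technical core of the construction, and is where the bulk of the work lies.
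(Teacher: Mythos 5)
Your outline reproduces the correct skeleton of the argument (force over a minimal fine-structural model, lift the extender embedding, show every $\sigma$-complete ultrafilter of the extension restricts to a finite ground iteration and is therefore a finite power of $U$), but as written it is not a proof: the two statements that carry all of the content --- uniqueness of the normal measure and the capture of \emph{every} $\sigma$-complete ultrafilter --- are exactly the places where you either hand-wave (``a comparison/condensation argument in the style of Kunen'') or explicitly defer (``the technical core \dots\ where the bulk of the work lies''). Condensation of $M^\dagger\cong L[j_{U^\dagger}(A),j_{U^\dagger}(U)]$ does not by itself pin down $j_{U^\dagger}$. What is actually needed is: (i) Schindler's theorem that $j_{U^\dagger}\restriction W$ is an iterated ultrapower of the core model, of finite length by countable closure; (ii) a cardinal-arithmetic argument that the first extender applied is the full $(\kappa,\kappa^{++})$-extender (else $2^\kappa=\kappa^{++}$ fails in the target); (iii) a ground-approximation lemma, extracted from a fusion property of the whole iteration, to rule out ground iterations of length $>1$ for a \emph{normal} measure; and (iv) --- the step your sketch omits entirely --- uniqueness of the $M$-generic filter containing $j[G]$, which is what converts ``the embedding lifts'' into ``the lift, and hence the measure, is unique.'' The paper obtains (iv) by making each iterand \emph{self-coding}: membership of each condition in the generic is coded by which members of a fixed family of almost disjoint stationary subsets of $\alpha^+\cap\mathrm{cof}(\omega_1)$ are killed, and these sets are built by fine structure so that the coding iteration stays distributive. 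Nothing in your design supplies an analogue of this.

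Moreover, the specific mechanisms you chose would obstruct the control you need. A lift built by ``master condition \dots\ and surgery'' (the standard route to a measurable with $2^\kappa=\kappa^{++}$ from a $(\kappa+2)$-strong cardinal) produces highly non-canonical generics over $M$, and in such models the normal measure on $\kappa$ is far from unique; your clauses (b) and (c) pull against each other. The paper instead uses fusion-type forcing (generalized Miller trees in the sense of Friedman--Zdomskyy) interleaved with the club-shooting coding posets, so that the stage-$j(\kappa)$ generic over $M$ is \emph{generated} by exact upper bounds of $j[G(\kappa)]$ --- no surgery, no freedom. Easton support is also insufficient: the Iteration-Fusion property used both for the lifting and for the approximation lemma forces one to work with unbounded, nonstationary supports, and making the resulting iteration of $\alpha^+$-club-shooting posets distributive is precisely where the collapsing structures and square machinery enter. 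Finally, for clause (2) the finite ground iteration must be identified extender-by-extender with the iteration by images of $E$; the paper does this with a device absent from your sketch --- each stage adds a $<^\alpha_{\mathrm{Sing}}$-increasing sequence of length $\alpha^{++}$, and a rank comparison of $<^{\kappa_i}_{\mathrm{Sing}}$ between the intermediate ultrapowers shows each applied extender is the full $(\kappa_i,\kappa_i^{++})$-extender --- followed by an induction on the ground iteration length using the derived measures $U^*_i$ and the factor maps $k_{U^*_i,U^*}$. (A smaller point: capturing $H^W_{\kappa^{++}}$ by a single embedding is $(\kappa+2)$-strength, strictly stronger than $o(\kappa)=\kappa^{++}$, and the former is the hypothesis actually used.)
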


The Large cardinal consistency assumption used in our construction is of a cardinal $\kappa$ that is $(\kappa+2)$-strong.

Theorem \ref{Thm:Main} also provides the first instance of a forcing over a model of UA, which adds an unbounded subset to a measurable cardinal $\kappa$, and preserves its measurablity and UA.

In order to prove the main result, we make several contributions to both the study of the possible behavior of $\sigma$-complete ultrafilters and especially to non-normal ultrafilters,  and to the study of forcing iteration theory.
Concerning the former, the possible behavior of normal ultrafilters goes back to problems from the 1970s regarding the possible number of normal ultrafilters on a measurable cardinal $\kappa$, and the possible structure of the Mitchell order on them.
Questions about the number of normal measures and their ordering are closely related to some of the earliest developments in forcing theory and inner model theory, relating to large cardinals. For example, in forcing theory, the work of Kunen and Paris \cite{MR277381} on elementary embeddings in forcing extensions, constructs a model with the maximal possible number of normal ultrafilters on a single measurable cardinal. In inner model theory, the work of Mitchell \cite{Mitchell1974innermodelsforcoherentsequences,MR716621} on coherent sequences of normal ultrafilters $\vec{U}$ and their associated canonical inner models $L[\vec{U}]$, constructs models with any number $\lambda \in [1,\kappa^{++}]$ of normal ultrafilters on a measurable cardinal $\kappa$. 
Over the years new ideas and methods in forcing theory and inner model theory have been developed to tackle these questions. 
Results by Baldwin \cite{MR820124}, Apter–Cummings–Hamkins \cite{MR2299507}, Leaning \cite{MR3274970}, and Friedman-Magidor \cite{FriMag09} regarding the possible number of normal ultrafilters, and by Cummings \cite{MR1257466},\cite{MR1312304}, Witzany \cite{MR1286010}, and the first author \cite{MR3544708},\cite{MR3397347}, on the possible behavior of the Mitchell order.\\
The work of Friedman and Magidor \cite{FriMag09} settled the question regarding possible number normal ultrafilters by introducing a forcing method that can construct models with any number $\lambda \in [1,\kappa^{++}]$ of normal ultrafilters, from the minimal large cardinal assumption. In addition \cite{FriMag09} constructs a model with a single measurable cardinal $\kappa$ with a single normal ultrafilter $U$, while $2^\kappa = \kappa^{++}$. The Friedman-Magidor model is the starting point to Theorem \ref{Thm:Main}, as it satisfies the portion of the ``Kunen-like" model regarding normal ultrafilters. Nevertheless, it does not satisfy that every $\sigma$-complete ultrafilter on $\kappa$ is isomorphic to a finite power $U^n$ of the unique normal ultrafilter $U$.
The missing part of the relevant theory is to control the behavior of non-normal ultrafilters on a measurable cardinals. Section \ref{Section-BluePrint} provides a blueprint for addressing the missing part. It introduces abstract properties, referred to as "blueprints", of forcing notions and iterated forcings (i.e., see Definitions \ref{Def:FM-blueprint}, \ref{Def:KLblueprint}) and shows that generic extensions of suitable canonical inner models $V = L[\E]$ by a poset $\po$ satisfying the relevant blueprint are ``Kunen-Like" (see Theorem \ref{Thm:KLblueprint}).

The rest of the paper is devoted to constructing a poset $\po$ which satisfies the "Kunen-Like" blueprint assumptions. 
The first main ingredient of the poset construction is the iteration theory of generalized Miller forcings, introduced by Friedman and Zdomskyy \cite{FriedmanZdomskyy2010}. A variation of their forcing that will be used is described in Section \ref{Section:GeneralMiller}. 
The second main ingredient is the use of the fine-structure theory of the ground model $V = L[\E]$ for the construction of $\po$. The use of fine-structure is key in successfully fulfilling one of the main blueprint assumptions of $\po$, which requires it to incorporate a  nonstationary support iteration of coding posets. In addition, each coding poset $\qo_\alpha$ destroys the stationarity of various non-reflecting stationary sets $S_\alpha$ that consist of low-cofinality ordinals (i.e., cofinality $\omega_1$). It is well-known that each single instance $\qo_\alpha$ of such a poset is sufficiently distributive, and, therefore, does not collapse cardinals. The problem resides in the iteration of such posets, which in general can collapse cardinals. 
Our solution builds on the fine-structure theory of $L[\E]$, and specifically on the construction methods of squares and morasses. This approach was motivated by a recent work of Foreman, Magidor, and Zeman, that is outlined in \cite{ForMagZem} and expected to appear in details in a sequel work. The iteration constructed in  \cite{ForMagZem}  is based on an Easton support iteration of coding posets. Our construction extends this method to large (nonstationary) support iterations and in establishing a strong form of distributivity  given in terms of a certain fusion property (definition \ref{Def:BlueprintPrelim} (2)). The fine-structure background is given in Section \ref{Section:FineStucture}, and its application to the main poset construction is given in Section \ref{Section:FinalIteration}.\\

\noindent Our notations for ultrafilters, ultrapowers, fine-structure, and forcing, are mostly standard. The most notable exception is our use of the Jerusalem forcing convention, according to which a condition $p$ extends (i.e., is more informative than) a condition $q$ is denoted by $p \geq q$, and the weakest condition in a poset $\po$ is denoted by $0_{\po}$.

\section{Blueprints for controlling $\kappa$-complete Ultrafilters in Generic Extensions}\label{Section-BluePrint}

The goal of this section is to provide two blueprints for posets $\po$  that violate GCH at a measurable cardinal $\kappa$ while maintaining certain control over the measures on $\kappa$ (i.e., $\kappa$-complete ultrafilters on $\kappa$ in the generic extension).

The first blueprint will be an abstraction of the Friedman Magidor construction that yields a generic extension in which $\kappa$ is the only measurable cardinal, $2^\kappa = \kappa^{++}$, and there is a single normal measure $U$ on $\kappa$. We shall refer to it as the Friedman-Magidor (FM) blueprint. 

The second blueprint will extend the FM-blueprint by adding assumptions to $\po$ which guarantee that every measure on $\kappa$ in the generic extension is a finite power $U^n$ of $U$, therefore yielding a ``Kunen-like" model. We shall refer to the second blueprint as the ``Kunen-like" blueprint.

\begin{notation}${}$
\begin{enumerate}
  
    \item We force with the Jerusalem convention for forcing by which a condition $q'$ is stronger (more informative) than $q$ is denoted by $q' \geq q$.

    \item For a poset $\qo$,
    the weakest condition of $\qo$ is denoted by $0_{\qo}$, and the 
     canonical $\qo$-name for the generic filter is denoted by  $\name{G}_{\qo}$.

    \item For a transitive inner model $N$ of $V$, and an extender/measure $E \in N$ such that the ultrapower of $N$ by $E$ is well-founded,  we denote the ultrapower embedding  by $j_E : V \to N_E \cong \Ult(N,E)$, where $N_E$ is the transitive collapse of the ultrapower model.

    \item A full-supported iterated forcing of length $\kappa$ is a forcing $\po_{\kappa}=\la \po_{\alpha}, \name{\qo}_{\alpha} \colon \alpha<\kappa \ra$, where for each $\alpha<\kappa$, $\name{\qo}_{\alpha}$ is a $\po_\alpha$-name for a forcing notion in $V^{\po_\alpha}$. For every $\alpha\leq \kappa$, a condition $p\in \po_\alpha $ is a function with domain $\alpha$, such that for every $\beta<\alpha$, $p\uhr \beta\in \po_\beta$ and $p\uhr \beta \Vdash p(\beta)\in \name{Q}_{\beta}$. Its support is the set $\supp(p) = \alpha\setminus \{ \beta<\alpha \colon p\uhr \beta \Vdash p(\beta) = \name{0}_{\qo_\beta} \}$. 
    Given $\alpha< \kappa$ and a generic $G_{\po_\alpha}$ for $\po_\alpha$ over the ground model $V$, we use the notation $\po / \po_{\alpha}\in V[G_{\po_\alpha}]$ to denote the associated quotient forcing.
    \item An iterated forcing $\la \po_{\alpha}, \name{\qo}_{\alpha} \colon \alpha<\kappa \ra$ with an Easton support is defined similarly, by adding the additional requirement that for every $\alpha\leq \kappa$ and $p\in \po_\alpha$, the set $\supp(p)$ is an Easton set. In other words, for every $\lambda\leq \kappa$ inaccessible, $\supp(p)$ is bounded in $\lambda$. Similarly, we define an iterated forcing with a nonstationary support by requiring that for every $\alpha\leq \kappa$ and $p\in \po_\alpha$, the set $\supp(p)$ is nonstationary in every inaccessible cardinal. In other words, for every inaccessible $\lambda \leq \kappa$, $\supp(p)\cap \lambda$ is a nonstationary set.
\end{enumerate}    
\end{notation}

\begin{remark}\label {Rmk:RKisom}
    If $W,U$ are two $\kappa$-complete ultrafilters with the same ultrapower models $M_W = M_U$ and embeddings $j_W = j_U$ then $U \cong_{RK} W$.
\end{remark}

\begin{definition}\label{Def:ForcingBasics}${}$
    \begin{enumerate}

    \item 
    Let $Z \subseteq \qo$. A condition $q_Z \in \qo$ is an \emph{upper bound} for $Z$ if $q_Z \geq q$ for all $q \in Z$.
    We say that $q_Z$ is an \emph{exact upper-bound} if it is an upper-bound and for every condition $q' \in \qo$, if $q'$ extends all conditions in $Z$ then it also extends  $q_Z$.
    
    \item A poset $\qo$ is $\kappa$-distributive if for every sequence $\la D_\alpha \mid \alpha < \beta\ra$ of length $\beta < \kappa$, consisting of dense open subsets of $\qo$, then $\cap \vec{D} = \bigcap_{\alpha < \beta} D_\alpha$ is dense in $\qo$. 

     \item 
    Let $\qo$ be a poset and $X \elem (H_{|\qo|^+},\qo)$.
    We say that a subset $G_X \subseteq X \cap \qo$ is an \emph{$X$-generic set} for $\qo$ if every two conditions in $G_X$ have a common extension in $G_X$, and $G_X \cap D \neq\emptyset$ for every $D \in X$ which is dense in $\qo$.\\
    We say that a condition $q \in \qo$ is a \emph{generic condition for $X$} if 
    $$q \Vdash (\name{G}_{\qo} \cap X) \text{ is } X\text{-generic .} $$

            
    \end{enumerate}
\end{definition}

\begin{definition}\label{Def:BlueprintPrelim}${}$
\begin{enumerate}

    \item A sequence $\vec{X} = \la X_\alpha \mid \alpha < \kappa\ra$ is said to be a \emph{Continuous Elementary Chain} in a transitive structure $H$ if $X_\alpha \elem H$ for all $\alpha < \kappa$, $\alpha \subseteq X_\alpha \subseteq X_\beta$ for all $\alpha < \beta$, and $X_\delta = \bigcup_{\alpha < \delta}X_\alpha$ for every limit ordinal $\delta \leq \kappa$.\\
   
    We say that a continuous chain $\vec{X}$ is \emph{Internally Approachable}  if it satisfies further that  $\vec{X}\uhr \delta+1 = \la X_\alpha \mid \alpha \leq \delta\ra \in X_{\delta+1}$ for every successor ordinal $\delta+1 < \kappa$.

    \item Let $\kappa$ be a regular cardinal and $S \subseteq \kappa$ stationary.  
    We say that a poset $\qo$ of size $|\qo| \geq \kappa$ has the \emph{$\kappa$-Continuous-Fusion} property with respect to $S$ ($\kappa$ C-Fusion(S)) if for every condition $q \in \qo$, a sequence $\vec{D} = \la D_\alpha \mid \alpha < \kappa\ra$ of dense open subsets of $\qo$, and a structure 
    $(H_{|\qo|^+},<_{wo},\qo,\vec{D},A)$ that extends $H_{|\qo|^+}$,
    there is an extension $q^* \geq q$, a club $C \subseteq \kappa$, and a continuous elementary chain $\vec{X} = \la X_\alpha \mid \alpha < \kappa\ra$ in $(H_{|\qo|^+},\qo,\vec{D})$, so that 
    \begin{enumerate}
        \item $|X_\alpha|<\kappa$ for all $\alpha < \kappa$.

        \item For every $\alpha \in S \cap C$ and an $X_\alpha$-generic set $G_{X_\alpha} \subseteq X_\alpha \cap \qo$ (in $V$). If $G_{X_\alpha}$ is compatible with $q^*$,  then $G_{X_\alpha} \cup \{q^*\}$ has an exact upper-bound, denoted $q^*_{G_{X_\alpha}}$, which also belongs to $D_\alpha$.    

        \item $q^*$ is a generic condition for $X = \bigcup_{\alpha < \kappa}X_\alpha$.
    \end{enumerate} 
    We shall call a condition $q^*$ with these properties a $\kappa$-Continuous-Fusion witness for $\vec{X},\vec{D}$.
    
    
    \noindent 
    If $\kappa$ is Mahlo then we say $\qo$ has the $\kappa$ C-Fusion property if it has the $\kappa$ C-Fusion(S) property with respect the set of regular cardinals $S = \kappa \cap Reg$.

    \item We say that a poset $\qo$ is \emph{self coding} via sequences  $\vec{S}^{\qo}$ and $\vec{q}^{\qo}$ when
   
    \begin{enumerate}
        \item  $\vec{q}^{\qo} = \la q^{\qo}_\tau \mid \tau < |\qo|\ra$ is an enumeration of $\qo$, 

        \item there is a regular cardinal $\lambda_{\qo} \leq |\qo|$ such that  $\vec{S}^{\qo} = \la S^{\qo}_\tau \mid \tau < |\qo|\ra$ consists of pairwise almost disjoint stationary subsets of $\lambda_{\qo}$,
    
        \item for every $\tau < |\qo|$, 
        $$ 0_{\qo} \Vdash (\check{S}^{\qo}_{2\tau} \text{ is not stationary } \iff q^{\qo}_\tau \in \name{G}_{\qo}) \text{ and }   $$ 

        $$ 0_{\qo} \Vdash (\check{S}^{\qo}_{2\tau+1} \text{ is not stationary } \iff q^{\qo}_\tau \not\in \name{G}_{\qo}),   $$ 

    \end{enumerate}

    \item Let $\kappa$ be a regular cardinal, $S \subseteq \kappa$ stationary, and $\po_\kappa = \la \po_\alpha,\qo_\alpha\mid \alpha < \kappa\ra$ an iterated forcing of length $\kappa$. 
    We say that $\po_\kappa$ has the $\kappa$-Iteration-Fusion property with respect to $S$ ($\kappa$ Iteration-Fusion(S)) if for every condition $p \in \po_\kappa$ and a sequence $\la D_\alpha \mid \alpha < \kappa\ra$ of dense open subsets of $\po_\kappa$ there is $p^* \geq p$ and a club $C \subseteq \kappa$ such that for every $\alpha \in C \cap S$, 
    the set 
    $$\{ q \in \po_{\alpha+1} \mid q \fr (p^*\setminus (\alpha+1)) \in D_\alpha\} \text{ is dense in }\po_{\alpha+1}.$$

     If $\kappa$ is Mahlo then we say $\qo$ has the $\kappa$ Iteration-Fusion property if it has the $\kappa$ Iteration Fusion(S) property with respect the set of regular cardinals $S = \kappa \cap Reg$.
    \end{enumerate}
\end{definition}

\begin{example}
We show that for the generalized Sacks forcing from \cite{Kanamori} and \cite{FriedmanThompson}, one can transform the standard tree-type fusion construction to prove the $\kappa$-Continuous-Fusion property from Definition \ref{Def:BlueprintPrelim} above.\\
Assume that $\kappa$ is Mahlo, and let $Sacks^*(\kappa)$ be the Sacks forcing with splitting at singulars, namely $p\in Sacks^*(\kappa)$ if and only if:
\begin{itemize}
    \item $p$ is a sub-tree of the binary tree $2^{<\kappa}$.
    \item $p$ is closed under union of increasing sequences of length $<\kappa$ of its elements.
    \item There exists a club $C\subseteq \kappa$ such that for every singular $\alpha\in C$, every $t\in p\cap 2^{\alpha}$ is a splitting point of $p$ (namely, both $t^{\frown} \la 0 \ra$ and $t^{\frown}\la 1 \ra$ belong to $p$).
    \item For every regular $\xi<\kappa$ and $t\in p\cap 2^{\xi}$, $t$ is not a splitting point of $p$.
\end{itemize}
We order $Sacks^*(\kappa)$ by reverse inclusion. 
It's not hard to see that for every pair of conditions $p,q\in Sacks^*(\kappa)$, $p\vee q := p\cap q$ is the least upper bound of ${p,q}$. Furthermore, $Sacks^*(\kappa)$ is $\kappa$-directed closed. 

\begin{claim}
    $Sacks^*(\kappa)$ satisfies the $\kappa$-continuous-Fusion property with respect to $S = \kappa\cap Reg$.
\end{claim}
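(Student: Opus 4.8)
The plan is to recast the standard tree-type fusion for generalized Sacks forcing (in the style of \cite{Kanamori} and \cite{FriedmanThompson}) so that the fusion sequence is threaded through an internally approachable continuous elementary chain of small submodels, and to produce the chain $\vec X$ and the witness $q^*$ in tandem. Since $\kappa$ is Mahlo it is in particular inaccessible, so $|2^{<\alpha}|<\kappa$ for every $\alpha<\kappa$; this is used throughout, both to keep the submodels of size $<\kappa$ and to ensure that the section $p\cap 2^\alpha$ of any condition has fewer than $\kappa$ nodes. First I would recall the fusion apparatus for $Sacks^*(\kappa)$ in the ``commit below increasing levels'' form: one builds a $\le$-decreasing sequence $\langle p_\gamma\mid\gamma<\kappa\rangle$ together with a continuous increasing sequence of commitment levels $\lambda_\gamma\to\kappa$ so that $p_{\gamma+1}\cap 2^{<\lambda_\gamma}=p_\gamma\cap 2^{<\lambda_\gamma}$ and the split-clubs $C_{p_\gamma}$ cohere below $\lambda_\gamma$; then $p_\kappa:=\bigcap_{\gamma<\kappa}p_\gamma$ is again a condition, since every node survives cofinally by running up through the $p_\gamma$'s and $p_\kappa$ splits on the diagonal club assembled from the $C_{p_\gamma}\cap\lambda_\gamma$. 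It is precisely the design ``split at all nodes of a club of singular levels, never split at a regular cardinal level'' --- with Mahloness of $\kappa$ guaranteeing that regular levels are abundant --- that makes these limits of fusion sequences genuine conditions.

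Given $q$, $\vec D=\langle D_\alpha\mid\alpha<\kappa\rangle$ and the structure $(H_{|\qo|^+},<_{wo},\qo,\vec D,A)$, I would build by recursion on $\alpha<\kappa$ an internally approachable continuous elementary chain $\vec X=\langle X_\alpha\mid\alpha<\kappa\rangle$ in $(H_{|\qo|^+},\qo,\vec D)$ (Definition~\ref{Def:BlueprintPrelim}) with $q,\vec D\in X_0$, $|X_\alpha|<\kappa$, and $\alpha\cup 2^{<(X_\alpha\cap\kappa)}\subseteq X_\alpha$ (all possible by inaccessibility), together with the fusion sequence $\langle p_\alpha\mid\alpha\le\kappa\rangle$ with commitment levels $\lambda_\alpha$. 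Let $C:=\{\alpha<\kappa\mid X_\alpha\cap\kappa=\alpha\}$, a club, and fix (in $V$) a well-order of $X:=\bigcup_\alpha X_\alpha$, along which enumerate the dense open subsets of $\qo$ lying in $X$ as $\langle E_\alpha\mid\alpha\in C\cap Reg\rangle$ with $E_\alpha\in X_\alpha$ and each recurring cofinally. Put $p_0=q$, $\lambda_0=0$; at limits take intersections and suprema; at $\alpha\in C\cap Reg$ set $\lambda_\alpha=\alpha$ and choose $p_{\alpha+1}\le p_\alpha$ with $p_{\alpha+1}\cap 2^{<\alpha}=p_\alpha\cap 2^{<\alpha}$, with $p_{\alpha+1}$ meeting $E_\alpha$, and with $p_{\alpha+1}$ thinned so as to secure clause (b) for $D_\alpha$ (see below); elsewhere set $p_{\alpha+1}=p_\alpha$. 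At each step arrange $p_{\alpha+1}\in X_{\alpha+1}$, using that $p_\alpha$, $\vec D$, $E_\alpha$, $\alpha$ and $\langle p_\beta\mid\beta<\alpha\rangle$ all lie in $X_{\alpha+1}$ by internal approachability of $\vec X$ and elementarity. Finally set $q^*:=p_\kappa$, a condition extending $q$, with the club $C$ and chain $\vec X$ as above.

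Clause (a) is immediate. For clause (c): given a dense open $D\in X$ we have $D=E_\alpha$ for cofinally many $\alpha\in C\cap Reg$, and for such $\alpha$ we arranged $p_{\alpha+1}\in X_{\alpha+1}\cap D\subseteq X\cap D$ with $q^*\ge p_{\alpha+1}$, so $q^*\Vdash p_{\alpha+1}\in\name{G}_{\qo}\cap X\cap D$; since $X$ is closed under the $Sacks^*(\kappa)$-join $p\vee p'=p\cap p'$, the set $\name{G}_{\qo}\cap X$ is also forced directed, so $q^*$ is a generic condition for $X$. For clause (b): fix $\alpha\in C\cap Reg$ and an $X_\alpha$-generic $G_{X_\alpha}\subseteq X_\alpha\cap\qo$ compatible with $q^*$, witnessed by a common extension $r$; then $r\subseteq q^*\cap\bigcap G_{X_\alpha}$, so $q^*\cap\bigcap G_{X_\alpha}$ contains a condition and its largest sub-condition (perfect core) exists, by the routine Cantor--Bendixson-style derivative and the fact that $Sacks^*(\kappa)$-conditions are closed under intersection; one checks directly that this perfect core is the exact upper bound $q^*_{G_{X_\alpha}}$ of $G_{X_\alpha}\cup\{q^*\}$, and $q^*_{G_{X_\alpha}}\ge q^*$. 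That $q^*_{G_{X_\alpha}}\in D_\alpha$ I would derive from a dichotomy: if $D_\alpha\in X_\alpha$, then $G_{X_\alpha}$ meets $D_\alpha$, so some $p\in G_{X_\alpha}\cap D_\alpha$, and $q^*_{G_{X_\alpha}}\subseteq\bigcap G_{X_\alpha}\subseteq p$, i.e.\ $q^*_{G_{X_\alpha}}\ge p\in D_\alpha$, whence $q^*_{G_{X_\alpha}}\in D_\alpha$ by openness; if $D_\alpha\notin X_\alpha$, then --- and this is where the thinning in the construction is used --- $D_\alpha$ is not reflected below level $\alpha$, so one can refine $p_\alpha$ into $D_\alpha$ without changing $p_\alpha\cap 2^{<\alpha}$, and taking $p_{\alpha+1}$ to be such a refinement gives $q^*\ge p_{\alpha+1}\in D_\alpha$, hence $q^*_{G_{X_\alpha}}\ge q^*\in D_\alpha$ and again $q^*_{G_{X_\alpha}}\in D_\alpha$ by openness.

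The main obstacle is exactly the dichotomy invoked in the last step: one must show that for $\alpha\in C\cap Reg$ every dense open $D\subseteq Sacks^*(\kappa)$ is either already an element of $X_\alpha$ or is ``concentrated at levels $\ge\alpha$'' in the sense that, below $p_\alpha$, it is dense in the suborder of conditions agreeing with $p_\alpha$ on $2^{<\alpha}$ --- this is what allows the elementary submodel to absorb the ``low'' dense sets while the commit-below-$\alpha$ fusion absorbs the ``high'' ones without endangering convergence of $\langle p_\gamma\rangle$ to a condition. The remaining ingredients --- existence of perfect cores and of exact upper bounds, directedness of $\name{G}_{\qo}\cap X$, smallness of the $X_\alpha$, and coherence of the commitment levels and split-clubs --- are soft, given that $\kappa$ is inaccessible.
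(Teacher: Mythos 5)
There is a genuine gap, and it sits exactly where you flagged your ``main obstacle'': the step in which $p_{\alpha+1}$ is supposed to meet $E_\alpha$ (and, in the second horn of your dichotomy, $D_\alpha$) while satisfying $p_{\alpha+1}\cap 2^{<\alpha}=p_\alpha\cap 2^{<\alpha}$ is impossible in general, and the dichotomy itself is false. Take $D_\alpha$ (or $E_\alpha$) to be the dense open set of conditions whose stem has length at least $\alpha+1$: since $X_\alpha\cap\kappa=\alpha$ we have $\alpha\notin X_\alpha$, so this set need not lie in $X_\alpha$, yet entering it forces you to delete every splitting node below $\alpha+1$, i.e.\ to change the tree on $2^{<\alpha}$ whenever $p_\alpha$ splits below $\alpha$ (which it must, by density of splitting nodes). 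So the case $D_\alpha\notin X_\alpha$ --- which is the typical case for the $\alpha$-th member of $\vec D$ at $\alpha\in C$ --- cannot be handled by refining the whole condition into $D_\alpha$, and your chain ``$q^*\geq p_{\alpha+1}\in D_\alpha$, hence $q^*\in D_\alpha$, hence $q^*_{G_{X_\alpha}}\in D_\alpha$'' breaks at its first link. The same example shows your verification of clause (c) is flawed: a single condition $q^*$ cannot extend a member of every dense open set in $X$ (it would need stems of unbounded length), so genericity of $q^*$ for $X$ cannot be argued by ``$q^*\geq p_{\alpha+1}\in E_\alpha$''.

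The repair is the fiber-wise fusion move, which is what the paper's proof does: at stage $\alpha$ one does not put the condition itself into $D_\alpha$, but shrinks $(q_\alpha)_t$ above \emph{each} node $t$ at the current splitting level so that every restricted tree $(q_{\alpha+1})_t$ lies in $D_\alpha\cap\bigl(\bigcap\{D\in X_\alpha : D \text{ dense open}\}\bigr)$; this never disturbs the tree below that level, so the fusion converges. Then the exact upper bound of $G_{X_\alpha}\cup\{q^*\}$ at regular $\alpha\in C$ is $(q^*)_t$ for $t$ the generic stem of length $\alpha$, and since there is no splitting at the regular level $\alpha$, $(q^*)_t$ extends one of the fibers that was placed in $D_\alpha$, so openness gives $(q^*)_t\in D_\alpha$ --- note that in the stem-length example above it is only this restriction, never $q^*$ itself, that lands in $D_\alpha$, which is precisely why the C-Fusion property is formulated in terms of the exact upper bound. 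Genericity of $q^*$ for $X=\bigcup_\alpha X_\alpha$ is verified the same way: given $D\in X_\alpha$, the generic branch selects a node $t$ at level $\alpha$ lying in $X_{\alpha+1}$ (closure under $\alpha$-sequences), and $(q^*)_t\in D\cap G\cap X$. Your remaining ingredients (directed closure, $p\vee q=p\cap q$, smallness of the $X_\alpha$, identification of the exact upper bound with the restriction to the generic stem) are fine and agree with the paper, but without replacing the global-refinement step by the fiber-wise one the proof does not go through.
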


\begin{proof}
Assume that $q\in Sacks^*(\kappa)$, $\vec{D} = \la D_{\alpha} \colon \alpha<\kappa \ra$ is a sequence of dense open subsets of $Sacks^*(\kappa)$. Let $\langle X_{\alpha} \colon \alpha<\kappa \rangle$ be any continuous and internally approachable chain of elementary substructures of $\left( H_{\kappa^{++}}, Sacks^*(\kappa), \vec{D} \right)$, such that  for every $\alpha<\kappa$, $X_{\alpha+1}$ is closed under $\alpha$-sequences. 
such that $q, Sacks^*(\kappa), \la D_\alpha \colon \alpha<\kappa \ra$ are all in $X_0$. 
We construct an increasing Fusion sequence of conditions, namely an increasing sequence $\la q_{\alpha} \colon \alpha<\kappa \ra$ extending $q$, alongside a continuous, increasing sequence $\la \xi_{\alpha} \colon \alpha<\kappa \ra$ of ordinals below $\kappa$, such that:
\begin{itemize}
    \item For every $\alpha<\kappa$, if $\xi_{\alpha}$ is singular, then it is a splitting level of $q_\alpha$. Here, a splitting level of a condition $p$ is a singular $\xi<\kappa$ such that every node in $\mbox{Lev}_{\xi}(p)$ is a splitting point of $p$, where $\mbox{Lev}_\xi(p) := p\cap 2^{\xi}$. 

    \item For every $\alpha<\beta<\kappa$, $\mbox{Lev}_{\xi_\alpha+1}(q_{\beta}) =  \mbox{Lev}_{\xi_\alpha+1}(q_{\alpha})$.
    \item For every $\alpha<\kappa$, $q_{\alpha+1}\in X_{\alpha+1}$.
    \item For every $\alpha<\kappa$ and $t\in  \mbox{Lev}_{\xi_\alpha+1}(q_{\alpha})\cap X_{\alpha+1} $, the tree 
    $$(q_{\alpha+1})_t = \{ s\in q_{\alpha} \colon s,t \mbox{ are compatible sequences} \}$$ 
    belongs to 
    $$D^*_{\alpha} := D_\alpha\cap \left( \bigcap\{ D \colon D\in X_{\alpha} \mbox{ is dense and open subset of } Sacks^*(\kappa) \}\right).$$
    We point out that if $\alpha = \xi_\alpha$ is regular then $q_{\alpha+1}$ does not split at level $\alpha$, and has a unique extension $t$, and therefore $q_{\alpha+1} = (q_{\alpha+1})_t \in D_\alpha$.
\end{itemize}
The construction of a Fusion sequence of conditions is done by induction. At limit steps, take $q_{\alpha} = \bigcap\{ q_{\beta} \colon \beta<\alpha \} $, $\xi_{\alpha} = \sup\{ \xi_\beta \colon \beta<\alpha \} $. For the successor step, assume that $q_{\alpha}, \xi_{\alpha}$ have been constructed, and let $\xi_{\alpha+1}$ be the first splitting level of $q_{\alpha}$ above $\xi_{\alpha}$. For every $t\in q_{\alpha}\cap 2^{\xi_{\alpha+1}+1}$, shrink $\left(q_{\alpha}\right)_{ t }$ so that it enters the dense open set $D^*_\alpha$. Note that by $\kappa$-closure, and the fact that $|X_{\alpha}|<\kappa$, $D^*_{\alpha}$ is indeed dense open. It's not hard to verify that the resulting tree $q_{\alpha+1}$ has an associated club whose singular points are splitting levels, and thus $q_{\alpha+1}\in Sacks^*(\kappa)$. Finally, note that  $q_{\alpha}\in X_{\alpha+1}$ by induction and internal approachability. Since $D^*_{\alpha}\in X_{\alpha+1}$ (again, by internal approachability),  we can choose $q_{\alpha+1}$ such that  $q_{\alpha+1}\in X_{\alpha+1}$. 

This concludes the inductive construction. Let $q^*$ be the tree generated from 
$$ \bigcup_{\alpha<\kappa} \mbox{Lev}_{ \xi_\alpha+1 }(q_\alpha).$$ 
The fact that $q^*\in Sacks^*(\kappa)$ follows since every singular point in the club $\{ \xi_{\alpha} \colon \alpha<\kappa \}$ is a splitting point of $q^*$.
We argue that $q^*$, $\la X_{\alpha} \colon \alpha<\kappa \ra$ and the club $C = \{ \alpha < \kappa \mid \xi_\alpha = \alpha\}$ are witnesses for the $\kappa$-continuous-Fusion property with respect to $S = \kappa\cap Reg$:

\begin{enumerate}
    \item Assume that $\alpha\in C$ is regular and $G_\alpha$ is an $X_\alpha$ generic set which is compatible with $q^*$. We argue that $G_{X_{\alpha}}\cup\{ q^* \}$ has an exact upper bound, which is the subtree of $q^*$ obtained by extending the stem of $q^*$ to level $\xi_{\alpha}$, as determined by  $\cap G_{X_{\alpha}}$. More formally, we will show below that $\cap G_{X_\alpha}$ has a stem $t$ of length $\xi_\alpha = \alpha$, which belongs to $q^*$. The desired exact upper bound will be the tree $q^*_{X_\alpha}:=(q^*)_t$, which is the sub-tree of $q^*$ which includes only nodes of $q^*$ which are compatible with $t$.
   
    First note that for every $\beta<\alpha$, $\xi_{\beta}\in X_{\alpha}$ (by the above construction, $\xi_{\beta+1}\in X_{\beta+1}$ is the least splitting point of $q_\beta$ above $\xi_{\beta}$. Thus, by internal approachability, every initial segment of the sequence $\la \xi_\beta \colon \beta<\alpha \ra$ belongs to $X_\alpha$). Thus, $q^*_{X_{\alpha}}$ meets the dense open set of conditions which decide the generic stem up to height $\xi_{\beta}$. This is true for every $\beta<\alpha$ separately, and thus $q^*_{X_{\alpha}}$ might split only at levels $\geq \xi_{\alpha}$. But $\xi_\alpha = \alpha$, is regular, and thus $q^*_{X_\alpha}$ does not split in level $\xi_\alpha$ as well, and by the point made at the end of the construction of the sequence $\la q_\alpha \mid \alpha < \kappa\ra$, we get that $q^*_{X_\alpha} \in D_\alpha$. 

    We argue that $q^*_{X_\alpha} = (q^*)_t$ is an exact upper bound of $G_{X_\alpha}\cup\{q^* \}$. Indeed, for every $p\in G_{X_\alpha}$, $(q^*)_t$ meets the dense open set of conditions which either extend $p$ or incompatible with it (this dense open set belongs to $X_{\beta}$ for some $\beta<\alpha$, and thus by the above construction, $(q^*)_{t\uhr (\xi_{\beta}+1) }$ belongs to it). But $q^*$ is compatible with every condition of $G_{X_\alpha}$, and thus $(q^*)_t \geq p$. For exactness, note that if $q'$ is an upper bound of $G_{X_\alpha}\cup \{ q^* \}$, then, arguing as above, $t$ is the stem of $q'$ since $q'$ extends $G_{X_{\alpha}}$. But $q'$ extends $q^*$, and thus $q'$ extends $(q^*)_t$.

    \item We argue that $q^*$ is a generic condition for $X = \bigcup_{\alpha<\kappa} X_{\alpha}$. Assume that $G\subseteq Sacks^*(\kappa)$ is generic over $V$ and $q^*\in G$. We argue that $G\cap X$ is $X$-generic. Indeed, given $D\in X$ dense open in $Sacks^*(\kappa)$, there exists a regular $\alpha<\kappa$ with $\alpha = \xi_{\alpha}$ and $D\in X_{\alpha}$. It follows that for every $t\in \mbox{Lev}_{\alpha+1}(q^*)\cap X_{\alpha+1}$, $(q^*)_t\in D$. Let $t$ be the initial segment of the generic branch up to height $\alpha$. Then $(q^*)_t\in G$. We argue that also $(q^*)_t\in D$. Since $X_{\alpha+1}$ is closed under $\alpha$-sequences, $t\in X_{\alpha+1}$. It follows that $(q^*)_t\in G\cap D$ as desired.
\end{enumerate}
    
\end{proof}
\end{example}

\begin{lemma}\label{Lem:IA-Fusion-NewClubGeneric}
    Let $\qo$ be a poset, $\kappa$ an uncountable regular cardinal, and $\la X_\alpha \mid \alpha < \kappa\ra$ be a continuous elementary chain such that $X_\alpha \elem (H_{{|\qo|}^+},\qo)$  and $|X_\alpha|<\kappa$ for all $\alpha < \kappa$. Suppose that $\kappa$ remains regular in generic extensions by $\qo$ and $q^* \in \qo$ is a generic condition for $X = \bigcup_{\alpha < \kappa}X_\alpha$. Then there is a $\qo$-name of a club $\name{C} \subseteq \kappa$ such that $q^*$ is a generic condition for $X_\alpha$ for every $\alpha \in \name{C}$. Namely, 
    $$q^* \Vdash 
    \forall\alpha \in \name{C}. \thinspace (\name{G}_\qo \cap X_\alpha) \text{ is } X_\alpha\text{-generic.}$$
\end{lemma}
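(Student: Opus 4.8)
The plan is to argue semantically in a generic extension and then convert the result into a name. Fix a generic $G\subseteq\qo$ over $V$ with $q^*\in G$, and work in $V[G]$. Since $q^*$ is a generic condition for $X=\bigcup_{\alpha<\kappa}X_\alpha$, the set $G\cap X$ is $X$-generic. The key observation is that, because the chain $\la X_\alpha\mid\alpha<\kappa\ra$ lies in $V$, is increasing and continuous with union $X$, and $\kappa$ is still regular in $V[G]$, every element of $X$ — in particular every witness that $G\cap X$ puts into a given dense set — already appears at some bounded stage $X_\gamma$, $\gamma<\kappa$. I would then build a ``trace function'' $h\colon\kappa\to\kappa$ in $V[G]$ recording how late these witnesses show up, and take $\name{C}$ to name the club of closure points of $h$.

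Concretely: for each dense $D\subseteq\qo$ with $D\in X$, let $\gamma_D<\kappa$ be least such that $G\cap D\cap X_{\gamma_D}\neq\emptyset$ (well-defined by $X$-genericity together with continuity of the chain), and set $h(\alpha)=\sup\{\gamma_D+1\mid D\in X_\alpha\text{ is dense in }\qo\}$. Since $|X_\alpha|<\kappa$ (computed in $V$, hence also in $V[G]$) and $\kappa$ is regular in $V[G]$, we get $h(\alpha)<\kappa$, so $h$ is a genuine function on $\kappa$, and $C=\{\alpha<\kappa\mid\alpha\text{ limit},\ h(\beta)<\alpha\text{ for all }\beta<\alpha\}$ is a club in $\kappa$ in $V[G]$. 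For $\alpha\in C$ one checks directly that $G\cap X_\alpha$ is $X_\alpha$-generic: given a dense $D\in X_\alpha$, continuity of the chain puts $D\in X_\beta$ for some $\beta<\alpha$, so $\gamma_D<h(\beta)<\alpha$, whence $X_{\gamma_D}\subseteq X_\alpha$ and $\emptyset\neq G\cap D\cap X_{\gamma_D}\subseteq (G\cap X_\alpha)\cap D$.

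The remaining clause in the definition of ``$X_\alpha$-generic'' — that any two conditions of $G\cap X_\alpha$ have a common extension inside $G\cap X_\alpha$ — I would deduce from the density clause just established, by a standard device: for $p,q\in G\cap X_\alpha$ the set
$$E_{p,q}=\{r\in\qo\mid r\geq p\text{ and }r\geq q\}\cup\{r\in\qo\mid r\text{ is incompatible with }p\text{ or with }q\}$$
is dense in $\qo$ and definable over $(H_{|\qo|^+},\qo)$ from $p,q$, hence belongs to $X_\alpha$; any $r\in(G\cap X_\alpha)\cap E_{p,q}$ is, being in $G$, compatible with both $p$ and $q$ and so must be a common extension of them. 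Thus $q^*$ forces $(\name{G}_\qo\cap X_\alpha)$ to be $X_\alpha$-generic for every $\alpha$ in the club defined above. Finally, since the entire argument takes place below $q^*$, the usual name-absorption (maximal principle) yields a $\qo$-name $\name{C}$ with $q^*\Vdash\name{C}$ is a club $\subseteq\kappa$ with the stated property (and one may put $\name{C}=\check\kappa$ off $q^*$).

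I expect the only genuinely delicate point to be the boundedness $h(\alpha)<\kappa$ and the unboundedness of $C$: both rest squarely on the hypothesis that $\kappa$ remains regular after forcing with $\qo$, without which $h$ need not map into $\kappa$ and $C$ need not be unbounded. Everything else is routine bookkeeping with the continuity and elementarity of the chain and the definition of an $X$-generic set.
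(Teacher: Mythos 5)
Your proposal is correct and follows essentially the same route as the paper's proof: work in a generic extension containing $q^*$, record for each dense $D\in X$ the least stage $X_\beta$ where $G$ meets $D$, and take the club of closure points of the resulting trace function, using regularity of $\kappa$ in $V[G]$ for boundedness. The only difference is that you also verify the directedness clause of $X_\alpha$-genericity via the dense set $E_{p,q}$, a point the paper's proof leaves implicit.
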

\begin{proof}
    It suffices to verify that in every generic extension $V[G]$ by $G\subseteq \qo$ with $q^* \in G$, there is a club $C \subseteq \kappa$ such that for every $\alpha \in C$, $(G \cap X_\alpha)$ is an $X_\alpha$ generic set. Since $q^*$ is a generic condition for $X$, for every dense open set $D \subseteq \qo$, $D \in X$, the intersection $G \cap D \cap X$ is nonempty. Let $\beta_D < \kappa$ be the minimal $\beta < \kappa$ so that $G \cap D \cap X_\beta \neq \emptyset$. Define a function $f :\kappa \to \kappa$ by 
    \[
    f(\alpha) = \sup\{\beta_D \mid D \in X_\alpha \text{ dense open in }\qo \}.
    \]
    Let $C \subseteq \kappa$ be the club of closure points of $f$. Since the sequence $\la X_\alpha \mid \alpha < \kappa\ra$ is a continuous chain, it follows that for every limit ordinal $\alpha \in C$ and a dense open set $D \in X_\alpha$, $G \cap D \cap X_\alpha \neq \emptyset$. Hence $G$ is a generic set for $X_\alpha$.
    \end{proof}

\begin{lemma}\label{Lem: CFusionApproximation} 
Let $\qo$ be a $\kappa$-distributive poset satisfying the $\kappa$ C-fusion property. Assume that $p\in \qo$, $n<\omega$ and $\name{f}$ is a $\qo$-name for a function from $\kappa^n$ to $V$. Then there are a condition $p^*\geq q$, a $\qo$-name $\name{C}^*$ for a club in $\kappa$, and a function $F\in V$, $F\colon \kappa\to V$, such that, for every Mahlo $\alpha<\kappa$, $|F(\alpha)|\leq 2^\alpha$, and-- 
$$p^* \Vdash \forall \alpha \in (\name{C}^* \cap Reg)\thinspace \left(\cup_{\vec{\beta}\in \alpha^{n-1}}\name{f}(\vec{\beta} \fr \{\alpha\}) \right)\subseteq \check{F}({\alpha}).$$
\end{lemma}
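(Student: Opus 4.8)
The plan is to extract $p^{*}$ and a continuous elementary chain $\vec{X}$ from the $\kappa$ C-Fusion property, and to let $F(\alpha)$ be the union of all the values $\name{f}(\vec\beta\fr\{\alpha\})$ (for $\vec\beta\in\alpha^{n-1}$) that the exact upper bounds supplied by clause (b) of that property force, as $G_0$ ranges over the possible $X_\alpha$-generic sets. Concretely: for $\alpha<\kappa$ and $\vec\beta\in\alpha^{n-1}$ let $E^{\alpha}_{\vec\beta}=\{q\in\qo\mid q\text{ decides }\name{f}(\vec\beta\fr\{\alpha\})\}$, which is dense open since $\name{f}$ is forced to take values in $V$; as $|\alpha^{n-1}|<\kappa$ and $\qo$ is $\kappa$-distributive, $D_\alpha:=\bigcap_{\vec\beta\in\alpha^{n-1}}E^{\alpha}_{\vec\beta}$ is again dense open. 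Apply the $\kappa$ C-Fusion property to $p$, to $\vec{D}=\la D_\alpha\mid\alpha<\kappa\ra$, and to the structure $(H_{|\qo|^{+}},<_{wo},\qo,\vec{D},\name{f})$, obtaining $p^{*}\geq p$, a club $C_0\subseteq\kappa$, and a continuous elementary chain $\vec{X}=\la X_\alpha\mid\alpha<\kappa\ra$ satisfying clauses (a)--(c) of Definition \ref{Def:BlueprintPrelim}(2). By (a), $|X_\beta|<\kappa$ for all $\beta$, so the set $C_1$ of closure points of $\beta\mapsto|X_\beta|$ is a club and $|X_\alpha|\leq|\alpha|$ for $\alpha\in C_1$. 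Since $\kappa$ is Mahlo it is regular in $V$, hence (as $\qo$ is $\kappa$-distributive) in $V^{\qo}$; since $p^{*}$ is a generic condition for $X=\bigcup_{\alpha<\kappa}X_\alpha$ by (c), Lemma \ref{Lem:IA-Fusion-NewClubGeneric} supplies a $\qo$-name $\name{C}_2$ for a club with $p^{*}\Vdash\forall\alpha\in\name{C}_2\;(\name{G}\cap X_\alpha)\text{ is }X_\alpha\text{-generic}$. Put $\name{C}^{*}:=\check{C}_0\cap\check{C}_1\cap\name{C}_2$.

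Now define $F$. For $\alpha\in C_0\cap C_1\cap Reg$ and any $X_\alpha$-generic $G_0\subseteq X_\alpha\cap\qo$ compatible with $p^{*}$, clause (b) produces an exact upper bound $q^{*}_{G_0}$ of $G_0\cup\{p^{*}\}$ lying in $D_\alpha$; being in $D_\alpha$, this condition decides $\name{f}(\vec\beta\fr\{\alpha\})$ for each $\vec\beta\in\alpha^{n-1}$, say to the value $a(G_0,\vec\beta)$. Set
$$F(\alpha):=\bigcup\bigl\{\,a(G_0,\vec\beta)\;\bigm|\;\vec\beta\in\alpha^{n-1},\;G_0\subseteq X_\alpha\cap\qo\text{ an }X_\alpha\text{-generic set compatible with }p^{*}\,\bigr\}$$
for such $\alpha$, and $F(\alpha):=\emptyset$ otherwise; then $F\in V$ and $F\colon\kappa\to V$. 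For Mahlo $\alpha$ the bound $|F(\alpha)|\leq 2^{\alpha}$ is a routine count: if $\alpha\notin C_0\cap C_1$ it is trivial, and otherwise $\alpha$ is regular with $|X_\alpha|\leq\alpha$, so at most $2^{|X_\alpha|}\cdot|\alpha|^{n-1}\leq 2^{\alpha}$ pairs $(G_0,\vec\beta)$ enter the union.

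It remains to verify the forcing statement. Let $G\subseteq\qo$ be generic with $p^{*}\in G$, let $C_2$ be the club named by $\name{C}_2$, and work in $V[G]$; so $C^{*}:=C_0\cap C_1\cap C_2$, the club named by $\name{C}^{*}$, is a club. Fix a regular $\alpha\in C^{*}$ and put $G_0:=G\cap X_\alpha$. Since $\alpha\in C_2$, $G_0$ is $X_\alpha$-generic, and it is compatible with $p^{*}$ as $G_0\cup\{p^{*}\}\subseteq G$; since also $\alpha\in C_0\cap Reg$, clause (b) gives the exact upper bound $q:=q^{*}_{G_0}\in D_\alpha$ of $G_0\cup\{p^{*}\}$. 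The crucial claim is $q\in G$. If not, pick by genericity $r\in G$ with $r\perp q$. Since $p^{*},r\in G$ and, by $\kappa$-distributivity, the set of conditions deciding the ${<}\kappa$-sized object $\name{G}\cap X_\alpha$ is dense open, we may choose $r'\in G$ with $r'\geq r$, $r'\geq p^{*}$ and $r'$ deciding $\name{G}\cap X_\alpha$; since $r'\in G$ this decision is the correct one, so $r'\Vdash\name{G}\cap X_\alpha=\check{G}_0$. Hence $r'\Vdash\check{g}\in\name{G}$, so $r'\geq g$ (assuming $\qo$ separative, as we may), for every $g\in G_0$; together with $r'\geq p^{*}$ this says $r'$ extends every element of $G_0\cup\{p^{*}\}$, whence $r'\geq q$ by exactness of $q$ --- contradicting $r\perp q$ since $r'\geq r$. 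Therefore $q\in G$. As $q\in D_\alpha\cap G$, for each $\vec\beta\in\alpha^{n-1}$ the value $\name{f}(\vec\beta\fr\{\alpha\})$ computed in $V[G]$ equals $a(G_0,\vec\beta)\subseteq F(\alpha)$; taking the union over $\vec\beta$ gives the required inclusion. As $\alpha$ was an arbitrary regular element of $C^{*}$, we conclude $p^{*}\Vdash\forall\alpha\in(\name{C}^{*}\cap Reg)\;\bigl(\bigcup_{\vec\beta\in\alpha^{n-1}}\name{f}(\vec\beta\fr\{\alpha\})\bigr)\subseteq\check{F}(\alpha)$.

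The step I expect to be the main obstacle is the claim $q^{*}_{G_0}\in G$: a priori the exact upper bound of a (typically infinite) subset of the generic filter need not belong to it. What makes it go through --- and the reason $\kappa$-distributivity appears among the hypotheses --- is that $\kappa$-distributivity lets us trap inside $G$ a single condition $r'$ that decides membership in $\name{G}$ for all fewer-than-$\kappa$ conditions of $X_\alpha\cap\qo$ simultaneously, hence extends all of $G_0$ at once, after which exactness forces $r'\geq q^{*}_{G_0}$. A secondary, bookkeeping difficulty is that clause (b) of the C-Fusion property is available only on the ground-model club $C_0\cap Reg$, whereas ``$\name{G}\cap X_\alpha$ is $X_\alpha$-generic'' holds only on a name-club (Lemma \ref{Lem:IA-Fusion-NewClubGeneric}); since the lemma permits $\name{C}^{*}$ to be a name one simply intersects the two, and the cardinality bound on $F$ is read off from the ground-model club $C_1$ alone.
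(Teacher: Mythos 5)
Your argument follows the paper's proof of Lemma \ref{Lem: CFusionApproximation} essentially step for step: the same dense sets $D_\alpha$ (with density coming from $\kappa$-distributivity), the same appeal to the $\kappa$ C-Fusion property together with Lemma \ref{Lem:IA-Fusion-NewClubGeneric} to intersect the ground-model club with the name of a club of $\alpha$'s at which $\name{G}_{\qo}\cap X_\alpha$ is $X_\alpha$-generic, and the same construction of $F$ by collecting the values decided by the exact upper bounds; the only cosmetic difference is that you index the collection by the $X_\alpha$-generic sets $G_0$ whereas the paper indexes by the exact upper bounds themselves (its set $\Gamma(X_\alpha)$), which amounts to the same $2^\alpha$ count. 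You are in fact more explicit than the paper on the one delicate point, namely why the value decided by the exact upper bound of $(G\cap X_\alpha)\cup\{p^*\}$ is the value realized in $V[G]$: your argument (use $\kappa$-distributivity to find $r'\in G$ extending $p^*$ and deciding all memberships $\check{g}\in\name{G}_{\qo}$ for $g\in X_\alpha\cap\qo$, then invoke exactness), modulo the separativity convention you flag, is exactly what is needed and is left implicit in the paper.

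There is, however, one slip to repair. You define $F(\alpha)$ as the \emph{union} $\bigcup\{a(G_0,\vec{\beta})\mid\dots\}$ of the decided values and then bound $|F(\alpha)|$ by the number of pairs $(G_0,\vec{\beta})$; but that number bounds only the number of sets being unioned, not the size of the union. Since the values $a(G_0,\vec{\beta})$ are arbitrary elements of $V$ (for instance $\name{f}$ could name a constant function whose value is a fixed set of size greater than $2^\kappa$), with your definition the bound $|F(\alpha)|\leq 2^\alpha$ simply fails. The intended (and the paper's) definition is $F(\alpha)=\{a(G_0,\vec{\beta})\mid\dots\}$, i.e.\ the decided values are collected as \emph{elements}; then the $2^\alpha$ bound is exactly your count, and the displayed conclusion of the lemma should be read as $\name{f}(\vec{\beta}\fr\{\alpha\})\in\check{F}(\alpha)$ for all $\vec{\beta}\in\alpha^{n-1}$, which is how the lemma is actually applied later (e.g.\ in the proof of Lemma \ref{Lem:IAFusionCombinatorics} and in Claim \ref{Claim:ellEq1}). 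With that one change your proof is correct and coincides with the paper's.
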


\begin{proof}
We can bound the values taken by the function $\name{f}$ and pick sufficiently large $\theta$ such that $0_{\po} \Vdash \forall \alpha < \kappa\thinspace \name{f}(\check{\alpha}) \in H_\theta^V$. Let $<_\theta$ be a well-ordering of $H_\theta$. Let $\vec{D} = \la D_\alpha \mid \alpha < \kappa\ra$ be a sequence of  dense sets in $\qo_\kappa$ given by 
$$D_\alpha = \{ q \in \qo \mid \exists \la x^\alpha_{\vec{\beta}}\ra_{\vec{\beta} \in \alpha^{n-1}} \subseteq H_{\theta}^V. \thinspace \forall \vec{\beta} \in \alpha^{n-1} \thinspace q \Vdash   \name{f}(\vec{\beta} \fr \{\alpha\}) = \check{x}^\alpha_{\vec{\beta}}\}.$$
The density of $D_{\alpha}$ follows since $\qo$ is $\kappa$-distributive.
Since $\qo$ has the $\kappa$ C-Fusion property, there are $q^* \geq q$, a club $C_1 \subseteq \kappa$, and a continuous elementary chain $\vec{X} = \la X_\alpha \mid \alpha < \kappa\ra$ of substructures $X_\alpha \elem (H_{\kappa^{++}}^{V[G]},\qo,\vec{D})$, such that for each inaccessible cardinal $\alpha \in C_1$, given a condition $q_{X_\alpha} \in \qo$ which is an exact upper bound to a $X_\alpha$-generic set, if $q_{X_\alpha} \parallel q^*$ then $q^* \vee q_{X_\alpha} \in D^*_\alpha$. 
Moreover, by Lemma \ref{Lem:IA-Fusion-NewClubGeneric} there is a $\qo$-name $\name{C}_2$ of a club in $\kappa$ such that for every $\alpha\in \name{C}_2$, $q^*$ forces that $\name{G}_{\qo} \cap X_\alpha$ is a generic set for $X_\alpha$. Since $\qo$ is $\kappa$-distributive, $\name{G}_{\qo} \cap  X_\alpha \in V$ for every such $\alpha$. Therefore by the C-Fusion property, $q^*$ forces that for every inaccessible $\alpha\in C_1\cap \name{C}_2$, we must have an exact upper bound $\name{q}_\alpha$ for $\name{G}_{\qo}\cap X_{\alpha}$. Such $
\name{q}_{\alpha}$ is compatible with $q^*$.

For each $\alpha \in C_1$, let
$$\Gamma(X_\alpha) = \{ q \in \qo_\kappa \mid q || q^* \text{ and } q \text{ is an exact upper-bound of some } X_\alpha \text{-generic set } G_{X_\alpha}^q \}.$$
For every $q \in \Gamma(X_\alpha)$ let $\la x^\alpha_{\vec{\beta}}(q)\ra_{\vec{\beta} \in \alpha^{n-1}} \subseteq H_\theta^V$ be such that for every $\vec{\beta} \in \alpha^{n-1}$, 
$$q \vee q^* \Vdash \name{f^*}(\vec{\beta} \fr\{\alpha\}) = \check{x}^\alpha_{\vec{\beta}}(q).$$
Since $\vec{X}$ is continuous we may assume $|X_\alpha| = \alpha$ for all $\alpha \in C_1$, and thus $|\Gamma(X_\alpha)| \leq (2^\alpha)^{V[G_\kappa]} = \alpha^{++}$. Let 
$$F(\alpha) = \{ x^{\alpha}_{\vec{\beta}}(q)  \mid \vec{\beta} \in \alpha^{n-1}, q \in \Gamma(X_\alpha)\}.$$
 It follows that $|F(\alpha)| \leq \alpha^{++}$ and
 $$q^* \Vdash \forall \alpha \in \check{C}_1 \cap \name{C}_2 \cap Reg \ \forall \vec{\beta} \in \alpha^{n-1} \ \name{f}(\vec{\beta} \fr \{\alpha\}) \in \check{F}(\alpha).$$
\end{proof}

\begin{lemma}\label{Lem:IAF-CapturingDenseSets}
    Suppose that $\po_\kappa =\la \po_\alpha,\qo_\alpha \mid \alpha < \kappa\ra$ is an iterated forcing which has the 
    $\kappa$ Iteration-Fusion property, and for each $\alpha < \kappa$, 
        $\qo_\alpha$ is trivial if $\alpha$ is not inaccessible,
    and the quotient tail forcing $\po/\po_\alpha$ is (forced to be) $\alpha$-distributive.
    Then for every $p \in \po_\kappa$ and a sequence $\la D_\alpha \mid \alpha < \kappa\ra$ of dense open subsets of $\po_\kappa$, there is an extension $p' \geq p$ and a closed unbounded set $C \subseteq \kappa$ such that for each regular $\alpha \in C$, the set
    \[
    \{ w \in \po_{\alpha+1} \mid w \fr (p'\setminus \alpha+1) \in \bigcap_{\beta < \alpha^{++}}D_\beta\}
    \]
    is dense in $\po_{\alpha+1}/(p'\uhr \alpha+1)$.
 \end{lemma}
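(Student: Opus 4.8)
The plan is to deduce the statement from the bare $\kappa$ Iteration-Fusion property of Definition~\ref{Def:BlueprintPrelim}(4), applied not to the given sequence $\langle D_\beta\mid\beta<\kappa\rangle$ but to an auxiliary sequence $\langle E_\alpha\mid\alpha<\kappa\rangle$ of dense open subsets of $\po_\kappa$ in which the requirement ``capture $\bigcap_{\beta<\alpha^{++}}D_\beta$ at stage $\alpha$'' has been transferred to the tail forcing $\po/\po_{\alpha+1}$. The point that makes this transfer legitimate is the following boost in distributivity of the tails: since $\qo_\gamma$ is trivial for every non-inaccessible $\gamma$, for a regular $\alpha<\kappa$ the iteration from $\alpha+1$ up to the least inaccessible $\alpha^\star>\alpha$ is trivial, so $\po/\po_{\alpha+1}\cong\po/\po_{\alpha^\star}$, which by hypothesis is $\alpha^\star$-distributive; as $\alpha^\star$ is inaccessible it exceeds $\alpha^{++}$, so $\po/\po_{\alpha+1}$ is $(\alpha^{++})^+$-distributive. (When $\kappa$ is Mahlo such $\alpha^\star<\kappa$ always exists.) Thus the tail forcing can absorb an intersection of $\alpha^{++}$ many dense open sets — which $\po_{\alpha+1}$ itself is in general far from being able to do, since the coding posets it involves add new bounded subsets of $\kappa$.

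For $\beta<\kappa$ and regular $\alpha<\kappa$ I would introduce the $\po_{\alpha+1}$-name $\name{D}^{\alpha}_\beta$ for the \emph{trace} of $D_\beta$ on the tail, $\{\,t\in\po/\po_{\alpha+1}\mid \exists w\in\name{G}_{\po_{\alpha+1}}\ w\fr t\in D_\beta\,\}$. A routine density argument shows $\name{D}^\alpha_\beta$ is forced to be dense open in $\po/\po_{\alpha+1}$, and that for a tail name $t$ and a condition $h\in\po_{\alpha+1}$, the set $\{w\in\po_{\alpha+1}\mid w\fr t\in D_\beta\}$ is open and predense (hence dense) below $h$ iff $h\Vdash_{\po_{\alpha+1}} t\in\name{D}^\alpha_\beta$. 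I would then set $E_\alpha=\po_\kappa$ for non-regular $\alpha$ and, for regular $\alpha$,
\[
E_\alpha=\Big\{\,q\in\po_\kappa \ \Big|\ q\uhr(\alpha+1)\Vdash_{\po_{\alpha+1}} q\setminus(\alpha+1)\in\textstyle\bigcap_{\beta<\alpha^{++}}\name{D}^\alpha_\beta\,\Big\}.
\]
By the $(\alpha^{++})^+$-distributivity of $\po/\po_{\alpha+1}$, the intersection $\bigcap_{\beta<\alpha^{++}}\name{D}^\alpha_\beta$ is forced to be dense open in $\po/\po_{\alpha+1}$, and a standard density computation in the two-step iteration $\po_\kappa=\po_{\alpha+1}\fr(\po/\po_{\alpha+1})$ then shows $E_\alpha$ is dense open in $\po_\kappa$.

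Applying the $\kappa$ Iteration-Fusion property to $p$ and $\langle E_\alpha\mid\alpha<\kappa\rangle$ yields $p'\geq p$ and a club $C\subseteq\kappa$ such that for every regular $\alpha\in C$ the set $\{\,w\in\po_{\alpha+1}\mid w\fr(p'\setminus(\alpha+1))\in E_\alpha\,\}$ is dense in $\po_{\alpha+1}$. Fixing a regular $\alpha\in C$ and unwinding the definition of $E_\alpha$: for densely many $w\geq p'\uhr(\alpha+1)$ one has $w\Vdash p'\setminus(\alpha+1)\in\bigcap_{\beta<\alpha^{++}}\name{D}^\alpha_\beta$, which by the characterization above gives, for each $\beta<\alpha^{++}$ \emph{separately}, that $\{w\in\po_{\alpha+1}\mid w\fr(p'\setminus(\alpha+1))\in D_\beta\}$ is open and dense below $p'\uhr(\alpha+1)$.

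The step I expect to be the main obstacle is passing from these $\alpha^{++}$ \emph{individual} density statements to the single statement that $\{w\mid w\fr(p'\setminus(\alpha+1))\in\bigcap_{\beta<\alpha^{++}}D_\beta\}$ is dense below $p'\uhr(\alpha+1)$: because $\po_{\alpha+1}$ is not $(\alpha^{++})^+$-distributive, the intersection of the $\alpha^{++}$ open dense sets $\{w\mid w\fr(p'\setminus(\alpha+1))\in D_\beta\}$ of $\po_{\alpha+1}$ need not be dense below $p'\uhr(\alpha+1)$ for a \emph{generic} tail. I would handle this by strengthening the design of $E_\alpha$ so that $q\in E_\alpha$ additionally forces $q\setminus(\alpha+1)$ to be the exact upper bound of an $X_\alpha$-generic set for $\po/\po_{\alpha+1}$, for an appropriate elementary substructure $X_\alpha$ of size $<\kappa$ containing all of $\langle D_\beta\mid\beta<\alpha^{++}\rangle$ — this being the form of tail condition that the fusion constructions establishing the $\kappa$ Iteration-Fusion property for the posets under consideration in fact deliver, in analogy with Definition~\ref{Def:BlueprintPrelim}(2)(b) and the use of Lemma~\ref{Lem:IA-Fusion-NewClubGeneric} in Lemma~\ref{Lem: CFusionApproximation}. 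The coherence forced by a \emph{single} exact upper bound then makes the witnessing extensions of $p'\uhr(\alpha+1)$ simultaneously land in all $D_\beta$ with $\beta<\alpha^{++}$, so the dense set of such extensions witnesses the conclusion. Intersecting $C$ with $Reg$ (equivalently, invoking the Mahlo case of the $\kappa$ Iteration-Fusion property) finishes the argument.
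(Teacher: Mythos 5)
Your route is, up to the very last step, the paper's own: the paper forms the same trace names (its $\name{d}_\beta$ are your $\name{D}^{\alpha}_\beta$), uses the triviality of $\qo_\gamma$ at non-inaccessible $\gamma$ to identify $\po_\kappa/\name{G}(\po_{\alpha+1})$ with $\po_\kappa/\name{G}(\po_{\alpha^*})$ for $\alpha^*$ the least inaccessible above $\alpha$ and thereby intersect the $\alpha^{++}$ many traces, defines $D'_\alpha=\{q\in\po_\kappa\mid q\uhr(\alpha+1)\Vdash q\setminus(\alpha+1)\in\bigcap_{\beta<\alpha^{++}}\name{d}_\beta\}$ (your $E_\alpha$), and applies the $\kappa$ Iteration-Fusion property to $\la D'_\alpha\mid\alpha<\kappa\ra$ to obtain $p'$ and the club $C$. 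Where you and the paper part ways is the final unwinding: the paper concludes by identifying the set $\{w\in\po_{\alpha+1}\mid w\fr(p'\setminus\alpha+1)\in D'_\alpha\}$ with $\{w\in\po_{\alpha+1}\mid w\fr(p'\setminus\alpha+1)\in\bigcap_{\beta<\alpha^{++}}D_\beta\}$, whereas you observe that from $w\Vdash p'\setminus(\alpha+1)\in\name{d}_\beta$ one only reads off, for each $\beta$ separately, that $\{w'\mid w'\fr(p'\setminus\alpha+1)\in D_\beta\}$ is dense above $w$; of the paper's displayed identity only the containment of the second set in the first is immediate, and it is the other containment that the lemma's conclusion requires. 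So your ``main obstacle'' is aimed precisely at the most compressed step of the paper's own argument; that part of your analysis is sound and worth keeping.

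What you offer in place of that step, however, is not a proof. You propose to strengthen $E_\alpha$ so that $q\in E_\alpha$ additionally forces $q\setminus(\alpha+1)$ to be an exact upper bound of an $X_\alpha$-generic set for $\po/\po_{\alpha+1}$, on the grounds that this is ``the form of tail condition that the fusion constructions in fact deliver.'' But the lemma is stated abstractly: its hypotheses are only the $\kappa$ Iteration-Fusion property, triviality at non-inaccessible stages, and distributivity of the tails. Nothing in these hypotheses gives density in $\po_\kappa$ of conditions whose tails are exact upper bounds of $X_\alpha$-generic sets --- that is a C-Fusion-type property of the tail quotient, which in this paper is an assumption on the individual $\qo_\gamma$'s in the FM-blueprint and not a hypothesis of this lemma --- nor do you say where $X_\alpha$ is to live (the quotient is only a $\po_{\alpha+1}$-name), nor do you carry out the key verification that a single exact upper bound forces the $\alpha^{++}$ memberships $w\fr(p'\setminus\alpha+1)\in D_\beta$ to hold simultaneously for densely many $w\geq p'\uhr(\alpha+1)$. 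So, judged as a self-contained derivation of the statement from its stated hypotheses, your proposal is incomplete at exactly the step you flagged; everything before that point coincides with the paper, which disposes of the step by asserting the set identity rather than by the detour you sketch.
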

 \begin{proof}
     For each $\alpha < \kappa$, let $\name{d}_\alpha$ be the $\po_{\alpha+1}$-name of the set of conditions 
     $$\name{d}_\alpha = \{ q \in \po_\kappa/\name{G}(\po_\alpha) \mid \exists w \in \name{G}(\po_\alpha) \thinspace w \fr q \in D_\alpha\}.$$
     For each $\alpha < \kappa$, $0_{\po_{\alpha+1}}$ forces $\name{d}_\alpha$ is dense open in $\po_\kappa/\name{G}(\po_{\alpha+1})$. Moreover, since $\po_\kappa/\name{G}(\po_{\alpha+1}) = \po_\kappa/\name{G}(\po_{\alpha^*})$ where $\alpha^*$ is the first inaccessible cardinal above $\alpha$, and $\po_\kappa/\name{G}(\po_{\alpha^*})$ is $\alpha^*$-distributive, $0_{\po_{\alpha+1}}$ forces $\name{d}^*_\alpha = \bigcap_{\beta < \alpha^{++}} \name{d}_\beta$ is also dense open.
    It follows that for each $\alpha < \kappa$ the set 
    $$D'_\alpha = \{ q \in \po_\kappa \mid q\uhr \alpha+1 \Vdash_{\po_{\alpha+1}} q\setminus \alpha+1 \in \name{d}^*_\alpha\}$$
    is dense open in $\po_\kappa$. 
    Applying the $\kappa$ Iteration-Fusion assumption for $\po_\kappa$ and the sequence $ \la D'_\alpha \mid \alpha < \kappa\ra$ we find $p' \geq p$ and a closed unbounded set $C \subseteq \kappa$ such that for every inaccessible cardinal $\alpha \in C$, the set $$\{ w \in \po_{\alpha+1} \mid w \fr (p'\setminus \alpha+1) \in D'_\alpha\} = 
    \{ w \in \po_{\alpha+1} \mid w \fr (p'\setminus \alpha+1) \in \bigcap_{\beta < \alpha^{++}}D_\alpha\}$$
    is dense in $\po_{\alpha+1}/(p'\uhr \alpha+1)$
 \end{proof}

 \begin{lemma}\label{Lem: PreservationOfStationarySetsForIterationFusionProperty}
     Suppose that $\kappa$ is Mahlo and $\po_\kappa =\la \po_\alpha,\qo_\alpha \mid \alpha < \kappa\ra$ is an iterated forcing which has the $\kappa$ Iteration-Fusion property, and, for every $\alpha<\kappa$, $|\po_\alpha|<\kappa$. Then $\po_\kappa$ preserves stationary subsets of $\kappa$. In particular, $\kappa$ remains Mahlo in $V^{\po_\kappa}$.
 \end{lemma}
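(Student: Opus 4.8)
The plan is to argue by contradiction in the usual fusion style, with the understanding that the $\kappa$ Iteration-Fusion property controls a given sequence of dense sets only at \emph{regular} coordinates, so that the target point of the stationary set $S$ must be obtained by a diagonalization rather than hit directly. Suppose $S \subseteq \kappa$ is stationary in $V$ and, for contradiction, some $p \in \po_\kappa$ forces $\name{C}$ to be a club in $\kappa$ with $\name{C} \cap \check{S} = \emptyset$. First I would fix, for each $\alpha < \kappa$, the dense open set $\hat{D}_\alpha$ of conditions deciding whether $\name{C} \setminus \check{\alpha}$ is empty and, if not, deciding $\min(\name{C} \setminus \check{\alpha})$; below $p$ --- which forces $\name{C}$ unbounded in $\kappa$ --- membership in $\hat{D}_\alpha$ simply amounts to forcing $\min(\name{C}\setminus\check{\alpha}) = \check{\gamma}$ for some $\gamma < \kappa$. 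Applying the $\kappa$ Iteration-Fusion property, with respect to $\kappa \cap Reg$ since $\kappa$ is Mahlo, to $p$ and $\la \hat{D}_\alpha \mid \alpha < \kappa \ra$, I obtain $p^* \geq p$ and a club $C_0 \subseteq \kappa$ such that for every $\alpha \in C_0 \cap Reg$ the set $E_\alpha = \{\, w \in \po_{\alpha+1} \mid w \fr (p^* \setminus (\alpha+1)) \in \hat{D}_\alpha \,\}$ is dense in $\po_{\alpha+1}$.

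Next I would extract from the $E_\alpha$'s a ground-model approximation to $\name{C}$. For $\alpha \in C_0 \cap Reg$ and $w \in E_\alpha$ with $w \geq p^* \uhr (\alpha+1)$, the glued condition $w \fr (p^* \setminus (\alpha+1))$ extends $p$ and lies in $\hat{D}_\alpha$, hence decides $\min(\name{C} \setminus \check{\alpha}) = \check{\gamma}(w)$ for some $\gamma(w) < \kappa$. Set $F(\alpha) = \{\, \gamma(w) \mid w \in E_\alpha,\ w \geq p^* \uhr (\alpha+1) \,\}$ and $\rho(\alpha) = (\sup F(\alpha)) + 1$. This is precisely where the hypothesis $|\po_{\alpha+1}| < \kappa$ is used: it bounds $|F(\alpha)| < \kappa$, hence $\rho(\alpha) < \kappa$ by regularity of $\kappa$. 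I then verify that $p^*$ forces that $\name{C} \cap [\alpha,\rho(\alpha)) \neq \emptyset$ for every $\alpha$ in the ground-model set $C_0 \cap Reg$: given a generic $G \ni p^*$ and such an $\alpha$, density of $E_\alpha$ above $p^* \uhr (\alpha+1)$ --- which lies in the $\po_{\alpha+1}$-generic induced by $G$ --- produces some $w$ in that generic with $w \in E_\alpha$ and $w \geq p^* \uhr (\alpha+1)$, and using the factorization $\po_\kappa \cong \po_{\alpha+1} * (\po_\kappa / \po_{\alpha+1})$ one checks that $w \fr (p^* \setminus (\alpha+1)) \in G$, so that $\min(\name{C}^G \setminus \alpha) = \gamma(w) < \rho(\alpha)$.

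To finish, I would pass to clubs in $V$. Let $C_1$ be the club of closure points of $\rho$, and, using that $C_0 \cap Reg$ is unbounded (it contains the stationary set $C_0 \cap \{\lambda < \kappa : \lambda \text{ inaccessible}\}$, as $\kappa$ is Mahlo), let $C_2$ be the club of limit points of $C_0 \cap Reg$. Since $S$ is stationary, choose $\alpha^* \in S \cap C_0 \cap C_1 \cap C_2$. Then in any $V[G]$ with $p^* \in G$, for each $\gamma < \alpha^*$ there is, by $\alpha^* \in C_2$, some $\beta \in C_0 \cap Reg$ with $\gamma < \beta < \alpha^*$, and then, by the previous paragraph and $\alpha^* \in C_1$ (so that $\rho(\beta) < \alpha^*$), the set $\name{C}^G$ meets $[\beta,\rho(\beta)) \subseteq (\gamma,\alpha^*)$. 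Hence $\name{C}^G \cap \alpha^*$ is unbounded in $\alpha^*$, so, $\name{C}^G$ being closed and $\alpha^* < \kappa$, we get $\alpha^* \in \name{C}^G$ --- contradicting $\alpha^* \in S$ together with the fact that $p^*$ forces $\name{C} \cap \check{S} = \emptyset$ (which holds since $p^* \geq p$). This proves that every $V$-stationary subset of $\kappa$ remains stationary. For the last assertion, $\kappa$ then remains regular: otherwise, fix in $V$ (by Solovay's theorem) a partition of $\kappa$ into $\kappa$ pairwise disjoint stationary sets; each remains stationary in $V^{\po_\kappa}$, hence meets a fixed closed cofinal subset $D$ of $\kappa$ of order type $\mathrm{cf}^{V[G]}(\kappa) < \kappa$, and picking a point of $D$ in each piece injects $\kappa$ into $D$, which is impossible. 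Since moreover $\kappa$ remains inaccessible --- which I would read off from the structure of $\po_\kappa$; in the intended applications the tails are distributive enough that this is immediate --- and $\kappa \cap Reg$ is preserved stationary, $\kappa$ remains Mahlo in $V^{\po_\kappa}$.

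The main obstacle I anticipate is the diagonalization in the penultimate step: the Iteration-Fusion property gives control only at the regular $\beta$'s, so one has to arrange that these coordinates, together with the resulting bounds $\rho(\beta)$, conspire at a suitable closure point $\alpha^*$ of $\rho$ lying in $S$, squeezing $\name{C}$ into being unbounded in $\alpha^*$ while remaining below it, and then invoke closedness. A secondary point needing care is the iteration bookkeeping --- specifically, confirming that the glued condition $w \fr (p^* \setminus (\alpha+1))$ belongs to the generic filter for $\po_\kappa$, which rests on the support conventions of the iteration and the factorization used above.
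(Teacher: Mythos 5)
Your argument is correct and is essentially the paper's own: you apply the $\kappa$ Iteration-Fusion property to the dense sets deciding the elements of $\name{C}$ at regular stages, use $|\po_{\alpha+1}|<\kappa$ to bound the possible decided values in $V$, and pass to closure points to trap $\name{C}$ — the paper packages this as ``$p^*$ forces a ground-model club $C^*\subseteq\name{C}$'' rather than as a contradiction with a fixed stationary $S$, which is the same argument. The paper is likewise terse about the ``in particular Mahlo'' clause, so your added remarks there (regularity via Solovay splitting, inaccessibility read off from the intended applications) go beyond, not against, what is written.
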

 \begin{proof}
     It suffices to prove that every club subset of $\kappa$ in $V^{\po_\kappa}$ contains a club subset of $\kappa$ from $V$. 
     
     Assume that $\name{C}$ is a $\po_\kappa$-name for a club subset of $\kappa$. For every $\alpha<\kappa$, let $D_{\alpha}$ be the dense open subset of conditions deciding the $\alpha$-th element $\name{c}_\alpha$ in the increasing enumeration of $\name{C}$. By the $\kappa$ Iteration-Fusion property, every condition $p\in \po_\kappa$ can be extended to a condition $p^*\geq p$, such that for some club $E\subseteq \kappa$ and for every regular $\alpha\in E$, the set
     $$\{ q\in \po_{\alpha+1} \colon \exists c<\kappa, q^{\frown}\left( p^*\setminus (\alpha+1) \right)\Vdash c= \name{c}_\alpha  \}$$
     is dense open in $\po_{\alpha+1}$.

     Given such $p^*$ and club $E\subseteq \kappa$, let $C^*$ be the set of closure points of the function which maps each $\alpha\in E\cap Reg$ to-- $$\sup\{ c<\kappa \colon \exists q\in \po_{\alpha+1}, q^{\frown}\left( p^*\setminus (\alpha+1) \right)\Vdash c = \name{c}_\alpha\}<\kappa$$
     where the supremum above is indeed below $\kappa$ since $|\po_{\alpha+1}|<\kappa$. Then $C^*$ is a club in $\kappa$, and $p^*\Vdash C^*\subseteq \name{C}$.     
 \end{proof}

 \begin{lemma}\label{Lem: PowersetOfKappaIsPreserved}
     Suppose that $\kappa$ is Mahlo and $2^\kappa = \kappa^+$. Let $\po_\kappa =\la \po_\alpha,\qo_\alpha \mid \alpha < \kappa\ra$ be an iterated forcing which has the $\kappa$ Iteration-Fusion property, such that for every $\alpha<\kappa$, $|\po_\alpha|<\kappa$, $\qo_\alpha$ is trivial if $\alpha$ is not inaccessible, and the quotient tail forcing $\po_\kappa / \po_\alpha$ is (forced to be) $\alpha$-distributive.      
     Then $2^{\kappa} = \kappa^+$ in $V^{\po_\kappa}$.
 \end{lemma}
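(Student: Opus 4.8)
The plan is to combine Lemma~\ref{Lem:IAF-CapturingDenseSets} with a counting of names in $V$. First note that by Lemma~\ref{Lem: PreservationOfStationarySetsForIterationFusionProperty}, $\kappa$ remains Mahlo --- hence regular --- in $V^{\po_\kappa}$, so the statement is meaningful, and since $2^\kappa \geq \kappa^+$ always holds it suffices to prove $V^{\po_\kappa} \models 2^\kappa \leq \kappa^+$. To that end I would fix a $\po_\kappa$-name $\name{A}$ with $0_{\po_\kappa} \Vdash \name{A} \subseteq \check\kappa$ and an arbitrary condition $p$. Letting $D_\beta$ ($\beta < \kappa$) be the dense open set of conditions deciding $\check\beta \in \name{A}$, I would invoke Lemma~\ref{Lem:IAF-CapturingDenseSets} --- whose hypotheses ($\kappa$ Iteration-Fusion, $\qo_\alpha$ trivial off the inaccessibles, $\alpha$-distributive tails) are exactly the present ones --- to obtain $p^* \geq p$ and a club $C \subseteq \kappa$ such that for every regular $\alpha \in C$ the set $E_\alpha = \{ w \in \po_{\alpha+1} \mid w \fr (p^*\setminus\alpha{+}1) \in \bigcap_{\beta < \alpha^{++}}D_\beta\}$ is dense in $\po_{\alpha+1}/(p^*\uhr\alpha{+}1)$; since $\kappa$ is inaccessible, $\alpha^{++} < \kappa$, so in particular $w \fr (p^*\setminus\alpha{+}1)$ decides $\check\beta \in \name{A}$ for all $\beta < \alpha$.

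Next, for each regular $\alpha \in C$ I would form the $\po_{\alpha+1}$-name $\name{B}_\alpha = \{ (\check\beta, w) \mid \beta < \alpha,\ w \in E_\alpha,\ w \fr (p^*\setminus\alpha{+}1) \Vdash \check\beta \in \name{A}\}$ and verify that $p^* \Vdash \name{A} \cap \check\alpha = \name{B}_\alpha$ for every regular $\alpha \in \check C$. The key is the standard two-step factoring fact that if $w \in \name{G}(\po_{\alpha+1})$, $w \geq p^*\uhr\alpha{+}1$, and $p^* \in \name{G}(\po_\kappa)$, then $w \fr (p^*\setminus\alpha{+}1) \in \name{G}(\po_\kappa)$: granted $p^*$ in the generic, density of $E_\alpha$ below $p^*\uhr\alpha{+}1$ produces such a $w$ inside $\name{G}(\po_{\alpha+1})$, so $w \fr (p^*\setminus\alpha{+}1)$ lands in the generic and decides $\name{A}\cap\alpha$, and a routine double inclusion matches this decision with the evaluation of $\name{B}_\alpha$. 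As $\kappa$ is Mahlo, $C \cap Reg$ is cofinal in $\kappa$, whence $p^* \Vdash \name{A} = \bigcup_{\alpha \in \check C \cap Reg}\name{B}_\alpha$.

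I then call a $\po_\kappa$-name \emph{simple} if it is of the form $\bigcup_{\alpha \in C \cap Reg}\name{B}_\alpha$ for some club $C \subseteq \kappa$ and some sequence $\la \name{B}_\alpha \mid \alpha \in C \cap Reg\ra$ of nice $\po_{\alpha+1}$-names for subsets of $\alpha$; replacing each $\name{B}_\alpha$ above by an equivalent nice name, the previous paragraph shows that for every $p$ there are $p^* \geq p$ and a simple name $\name{A}' \in V$ with $p^* \Vdash \name{A} = \name{A}'$. Consequently, in any generic extension $V[G]$ every subset of $\kappa$ is the evaluation of a simple name belonging to $V$, so it remains to count simple names in $V$: there are $2^\kappa = \kappa^+$ clubs $C \subseteq \kappa$, and for each $\alpha < \kappa$ there are at most $2^{|\po_{\alpha+1}|+\alpha} < \kappa$ nice $\po_{\alpha+1}$-names for subsets of $\alpha$ (using that $\kappa$ is inaccessible and $|\po_{\alpha+1}| < \kappa$), hence at most $\kappa^\kappa = 2^\kappa = \kappa^+$ eligible sequences $\la \name{B}_\alpha \mid \alpha \in C \cap Reg\ra$. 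Thus there are at most $(\kappa^+)^V$ simple names, so $(2^\kappa)^{V[G]} \leq (\kappa^+)^V$. Since forcing creates no new cardinals, $(\kappa^+)^V \leq (\kappa^+)^{V[G]} \leq (2^\kappa)^{V[G]}$, so the three quantities agree and $V[G] \models 2^\kappa = \kappa^+$.

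The step I expect to be the main obstacle is the verification that $\name{B}_\alpha$ faithfully names $\name{A}\cap\alpha$ below $p^*$, which requires carefully tracking the factoring $\po_\kappa \cong \po_{\alpha+1} * (\po_\kappa/\name{G}(\po_{\alpha+1}))$ and the fact that the decision made by $w \fr (p^*\setminus\alpha{+}1)$ is already determined by $\name{G}(\po_\kappa) \cap \po_{\alpha+1}$ together with $p^*$. A secondary point needing care is keeping the per-coordinate count of nice names strictly below $\kappa$ and the resulting product at $\kappa^+$, which is where inaccessibility of $\kappa$, $|\po_{\alpha+1}| < \kappa$, and $2^\kappa = \kappa^+$ in $V$ enter.
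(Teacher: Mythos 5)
Your proposal is correct and follows essentially the same route as the paper: approximate $\name{A}\cap\alpha$ by a $\po_{\alpha+1}$-name on a club of regular $\alpha<\kappa$ (using the $\kappa$ Iteration-Fusion property together with the distributivity of the tails, which you package via Lemma \ref{Lem:IAF-CapturingDenseSets} while the paper applies Iteration-Fusion directly to dense sets built from tail distributivity), and then count the possible sequences of small names in $V$ using inaccessibility of $\kappa$, $|\po_{\alpha+1}|<\kappa$, and $2^\kappa=\kappa^+$. The factoring verification you flag is indeed routine (restrict to $w\geq p^*\uhr\alpha{+}1$ in the definition of $\name{B}_\alpha$), and your counting matches the paper's identification of the names with functions into antichains of $\po_{\alpha+1}$.
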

\begin{proof}
    Assume that $G\subseteq \po_\kappa$ is generic over $V$. Let $\name{A}$ is a $\po_\kappa$ name for a subset of $\kappa$. For every $\alpha<\kappa$, let 
    $$ D_{\alpha} = \{ q\in \po_\kappa \colon q\uhr (\alpha+1) \Vdash \exists A_{\alpha}\subseteq \alpha, q\setminus(\alpha+1)\Vdash \name{A}\cap \alpha = A_{\alpha} \}. $$
    Note that $D_{\alpha}$ is indeed dense for every $\alpha<\kappa$, since $\po_\kappa / \po_{\alpha+1}$ is $\alpha^+$-distributive.
    By the $\kappa$ Iteration-Fusion property, there exist $p\in G$ and a club $C\subseteq \kappa$ such that, for every $\alpha\in C\cap Reg$, the set
    $$ \{ q\in \po_{\alpha+1} \colon q\Vdash \exists A_{\alpha}\subseteq \alpha, \ p\setminus(\alpha+1)\Vdash \name{A}\cap \alpha = A_{\alpha} \} $$
    is dense in $\po_{\alpha+1}$. It follows that, for  every $\alpha\in C\cap Reg$, there exists a $\po_{\alpha+1}$-name $\name{A}_{\alpha}$, such that the weakest condition of $\po_{\alpha+1}$ forces that 
    $$p\setminus (\alpha+1)\Vdash \name{A}\cap \alpha = \name{A}_\alpha.$$
    The set $(\name{A})_G$ is uniquely determined from the sequence $\la \name{A}_\alpha \colon \alpha\in C\cap Reg \ra$ and $G$. By standard arguments, we can identify each name $\name{A}_{\alpha}$ with a function from $\alpha$ to the set $X_{\alpha}$ of maximal antichains in $\po_{\alpha+1}$. Since $\kappa$ is strongly inaccessible and $|\po_{\alpha+1}|<\kappa$, there are $2^{\kappa} = \kappa^{+}$ elements in $\prod_{\alpha\in C\cap Reg} (X_{\alpha})^{\alpha}$. It follows that $2^{\kappa} = \kappa^+$ holds in $V[G]$.
\end{proof}

\subsection{The Friedman-Magidor Blueprint}

\begin{definition}[FM-blueprint]\label{Def:FM-blueprint}${}$\\
    Let $\kappa$ be a measurable cardinal and  $\po = \la \po_\alpha,\name{\qo}_\alpha\mid \alpha \leq \kappa\ra$ a \textbf{nonstationary-support} iteration of length $\kappa+1$. The Friedman-Magidor (FM) blueprint includes the following additional assumptions for $\po$:
    \begin{enumerate}[label = FM\arabic*.]
        \item For each $\alpha \leq \kappa$, the ${\po}_\alpha-$name $\name{\qo}_\alpha$ is the trivial poset unless $\alpha$ is Mahlo.

        \item For every Mahlo cardinal $\alpha \leq \kappa$, 
        $0_{\po_\alpha}$ forces $\name{\qo}_\alpha$ satisfies the following requirements
        
        \begin{enumerate}
            
            \item $\qo_\alpha$ has size $|\name{\qo}_\alpha| =\alpha^{++}$ and adds $\alpha^{++}$ new subsets to $\alpha$, 
            
            \item $\name{\qo}_\alpha$ is $\alpha$-distributive, satisfies the $\alpha^{++}$.c.c, and preserves the Mahloness of $\alpha$. 
            
            \item $\name{\qo}_\alpha$ has the $\alpha$ C-Fusion property,

            \item $\name{\qo}_\alpha$ is self coding via a pair $(\name{\vec{q}}^\alpha, \name{\vec{S}}^\alpha)$.

        \end{enumerate}

    \item For every Mahlo cardinal $\alpha \leq \kappa$, 
    \begin{enumerate}

        \item $\po_\alpha$ has the $\alpha$ Iteration-Fusion property, 
        
        \item $0_{\po_\alpha} \Vdash (\po/\po_\alpha) \text{ is } \alpha\text{-distributive}$.
        
   \end{enumerate}

    \item The iteration is uniformly definable in sense that there are parameter-free formulas $\varphi_{\po}(v,x)$, $\varphi_{\qo}(v,x,y)$, and $\varphi_F(v,x)$, such that the following holds in every Mahlo cardinal $\alpha \leq \kappa$ and an inner model $M$ containing $H_{\alpha^{++}}$, 
    \begin{enumerate}
        \item $\varphi_{\po}(\alpha,x)$  defines $\po_{\alpha+1}$,

        \item $\varphi_F(\alpha,x)$ defines a $\po_{\alpha+1}$-name for a sequence of functions $\vec{f}^\alpha = \la f^\alpha_\tau \mid \tau < \alpha^{++}\ra \subseteq {}^\alpha \alpha$ that witnesses $\qo_\alpha$ adds $\alpha^{++}$ new subsets to $\alpha$,
        
        \item $\varphi_{\qo}(\alpha,x,y)$ defines the $\po_\alpha$-name for a pair $(\vec{q}^\alpha,\vec{S}^\alpha)$, which are forced by $0_{\po_\alpha}$ to be witnesses that $\qo_\alpha$ is self coding.
        \end{enumerate}    
    \end{enumerate}
\end{definition}

Throughout this section, we assume GCH holds in the ground model $V$.
We start with a lemma which states a useful combinatorial property of the a poset $\po$ that satisfies the FM-blueprint.

\begin{lemma}\label{Lem:IAFusionCombinatorics}
 (GCH) Suppose that $\po$ satisfies the FM-blueprint assumptions. Let $\name{f}$ be a $\po$-name for a function from $\kappa^n$ for some $n < \omega$ to ground model sets in $V$, and $p \in \po$ a condition. Then there are  $p^* \geq p$, a $\po$-name $\name{C}$ of a club in $\kappa$, and a function $F: \kappa \to V$ such that $|F(\alpha)| \leq \alpha^{++}$ for every Mahlo cardinal, and 
    $$p^* \Vdash \forall \alpha \in (\name{C} \cap Reg)\thinspace \left(\cup_{\vec{\beta}\in \alpha^{n-1}}\name{f}(\vec{\beta} \fr \{\alpha\}) \right)\subseteq \check{F}({\alpha}).$$
\end{lemma}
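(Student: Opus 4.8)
The plan is to reduce the statement to Lemma \ref{Lem: CFusionApproximation} applied "locally" at a suitable stage $\po_{\alpha+1}$, and then to use the $\kappa$ Iteration-Fusion property of $\po = \po_\kappa$ (assumption FM3(a)) together with the tail-distributivity (FM3(b)) to assemble the local pieces into a single global bound $F$ and a single club name $\name{C}$. The key observation is that although $\name{f}$ is a name for a function on $\kappa^n$, at a regular stage $\alpha$ the "slice" $\cup_{\vec\beta \in \alpha^{n-1}} \name{f}(\vec\beta \fr \{\alpha\})$ has domain of size $\le \alpha^{n-1} < \kappa$, and by the $\alpha$-distributivity of the tail forcing $\po/\po_{\alpha}$ (and the fact that $\po_\alpha$ is small, $|\po_\alpha| < \alpha$ when $\alpha$ is Mahlo), the value of this slice is essentially decided at an initial segment of the iteration of size $< \kappa$. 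More precisely, I would first pass to the dense open sets $D_\alpha$ consisting of conditions $q$ that decide $\name{f}(\vec\beta \fr \{\alpha\})$ for all $\vec\beta \in \alpha^{n-1}$ simultaneously — these are dense by the tail-distributivity FM3(b) combined with the $\alpha^{++}$-c.c.\ bookkeeping (FM2(b)), exactly as in the proof of Lemma \ref{Lem: CFusionApproximation}.

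The main technical step is then to apply Lemma \ref{Lem:IAF-CapturingDenseSets} (whose hypotheses are met: $\po$ has $\kappa$ Iteration-Fusion by FM3(a), $\qo_\alpha$ is trivial off Mahlo stages by FM1, and the tail forcing is $\alpha$-distributive by FM3(b)) to the sequence $\la D_\alpha \mid \alpha < \kappa\ra$. This yields $p^* \ge p$ and a club $C \subseteq \kappa$ such that for every regular $\alpha \in C$, the set of $w \in \po_{\alpha+1}$ with $w \fr (p^*\setminus(\alpha{+}1)) \in \bigcap_{\beta < \alpha^{++}} D_\beta$ is dense in $\po_{\alpha+1}/(p^*\uhr\alpha{+}1)$. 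For such $\alpha$, any $w$ in this dense set, together with the tail $p^*\setminus(\alpha{+}1)$, decides the entire slice $\cup_{\vec\beta\in\alpha^{n-1}}\name f(\vec\beta\fr\{\alpha\})$ into a set of size $\le \alpha^{n-1} \le \alpha^{++}$ of possible values; collecting over all $w$ in a maximal antichain below $p^*\uhr\alpha{+}1$ in $\po_{\alpha+1}$ (there are at most $|\po_{\alpha+1}|^+ \le \alpha^{++}$-many by the $\alpha^{++}$-c.c.\ of the components and GCH) gives a set $F(\alpha)$ of size $\le \alpha^{++}$ containing every possible value of the slice. Off the regular stages of $C$ set $F(\alpha) = \emptyset$. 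The club name $\name C$ is taken to be (a name for) the intersection of $C$ with the closure points of the map sending $\alpha$ to the stage by which $p^*$ (plus the generic below $\alpha{+}1$) has decided the slice — this is where one invokes $|\po_\alpha| < \alpha$ for Mahlo $\alpha$ to keep the supremum below $\kappa$, as in Lemma \ref{Lem: PreservationOfStationarySetsForIterationFusionProperty}. Actually, since by construction the slice at $\alpha \in C \cap Reg$ is already forced (by $p^*$ together with a condition in $\po_{\alpha+1}$) to lie in $\check F(\alpha)$ outright, one may simply take $\name C$ to be the check name for $C \cap Reg$, and the forced statement $p^* \Vdash \forall \alpha \in (\name C \cap Reg)\, (\cup_{\vec\beta}\name f(\vec\beta\fr\{\alpha\})) \subseteq \check F(\alpha)$ follows immediately from the density clause.

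The step I expect to be the main obstacle is the bookkeeping that guarantees $|F(\alpha)| \le \alpha^{++}$: one must be careful that the slice $\cup_{\vec\beta\in\alpha^{n-1}}\name f(\vec\beta\fr\{\alpha\})$, ranging over all conditions $w$ in a maximal antichain of $\po_{\alpha+1}/(p^*\uhr\alpha{+}1)$, produces only $\le \alpha^{++}$-many distinct values. This uses three facts in combination — the $\alpha^{++}$-c.c.\ of $\po_{\alpha+1}$ (from FM2(b), since $\po_{\alpha+1}$ is a nonstationary-support iteration of length $\le\alpha+1$ of $\alpha^{++}$-c.c.\ forcings, which is itself $\alpha^{++}$-c.c.\ under GCH), the bound $\alpha^{n-1} \le \alpha^{++}$ on the size of the domain of the slice, and GCH in $V$ to count names — and the delicate point is that a priori the value of $\name f(\vec\beta\fr\{\alpha\})$ might depend on the full generic for $\po$, not just on the part below $\alpha+1$ together with the fixed tail $p^*\setminus(\alpha{+}1)$; this is precisely why the application of Lemma \ref{Lem:IAF-CapturingDenseSets} (rather than a naive appeal to tail-distributivity alone) is needed, since it forces the slice to be decided by $\po_{\alpha+1}$ below the fixed tail of $p^*$. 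Once this is set up correctly, the remaining verifications are routine.
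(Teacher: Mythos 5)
There is a genuine gap, and it comes from your identification ``$\po = \po_\kappa$''. In the FM-blueprint $\po$ is the iteration of length $\kappa+1$, i.e.\ $\po = \po_\kappa * \name{\qo}_\kappa$, and $\name{f}$ is a name for the \emph{full} forcing, so its values may depend on the stage-$\kappa$ generic for $\qo_\kappa$ --- a $\kappa$-distributive poset of size $\kappa^{++}$ which adds $\kappa^{++}$ new subsets of $\kappa$. Consequently your appeal to Lemma \ref{Lem:IAF-CapturingDenseSets} is not licensed: FM3(a) gives the $\kappa$ Iteration-Fusion property only for the length-$\kappa$ iteration $\po_\kappa$, not for $\po$, and the conclusion you want is in general false for $\po$. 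Indeed, take $n=1$ and let $\name{f}$ be the characteristic function of a new subset of $\kappa$ added by $\qo_\kappa$ (e.g.\ the stage-$\kappa$ generic branch): in the fixed tail $p^*\setminus(\alpha+1)$ the $\kappa$-th coordinate is a single $\qo_\kappa$-condition, which decides only a bounded amount of $\name{f}$, so no $w \in \po_{\alpha+1}$ can make $w \fr (p^*\setminus(\alpha+1))$ decide $\name{f}(\alpha)$ for club-many $\alpha$; your dense sets $D_\alpha$ cannot be captured below a fixed tail. (The lemma remains true because it only asks to \emph{bound} the slice in a small ground-model set, not to decide it.) Your opening remark about applying Lemma \ref{Lem: CFusionApproximation} ``locally at stage $\po_{\alpha+1}$'' does not repair this, since $\name{f}$ is not a $\qo_\alpha$-name and that lemma concerns a single poset with the C-Fusion property, not an initial segment of the iteration.

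The missing idea is a first approximation step handling the top forcing: work in $V[G_\kappa]$ and apply Lemma \ref{Lem: CFusionApproximation} to $\qo_\kappa$ (using FM2(c), the $\kappa$ C-Fusion property) to find $p^*_\kappa$, a $\qo_\kappa$-name $\name{C}^*$ for a club, and a function $F^* \in V[G_\kappa]$ with $|F^*(\alpha)| \le 2^\alpha = \alpha^{++}$ bounding the slices of $\name{f}$ on $\name{C}^*\cap Reg$; the bound is obtained not by deciding $\name{f}$ but by collecting the values forced by the exact upper bounds of all possible $X_\alpha$-generic filters compatible with the fusion witness. Only after this reduction does one run your second step: back in $V$, enumerate each $F^*(\alpha)$ in length $\alpha^{++}$, form the dense subsets $D^\alpha_i$ of $\po_\kappa$ deciding the $i$-th member, and apply Lemma \ref{Lem:IAF-CapturingDenseSets} to $\po_\kappa$ (where its hypotheses really do hold) to obtain $p'$, a club $C_2$, and $F(\alpha) = \{x^\alpha_i(r_w) \mid w \in W_\alpha,\ i<\alpha^{++}\}$ of size $\le \alpha^{++}$; the final condition is $p' \fr \name{p}^*_\kappa$ and the club name is $\name{C}_1 \cap \check{C}_2$. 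Your counting of $|F(\alpha)|$ via $|\po_{\alpha+1}| \le \alpha^{++}$ is fine (though note the blueprint does not give $|\po_\alpha| < \alpha$; in the actual construction $|\po_\alpha| = \alpha^+$), but without the first step the argument does not go through.
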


\begin{proof}
We can bound the values taken by the function $\name{f}$ and pick sufficiently large $\theta$ such that $0_{\po} \Vdash \forall \alpha < \kappa\thinspace \name{f}(\check{\alpha}) \in H_\theta^V$.
We fix a well-ordering $<_\theta$ of $H_\theta$.
Given $p^*  \in \po = \po_\kappa * \qo_\kappa$. We write it as $p = p\uhr \kappa * \name{p}_\kappa$ where $p\uhr \kappa \in \po_\kappa$ and $p\uhr \kappa \Vdash \name{p}_\kappa \in \qo_\kappa$. We break the ground approximation argument of $\name{f}$ into two parts, where we first approximate the $\po$-name $\name{f}$ by a $\po_\kappa$ name of a function $\name{F^*}$, and then approximate $\name{F^*}$ by a function $F \in V$.\\

For the first approximation step, fix $G_\kappa\subseteq \po_\kappa$ generic over $V$, and apply Lemma  \ref{Lem: CFusionApproximation} in $V[G_\kappa]$ to find $F^*\in V[G_\kappa]$, $p^*_\kappa \geq p_\kappa$ and a $\qo_\kappa$-name $\name{C}^*$ for a club subset of $\kappa$ such that, for every Mahlo $\alpha<\kappa$, $ V[G_\kappa] \models |F(\alpha)|\leq 2^\alpha$, and-- 
$$p_\kappa^* \Vdash \forall \alpha \in (\name{C}^* \cap Reg)\thinspace \left(\cup_{\vec{\beta}\in \alpha^{n-1}}\name{f}(\vec{\beta} \fr \{\alpha\}) \right)\subseteq \check{F}^*({\alpha}).$$

 For each inaccessible cardinal $\alpha \in C^*$, let $\bar{x}^\alpha = \la x^\alpha_i \mid i < \alpha^{++}\ra$ be an enumeration of $F^*(\alpha)$ according to the fix well-ordering $<_\theta$, of length $\alpha^{++}$ (possibly with repetitions).\\

\noindent
 Moving back to $V$ for the second approximation step, there is $\bar{p} \geq_{\po_\kappa} p\uhr \kappa$ which forces the relevant properties above for $\po_\kappa$-names, $\name{F}^*,\la \name{\bar{x}}^\alpha \mid \alpha < \kappa\ra$, and $\name{p}^*_\kappa$, of $F^*$, $\la \bar{x}^\alpha \mid \alpha < \kappa\ra$, and $p^*_\kappa$ respectively. Also, let $\name{C}_1$ be the
  $\po = (\po_\kappa*\qo_\kappa)$-name for $\name{C}^*$. 
 Working in $V$, we may take a sequence of dense open subsets of $\po_\kappa$,
 $\la D^\alpha_i \mid \alpha < \kappa, i < \alpha^{++}\ra$ such that for an inaccessible cardinal $\alpha < \kappa$ and $i < \alpha^{++}$, $D^\alpha_i$ consists of conditions 
 $r \in \po_\kappa$ for which there is a set $x^\alpha_i(r) \in H_\theta$ so that
         $r \Vdash \name{x}^\alpha_i = \check{x}_i^{\alpha}(r)$.
By Lemma \ref{Lem:IAF-CapturingDenseSets} there is an extension $p' \geq \bar{p}$ and a closed unbounded set $C_2 \subseteq \kappa$ such that for every inaccessible cardinal $\alpha \in C_2$, the set 
$$W_\alpha = \{ w \in \po_{\alpha+1} \mid w \fr (p'\setminus \alpha+1) \in \bigcap_{\beta \leq \alpha, i < \beta^{++}}D^\beta_i\}$$ is dense in $\po_{\alpha+1}/(p'\uhr \alpha+1)$.
For each $w \in W_\alpha$ denote $w \fr (p'\setminus \alpha+1)$ by $r_w$, and 
define 
$$F(\alpha) = \{ x^\alpha_i({r_w}) \mid w\in W_\alpha \text{ and } i < \alpha^{++}\}.$$
Define $p^* = p' \fr \name{p}^*_\kappa \in \po = \po_\kappa * \qo_\kappa$ and $\name{C}^* = \name{C}_1 \cap \check{C}_2 $. It follows that $|F(\alpha)| \leq \alpha^{++}$ for every inaccessible cardinal $\alpha < \kappa$, and $p^*$ forces that for each $\alpha \in \name{C}^*$ and $\vec{\beta} \in \alpha^{n-1}$, $\name{f}(\vec{\beta} \fr \{\alpha\}) \in \check{F}(\alpha)$.
\end{proof}

\begin{theorem}\label{Thm:FMblueprint}
Suppose that $V = L[\E]$ is a  fine-structural extender model (in the sense of \cite{Steel-HB})  which is minimal for the existence of a measurable cardinal $\kappa$ carrying a $(\kappa,\kappa^{++})$-extender $E$.
 Let $\po = \la \po_\alpha,\name{\qo}_\alpha \mid \alpha \leq \kappa\ra$ be an iterated forcing poset which satisfies the FM-blueprint. Then in a generic extension $V[G]$ by $G \subseteq \po$, $\kappa$ is the only measurable cardinal, there is a unique normal measure $U$ on $\kappa$, and $j_U \uhr V = j_E$.
\end{theorem}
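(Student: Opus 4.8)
The plan is to analyze an arbitrary normal measure $W$ on $\kappa$ in $V[G]$ and show it must coincide with the lift of $E$. Fix such a $W$ with ultrapower $j_W : V[G] \to N = \Ult(V[G],W)$. The first step is to argue that $j_W$ restricts to an elementary embedding $j_W \uhr V : V \to V' $ with critical point $\kappa$, where $V' = \bigcap_n j_W^n(V)$ (or more precisely the $V$-part of $N$), and that $V'$ is an iterate of $V = L[\E]$ by the coherence and minimality of the extender sequence: since $V$ is the minimal extender model with a $(\kappa,\kappa^{++})$-extender and $V[G]$ adds no new bounded subsets of $\kappa$ at inaccessibles below $\kappa$ (by the distributivity hypotheses FM2(b), FM3(b)), $V^{V[G]}_\kappa = V_\kappa$, so $j_W \uhr V_\kappa$ is the restriction of an iteration map. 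The minimality of $L[\E]$ then forces $j_W \uhr V$ to be (equivalent to) the ultrapower by the unique order-zero measure derived from $E$, i.e. $j_W \uhr V = j_E$, so $\crit(j_W) = \kappa$ and $W$ projects to the normal measure $E_0$ on the $V$-side.

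The second step is the uniqueness: show that $W$ is completely determined by $j_W \uhr V$ together with how $j_W$ acts on $G$. Here is where the self-coding hypothesis (FM2(d)), the uniform definability (FM4), and Lemma \ref{Lem:IAFusionCombinatorics} enter. Using Lemma \ref{Lem:IAFusionCombinatorics} applied to names for the functions $\vec{f}^\alpha$ (FM4(b)) and the coding data $(\vec{q}^\alpha,\vec{S}^\alpha)$ (FM4(c)), one shows that the generic $G$ is captured, on a club of inaccessibles $\alpha < \kappa$, by ground-model-sized approximations; pushing this up through $j_W$ and using that $N$ can reconstruct $j_W(\po)$ and $j_W(G)$ below $j_W(\kappa)$ from the definable parameters, one sees that $G$ itself — hence $W = \{ X \subseteq \kappa \mid \kappa \in j_W(X)\}$ — is recoverable inside any model containing $j_W \uhr V$ and the tail of $j_W(G)$ above $\kappa$. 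Since the self-coding forces the stationarity of the sets $S^\alpha_{2\tau},S^\alpha_{2\tau+1}$ to encode membership of conditions in the generic, and stationarity is absolute between $V[G]$ and $N$ for the relevant cofinalities (using FM2(b) and the preservation of Mahloness, Lemma \ref{Lem: PreservationOfStationarySetsForIterationFusionProperty}), two normal measures $W,W'$ with $j_W\uhr V = j_{W'}\uhr V = j_E$ must give $j_W(G) = j_{W'}(G)$ and hence $W = W'$. As $j_E$ is canonically the lift of $U := \{X \subseteq \kappa \mid \kappa \in j_E(X)\}$, this identifies the unique normal measure.

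The third step is to rule out other measurable cardinals. Any measurable $\mu \neq \kappa$ in $V[G]$ would give, by the same restriction argument (using that the iteration is small enough — $|\po_\alpha| < \kappa$ — and the distributivity of tails), a measure on $\mu$ in $V$ or in an intermediate extension, contradicting that $V = L[\E]$ is minimal for a single measurable and that the forcing below $\mu$ is too small and too distributive to create one; for $\mu > \kappa$ one uses that $\po$ has size $\kappa^{++}$ and the tail above $\kappa$ is trivial, so no new measurables appear above $\kappa$, while for $\mu < \kappa$ the factor $\po_\mu$ has size $< \kappa$ and $\po/\po_\mu$ is $\mu$-distributive, so a measure on $\mu$ in $V[G]$ would reflect to one in $V$.

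The main obstacle I expect is the uniqueness argument in the second step — specifically, controlling $j_W(G)$ on the interval $(\kappa, j_W(\kappa))$ and showing the self-coding machinery genuinely pins down the generic inside $N$. The delicate point is that $N$ is only closed under $\kappa$-sequences (as the ultrapower by a normal measure), so one cannot simply assert $G \in N$; instead one must use Lemma \ref{Lem:IAFusionCombinatorics} to get $\kappa^{++}$-sized, club-indexed approximations to $G$ that \emph{are} in $N$, combined with the fact that the definable parameters $\varphi_\po,\varphi_\qo,\varphi_F$ let $N$ re-derive the poset and its coding, and that absoluteness of non-stationarity of the $S^\alpha_\tau$'s transfers the bit-by-bit information. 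Getting the cardinal arithmetic bookkeeping right (that these approximations really have size $\leq \kappa^{++}$ and lie in $N$) is the crux.
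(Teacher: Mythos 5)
Your step 1 and step 3 contain genuine errors, and the existence half of the theorem is essentially missing. First, the claim that $V_\kappa^{V[G]}=V_\kappa$ "by distributivity" is false: at every Mahlo $\alpha<\kappa$ the stage forcing $\qo_\alpha$ adds $\alpha^{++}$ new subsets of $\alpha$ (FM2(a)); distributivity only prevents new short sequences. Second, the identification of $j_W\uhr V$ is wrong as stated: the theorem asserts $j_W\uhr V=j_E$, the ultrapower by the \emph{full} $(\kappa,\kappa^{++})$-extender, not by "the order-zero measure derived from $E$," and minimality of $L[\E]$ does not by itself give this. One needs (i) Schindler's theorem that the restriction of $j_W$ to $V$ is a finite normal iteration of $V$ (closure under $\omega$-sequences gives finiteness); (ii) a cardinal-arithmetic argument to see the first extender $\Ex_0$ applied is $E$ itself: if $\Ex_0$ had height $<\kappa_0^{++}$, then the $\kappa_0^{++}$ of the target model would be collapsed relative to $\power(\kappa_0)^{V[G]}=\power(\kappa_0)^{M_W}$, contradicting $2^{\kappa_0}=\kappa_0^{++}$ in $V[G]$ (the paper's Claim \ref{Claim:F_0isE}); and (iii) an argument that the iteration has length one, since minimality certainly does not exclude longer iterations (the restriction of $j_{U^2}$ is a two-step iteration of $V$ by $E$ and its image). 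Step (iii) is exactly where normality together with the fusion covering Lemma \ref{Lem:IAFusionCombinatorics} is needed (Claim \ref{Claim:ellEq1}: $\kappa_1=j_W(f)(\kappa)$ would land in a pointwise image of an embedding with critical point $\kappa_1$); you instead spend that lemma on the uniqueness step, where it is not required. Your uniqueness-via-self-coding idea is close in spirit to the paper's (stationarity of the pairs $S^\kappa_{2\tau},S^\kappa_{2\tau+1}$ is compared between $V[G]$ and $M_E[H^*]\subseteq V[G]$, pinning down $H^*\uhr\qo_\kappa$), and one does not need to "recover $G$ inside $N$" at all.

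More seriously, you never establish existence of a normal measure $U$ with $j_U\uhr V=j_E$: the statement "$j_E$ is canonically the lift of $U$" is circular. One must construct an $M_E$-generic filter $G^*\subseteq j_E(\po)$ containing $j_E``G$ inside $V[G]$ and then verify that the lifted embedding is the ultrapower of $V[G]$ by its derived measure. This is the technical core of the paper's proof (Lemmas \ref{Lem:FM1} and \ref{Lem:FM2}): genericity of $G$ for the quotient $j_E(\po_\kappa)/G$ uses the $\kappa$ Iteration-Fusion property pushed through $j_E$, and the top block $j^1(\qo_\kappa)$ — a highly distributive poset of size $j(\kappa)^{++}$ over $M_E[G^*_{j(\kappa)}]$, whose generic cannot come from closure alone — is generated by exact upper bounds of $j^1$-images of $X_\alpha$-generic sets supplied by the C-Fusion property; Lemma \ref{Lem:FM2} then uses the uniformly definable functions $\vec{f}^\alpha$ to see every element of $M_E[G^*]$ has the form $j^*(g)(\kappa)$. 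Finally, your Levy–Solovay-style reflection for measurables $\mu<\kappa$ fails: $\po_{\mu+1}$ has size $\mu^{++}$, so it is not small relative to $\mu$, and $\mu$-distributivity of the tail does not place a measure on $\mu$ from $V[G]$ into $V$ or an intermediate model. The paper rules out other measurables by the same route as above: any $\sigma$-complete ultrafilter restricts on $V$ to a finite iteration whose first critical point is at most $\kappa$ (by minimality $\kappa$ is the largest measurable of $V$), and the $2^{\kappa_0}=\kappa_0^{++}$ contradiction forces $\Ex_0=E$, hence the measurable is $\kappa$.
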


Fix a $V$-generic filter $G \subseteq \po$.
We need to show that in $V[G]$, there is a unique normal measure $U$ on $\kappa$, and that $j_U \uhr V = j_E$ is the ultrapower embedding by the unique $(\kappa,\kappa^{++})$-extender $E \in V = L[\E]$. 
We break the proof of this statement into the following four parts, given in the next Lemmas.
\begin{lemma}\label{Lem:FM1}
    In $V[G]$ there exists a unique $M_E$-generic filter $G^* \subseteq j_E(\po)$ which contains $j_E``G$. 
\end{lemma}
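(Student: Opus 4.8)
The plan is to build $G^{*}$ as $G*H$: $G$ itself will serve as the $M_E$-generic for an initial segment of $j_E(\po)$, and $H$ as a carefully chosen $M_E[G]$-generic for the tail. First comes the factorization $j_E(\po)=\po*\name{\mathbb R}$. Since $E$ is $(\kappa+2)$-strong, $V_{\kappa+2}^{M_E}=V_{\kappa+2}^{V}$, so $\kappa$ is Mahlo in $M_E$, $H_{\kappa^{++}}^{M_E}=H_{\kappa^{++}}^{V}$, and $V$ and $M_E$ agree on cardinals and on inaccessibility throughout $(\kappa,\kappa^{++}]$. Feeding the uniform-definability formulas $\varphi_\po,\varphi_\qo,\varphi_F$ of clause FM4 into $M_E$ at the Mahlo cardinals $\le\kappa$ yields $j_E(\po)\uhr(\kappa+1)=\po_\kappa*\name{\qo}_\kappa=\po$, so $\po$ is literally an initial segment of $j_E(\po)$; write $\name{\mathbb R}$ for the tail --- the iteration on $(\kappa,j_E(\kappa)]$, a $\po$-name in $M_E$. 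By FM1 its first nontrivial iterand sits at $\mu^{*}$, the least $M_E$-Mahlo above $\kappa$, which lies in $(\kappa^{++},j_E(\kappa))\subseteq(\kappa^{++},\kappa^{+++})$.

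Next I would observe that $j_E``G$ is already absorbed at the bottom. For $p\in G$, the support of $p\uhr\kappa$ is a \emph{nonstationary} subset of $\kappa$ (the iteration has nonstationary support), hence is not a member of the normal ultrafilter $U_{0}=\{X\subseteq\kappa\mid\kappa\in j_E(X)\}$ derived from $E$; therefore $\kappa\notin j_E(\supp(p\uhr\kappa))=\supp(j_E(p\uhr\kappa))$, so $j_E(p)(\kappa)=0_{\qo_\kappa}$. Since $j_E$ has critical point $\kappa$ and so fixes the small initial iterands, $j_E(p)\uhr(\kappa+1)=(p\uhr\kappa)^{\frown}\langle 0_{\qo_\kappa}\rangle\le p$, which places $j_E(p)\uhr(\kappa+1)$ in $G$. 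Thus, for $G^{*}=G*H$, the demand $j_E``G\subseteq G^{*}$ reduces to requiring the tail conditions $r_{p}:=j_E(p)\setminus(\kappa+1)$ $(p\in G)$ --- a directed subset of $\mathbb R:=\name{\mathbb R}^{G}$, with supports confined to the $M_E$-Mahlo cardinals in $[\mu^{*},j_E(\kappa)]$ --- to lie in $H$.

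The main work, and the principal obstacle, is the construction of $H$. Over $M_E[G]$ the forcing $\mathbb R$ is (isomorphic to) a tail of $j_E(\po)$ with first nontrivial step at $\mu^{*}>\kappa^{++}$, so by the $j_E$-image of clause FM3(b) it is $\mu^{*}$-distributive over $M_E[G]$; since GCH passes to $M_E$ by elementarity, the dense subsets of $\mathbb R$ in $M_E[G]$ number at most $\kappa^{+++}$ in $V[G]$ (as $|j_E(\kappa^{+++})|=\kappa^{+++}$). The heart of the matter is to produce a \emph{master condition} $r^{*}\in\mathbb R$ that is an exact upper bound of $\{r_{p}\mid p\in G\}$, and then extend $\{r_{p}\mid p\in G\}\cup\{r^{*}\}$ to a full $M_E[G]$-generic $H$; granting this, $G^{*}:=G*H$ is $M_E$-generic for $j_E(\po)$ and contains $j_E``G$ by the previous paragraph together with $r^{*}\ge r_{p}$. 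Both steps are exactly why the blueprint demands the Fusion clauses: a dense $D\subseteq\mathbb R$ in $M_E[G]$ has the form $j_E(\langle D_{s}\mid s\in[\kappa]^{<\omega}\rangle)(\vec\gamma)$ for a $\kappa$-indexed family of dense subsets of levels of $\po$, so reflecting a single dense set of $j_E(\po)$ produces $\kappa$-many dense sets at the matching level of $\po$ --- more than ordinary distributivity can absorb, but precisely what the Iteration-Fusion property of $\po$ and the C-Fusion property of its iterands are built for. Concretely I would run the $\mu^{*}$-Iteration-Fusion construction for $j_E(\po)$, arrange that along its internally approachable chain $\vec X$ the family $\{r_{p}\mid p\in G\}$ reflects to $X_\alpha$-generic sets compatible with the fused condition, invoke the exact-upper-bound clause of C-Fusion cofinally to assemble $r^{*}$ level by level, and then use the $\mu^{*}$-Iteration-Fusion witness above $r^{*}$ to obtain $H$.

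For uniqueness, suppose $G^{*}$ is any $M_E$-generic for $j_E(\po)$ containing $j_E``G$; then $j_E$ lifts to an elementary $j_E^{+}\colon V[G]\to M_E[G^{*}]$ with $j_E^{+}(G)=G^{*}$. First, $G^{*}$ restricted to $\po_\kappa$ is an $M_E$-generic for $\po_\kappa$ containing the $V$-generic $G_\kappa$, hence equals $G_\kappa$ by maximality of $G_\kappa$ as a filter. For the critical coordinate, $q=q^{\qo_\kappa}_{\tau}\in G^{*}(\kappa)$ iff --- by the self-coding of $\qo_\kappa$ over $M_E[G_\kappa]$, together with the fact that the forcing on the later coordinates is $\mu^{*}$-distributive and, at $j_E(\kappa)$, $j_E(\kappa)$-distributive, both closure degrees exceeding $|\qo_\kappa|$, so that stationarity of subsets of $\lambda_{\qo_\kappa}$ is preserved into $M_E[G^{*}]$ --- the coding set $S^{\qo_\kappa}_{2\tau}$ is nonstationary in $M_E[G^{*}]$; by elementarity of $j_E^{+}$ this forces $S^{\qo_\kappa}_{2\tau}$ nonstationary also in $V[G]$, which by the self-coding of $\qo_\kappa$ over $V[G_\kappa]$ puts $q\in G(\kappa)$. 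Hence $G^{*}(\kappa)\subseteq G(\kappa)$, and then $G^{*}(\kappa)=G(\kappa)$ since $G^{*}(\kappa)$ is a maximal filter on $\qo_\kappa$. Running the same argument up the tail coordinates of $j_E(\po)$ by induction --- each tail iterand being, by uniform definability, a $j_E$-image whose self-coding reflects through $j_E^{+}$ to coordinates of $\po$ already decided by $G$ --- pins down $G^{*}$ completely, and it coincides with the $G^{*}=G*H$ constructed above. Thus the asserted filter exists and is unique.
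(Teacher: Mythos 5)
Your opening steps are fine and match the paper (the uniform definability gives $j_E(\po)\uhr(\kappa+1)=\po_{\kappa+1}$, and the nonstationary-support observation that $j_E(p)(\kappa)$ is trivial, so $j_E(p)\uhr(\kappa+1)\in G$), but the heart of your construction has a genuine gap. You posit a single master condition $r^*$ which is an exact upper bound of all tails $\{r_p\mid p\in G\}$, and then propose to extend $\{r_p\}\cup\{r^*\}$ to an $M_E[G]$-generic $H$ by playing the number of dense sets against $\mu^*$-distributivity. Neither step is available. The top coordinate of such an $r^*$ would be a single condition of $j^1(\qo_\kappa)$ lying above all of $j^1``G(\kappa)$, a set of size $\kappa^{++}$ that is not an element of $M_E[\,\cdot\,]$ (it essentially codes $j_E``\kappa^{++}$), and $\qo_\kappa$ is only $\kappa$-distributive with fusion, not $\kappa^{+}$-closed from the point of view of $V[G]$, so no such bound exists or can be produced. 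The second step is also illegitimate: $\mu^*$-distributivity of the tail is a fact of $M_E[G]$, and it gives you nothing for an external recursion in $V[G]$ --- the increasing sequences you would build along an enumeration of the $\kappa^{+++}$ dense sets live in $V[G]$, not in $M_E[G]$, and have no upper bounds there; and if such a free construction did succeed it would produce many different generics, contradicting the uniqueness you must also prove. The paper's argument is different in kind: it never builds a generic by meeting dense sets externally, but shows that the filter generated by $G\cup j_E``G$ is \emph{already} generic. Every dense $D$ in $M_E$ (resp.\ $M_E[G]$) has the form $j_E(f)(\nu)$ with $\nu<\kappa^{++}$; applying the $\kappa$-Iteration-Fusion property of $\po_\kappa$ \emph{in $V$} to the induced $\kappa$-sequence of dense sets (using distributivity of the tails to intersect the $\alpha^{++}$-many dense sets at each level $\alpha$) yields $p^*\in G$ with $j_E(p^*)\setminus(\kappa+1)\in D$. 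For the final coordinate the generic is generated by exact upper bounds of the $\kappa$-sized fragments $j^1``(G(\kappa)\cap X)$ supplied by the C-Fusion witnesses in $G(\kappa)$ --- fragments which do belong to the target model because ${}^{\kappa}M_E\subseteq M_E$ --- never by a bound over all of $j^1``G(\kappa)$.

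Your uniqueness argument also has a gap beyond coordinate $\kappa$. At $\kappa$ the idea is essentially right, though the justification is not ``elementarity of $j_E^{+}$'': what is used is $M_E[G^*]\subseteq V[G]$, so a club of $M_E[G^*]$ witnessing nonstationarity of $S^{\qo_\kappa}_{2\tau}\subseteq\kappa^{+}$ is a club in $V[G]$, which together with the coding in pairs forces $H^*\uhr\qo_\kappa=G(\kappa)$. But the proposed induction ``up the tail coordinates'' does not work: at an $M_E$-Mahlo $\mu\in(\kappa,j_E(\kappa))$ the iterand is self-coding via stationary subsets of $(\mu^{+})^{M_E}$, ordinals of $V$-cardinality $\kappa^{++}$, and their stationarity in $M_E[G^*]$ is not tied to $G$ or to stationarity facts of $V[G]$ in the way it is at $\kappa$; so this route does not pin down the intermediate coordinates at all. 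In the paper, uniqueness above $\kappa+1$ is instead a byproduct of the construction: since $G^*$ is generated from $G\cup j_E``G$ together with the canonical exact upper bounds determined by $G(\kappa)$, once the coding argument at $\kappa$ gives $H^*\uhr\po_{\kappa+1}=G$, any $M_E$-generic $H^*$ containing $j_E``G$ contains $G^*$ and hence equals it. So the canonical generation of $G^*$ is not an optional refinement --- it is what makes both existence and uniqueness go through, and it is exactly what your master-condition-plus-enumeration plan replaces with steps that fail.
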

    By Silver's argument for extensions of elementary embedding in forcing generic extension,  such $M_E$-generic $G^*$ gives rise to a unique extension $j^* : V[G] \to M_E[G^*]$ of the embedding $j_E$, defined by $j^*(\sigma_G) = j_E(\sigma)_{G^*}$ for a name $\sigma \in V^{\po}$.

\begin{lemma}\label{Lem:FM2}

    Every $a \in M_E[G^*]$ has the form $j^*(g)(\kappa)$ for some $g \in {}^\kappa V[G]$, $g \in V[G]$.
\end{lemma}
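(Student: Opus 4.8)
The plan is to show that $M_E[G^*]$ is generated over $M_E$ by the single element $\kappa$ using the seed function $[\id]_E = \kappa$, exactly as in the standard analysis of ultrapower embeddings by $(\kappa,\kappa^{++})$-extenders, but carried out at the level of the generic extension. Recall that since $E$ is a $(\kappa,\kappa^{++})$-extender, $M_E = \{ j_E(f)(a) \mid f \in V, a \in [\kappa^{++}]^{<\omega}\}$, and since $V = L[\E]$ is minimal for a measurable carrying such an extender, $M_E$ is in fact the ultrapower by the derived normal measure together with the bookkeeping needed to capture ${}^\kappa\kappa\cap V$; concretely every element of $M_E$ has the form $j_E(g)(\kappa)$ for some $g \in {}^\kappa V$ in $V$ (this is where minimality of $L[\E]$ is used — $M_E$ has no superfluous seeds beyond $\kappa$). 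First I would record this fact about $M_E$ and then lift it through the forcing.

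The key steps, in order, are as follows. First, every element of $M_E[G^*]$ has the form $\tau_{G^*}$ for some $j_E(\po)$-name $\tau \in M_E$; write $\tau = j_E(T)(\kappa)$ where $T \in V$, $T : \kappa \to V$, and $T(\alpha)$ is a $\po$-name (we may assume $T(\alpha) \in H_\theta$ for a fixed large $\theta$, using that $\po$ has size $\kappa^{++}$ and the relevant names are bounded). Second, since $j^* : V[G] \to M_E[G^*]$ satisfies $j^*(\sigma_G) = j_E(\sigma)_{G^*}$ and $j^* \uhr V = j_E$, one has $j_E(T)(\kappa) = j^*(g)(\kappa)$ where $g \in V[G]$ is the function $\alpha \mapsto (T(\alpha))_G$; this $g$ lies in $V[G]$ because $T \in V$ and $G \in V[G]$, so $g$ is definable from $T$ and $G$. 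Putting these together gives $\tau_{G^*} = j_E(T)_{G^*}(\kappa) = j^*(g)(\kappa)$ with $g \in {}^\kappa V[G] \cap V[G]$, which is exactly the claim. I would phrase the middle step carefully: one needs $j_E(T)_{G^*}(\kappa) = j^*(g)(\kappa)$, which follows because $j^*$ commutes with application of names to $G$ and agrees with $j_E$ on $V$, so $j^*(g)$ is the function $\beta \mapsto (j_E(T)(\beta))_{G^*}$ evaluated via $j^*$, and plugging in $\kappa = [\id]_E$ gives the value $(j_E(T)(\kappa))_{G^*} = \tau_{G^*}$.

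The main obstacle is justifying the representation $\tau = j_E(T)(\kappa)$ for an arbitrary name $\tau \in M_E$, i.e. that the single seed $\kappa$ suffices rather than an arbitrary finite tuple from $\kappa^{++}$. This is precisely the content of minimality of $V = L[\E]$: in the minimal extender model for a measurable carrying a $(\kappa,\kappa^{++})$-extender, the extender ultrapower $M_E$ coincides with an iterated/internal ultrapower construction in which every element is captured by $\kappa$ together with a function in $V$ — equivalently, $M_E \models$ "$V$ is generated by $\kappa$ over the normal measure ultrapower," and the missing subsets of $\kappa$ are coded into the ordinals below $j_E(\kappa)$ via the fine structure of $L[\E]$. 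I would cite the relevant structural fact about $L[\E]$ (that $M_E = H_{j_E(\kappa)}^{M_E}$-many seeds reduce to $\{\kappa\}$, or more precisely that $M_E = \{ j_E(g)(\kappa) \mid g \in {}^\kappa V \cap V\}$) from the inner-model-theory background, and then the lifting through $\po$ is routine as sketched above. A secondary, minor point is bounding the names: one must check $T$ can be taken with $T(\alpha) \in H_\theta^V$ uniformly, which follows since $|\po| = \kappa^{++}$ and any single element of $M_E[G^*]$ is $\tau_{G^*}$ for a name $\tau$ of bounded rank, so by elementarity the corresponding $T$ has bounded rank in $V$.
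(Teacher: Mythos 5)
Your reduction breaks at the very first step: the claim that every name $\tau \in M_E$ can be written as $j_E(T)(\kappa)$ with $T \in V$ (equivalently, that every element of $M_E$ is $j_E(g)(\kappa)$ for some $g \in V$) is false, and minimality of $L[\E]$ does not rescue it. If every element of $M_E$ were captured by the single seed $\kappa$, then $j_E$ would coincide with the ultrapower by the derived normal measure $U_E$. But $E$ is a $(\kappa,\kappa^{++})$-extender, so $H_{\kappa^{++}} \subseteq M_E$ and $M_E$ computes $\kappa^{++}$ correctly, whereas the normal-measure ultrapower $N = \Ult(V,U_E)$ satisfies $(\kappa^{++})^{N} < j_{U_E}(\kappa) < \kappa^{++}$ (GCH holds in $L[\E]$); so the factor map $k : N \to M_E$, $k([g]_{U_E}) = j_E(g)(\kappa)$, has critical point $(\kappa^{++})^N$ and is far from surjective. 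Concretely, only $\kappa^+$-many ordinals below $\kappa^{++}$ are of the form $j_E(h)(\kappa)$ with $h \in V$, so most $\nu < \kappa^{++}$ are missed. Minimality of $L[\E]$ says there is no stronger large cardinal around; it does not reduce the generators of $E$ below $\kappa^{++}$ --- if it did, $E$ could not witness $(\kappa+2)$-strength at all. Your proposed $g(\alpha) = (T(\alpha))_G$ only uses functions whose ``shape'' lies in $V$, so it inherits exactly this defect; the substance of the lemma is precisely that functions genuinely new to $V[G]$ are needed.

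The correct route (and the paper's) is: write $a = (j_E(f)(\nu))_{G^*}$ with $f \in V$ and a seed $\nu < \kappa^{++}$, and then show that \emph{after forcing} every $\nu < \kappa^{++}$ is of the form $j^*(h_\nu)(\kappa)$ for some $h_\nu \in {}^\kappa\kappa \cap V[G]$. This is where the blueprint is used: by FM4(b), the formula $\varphi_F$ uniformly defines the generic sequence $\vec f^{\alpha} = \la f^{\alpha}_\tau \mid \tau < \alpha^{++}\ra \subseteq {}^{\alpha}\alpha$ of pairwise distinct new functions at every Mahlo $\alpha \leq \kappa$, and since $H_{\kappa^{++}} \subseteq M_E$ the same sequence $\vec f^{\kappa}$ is defined in $M_E[G^*]$ and in $V[G]$. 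Setting $h_\nu(\alpha) = \tau$ when $f^{\kappa}_\nu \uhr \alpha = f^{\alpha}_\tau$ (and $0$ otherwise), one gets $j^*(h_\nu)(\kappa) = \nu$ because $j^*(f^{\kappa}_\nu)\uhr\kappa = f^{\kappa}_\nu$ and the functions in the sequence are pairwise distinct. Your second step (commuting $j^*$ with evaluation of names at the generic) is fine as far as it goes, but without the seed-coding step via the generic functions the proof does not go through.
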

This implies the model $M_E[G^*]$ and the embedding $j^*$ identify with the ultrapower of $V[G]$ by the $j^*$-derived normal measure
    \[
    U = \{ X \subseteq \kappa \mid \kappa \in j^*(X)\}.
    \]

\begin{lemma}\label{Lem:FM2.5}
    Let $U^*$ be a $\kappa$-complete ultrafilter in $V[G]$, and  $j_{U^*} : V[G] \to M_{U^*}$ be its induced ultrapower embedding. Then 
    there is 
a finite normal iteration $\la \Mx_i,\jx_{i,j} : i \leq j \leq \ell\ra$ of $V = \Mx_0$ by extenders $\Ex_i \in \Mx_i$, $i < \ell$ (i.e., $\jx_{i,i+1} = j^{\Mx_i}_{\Ex_i} : \Mx_i \to \Mx_{i+1} \cong \Ult(\Mx_i,\Ex_i)$ is the ultrapower map), whose critical points $\kappax_i = cp(\Ex_i)$, $i < \ell$, are increasing, 
such that $j_{U^*}\uhr V = \jx_{0,\ell}$, $j_{U^*}``V = \Mx_{\ell}$, $M_{U^*} = \Mx_\ell[H^*]$ is a generic extension of $\Mx_{\ell}$ by $H^* = j_{U^*}(G)$.
\end{lemma}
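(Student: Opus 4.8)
The plan is to analyze $j_{U^*}$ by restricting it to the ground model $V$ and exploiting the fine structure of $V=L[\E]$, in the spirit of Kunen's analysis of $\sigma$-complete ultrafilters in $L[U]$.

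First I would record the routine reductions. We may assume $j_{U^*}$ is nontrivial, and since $\kappa$ is the unique measurable cardinal of $V[G]$ (Theorem \ref{Thm:FMblueprint}) we have $cp(j_{U^*})=\kappa$; as $U^*$ is $\kappa$-complete, $M_{U^*}=\Ult(V[G],U^*)$ is closed under ${<}\kappa$-sequences from $V[G]$, so in particular $\mathrm{cf}^{V[G]}(j_{U^*}(\kappa))\ge\kappa$. Since $V$ is a ground of $V[G]$ and grounds are uniformly definable (Laver, Woodin), the class $\Mx:=j_{U^*}(V)=L[j_{U^*}(\E)]$ is precisely the $j_{U^*}(\po)$-ground of $M_{U^*}$; using that $j_{U^*}$ commutes with evaluation of ground-model $\po$-names, $H^*:=j_{U^*}(G)$ is $j_{U^*}(\po)$-generic over $\Mx$ with $M_{U^*}=\Mx[H^*]$, while $\sigma:=j_{U^*}\uhr V\colon V\to\Mx$ is elementary. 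Because $V$ is minimal for a $(\kappa,\kappa^{++})$-extender, hence far below a Woodin cardinal, the sequence $\E$ carries no overlapping extenders, all the relevant iterations are linear, and both $V$ and $\Mx$ --- a definable extender model inside the well-founded $M_{U^*}$ --- are iterable. Comparing $V$ with $\Mx$ and invoking the Dodd--Jensen lemma in the presence of $\sigma$, the $\Mx$-side of the comparison is trivial, so $\Mx$ is a linear iterate of $V$: there is a (a priori possibly transfinite) normal linear iteration $\la\Mx_i,\jx_{i,j}\mid i\le j\le\ell\ra$ of $V=\Mx_0$ by extenders $\Ex_i\in\Mx_i$ ($i<\ell$), with iteration embedding $\sigma=\jx_{0,\ell}$ and strictly increasing critical points $\kappax_i=cp(\Ex_i)$, $\kappax_0=\kappa$; and since $V$ has no measurable above $\kappa$, each $\Mx_i=\jx_{0,i}(V)$ has no measurable in $[\kappa,\jx_{0,i}(\kappa))$, whence $\kappa\le\kappax_i\le\jx_{0,i}(\kappa)\le\sigma(\kappa)=j_{U^*}(\kappa)$. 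Granting for the moment that $\ell<\omega$, the data $(\Mx_\ell,\sigma)=(j_{U^*}(V),j_{U^*}\uhr V)$ together with $M_{U^*}=\Mx_\ell[H^*]$ and $H^*=j_{U^*}(G)$ are exactly the conclusion of the lemma.

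It remains to prove $\ell<\omega$; this is the crux, the present analogue of Kunen's theorem that every $\sigma$-complete ultrafilter in $L[U]$ is a finite power of $U$. I would argue by a generator count combining two facts. On one side, since $\E$ has no overlapping extenders, the generators of $\sigma=\jx_{0,\ell}$ contributed at stage $i$ lie in $[\kappax_i,\kappax_{i+1})$; hence if $\ell\ge\omega$ the generators of $\sigma$ are cofinal in $\mu:=\sup_{n<\omega}\kappax_n\le j_{U^*}(\kappa)$, and $\{\kappax_n\mid n<\omega\}$ is an unbounded family of mutually independent generators below $\mu$ --- while $\mathrm{cf}^{V[G]}(\mu)=\omega$ and, if $\mu=j_{U^*}(\kappa)$, this already contradicts $\mathrm{cf}^{V[G]}(j_{U^*}(\kappa))\ge\kappa$. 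On the other side, $j_{U^*}$ is an \emph{ultrapower}, so $M_{U^*}=\mathrm{Hull}^{M_{U^*}}(\mathrm{ran}(j_{U^*})\cup\{[\mathrm{id}]_{U^*}\})$; pushing each $V[G]$-function that represents an element of $\Mx=j_{U^*}(V)$ through the ``ground approximation'' lemmas for $\po$ (Lemma \ref{Lem:IAFusionCombinatorics} and its variants for the underlying index set of $U^*$), which replace it by a $V$-function with small fibers, yields $\Mx=\mathrm{Hull}^{\Mx}(\mathrm{ran}(\sigma)\cup\{[\mathrm{id}]_{U^*}\})$ --- so $\sigma$ has a bounded set of generators. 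An infinite iteration is incompatible with these two: unboundedly many independent generators below $\mu$ cannot be captured by $\mathrm{ran}(\sigma)$ together with one ordinal $<\mu$, and the $\kappa$-completeness of $U^*$ (closure of $M_{U^*}$ under countable sequences) closes off the residual case in which $[\mathrm{id}]_{U^*}$ lies too high. I expect this finiteness step to be the genuine obstacle --- upgrading ``boundedly many'' to ``finitely many'' and handling ultrafilters on large underlying sets; the remaining points (iterability of $\Mx$, the commutation $j_{U^*}(\tau_G)=j_{U^*}(\tau)_{H^*}$ used to identify $M_{U^*}=\Mx_\ell[H^*]$, and the Dodd--Jensen bookkeeping) are routine.
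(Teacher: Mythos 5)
Your structural reductions are essentially the paper's: by elementarity (the paper does not even need ground definability) $M_{U^*}=L[j_{U^*}(\E)][j_{U^*}(G)]$, so $M_{U^*}=\Mx[H^*]$ with $H^*=j_{U^*}(G)$ generic for $j_{U^*}(\po)$ over $\Mx=L[j_{U^*}(\E)]$, and the fact that $j_{U^*}\uhr V$ is a normal linear iteration of $V$ with $j_{U^*}\uhr V$ equal to the iteration map is exactly what the paper imports from Schindler's theorem on iterates of the core model; your ``comparison plus Dodd--Jensen'' sketch is the same black box, so I do not count it as a gap. The genuine gap is the finiteness of the iteration, which is the crux of the lemma and which you yourself flag as unresolved. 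Your generator count does not close as stated: Lemma \ref{Lem:IAFusionCombinatorics} only covers a $V[G]$-function into $V$ by a $V$-function $F$ with $|F(\alpha)|\leq\alpha^{++}$, so what it yields is $\Mx=\mathrm{Hull}^{\Mx}\bigl(\mathrm{ran}(\sigma)\cup\{[\mathrm{id}]_{U^*}\}\cup([\mathrm{id}]_{U^*}^{++})^{\Mx}\bigr)$, not a hull over the single parameter $[\mathrm{id}]_{U^*}$. Hence the generators of $\sigma$ are only bounded by $([\mathrm{id}]_{U^*}^{++})^{\Mx}$, and nothing you say locates $[\mathrm{id}]_{U^*}$ below $\mu=\sup_{n<\omega}\kappax_n$; if $([\mathrm{id}]_{U^*}^{++})^{\Mx}\geq\mu$ there is no clash with the generators $\kappax_n$ being unbounded in $\mu$, so the contradiction evaporates precisely in the case you defer to ``the residual case,'' for which no argument is given.

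The fix is the paper's argument, which needs no generator count at all: since $U^*$ is $\kappa$-complete, $M_{U^*}$ is closed under $\omega$-sequences of $V[G]$, and since $j_{U^*}(\po)$ is (forced to be) $\omega_1$-distributive over $\Mx$, every $\omega$-sequence of elements of $\Mx$ lying in $V[G]$ lies in $M_{U^*}=\Mx[H^*]$ and hence already in $\Mx$; so $\Mx$ is closed under $\omega$-sequences of $V[G]$. But if the comparison iteration had length $\geq\omega$, the sequence $\la\kappax_n\mid n<\omega\ra$ of its first $\omega$ critical points, which is an element of $V[G]$, could not belong to the final model, by the standard fact that a normal iteration of length at least $\omega$ never contains its own critical sequence in the last model. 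Therefore $\ell<\omega$. Replacing your generator count by this closure argument (and noting that the $\omega_1$-distributivity of $j_{U^*}(\po)$, which you never invoke, is exactly what transfers closure from $M_{U^*}$ to $\Mx$) completes the proof along the paper's lines.
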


\begin{lemma}\label{Lem:FM3}
    $\kappa$ is the only measurable cardinal in $V[G]$, and
    if $U^*$ is a normal measure on $\kappa$ in $V[G]$ whose associated ultrapower embedding is $j_{U^*} : V[G] \to M_{U^*}$, then $j_{U^*}\uhr V = j_E$ and $M_{U^*} = M_E[H^*]$ for some $M_E$-generic filter $H^* \subseteq j_E(\po)$.
\end{lemma}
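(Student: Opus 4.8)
The plan is to drive everything off Lemma~\ref{Lem:FM2.5}, which reduces an arbitrary $\kappa$-complete ultrafilter of $V[G]$ to a finite normal iteration of the minimal model $V=L[\E]$, together with two consequences of minimality: in $V$, $\kappa$ is the unique measurable cardinal, and $E$ is the unique extender on the sequence $\E$ with critical point $\kappa$ (equivalently $o^{\E}(\kappa)=1$).

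\smallskip\noindent\emph{$\kappa$ is the only measurable cardinal of $V[G]$.} Suppose $\lambda$ is measurable in $V[G]$. If $\lambda\geq\kappa$, fix a $\lambda$-complete (hence $\kappa$-complete) ultrafilter $W$ on $\lambda$ and apply Lemma~\ref{Lem:FM2.5}: $j_W\uhr V$ is a nontrivial finite normal iteration $\jx_{0,\ell}$ of $V$, so its least critical point $\kappax_0=\mathrm{cp}(\Ex_0)$ is measurable in $V=\Mx_0$, whence $\kappax_0=\kappa$; but $\kappax_0=\mathrm{cp}(\jx_{0,\ell})=\mathrm{cp}(j_W)=\lambda$, forcing $\lambda=\kappa$. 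If $\lambda<\kappa$, factor $V[G]=V[G_\kappa][G(\kappa)]$ with $\name{\qo}_\kappa$ $\kappa$-distributive, so that $P(\lambda)^{V[G]}=P(\lambda)^{V[G\uhr\lambda^*]}$ where $\lambda^*<\kappa$ is the least inaccessible above $\lambda$ and the intermediate tail is $\lambda^*$-distributive; then $\lambda$ would be measurable in a set-generic extension of $V$ by a poset of size $<\kappa$, hence measurable in the core model, which below $\kappa$ coincides with $V$ --- contradicting that $\kappa$ is the unique measurable of $V$.

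\smallskip\noindent\emph{The normal measure is unique and equals $U$.} Let $U^*$ be a normal measure on $\kappa$ in $V[G]$ and apply Lemma~\ref{Lem:FM2.5}, obtaining a finite normal iteration $\langle \Mx_i,\jx_{i,j}:i\leq j\leq\ell\rangle$ with strictly increasing critical points $\kappax_i$, with $j_{U^*}\uhr V=\jx_{0,\ell}$, $\Mx_\ell=j_{U^*}(V)$, and $M_{U^*}=\Mx_\ell[H^*]$ where $H^*=j_{U^*}(G)$. Since $\mathrm{cp}(j_{U^*})=\kappa$ we get $\kappax_0=\kappa$, and then $\Ex_0=E$ by uniqueness of the extender with critical point $\kappa$; thus $\Mx_1=M_E$ and $\jx_{0,1}=j_E$. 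The crux is to show $\ell=1$. If $\ell\geq 2$, then $\kappax_1>\kappa$ is measurable in $\Mx_1=M_E$, and since $M_E\models$ ``$j_E(\kappa)$ is the unique measurable cardinal'' (elementarity), $\kappax_1=j_E(\kappa)$; iterating, the tail $\jx_{1,\ell}$ is forced to climb the images of the top extender, so in particular $\mathrm{cp}(\jx_{1,\ell})=j_E(\kappa)>\kappa^{++}$, which means $j_{U^*}\uhr V=\jx_{1,\ell}\circ j_E$ derives exactly the $(\kappa,\kappa^{++})$-extender $E$ but moves $j_E(\kappa)$. I would rule this out by combining (i) the computation that $U^*\cap V=\{X\in P(\kappa)^V:\kappa\in\jx_{0,\ell}(X)\}$ is the unique normal measure of $V$ (since $\mathrm{cp}(\jx_{1,\ell})>\kappa$ this does not depend on $\ell$), together with the factor embedding $\Ult(V,U^*\cap V)\to M_{U^*}$, and (ii) the fact that $M_{U^*}=\Ult(V[G],U^*)$ is generated over $j_{U^*}``V[G]$ by $\{\kappa\}$, agrees with $V[G]$ on $P(\kappa)$ (hence on $\kappa^{+}$ and $\kappa^{++}$), and equals $\Mx_\ell[H^*]$ with $H^*$ generic for an FM-type iteration that preserves the Mahloness of $j_E(\kappa)$ over $\Mx_\ell$ --- which traps a contradiction between the location of $j_E(\kappa)$ relative to $\kappa^{++}$ in $V[G]$ and the role $j_E(\kappa)$ would be forced to play inside $M_{U^*}$. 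Once $\ell=1$ we have $j_{U^*}\uhr V=j_E$; by Lemma~\ref{Lem:FM1} the $M_E$-generic $H^*$ containing $j_E``G$ is the unique such filter, hence $H^*=G^*$; the Silver lift of $j_E$ to $V[G]\to M_E[G^*]$ is unique, so $j_{U^*}=j^*$ and therefore $U^*=U$, the measure of Lemma~\ref{Lem:FM2}. The displayed identity $M_{U^*}=M_E[H^*]$ follows at once.

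\smallskip\noindent\emph{Main obstacle.} The delicate step is $\ell=1$, i.e.\ that the iteration of Lemma~\ref{Lem:FM2.5} cannot properly climb the top extender of $L[\E]$. Crude cardinality bounds do not suffice: every finite iterate of $j_E(\kappa)$ by the images of the top extender again has cardinality $\kappa^{++}$ and lies below $\kappa^{+3}$ in $V[G]$, so one cannot close the argument by counting. The proof must exploit the two structures simultaneously carried by $M_{U^*}$ --- a normal ultrapower of $V[G]$ and an FM-generic extension of a finite iterate of $V$ --- which is where the factor-map analysis and the $P(\kappa)$-agreement of $M_{U^*}$ with $V[G]$ carry the weight.
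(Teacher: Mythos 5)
Your proposal has two genuine gaps, both traceable to a false structural premise about $V=L[\E]$. You assume that $\kappa$ is the \emph{unique} measurable of $V$ and that $E$ is the unique extender on $\E$ with critical point $\kappa$ (``$o^{\E}(\kappa)=1$''). Minimality only makes $\kappa$ the \emph{maximal} measurable: since the normal measure derived from $E$ lies in $H_{\kappa^{++}}\subseteq M_E$, $\kappa$ is measurable in $M_E$, hence $\kappa$ is a limit of $V$-measurables and the sequence $\E$ carries many shorter extenders with critical point $\kappa$. This breaks three of your steps. (a) Your disposal of measurables $\lambda<\kappa$ of $V[G]$ ends with ``$\lambda$ measurable in the core model, contradicting uniqueness in $V$'' --- but $\lambda$ measurable in $K=V$ is no contradiction at all; the paper instead notes that stage $\lambda$ is nontrivial, so $2^{\lambda}=\lambda^{++}$ in $V[G]$, that by minimality any first extender $\Ex_0\neq E$ has height $<\kappa_0^{++}$, and that $\power(\kappa_0)^{M_{U^*}}=\power(\kappa_0)^{V[G]}$, so $M_{U^*}=\Mx_\ell[H^*]$ would collapse $\kappa_0^{++}$ (Claim~\ref{Claim:F_0isE}). (b) Your assertion ``$\Ex_0=E$ by uniqueness of the extender with critical point $\kappa$'' needs this same height-plus-cardinal-arithmetic argument; uniqueness only holds for $(\kappa,\kappa^{++})$-extenders, and you must rule out the shorter ones. (c) In the $\ell\geq 2$ case, $\kappax_1$ need not be $j_E(\kappa)$: $M_E$ has many measurables in $(\kappa^{++},j_E(\kappa))$, so your plan to ``trap'' $j_E(\kappa)$ is aimed at the wrong ordinal.

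The second and more serious gap is that the crux, $\ell=1$, is never actually proved: your items (i)--(ii) gesture at a contradiction but derive none, and nothing in your sketch uses any property of $\po$ beyond distributivity and $\power(\kappa)$-agreement, whereas this step genuinely requires the blueprint's fusion machinery. The paper's argument (Claim~\ref{Claim:ellEq1}) runs: by normality $\kappax_1=j_{U^*}(f)(\kappa)$ for some $f:\kappa\to\kappa$ in $V[G]$; Lemma~\ref{Lem:IAFusionCombinatorics} (proved from the C-Fusion and Iteration-Fusion properties) yields $F\in V$ with $|F(\alpha)|\leq\alpha^{++}$, a name $\name{C}^*$ for a club, and $p^*\in G$ forcing $f(\alpha)\in F(\alpha)$ on $\name{C}^*\cap Reg$; since $\kappa\in j_{U^*}(C^*)$ and $j_{U^*}(p^*)\in H^*$, one gets $\kappax_1\in j_{U^*}(F)(\kappa)=\jx_{1,\ell}``\bigl[\jx_{0,1}(F)(\kappa)\bigr]$, which is absurd because $\kappax_1=\cp(\jx_{1,\ell})$ cannot lie in a pointwise image of $\jx_{1,\ell}$. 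This use of normality together with the ground-covering function $F$ is the missing idea; without it (or some substitute exploiting the specific forcing), the claim cannot be closed. Your endgame --- once $\ell=1$, invoke the uniqueness in Lemma~\ref{Lem:FM1} and the Silver lift --- is fine and matches the paper, but it only begins after the two gaps above are filled.
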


By combining the statement of Lemma \ref{Lem:FM3} with the uniqueness part of Lemma \ref{Lem:FM1}, it follows that $H^* = G^*$ and that $U^* = U$, which concludes the proof of Theorem \ref{Thm:FMblueprint}. 
We proceed to prove Lemmas \ref{Lem:FM1}-\ref{Lem:FM3}.

\begin{proof}(Lemma \ref{Lem:FM1})\\
Write $j = j_E$, $M = M_E$, and $G = G_{\kappa} * G(\kappa)$,  where $G_\kappa \subseteq \po_\kappa$ is $V$-generic, and $G(\kappa) \subseteq \qo_\kappa = (\name{\qo}_\kappa)_{G_\kappa}$.
We first show that in $V[G]$, $j : V \to M$ extends to $j^1 : V[G_\kappa] \to M[G^*_{j(\kappa)}]$ where $G^*_{j(\kappa)} \subseteq j(\po_\kappa)$ is $M$-generic and contains $j``G_\kappa$. 
The uniform definability property of the FM-blueprint and the fact $H_{\kappa^{++}} \subseteq M$ imply that $j(\po)_{\kappa+1} = \po_{\kappa+1}$. In particular, $G$ is $M$-generic for $j(\po_\kappa)_{\kappa+1}$. 
Moving forward to extend $j^1$ to have domain $V[G] = V[G_\kappa * G(\kappa)]$, define 
 $$G^*(j(\kappa)) = \la j^1``G(\kappa) \ra = \{ q^* \in j^1(\qo_\kappa) \mid \forall q \in G(\kappa) \thinspace q^*\parallel j^1(q)\}.$$

To show $G^*_{j(\kappa)}$ is $M$-generic it suffices to verify that in $M[G]$, for every dense open subset $D \subseteq j(\po_\kappa)/G$ there is $p \in G_\kappa$ such that $(j(p)\setminus \kappa+1)_G \in D$.
Let $\name{D}$ be a $\po_{\kappa+1}$-name for $D$.
 Since $E$ is $(\kappa,\kappa^{++})$, $D = j(f)(\nu)$ for some $f : \kappa \to \power(\po_\kappa)$ and $\nu \in [\kappa,\kappa^{++})$, so that for every Mahlo cardinal $\alpha < \kappa$ and $\beta \in [\alpha,\alpha^{++})$,  $f(\beta)$ is a $\po_{\alpha+1}$-name of a dense open subset of $\po_\kappa/\po_{\alpha+1}$.
 For each Mahlo cardinal $\alpha < \kappa$, by the FM-blueprint assumption, $\po_\kappa/\po_{\alpha+1}$ is $<\beta_\alpha$-distributive, where $\beta_\alpha$ is the first Mahlo cardinal above $\alpha$. Therefore, $0_{\po_{\alpha+1}} \Vdash \bigcap_{\beta \in [\alpha,\alpha^{++})}f(\beta)$ is dense open. 
 Hence the set 
 $$D_\alpha = \{ \ p \in \po_\kappa \mid p\uhr (\alpha+1) \Vdash p\setminus (\alpha+1) \in \bigcap_{\beta \in [\alpha,\alpha^{++})}f(\beta)\ \}$$ is dense open in $\po_\kappa$, and 
 by the $\kappa$ Iteration-Fusion property of $\po_\kappa$ there is $p^* \in G$ and a club $C \subseteq \kappa$ so that for every Mahlo cardinal $\alpha \in C$, $p^*\uhr (\alpha+1) \Vdash p^*\setminus (\alpha+1) \in \bigcap_{\beta\in [\alpha,\alpha^{++})} f(\beta)$.
 Applying $j$, we see that $j(p^*)\uhr \kappa+1 \Vdash j(p^*) \setminus (\kappa+1) \in \name{D}$. Since $j(p^*)\uhr \kappa+1 \in G$, it follows that $j(p^*)\setminus (\kappa+1) \in D$.\\
 Having established $G^*_{j(\kappa)} \subseteq j(\po_{\kappa})$ is generic over $M$ and contains $j``G_\kappa$, we conclude $j$ extends to 
 $$j^1 : V[G_\kappa] \to M[G^*_{j(\kappa)}].$$ 
 
 Moving forward to extend $j^1$ to have domain $V[G] = V[G_\kappa * G(\kappa)]$ we would like to define $G^*(j(\kappa)) \subseteq j^1(\qo_\kappa)$ that is generic over $M[G^*_{j(\kappa)}]$. To this end, 
 we consider some properties of $\qo_\kappa$ and $j^1(\qo_\kappa)$ that follow from the assumption that $\qo_\kappa$ has the $\kappa$-Continuous-Fusion ($\kappa$-CF) property. By $\kappa$-CF, for every $q \in G(\kappa)$ and a sequence of dense open sets $\vec{D} = \la D_\alpha \mid \alpha < \kappa\ra$ there is a continuous chain of structures $\vec{X} = \la X_\alpha \mid \alpha < \kappa\ra$ and $q^* \in G(\kappa)$ that is a $\kappa$-Continuous-Fusion witness for $q,\vec{D},\vec{X}$.
As $|X_\alpha|<\kappa$ for all $\alpha < \kappa$ we have that $j(X_\alpha) = j[X_\alpha]$, and 
since the sequence $\vec{X}$ is continuous, we see that the $\kappa$-th structure $j(\vec{X})_\kappa$ of $j^1(\vec{X})$ is equal to 
$$
j^1(\vec{X})_\kappa = \bigcup_{\alpha<\kappa}j^1(X_\alpha) = j^1[\bigcup \vec{X}].
$$

Having $q^* \in G(\kappa)$ being a generic condition for $X = \bigcup \vec{X}$ we conclude that 
$$G_{j^1(\vec{X})_\kappa} = j^1[G(\kappa) \cap \bigcup \vec{X}]$$ 
is generic for $j^1(\vec{X})_\kappa$, and consists of conditions that are compatible with $j^1(q^*)$. 
Moreover, as $\kappa \in j(C \cap S)$ for $S = Reg \cap \kappa$, it follows from the elementarity of $j^1$ and the fact $q^*$ is a $\kappa$-Continuous-Fusion witness for $\vec{D},\vec{X}$ that the set
$$
\{ j^1(q^*) \} \cup G_{j^1(\vec{X})_\kappa}
$$
has an exact upper bound, denoted $j^1(q^*)_{G_{j(\vec{X})_\kappa}}$.
With this in mind, we 
define $G^*_{j(\kappa)} \subseteq j^1(\qo_\kappa)$ by

$$G^*(j^1(\kappa)) = \la \{ j^1(q^*)_{G_{j^1(\vec{X})_\kappa}} \mid q^*\in G_{\kappa} \text{ is a } \kappa-CF \text{ witness for some } \vec{D},\vec{X} \}\ra$$

Namely, $G^*(j^1(\kappa))$ is the set generated (i.e., closing under weaker conditions) by the set of conditions of the form $j^1(q^*)_{G_{j^1(\vec{X})_\kappa}}$, where $q^* \in G(\kappa)$, where $q^* \in G(\kappa)$ is $\kappa$-CF witness for some $\vec{X},\vec{D}$. 


Let us first verify that $G^*(j^1(\kappa))$ is a filter, i.e., that every two conditions of the form $j^1(q_1^*)_{G_{j^1(\vec{X})_\kappa}}$ and $j^1(q_2^*)_{G_{j^1(\vec{X})_\kappa}}$ are compatible, whenever  $q^*_1, q^*_2 \in G(\kappa)$ are $\kappa$-CF witnesses for pairs $\vec{X}^1 = \la X^1_\alpha\ra_{\alpha<\kappa},\vec{D}^1$ and $\vec{X}^2 = \la X^2_\alpha \mid \alpha < \kappa\ra,\vec{D}^2$, respectively. \\
By density, there is $q^* \in G(\kappa)$ that extends both $q^*_1,q^*_2$ and is a $\kappa$-CF witness for a sequence $\vec{X}$ (and a trivial $\vec{D}$) consisting of structure $$X_\alpha \elem (H_{|\qo|^+},\in,<_{wo},\qo,\vec{X}^1,\vec{X}^2).$$

By continuity of the sequences $\vec{X},\vec{X}^1,\vec{X}^2$, there is a club set of $\alpha < \kappa$ such that $X^1_\alpha,X^2_\alpha \subseteq X_\alpha$.
Applying $j^1$, we conclude  $j^1(q^*)$ extends both $j^1(q^*_1),j^1(q^*_2)$, and that 
$G_{j^1(\vec{X})_\kappa}$ contains both $G_{j^1(\vec{X^1})_\kappa}$ and $G_{j^1(\vec{X^2})_\kappa}$.
It follows that $j^1(q^*)_{j^1(\vec{X})_\kappa}$ is an upper bound of $\{ j^1(q^*_i)\} \cup G_{j^1(\vec{X^i})_\kappa}$ for both $i = 1,2$, and hence, is a common extension of their exact upper bounds $j^1(q^*)_{j^1(\vec{X^1})_\kappa}$, $j^1(q^*)_{j^1(\vec{X^2})_\kappa}$. \\

Next, we need to show $G^*(j^1(\kappa)) \subseteq j^1(\qo_\kappa)$ is generic over $M[G^*_{j(\kappa)}]$. Suppose $D^* \subseteq j^1(\qo_\kappa)$, $D^* \in M[G^*_{j(\kappa)}]$ is dense open, and let $\name{D^*}$ be a $j(\po_\kappa)$-name for $D^*$. Choosing a function $f^*$ and $\nu < \kappa^{++}$ such that $\name{D}^* = j(f^*)(\nu)$, we may assume that for every $\alpha < \kappa$,  $f^*(\alpha)$ is a $\po_\kappa$-name of a dense open subset of $\name{\qo}_\kappa$. Define for each $\alpha < \kappa$, $D^*_\alpha = \bigcap_{\beta \in [\alpha,\alpha^{++})} (f^*(\beta))_{G_\kappa}$. Since $\qo_\kappa$ is $\kappa$-distributive,  $D^*_\alpha$ is dense open. 
 Working in $V[G_\kappa]$ where $\qo_\kappa$ has the $\kappa$ C-Fusion property, there is a dense set of conditions $q^* \in \qo_\kappa$ for which there is a club $C \subseteq \kappa$, and an internally approachable continuous chain $\vec{X} = \la X_\alpha \mid \alpha < \kappa\ra$ of elementary substructures of $(H_{\kappa^{++}}[G_\kappa],\qo_\kappa)$ that witness the $\kappa$ C-Fusion property with respect to the sequence of dense sets $\la D^*_\alpha \mid \alpha < \kappa\ra$. 
 Let $X = \bigcup_{\alpha < \kappa} X_\alpha$
 and $j^1(\vec{X}) = \vec{X}^* = \la X^*_\alpha \mid \alpha < j(\kappa)\ra$. By the continuity of $\vec{X}^*$, we have $X^*_\kappa = j^1``X$. By the $\kappa$ C-Fusion property, $q^*$ is a generic condition for $X$. Moving to $V[G]= V[G_\kappa * G(\kappa)]$ we may assume $q^* \in G(\kappa)$ and conclude that $G(\kappa) \cap X$ is an $X$-generic set for $\qo_\kappa$. 
 Since $j\uhr X : X \leftrightarrow j``X = X^*_\kappa$ belongs to $M$ (as $|X| = \kappa$ and ${}^\kappa M \subseteq M$) and $G(\kappa) \in M[G^*_{j(\kappa)}]$, we conclude that $G_{X^*_\kappa} = j^1``(G(\kappa) \cap X)$ is an $X^*_\kappa$ generic set for $j^1(\qo_\kappa)$. Moreover, this is forced by $j^1(q^*)$ with respect to $\vec{X}^*$. It follows that $G_{X^*_\kappa} \cup \{ j^1(q^*)\}$ an exact upper-bound $q^*_{G_{X^*_\kappa}}$, which belongs to $D^*_\kappa \subseteq D^*$. We conclude that $G^*(j(\kappa)) \cap D^* \neq \emptyset$, and that 
 $G^* = G^*_{j(\kappa)} * G^*(j(\kappa))$ is a generic subset of $j(\po)$ over $M$, and contains $j`` G$.

 Next, we show $G^*$ is unique. Suppose  $H^* \subseteq j(\po)$ is $M$-generic and contains $j``G$.  First, $G_\kappa \subseteq j``G$ as $j(p) \uhr \kappa = p\uhr \kappa$ for all $p \in \po = \po_{\kappa+1}$. Therefore $H^*\uhr \po_\kappa = G_\kappa$. Second, the uniformly definable self coding properties of $\po$ guarantee $H^*\uhr \qo_\kappa$ must be  equal to $G(\kappa)$. Indeed, recall the sequences $\vec{q}^\kappa = \la q^\kappa_\tau \mid \tau < \kappa^{++}\ra$ and $\vec{S}^\kappa = \la S^\kappa_\tau \mid \tau < \kappa^{++}\ra$ which are used to code $\qo_\kappa$ (over both $V$ and $M$), if  $q^\kappa_\tau \in (H^*\uhr \qo_\kappa) \triangle G(\kappa)$ for some $\tau < \kappa^{++}$ then $V[G]$ and $M[H^*]$ will disagree on which of the two sets $S^{\kappa}_{2\tau}, S^{\kappa}_{2\tau+1}$ are not stationary. This is absurd as $M[H^*] \subseteq V[G]$.
 Having established that $H^*\uhr \po_{\kappa+1} = G$, it follows from the construction of $G^*$ that $G^*/G$ is generated by $j``G/G$. Since $j``G \subseteq H^*$ we must have $H^* = G^*$.
 \end{proof}

 \begin{proof}(Lemma \ref{Lem:FM2})\\
 For every $a \in M[G^*]$ there is a function $f : \kappa \to V^{\po}$ in $V$ and $\nu < \kappa^{++}$ such that $a = (j(f)(\nu))_{G^*}$. Therefore, to show that $a = j^*(g)(\kappa)$ for some $g :\kappa \to V[G]$ in $V[G]$, it suffices to verify that for every $\nu < \kappa^{++}$ there is $h : \kappa \to \kappa$ in $V[G]$ so that $\nu = j^*(h)(\kappa)$. To this end, let $\varphi_F(v,x)$ be the parameter-free formula from the uniformly definable requirement of the FM-blueprint. Since $H_{\kappa^{++}}$ is contained in $M = M_E$ then $\varphi_F(\kappa,x)$ defines the same $\po_{\kappa+1}$ sequence $\vec{f}^\kappa = \la f^\kappa_\nu \mid \nu < \kappa^{++}\ra \subseteq {}^\kappa \kappa$ of pairwise distinct functions in both $M[G^*]$ and $V[G]$.
 Similarly, for each Mahlo cardinal $\alpha < \kappa$, let $\vec{f}^\alpha = \la f^\alpha_\tau \mid \tau < \alpha^{++}\ra \subseteq {}^
\alpha \alpha$ be the sequence of pairwise distinct functions definable in $V[G]$ by $\varphi_F(\alpha,x)$.
 Working in $V[G]$, define for each $\nu < \kappa^{++}$ the function $h_\nu \in {}^\kappa \kappa$. For each Mahlo cardinal $\alpha < \kappa$, if $f^\kappa_\nu \uhr \alpha = f^\alpha_\tau$ for some $\tau < \alpha^{++}$ then $h_\nu(\alpha) = \tau$. Otherwise, $h_\nu(\alpha) = 0$. 
 For  $\nu < \kappa^{++}$, as $j^*(f^\kappa_\nu) \uhr \kappa = f^\kappa_\nu$, $j^*(h_\nu)(\kappa) = \nu$. 
 \end{proof}

\begin{proof}
(Lemma \ref{Lem:FM2.5})\\
Let $U^*$ be a measure in $V[G]$ on a measurable cardinal $\kappa^*$ (not necessarily normal). Denote its ultrapower embedding by $j_{U^*} : V[G] \to M_{U^*}$. 
Having $V = L[\E]$ be a fine-structural extender model we have $M_{U^*} = L[j_{U^*}(\E)][j_{U^*}(G)]$. I.e., it is a generic extension of 
its core model $L[j_{U^*}(\E)]$ by a generic filter $j_{U^*}(G) \subseteq j_{U^*}(\po)$. Denote $L[j_{U^*}(\E)]$ by $M^*$ and $j_{U^*}(G)$ by $H^*$.
Moreover, by a Theorem of Schindler (\cite{schindler2006iteratesofthecoremodel})  the restriction $\jx := j_{U^*}\uhr V  : V \to \Mx$ is an iterated ultrapower embedding of $V$ which is uniquely determined by applying comparison of $V$ and $\Mx$. 
Having, $M_{U^*}^\omega \subseteq M_{U^*}$ and $i(\po)$ being $\omega_1$-distributive means $(\Mx)^\omega \subseteq \Mx$.  Hence, the iterated ultrapower of $V$ that generates $\Mx$ must have finite length $\ell < \omega$, namely
 $$\jx  = \jx_{\ell-1,\ell} \circ \jx_{\ell-2,\ell-1}\circ \dots \circ \jx_{0,1}$$
    where $\ell < \omega$ and for each $i < \ell$, 
    $$\jx_{i,i+1} = j^{\Mx_i}_{
    \Ex_i} : \Mx_i \to \Mx_{i+1} \cong \Ult(\Mx_i,\Ex_i)$$ is an ultrapower embedding by an extender $\Ex_i \in \Mx_i$ ($\Mx_0 = V$) is an extender in $\Mx_i$ with critical point $cp(\Ex_i) = \kappax_i$ and height $ht(\Ex_i) \leq ((\kappax_i)^{++})^{\Mx_i}$, 
    $\kappa = \kappax_0 < \kappax_1 < \dots < \kappax_{\ell-1}$.
\end{proof}

\begin{proof}(Lemma \ref{Lem:FM3})\\

The statement of the lemma follows from the next two claims. 
\begin{claim}\label{Claim:F_0isE}
    $\Ex_0 = E$. In particular $\kappa^* = \kappa_0 = \kappa$ is the only measurable cardinal in $V[G]$.
\end{claim}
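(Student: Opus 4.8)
\emph{Proof proposal.} The plan is to read off from Lemma~\ref{Lem:FM2.5} that $j_{U^*}\uhr V=\jx_{0,\ell}$ is a finite iterated ultrapower of $V$ with $M_{U^*}=\Mx_\ell[H^*]$, and then to identify $\Ex_0$ by pinning down first its critical point and then its length. For the critical point: $\Ex_0\in\Mx_0=V$ and $\kappax_0=cp(\Ex_0)=cp(j_{U^*})$, so $\kappax_0$ is measurable in $V$; since $V=L[\E]$ is minimal for a measurable carrying a $(\kappa,\kappa^{++})$-extender, $\kappa$ is the unique measurable of $V$, whence $\kappax_0=\kappa$. Applying this observation to an arbitrary measure on an arbitrary measurable cardinal of $V[G]$ (via Lemma~\ref{Lem:FM2.5}) already establishes the ``in particular'' clause, that $\kappa$ is the only measurable of $V[G]$; it remains to prove $\Ex_0=E$.

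I would next reduce $\Ex_0=E$ to the single statement $(\kappa^{++})^{\Mx_\ell}=(\kappa^{++})^V$. By Lemma~\ref{Lem:FM2.5} we have $ht(\Ex_0)\le(\kappa^{++})^V$, and equality forces $\Ex_0=E$ since, by minimality of $\E$, $E$ is the unique extender of $V$ with critical point $\kappa$ of that length; so if $\Ex_0\neq E$ then $ht(\Ex_0)<(\kappa^{++})^V$, and a routine count of the ultrapower using GCH in $V$ gives $|\jx_{0,1}(\kappa)|^V\le\kappa^+$, hence $\jx_{0,1}(\kappa)<(\kappa^{++})^V$; but $\jx_{0,1}(\kappa)$ is inaccessible in $\Mx_1$ and above $\kappa^+$, so it witnesses $(\kappa^{++})^{\Mx_1}<(\kappa^{++})^V$. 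Moreover, the critical points of $\Ex_1,\dots,\Ex_{\ell-1}$ are the successive (unique) measurables $\jx_{0,i}(\kappa)$ of the $\Mx_i$ above $\kappa$, each inaccessible in $\Mx_i$ and hence $>(\kappa^{++})^{\Mx_1}$; thus $\jx_{1,\ell}$ fixes the ordinals $\le(\kappa^{++})^{\Mx_1}$ and $(\kappa^{++})^{\Mx_\ell}=(\kappa^{++})^{\Mx_1}$. So it suffices to prove $(\kappa^{++})^{\Mx_\ell}=(\kappa^{++})^V$.

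The core of the argument — and the step I expect to be the main obstacle — is a two-sided estimate of $(2^\kappa)^{M_{U^*}}$. From below: since $U^*$ is a $\kappa$-complete ultrafilter on $\kappa$, $M_{U^*}$ is closed under $\kappa$-sequences, so $\power(\kappa)^{M_{U^*}}=\power(\kappa)^{V[G]}$; as $\po$ preserves cardinals $\le\kappa^{++}$ and forces $2^\kappa=\kappa^{++}$, this gives $(2^\kappa)^{M_{U^*}}\ge(\kappa^{++})^{V[G]}=(\kappa^{++})^V$. From above: $M_{U^*}=\Mx_\ell[H^*]$, and by uniform definability $\jx_{0,\ell}(\po)$ satisfies the FM-blueprint in $\Mx_\ell$. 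Let $\beta_0$ be the least Mahlo cardinal of $\Mx_\ell$ above $\kappa$; by FM1 the stages of $\jx_{0,\ell}(\po)$ strictly between $\kappa$ and $\beta_0$ are trivial, so its initial segment of length $\beta_0$ is $\po_\kappa*\qo_\kappa^{\Mx_\ell}$, and, since $\beta_0>\kappa$, FM3(b) read in $\Mx_\ell$ gives that the tail $\jx_{0,\ell}(\po)/\po_{\beta_0}$ is $\beta_0$-distributive over $\Mx_\ell[H^*\uhr\po_{\beta_0}]$ and hence adds no subset of $\kappa$; so $\power(\kappa)^{M_{U^*}}=\power(\kappa)^{\Mx_\ell[H^*\uhr\po_{\beta_0}]}$. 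Finally, $|\po_\kappa|^{\Mx_\ell}\le\kappa^+$ and, by FM2 in $\Mx_\ell$, $\qo_\kappa^{\Mx_\ell}$ has size $(\kappa^{++})^{\Mx_\ell}$, is $\kappa$-distributive, and has the $(\kappa^{++})^{\Mx_\ell}$-c.c.; a nice-name count in $\Mx_\ell$ (using GCH there, and that $\po_\kappa$ preserves GCH at $\kappa$ and $\kappa^+$ by Lemma~\ref{Lem: PowersetOfKappaIsPreserved}) bounds $\power(\kappa)^{\Mx_\ell[H^*\uhr\po_{\beta_0}]}$ by $(\kappa^{++})^{\Mx_\ell}$, so $(2^\kappa)^{M_{U^*}}\le(\kappa^{++})^{\Mx_\ell}$. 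Since $\Mx_\ell\subseteq V$ and $\power(\kappa)^{\Mx_\ell}=\power(\kappa)^V$ (so $(\kappa^+)^{\Mx_\ell}=\kappa^+$) together give $(\kappa^{++})^{\Mx_\ell}\le(\kappa^{++})^V$, the two estimates force $(\kappa^{++})^{\Mx_\ell}=(\kappa^{++})^V$. By the previous paragraph this yields $\Ex_0=E$, and then $\kappax_0=cp(E)=\kappa$, completing the claim.
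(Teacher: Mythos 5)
There is a genuine gap, and it is in your very first step. You assert that because $V=L[\E]$ is minimal for a measurable carrying a $(\kappa,\kappa^{++})$-extender, $\kappa$ is the \emph{unique} measurable cardinal of $V$, and you use this both to conclude $\kappax_0=\kappa$ and to dismiss the ``in particular'' clause. This is false: since $E$ is a $(\kappa,\kappa^{++})$-extender, $H_{\kappa^{++}}\subseteq M_E$, so the normal measure derived from $E$ witnesses that $\kappa$ is measurable \emph{in} $M_E$, and by elementarity there are unboundedly many measurable cardinals below $\kappa$ in $V$ (indeed a set of normal-measure one). Minimality only gives that $\kappa$ is the \emph{largest} measurable (and the unique cardinal carrying a $(\lambda,\lambda^{++})$-extender), i.e.\ only $\kappax_0\leq\kappa$. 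Consequently your shortcut neither pins down $\kappax_0$ nor yields that $\kappa$ is the only measurable of $V[G]$; ruling out the case $\kappax_0<\kappa$ is precisely the nontrivial content of the ``in particular'' clause, and it cannot be obtained without using the forcing.

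The good news is that the cardinality comparison you carry out in your last paragraph is essentially the paper's argument and closes this gap once it is run at $\kappa_0=\kappax_0$ rather than at $\kappa$: since $\Ex_0\in V$ has critical point $\kappa_0$, $\kappa_0$ is measurable, hence Mahlo, in $V$, so it is a nontrivial stage of $\po$ and $V[G]\models 2^{\kappa_0}=\kappa_0^{++}$; if $\Ex_0\neq E$ (which is automatic when $\kappa_0<\kappa$), minimality gives $ht(\Ex_0)<\kappa_0^{++}$, whence by GCH $(\kappa_0^{++})^{\Mx_\ell}=(\kappa_0^{++})^{\Mx_1}<\kappa_0^{++}$, while $\kappa_0$-closure of $M_{U^*}$ gives $\power(\kappa_0)^{M_{U^*}}=\power(\kappa_0)^{V[G]}$, and the surjection of $(\kappa_0^{++})^{\Mx_\ell}$ onto $\power(\kappa_0)^{M_{U^*}}$ inside $M_{U^*}\subseteq V[G]$ contradicts $2^{\kappa_0}=\kappa_0^{++}$ in $V[G]$. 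This is exactly the paper's proof; the only (harmless) difference in your version is that you obtain the bound $(2^{\kappa})^{M_{U^*}}\leq(\kappa^{++})^{\Mx_\ell}$ by a nice-name count for $\jx_{0,\ell}(\po)$ inside $\Mx_\ell$, whereas the paper gets $M_{U^*}\models 2^{\kappa_0}=\kappa_0^{++}$ directly by elementarity of $j_{U^*}$ from the fact that $V[G]$ satisfies $2^{\alpha}=\alpha^{++}$ at every Mahlo stage $\alpha$. So rewrite the first paragraph: you only get $\kappa_0\leq\kappa$ for free, and both $\Ex_0=E$ and the uniqueness of the measurable in $V[G]$ come out of the arithmetic contradiction at $\kappa_0$.
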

\begin{proof}(Claim \ref{Claim:F_0isE})\\
    The minimality assumption of $V = L[\E]$ being a minimal model for the property of $\kappa$ carrying a $(\kappa,\kappa^{++})$-extender implies $\kappa$ is the maximal measurable cardinal in $V$. Hence $\kappa_0 \leq \kappa$. Clearly $\kappa_0$ is a nontrivial iteration stage of $\po$ and so $2^{\kappa_0} = \kappa_0^{++}$ in $V[G]$.\\
    
    Suppose that $\Ex_0 \neq E$. Then $ht(\Ex_0) < \kappa_0^{++}$ which implies $(\kappa_0^{++})^{M_1} < j_{\Ex_0}(\kappa_0) < \kappa_0^{++}$. 
    Since $(\kappa_0^{++})^{M_1} = (\kappa_0^{++})^{N}$, and $M_{U^*} = N[H^*] \models 2^{\kappa_0} = (\kappa_0)^{++}$, there is a bijection $f \in N[H^*]$, 
    $f : (\kappa_0^{++})^N \leftrightarrow \power(\kappa_0)^{M_{U^*}}$. But $\power(\kappa_0)^{M_{U^*}}= \power(\kappa_0)^{V[G]}$, and so the last contradicts the fact $|\power(\kappa_0)|^{V[G]} = \kappa_0^{++}$.
    This conclude the proof of Claim \ref{Claim:F_0isE}.
\end{proof}

\begin{claim}\label{Claim:ellEq1}
    If $U^*$ is normal then $\ell = 1$. 
\end{claim}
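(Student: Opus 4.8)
The plan is to assume $\ell \geq 2$ and derive a contradiction from the normality of $U^*$, by forcing the critical point $\kappax_1$ of the second extender $\Ex_1$ into the range of the tail embedding $\jx_{1,\ell}\colon \Mx_1 \to \Mx_\ell$ — which is absurd, since $\kappax_1 = cp(\jx_{1,\ell})$. First I would record the facts about $\kappax_1$ that come for free. As $\Ex_0 = E$ by Claim~\ref{Claim:F_0isE} (so $\ell \geq 1$) and $\Ex_1 \in \Mx_1 = \Ult(V,E)$ is an extender with $cp(\Ex_1) = \kappax_1 > \kappax_0 = \kappa$, the cardinal $\kappax_1$ is measurable, hence inaccessible, in $\Mx_1$, so $(\kappa^{++})^{\Mx_1} < \kappax_1$. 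The minimality of $V=L[\E]$ gives, as recorded in Claim~\ref{Claim:F_0isE}, that $\kappa$ is the maximal measurable of $V$, so by elementarity $j_E(\kappa)$ is the maximal measurable of $\Mx_1$; hence $\kappa < \kappax_1 \leq j_E(\kappa)$, and therefore $\kappax_1 \leq j_E(\kappa) < \jx_{1,\ell}(j_E(\kappa)) = \jx_{0,\ell}(\kappa) = j_{U^*}(\kappa)$, the strict step holding because $cp(\jx_{1,\ell}) = \kappax_1 \leq j_E(\kappa)$ is moved. And of course $\kappax_1 = cp(\jx_{1,\ell})$ is not in $\mathrm{ran}(\jx_{1,\ell})$.

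Next I would use normality. Since $U^*$ is normal, $[\mathrm{id}]_{U^*} = \kappa$ and every element of $M_{U^*} = \Ult(V[G],U^*)$ has the form $[h]_{U^*} = j_{U^*}(h)(\kappa)$ with $h \in V[G]$; as $\kappa < \kappax_1 < j_{U^*}(\kappa)$, there is $g\colon\kappa\to\kappa$ in $V[G]$ with $\kappax_1 = [g]_{U^*}$. The crucial move is to approximate $g$ from the ground model: fixing a name $\name{g}$ for $g$ and applying Lemma~\ref{Lem:IAFusionCombinatorics} with $n=1$ to a $\po$-name for $\alpha\mapsto\{\name{g}(\alpha)\}$ (plus a routine density argument to keep the witnessing condition in $G$), I would obtain $F\colon\kappa\to V$ in $V$ with $|F(\alpha)|\leq\alpha^{++}$ for every Mahlo $\alpha$, and a club $C\subseteq\kappa$ in $V[G]$, such that $g(\alpha)\in F(\alpha)$ for all $\alpha\in C\cap Reg$. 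Because $U^*$ is normal it contains $C$ and the set of $V[G]$-regular cardinals below $\kappa$, so $C\cap Reg\in U^*$, whence $\kappax_1 = [g]_{U^*} \in [F]_{U^*} = j_{U^*}(F)(\kappa) = \jx_{0,\ell}(F)(\kappa)$ (using $F\in V$ and $j_{U^*}\uhr V = \jx_{0,\ell}$ from Lemma~\ref{Lem:FM2.5}).

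Finally I would unwind $\jx_{0,\ell}(F)(\kappa)$ through $\jx_{0,\ell} = \jx_{1,\ell}\circ j_E$. Since $\kappa < cp(\jx_{1,\ell})$, one has $\jx_{0,\ell}(F)(\kappa) = \jx_{1,\ell}\big(j_E(F)(\kappa)\big)$, and $j_E(F)(\kappa)$ is a set whose $\Mx_1$-cardinality is at most $(\kappa^{++})^{\Mx_1}$: this follows by applying elementarity of $j_E$ to the bound $|F(\alpha)|\leq\alpha^{++}$ at $\alpha=\kappa$, which requires $\kappa$ to be Mahlo in $\Mx_1$ — true because $E$ is $(\kappa+2)$-strong, so $\power(\kappa)^{\Mx_1} = \power(\kappa)^V$. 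As $(\kappa^{++})^{\Mx_1} < \kappax_1 = cp(\jx_{1,\ell})$, the set $j_E(F)(\kappa)$ has size below the critical point and so is moved pointwise: $\jx_{1,\ell}\big(j_E(F)(\kappa)\big) = \{\jx_{1,\ell}(x) : x\in j_E(F)(\kappa)\}$. Hence $\kappax_1 = \jx_{1,\ell}(x)$ for some $x$, so $\kappax_1 \in \mathrm{ran}(\jx_{1,\ell})$ — contradicting $\kappax_1 = cp(\jx_{1,\ell})$. Therefore $\ell = 1$, which together with Claim~\ref{Claim:F_0isE} completes the proof of Lemma~\ref{Lem:FM3}.

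The step that carries the real weight is the ground-model approximation of $g$ supplied by Lemma~\ref{Lem:IAFusionCombinatorics}; everything else is extender bookkeeping, but it is precisely the sharpness of the bound $|F(\alpha)|\leq\alpha^{++}$ — set against the fact that $\kappax_1$, being inaccessible in $\Mx_1$, lies strictly above $(\kappa^{++})^{\Mx_1}$ — that makes $j_E(F)(\kappa)$ small enough relative to $cp(\jx_{1,\ell})$ for the contradiction to fire. The only delicate side points I anticipate are confirming $\kappa$ is Mahlo in $\Mx_1$ (from $(\kappa+2)$-strength of $E$) and the inequality $\kappax_1 < j_{U^*}(\kappa)$, both of which reduce to $\Ex_0 = E$ and the maximality of $\kappa$ among the measurables of $V$.
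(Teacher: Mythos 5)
Your proof is correct and follows essentially the same route as the paper: represent $\kappax_1 = j_{U^*}(g)(\kappa)$ by normality, approximate $g$ by a ground-model function $F$ with $|F(\alpha)|\leq\alpha^{++}$ via Lemma \ref{Lem:IAFusionCombinatorics}, and conclude that $\kappax_1$ lies in $\jx_{1,\ell}(j_E(F)(\kappa)) = \jx_{1,\ell}``\,[j_E(F)(\kappa)]$ (the set having $\Mx_1$-size below $cp(\jx_{1,\ell})$), contradicting $\kappax_1 = cp(\jx_{1,\ell})$. The only differences are cosmetic: you invoke \L{}o\'s's theorem directly in $V[G]$ (using $C\cap Reg\in U^*$) where the paper pushes the forcing statement through $j_{U^*}$ and uses $j_{U^*}(p^*)\in H^*$, and you spell out the side facts $\kappax_1<j_{U^*}(\kappa)$ and $\kappax_1>(\kappa^{++})^{\Mx_1}$ that the paper takes from the normal-iteration setup.
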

\begin{proof}(Claim \ref{Claim:ellEq1})\\
Suppose otherwise. Let 
$$j_{1,\ell} = j_{\ell-1,\ell} \circ j_{\ell-2,\ell-2}\circ \dots \circ j_{2,1}: M_1 \to M_{\ell}.$$ 
Then $j_{1,\ell}$ is not trivial and $cp(j_{1,\ell}) = \kappa_1 > \kappa_0^{++} = \kappa^{++}$.
Since $U^*$ is normal there is $f : \kappa \to \kappa$ in $V[G]= V[G_\kappa * G(\kappa)]$ such that $\kappa_1 = j_{U^*}(f)(\kappa)$. 
By Lemma \ref{Lem:IAFusionCombinatorics} there is a function $F \in V$ with $\dom(F)= \kappa$ and $|F(\alpha)| \leq \alpha^{++}$ for all inaccessibles $\alpha < \kappa$, a $\po$-name of a club $\name{C}^*$, and a condition  $p^* \in G$ such that 
$p^* \Vdash \forall \alpha \in \name{C}^*. \ \name{f}(\check{\alpha}) \in \check{F}(\check{\alpha})$. 
 Applying $j_U^*$ we see that 
 $$j_{U^*}(p^*) \Vdash \left(\check{\kappa} \in j_{U^*}(\name{C}^*)\right) \implies \left(\kappa_1 \in j_{U^*}(F)(\kappa)\right).$$ 
 Having $j_{U^*}(p^*) \in H^*$, and $\kappa \in j_{U^*}(C^*)$ as $C^* = \name{C}^*_{G}$ is a club and $\cp(j_{U^*}) = \kappa$, we conclude that
 $$\kappa_1 \in j_{U^*}(F)(\kappa) = j_{1,\ell}(j_{0,1}(F))\left( j_{1,\ell}(\kappa)\right) = j_{1,\ell}(j_{0,1}(F)(\kappa)) = j_{1,\ell}``[j_{0,1}(F)(\kappa)].
 $$
 This is absurd as $\kappa_1$ is the critical point of $j_{1,\ell}$ and cannot belong to a pointwise image of $j_{1,\ell}$.
 This conclude the proofs of Claim \ref{Claim:ellEq1} and Lemma \ref{Lem:FM3}, in turn.
\end{proof}
\end{proof}

\subsection{The Extended ``Kunen-Like" Blueprint}

\begin{definition}\label{Def:<_sing}
    Let $\kappa$ be an uncountable regular cardinal. Let $<^\kappa_{Sing}$ be the order relation on ${}^\kappa \kappa$ defined by  
     $f <^\kappa_{Sing} g$ if and only if there is a club $C \subseteq \kappa$ such that $f(\alpha) < g(\alpha)$ for every singular ordinal $\alpha \in C$. 
\end{definition}

Let us list some basic properties of $<^\kappa_{Sing}$.
\begin{lemma}\label{Lem:<*Basics}${}$
    \begin{enumerate}
        \item For every uncountable regular cardinal $\kappa$, $<^\kappa_{Sing}$ is a well-founded pre-order on ${}^\kappa \kappa$.

        \item Let  $M_1,M_2$ be transitive classes of $ZFC$  such that
        \begin{itemize}
            \item $\kappa$ is a regular uncountable cardinal in both, 
            \item $M_1,M_2$ agree on cofinalities of cardinal $\alpha < \kappa$,
            \item  $\power(\kappa)^{M_1} \subseteq \power(\kappa)^{M_2}$.
        \end{itemize}  
        Suppose that in $M_1$, there is an increasing sequence in $<^\kappa_{Sing}$ of length $\rho_1 \in On$, then    $\rho_1 \leq  \left(rank(<^\kappa_{Sing})\right)^{M_2}.$
    \end{enumerate}
\end{lemma}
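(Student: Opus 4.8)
\emph{Part (1): well-foundedness and the pre-order property.} The relation $<^\kappa_{Sing}$ is clearly transitive (intersect the two clubs) and not symmetric, so it is a pre-order; it is not a partial order because two functions agreeing on a club of singular ordinals are $<^\kappa_{Sing}$-equivalent without being equal. For well-foundedness the plan is the standard Fodor-style argument: suppose $\la f_n \mid n < \omega\ra$ were an infinite descending sequence, i.e. $f_{n+1} <^\kappa_{Sing} f_n$ witnessed by a club $C_n$. Let $C = \bigcap_n C_n$, still a club, and pick any singular $\alpha \in C$ that is a limit point of $C$ of cofinality $\omega$ — or more simply, just fix one singular $\alpha \in C$ (the set of singular ordinals in $C$ is nonempty, indeed unbounded, since $\kappa$ is regular uncountable so $C$ contains singular limit ordinals). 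Then $\la f_n(\alpha) \mid n < \omega\ra$ is an infinite strictly descending sequence of ordinals, contradiction. This shows $<^\kappa_{Sing}$ is well-founded and hence has a well-defined rank function on ${}^\kappa\kappa$.

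\emph{Part (2): absoluteness of long increasing sequences.} Work with $M_1, M_2$ as hypothesized. Suppose $\la f_\xi \mid \xi < \rho_1\ra \in M_1$ is $<^\kappa_{Sing}$-increasing. The key point is that the statement ``$f <^\kappa_{Sing} g$'' only depends on the values of $f,g$ on singular ordinals $<\kappa$, and by hypothesis $M_1$ and $M_2$ agree on which $\alpha < \kappa$ are singular, and $\power(\kappa)^{M_1} \subseteq \power(\kappa)^{M_2}$ so every club of $\kappa$ in $M_1$ is a club of $\kappa$ in $M_2$. Hence for any $f, g \in {}^\kappa\kappa \cap M_1$, if $M_1 \models f <^\kappa_{Sing} g$ as witnessed by a club $C \in M_1$, then the same $C$ witnesses $M_2 \models f <^\kappa_{Sing} g$ (note $f,g \in M_2$ too, since a function from $\kappa$ to $\kappa$ is coded by a subset of $\kappa\times\kappa\cong\kappa$, so $\power(\kappa)^{M_1}\subseteq\power(\kappa)^{M_2}$ gives ${}^\kappa\kappa\cap M_1 \subseteq {}^\kappa\kappa\cap M_2$). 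Therefore the sequence $\la f_\xi \mid \xi < \rho_1\ra$, viewed in $M_2$, is still $<^\kappa_{Sing}$-increasing. A strictly increasing sequence of length $\rho_1$ in a well-founded relation forces the rank of that relation to be at least $\rho_1$; formally, by induction on $\xi < \rho_1$ one shows $\mathrm{rank}_{<^\kappa_{Sing}}(f_\xi) \geq \xi$ in $M_2$, and since ranks of elements are always strictly below the rank of the whole relation, $\rho_1 \leq (\mathrm{rank}(<^\kappa_{Sing}))^{M_2}$ (taking $\rho_1 = \sup_{\xi<\rho_1}(\xi+1) \le \sup_{\xi<\rho_1}(\mathrm{rank}_{<^\kappa_{Sing}}(f_\xi)+1) \le (\mathrm{rank}(<^\kappa_{Sing}))^{M_2}$).

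\emph{Main obstacle.} There is no serious obstacle here — both parts are soft absoluteness/well-foundedness arguments. The one point deserving care is the inclusion ${}^\kappa\kappa \cap M_1 \subseteq {}^\kappa\kappa \cap M_2$ and, relatedly, that ``$f <^\kappa_{Sing} g$'' is genuinely upward absolute between the two models: this needs exactly the three listed hypotheses (agreement on cofinalities below $\kappa$ so the notion of ``singular $\alpha < \kappa$'' is the same; $\power(\kappa)^{M_1}\subseteq\power(\kappa)^{M_2}$ so clubs and functions transfer), and one should double-check that no \emph{downward} transfer is needed — indeed only the direction ``$M_1$-increasing $\Rightarrow$ $M_2$-increasing'' is used, which is why a mere inclusion of powersets (rather than equality) suffices.
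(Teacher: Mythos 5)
Your proof is correct and takes essentially the same route as the paper: part (1) is exactly the $\sigma$-completeness of the club filter (intersect the witnessing clubs and evaluate at a singular point of the intersection), and part (2) is the same upward transfer of $f <^\kappa_{Sing} g$ from $M_1$ to $M_2$ using that $M_1$-clubs lie in $M_2$ and that the two models compute the same singular ordinals below $\kappa$. The only cosmetic difference is that you run the rank induction directly along the sequence inside $M_2$ (showing $\mathrm{rank}(f_\xi)\geq\xi$), whereas the paper compares the $M_1$-rank and $M_2$-rank of each function; the two computations are equivalent.
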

\begin{proof}${}$
    \begin{enumerate}
        \item This is an immediate consequence of the fact that the club filter on a regular uncountable cardinal $\kappa$ is $\sigma$-complete.

        \item 
        $\power(\kappa)^{M_1} \subseteq \power(\kappa)^{M_2}$ implies that every club $C \subseteq \kappa$ in $M_1$ belongs to $M_2$. Since $M_1,M_2$ agree on cofinalities of cardinal $\alpha < \kappa$, $(C \cap Sing)^{M_1} = (C \cap Sing)^{M_2}$. It is therefore clear that for every $f,g \in {}^\kappa \kappa \cap M_1$, $M_1 \models f <^\kappa_{Sing} g$ implies $M_2 \models f <^\kappa_{Sing} g$ as well. 
It follows by a straightforward induction on ordinals $\rho$, that for every $f \in {}^\kappa \kappa \cap M_1$ with $\rho = rank_{<^\kappa_{Sing}}(f)^{M_1}$ that $rank_{<^\kappa_{Sing}}(f)^{M_2} \geq \rho$.
As the assumption clearly implies $rank^{M_1}(<^\kappa_{Sing}) \geq \rho_1$, we conclude that  $rank^{M_2}(<^\kappa_{Sing}) \geq \rho_1$. 
    \end{enumerate}
\end{proof}

\begin{definition}[``Kunen-like" blueprint]\label{Def:KLblueprint}${}$\\
    Let $\kappa$ be a measurable cardinal and  $\po = \la \po_\alpha,\name{\qo}_\alpha\mid \alpha \leq \kappa\ra$ an iteration of length $\kappa+1$. The ``Kunen-like" blueprint includes the assumptions of the Friedman-Magidor blueprint \ref{Def:FM-blueprint}, with the following two additional assumptions:
    \begin{enumerate}[label = KL\arabic*.]
            \item For every Mahlo cardinal $\alpha \leq \kappa$, $\name{\qo}_\alpha$ adds an increasing sequence in $<^\alpha_{Sing}$ of length $\alpha^{++}$,
            
            \item $\name{\qo}_\alpha$ is self coding, by sequences $\vec{q}^\alpha$ and $\vec{S}^\alpha$, where $\vec{S}^\alpha = \la S^\alpha_\tau \mid \tau < \alpha^{++}\ra$ consists of almost disjoint stationary subsets of $\alpha^+ \cap Cof(\rho_\alpha)$, for some regular cardinal $\rho_\alpha < \alpha$.
    \end{enumerate}
\end{definition}

\begin{theorem}\label{Thm:KLblueprint}
Suppose that $V = L[\E]$ is a  fine-structural extender model which is minimal for the existence of a measurable cardinal $\kappa$ carrying a $(\kappa,\kappa^{++})$-extender $E$.
 Let $\po = \la \po_\alpha,\name{\qo}_\alpha \mid \alpha \leq \kappa\ra$ be a iterated forcing poset which satisfies the ``Kunen-like" blueprint. Then in a generic extension $V[G]$ by $G \subseteq \po$, $2^\kappa = \kappa^{++}$, $\kappa$ is the only measurable cardinal,  there is a unique normal measure $U$ on $\kappa$, and every $\sigma$-complete ultrafilter $U^*$ is Rudin-Kiesler isomorphic to a finite power $U^\ell$ of $U$.
\end{theorem}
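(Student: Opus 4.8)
The plan is to build on Theorem~\ref{Thm:FMblueprint}, which already gives that in $V[G]$ the cardinal $\kappa$ is the unique measurable cardinal, there is a unique normal measure $U$ on $\kappa$, and $j_U\uhr V = j_E$. The two additional ingredients provided by the ``Kunen-like'' blueprint --- namely KL1 (each $\name{\qo}_\alpha$ adds an increasing $<^\alpha_{Sing}$-sequence of length $\alpha^{++}$) and KL2 (the self-coding stationary sets $\vec{S}^\alpha$ live on $\alpha^+\cap Cof(\rho_\alpha)$) --- are there precisely to rule out iterated ultrapowers of length $\ell>1$ when the measure is not assumed normal, and to nail down $2^\kappa=\kappa^{++}$. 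So the proof splits into three tasks: (i) $2^\kappa=\kappa^{++}$ in $V[G]$; (ii) every $\sigma$-complete ultrafilter $U^*$ is Rudin-Kiesler isomorphic to some $U^\ell$.

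For (i), first note $2^\kappa\geq\kappa^{++}$ because $\name{\qo}_\kappa$ adds $\kappa^{++}$ new subsets of $\kappa$ (FM2(a)/(b)). For the upper bound $2^\kappa\leq\kappa^{++}$ I would run the argument of Lemma~\ref{Lem: PowersetOfKappaIsPreserved} one level up: $\po = \po_\kappa*\qo_\kappa$, where $\po_\kappa$ has size $\kappa$ and forces GCH below $\kappa$ to persist (using $\kappa$ Iteration-Fusion together with $\alpha$-distributivity of the tail quotients, exactly as in Lemmas~\ref{Lem: PreservationOfStationarySetsForIterationFusionProperty}--\ref{Lem: PowersetOfKappaIsPreserved}), so $(2^\kappa)^{V[G_\kappa]}=\kappa^+$; then $\qo_\kappa$ has size $\kappa^{++}$ and the $\kappa^{++}$.c.c.\ (FM2(b)), and it is $\kappa$-distributive, so a nice-name count gives $(2^\kappa)^{V[G]}\leq(\kappa^{++})^{\kappa}=\kappa^{++}$ using $(2^\kappa)^{V[G_\kappa]}=\kappa^+$ and the chain condition. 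Hence $2^\kappa=\kappa^{++}$.

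For (ii), let $U^*$ be a $\sigma$-complete ultrafilter on some $\mu$ in $V[G]$; $U^*$ is then $\kappa$-complete on a measurable cardinal, and since $\kappa$ is the unique measurable, its underlying measurable (the critical point of $j_{U^*}$) is $\kappa$. Apply Lemma~\ref{Lem:FM2.5} to get the finite normal iteration $\la \Mx_i,\jx_{i,j}\ra_{i\le j\le\ell}$ of $V=\Mx_0$ with increasing critical points $\kappa=\kappax_0<\kappax_1<\dots<\kappax_{\ell-1}$, $j_{U^*}\uhr V=\jx_{0,\ell}$, $j_{U^*}``V=\Mx_\ell$, and $M_{U^*}=\Mx_\ell[H^*]$ with $H^*=j_{U^*}(G)$. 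By Claim~\ref{Claim:F_0isE}, $\Ex_0=E$, so $\jx_{0,1}=j_E$ and $\Mx_1=M_E$. The crucial new step --- and I expect this to be the main obstacle --- is to show that for each $i<\ell$ the extender $\Ex_i$ is in fact \emph{full}, i.e.\ $ht(\Ex_i)=((\kappax_i)^{++})^{\Mx_i}$, and that the measurable cardinals $\kappax_i$ are exactly the ``next'' measurable of $\Mx_i$ above $\kappax_{i-1}$. The idea is the same one used in Claims~\ref{Claim:F_0isE} and~\ref{Claim:ellEq1}, but now one must also use KL1 and KL2: the $<^\alpha_{Sing}$-sequences of length $\alpha^{++}$ added at each Mahlo $\alpha$ give, via Lemma~\ref{Lem:<*Basics}, a $\power(\cdot)$-absolute lower bound of $\kappax_i^{++}$ on $rank(<^{\kappax_i}_{Sing})$ in $\Mx_i[H^*]$, while $H^*$ is a generic extension of $\Mx_i$ (through the tail of the iteration applied to $\po$) by a poset that preserves $\kappax_i^{++}$ and the relevant cofinality structure --- so $ht(\Ex_i)$ cannot be strictly below $(\kappax_i^{++})^{\Mx_i}$ without contradicting $(2^{\kappax_i})^{V[G]}=\kappax_i^{++}$ (argue as in Claim~\ref{Claim:F_0isE}, using that $\power(\kappax_i)^{M_{U^*}}=\power(\kappax_i)^{V[G]}$), exactly as for $i=0$.

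Once each $\Ex_i$ is the full $(\kappax_i,(\kappax_i)^{++})$-extender of $\Mx_i$, the iteration $\la\Mx_i,\jx_{i,j}\ra$ is determined, and on the side of $V[G]$ it should correspond to a finite iteration of the generic structure by the unique normal measure: by the uniform definability (FM4) and the self-coding (FM2(d), KL2), the generic extension filters $j_{U^*}(G)$ at each stage are forced to be the canonical ones, so $\Mx_i[H^*_i]$ together with the restriction of $j_{U^*}$ is (isomorphic to) the $i$-fold normal iterate of $V[G]$ by $U$ --- this is where one invokes the uniqueness-of-generic argument from the proof of Lemma~\ref{Lem:FM1} applied at each $\kappax_i$, exactly as Lemma~\ref{Lem:FM3} did at $\kappa$. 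Hence $M_{U^*}=M_{U^\ell}$ and $j_{U^*}=j_{U^\ell}$, so by Remark~\ref{Rmk:RKisom}, $U^*\cong_{RK}U^\ell$. Finally $\ell$ is finite by Lemma~\ref{Lem:FM2.5} (the closure of $M_{U^*}$ under $\omega$-sequences together with the $\omega_1$-distributivity of the iterate of $\po$ forces the normal iteration to be finite). This completes the proof.
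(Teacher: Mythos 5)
Your overall skeleton (reduce to Lemma \ref{Lem:FM2.5}, show each $\Ex_i$ is the full $(\kappax_i,(\kappax_i)^{++})$-extender using KL1 and Lemma \ref{Lem:<*Basics}, identify the generics via self-coding, then conclude $M_{U^*}=M_{U^\ell}$ and apply Remark \ref{Rmk:RKisom}) matches the paper's strategy, and your treatment of $2^\kappa=\kappa^{++}$ is fine. But there is a genuine gap at the step you yourself flag as the main obstacle. For $i\geq 1$ you propose to rule out $ht(\Ex_i)<((\kappax_i)^{++})^{\Mx_i}$ ``exactly as for $i=0$, using that $\power(\kappax_i)^{M_{U^*}}=\power(\kappax_i)^{V[G]}$''; this equality is simply false above $\kappa$: $\kappax_i$ is an ordinal of $V[G]$-cardinality at most $\kappa^{++}$ (it need not even be a cardinal of $V[G]$), $M_{U^*}$ does not absorb $\power(\kappax_i)^{V[G]}$, and the relation $<^{\kappax_i}_{Sing}$ of $V[G]$ is not even meaningful there. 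So the Claim \ref{Claim:F_0isE} argument does not transfer, and the appeal to $(2^{\kappax_i})^{V[G]}=\kappax_i^{++}$ has no content for $i\geq 1$.

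What the paper does instead — and what your proposal is missing — is an induction on the ground iteration length $\ell(U^*)$ (Proposition \ref{Prop:U*ISUell}), carried by the derived measures $U^*_i$ concentrating on $\kappa^i$ (Definition \ref{Def:U*_i}) and the factor maps $k_{U^*_i,U^*}$. The rank comparison for $<^{\kappax_i}_{Sing}$ is run between $M_{U^*_i}$ and $M_{U^*}$, not between $V[G]$ and $M_{U^*}$: one first proves, as part of a simultaneous induction (Lemma \ref{Lem:KLMainList}), that $U^*_i=U^i$ (using the main induction hypothesis, since $\ell(U^*_i)=i<\ell$), that $H^*\uhr\po_{\kappax_i+1}=G^{(i)}$, and that $\cp(k_{U^*_i,U^*})\geq\kappax_i$; only then does $M_{U^*_i}=M_{U^i}$ supply a $<^{\kappax_i}_{Sing}$-increasing sequence of length $((\kappax_i)^{++})^{\Mx_i}$ whose rank transfers into $M_{U^*}$ via $\power(\kappax_i)^{M_{U^*_i}}\subseteq\power(\kappax_i)^{M_{U^*}}$, contradicting $\Ex_i\neq E_i$. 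Establishing $\cp(k_{U^*_i,U^*})\geq\kappax_i$ and the agreement of generics at stage $\kappax_i$ is itself nontrivial: it uses the uniformly definable generic sequences $\vec f^{\,\alpha}$ (FM4), the covering Lemma \ref{Lem:IAFusionCombinatorics}, and the KL2 requirement that the coding sets lie in $\kappa^+\cap Cof(\rho_\kappa)$ with $\rho_\kappa<\kappa$, which is exactly what lets stationarity survive the discontinuity of $\jx_{0,i}$ at $\kappa^+$ (the image $\jx_{0,i}``\kappa^+$ is unbounded and closed under short limits), plus the even/odd pairing of the coding sets for the reverse implication. Your sketch gestures at KL2 and at ``uniqueness of generics at each $\kappax_i$'', but without the $U^*_i$/$k_{U^*_i,U^*}$ induction there is no model available in which to run either the rank argument or the generic-identification, so as written the argument for $i\geq 1$ does not go through.
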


Before turning to the proof of Theorem \ref{Thm:KLblueprint} let us describe the elementary embedding and generic filter associated to $U^{\ell}$ for $1 \leq \ell < \omega$.

\begin{remark}\label{Rmk:DsecribeU^ell}
    We give a precise construction of finite powers $U^\ell$, $\ell < \omega$ of the normal measure $U \in V[G]$ on $\kappa$ from the previous section. For this, we define by induction on $n$, ultrapower inner models $\NU_n$, elementary embeddings $\iU_{0,n} : \NU_0 \to \NU_n$.\\
    Set $\NU_0 = V[G]$ and 
    $$\iU_{0,1} = j^{\NU_0}_{U} : \NU_0 \to \NU_1 \cong \Ult(\NU_0,U).$$ 
    Denote the critical point $\kappa$ by $\kappaU_0$.
    For $n \geq 1$, given $\iU_{0,n}: \NU_0 \to \NU_n$ define 
    $$\iU_{n,n+1} = j^{\MU_n}_{\iU_{0,n}(U)} : \NU_n \to \NU_{n+1} \cong \Ult(\NU_n,\iU_{0,n}(U))$$ to be the ultrapower embedding, and set $\kappaU_n := \iU_{0,n}(\kappa)$ and 
    $\iU_{0,n+1} = j^{\NU_n}_{\iU_{0,n}(U)} \circ \iU_{0,n}$.\\
    
    \noindent 
    For $\ell \geq 1$ we can describe $U^\ell$ by
    $$U^\ell = \{ X \subseteq \kappa^{\ell} \mid \left(\thinspace \kappaU_0,\dots, \kappaU_{\ell-1}\thinspace \right) \in \iU_{0,\ell}(X)\}.$$
    Clearly, the ultrapower of $V[G]$ by $U^{\ell}$ is $\MU_{\ell}$, and the ultrapower embedding is $\iU_{0,\ell}$.\\
    
    \noindent
    For $n< \omega$, the construction of the map $\iU_{n,n+1} : \NU_n \to \NU_{n+1}$ as an (internal) ultrapower map of $\NU_n$ by $\iU_{0,n}(U)$. 
    This means that the description of this ultrapower as an extension a ground embedding in a generic extension by $\iU_{0,n}(\po)$.
    More specifically, we have
    \begin{itemize}
        \item $\kappaU_n$ is the only measurable cardinal in $\NU_n$, and its unique normal measure is $\iU_{0,n}(U)$.
        
        \item $\NU_n = \MU_n[G^{(n)}]$ is the generic extension of $\MU_n := L[\iU_{0,n}(\E)]$, is the ultrapower  by  $G^{(n)} = \iU_{0,n}(G) \subseteq \iU_{0,n}(\po)$.

        \item The restriction of the ultrapower map $j^{\NU_n}_{\iU_{0,n}(U)}$ to the ground $\MU_n$ is the ultrapower embedding of $\MU_n$ by its unique $(\kappaU_n,\kappaU_n^{++})$-extender $E_n := \iU_{0,n}(E)$.\\
        
        \item For each finite $\ell \geq 1$, the restriction of $\iU_{0,\ell} = j^{V[G]}_{U^\ell}$ to $V$ is the $\ell$-th iterated ultrapower of $V$ by $E$ (and its images). 
        Namely, given by the  iteration sequence $\la \MU_n, \jU_{n,m} \mid n \leq  m \leq \ell\ra$, where
        $$\MU_0 = V, \text{ and } \jU_{n,n+1} = j^{\MU_n}_{\jU_{0,n}(E)}: \MU_n \to \MU_{n+1} \cong \Ult(\MU_n,\jU_{0,n}(E)) \text{ for every } n < \ell.$$
        In particular, for each $n < \omega$, $E_n = \iU_{0,n}(E) = \jU_{0,n}(E)$.
        
        \item $G^{(0)} = G$ and for each $n < \omega$, 
        $G^{(n+1)} \subseteq \iU_{0,n+1}(\po)$ is generated by the set $G^{(n)} \cup j^{\MU_n}_{E_n}``[G^{(n)}]$. 
        Moreover, $\jU_{0,n+1}(\po_\kappa) = \po^{\MU_{n+1}}_{\kappaU_{n+1}} $ and 
        $G^{(n+1)}\uhr \jU_{0,n+1}(\po_\kappa)$ 
        is generated from 
        $G^{(n)} \cup j^{\MU_n}_{E_n}``[G^{(n)}\uhr \po^{\MU_n}_{\kappa_n}]$ (see the first part of the proof of Lemma \ref{Lem:FM1}), and 
        $G^{(n+1)} \uhr \qo^{\MU_{n+1}}_{\kappaU_{n+1}}$ is generated from $j^1_{E_n}``[G^{(n)}\uhr \qo^{\MU_n}_{\kappaU_n}]$, where $j^1_{E_n}$ is the extension of $j^{\MU_n}_{E_n}$ to $\MU_n[G^{(n+1)}\uhr \po^{\MU_{n+1}}_{\kappaU_{n+1}}]$ (Second part of the proof of Lemma \ref{Lem:FM1}).\\
        
        In particular we see that for each $n < \ell$
        \[
        G^{(n)} = G^{(\ell)}\uhr \po^{\MU_{\ell}}_{\kappa_n+1} \text{ and }
        \]
        \[
        \iU_{0,n}``[G\uhr \qo_\kappa] \text{ generates the generic } G^{(n)}\uhr\qo^{\MU_n}_{\kappaU_n} \text{ over } \MU_n[G^{(n)}\uhr \po^{\MU_n}_{\kappaU_n}].
        \]
    \end{itemize}
\end{remark}

\subsection{Proof of Theorem \ref{Thm:KLblueprint}}
In this subsection we prove Theorem  \ref{Thm:KLblueprint}.
Let $U^*$ be a $\sigma$-complete ultrafilter in $V[G]$. Since by Theorem \ref{Thm:FMblueprint}, $\kappa$ is the only measurable cardinal in $V[G]$, then $U^*$ is $\kappa$-complete. We first appeal to the analysis and notations from the proof of Lemma \ref{Lem:FM2}, whose first part (up to and including Claim \ref{Claim:F_0isE}) does not assume normality for $U^*$.
Let $j_{U^*} : V[G] \to M_{U^*}$ be the ultrapower embedding. Summarizing the relevant findings in Lemma \ref{Lem:FM2} we know $U^*$ and $j_{U^*}$ have the following properties:
\begin{itemize}
\item $j_{U^*}\uhr V = \jx_{0,\ell} : V \to N = \Mx_\ell$ is a an iterated ultrapower of $V$ of some finite length $\omega > \ell \geq 1$ by extenders $\Ex_i \in M_i$ whose critical points are $\kappax_i = cp(\Ex_i)$, for  $i < \ell$.

\item $\Mx_0 = V$ and $\Ex_0 = E$ by Claim \ref{Claim:F_0isE}. In particular $\kappa_0 = \kappa$.

\item for each $i < \ell$, 
$\jx_{i,i+1} : \Mx_i \to \Mx_{i+1}\cong \Ult(M_i,\Ex_i)$ by an extender $\Ex_i \in \Mx_i$ whose critical point is $\kappax_i = cp(\Ex_i) > ((\kappax_{i-1})^{++})^{\Mx_i}$.

\item $M_{U^*} = \Mx_{\ell}[H^*]$ where $H^* \subseteq \jx_{0,\ell}(\po)$ is generic over $\Mx_{\ell}$.
\end{itemize}

\begin{definition}
We refer to $\ell$ as the \emph{ground iteration length of} $U^*$ and further denote it by $\ell(U^*)$.    
\end{definition}

$\ell(U^*)$ clearly depends on $U^*$. For example if $U^* = U^{k}$ is a finite power of the normal measure $U$ then $\ell(U^*)  = k$. 

We establish the statement of Theorem \ref{Thm:KLblueprint} by showing that $\ell(U^*)$ identifies $U^*$.

\begin{proposition}\label{Prop:U*ISUell}
    For every $\kappa$-complete ultrafilter $U^*$ in $V[G]$, $U^* \cong_{RK} U^{\ell}$ where $\ell = \ell(U^*)$ is the ground iteration length of $U^*$.
\end{proposition}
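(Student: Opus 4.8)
The plan is to reduce the proposition, via Remark~\ref{Rmk:RKisom}, to two facts: \textbf{(a)} the ground iteration $j_{U^*}\uhr V = \jx_{0,\ell}\colon V\to\Mx_\ell$ is \emph{literally} the $\ell$-step iteration $\jU_{0,\ell}\colon V\to\MU_\ell$ of $V$ by $E$ and its images described in Remark~\ref{Rmk:DsecribeU^ell} --- equivalently $\Ex_m = \jx_{0,m}(E)$ for all $m<\ell$, so that $\Mx_m = \MU_m$, $\jx_{0,m}=\jU_{0,m}$ and $\kappax_m = \jU_{0,m}(\kappa)=\kappaU_m$; and \textbf{(b)} $H^* = j_{U^*}(G) = G^{(\ell)}$, the canonical $\MU_\ell$-generic filter on $\jU_{0,\ell}(\po)$ of Remark~\ref{Rmk:DsecribeU^ell}. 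Granting these, $M_{U^*}=\Mx_\ell[H^*]=\MU_\ell[G^{(\ell)}]=\NU_\ell$, and $j_{U^*}$ and $\iU_{0,\ell}$ --- both being the lift of $\jU_{0,\ell}$ along the generic $G^{(\ell)}$, hence equal by the uniqueness in Silver's argument --- coincide; since $\iU_{0,\ell}$ and $\NU_\ell$ are the ultrapower embedding and ultrapower of $V[G]$ by $U^\ell$, Remark~\ref{Rmk:RKisom} gives $U^*\cong_{RK}U^\ell$.

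For (a) I would induct on $m<\ell$. The base case $\Ex_0=E$ is Claim~\ref{Claim:F_0isE}. For the step, assume $\Mx_m=\MU_m=L[\jU_{0,m}(\E)]$ and $\jx_{0,m}=\jU_{0,m}$. Since $\MU_m$ is a linear iterate of the minimal extender model $V$ it has a unique measurable cardinal, $\jU_{0,m}(\kappa)$, and $\Ex_m\in\Mx_m$ having critical point $\kappax_m$ (which is measurable in $\Mx_m$) forces $\kappax_m=\jU_{0,m}(\kappa)$. So it suffices to show $ht(\Ex_m)=((\kappax_m)^{++})^{\MU_m}$, i.e.\ that $\Ex_m$ is the top extender of $\MU_m$ on $\kappax_m$. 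Suppose not. Applying $\jU_{0,m}$ to the cardinality count in Claim~\ref{Claim:F_0isE} gives $\jx_{m,m+1}(\kappax_m)<((\kappax_m)^{++})^{\MU_m}$, and since $\kappax_{m+1}>((\kappax_m)^{++})^{\Mx_{m+1}}$ the remaining iteration fixes $((\kappax_m)^{++})^{\Mx_{m+1}}$, so
\[
((\kappax_m)^{++})^{\Mx_\ell}=((\kappax_m)^{++})^{\Mx_{m+1}}<\jx_{m,m+1}(\kappax_m)<((\kappax_m)^{++})^{\MU_m}.
\]
Now $\kappax_m$ is Mahlo in $\Mx_\ell$ and a nontrivial stage of $\jx_{0,\ell}(\po)$, so by the blueprint's $\kappax_m$-distributivity and $(\kappax_m)^{++}$-c.c., the distributivity of the tail, and the self-coding and uniform-definability clauses (which show the stage-$\kappax_m$ poset of $\jx_{0,\ell}(\po)$ and its generic are already determined in $\Mx_{m+1}$), $M_{U^*}$ satisfies $2^{\kappax_m}=((\kappax_m)^{++})^{\Mx_\ell}$, whence $rank(<^{\kappax_m}_{Sing})^{M_{U^*}}$ lies below $((\kappax_m)^{++})^{\MU_m}$. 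But clause KL1, transported through $\jU_{0,m}$, provides in a suitable generic extension of $\MU_m$ an increasing $<^{\kappax_m}_{Sing}$-sequence of length $((\kappax_m)^{++})^{\MU_m}$; using Lemma~\ref{Lem:IAFusionCombinatorics} inside $\MU_m$ to control the relevant lengths and Lemma~\ref{Lem:<*Basics}(2) to transfer this sequence into $M_{U^*}$ (the two models agree on cofinalities below $\kappax_m$, and the former's subsets of $\kappax_m$ lie in the latter), we obtain $rank(<^{\kappax_m}_{Sing})^{M_{U^*}}\geq((\kappax_m)^{++})^{\MU_m}$ --- a contradiction, completing the induction.

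For (b) I would iterate the uniqueness argument from the last two paragraphs of the proof of Lemma~\ref{Lem:FM1} along $\jU_{0,\ell}=\jU_{\ell-1,\ell}\circ\cdots\circ\jU_{0,1}$: both $H^*$ and $G^{(\ell)}$ are $\MU_\ell$-generic for $\jU_{0,\ell}(\po)$ and contain $\jU_{0,\ell}``G$ (for $H^*$ because $\jU_{0,\ell}``G=\jx_{0,\ell}``G=j_{U^*}``G\subseteq j_{U^*}(G)=H^*$; for $G^{(\ell)}$ by construction), and, stepping up one ultrapower at a time, the self-coding clause KL2 together with the uniform definability of $\po$ pins the stage-$\kappaU_m$ part of the generic to be generated by the image of the stage-$\kappaU_{m-1}$ part --- exactly as in Lemma~\ref{Lem:FM1}, on pain of $M_{U^*}$ and a strictly smaller inner model disagreeing about the non-stationarity of one of the coding sets. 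Hence $H^*=G^{(\ell)}$.

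The main obstacle is the inductive step of (a) for $m\geq1$. At $m=0$ one has $\kappax_0=\kappa=cp(j_{U^*})$, hence $\power(\kappa)^{M_{U^*}}=\power(\kappa)^{V[G]}$, which is exactly what powers Claim~\ref{Claim:F_0isE}; for $m\geq1$ the critical point $\kappax_m$ is strictly above $cp(j_{U^*})$, this identification fails, and a direct count of subsets of $\kappax_m$ inside $M_{U^*}$ produces no contradiction. This is precisely the situation clause KL1 and the order $<^\kappa_{Sing}$ were introduced into the blueprint to handle: the $<^{\kappax_m}_{Sing}$-rank depends only on clubs of $\kappax_m$ and cofinalities below $\kappax_m$, hence is robust enough to be carried between $\MU_m$-extensions and $M_{U^*}$ by Lemma~\ref{Lem:<*Basics}(2). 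A secondary complication is that, $U^*$ being possibly non-normal, $\kappax_m$ is not of the form $j_{U^*}(f)(\kappa)$ with $f\colon\kappa\to\kappa$, so the clean argument of Claim~\ref{Claim:ellEq1} does not transfer verbatim and must be recast as the rank comparison above.
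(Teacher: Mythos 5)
There is a genuine gap, and it sits exactly at the sentence in your inductive step for (a) asserting that ``the former's subsets of $\kappax_m$ lie in the latter''. To invoke Lemma \ref{Lem:<*Basics}(2) you need a model $M_1$ that (i) contains a $<^{\kappax_m}_{Sing}$-increasing sequence of length $((\kappax_m)^{++})^{\MU_m}$ and (ii) satisfies $\power(\kappax_m)^{M_1}\subseteq\power(\kappax_m)^{M_{U^*}}$. The natural candidate for $M_1$ is $\NU_m=\MU_m[G^{(m)}]$, where KL1 at stage $\kappax_m$ indeed supplies the long sequence; but nothing in your setup shows that the subsets of $\kappax_m$ of that generic extension belong to $M_{U^*}=\Mx_\ell[H^*]$. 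For $m\geq 1$ the critical point of $j_{U^*}$ is $\kappa<\kappax_m$, so there is no automatic agreement of power sets at $\kappax_m$, and the unknown generic $H^*$ need not (at this point of the argument) contain the stage-$\kappax_m$ data of $G^{(m)}$. The paper closes exactly this hole with the derived ultrafilters $U^*_i$ of Definition \ref{Def:U*_i} and the factor embeddings $k_{U^*_i,U^*}:M_{U^*_i}\to M_{U^*}$: in the same induction one proves $U^*_m=U^m$ (Claim \ref{Claim:U*_i=U^i}, which uses the \emph{outer} induction on $\ell(U^*)$, applied to $U^*_m$ whose ground iteration length is $m<\ell$) and $cp(k_{U^*_m,U^*})\geq\kappax_m$; then $M_{U^*_m}=\NU_m$ carries the generic $<^{\kappax_m}_{Sing}$-sequence and $\power(\kappax_m)^{\NU_m}\subseteq\power(\kappax_m)^{M_{U^*}}$ follows from the critical-point bound. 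Your proposal contains neither the derived measures, nor the factor maps, nor any induction on $\ell(U^*)$, so the rank comparison cannot be completed as stated.

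The same omission undercuts (b). Knowing only $\jx_{0,\ell}``G\subseteq H^*$ pins down $H^*$ through stage $\kappa$, as in Lemma \ref{Lem:FM1}, but to run the self-coding argument at stage $\kappax_m$ one must first know that $H^*$ agrees with $G^{(m)}$ below $\kappax_m$; in the paper this comes from $k_{U^*_m,U^*}``G^{(m)}\subseteq H^*$, using $U^*_m=U^m$ so that $j_{U^*_m}(G)=G^{(m)}$ and $k_{U^*_m,U^*}(G^{(m)})=H^*$ — again unavailable in your outline, since $\jx_{0,\ell}``G$ alone does not generate (nor contain) the image generics $j^{\MU_{m-1}}_{E_{m-1}}``G^{(m-1)}$ that build $G^{(m)}$. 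So your outline does isolate the right mechanisms (the $<_{Sing}$-rank comparison for $\Ex_m=E_m$, and self-coding for uniqueness of the generic), but the missing ingredient — the intermediate ultrapowers $M_{U^*_m}$ of $V[G]$ together with clauses (3) and (4) of Lemma \ref{Lem:KLMainList}, established by induction on $\ell(U^*)$ — is not a technicality: it is the transfer device that both (a) and (b) need.
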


We prove Proposition 
\ref{Prop:U*ISUell} by \textbf{induction on $\ell = \ell(U^*)$.}

The proof will be based on an analysis of $U^*$ and its ultrapower, and a comparison  with the description of $U^{\ell}$ and its ultrapower parameters, as given in Remark \ref{Rmk:DsecribeU^ell}.
The analysis of $U^*$ and its ultrapower will make extensive use of the following measures $U^*_i$
$1 \leq i \leq \ell$, derived from $j_{U^*}$. 

\begin{definition}\label{Def:U*_i}
    For $1 \leq i \leq \ell$, let $U^*_i$ be a $\kappa$-complete ultrafilter on $\kappa^{i}$ defined by 
    \[ U^*_i = \{A \subseteq \kappa^{i} \mid (\kappax_0,\dots,\kappax_{i-1}) \in j_{U^*}(A)\}\]

We denote the ultrapower of $V[G]$ with $U^*_i$ by $j_{U^*_i} : V[G] \to M_{U^*_i}$, and the induced embedding of $M_{U^*_i}$ in $M_{U^*}$ by 
$k_{U^*_i,U^*} : M_{U^*_i} \to M_{U^*}$, which is given by 
$$k_{U^*_i,U^*}([f]_{U^*_i}) = j_{U^*}(f)(\kappax_0,\dots\kappax_{i-1}).$$
    
\end{definition}

Therefore $U^*_i \leq_{RK} U^*$. For example, $U^*_1$ is the normal projection of $U^*$, which by Theorem \ref{Thm:FMblueprint} must be the unique normal measure $U$ on $\kappa$ in $V[G]$. \\

We proceed to proof the inductive step of the statement of Proposition \ref{Prop:U*ISUell}. 
Fix a $\kappa$-complete ultrafilter $U^*$ in $V[G]$ with $\ell(U^*) = \ell$, and suppose that the statement holds for all ultrafilters $\bar{U}$ with $\ell(\bar{U}) < \ell$.

Let $\Ex_i, \jx_{i,i+1} : \Mx_i \to \Mx_{i+1} \cong \Ult(\Mx_i,\Ex_i)$, be the normal iteration sequence that generate $j_{U^*}\uhr V : V \to \Mx_{\ell}$. 

The following lemma gathers the main properties of $j_{U^*}$, given in terms of  $\Ex_i,\Mx_i$, $U^*_i$ and $k_{U^*_i,U^*}$. These will be key in proving that $U^* \cong_{RK} U^\ell$.
    \begin{lemma}\label{Lem:KLMainList}
    The following holds for each $0 \leq i < \ell$
    \begin{enumerate}
        \item $\Ex_i = E_i$,\\
        
        \noindent
        Notice that having $E^*_j = E_j$ for all $j \leq i$ this implies that $ \Mx_{i+1} = \MU_{i+1}$, $\jx_{0,i+1} = \jU_{0,i+1}$, and $\kappax_j = \kappaU_j$ for $0 \leq j \leq i+1$. 
        Also, $\po^{\Mx_{\ell}}_{\kappax_{i+1} +1} = \po^{\MU}_{\kappax_{i+1}+1} = \jU_{0,i}(\po)$.
        
        \item $H^*\uhr \po^{\Mx_\ell}_{\kappax_{i}+1} = G^{(i)}$
        
        \item  $U^*_{i+1}= U^{i+1}$ for $i < \ell-1$ 

        \item $cp(k_{U^*_{i+1},U^*}) \geq \kappax_{i+1}$ for $i < \ell-1$.
    \end{enumerate}
    \end{lemma}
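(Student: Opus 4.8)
The plan is to prove Lemma~\ref{Lem:KLMainList} by a simultaneous induction on $i<\ell$, establishing clauses (1)--(4) for each $i$ before moving to $i+1$, and feeding in the outer induction hypothesis of Proposition~\ref{Prop:U*ISUell} (that the claim holds for ultrafilters of smaller ground iteration length) where needed. The base case $i=0$ of clause (1) is exactly Claim~\ref{Claim:F_0isE}, which gives $\Ex_0=E=E_0$ and hence $\kappax_0=\kappaU_0=\kappa$; clause (2) for $i=0$ is the triviality $H^*\uhr\po_{\kappa+1}=G^{(0)}=G$, which follows as in the proof of Lemma~\ref{Lem:FM1} since $j_{U^*}(p)\uhr\kappa+1=p$ for all $p\in\po=\po_{\kappa+1}$ and the self-coding of $\qo_\kappa$ forces $H^*\uhr\qo_\kappa=G(\kappa)$ (the ``disagreement on stationarity of $S^\kappa_{2\tau},S^\kappa_{2\tau+1}$'' argument). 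With the base case in hand, the inductive step will proceed clause by clause.

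For the inductive step, assume clauses (1)--(4) hold for all $j\le i-1$. First I would derive clause (1) for $i$, i.e.\ $\Ex_i=E_i$: by the induction hypothesis on clause (1) we already know $\Mx_i=\MU_i$, $\jx_{0,i}=\jU_{0,i}$, and $\kappax_i=\kappaU_i$, so $\Ex_i$ is an extender in $\MU_i$ with critical point $\kappaU_i$; since $\MU_i$ (being $\jU_{0,i}(V)$, a model of the FM/KL blueprint over $\kappaU_i$) has $\kappaU_i$ as its maximal measurable and the unique $(\kappaU_i,\kappaU_i^{++})$-extender $E_i$, the same cardinal-arithmetic argument used in Claim~\ref{Claim:F_0isE} (applied inside $\MU_i$ to the tail iteration $\jx_{i,\ell}$, using that $2^{\kappaU_i}=\kappaU_i^{++}$ holds in $M_{U^*}$ and hence $ht(\Ex_i)$ cannot be strictly below $\kappaU_i^{++}$) forces $\Ex_i=E_i$; this immediately yields $\Mx_{i+1}=\MU_{i+1}$, $\jx_{0,i+1}=\jU_{0,i+1}$, $\kappax_{i+1}=\kappaU_{i+1}$, and the identity $\po^{\Mx_\ell}_{\kappax_{i+1}+1}=\jU_{0,i}(\po)=\po^{\MU}_{\kappax_{i+1}+1}$ by uniform definability of the iteration (FM4) together with $H_{(\kappaU_i)^{++}}\subseteq\MU_i$. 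Next, clause (2) for $i$: starting from the induction hypothesis $H^*\uhr\po^{\Mx_\ell}_{\kappax_{i-1}+1}=G^{(i-1)}$ and the just-established identification of the relevant initial segments of $\jx_{0,\ell}(\po)$ with $\jU_{0,i}(\po)$, I would run the two-part extension-of-embedding argument of Lemma~\ref{Lem:FM1} relativized to $\MU_i$: the $\kappa$-Iteration-Fusion part of $\po^{\MU_i}_{\kappaU_i}$ plus distributivity of the tail gives that $H^*\uhr\po^{\MU_{i+1}}_{\kappaU_{i+1}}$ is generated by $G^{(i-1)}\uhr\po^{\MU_i}_{\kappaU_i}$ together with $\jx_{i,i+1}``$ of it, and the $\kappa$-Continuous-Fusion part together with self-coding (KL2) pins down $H^*\uhr\qo^{\MU_{i+1}}_{\kappaU_{i+1}}$ as the image of $G^{(i-1)}\uhr\qo^{\MU_i}_{\kappaU_i}$ under the extended map --- which is precisely the definition of $G^{(i)}$ in Remark~\ref{Rmk:DsecribeU^ell}. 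Hence $H^*\uhr\po^{\Mx_\ell}_{\kappax_i+1}=G^{(i)}$.

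For clause (3), $U^*_{i+1}=U^{i+1}$ when $i<\ell-1$: the point is that $j_{U^*_{i+1}}$ restricted to $V$ is the iterated ultrapower by $\Ex_0,\dots,\Ex_i=E_0,\dots,E_i$ (by clause (1), already proved at stage $i$), so $M_{U^*_{i+1}}$ has ground part $\MU_{i+1}$, and by clause (2) the generic is $G^{(i+1)}$; thus $j_{U^*_{i+1}}$ agrees with $\iU_{0,i+1}=j^{V[G]}_{U^{i+1}}$ on a generating set, so the derived ultrafilters coincide --- more carefully, $U^*_{i+1}$ has ground iteration length $i+1<\ell$, so the outer induction hypothesis of Proposition~\ref{Prop:U*ISUell} applies and gives $U^*_{i+1}\cong_{RK}U^{\,\ell(U^*_{i+1})}=U^{i+1}$, and then the explicit seed $(\kappax_0,\dots,\kappax_i)=(\kappaU_0,\dots,\kappaU_i)$ shows the isomorphism is the identity. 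Finally clause (4), $cp(k_{U^*_{i+1},U^*})\ge\kappax_{i+1}$: since $k_{U^*_{i+1},U^*}([f]_{U^*_{i+1}})=j_{U^*}(f)(\kappax_0,\dots,\kappax_i)$ and $j_{U^*}\uhr V$ factors as $\jx_{i+1,\ell}\circ\jx_{0,i+1}$ with $\jx_{0,i+1}=j_{U^*_{i+1}}\uhr V$, the map $k_{U^*_{i+1},U^*}$ on the ground parts is exactly $\jx_{i+1,\ell}:\MU_{i+1}\to\Mx_\ell$, whose critical point is $\kappax_{i+1}$; on the generic side clause (2) shows the relevant generic data below $\kappax_{i+1}$ is fixed, so no ordinal below $\kappax_{i+1}$ is moved. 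I expect the main obstacle to be clause (2): carefully transferring the full Lemma~\ref{Lem:FM1} extension-and-uniqueness argument into the iterate $\MU_i$ at stage $i$ requires knowing that $\MU_i$ really does satisfy the blueprint hypotheses at $\kappaU_i$ (which follows by elementarity of $\jU_{0,i}$, using $H_{(\kappaU_i)^{++}}\subseteq\MU_i$ so that the uniform definability clause FM4 transfers), and then verifying that the generated generic filter $G^{(i)}$ from Remark~\ref{Rmk:DsecribeU^ell} is exactly the one the argument produces --- this bookkeeping of which initial segments of $H^*$ are determined, and matching them against the recursive definition of $G^{(i)}$, is where the real work lies.
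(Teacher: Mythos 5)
Your overall skeleton (induction on $i$, base case from the FM lemmas, the outer induction of Proposition~\ref{Prop:U*ISUell} plus the seed identification for clause (3)) matches the paper, but the inductive step of clause (1) — which is the heart of the lemma — is wrong as proposed. You claim $\Ex_i=E_i$ for $i\geq 1$ follows by ``the same cardinal-arithmetic argument used in Claim~\ref{Claim:F_0isE}'' applied inside $\MU_i$. That argument does not transfer. At $i=0$ it works because $\kappa$ is the critical point of $j_{U^*}$, so $\power(\kappa)^{M_{U^*}}=\power(\kappa)^{V[G]}$ and the true value $2^{\kappa}=\kappa^{++}$ of $V[G]$ can be played against the ordinal $(\kappa^{++})^{\Mx_1}<\kappa^{++}$. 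For $i\geq 1$, $\kappax_i$ is not a cardinal of $V[G]$, and there is no model available that both computes $\power(\kappax_i)$ as $M_{U^*}$ does and certifies the ``correct'' size of $2^{\kappax_i}$: from the induction hypothesis $cp(k_{U^*_i,U^*})\geq\kappax_i$ you only get the one-way inclusion $\power(\kappax_i)^{M_{U^*_i}}\subseteq\power(\kappax_i)^{M_{U^*}}$, and the enumeration of $\power(\kappax_i)^{M_{U^*_i}}$ in order type $((\kappax_i)^{++})^{\Mx_i}$ lives in $M_{U^*_i}$, not in $M_{U^*}$, so no internal or external cardinality contradiction arises if $M_{U^*}$ believes $2^{\kappax_i}=((\kappax_i)^{++})^{\Mx_\ell}<((\kappax_i)^{++})^{\Mx_i}$ (the latter ordinal need not even be a cardinal of $M_{U^*}$). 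This is exactly why the paper introduces assumption KL1 and the order $<^{\kappax_i}_{Sing}$: by induction $M_{U^*_i}=M_{U^i}$ contains the stage-$\kappax_i$ generic $<^{\kappax_i}_{Sing}$-increasing sequence of length $((\kappax_i)^{++})^{\Mx_i}$, and Lemma~\ref{Lem:<*Basics}(2) — which needs only the one-way power-set inclusion plus agreement on cofinalities below $\kappax_i$ — converts this into the lower bound $rank^{M_{U^*}}(<^{\kappax_i}_{Sing})\geq((\kappax_i)^{++})^{\Mx_i}$, whereas $\Ex_i\neq E_i$ would force this rank below $\jx_{i,i+1}(\kappax_i)<((\kappax_i)^{++})^{\Mx_i}$. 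Your proposal never invokes KL1 or the $<_{Sing}$-rank comparison, so the step on which the whole ``Kunen-like'' extension rests is missing.

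A secondary gap: in clause (3) you assert that $j_{U^*_{i+1}}\uhr V$ ``is the iterated ultrapower by $E_0,\dots,E_i$ (by clause (1))'', but clause (1) only identifies the iteration generating $j_{U^*}\uhr V$; the factorization $j_{U^*}=k_{U^*_{i+1},U^*}\circ j_{U^*_{i+1}}$ does not by itself determine the ground iteration of $U^*_{i+1}$. Identifying the ground of $M_{U^*_{i+1}}$ with $\Mx_{i+1}$ is real work: the paper computes $\Mx_\ell\cap im(k_{U^*_{i+1},U^*})=im(\jx_{i+1,\ell})$ via Subclaims~\ref{Claim:U*(i+1)capturingM(i+1)} and~\ref{Claim:M(i+1)capturingU*(i+1)}, one direction using the uniformly definable generic function sequences at stage $\kappax_i$ (hence clause (2)), the other using Lemma~\ref{Lem:IAFusionCombinatorics}; the same computation is what actually yields clause (4), which you also treat as immediate. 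These last points are repairable by inserting the paper's subclaims, but the clause-(1) argument must be replaced by the $<_{Sing}$-rank argument.
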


The proof of the Lemma will make use of the following notation.
\begin{notation}
For $i \leq \ell$, 
let $\la f^{\kappax_i,H^*}_{\tau} \mid \tau < ((\kappax_i)^{++})^{\Mx_\ell}\ra$ be the $H^*$-induced generic sequence of functions at stage $\kappax_i$ of the iteration (i.e., produced by the $H^*$ generic for $\qo^{\Mx_\ell}_{\kappax_i}$), which is definable from the  uniform definition $j_{U^*}(\varphi_F)(\kappax_i,x)$ (specified by blueprint assumptions).
\end{notation}

\begin{proof}(Lemma \ref{Lem:KLMainList})\\
We proof the statements in the lemma by induction on $i < \ell$.\\

Starting at $i = 0$, the FM-blueprint argument shows that $\Ex_0 = E_0$ (Claim \ref{Claim:F_0isE}).
Also, the proof of Lemma \ref{Lem:FM1}, 
shows that $H^*\uhr \po_{\kappax_0+1} = G = G^{(0)}$. 
This also implies that $H^*$ contains both  $\jx_{0,\ell}``G$ and $G = G^{(0)}$. 
Also,  since $U^*_1$ is a normal measure, the FM-blueprint argument (\ref{Claim:ellEq1}) implies $U^*_1 = U= U^1$. 


Next, by Lemma \ref{Lem:FM2}, we have that for every $\gamma < \kappa^{++} = ((\kappax_0)^{++})^{\Mx_\ell}$ there is a function $h \in {}^\kappa \kappa$ in $V[G]$ such that $\gamma = j_{U^*_1}(h)(\kappax_0)$. 
This, together with the proof of Lemma \ref{Lem:FM1}, which shows that
$G^{(1)}$ is generated by $G \vee \jx_{0,1}``G$, implies 
that for every  $\po$-name $\name{h}$  with $h = \name{h}_G$, there is some $p \in G$ such that 
$$p \vee \jx_{0,1}(p) \Vdash_{\jx_{0,1}(\po)} \jx_{0,1}(\name{h})(\check{\kappax_0} ) = \check{\gamma}.$$

Applying $\jx_{1,\ell} : \Mx_1 \to \Mx_\ell$, and recalling that its critical point is $\kappax_1 > ((\kappax_0)^{++})^{\Mx_1}> \gamma$, we get that
$$p \vee \jx_{0,\ell}(p) \Vdash_{\jx_{0,\ell}(\po)} \jx_{0,\ell}(\name{h})(\check{\kappax_0} ) = \check{\gamma}.$$

As $H^*$ contains $G \cup \jx_{0,\ell}``G$, we conclude that  in $M_{U^*} = \Mx_{\ell}[H^*]$, $j_{U^*}(h)(\kappax_0) = \gamma$.
It follows that $((\kappax_0)^{++})^{\Mx_{1}} \subseteq im(k_{U^*_1,U^*})$. 
Next, by standard iterated ultrapower arguments and the normality of the iteration leading to $\Mx_\ell$, we have that every $\gamma < \kappax_1$ is of the form $\jx_{0,\ell}(g)(\gamma_0)$ for some $\gamma_0 < ((\kappax_0)^{++})^{\Mx_1}$ and $g \in {}^\kappa \kappa$ in $V$. Combining the findings, we conclude that $\kappax_1 \subseteq im(k_{U^*_1,U^*})$, hence 
$\cp(k_{U^*_1,U^*}) \geq \kappax_1$.\\

This conclude the verification of the inductive assumptions for $i = 0$.\\

Suppose now that $1 \leq i < \ell$ is such that the inductive assumptions hold for all $0 \leq j \leq i-1$. 
In particular, we know that 
$U^*_i = U^i$ and $\cp(k_{U^*_{i},U^*}) \geq \kappax_i$. We proceed to verify the inductive statements for $i$.
\begin{claim}
    $\Ex_i = E_i$.
\end{claim}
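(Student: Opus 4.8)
The plan is to argue by contradiction in the spirit of Claim \ref{Claim:F_0isE}, but the naive count of subsets of the measurable used there is unavailable: since $\cp(j_{U^*}) = \kappa < \kappaU_i$ we have $\power(\kappaU_i)^{M_{U^*}} \neq \power(\kappaU_i)^{V[G]}$. Instead I would compute the well-foundedness rank of the relation $<^{\kappaU_i}_{Sing}$ (Lemma \ref{Lem:<*Basics}) inside $M_{U^*}$ both ``from above'', via the structure of $\Mx_\ell$ and the forcing, and ``from below'', via the derived ultrafilter $U^*_i$ and the ``Kunen-like'' assumption KL1. First I would reduce the possibilities for $\Ex_i$: by the inductive hypothesis $\Mx_i = \MU_i = L[\jU_{0,i}(\E)]$ and $\kappax_i = \kappaU_i$, and by minimality of $L[\E]$ every extender of $\MU_i$ has critical point $\kappaU_i$, with $E_i := \jU_{0,i}(E)$ the unique one of height $(\kappaU_i^{++})^{\MU_i}$. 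So if $\Ex_i \neq E_i$ then $\cp(\Ex_i) = \kappaU_i$ and $ht(\Ex_i) < (\kappaU_i^{++})^{\MU_i}$; a GCH cardinality count in $\MU_i$ gives $j_{\Ex_i}(\kappaU_i) < (\kappaU_i^{++})^{\MU_i}$, so $j_{\Ex_i}(\kappaU_i)$ is inaccessible in $\Mx_{i+1}$ and $(\kappaU_i^{++})^{\Mx_{i+1}} < (\kappaU_i^{++})^{\MU_i}$. Since the later extenders $\Ex_{i+1},\dots,\Ex_{\ell-1}$ have critical points above $(\kappaU_i^{++})^{\Mx_{i+1}}$, the embedding $\jx_{i+1,\ell}$ fixes this ordinal, so $(\kappaU_i^{++})^{\Mx_\ell} = (\kappaU_i^{++})^{\Mx_{i+1}} < (\kappaU_i^{++})^{\MU_i}$; and since ultrapowers by extenders of the model yield inner submodels, $\Mx_\ell \subseteq \MU_i$, so $(\kappaU_i^{++})^{\MU_i}$ is still a cardinal of $\Mx_\ell$ and $\big((\kappaU_i^{++})^{\Mx_\ell}\big)^{+\Mx_\ell} \leq (\kappaU_i^{++})^{\MU_i}$.

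For the upper bound I would work in $M_{U^*} = \Mx_\ell[H^*]$. Reading off the FM/KL blueprint inside $\Mx_\ell$ together with GCH there: the only stage of $\po^{\Mx_\ell}$ adding subsets of $\kappaU_i$ is $\qo^{\Mx_\ell}_{\kappaU_i}$, which is $(\kappaU_i^{++})^{\Mx_\ell}$-c.c., and a name count (as in the proof of Lemma \ref{Lem: PowersetOfKappaIsPreserved}) shows $\power(\kappaU_i)^{M_{U^*}}$ has $M_{U^*}$-cardinality at most $(\kappaU_i^{++})^{\Mx_\ell}$, while the tail of $\po^{\Mx_\ell}$ past $\kappaU_i$ is distributive enough to preserve $\big((\kappaU_i^{++})^{\Mx_\ell}\big)^{+\Mx_\ell}$. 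Hence $<^{\kappaU_i}_{Sing}$ in $M_{U^*}$ is a well-founded relation on a set of $M_{U^*}$-cardinality $\leq (\kappaU_i^{++})^{\Mx_\ell}$, so $rank(<^{\kappaU_i}_{Sing})^{M_{U^*}} < \big((\kappaU_i^{++})^{\Mx_\ell}\big)^{+M_{U^*}} = \big((\kappaU_i^{++})^{\Mx_\ell}\big)^{+\Mx_\ell}$. Combined with the last paragraph, assuming $\Ex_i \neq E_i$ this gives $rank(<^{\kappaU_i}_{Sing})^{M_{U^*}} < (\kappaU_i^{++})^{\MU_i}$.

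For the lower bound I would use the derived ultrafilter $U^*_i$. The inductive hypotheses give $U^*_i = U^i$, so $M_{U^*_i} = \NU_i = \MU_i[G^{(i)}]$, and $\cp(k_{U^*_i,U^*}) \geq \kappax_i = \kappaU_i$. Applying $\jU_{0,i}$ to the instance of KL1 for $\qo_\kappa$, the poset $\qo^{\MU_i}_{\kappaU_i}$ adds to $\NU_i$ an increasing $<^{\kappaU_i}_{Sing}$-sequence of length $(\kappaU_i^{++})^{\MU_i}$, so $rank(<^{\kappaU_i}_{Sing})^{M_{U^*_i}} \geq (\kappaU_i^{++})^{\MU_i}$. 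Because $\cp(k_{U^*_i,U^*}) \geq \kappaU_i$, the transitive models $M_1 = M_{U^*_i}$ and $M_2 = M_{U^*}$ share $H_{\kappaU_i}$ — so they agree on cofinalities of cardinals below $\kappaU_i$ and $\power(\kappaU_i)^{M_1} \subseteq \power(\kappaU_i)^{M_2}$ — and $\kappaU_i$ is regular uncountable in both. Lemma \ref{Lem:<*Basics}(2), with $\kappaU_i$ in place of $\kappa$, then yields $rank(<^{\kappaU_i}_{Sing})^{M_{U^*}} \geq (\kappaU_i^{++})^{\MU_i}$, contradicting the upper bound. Hence $\Ex_i = E_i$.

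The step I expect to be the main obstacle is the cardinal bookkeeping linking the first two paragraphs: one must verify carefully that a proper initial segment $\Ex_i$ genuinely forces $(\kappaU_i^{++})^{\Mx_\ell}$ strictly below $(\kappaU_i^{++})^{\MU_i}$ — the GCH count in $\MU_i$, together with the observation that the subsequent extender iteration cannot restore a collapsed cardinal — and that the tail of $\po^{\Mx_\ell}$ beyond stage $\kappaU_i$ indeed preserves $\big((\kappaU_i^{++})^{\Mx_\ell}\big)^{+\Mx_\ell}$, so the rank upper bound really stays below $(\kappaU_i^{++})^{\MU_i}$. A secondary, more routine point is that $\cp(k_{U^*_i,U^*}) \geq \kappaU_i$ is exactly what is needed to validate all three hypotheses of Lemma \ref{Lem:<*Basics}(2).
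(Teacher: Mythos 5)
Your proof is correct and follows essentially the same route as the paper: assuming $\Ex_i \neq E_i$, you bound $rank^{M_{U^*}}(<^{\kappax_i}_{Sing})$ from above using the smallness of $ht(\Ex_i)$ together with the blueprint's cardinal arithmetic in $M_{U^*}=\Mx_\ell[H^*]$, and from below using the inductive facts $U^*_i = U^i$ and $\cp(k_{U^*_i,U^*}) \geq \kappax_i$, assumption KL1, and Lemma \ref{Lem:<*Basics}, exactly as the paper does. The only difference is cosmetic bookkeeping in the upper bound: the paper bounds the rank by $((2^{\kappax_i})^+)^{M_{U^*}} = ((\kappax_i)^{+3})^{\Mx_\ell} < \jx_{i,i+1}(\kappax_i) < ((\kappax_i)^{++})^{\Mx_i}$, whereas you route through $\bigl((\kappax_i^{++})^{\Mx_\ell}\bigr)^{+\Mx_\ell} \leq ((\kappax_i)^{++})^{\MU_i}$ via downward absoluteness of cardinals to the inner model $\Mx_\ell \subseteq \MU_i$.
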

\begin{proof}
Suppose otherwise. 
Since $E_i$ is the unique $(\lambda,\lambda^{++})$- extender in $\Mx_{i}$ (for any $\lambda)$ then $\Ex_i \neq E_i$ implies 
$$(2^{\kappax_i})^{M_{U^*}} = ((\kappax_i)^{++})^{\Mx_{\ell}}  < \jx_{i,i+1}(\kappax_i) <  ((\kappax_i)^{++})^{\Mx_{i}}.$$
As $rank^{M_{U^*}}(<^{\kappax_i}_{Sing}) < ((2^{\kappax_i})^+)^{M_{U^*}} = ((\kappax_i)^{+3})^{\Mx_{\ell}}  < \jx_{i,i+1}(\kappax_i)$, we get that $rank^{M_{U^*}}(<^{\kappax_i}_{Sing}) < ((\kappax_i)^{++})^{\Mx_{i}}$.\\
Let us see that this contradicts our inductive assumption for $i$. By the inductive assumptions, 
\begin{itemize}
    \item $cp(k_{U^*_i,U^*}) \geq \kappax_i$, which implies $\power(\kappax_i)^{M_{U^*_i}} \subseteq \power(\kappax_i)^{M_{U^*}}$, and

    \item $((\kappax_{i})^{++})^{M_{U^*_i}} = ((\kappax_{i})^{++})^{M_{U^i}} = ((\kappax_{i})^{++})^{\MU_i} = ((\kappax_{i})^{++})^{\Mx_i}$.
\end{itemize}
Since $M_{U^*_i}$ contains a (generic) $<^{\kappax_i}_{Sing}$-increasing sequence of functions in ${}^{\kappax_i}\kappax_i$, it follows from the last points and Lemma \ref{Lem:<*Basics} that 
$rank^{M_{U^*}}(<^{\kappax_i}_{Sing}) \geq ((\kappax_i)^{++})^{\Mx_{i}}$, which contradicts the first finding above.
\end{proof}

Next, we need to show that $G^{(i)} = H^*\uhr \po_{\kappax_i+1}$.
To see this, note that
 $k_{U^*_{i},U^*}``G^{(i)} \subseteq H^*$ and in particular $G^{(i)}\uhr \po_{\kappax_i}  \subseteq H^*\uhr \po_{\kappax_i}$. As the first set is already generic over $\Mx_{\ell}$, it must be that $G^{(i)}\uhr \po_{\kappax_i} = H^*\uhr \po_{\kappax_i}$.

It remains to show that $H^*$ and $G^{(i)}$ agree on the generic filter at stage $\kappax_i = \kappaU_i$ of the iteration. 

\begin{claim}
$H^*\uhr \qo_{\kappax_i} = G^{(i)}\uhr \qo_{\kappax_i}$.
\end{claim}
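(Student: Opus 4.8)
The plan is to carry out, at the iteration stage $\kappax_i$, the same self-coding argument that established uniqueness of $G^*$ in the proof of Lemma~\ref{Lem:FM1}.

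I would first fix the coding data at stage $\kappax_i$. By the uniform definability clause of the FM-blueprint (the formula $\varphi_{\qo}$) and elementarity, both $\NU_i$ and $M_{U^*}$ compute the self-coding witness $(\vec{q}^{\kappax_i},\vec{S}^{\kappax_i})$ of $\qo_{\kappax_i}$ from the stage-$\kappax_i$ restriction of the iteration; since it has already been shown that $H^*\uhr\po_{\kappax_i}=G^{(i)}\uhr\po_{\kappax_i}$, these are literally the same objects in the two models, and by clause KL2, $\vec{S}^{\kappax_i}=\la S^{\kappax_i}_\tau \mid \tau<(\kappax_i)^{++}\ra$ consists of almost disjoint, non-reflecting stationary subsets of $(\kappax_i)^+\cap Cof(\rho_{\kappax_i})$ for a regular $\rho_{\kappax_i}<\kappax_i$.

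Next I would verify that the self-coding biconditionals hold in each of $\NU_i$ and $M_{U^*}$. In $\NU_i=\MU_i[G^{(i)}]$ this is immediate, since $\iU_{0,i}(\po)=\po^{\MU_i}_{\kappaU_i+1}$ has no iteration stages beyond $\kappax_i$. In $M_{U^*}=\Mx_\ell[H^*]$ one uses that, by the FM-blueprint, the tail of $\jx_{0,\ell}(\po)$ above stage $\kappax_i+1$ is $<\beta$-distributive for $\beta$ the next Mahlo cardinal of $\Mx_\ell$ above $\kappax_i$; as $\beta$ lies far above $(\kappax_i)^{++}$, this tail adds no subsets of $(\kappax_i)^+$ and hence preserves both stationarity and non-stationarity of each $S^{\kappax_i}_\tau$. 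So in each model the stage-$\kappax_i$ generic is recovered from the coding — $q^{\kappax_i}_\tau$ lies in it iff $S^{\kappax_i}_{2\tau}$ is non-stationary, and $q^{\kappax_i}_\tau$ is out of it iff $S^{\kappax_i}_{2\tau+1}$ is non-stationary — so exactly one of $S^{\kappax_i}_{2\tau},S^{\kappax_i}_{2\tau+1}$ is non-stationary in each of $\NU_i,M_{U^*}$, and the claim reduces to the assertion that $\NU_i$ and $M_{U^*}$ agree on which of the $S^{\kappax_i}_\tau$ are non-stationary.

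For this I would argue as in Lemma~\ref{Lem:FM1}: if $q^{\kappax_i}_\tau\in(H^*\uhr\qo_{\kappax_i})\triangle(G^{(i)}\uhr\qo_{\kappax_i})$, then $M_{U^*}$ declares one of $S^{\kappax_i}_{2\tau},S^{\kappax_i}_{2\tau+1}$ non-stationary while $\NU_i$ declares the other one non-stationary, and since a club witnessing non-stationarity is upward absolute, it suffices to know $M_{U^*}\subseteq\NU_i$ — the set $M_{U^*}$ declares non-stationary is then non-stationary in $\NU_i$ too, contradicting the $\NU_i$-side biconditional. The main obstacle is precisely this containment $M_{U^*}\subseteq\NU_i$ (or at least: every club subset of $(\kappax_i)^+$ lying in $M_{U^*}$ lies in $\NU_i$), and this is where the fine structure of $V=L[\E]$ enters. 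The factor embedding $k_{U^*_i,U^*}\colon M_{U^*_i}=\NU_i\to M_{U^*}$ has $cp(k_{U^*_i,U^*})=\kappax_i$ — by the inductive bound $cp(k_{U^*_i,U^*})\geq\kappax_i$ together with the fact that $k_{U^*_i,U^*}$ agrees on $\MU_i=\Mx_i$ with the finite iterated ultrapower $\jx_{i,\ell}\colon\Mx_i\to\Mx_\ell$, which moves $\kappax_i$ — so $k_{U^*_i,U^*}$ does not fix subsets of $(\kappax_i)^+$ and the containment cannot be read off directly. My plan instead is to realize $k_{U^*_i,U^*}$ as the canonical lift of $\jx_{i,\ell}$ to $\NU_i=\MU_i[G^{(i)}]$ and to observe that this iterated ultrapower is internal to $\NU_i$: its first extender $\Ex_i=E_i$ lies in $\MU_i\subseteq\NU_i$ (established in the present step), and each later extender $\Ex_j$ lies in the corresponding iterate $\Mx_j$, an inner model of the $j$-th internal iterate of $\NU_i$; thus the whole iteration stays inside $\NU_i$, giving $M_{U^*}\subseteq\NU_i$ once the lift is identified with $k_{U^*_i,U^*}$. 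I expect that identification — equivalently, uniqueness of the $\Mx_\ell$-generic extending $\jx_{i,\ell}``G^{(i)}$ — to be the technical heart of the step: it must be argued coordinate by coordinate, using FM-blueprint distributivity away from stage $\kappax_i$ and, at the stage-$\kappax_i$ coordinate, the same self-coding analysis carried out inside the common inner model of $\NU_i$ and $M_{U^*}$ below stage $\kappax_i$, taking care not to argue in a circle with the claim itself.
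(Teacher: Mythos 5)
Your reduction of the claim to an agreement between $\NU_i$ and $M_{U^*}$ about which of the coding sets $S^{\kappax_i}_\tau$ are non-stationary is sensible, but the step you yourself flag as the technical heart --- the containment $M_{U^*}\subseteq \NU_i$, to be obtained by identifying $H^*$ with the canonical lift of $\jx_{i,\ell}$ applied to $G^{(i)}$ --- is a genuine gap, and at this point of the induction it cannot be filled without circularity. The identification of $H^*$ with the canonically constructed generic is exactly what the whole induction is building towards: in the paper it is only drawn at the very end of the proof of Proposition \ref{Prop:U*ISUell}, after all of Lemma \ref{Lem:KLMainList} is in place, by invoking the uniqueness assertion of Lemma \ref{Lem:FM1}; and that uniqueness argument is itself the self-coding argument, i.e.\ precisely the kind of statement you are trying to prove at stage $\kappax_i$. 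Moreover, at stage $i$ one does not yet know $\Ex_j=E_j$ for $j>i$, so the \emph{canonical} $\Mx_\ell$-generic built over $G^{(i)}$ is not even available; while $\Mx_\ell\subseteq\NU_i$ as a class (each $\Ex_j\in\Mx_j$), there is no reason yet for $H^*=j_{U^*}(G)$, which lives in $M_{U^*}$, to belong to $\NU_i$, and no lemma in the development gives the coordinatewise uniqueness you would need. Note also that the inductive information $\cp(k_{U^*_i,U^*})\geq\kappax_i$ only yields $\power(\kappax_i)^{\NU_i}\subseteq\power(\kappax_i)^{M_{U^*}}$, which points in the wrong direction and says nothing about clubs in $(\kappax_i^+)^{\Mx_\ell}$.

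The paper sidesteps the containment entirely by comparing $M_{U^*}=\Mx_\ell[H^*]$ with $V[G]$, where $M_{U^*}\subseteq V[G]$ is automatic, instead of with $\NU_i$. It first reduces the claim to the single inclusion $\jx_{0,i}``[G\uhr\qo_\kappa]\subseteq H^*\uhr\qo_{\kappax_i}$: since $E_i=\Ex_i$, the models $\Mx_i$ and $\Mx_\ell$ agree on $H_{(\kappax_i)^{++}}$, so this set generates a filter generic for $\qo^{\Mx_\ell}_{\kappax_i}$ over $\Mx_\ell[H^*\uhr\po_{\kappax_i}]$, and the same set generates $G^{(i)}\uhr\qo_{\kappax_i}$; hence the inclusion gives equality. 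The inclusion is then proved by a stationarity transfer along $\jx_{0,i}$ which your proposal lacks: by assumption KL2 of Definition \ref{Def:KLblueprint} the coding sets concentrate on a fixed cofinality $\rho_\kappa<\kappa$, which $\jx_{0,i}$ does not move, and $\jx_{0,i}``\kappa^+$ is unbounded in $((\kappax_i)^+)^{\Mx_\ell}$ and closed under $<\kappa$-limits, so $V[G]$ computes $S^\kappa_\tau$ to be stationary iff it computes $S^{\kappax_i}_{\jx_{0,i}(\tau)}$ to be stationary; downward absoluteness of stationarity to the inner model $\Mx_\ell[H^*]$, together with the pairing of $S_{2\tau}$ with $S_{2\tau+1}$, upgrades this to a full equivalence with stationarity in $\Mx_\ell[H^*]$, and the self-coding biconditionals on both sides then give the inclusion. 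If you wish to keep your framework, the repair is to compare both coding statements to stationarity in $V[G]$ rather than to ask for $M_{U^*}\subseteq\NU_i$ --- but then you need exactly this transfer step to know that, for indices in the range of $\jx_{0,i}$, precisely one member of each pair is stationary in $V[G]$.
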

\begin{proof}
    As pointed out at the end of Remark \ref{Rmk:DsecribeU^ell}, 
    $\jU_{0,i}``[G\uhr \qo_\kappa]$ generates the generic filter $G^{(i)}\uhr \qo^{\MU_i}_{\kappaU_i}$ over $\MU_i[G^{(i)}\uhr \po^{\MU_i}_{\kappaU_i}]= \Mx_i[H^*\uhr \po^{\MU_i}_{\kappaU_i}]$. As $E_i = \Ex_i$, $\Mx_i$ and $\Mx_\ell$ agree on $H_{(\kappax_{i})^{++}}$. Thus $\jU_{0,i}``[G\uhr \qo_\kappa] = \jx_{0,i}``[G\uhr \qo_\kappa]$ also generates a generic for $\qo^{\Mx_i}_{\kappaU_i}$ over $\Mx_\ell[ H^*\uhr \po^{\Mx_\ell}_{\kappaU_i}]$. 
    It is therefore sufficient to prove that 
    $\jU_{0,i}``[G\uhr \qo_\kappa] \subseteq H^*\uhr \qo^{\MU_i}_{\kappaU_i}$. 
    $\jU_{0,i}``[G\uhr \qo_\kappa] \subseteq H^*\uhr \qo^{\MU_i}_{\kappaU_i}$.
    
    Following the notations of the Extended Blueprint \ref{Def:KLblueprint}, let $\vec{S}^\kappa = \la S^\kappa_\tau \mid \tau < \kappa^{++}\ra$ and $\vec{q}^\kappa = \la q^\kappa_\tau \mid \tau < \kappa^{++}\ra$ be the sequence of stationary sets and enumeration of $\qo_\kappa$, which are used by $\qo_\kappa$ to self code its generic filter (As described in Definition \ref{Def:BlueprintPrelim}).
    Similarly, let $\vec{S}^{\kappaU_i} = \jx_{0,i}(\vec{S}^\kappa)$ and $\vec{q}^{\kappaU_i} = \jx_{0,i}(\vec{q}^\kappa)$. be the sequences used for the self coding of $\qo^{\Mx_\ell}_{\kappaU_i}$ in $\Mx_\ell$. 
    By the blueprint assumptions for each $\tau< \kappa^{++}$, the stationary set $S^\kappa_\tau \subseteq \kappa^+$ consists of ordinals of cofinality $\rho_\kappa < \kappa$. Since $\jx_{0,i}(\rho_\kappa) = \rho_\kappa$, the same is true for $\jx_{0,i}(S^\kappa_\tau) = S^{\kappaU_i}_{\jx_{0,i}(\tau)}$. 
    Moreover, as the set $\jx_{0,i}``\kappa^+$ is unbounded in $(\kappaU_i^+)^{\Mx_\ell}$ and closed to limits of its  $<\kappa$-increasing sequences, it follows that
    
    $$V[G] \models S^\kappa_{\tau} \text{ is stationary} 
\iff V[G] \models S^{\kappaU_i}_{\jx_{0,i}(\tau)} \text{ is stationary} \iff
\Mx_\ell[H^*] \models S^{\kappaU_i}_{\jx_{0,i}(\tau)} \text{ is stationary}$$
For the direction $(\Longleftarrow)$ of the last equivalence, we use the fact that the stationary sets are coded in pairs, where each $S^{\kappa_i}_{ \nu }$ is stationary in $\Mx_\ell[H^*]$ if and only if $S^{\kappa_i}_{ \nu' }$ is not stationary there, for $\nu'$ which is either $\nu+1$ if $\nu$ is an even ordinal, or $\nu-1$ if $\nu$ is odd. Thus, the trivial direction $(\Longrightarrow)$ suffices to imply the equivalence.

    By the self coding properties of $\qo_\kappa$ and elementary properties of $\jx_{0,i}$, it follows that for every $\tau < \kappa^{++}$, $q^\kappa_\tau \in G\uhr \qo_\kappa$ if and only if $\jx_{0,i}(q^\kappa_\tau) \in H^*$. The claim follows. 
\end{proof}

Moving on, we would like to show that 
\begin{claim}\label{Claim:U*_i=U^i}
$U^*_{i+1} = U^{i+1}$ if $i + 1 < \ell$.   
\end{claim}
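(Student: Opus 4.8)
The plan is to compute the critical point of the factor embedding $k_{U^*_{i+1},U^*}$, to deduce from it that $[\mathrm{id}_{\kappa^{i+1}}]_{U^*_{i+1}} = (\kappaU_0,\dots,\kappaU_i)$, to read off that the fibre of $U^*_{i+1}$ over its normal projection $U^*_i = U^i$ is a \emph{normal} measure on $\kappaU_i$ in $\NU_i$, and finally to identify that fibre with $\iU_{0,i}(U)$ by applying Theorem \ref{Thm:FMblueprint} inside $\NU_i$; this gives $U^*_{i+1} = U^{i+1}$.

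\smallskip\noindent
First I would pin down $\cp(k_{U^*_{i+1},U^*})$. Let $\pi\colon \kappa^{i+1}\to\kappa^i$ delete the last coordinate. Then $\pi_*(U^*_{i+1}) = U^*_i = U^i$, so $U^*_i\leq_{RK}U^*_{i+1}\leq_{RK}U^*$ and $k_{U^*_i,U^*} = k_{U^*_{i+1},U^*}\circ k_{U^*_i,U^*_{i+1}}$. Since the critical point of a composite of elementary embeddings is the minimum of the two critical points, $\cp(k_{U^*_{i+1},U^*})\geq\cp(k_{U^*_i,U^*})$. By the inductive hypothesis $\cp(k_{U^*_i,U^*})\geq\kappax_i$, while $k_{U^*_i,U^*}(\kappaU_i) = k_{U^*_i,U^*}(\iU_{0,i}(\kappa)) = j_{U^*}(\kappa) > \kappaU_i$ shows $k_{U^*_i,U^*}$ moves $\kappaU_i$; hence $\cp(k_{U^*_i,U^*}) = \kappaU_i = \kappax_i$, so $\cp(k_{U^*_{i+1},U^*})\geq\kappaU_i$. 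Equality is impossible: by Definition \ref{Def:U*_i} the map $k_{U^*_{i+1},U^*}$ sends the class of the last-coordinate function, $[\,(x_0,\dots,x_i)\mapsto x_i\,]_{U^*_{i+1}}$, to $j_{U^*}\bigl((x_0,\dots,x_i)\mapsto x_i\bigr)(\kappax_0,\dots,\kappax_i) = \kappax_i = \kappaU_i$, so $\kappaU_i$ lies in the range of $k_{U^*_{i+1},U^*}$, which is impossible for an elementary embedding with critical point $\kappaU_i$. Therefore $\cp(k_{U^*_{i+1},U^*}) > \kappax_i$. Consequently $k_{U^*_{i+1},U^*}$ fixes every ordinal $\leq\kappax_i$, and since it sends $[\,(x_0,\dots,x_i)\mapsto x_j\,]_{U^*_{i+1}}$ to $\kappax_j$ for each $j\leq i$, a short comparison of ordinals (using that $\cp(k_{U^*_{i+1},U^*})$ is a limit ordinal above $\kappax_i$) yields $[\mathrm{id}_{\kappa^{i+1}}]_{U^*_{i+1}} = (\kappax_0,\dots,\kappax_i) = (\kappaU_0,\dots,\kappaU_i)$.

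\smallskip\noindent
Next, using the inductive hypothesis $U^*_i = U^i$ (so $M_{U^*_i} = \NU_i$ and $j_{U^*_i} = \iU_{0,i}$), I would invoke the standard decomposition of $U^*_{i+1}$ over its first-$i$-coordinate projection: $U^*_{i+1} = \sum_{U^i}\bar W$, where $\bar W\in\NU_i$ is a $\kappaU_i$-complete ultrafilter on $\kappaU_i = \iU_{0,i}(\kappa) = j_{U^i}(\kappa)$. Under this decomposition $[\mathrm{id}_{\kappa^{i+1}}]_{U^*_{i+1}}$ is the concatenation of $[\mathrm{id}_{\kappa^i}]_{U^i} = (\kappaU_0,\dots,\kappaU_{i-1})$ with $[\mathrm{id}_{\kappaU_i}]_{\bar W}$; comparing with the previous paragraph gives $[\mathrm{id}_{\kappaU_i}]_{\bar W} = \kappaU_i$, i.e.\ $\bar W$ is a normal measure on $\kappaU_i$ in $\NU_i$. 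Now $\NU_i = \MU_i[G^{(i)}]$ with $\MU_i = L[\iU_{0,i}(\E)]$ minimal for the $(\kappaU_i,\kappaU_i^{++})$-extender $E_i = \iU_{0,i}(E)$, and $G^{(i)} = \iU_{0,i}(G)$ is generic for $\iU_{0,i}(\po)$, which satisfies the FM-blueprint by elementarity of $\iU_{0,i}$; so Theorem \ref{Thm:FMblueprint} applies in $\NU_i$ and shows that $\kappaU_i$ carries there a unique normal measure, which by Remark \ref{Rmk:DsecribeU^ell} is $\iU_{0,i}(U)$. Hence $\bar W = \iU_{0,i}(U)$, and $U^*_{i+1} = \sum_{U^i}\iU_{0,i}(U) = U^{i+1}$ — the last equality because $\sum_{U^i}\iU_{0,i}(U)$ has ultrapower $\Ult(\NU_i,\iU_{0,i}(U)) = \NU_{i+1}$, ultrapower map $\iU_{0,i+1}$, and seed $(\kappaU_0,\dots,\kappaU_i)$, which is precisely the description of $U^{i+1}$ given in Remark \ref{Rmk:DsecribeU^ell}.

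\smallskip\noindent
The step needing the most care is the critical-point computation in the first paragraph — in particular the point that $k_{U^*_{i+1},U^*}$ genuinely maps the last-coordinate seed to the true critical point $\kappax_i$ rather than to some larger ordinal — together with the verification of the standard facts that $\bar W$ lies in $\NU_i$ and is $\kappaU_i$-complete there. Everything else is a direct appeal to Theorem \ref{Thm:FMblueprint} and the bookkeeping of Remark \ref{Rmk:DsecribeU^ell}; note that this argument uses only the clauses of Lemma \ref{Lem:KLMainList} at smaller indices and not the outer induction on $\ell(U^*)$.
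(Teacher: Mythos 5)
Your first paragraph is correct (and is a legitimate substitute for the way the paper recovers $[\mathrm{id}]_{U^*_{i+1}}=(\kappax_0,\dots,\kappax_i)$), but the decisive step of your second paragraph — ``the standard decomposition $U^*_{i+1}=\sum_{U^i}\bar W$ with $\bar W\in\NU_i$'' — is not a standard fact, and it is exactly where the content of the claim lies. What is automatic is only the external statement: letting $\bar W$ be the $\NU_i$-ultrafilter on $\kappaU_i$ derived from the factor map $k_{U^*_i,U^*_{i+1}}$ with seed $\kappaU_i$ (equivalently from $k_{U^*_i,U^*}$, since you showed $\cp(k_{U^*_{i+1},U^*})>\kappax_i$), one gets $A\in U^*_{i+1}\iff \iU_{0,i}(A)_{(\kappaU_0,\dots,\kappaU_{i-1})}\in\bar W$, and $\bar W$ is normal and $\kappaU_i$-complete \emph{as an external $\NU_i$-ultrafilter}. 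But to apply Theorem \ref{Thm:FMblueprint} inside $\NU_i$ (via elementarity of $\iU_{0,i}$) and conclude $\bar W=\iU_{0,i}(U)$, you need $\bar W\in\NU_i$; this says that $U^i$ lies below $U^*_{i+1}$ in the Rudin--Frol\'ik sense, not merely that it is a Rudin--Keisler projection, and that implication is not a ZFC triviality for countably complete ultrafilters — it is precisely the sort of structural control over measures that the whole construction is designed to produce. Nothing you may quote at this point (the clauses of Lemma \ref{Lem:KLMainList} at smaller indices, or the blueprint properties) yields it: indeed ``$\bar W\in\NU_i$'' is essentially equivalent to the claim itself, since internality plus normality plus uniqueness gives $\bar W=\iU_{0,i}(U)$ and hence $U^*_{i+1}=U^{i+1}$, while conversely the claim gives $\bar W=\iU_{0,i}(U)\in\NU_i$. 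No cheap closure argument helps either: $\bar W$ has size $\kappa^{++}$ in $V[G]$, so closure of $\NU_i$ under $\kappa$-sequences does not place it inside. Your closing remark that the argument avoids the outer induction on $\ell(U^*)$ is therefore a warning sign rather than a feature: that induction is exactly the strength the paper spends here, and your route hides the corresponding cost inside the unproved internality assertion.

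For comparison, the paper's proof does not attempt the sum decomposition. It identifies the ground model of $M_{U^*_{i+1}}$ with $\Mx_{i+1}$ by computing $\Mx_\ell\cap im(k_{U^*_{i+1},U^*})=im(\jx_{i+1,\ell})$ (Subclaims \ref{Claim:U*(i+1)capturingM(i+1)} and \ref{Claim:M(i+1)capturingU*(i+1)}, which use the uniformly definable generic function sequences $f^{\kappax_i,G^{(i)}}_\gamma$ and the ground-approximation Lemma \ref{Lem:IAFusionCombinatorics}); this shows $\ell(U^*_{i+1})=i+1<\ell$, so the outer induction hypothesis of Proposition \ref{Prop:U*ISUell} applies and gives $U^*_{i+1}\cong_{RK}U^{i+1}$, which is upgraded to equality using the seed computation. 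If you wish to keep your framing, you must still prove $\bar W\in\NU_i$, and the only visible route to that is essentially the paper's argument.
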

\begin{proof}
Let $j_{U^*_{i+1}} : V[G] \to M_{U^*_{i+1}}$ denote the ultrapower embedding and model of $V[G]$ by $U^*_{i+1}$.\\
 Our strategy for showing that $U^*_{i+1} = U^{i+1}$ is to prove that the restriction of $M_{U^*_{i+1}}$ to its ground model is $\Mx_{i+1}$. Once this is established, we can  use the fact $\Mx_{i+1}$ is  iterated ultrapower of $V$ by the extenders $E_j$ $0 \leq j \leq i$, and therefore the ground iteration length $U^*_{i+1}$ is $\ell(U^*_{i+1}) = i+1$. And as we assume $i+1 < \ell$, it will immediately follow from our main inductive hypothesis that $U^*_{i+1} \cong_{RK} U^{i+1}$. This means that $j_{U^*_{i+1}} = j_{U^{i+1}} : V[G] \to M_{U^*_{i+1}} = M_{U^{i+1}}$. Moreover, as $[id]_{U^*_{i+1}} = (\kappax_0,\dots,\kappax_{i}) = (\kappaU_0,\dots,\kappaU_i) = [id]_{U^{i+1}}$ we conclude that in fact, $U^*_{i+1} = U^{i+1}$.\\
 
\noindent To identify the ground of $M_{U^*_{i+1}}$ we study its image by the elementary embedding 

$$k_{U^*_{i+1},U^*} : M_{U^*_{i+1}} \to M_{U^*} = \Mx_\ell[H^*],$$
with the observation that the ground of $M_{U^*_{i+1}}$ is the transitive collapse of its pointwise image. 
By the elementarity of $k_{U^*_{i+1},U^*}$, the image of the ground of $M_{U^*_{i+1}}$  is intersection $\Mx_{\ell} \cap im(k_{U^*_{i+1},U^*})$. Therefore, in order to identify the ground of $M_{U^*_{i+1}}$ with $\Mx_{i+1}$, it suffices to prove 
    $$\Mx_\ell \cap im(k_{U^*_{i+1},U^*}) = im(\jx_{i+1,\ell}).$$
The left-hand-side class
    $\Mx_\ell \cap im(k_{U^*_{i+1},U^*})$ consists of all sets of the form $j_{U^*}(g^*)(\kappax_0,\dots\kappax_i)$, where $g^* \in V[G]$ is a function from $\kappa^{i+1}$ to $V$.
    The right-hand-side class $im(\jx_{i+1,\ell})$ consists of all sets of the form $\jx_{0,\ell}(g)(\gamma_0,\dots,\gamma_i)$ where $g : \kappa^{i+1} \to V$ is a function in $V$, and for each $j \leq i$, 
    $\gamma_j < ((\kappax_j)^{++})^{\Mx_{i+1}}$.
The following two subclaims prove the mutual containment between the two classes. 
    \begin{subclaim}\label{Claim:U*(i+1)capturingM(i+1)}
    For every $x \in im(\jx_{i+1,\ell})$ there is 
    a function  $h : \kappa^{i+1} \to \kappa$ in $V[G]$ such that $x = j_{U^*}(h)(\kappax_0,\dots,\kappax_{i})$. 
    \end{subclaim}

    Since every set $x \in im(\jx_{i+1,\ell})$ is of the form $\jx_{0,\ell}(f)(\gamma_0,\dots,\gamma_i)$ where $f : \kappa^{i+1} \to V$ is a function in $V$, and 
    $\gamma_j \leq ((\kappax_j)^{++})^{\Mx_\ell}$ for every $j \leq i$, 
    it suffices to show that for every $\gamma < ((\kappax_i)^{++})^{\Mx_\ell}$
    there is a function $h^*\in M_{U^*_i}$ so that $\gamma = k_{U^*_i,U^*}(h^*)(\kappax_i)$. To show this, 
    we use the inductive assumptions for $j = i-1$, which include  $U^*_i = U^i$,  $H^*\uhr \po_{\kappax_i+1} = G^{(i)}$, and $\cp(k_{U^*_i,U^*}) \geq \kappax_i$.\\
    Consider the function $f^{\kappax_i,G^{(i)}}_{\gamma} \in  {}^{\kappax_i} \kappax_i \cap M_{U^*_i}$, which is the $\gamma$-th element from the $G^{(i)}$-induced generic sequence of function. We see that 
    $$k_{U^*_i,U^*}(f^{\kappax_i,G^{(i)}}_{\gamma}) \uhr \kappax_i = f^{\kappax_i,G^{(i)}}_{\gamma} = f^{\kappax_i,H^*}_{\gamma}.$$
    It follows that $\gamma = k_{U^*_i,U^*}(h^*)(\kappax_i)$ where $h^* \in {}^{\kappax_i} \kappax_i \cap M_{U^*_i}$ is defined mapping a Mahlo cardinal $\alpha < \kappax_i$ to $h^*(\alpha) < \alpha^{++}$ for which $f^{\kappax_i,G^{(i)}}_{\gamma} \uhr \alpha = f^{\alpha,G^{(i)}}_{h^*(\alpha)}$, if such an ordinal $h^*(\alpha)$ exists.
    This conclude the proof of Subclaim \ref{Claim:U*(i+1)capturingM(i+1)}.

    \begin{subclaim}\label{Claim:M(i+1)capturingU*(i+1)}
    
         For every set $x \in im(k_{U^*_{i+1},U^*}) \cap \Mx_{\ell}$ there is a function $g \in V$, $g :\kappa^2 \to V$, and $\gamma < ((\kappax_i)^{++})^{\Mx_\ell}$ to that $x = j_{U^*}(g)(\kappax_i,\gamma)$.    \end{subclaim}

         Let $f = \name{f}_G \in V[G]$ be a function from $\kappa^{i+1}$ to $V$ so that $x = j_{U^*}(f)(\kappax_0,\dots,\kappax_i)$.
             By Lemma \ref{Lem:IAFusionCombinatorics} applied to the name $\name{f}$, there are $F \in V$, $F : \kappa \to V$ and a club $C^* \in \kappa$ in $V[G]$,  such that $|F(\alpha)| \leq \alpha^{++}$ for every inaccessible cardinal $\alpha < \kappa$, and for every $\alpha\in C^* \cap Reg$ and $\vec{\beta} \in \alpha^{i}$, 
             $f(\vec{\beta} \fr \{\alpha\}) \in F(\alpha)$.
             Having ${U^*_i} = U^i$ we get $j_{U^*_i}(C^*) \in M_{U^*_i}$ is a club in $\kappax_{i}$. Moreover,  $\cp(k_{U^*_i,U^*}) \geq \kappax_{i}$ implies that $j_{U^*}(C^*)\cap \kappax_{i} = j_{U^*_i}(C^*)$,  is unbounded in $\kappax_i$. Hence, $\kappax_{i} \in j_{U^*}(C^*)$. 
             We conclude that 
             in $M_{U^*} = \Mx_\ell[H^*]$, the set 
             $Y = j_{U^*}(F)(\kappax_i) = \jx_{0,\ell}(F)(\kappax_i) \in  \Mx_\ell$ has size $|Y|^{\Mx_\ell} \leq ((\kappax_i)^{++})^{\Mx_\ell}$, and 
             $$ x = j_{U^*}(f)(\kappax_0,\dots,\kappax_i) \in Y.$$
             Letting $\rho_Y \in \Mx_\ell$, $\rho_Y : ((\kappax_i)^{++})^{\Mx_\ell} \to Y$ be the minimal surjection in the canonical well-ordering of $\Mx_\ell$, and that $x = \rho_Y(\gamma)$ for some $\gamma < ((\kappax_i)^{++})^{\Mx_\ell}$.
             We conclude that $\rho_Y$ is definable from $Y$, and hence has the form $\jx_{0,\ell}(h)(\kappax_i)$ for some $h \in V$.
             It follows that $x = j_{U^*}(g)(\kappax_i,\gamma)$ where 
             $g \in V$ is defined by $g(\alpha,\beta) = h(\alpha)(\beta)$, where $h(\alpha) = \rho_{F(\alpha)} : \alpha^{++} \to V$ is a surjection from $\alpha^{++}$ onto $F(\alpha)$, which is minimal in the canonical well ordering of $V= L[\E]$ and $\beta < \alpha^{++}$. 
         This conclude the proof of Subclaim \ref{Claim:M(i+1)capturingU*(i+1)}, and Claim \ref{Claim:U*_i=U^i}.
         \end{proof}

Finally, it remains to to show that

\begin{claim}
$cp(k_{U^*_{i+1},U^*})\geq \kappax_{i+1}$ if $i+1 < \ell$.
\end{claim}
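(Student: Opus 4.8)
The plan is to obtain this claim essentially for free from the image computation already carried out in the proof of Claim~\ref{Claim:U*_i=U^i}. There, Subclaims~\ref{Claim:U*(i+1)capturingM(i+1)} and~\ref{Claim:M(i+1)capturingU*(i+1)} together establish the identity
$$\Mx_\ell \cap im(k_{U^*_{i+1},U^*}) \;=\; im(\jx_{i+1,\ell}),$$
and this identity was proved using only the inductive hypotheses together with the parts of the present step $i$ already in hand (that $\Ex_i = E_i$, that $H^*\uhr \po_{\kappax_i+1} = G^{(i)}$, that $U^*_{i+1}=U^{i+1}$, and that $\cp(k_{U^*_i,U^*}) \geq \kappax_i$). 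Since we are assuming $i+1 < \ell$, the iterated ultrapower $\jx_{i+1,\ell} : \Mx_{i+1} \to \Mx_\ell$ is nontrivial and its first step is the ultrapower by $\Ex_{i+1}$, whose critical point is $\kappax_{i+1}$; as the critical points along a normal iteration are strictly increasing, $\cp(\jx_{i+1,\ell}) = \kappax_{i+1}$.

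Granting this, I would argue as follows. Let $\gamma < \kappax_{i+1}$ be an ordinal. Then $\gamma \in \Mx_{i+1}$, and since $\gamma < \cp(\jx_{i+1,\ell})$ we have $\gamma = \jx_{i+1,\ell}(\gamma) \in im(\jx_{i+1,\ell})$. By the displayed identity, $\gamma \in \Mx_\ell \cap im(k_{U^*_{i+1},U^*}) \subseteq im(k_{U^*_{i+1},U^*})$. As $\gamma$ was an arbitrary ordinal below $\kappax_{i+1}$, this shows $\kappax_{i+1} \subseteq im(k_{U^*_{i+1},U^*})$. Finally, I would invoke the elementary fact that any elementary embedding $k$ between transitive models with $\lambda \subseteq im(k)$ has $\cp(k)\geq\lambda$: otherwise $\delta := \cp(k) < \lambda$ would lie in $im(k)$, say $\delta = k(\bar\delta)$, which is impossible since $k$ fixes every ordinal below $\delta$ while $k(\delta) > \delta$. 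Applied to $k = k_{U^*_{i+1},U^*}$ and $\lambda = \kappax_{i+1}$, this yields $\cp(k_{U^*_{i+1},U^*}) \geq \kappax_{i+1}$, which is the claim. With it, all four inductive statements of Lemma~\ref{Lem:KLMainList} hold at stage $i$, completing the inductive step and hence the proof of Lemma~\ref{Lem:KLMainList}.

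Since the substantive content (the two subclaims) was already secured in the proof of Claim~\ref{Claim:U*_i=U^i}, I do not expect any real obstacle here. The two points that need care are purely bookkeeping: the hypothesis $i+1 < \ell$ is used exactly to identify $\kappax_{i+1}$ as the critical point of $\jx_{i+1,\ell}$ (when $i+1 = \ell$, $\jx_{i+1,\ell}$ is the identity and the statement is vacuous), and one should double-check that the harvested identity $\Mx_\ell \cap im(k_{U^*_{i+1},U^*}) = im(\jx_{i+1,\ell})$ indeed only used material established earlier in the induction — which, as noted above, it does.
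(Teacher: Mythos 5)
Your proof is correct and follows essentially the same route as the paper: both arguments reduce the claim to showing $\kappax_{i+1}\subseteq im(k_{U^*_{i+1},U^*})$ via Subclaim \ref{Claim:U*(i+1)capturingM(i+1)} (the containment $im(\jx_{i+1,\ell})\subseteq im(k_{U^*_{i+1},U^*})$), and then conclude with the standard fact about critical points. The only difference is cosmetic: where the paper invokes normality of the iteration to write each $\gamma<\kappax_{i+1}$ as $j_{U^*}(g)(\gamma_0,\dots,\gamma_i)$ with $g\in V$ and generators below $((\kappax_i)^{++})^{\Mx_\ell}$, you simply observe that such $\gamma$ is fixed by $\jx_{i+1,\ell}$ and hence already lies in $im(\jx_{i+1,\ell})$, which is a harmless (indeed slightly cleaner) shortcut.
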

\begin{proof}

By Subclaim \ref{Claim:U*(i+1)capturingM(i+1)}, 
$im(k_{U^*_{i+1},U^*})$ contains all ordinals of the form $j_{U^*}(f)(\gamma_0,\dots,\gamma_i)$ where $f : \kappa^{i+1} \to On$ is a function in $V$, and $\gamma_0,\dots,\gamma_i < ((\kappax_i)^{++})^{\Mx_\ell}$.
By the normality of the iteration leading to $\Mx_\ell$, every $\gamma < \kappax_{i+1}$ is of the form $\gamma = j_{U^*}(g)(\gamma_0,\dots,\gamma_i)$, where $\gamma_0< \dots< \gamma_i < ((\kappax_i)^{++})^{\Mx_\ell}$. 
Hence $\kappax_{i+1} \leq 
cp(k_{U^*_{i+1},U^*})$.
\end{proof}

This concludes the proof of Lemma \ref{Lem:KLMainList}.
\end{proof}

Having shown Lemma \ref{Lem:KLMainList}, we finish the proof of Proposition \ref{Prop:U*ISUell} by showing that $U^*_{\ell} \cong_{RK} U^*$.  By Lemma \ref{Lem:KLMainList}, $\Ex_i = E_i$ for all $i < \ell$. Therefore $\Mx_\ell = \MU_{\ell}$ is the ultrapower of $\Mx_{\ell-1} = \MU_{\ell-1}$ by $\Ex_{\ell-1} = E_{\ell-1}$, whose associated elementary embedding is $\jx_{\ell-1,\ell} : \Mx_{\ell-1} \to \Mx_{\ell} \cong \Ult(\Mx_{\ell-1},E_{\ell-1})$.
Also, by the proof of Claim \ref{Claim:U*_i=U^i} we see that 
\begin{itemize}
    \item $\Mx_{\ell-1}$ is the ground of $M_{U^*_{\ell-1}} = M_{U^{\ell-1}} = \Mx_{\ell-1}[G^{(\ell-1)}]$,
    
    \item   $k_{U^*_{\ell-1},U^*}(G^{(\ell-1)}) \subseteq H^*$, and

    \item  $k_{U^*_{\ell-1},U^*} : \MU_{\ell-1}[G^{(\ell-1)}] \to \Mx_{\ell}[H^*]$  is an extension of the embedding $\jx_{\ell-1,\ell}$, which is the ultrapower embedding of $\Mx_{\ell-1}$ by its extender $E_{\ell-1}$.
\end{itemize}

It follows from the uniqueness assertion of Lemma 
    \ref{Lem:FM1} that $H^* = G^{(\ell)}$, and from \ref{Lem:FM3} that 
    $k_{U^*_{\ell-1},U^*}$ is the ultrapower embedding of $M_{U^*_{\ell-1}} = M_{U^{\ell-1}}$ by its unique normal measure $j_{U^{\ell-1}}(U)$. Therefore, it is clear from the description of $U^\ell$ from the ultrapower by $U^{\ell-1}$ followed by the ultrapower with the image of $U$ that $M_{U^*} =M_{U^\ell}$ and that $j_{U^*} = j_{U^{\ell}}$.
    Using Remark \ref{Rmk:RKisom} we conclude that $U^* \cong_{RK} U^\ell$. This concludes the proof of Proposition \ref{Prop:U*ISUell}, and in turn, the proof of Theorem 
\ref{Thm:KLblueprint}.

\newpage
\section{Thin stationary sets in $L[\E]$} \label{Section:FineStucture}

\subsection{Introduction}

Our implementation of the self-coding mechanism of Definition \ref{Def:KLblueprint} will be based on a careful choice of sequences $\la \mathcal{S}^\alpha \mid \alpha \leq \kappa \text{ is Mahlo}\ra$ 
of non-reflecting stationary sets $\mathcal{S}^\alpha \subseteq \alpha^+ \cap cof(\omega_1)$. 
The coding part of the stage $\alpha$ poset $\qo_\alpha$ will add a closed unbounded subset of $\alpha^+$ that is disjoint from some stationary subset of $\mathcal{S}^\alpha$ (depending on the information to be coded). 
The fact $\mathcal{S}^\alpha$ is non-reflecting suffices to secure that the coding poset at each individual $\qo_\alpha$ is $\alpha^+$-distributive. However, an iteration of posets $\qo_\alpha$, $\alpha < \kappa$, each $\qo_\alpha$ being $\alpha^+$-distributive may collapse cardinals, let alone satisfy stronger preservation properties such as the $\kappa$-Iteration-Fusion property needed for the blueprint. 

Starting from a strong limit cardinal $\kappa$, the goal of this section is to construct non-reflecting stationary sets $\mathcal{S}^\alpha \subseteq \alpha^+ \cap cof(\omega_1)$ for Mahlo cardinals $\alpha < \kappa$ so the non-stationary support iteration of posets adding disjoint closed unbounded subsets from each $\mathcal{S}^\alpha$, has distributive tails and has the $\kappa$ Iteration Fusion property (Definition \ref{Def:FM-blueprint}(3)). The construction is motivated by a recent result of Foreman, Magidor and Zeman about Games with Filters with Strong Ideals, that is announced in \cite{ForMagZem} and expected to appear in a sequel paper. The forcing  described \cite{ForMagZem} is based on an Easton support iteration of similar club shooting posets. Our use of an iteration that allows unbounded supports, and the requirement of the $\kappa$-Iteration-Fusion property, leads to a somewhat different analysis of the relevant fine-structural theory (most notably, the results around Proposition \ref{Lemma:NowhereStationary}). Specifically, our construction is based on a correspondence between certain sequences 
$$\la \eta_\alpha \mid \alpha \in Z\ra$$
of points $\eta_\alpha \in \mathcal{S}^\alpha$, where the domain  $Z \subseteq \kappa$ is nowhere stationary.\footnote{I.e., for every $\beta \leq \kappa$, $Z \cap \beta$ is not stationary in $\beta$.}
The sets $\mathcal{S}^\alpha$ are constructed in subsection \ref{subsec:FineStructurePrimer}, and the correspondence mentioned above is developed in subsection \ref{subsec:Correspondence}.
For the most part, our construction is based the fine structure theory and notations from \cite{SchindlerZeman2009FineStruture} and 
\cite{Steel-HB}. 
To simplify the presentation and avoid certain technical details for the benefit of readers who are not experts in fine structure, we make use of some ad-hoc notions of good and very good points, given in subsection \ref{subsec:GoodAndVeryGood}.

\subsection{A Fine structure primer}\label{subsec:FineStructurePrimer}


For the most part, we make use of standard fine structure theory and notations following the handbook chapters by Schindler-Zeman \cite{SchindlerZeman2009FineStruture} and Steel \cite{Steel-HB}. 

For a premouse $M = (J^{E^M}_\tau,E^M,F^M)$ we denote the $n$-th projectum by $\rho^M_n$, the $n$-th standard parameter by $p^M_n$, and the $n$-th reduct by $M^n$.
Recall that, for every $n<\omega$, the $n$-th projectum, parameter, standard code and reduct are defined inductively:
\begin{itemize}
    \item The $(n+1)$-th projectum of $M$ is the least ordinal $\rho = \rho_{n+1}^{M}$ such that there exists a new $\Sigma_1$-definable subset of $\rho$ over the $n$-th reduct, $M^{(n)}$, of $M$. We say that such a subset is $r\Sigma^{(n)}_1$-definable.
    \item The $(n+1)$-th standard parameter, $p_{n+1}^{M}$, is the lexicographically-least sequence 
    $$\la p(0), \ldots,p(n+1) \ra$$ of finite sets of ordinals, such that for every $i\leq n$, $p(i+1)\in \left[ \rho^{M}_{i+1}, \rho^{M}_i \right)^{<\omega}$ is a parameter used to define a new $r\Sigma^{(i+1)}_1$-subset of $\rho^{M}_{i+1}$ 
    \item The $(n+1)$-th reduct is the structure $ M^{(n+1)} = ( H^{M}_{\rho}, A^{M}_{n+1} )$, where  $\rho = \rho^{M}_{n+1}$ and $A^{M}_{n+1}$ is the standard code associated with the standard parameter $p^{M}_{n+1}$. Namely, the set consisting of pairs $\la i ,x \ra$ where $i$ is the Godel number of a $\Sigma^{(n)}_1$ formula $\varphi$, $x\in H^{M}_{\rho}$ and $M^{(n)}\vDash \varphi(x,p^{M}_{n+1})$.
\end{itemize}
Given a premouse $M$, we shall denote $\rho_{\omega}(M) = \min\{ \rho_{n}(M) \colon n<\omega \}$. 

\begin{remark} \label{Rmk:sigma1Skolemfunction} 
Assume that $M$ is a premouse. Let us elaborate on the definition of the Skolem functions $h^{M}_{n+1} \colon \omega \times (M^{(n+1)})^{<\omega}\to M$. 
\begin{enumerate}
    \item We begin by recalling the definition of the first Skolem function. The $\Sigma_1$-definition describing $h^{M}_1(i,x) = y$ for $i < \omega$ and $x,y \in M$, has the form: 
\begin{align*}
\label{Definition of the Sigma1 Skolem function}
 \exists z (\  &(z = S^{\E}_\gamma \text{ for some ordinal } \gamma) \wedge (x,y \in z) \wedge\\
&(z \models y \text{ is constructibly minimal with } \varphi^i_0(x,y))   \  )   
\end{align*}
where:
\begin{itemize}
    \item $\varphi^i_0$ denotes the $\Delta_0$ component in the $\Sigma_1$-formula coded by $i$, which is $\exists w \varphi^i_0(x,w)$.
    \item The hierarchy $\la S^{\E}_{\alpha} \colon \alpha\in Ord \ra$ refines $\la J^{\E}_{\alpha} \colon \alpha\in Ord \mbox{ and is limit} \ra$. Each $S^{\E}_{\alpha+1}$ is obtained from $S^{\E}_{\alpha}$ by taking $S^{\E}_{\alpha+1} = \bigcup_{ f\in F } f^{''}((S^{\E}_\alpha \cup  \{ S^{ \E}_\alpha   \} )^{n_f})$, where $F$ is a finite set of functions, each $f\in F$ in an $n_f$-ary function for some $n_f<\omega$, and $F$ forms a basis for all the Rudimentary functions (including one for additional predicates such as $\E$). For $\alpha$ limit, $S^{\E}_{\alpha} = \bigcup \{S^{\E}_{\beta} \colon \beta<\alpha\}$ is Rudimentary closed of the same height of $J^{\E}_\alpha$, and thus it is the same structure as $J^{\E}_{\alpha}$.
\end{itemize}
\item Let us inductively define $h^{M}_{n+1} \colon \omega \times (M^{(n+1)})^{<\omega}\to M$. By coding sequences in $\omega^{<\omega}$ as a single natural number, it's enough to define $h^{M}_{n+1} \colon \omega^{<\omega} \times (M^{(n+1)})^{<\omega}\to M$ as follows:
$$  h^{ M}_{n+1}\left( \langle \vec{i}, i_0, \ldots, i_k \rangle, \langle x_0,\ldots, x_k \rangle \right) = h^{ 
M }_{n}\left( \vec{i}, \la  h^{ M^{(n)} }_{1}(i_0, x_0),\ldots, h^{ M^{(n)} }_{1}(i_k, x_k) \ra  \right). $$
\end{enumerate}    
\end{remark}
Following definition 5.12 in \cite{SchindlerZeman2009FineStruture}, given premouses $M,N$, we write $M \elem_{r\Sigma_{n+1}} N$  if there exists an $r\Sigma_{n+1}$-preserving embedding $\sigma \colon M\to N$: that is, an embedding satisfying the following--
\begin{itemize}
    \item $\sigma(p^{M}_{n}) = p^{N}_{n}$.
    \item For every $i\leq n$, $\sigma \uhr M^{(i)} \to N^{(i)}$ is $\Sigma_1$-preserving.
\end{itemize}
If the second requirement above is weakened so that $\sigma \uhr M^{(i)} \to N^{(i)}$ is $\Sigma_1$-preserving for every $i<n$ and $\sigma\uhr M^{(n)}\to N^{(n)}$ is $\Sigma_0$-preserving, then the embedding $\sigma\colon M\to N$ is called a weakly $r\Sigma_{n+1}$-preserving embedding.

\subsection{The stationary sets $\mathcal{S}^\alpha$}

Let $L[\E]$ be a minimal extender model for the existence of a $(\kappa,\kappa^{++})$-extender for some cardinal $\kappa$. Throughout this section, we denote $K = L[\E]$. For every ordinal $\eta$, $K|\eta$ is the structure $\left(J^{\E}_\eta,\in,E_\eta\right)$, which is active if $E_\eta \neq \emptyset$, and passive otherwise. 

Throughout this subsection, fix an uncountable cardinal $\alpha\leq \kappa$. $L[\E]$ carries $\square_\alpha$-sequence, $\vec{c^\alpha}= \la c^\alpha_\eta \mid \eta \in \C^\alpha\ra$ where 
$$\C^\alpha = \{ \eta \in (\alpha,\alpha^+) \mid K|\eta \elem_{\Sigma_\omega} K|\alpha^+\}$$ 
is a closed unbounded subset of $\alpha^+$. For every $\eta\in \mathcal{C}^\alpha$, the definition of $c^\alpha_\eta$ is based on the collapsing structure $N^{\E}_\eta = K|\tau(\eta)$ of $\eta$, defined as following.

\begin{definition} \label{Def: CollapsingStructure}
For $\eta \in \C^\alpha$, let $\tau(\eta)$ be the maximal $\tau\in \left( \eta, \alpha^{+} \right)$, such that $\eta$ is a cardinal in $K|\tau$ (and, if there is no such $\tau$, set $\tau = \eta$). The structure $K|\tau(\eta)$ is called the collapsing structure of $\eta$, and is denoted by $N^{\E}_{\eta}$. 
\end{definition}
Whenever $\E$ is clear from the context, we denote $N_{\eta} = N^{\E}_\eta$. The projecta, standard parameters and reducts of a collapsing structure $N_{\eta}$ will be denoted by $\rho^{N_{\eta}}_{n}$, $p^{N_\eta}_n$ and $N^{(n)}_{\eta}$. Denote by $h^{N_{\eta}}_{n+1} \colon \omega\times \left( N^{(n+1)}_{\eta} \right)^{<\omega} \to N_{\eta} $ the $(n+1)$-th Skolem function (see remark \ref{Rmk:sigma1Skolemfunction} below). The $h^{N_\eta}_{n+1}$-hull of $\rho^{N_\eta}_{n+1}\cup \{ p^{N_\eta}_{n+1} \}$ is $N_{\eta}$. Thus, every element of $N_\eta$ has the form $h^{N_\eta}_{n+1}(k, \xi, p^{N_\eta}_{n+1})$ for some $k<\omega$ and $\xi<\rho^{N_\eta}_{n+1}$. Let $n_\eta < \omega$ be minimal such that $\alpha = \rho^{N_\eta}_{n_{\eta}+1}$.


By the maximality of $\tau(\eta)$ in the definition of the collapsing structure $N_\eta$ and the fact it is acceptable, there exists a surjection from $\alpha$ onto $\eta$ which is definable over $N_{\eta}$. It follows that $\alpha$ is the ultimate projectum of $N_{\eta}$.

It will be useful for our purposes to prove that in $L[\E]$, for every $\alpha \geq \omega_2$ there is a stationary set of nice collapsing structures.

\begin{proposition} \label{Prop: Stationary set of passive collapsing structures}
    For every cardinal $\alpha > \omega_2$ the set
\begin{align*}
\mathcal{S}^\alpha = 
\{ \eta \in \C^\alpha  \mid \  &
 N_\eta \text{ is passive, } \eta\in N_\eta,\\
 &n_\eta \geq 2, \  \rho^{N_\eta}_1 = \ldots = \rho^{N_\eta}_{n_\eta} = N_\eta\cap Ord,\\
 &p_{n_\eta+1}^{N_\eta} = \{ \delta \} \mbox{ where } \delta\in (\alpha,\alpha^{+}) \mbox{ is the least ordinal outside of }  h^{N_\eta}_{n_\eta+1}[\omega\times \alpha], \\
 &\cf(N_\eta \cap Ord) = \omega_1, \mbox{ and } h^{N_\eta}_{n_\eta+1}[\omega \times \omega_1] \text{ is cofinal in } N_\eta \cap Ord.\} 
    \end{align*}
    is stationary in $\alpha^+$.
\end{proposition}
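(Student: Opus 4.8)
The plan is to prove that $\mathcal{S}^\alpha$ is stationary by a reflection argument: given an arbitrary club $D\subseteq\alpha^+$, I will produce $\eta\in D\cap\mathcal{S}^\alpha$ as the supremum of a suitable elementary chain of length $\omega_1$, and then verify the fine-structural requirements on its collapsing structure $N_\eta$.

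For the construction of $\eta$, fix a large ordinal $\Theta$ (say $\Theta=\alpha^{+3}$) and build a continuous $\subseteq$-increasing, internally approachable chain $\vec{X}=\la X_i\mid i<\omega_1\ra$ of elementary substructures of $K|\Theta$ with $\{D,\alpha,\alpha^+\}\cup\alpha\subseteq X_0$, $|X_i|=\alpha$ and $X_i\cap\alpha^+\in\alpha^+$ for all $i$. Put $X=\bigcup_{i<\omega_1}X_i$ and $\eta:=X\cap\alpha^+=\sup_{i<\omega_1}(X_i\cap\alpha^+)$. Then $\cf(\eta)=\omega_1$; since $D\in X_0$ is club, $D\cap\eta$ is cofinal in $\eta$, hence $\eta\in D$; and since the transitive collapse of $X\cap K|\alpha^+$ is, by condensation for $L[\E]$, an initial segment of $K$ of height $\eta$, we get $K|\eta\elem_{\Sigma_\omega}K|\alpha^+$, i.e. $\eta\in\C^\alpha$. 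Note also $\eta\in N_\eta$ is automatic, since $\tau(\eta)>\eta$.

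The core of the proof is the fine-structural analysis of $N_\eta=K|\tau(\eta)$. First, $N_\eta$ is passive: by the minimality of $V=L[\E]$, the only active levels at indices $\le\alpha^+$ are those determined by $E$ and its proper initial segments, and the standard structure theory rules these out as collapsing structures of ordinals in $\C^\alpha$. Second --- the delicate clause --- one shows $n_\eta\ge2$, that $\rho^{N_\eta}_1=\dots=\rho^{N_\eta}_{n_\eta}=N_\eta\cap Ord$ and $\rho^{N_\eta}_{n_\eta+1}=\alpha$, and that $p^{N_\eta}_{n_\eta+1}=\{\delta\}$ with $\delta$ least outside $h^{N_\eta}_{n_\eta+1}[\omega\times\alpha]$. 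The mechanism: $K|\alpha^+$ is sound with $\rho_\omega(K|\alpha^+)=\alpha^+$, and the manner in which $\eta$ is collapsed inside $K$ mirrors, via the inverse collapse $\pi_\eta\colon K|\eta\to K|\alpha^+$ together with the interpolation and extension-of-embeddings machinery of \cite{SchindlerZeman2009FineStruture}, the manner in which the relevant structure at level $\alpha^+$ projects to $\alpha^+$; running this through fine-structural condensation transfers the clean projectum pattern to $N_\eta$, and the minimality of standard parameters then forces $p^{N_\eta}_{n_\eta+1}$ into the stated singleton form. Finally, $\cf(\tau(\eta))=\omega_1$ and the cofinality of $h^{N_\eta}_{n_\eta+1}[\omega\times\omega_1]$ in $\tau(\eta)$ both follow from internal approachability, which makes every proper initial segment of $\vec{X}$ available in the next model, exhibiting $\tau(\eta)$ as the supremum of an increasing $\omega_1$-sequence of $h^{N_\eta}_{n_\eta+1}$-values on arguments below $\omega_1$.

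I expect the main obstacle to be precisely the clean projectum pattern with $n_\eta\ge2$ and the singleton standard parameter: these fail for a general $\eta\in\C^\alpha$, so the chain $\vec{X}$ --- in particular the choice of which expansion of $K|\Theta$ the $X_i$ are elementary in, and which Skolem functions they are closed under --- must be set up so that the collapse of $\eta$ in $K$ is witnessed at the correct $\Sigma_n$-level and not prematurely. The remaining conditions (membership in $D\cap\C^\alpha$, cofinality $\omega_1$, the cofinal-hull clause, and $\eta\in N_\eta$) are comparatively routine consequences of using a length-$\omega_1$ internally approachable chain together with condensation.
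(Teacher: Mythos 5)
There is a genuine gap, and it sits exactly where you flag it: your construction never forces the transitive collapse you produce to \emph{be} the collapsing structure $N_\eta$, nor does it control the projectum/parameter pattern of $N_\eta$. If $X=\bigcup_{i<\omega_1}X_i$ is a fully elementary hull of $K|\alpha^{+3}$ of size $\alpha$ with $\alpha\subseteq X$, its collapse $\bar N=K|\bar\Theta$ does not definably collapse $\eta=X\cap\alpha^+$ (indeed $\eta=(\alpha^+)^{\bar N}$ and $X$ is not a hull of $\alpha$ together with finitely many points, since it contains the $\omega_1$-many characteristic ordinals of the chain); hence $N_\eta=K|\tau(\eta)$ is a longer initial segment whose projecta, $n_\eta$, and standard parameter are in no way governed by your chain, and for a ``generic'' $\eta\in\C^\alpha$ these conditions simply fail. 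The content of the proposition is precisely the missing step, and the paper obtains it by a different arrangement: it works inside an auxiliary structure $N^*=\bigcup_{\nu<\omega_1}K|\xi_\nu$ built from the $r\Sigma_2$-elementary levels of $K$ between $\alpha^{++}$ and $\alpha^{+3}$ (so that $N^*$ is passive, $\rho_1^{N^*}=\rho_2^{N^*}=N^*\cap\mathrm{Ord}$, and $h^{N^*}_3[\omega\times\omega_1]$ is cofinal), picks $\delta$ \emph{least} outside $h^{N^*}_3[\omega\times\alpha]$, and collapses the restricted Skolem hull $h^{N^*}_3[\omega\times(\alpha\cup\{\delta\})]$; only because the hull is generated by $\alpha\cup\{\delta\}$ at the $\Sigma_3$ level does the collapse have ultimate projectum $\alpha$, definably collapse $\eta$, and (after the downward extension of embeddings lemma, the solidity-witness computation showing $p_3=\{\delta\}$, and a comparison argument) equal $N_\eta$ with the stated properties. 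Your appeal to ``interpolation and extension-of-embeddings machinery'' is naming the right toolbox, but without this specific choice of hull there is nothing for it to act on.

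There is also a structural obstruction to your direct-stationarity format. To land $\eta$ in an arbitrary club $D$ you put $D$ into the hull; but $D$ is an arbitrary subset of $\alpha^+$, and once it is needed to generate the hull it contaminates the standard parameter of the collapse, destroying the requirement that $p^{N_\eta}_{n_\eta+1}$ be the singleton $\{\delta\}$ with $\delta$ least omitted from $h^{N_\eta}_{n_\eta+1}[\omega\times\alpha]$. The paper sidesteps this by arguing by contradiction with the \emph{constructibly least} club $C^*$ disjoint from $\mathcal{S}^\alpha$: being lightface definable over $K|\alpha^{++}$, it lies in $h^{N^*}_3[\omega\times\{0\}]$ and enters the hull at no parameter cost, so $\eta$ is automatically a limit point of $C^*$, yielding the contradiction. (A smaller inaccuracy: $\eta\in N_\eta$ is not ``automatic'' --- by definition $\tau(\eta)$ may equal $\eta$ --- it holds in the paper's construction because $\eta=(\alpha^+)^{N}$ for the collapsed structure $N$, which is strictly taller than $\eta$.) To repair your proof you would either have to reproduce the paper's restricted-hull construction with the least-counterexample trick, or show how an arbitrary club can be accommodated without enlarging the standard parameter, which your proposal does not do.
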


\begin{proof}
    Suppose otherwise, and let $C^* \subseteq \alpha^+$ be a closed unbounded set disjoint from $S^\alpha$,  which is minimal in the canonical well-order of $L[
    \E]$. Therefore $C^* \in K|\alpha^{++}$ is definable in $K|\alpha^{++}$ without parameters. 
    For each $\nu < \omega_1$, let $\xi_\nu$ be the $\nu$-th ordinal in the interval $(\alpha^{++},\alpha^{+3})$ satisfying  $K|\xi_\nu \elem_{r\Sigma_2} K|(\alpha^{+3})$.\footnote{Namely, the identity map from $K|\xi_\nu$ to $K|\alpha^{+3}$ is $r\Sigma_2$-preserving.} 
    Let $\xi^* = \bigcup_{\nu < \omega_1}\xi_\nu$ and  $N^* = \bigcup_{\nu < \omega_1} K|\xi_\nu$. 
    We have:
    \begin{itemize}
        \item $N^*\elem_{r\Sigma_{2}}K|\alpha^{+3}$.
        \item $N^*$ is passive. This follows since $K\mid \alpha^{+3}$ itself is passive by our anti-large cardinals assumption that $K$ is the minimal model containing a $(\mu, \mu^{++})$-extender for some cardinal $\mu$.
        
        \item $\alpha^{+2} \in N^*$ is the largest cardinal in $N^*$, and is $\Pi_2$-definable  without parameters in $N^*$.
        Similarly, $\alpha,\alpha^+ \in N^*$ are definable without parameters, and therefore also $C^*$. In particular, $C^*\in h^{N^*}_3[\omega \times \{0\}]$. 
        
        \item For each $\nu < \omega_1$, $\xi_\nu$ is the $\nu$-th ordinal above $\alpha^{+2}$ in $N^*$ which is closed under $h_2^{N^*}$. Therefore, $\xi_\nu \in  h^{N^*}_3[\omega \times \omega_1]$. Hence, $h^{N^*}_3[\omega \times \omega_1]$ is cofinal in $N^* \cap Ord = \xi^*$. 
    
       \item $\rho_1^{N^*} = \rho^{N^*}_2 = \xi^*$ (this follows since since $N^*\elem_{r\Sigma_2}K\mid \alpha^{+3}$, and so $N^*$ and its first reduct, satisfy $\Sigma_1$-separation). 
       
        \item $cf(N^* \cap Ord) = cf(\xi^*) = \omega_1$.
        \end{itemize}
Let $\delta$ be the least ordinal such that $\delta\notin h^{N^*}_{3}[\omega\times \alpha]$. Clearly $\delta\in (\alpha, \alpha^{+})$. Define
$$X = {h}^{N^*}_3[\omega \times (\alpha\cup \{  \delta  \})]$$
Denote by $N$ the transitive collapse of $X$. Let $\sigma$ be the inverse of the transitive collapse map. We will argue below, in a sequence of claims, that for some $\eta\in \mathcal{C}^{\alpha}$, $N = N_{\eta}$, and  $\eta\in C^*\cap \mathcal{S}^{\alpha}$, which is a contradiction. 
\begin{claim}\label{Claim: prp Calpha, collapsing structure}
    $N$ is a premouse and has the form $N = J^{\E'}_{\tau}$ for some coherent extender sequence $\E'$ and ordinal $\tau$. Moreover, $\sigma\colon N\to N^*$ is weakly $r\Sigma_3$-preserving, $\rho^{J^{\E'}}_{3} = \alpha$ and $p^{J^{\E'}}_{3} = \{\delta \} $. 
\end{claim}

\begin{proof}
    denote by $(N^*)^{(2)}$ the second reduct of $N^*$ (since $N^*$ and its first reduct are models of $\Sigma_1$-separation, the universe of $(N^*)^{(2)}$ is the same as the universe of $N^*$). Consider the hull 
    $$Y = {h}^{(N^*)^{(2)}}_1[\omega \times (\alpha\cup \{  \delta  \})].$$
    Let $N'$ be the transitive collapse of $Y$, and let $\pi \colon N'\to (N^*)^{(2)}$ be the inverse of the collapse map. Note that $\delta$ is collapsed to itself, since every ordinal below $\delta$ already belongs to $h^{N^*}_{3}[\omega\times \alpha]$. Since $\pi$ is $\Sigma_0$, $N'$ is the second reduct of a $J$-structure $J^{B}_{\tau}$ for some predicate $B$ and ordinal $\tau$ (see Lemma 3.3 in \cite{SchindlerZeman2009FineStruture}). Moreover, $\pi$ lifts to an $r\Sigma_3$-preserving embedding $\bar{\pi}$ from $J^{B}_{\tau}$ to $N^*$ (this follows from the downward extension of embeddings lemma, see Lemma 3.1 in \cite{SchindlerZeman2009FineStruture}). Also, the lift $\bar{\pi}$ is $\Sigma_1$-preserving, and a $J$-structure which can be $\Sigma_1$-embedded into a premouse is a premouse (see Lemma 4.1.3 (a) in \cite{Zeman-Book}). Thus, for some extender sequence $\E'$, $J^{B}_{\tau} = J^{\E'}_{\tau}$. 
    
    Since $J^{\E'}_\tau$ is $r\Sigma_3$ embedded in $N^*$, we have that $\rho^{J^{\E'}_\tau}_1 = \rho^{J^{\E'}_\tau}_{2} = \tau$. By the uniform definability of Skolem functions, and the fact that $\delta$ collapses to itself, we have that $J^{\E'}_\tau = h^{J^{\E'}_\tau}_{3}[\omega\times (\alpha\cup \{\delta \})]$. In particular $\rho^{J^{\E'}_\tau}_3 = \alpha$. 
    
    We argue that $p^{J^{\E'}_\tau}_3 = \{\delta\}$. Assume that $A\subseteq \alpha$ is $\Sigma_1$ definable over $\left(J^{\E'}_\tau\right)^{(2)}$ from some parameter $a<_{lex}{\delta}$. Let $A^*\subseteq \alpha$ be the set defined from $a$ using the same $\Sigma_1$-formula over $(N^*)^{(2)}$. Then $A^*\in h^{(N^*)^{(2)}}_{1}[\omega\times \alpha]$, by the choice of $\delta$ and since $a<_{lex} \{\delta\}$. Thus $A^*\in Y$ collapses to $A$.
    
    Finally, note that $\bar{\pi}$ maps each element of the form $h^{J^{\E'}_\tau}_{3}(n,\xi, \delta)$ to $h^{N^*}_3(n,\xi, \delta)$ (where $n<\omega$ and $\xi<\alpha$), and thus $X$ transitively collapse to $J^{\E'}_\tau$. It follows that $\bar{\pi} = \sigma$.
\end{proof}

$N$ is passive, since $N^*$ is passive. $N$ is iterable, by embedding each of its iterates to a corresponding iterate of $N^*$. We argue that $\E' = \E$, and thus it will follow that $N$ is an initial segment of $K$. This is proved by a  standard comparison argument 
(see for example, \cite{Mitchell-HB} , \cite{Zeman-Book}, \cite{Steel-HB}). We include a proof for readers who are less familiar with comparison arguments.

\begin{claim} \label{claim: comparison argument}
$N= K|\tau$ where $\tau = N\cap \mbox{Ord}$.
\end{claim}
\begin{proof}    
    Denote $M = L[\E]$. Let us argue that $N$ is an initial segment of $M$. Assume otherwise, and run the comparison between the structures $M, N$. Assume that the length of the comparison is $\theta\leq \mbox{Ord}$, and the structures built in the comparison process are $\la N_{i} \colon i\leq\theta \ra$, $\la M_{i} \colon i\leq\theta \ra$. Note that the structures $N, M$ agree on extenders indexed below $\alpha$, since $\mbox{crit}(\sigma)\geq \alpha$. Thus, if an extender is applied on one of the sides in the comparison process, its critical point is at least $\alpha$ (it cannot be below $\alpha$, since this, together with the fact that the extender taken is indexed above $\alpha$, contradicts the smallness assumption of $L[\E]$ being a minimal model with a $(\kappa,\kappa^{++})$-extender).

    Since $M$ is universal, there is no truncation on the $N$-side. We argue that $N_{\theta} = N$, namely, $N$ does not move in the iteration. Indeed, otherwise, since there is no truncation on the $N$-side, $N_\theta$ is not sound. Thus it cannot be a strict initial segment of $M_\theta$. So, again by universality, $N_{\theta} = M_{\theta}$, and thus $\left(\mathcal{P}(\alpha)\right)^{N} = \left(\mathcal{P}(\alpha)\right)^{N_\theta} = \left(\mathcal{P}(\alpha)\right)^{M_\theta}$. Since the powerset of $\alpha$ in $N$ is partial to the powerset of $\alpha$ in $M$, the $M$-side must have been truncated in the iteration process. At least one extender was applied in the iteration process on the $M$-side, since otherwise, $N_\theta$ is an initial segment of $M$, contradicting the fact that $N_\theta$ is not sound. Thus, $M_{\theta}$ was truncated in order to apply an extender on the appropriate level to which it can be applied.

    Let $M'$ be the model obtained after the last truncation on the $M$-side (in particular, $M_{\theta}$ is an iteration of $M'$ with no drops). $N$ is the core of $N_\theta = M_\theta$ (that is, the Skolem hull of $N_{\theta}$ below its projectum joint with its standard parameter), and since $M' $ is sound, $N = M'$. This contradicts the fact that some iterates have been taken above $M'$ with its extenders.

    Thus the $N$-side does not move in the comparison process. So either $N = M _{\theta}$ or $N$ is a strict initial segment of $M_{\theta}$. If an extender is applied on the $M$-side, it must have critical point $\alpha$, and index below $\left(\alpha^+\right)^{N}$. Thus, such an extender is a partial extender, and $M_\theta$ is not sound. So the case $N = M_{\theta}$ is possible only in the case that $N$ is an initial segment of $M$. The other possible case is that $N$ is a strict initial segment of $M_{\theta}$, and furthermore, $N$ is an initial segment of $M_{\theta}\uhr (\alpha^+)^{M_{\theta}}$ (since $N$ definably collapses to $\alpha$). It follows that that $N $ is actually an initial segment of $M\uhr  \left( \alpha^{+} \right)^{ M }$, since $M_\theta \uhr \left( \alpha^{+} \right)^{ M _{\theta} } = M\uhr \left( \alpha^{+} \right)^{ M _{\theta} }$ by the coherence of extender sequences. Thus, in any case, $N$ is an initial segment of $M$, as desired.
\end{proof}

The following claim yields the contradiction which  concludes the proof of proposition \ref{Prop: Stationary set of passive collapsing structures}. 

\begin{claim} \label{claim: the transitive collapse is indeed a collapsing structure}
Let $\eta = \alpha^+ \cap X = \mbox{crit}(\sigma)$. Then $N = N_\eta$, and $\eta\in \mathcal{S}^{\alpha}\cap C$.
\end{claim}

\begin{proof}
We first argue that $\eta\in \mathcal{C}^{\alpha}$, namely $K\mid \eta \elem_{\Sigma_\omega} K\mid \alpha^{+}$. Given  $a\in K\mid \eta$ and formula $\varphi(a)$, consider the $\Delta_0$-formula $\varphi^{ J^{\E}_{\eta} }(a)$ obtained from $\varphi$ by bounding every quantifier into $J^{\E}_\eta$. Then-- 
\begin{align*}
        K\mid \eta \vDash \varphi(a) & \iff N\vDash \varphi^{ J^{\E}_{\eta} }(a)\\
        &\iff N^* \vDash \varphi^{ J^{\E}_{\alpha^+} }(\pi(a))\\
        &\iff K\mid \alpha^+ \vDash \varphi(\pi(a))
\end{align*}
Since $\pi(a) = a$, we conclude that $K\mid \eta \vDash \varphi(a) \iff   K\mid \alpha^+ \vDash \varphi(a)$, as desired.

We argue that $N = N_{\eta}$. $\eta$ is a cardinal in $N = J^{\E}_{\tau}$ since it is equal to $(\alpha^+)^N$. However, $\eta$ collapses in $J^{\E}_{\tau+\omega}$, since $h^{N}_{3}\uhr \omega\times \alpha\cup\{\delta\} $ is onto $\eta$ and $r\Sigma_3$-definable over $N$.

Finally, by the construction of $N^*$, $h^N_3[\omega \times \omega_1]$ is cofinal in $N \cap Ord$, and $\cf(N \cap Ord) = \cf(N^* \cap Ord)  = \omega_1$. 

We showed above that $\eta\in \mathcal{S}^\alpha$. We also have that $\eta \in C^*$, as $\eta = X \cap \alpha^+$ must be a limit point of $C^* \in X$. So $\eta\in \mathcal{S}^{\alpha}\cap C^*$.
\end{proof}
\end{proof} 

A similar argument can be used to establish the following:

\begin{proposition} \label{Prop: Stationary set of passive collapsing structures on an arbitrary cofinality}
For every cardinal $\alpha > \omega_2$, $2\leq n<\omega$ and a regular cardinal $\omega_1\leq \zeta\leq\alpha$, the set    
        \begin{align*}
    \mathcal{S}^{\alpha, \zeta , n} = 
    \{ \eta \in \C^\alpha  \mid \  &
    N_\eta \text{ is passive, }, \eta\in N_\eta\\
    &n_\eta = n, \ \rho^{N_\eta}_1 = \ldots = \rho^{N_\eta}_{n_\eta} = N_\eta\cap Ord,\\
    &p_{n_\eta+1}^{N_\eta} = \{ \delta \} \mbox{ for some } \delta\in (\alpha,\alpha^{+}), \\
    &\cf(N_\eta \cap Ord) = \zeta, \mbox{ and } h^{N_\eta}_{n_\eta+1}[\omega \times \zeta]     \text{ is cofinal in } N_\eta \cap Ord.\} 
        \end{align*}
is stationary in $\alpha^+$. Furthermore, for every set $X\in K \mid \alpha^{+3}$ which is lightface $\Sigma_{n}$-definable over $K \mid \alpha^{+3}$, the subset of $S^{\alpha.\zeta,n}$ consisting of $\eta$-s such that there exists a weakly $r\Sigma_{n+1}$-preserving map $\pi\colon N_\eta\to K$ with $X\in \mbox{Im}(\pi)$ is stationary.
\end{proposition}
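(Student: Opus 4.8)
It will suffice to establish the second (``furthermore'') assertion, since the first follows by taking $X=\emptyset$, which belongs to $\mbox{Im}(\pi)$ for every $\pi$. My plan is to repeat the proof of Proposition~\ref{Prop: Stationary set of passive collapsing structures} almost verbatim, with three changes: replace $r\Sigma_2$-elementary substructures by $r\Sigma_n$-elementary ones, replace the length-$\omega_1$ chain by a chain of length $\zeta$, and carry the set $X$ along through the hull construction.

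I would suppose toward a contradiction that the assertion fails for some lightface $\Sigma_n$-definable $X\in K|\alpha^{+3}$, and let $C^*\subseteq\alpha^+$ be the club, minimal in the canonical well-order of $L[\E]$, disjoint from the set of $\eta\in\mathcal{S}^{\alpha,\zeta,n}$ admitting a weakly $r\Sigma_{n+1}$-preserving map $\pi\colon N_\eta\to K$ with $X\in\mbox{Im}(\pi)$. For each $\nu<\zeta$ let $\xi_\nu$ be the $\nu$-th ordinal of $(\alpha^{++},\alpha^{+3})$ with $K|\xi_\nu\elem_{r\Sigma_n}K|\alpha^{+3}$; these form a club of order type $\alpha^{+3}$, so, since $\zeta\le\alpha<\cf(\alpha^{+3})$, the ordinal $\xi^*=\bigcup_{\nu<\zeta}\xi_\nu$ is below $\alpha^{+3}$ and has cofinality $\zeta$. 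Put $N^*=\bigcup_{\nu<\zeta}K|\xi_\nu$. As in the original argument: $N^*\elem_{r\Sigma_n}K|\alpha^{+3}$; $N^*$ is passive, since $K|\alpha^{+3}$ is, by the minimality of $L[\E]$; $\rho_1^{N^*}=\cdots=\rho_n^{N^*}=\xi^*$, as $N^*$ and its first $n-1$ reducts inherit $\Sigma_1$-separation; $\alpha^{++},\alpha^+,\alpha\in N^*$ are definable over $N^*$ without parameters (for $n\ge 2$, $r\Sigma_n$-elementarity preserves $\Pi_2$-statements downward), hence so is $C^*$, and by hypothesis so is $X$; and $h^{N^*}_{n+1}[\omega\times\zeta]$ is cofinal in $\xi^*$ with $\cf(\xi^*)=\zeta$, since each $\xi_\nu$ is the $\nu$-th ordinal of $N^*$ above $\alpha^{++}$ closed under $h_n^{N^*}$.

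Next I would let $\delta\in(\alpha,\alpha^+)$ be least with $\delta\notin h^{N^*}_{n+1}[\omega\times\alpha]$, set $Y=h^{N^*}_{n+1}[\omega\times(\alpha\cup\{\delta\})]$, and let $N$ be its transitive collapse and $\sigma\colon N\to N^*$ the inverse collapse. Following Claims~\ref{Claim: prp Calpha, collapsing structure}--\ref{claim: comparison argument} with $n+1$ in place of $3$ and $n$ in place of $2$ --- invoking the downward extension of embeddings lemma at level $n$, the fact that a $J$-structure $\Sigma_1$-embeddable into a premouse is a premouse, and the comparison of $L[\E]$ with $N$ (every extender applied in the comparison has critical point $\ge\alpha$, by minimality, so the $N$-side does not move) --- one obtains that $N=J^{\E}_\tau=K|\tau$ for $\tau=N\cap Ord$, that $\sigma$ is weakly $r\Sigma_{n+1}$-preserving, and that $\rho_{n+1}^N=\alpha$ with $p_{n+1}^N=\{\delta\}$. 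Setting $\eta=Y\cap\alpha^+=\mbox{crit}(\sigma)$, the argument of Claim~\ref{claim: the transitive collapse is indeed a collapsing structure} then gives $\eta\in\mathcal{C}^\alpha$, $N=N_\eta$, $\eta\in N_\eta$, $n_\eta=n$, $\rho_1^{N_\eta}=\cdots=\rho_n^{N_\eta}=N_\eta\cap Ord$, $\cf(N_\eta\cap Ord)=\zeta$, and $h^{N_\eta}_{n+1}[\omega\times\zeta]$ cofinal in $N_\eta\cap Ord$, so $\eta\in\mathcal{S}^{\alpha,\zeta,n}$; moreover $\eta$ is a limit point of $C^*\in Y$, so $\eta\in C^*$. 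Finally $X\in h^{N^*}_{n+1}[\omega\times\{0\}]\subseteq Y$, so $X$ has a preimage under the collapse, i.e.\ $X\in\mbox{Im}(\sigma)$; composing $\sigma$ with the identity inclusion $N^*\hookrightarrow K$, which is $r\Sigma_n$-preserving, yields a weakly $r\Sigma_{n+1}$-preserving $\pi\colon N_\eta\to K$ with $X\in\mbox{Im}(\pi)$, contradicting $\eta\in C^*$.

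The step I expect to require the most care is confirming, for arbitrary $n$, that the collapse $N$ of $h^{N^*}_{n+1}[\omega\times(\alpha\cup\{\delta\})]$ has exactly the prescribed fine-structural profile --- $\rho_1^N=\cdots=\rho_n^N=N\cap Ord$, $\rho_{n+1}^N=\alpha$, $p_{n+1}^N=\{\delta\}$, hence $n_\eta=n$. This reduces to propagating $\Sigma_1$-separation through the first $n-1$ reducts and applying the downward extension of embeddings lemma at the $n$-th level, exactly parallel to the $n=2$ case, so it is conceptually routine but notation-heavy. The only genuinely new point is that the map $\pi\colon N_\eta\to K$ produced at the end is merely weakly $r\Sigma_{n+1}$-preserving, since the inclusion $N^*\hookrightarrow K$ is only $r\Sigma_n$-preserving --- which is precisely the level of preservation demanded in the statement.
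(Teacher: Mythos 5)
Your proposal is correct and follows essentially the same route as the paper: contradiction via the constructibly least club $C^*$, the structure $N^*=\bigcup_{\nu<\zeta}K|\xi_\nu$ built from a $\zeta$-chain of $r\Sigma_n$-elementary levels in $(\alpha^{++},\alpha^{+3})$, the hull $h^{N^*}_{n+1}[\omega\times(\alpha\cup\{\delta\})]$ with $\delta$ least outside $h^{N^*}_{n+1}[\omega\times\alpha]$, and the downward-extension/comparison argument identifying the collapse with $N_\eta$ for some $\eta\in\mathcal{S}^{\alpha,\zeta,n}\cap C^*$ with $X$ in the range of the inverse collapse. The only (harmless) deviation is how $X$ enters the hull: the paper simply requires $X\in J^{\E}_{\xi_\nu}$ when choosing the $\xi_\nu$'s, whereas you derive $X\in N^*$ by transferring the lightface $\Sigma_n$ definition along the $r\Sigma_n$-elementary inclusion; both then yield $X\in h^{N^*}_{n+1}[\omega\times\{0\}]$.
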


\begin{proof}
Assume that $\mathcal{S}^{\alpha,\zeta,n}$ is not stationary. Repeating the same argument of Proposition \ref{Prop: Stationary set of passive collapsing structures}, let $C^*\subseteq \alpha^+$ be the constructibly-least club disjoint from $\mathcal{S}^{\alpha,\zeta,n}$. We can find $N^*\elem_{r\Sigma_{n}} J^{\E}_{\lambda^{+3}}$ such that:
    \begin{itemize}
        \item $N^*$ is passive.
        \item $C^*\in h^{N^*}_{n+1}[\omega\times \{ 0 \}]$.
        \item $h^{N^*}_{n+1}[\omega\times \zeta]$ is cofinal in $N^*\cap \mbox{Ord}$.
        \item $\rho^{N^*}_1 =\ldots = \rho^{N^*}_{n} =  N^*\cap \mbox{Ord}$. 
        \item For the "furthermore" part, assume that $X$ is given and $C^*$ is disjoint from the relevant subset of $\mathcal{S}^{\alpha,\zeta,n}$. Since $X$ is lightface $\Sigma_n$-definable in $J_{\alpha^{+3}}$, we can assume  $ X\in h^{N^*}_{n+1}[\omega\times \{ 0 \}]$.
    \end{itemize}
    To find such $N^*$, pick, for every $\nu<\zeta$, the $\nu$-th ordinal $\xi_\nu\in (\alpha^{++}, \alpha^{+3})$ such that $J^{\E}_{\xi_{\nu}} \elem_{r\Sigma_{n}} J^{\E}_{\alpha^{+3}}$ and $X\in J^{\E}_{\xi_\nu}$. Define $N^* = \bigcup_{\nu<\delta} J^{\E}_{\xi_\nu}$. Then $N^*$ is passive and   $N^*\elem_{r\Sigma_{n}} J^{\E}_{\alpha^{+3}}$. We have that $h^{N^*}_{n+1}[\omega\times \zeta]$ cofinal in $\sup_{\nu<\zeta}\xi_{\nu} = N^*\cap \mbox{Ord}$, since each $\xi_{\nu}$ belongs to $h^{N^*}_{n+1}[\omega\times \zeta]$, as the $\nu$-th ordinal $\xi > \alpha^{++}$ such that $J^{\E}_{\xi}$ is closed under $h^{N^*}_{n}$ (the fact that $n\geq 2$ ensures that $\alpha\in h^{N^*}_{n+1}[\omega\times \{0\}] $). Also, each of the first $(n-1)$-many reducts of $N^*$ satisfies $\Sigma_1$-separation, and thus $\rho^{N^*}_{i} = N^*\cap \mbox{Ord}$ for every $i\leq n$. 

    Let $\delta\in (\alpha,\alpha^+)$ be the least element outside of $h^{N^*}_{n+1}[\omega\times \alpha]$. Let $N$ be the transitive collapse of $h^{N^*}_{n+1}[\omega \times (\alpha\cup \{\delta\})]$. Arguing as in Proposition \ref{Prop: Stationary set of passive collapsing structures}, $N = N_{\eta}$ for some $\eta\in \mathcal{S}^{\alpha,\zeta,n}\cap C^*$. For the "furthermore" part, let $\pi \colon N_{\eta}\to J^{\E}_{\alpha^{+3}}$ be the inverse of the transitive collapse map which collapses $N^*$ to $N$. Then $X\in \mbox{Im}(\pi)$ since $X\in h^{N^*}_{n+1}[\omega\times \{ 0 \} ]$.
\end{proof}

\subsection{Good and Very Good Points}\label{subsec:GoodAndVeryGood}

In this subsection we revisit the construction of the $\square_{\alpha}$-sequence $\la c^{\alpha}_{\eta} \colon \eta\in \mathcal{C}^{\alpha} \ra$. The main goal is to associate to each point of $c^{\alpha}_{\eta}$ an ordinal $\epsilon$ of the reduct $N^{n_\eta}_{\eta}$. Such points $\epsilon$ will be utilized in the next subsection to establish a connection between the $\square_{\alpha}$-sequence, and other $\square_{\alpha'}$-sequences for suitably chosen cardinals $\alpha'<\alpha$.

We follow the $\square_\alpha$ constrution to produce clubs $c^\alpha_\eta$ for every $\eta \in \bigcup_{\zeta < \kappa} S^{\alpha,\zeta}$. The special properties of the collapsing structures $N_\eta$ for $\eta \in S^{\alpha,\zeta}$ reduces the technical complexity of the construction of $c^\eta_\alpha$, and for the most part, allows us to follow the construction of square in $L$, as described in Handbook chapter \cite{SchindlerZeman2009FineStruture} by Schindler and Zeman.  Recall that the construction consists of two steps.
\begin{itemize}
    \item In the first step, a closed subset $d^{\alpha}_{\eta}\subseteq \eta$ is defined, for every $\eta\in {\C}^\alpha$, such that the system $\la d^{\alpha}_\eta \colon \eta\in {\C}^{\alpha} \ra$ is coherent. 
    \item In the second step, each club $d^{\alpha}_{\eta}$ is thinned out to the club $c^{\alpha}_{\eta}\subseteq \eta$ with $\otp( c^{\alpha}_\eta )\leq \alpha$, so that the system $\la c^{\alpha}_\eta \colon \eta\in {\C}^\alpha, \cf(\eta)>\omega \ra$ is the desired coherent sequence of clubs. 
\end{itemize}

We concentrate on the first step, and postpone the details of the second step to definition \ref{Def: finestructurethinout}.

Following \cite{SchindlerZeman2009FineStruture}, $d^{\alpha}_{\eta}$ is defined for every $\eta\in {\C}^\alpha$ as follows.

\begin{definition}
Fix $\eta\in {\C}^{\alpha}$. Define $d^{\alpha}_{\eta}\subseteq \eta$ to be the set of points $\eta'\in {\C}^{\alpha}\cap \eta$, such that:
\begin{enumerate}
    \item $n_{\eta'} = n_{\eta}$. In what follows, $n$ denotes this common value.
    \item There exists a weakly $r\Sigma_{n+1}$-embedding $\sigma \colon N_{\eta'}\to N_{\eta}$, such that:
    \begin{enumerate}
        \item $\sigma\uhr \eta' = id$.
        \item $\sigma( p^{N_{\eta'}}_{n+1} ) = p^{N_{\eta}}_{n+1}$.
        \item $\sigma(\eta') = \eta$ (under the assumption that $\eta'\in N_\eta$).
    \end{enumerate}
\end{enumerate}
\end{definition}

The square construction asserts that $d^\alpha_\eta$ is cofinal in $\eta$ whenever $\cf(\eta) > \aleph_0$. This has been shown by Jensen for $L$, and in a series of developments for stronger canonical inner models,   culminating in the work of Schimmerling and Zeman \cite{schimmerlingZemanSquare}.

\begin{theorem}
For each $\eta \in \mathcal{C}^\alpha$, if $cf(\eta) > \omega$ then $d^\alpha_\eta \subseteq \eta$ is cofinal in $\eta$.
\end{theorem}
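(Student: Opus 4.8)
The statement is the classical Jensen square-sequence cofinality lemma, so the plan is to run the standard condensation/hull argument, reusing the premouse and comparison analysis already carried out in Proposition~\ref{Prop: Stationary set of passive collapsing structures}. Fix $\eta \in \mathcal{C}^\alpha$ with $\cf(\eta) > \omega$, write $n = n_\eta$, and let $\gamma < \eta$ with $\gamma \geq \alpha$ be arbitrary; I will produce $\eta' \in d^\alpha_\eta$ with $\gamma < \eta' < \eta$. First I would build an increasing $\omega$-sequence $\gamma = \gamma_0 < \gamma_1 < \cdots < \eta$ by setting $\gamma_{m+1} = \sup\bigl(\eta \cap h^{N_\eta}_{n+1}[\omega \times (\gamma_m \cup \{p^{N_\eta}_{n+1},\eta\})]\bigr)$. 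Since $\eta$ is a cardinal in $N_\eta$ and $|\gamma_m|^{N_\eta} < \eta$, the hull at stage $m$ has size $< \eta$, so $\gamma_{m+1} < \eta$; and because $\cf(\eta) > \omega$, the limit $\eta' := \sup_m \gamma_m$ is still $< \eta$. Then $X := h^{N_\eta}_{n+1}[\omega \times (\eta' \cup \{p^{N_\eta}_{n+1},\eta\})] = \bigcup_m h^{N_\eta}_{n+1}[\omega \times (\gamma_m \cup \{p^{N_\eta}_{n+1},\eta\})]$ satisfies $X \cap \eta = \eta'$ (an ordinal), with $\eta',\eta \in X$.

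Next I would analyse the transitive collapse $N$ of $X$, with $\sigma : N \to N_\eta$ the anticollapse, which has critical point at least $\alpha$. Running the downward extension of embeddings argument exactly as in Claim~\ref{Claim: prp Calpha, collapsing structure}, $N$ is a premouse of the form $J^{\E'}_\tau$, $\sigma$ is weakly $r\Sigma_{n+1}$-preserving, $\rho^N_1 = \cdots = \rho^N_n = \tau = N\cap\mathrm{Ord}$, and $\rho^N_{n+1} = \alpha$ with $n$ minimal such (both inherited from $N_\eta$ through $\sigma$). The comparison argument of Claim~\ref{claim: comparison argument} then gives $\E' = \E$, hence $N = K|\tau$; since $\eta' = (\alpha^+)^N$ is a cardinal in $N$ that is definably collapsed over $N$ via $h^N_{n+1}$, we get $\tau = \tau(\eta')$ and $N = N_{\eta'}$, so $n_{\eta'} = n$. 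The condensation computation of Claim~\ref{claim: the transitive collapse is indeed a collapsing structure} shows $\eta' \in \mathcal{C}^\alpha$. Finally $\sigma \upharpoonright \eta' = \mathrm{id}$ because $X \cap \eta = \eta'$, and $\sigma(\eta') = \eta$ because $\eta \in X$ and $\eta = (\alpha^+)^{N_\eta}$ must collapse to $(\alpha^+)^N = \eta'$. Thus $\sigma$ satisfies clauses (2)(a) and (2)(c) of the definition of $d^\alpha_\eta$.

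The remaining step, and the one I expect to be the main obstacle, is clause (2)(b): $\sigma(p^{N_{\eta'}}_{n+1}) = p^{N_\eta}_{n+1}$. What comes for free is that $\bar p := \sigma^{-1}(p^{N_\eta}_{n+1})$ is a parameter over $N_{\eta'}$ from which every element of $N_{\eta'}$ is $h^{N_{\eta'}}_{n+1}$-definable using only ordinals $< \alpha$; the difficulty is to see it is the lexicographically least such, i.e.\ equals the standard parameter $p^{N_{\eta'}}_{n+1}$. For $L$ this is immediate, but for general $L[\E]$ it rests on solidity of $p^{N_\eta}_{n+1}$ together with an interpolation argument — precisely the delicate content isolated by Schimmerling and Zeman~\cite{schimmerlingZemanSquare} (see also \cite{SchindlerZeman2009FineStruture,Zeman-Book}). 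I would invoke solidity of $p^{N_\eta}_{n+1}$ and weak $r\Sigma_{n+1}$-preservation of $\sigma$ to transfer the standard parameter downwards; alternatively, for the points $\eta$ actually used in this paper — where $p^{N_\eta}_{n+1} = \{\delta\}$ with $\delta$ the least ordinal outside $h^{N_\eta}_{n+1}[\omega \times \alpha]$, as in Propositions~\ref{Prop: Stationary set of passive collapsing structures} and \ref{Prop: Stationary set of passive collapsing structures on an arbitrary cofinality} — solidity is trivial, since any $a <_{\mathrm{lex}} \sigma^{-1}(\delta)$ is an ordinal below $\delta$ and so any $r\Sigma_{n+1}$ fact it could witness is already realised inside $h^{N_{\eta'}}_{n+1}[\omega \times \alpha]$. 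In either case $\bar p = p^{N_{\eta'}}_{n+1}$, so $\sigma$ witnesses $\eta' \in d^\alpha_\eta$; as $\gamma < \eta' < \eta$ and $\gamma$ was arbitrary, $d^\alpha_\eta$ is cofinal in $\eta$.
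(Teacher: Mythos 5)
There is a genuine gap, and it sits at the very first step. Your recursion defines $\gamma_{m+1} = \sup\bigl(\eta \cap h^{N_\eta}_{n+1}[\omega \times (\gamma_m \cup \{p^{N_\eta}_{n+1},\eta\})]\bigr)$ using the \emph{unrestricted} $(n{+}1)$-st Skolem hull with the standard parameter thrown in. But $N_\eta$ is sound with $\rho^{N_\eta}_{n_\eta+1} = \alpha$: as the paper states just after Definition~\ref{Def: CollapsingStructure}, every element of $N_\eta$ has the form $h^{N_\eta}_{n_\eta+1}(k,\xi,p^{N_\eta}_{n_\eta+1})$ with $\xi < \alpha$. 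Since you start with $\gamma_0 \geq \alpha$, the hull at stage $0$ already contains $h^{N_\eta}_{n+1}[\omega\times(\alpha\cup\{p^{N_\eta}_{n+1}\})] = N_\eta$, so $\eta \cap X = \eta$ and $\gamma_1 = \eta$; no proper $\eta'$ is produced and the whole construction never gets off the ground. (Your cardinality justification is also off on its own terms: the hull has $V$-size $\alpha$ while $\eta$ has $V$-cofinality at most $\alpha$, so smallness of the hull would not bound its intersection with $\eta$ anyway; but soundness makes the hull literally everything.) This is not a fixable slip but a structural feature: any hull containing $\alpha\cup\{p^{N_\eta}_{n+1}\}$ and closed under the full $\Sigma_1$ Skolem function of the $n$-th reduct is all of $N_\eta$, which is exactly why the paper works with the \emph{restricted} hulls $h^{N_\eta,\epsilon}_{n+1}$ of Definition~\ref{Def:RestrictedSkolem} and with good points, why $\sigma_{\eta',\eta}$ is only \emph{weakly} $r\Sigma_{n+1}$-preserving, and why the lemma following Lemma~\ref{Lem: FineStructureGoodPts} shows that a cofinal (fully $r\Sigma_{n+1}$) such embedding forces $\eta' = \eta$.

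Two further points of comparison. First, the paper does not prove this theorem at all: it cites Jensen (for $L$) and Schimmerling--Zeman \cite{schimmerlingZemanSquare}, precisely because the substantive difficulty in extender models is the standard-parameter/solidity analysis needed to see that the collapse of a (restricted) hull is again a collapsing structure with the collapsed parameter as its standard parameter. Your own fallback acknowledges this, but your shortcut for the ``trivial solidity'' case applies only to the special points of $\mathcal{S}^\alpha$-type (where $p^{N_\eta}_{n+1}=\{\delta\}$ with $\delta$ least outside the hull of $\alpha$), not to an arbitrary $\eta\in\mathcal{C}^\alpha$ as the theorem requires, and even in that case one must arrange that the solidity witness $W$ lies in the hull (clause (3) of Definition~\ref{Def: GoodPoints}), which the restricted-hull framework provides and an unrestricted hull cannot. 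Second, a correct from-scratch argument along your intended lines would take hulls restricted at good points $\epsilon$ chosen cofinally in $N^{(n_\eta)}_\eta\cap\mathrm{Ord}$, and use the monotone cofinal correspondence $\epsilon\mapsto\sup(X_\epsilon\cap\eta)$ (as in the proof of Lemma~\ref{Lemma:Cf(eta)=Cf(N)}) to get $\eta'$ above any prescribed $\gamma$; the verification that such $\eta'$ land in $d^\alpha_\eta$ is then Lemma~\ref{Lem: FineStructureGoodPts}, whose general form is exactly the Schimmerling--Zeman content the paper chooses to cite rather than reprove.
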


For our purposes, it will be convenient to regard the structures $N_{\eta'}$ for $\eta'\in d^\alpha_\eta$ as the result of a transitive collapse. For that, we introduce the machinery of restricted Skolem hulls (Definition \ref{Def:RestrictedSkolem}) and good points (Definition \ref{Def: GoodPoints}).

\begin{definition}(Restricted Skolem functions)\label{Def:RestrictedSkolem}
Let $\eta\in \mathcal{C}^{\alpha}$ and denote $n = n_{\eta}$. Assume that $\epsilon\in N_{\eta}$ an ordinal and $\alpha< \epsilon < \rho^{N_{\eta}}_{n}$. 
\begin{enumerate}
    \item Let $J^{ \E, n }_{ \epsilon }$ denote the structure $N^{(n)}_{\eta} \mid \epsilon$ (that is, the $\epsilon$-th level of the $J$-hierarchy in the suitable language).
    \item $h_{1}^{N_\eta , \epsilon}$ is the function  defined by the $\Delta_0$-formula, which is obtained by replacing the existential quantifier $\exists z$ in the definition of the first Skolem function from remark \ref{Rmk:sigma1Skolemfunction} with $ \exists z \in J^{\E,n}_\epsilon$.
    \item The function $h^{N_{\eta}  , \epsilon}_{n+1}$ is defined similarly to the inductive definition of the function $h^{N_{\eta}}_{n+1}$ from remark \ref{Rmk:sigma1Skolemfunction} as follows:
    $$  h^{ N_\eta , \epsilon }_{n+1}\left( \langle \vec{i}, i_0, \ldots, i_k \rangle, \langle x_0,\ldots, x_k \rangle \right) = h^{ N_{\eta} }_{n}\left( \vec{i}, \la  h^{ N^{(n)}_{\eta}   , \epsilon  }_{1}(i_0, x_0),\ldots, h^{ N^{(n)}_{\eta}  , \epsilon }_{1}(i_k, x_k) \ra  \right). $$
\end{enumerate}
\end{definition}

In comparison to $h^{N_\eta}_{n+1}$-Skolem hulls, which produce $r\Sigma_{n+1}$-preserving maps by taking inverse to the transitive collapse, restricted Skolem hull produce weakly $r\Sigma_{n+1}$-preserving maps. This point will be further explained in Lemma \ref{Lem: FineStructureGoodPts}.

%

We now introduce the concept of a good point. Each such good point $\epsilon$ induces a club point $\eta'\in d^{\alpha}_{\eta}$ (the other direction is also true, but requires to extend the notion of a 'good point' to a more general context; see Remark \ref{Rmk: FineStructureGoodPts}).

\begin{definition} \label{Def: GoodPoints}
Assume that $N_{\eta}$ is the collapsing structure of some $\eta\in {\S}^{\alpha, \zeta,n}$ for some $\zeta\leq \alpha$ and $n<\omega$. Let $\delta\in (\alpha,\alpha^+)$ be such that $p^{N_{\eta}}_{n_\eta+1} = \{\delta\}$. We say that an ordinal $\epsilon<\rho^{N_\eta}_{n} $ is good if:
\begin{enumerate}
    \item  $h^{N^{(n)}_\eta, \epsilon}_1[\omega \times (\alpha\cup \{\delta\})]$ is unbounded in $\epsilon$ (equivalently, $h^{ N_{\eta}  , {\epsilon} }_{n+1}[\omega\times (\alpha\cup \{\delta \}) ]$ is unbounded in $\epsilon$).
    \item  $\alpha\in h^{N_\eta^{(n)},\epsilon}_1[\omega \times (\alpha\cup \{\delta \})]$ (equivalently, $\alpha \in h^{ N_{\eta} , {\epsilon} }_{n+1}[\omega\times(\alpha\cup \{\delta\})]$).
    \item Letting $W$ be the transitive collapse of $h^{N^{(n)}_\eta, \epsilon}_{1}[\omega\times \delta]$, $W\in h^{N^{(n)}_{\eta}, \epsilon}_{1}[\omega\times (\alpha\cup\{\delta\})]$ (equivalently, $W\in h^{N_\eta,\epsilon}_{n+1}[\omega\times (\alpha\cup \{\delta \} )]$).
\end{enumerate}
\end{definition}

\begin{remark} \label{Rmk: FineStructureGoodPts}
In order to simplify the proofs and avoid dealing with solidity witnesses, we concentrated above on collapsing structures $N_\eta$ for $\eta\in \mathcal{S}^{\alpha,\zeta,n}$ (where $\zeta\leq\alpha$ is regular, and $n<\omega$). 

Working with such values of $\eta$ also allows us to restrict our discussion to collapsing structures with a singleton in $(\alpha, \alpha^+)$ as a standard parameter. This will be key in the proof that the forcing realizing the ``Kunen-like" blueprint, which will be constructed in later sections, preserves the stationarity of certain stationary sets (see Lemma \ref{Lem: FinalForcingPreservationOfStatSets}).

The definition of a good point can be adapted to collapsing structures with an arbitrary parameter, by replacing $\{\delta\}$ in the definition above with the standard parameter $p^{N_\eta}_{n+1}$, and adding the requirements:
\begin{itemize}
    \item $\epsilon > \max\left(p^{N_\eta}_{n+1}\right)$.
    \item For every $i\leq n+1$ and $\nu\in p_i = p^{N_\eta}_{i}$, $W^{\nu,p_i}_{N_{\eta}   , {\epsilon}}\in h^{ N_{\eta}   , {\epsilon} }_{n+1}[\omega\times(\alpha\cup \{\delta \}) ]$, where $W^{\nu,p_i}_{N_{\eta}   , {\epsilon}}$ is the transitive collapse of $h^{N_\eta,\epsilon}_{i+1}[\omega \times (\nu \cup (p_i\setminus (\nu+1)))]$. The structure $W^{\nu,p_i}_{N_{\eta}   , {\epsilon}}$ is called the standard witness to the fact that $\nu\in p_i$ (see Definitions 7.3 and 7.10 in \cite{SchindlerZeman2009FineStruture} for more details).
\end{itemize}
In general, all the results in this section can be generalized to arbitrary (passive) collapsing structures by integrating the parameters and their solidity witnesses into the hulls we collapse.
    
\end{remark}


\begin{lemma} \label{Lem: FineStructureGoodPts}
Assume that $\eta\in {\mathcal{S}}^{\alpha, \zeta,n}$ for some regular $\zeta\leq \eta $ and $n<\omega$. Let $\delta\in (\alpha,\alpha^+)$ be such that $p^{N_\eta}_{n+1} = \{\delta\}$, and let $\epsilon \in \left(\delta, \rho^{N_\eta}_{n}\right)$ be a good point. Then:
\begin{enumerate}
    \item There exists $\tau$ such that $h^{N_\eta , \epsilon}_{n+1}[\omega\times(\alpha\cup \{\delta \})]$ transitively collapses to $J^{\E}_{\tau}$.
   \item Let $\eta' = \eta \cap h^{N_\eta , \epsilon}_{n+1}[\omega\times(\alpha\cup \{\delta\} )]$. Then $\alpha < \eta' < \eta$. Moreover, $\eta'\in \mathcal{C}^{\alpha}$, namely $J^{\E}_{\eta'} \prec J^{\E}_{\alpha^+}$. Furthermore, the collapsing structure $N_{\eta'}$ of $\eta'$ is $J^{\E}_{\tau}$.
    \item The inverse of the transitive collapse of $h^{N_\eta , \epsilon}_{n+1}[\omega\times(\alpha\cup \{\delta \})]$ is the unique weakly $r\Sigma_{n+1}$-embedding $\sigma \colon N_{\eta'}\to N_{\eta}$, with the properties:
    \begin{enumerate}
        \item $\sigma\uhr \eta' = id$.
        \item $\sigma(p_{n+1}^{N_{\eta'}}) = p^{N_\eta}_{n+1}$.
        \item $\sigma(\eta') = \eta$.
    \end{enumerate}
      This witnesses the fact that $\eta'\in d^{\alpha}_{\eta}$. We denote the above embedding by 
      $$\sigma_{\eta',\eta} \colon N_{\eta'}\to N_{\eta}.$$
\end{enumerate}
\end{lemma}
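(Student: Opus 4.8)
The plan is to analyze the structure $H := h^{N_\eta,\epsilon}_{n+1}[\omega\times(\alpha\cup\{\delta\})]$ directly and show it behaves like a restricted hull that collapses to an initial segment of $L[\E]$. First I would observe that by the definition of the restricted Skolem function $h^{N_\eta,\epsilon}_{n+1}$ (Definition \ref{Def:RestrictedSkolem}), the set $H$ is obtained by applying $h^{N_\eta}_n$ to outputs of $h^{N^{(n)}_\eta,\epsilon}_1$ on arguments from $\alpha\cup\{\delta\}$; since $h^{N^{(n)}_\eta,\epsilon}_1$ is $\Delta_0$ (quantifiers bounded to $J^{\E,n}_\epsilon$), the map $\sigma$ obtained as the inverse of the transitive collapse of $H$ will be $\Sigma_0$-preserving on the $n$-th reduct and $\Sigma_1$-preserving on the lower reducts — this is exactly what ``weakly $r\Sigma_{n+1}$-preserving'' means. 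For part (1), I would invoke Lemma 3.3 of \cite{SchindlerZeman2009FineStruture}: the transitive collapse of a $\Sigma_0$-elementary substructure of the $n$-th reduct of a $J$-structure is again the $n$-th reduct of a $J$-structure $J^B_\tau$; the downward extension of embeddings lemma (Lemma 3.1 in \cite{SchindlerZeman2009FineStruture}) then lifts the collapse map to a weakly $r\Sigma_{n+1}$-preserving $\sigma\colon J^B_\tau\to N_\eta$; since this lift is $\Sigma_1$-preserving into the premouse $N_\eta$, by Lemma 4.1.3(a) of \cite{Zeman-Book} we get $J^B_\tau = J^{\E}_\tau$ for a coherent extender sequence, giving the desired $\tau$. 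Crucially, goodness clause (1) — that $h^{N^{(n)}_\eta,\epsilon}_1[\omega\times(\alpha\cup\{\delta\})]$ is unbounded in $\epsilon$ — is what ensures the collapse of the $n$-th reduct has height exactly corresponding to $\epsilon$, so that $\rho^{J^{\E}_\tau}_n = \tau$ and the structure is genuinely the $n$-th reduct.

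For part (2), I would first note $\sigma\uhr(\alpha\cup\{\delta\}) = \mathrm{id}$ since $\alpha\cup\{\delta\}\subseteq H$ (for $\alpha$: by goodness clause (2), $\alpha\in H$, and every ordinal below $\alpha$ is in $H$ trivially; for $\delta$: $\delta$ is a parameter, hence in $H$, and no ordinal strictly between $\alpha$ and $\delta$ can be in $H$ by minimality of $\delta = \mathrm{least}\notin h^{N_\eta}_{n+1}[\omega\times\alpha]$, a fact inherited at the reduct level). Hence $\mathrm{cp}(\sigma)\geq\alpha$ and in fact $\eta' := \eta\cap H = \mathrm{cp}(\sigma)$ with $\alpha\leq\eta'$; that $\eta' < \eta$ follows since $H$ cannot be cofinal in $\eta$ (otherwise $\eta$ would be singular in $N_\eta$ via a map definable from $\epsilon < \rho^{N_\eta}_n$, contradicting that $\eta$ is a cardinal in $N_\eta$ below its $n$-th projectum — here I'd want to check $\eta < \rho^{N_\eta}_n$, which holds because $\rho^{N_\eta}_n = N_\eta\cap\mathrm{Ord}$ by the definition of $\mathcal{S}^{\alpha,\zeta,n}$), and $\alpha < \eta'$ because $\alpha\in H$ as an element so $H$ contains subsets of $\alpha$ not in $J^{\E}_\alpha$, forcing $\eta' > \alpha$. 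For $J^{\E}_{\eta'}\prec J^{\E}_{\alpha^+}$, i.e. $\eta'\in\mathcal{C}^\alpha$, I would run the same bounded-quantifier computation as in Claim \ref{claim: the transitive collapse is indeed a collapsing structure}: for $a\in J^{\E}_{\eta'}$ and a formula $\varphi$, relativize to the bounded version $\varphi^{J^{\E}_{\eta'}}$, push it through $\sigma$ to $\varphi^{J^{\E}_\eta}$ at $\sigma(a) = a$, and use $\eta\in\mathcal{C}^\alpha$. That $N_{\eta'} = J^{\E}_\tau$ is the collapsing structure requires: $\eta'$ is a cardinal in $J^{\E}_\tau$ (it equals $(\alpha^+)^{J^{\E}_\tau}$, since $\alpha$ remains the $n$-th projectum and $\eta'$ is its successor there) but collapses in $J^{\E}_{\tau+\omega}$ because $h^{J^{\E}_\tau}_{n+1}\uhr\omega\times(\alpha\cup\{\delta\})$ is onto $\eta'$ — this last point uses goodness clause (3), that the transitive collapse $W$ of $h^{N^{(n)}_\eta,\epsilon}_1[\omega\times\delta]$ lies in $H$, which is precisely the solidity-witness ingredient guaranteeing $\sigma(p^{N_{\eta'}}_{n+1}) = \{\delta\} = p^{N_\eta}_{n+1}$ and hence that the collapsed parameter is again standard, so $\tau = \tau(\eta')$.

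For part (3), existence of $\sigma$ with properties (a)–(c) is already established above: (a) is $\sigma\uhr\eta' = \mathrm{id}$, (b) follows from the preservation of the standard parameter under the collapse (again clause (3) of goodness, ensuring the collapsed parameter is solid, combined with the fact that $\sigma$ is weakly $r\Sigma_{n+1}$-preserving and $n_{\eta'} = n = n_\eta$), and (c) holds because $\eta = \sigma(\mathrm{cp}(\sigma))$ is forced by $\eta\in N_\eta$ being $\Sigma$-definable over $N_\eta$ from $\{\delta\}$ (as $\eta$ is the cardinal successor of $\alpha = \rho^{N_\eta}_{n+1}$ inside $N_\eta$, pinned down by the standard parameter), so its preimage is the corresponding object in $N_{\eta'}$, namely $\eta'$. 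Uniqueness of such $\sigma$ is the standard fine-structural fact that a weakly $r\Sigma_{n+1}$-preserving map fixing $\eta'$, sending the standard parameter to the standard parameter, and with $\sigma(\eta') = \eta$, has its range equal to $h^{N_\eta}_{n+1}[\omega\times(\eta'\cup\{\delta\})]$ and is therefore determined; I would spell this out by noting any such $\sigma$ commutes with the Skolem functions, so $\sigma(h^{N_{\eta'}}_{n+1}(i,\xi,p^{N_{\eta'}}_{n+1})) = h^{N_\eta}_{n+1}(i,\xi,\delta)$ for all $i<\omega$, $\xi<\eta'$, which fixes $\sigma$ on all of $N_{\eta'}$ (its universe being that hull) — but I also need to check this hull equals the restricted hull $h^{N_\eta,\epsilon}_{n+1}[\omega\times(\alpha\cup\{\delta\})]$ under the collapse, which follows from goodness clause (1) making $\epsilon$ "large enough" inside the collapsed structure that restriction to $J^{\E,n}_\epsilon$ imposes no constraint after collapsing. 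Finally, $\eta'\in d^\alpha_\eta$ is immediate from the definition of $d^\alpha_\eta$ once (a)–(c) and $n_{\eta'} = n_\eta$ are in hand.

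\textbf{Main obstacle.} The delicate point is part (2)'s claim that $N_{\eta'} = J^{\E}_\tau$ is genuinely the collapsing structure of $\eta'$ — i.e. that $\tau$ is maximal such that $\eta'$ is still a cardinal in $J^{\E}_\tau$, equivalently that the collapsed standard parameter is $\{\delta\}$ with $\delta$ still in the interval $(\alpha,(\alpha^+)^K)$ and still standard over $J^{\E}_\tau$. This is exactly where goodness clause (3) and the solidity-witness discussion of Remark \ref{Rmk: FineStructureGoodPts} are essential: without knowing the relevant standard witnesses are captured in the hull $H$, the collapse could fail to preserve standardness of the parameter, and $\tau$ might be too small. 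Verifying this carefully — that clause (3) delivers precisely the witness needed, given that we arranged $p^{N_\eta}_{n+1}$ to be a singleton in $(\alpha,\alpha^+)$ — is the technical heart of the lemma, and everything else is bookkeeping with the downward extension of embeddings lemma and bounded-quantifier relativizations.
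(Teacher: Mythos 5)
Your overall route is the paper's route: collapse the restricted hull, use the downward extension of embeddings lemma to lift the collapse of the reduct-level hull to a weakly $r\Sigma_{n+1}$-preserving $\sigma$, use goodness clause (1) for the hull property (so the collapsed structure projects to $\alpha$ with a singleton parameter), clause (3) for the solidity witness making the collapsed parameter standard, and then the bounded-quantifier relativization for $\eta'\in\mathcal{C}^\alpha$ and the ``$\eta'$ collapses at $\tau+\omega$'' point for $N_{\eta'}=J^{\E}_\tau$. However, there are genuine gaps. The most substantive: from rud-closure/Lemma 3.3 of Schindler--Zeman, downward extension, and Lemma 4.1.3(a) of Zeman you only obtain a premouse $J^{\E'}_{\tau}$ for \emph{some} coherent sequence $\E'$, weakly $r\Sigma_{n+1}$-embedded into $N_\eta$; concluding that this is literally $J^{\E}_\tau$, i.e.\ an initial segment of $L[\E]$ (which parts (1) and (2) assert, and which the identification $N_{\eta'}=J^{\E}_\tau$ requires), needs iterability plus a comparison with $L[\E]$, exactly as the paper does by invoking Claim \ref{claim: comparison argument}. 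Your proposal never addresses this condensation step, and your ``main obstacle'' paragraph locates the heart of the lemma elsewhere, so this is an omission rather than an implicit appeal. Relatedly, you skip the verification that the restricted hull at the reduct level is rudimentarily closed (so that its collapse is a $J$-structure at all); for restricted hulls this is not automatic and the paper proves it using the $S$-hierarchy and the unboundedness clause.

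Two further points. First, your claim that $\sigma\uhr(\alpha\cup\{\delta\})=\mathrm{id}$ is justified by ``minimality of $\delta$,'' but that minimality is part of the definition of $\mathcal{S}^\alpha$ in Proposition \ref{Prop: Stationary set of passive collapsing structures}, not of $\mathcal{S}^{\alpha,\zeta,n}$ in Proposition \ref{Prop: Stationary set of passive collapsing structures on an arbitrary cofinality}, which is the hypothesis here; in general $\delta$ is moved, and the paper deliberately works with the preimage $\delta'$ (remarking that $\delta'=\delta$ only when $\eta\in\mathcal{S}^\alpha$), so your later uses of $\delta$ (e.g.\ in the hull witnessing that $\eta'$ collapses at $\tau+\omega$) should be with $\delta'$. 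Second, you take for granted that $\eta'=\eta\cap H$ is an ordinal equal to the critical point; the transitivity of $H\cap\eta$ needs the paper's argument pulling a surjection $f\colon\alpha\to\eta^*$ into the hull via $J^{\E}_{\eta}\prec J^{\E}_{\alpha^+}$ and a $\Sigma_1$ witness below $\epsilon$. And your argument for $\eta'<\eta$ (``$\eta$ would be singular in $N_\eta$ via a definable map'') is not by itself a contradiction, since $N_\eta$ \emph{does} definably collapse $\eta$ onto $\alpha$ -- that is what makes it the collapsing structure; the correct contradiction, as in the paper, is at the reduct level: if $\eta'=\eta$ then the reduct-level restricted hull $Y$, which is an element of $N^{(n)}_\eta$ because $h^{N^{(n)}_\eta,\epsilon}_1$ is $\Sigma_1$-definable over $J^{\E,n}_\epsilon$ with $\epsilon<\rho^{N_\eta}_n$, would transitively collapse onto $N^{(n)}_\eta$ itself, which is impossible. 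With these repairs your outline matches the paper's proof.
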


\begin{proof}${}$
\begin{enumerate}
    
    \item Denote by $X$ the transitive collapse of $h^{N_\eta , \epsilon}_{n+1}[\omega\times (\alpha\cup \{\delta\})]$, and by $Y$ the transitive collapse of $h^{N^{(n)}_\eta , \epsilon}_{1}[\omega\times(\alpha\cup \{\delta \})]$.

    We first argue that $Y$ is rudimentary closed. Namely, that $Y$ is closed under each function in the finite set of functions which form a basis for the class of rudimentary functions. For that, we should prove that for every basis function $f(\nu_1,\ldots, \nu_k)$, if $x_1,\ldots, x_k\in h^{N^{(n)}_{\eta}  , \epsilon}_{1}[\omega\times (\alpha\cup \{\delta \})]$, then $f(x_1,\ldots, x_k)\in h^{N^{(n)}_{\eta}  , \epsilon}_{1}[\omega\times (\alpha\cup \{\delta\})]$. By the definition of the restricted Skolem function, it suffices to prove that there exists an ordinal $\gamma<\epsilon$ such that $x_1, \ldots, x_k, f(x_1,\ldots, x_k)\in N^{(n)}_\eta \mid \gamma$ 
    (where $N^{(n)}_\eta \mid \gamma$ is the $\gamma$-th member in the $S$-hierarchy of the $n$-th reduct, see Remark \ref{Rmk:sigma1Skolemfunction}), since such $N^{(n)}_\eta \mid \gamma\in J^{\E}_{\epsilon}$ can be taken as a witness for the $\Sigma_1$-formula which defines $f(x_1,\ldots, x_n)$ from $x_1,\ldots, x_n$. 
    Indeed, by the very definition of the $S$-hierarchy, if $\gamma<\epsilon$ is such that $x_1,\ldots, x_n\in N^{(n)}_\eta \mid \gamma$, then $f(x_1,\ldots, x_n)\in N^{(n)}_\eta \mid \gamma+1$. 

    Since $Y$ is rudimentary closed, $Y$ is collapsed to a $J$-structure of the form $J^{A}_{\tau'}$ for some ordinal $\tau'$ and some predicate $A$. Let $\pi \colon J^{A}_\tau \to N^{(n)}_\eta $ be the inverse of the transitive collapse. Let $\delta'$ be such that $\pi(\delta') = \delta$. By the uniform definability of the Skolem function, and the fact that $h^{N^{(n)}_\eta, \epsilon}_1[\omega\times \left( \alpha\cup \{\delta \} \right)]$ is unbounded in $\epsilon$, $J^{A}_{\tau'}=h^{J^{A}_{\tau'}}_{1}[\omega\times (\alpha\cup \{\delta'\} )]$. Then $\rho_1^{J^{A}_{\tau'}} = \alpha$ and $p^{J^{A}_{\tau'}}_{1} = \{\delta'\}$, and for some ordinal $\tau$ and an extender sequence $\E'$, $J^{A}_{\tau'}$ is the $n$-th reduct of a structure $J^{\E'}_{\tau}$ with $\rho^{J^{\E'}_\tau}_{n+1} = \alpha$, $p^{J^{\E'}_\tau} = \{  \delta' \}$, and $\pi$ lifts to an $r\Sigma_{n+1}$-preserving embedding $\sigma\colon J^{\E'}_{\tau}\to N_{\eta}$. We concentrate on the proof that $p^{J_{\tau'}}_1 = \{ \delta' \}$, since the rest is similar to Claim  \ref{Claim: prp Calpha, collapsing structure}. Recall that $W\in h^{N^{(n)}_\eta, \epsilon}_{1}[\omega\times (\alpha\cup \{\delta \})]$    where $W$ is the transitive collapse of $h^{N^{(n)}_\eta, \epsilon}_{1}[\omega\times \delta]$. But $W$ is also the transitive collapse of $h^{J^{A}_{\tau'}}[\omega\times \delta']$, and in the  transitive collapse of $Y$, $W$ collapses to itself, since it's transitive. So $W\in J^{A}_{\tau'}$, which witnesses the fact that $\delta'\in p^{J^{A}_{\tau'}}_{1}$ (see Definition 7.3 and Lemmas 7.2 and 7.4 in \cite{SchindlerZeman2009FineStruture}). Since $\delta'$ is already a parameter satisfying $J^{A}_{\tau'}[\omega\times (\alpha\cup \{\delta' \})]$, $p^{J^{A}_{\tau'}}_1 = \{\delta' \}$. 
    
    The embedding $\sigma$ is actually the unique $r\Sigma_{n+1}$-preserving embedding which lifts $\pi$ and satisfies $\sigma\left( p^{J^{\E'}_{\tau'}}_{n+1} \right) = p^{N_\eta}_{n+1}$ (that is, $\sigma(\delta') = \delta$), and, since $\sigma$ is weakly $r\Sigma_{n+1}$-preserving, we have
    $$\sigma\left(     h^{J^{\E'}_\tau}_{n}\left( \vec{i} , \la h^{ J^{A}_{\tau'} }_1(i_0, \nu_0, \delta'), \ldots, h^{ J^{A}_{\tau'} }_{1}(i_k, \nu_k, \delta')  \ra\right)    \right) = h^{N_\eta}_{n}\left( \vec{i} , \la h^{ N^{(n)}_{\eta} }_1(i_0,  \nu_0, \delta ), \ldots, h^{  N^{(n)}_{\eta} }_{1}(i_k,\nu_k ,\delta)  \ra\right)$$
    for every $\vec{i}, \la i_0,\ldots, i_k \ra$ sequences of natural numbers, and $\nu_0, \ldots, \nu_k<\alpha$. Therefore, $X$ is equal to the image of $\sigma$, and $\sigma$ is the inverse of the transitive collapse map of $X$ to $J^{\E'}_{\tau}$. Finally, by running a comparison argument between $J^{\E'}_{\tau}$ and $L[\E]$, as in Claim \ref{claim: comparison argument}, we conclude that $J^{\E'}_{\tau}$ is an initial segment of $L[\E]$. Thus, $X \simeq J^{\E}_\tau$, and $Y$ is its $n$-th reduct.

    \item We first argue that $h^{N_{\eta}  , \epsilon}_{n+1}[\omega\times (\alpha\cup \{\delta \} )]\cap \eta$ is transitive. Once we prove that, it will follow that $\eta' =h^{N_{\eta}  , \epsilon}_{n+1}[\omega\times (\alpha\cup  \{\delta \} )]\cap \eta$ is an ordinal in the interval $(\alpha,\eta)$, for the following reasons:
    \begin{itemize}
        \item $\eta'> \alpha$ since $\alpha+1\subseteq h^{N_{\eta}  , \epsilon}_{n+1}[\omega\times (\alpha\cup \{ \delta\})]$ for every good point $\epsilon$.
        \item $\eta'<\eta$: otherwise,   $h^{J^{\E}_\tau}_{n+1}[\omega\times ( \alpha\cup \{ \delta\} )]$ witnesses that $\eta$ is collapsed in $J^{\E}_{\tau+\omega}$, namely $J^{\E}_\tau = N_\eta$. Comparing the $n$-th reducts, it follows that  $Y = h^{N^{(n)}_\eta, \epsilon}_{1}[\omega \cup ( \alpha\cup \{ \delta \} )]$ transitively collapses to $N^{(n)}_\eta$. However, letting $J^{\E, n}_\epsilon = N^{(n)}_\eta \mid \epsilon$, the function $h_{1}^{N^{(n)}_{\eta}, {\epsilon}} \colon \omega\times J^{\E,n}_{\epsilon} \to J^{\E,n}_{\epsilon} $ is $\Sigma_1$-definable over $J^{\E,n}_{\epsilon}$. Thus, $h^{N_{\eta}  , {\epsilon}}_{1}$ belongs to $J^{\E,n}_{\epsilon+\omega}  \subsetneq N^{(n)}_{\eta}$, and so does its pointwise image $Y = h_1^{N_{\eta}, \epsilon}[\omega\times (\alpha\cup \{\delta \} )]$.
    \end{itemize}
    
    We proceed and prove that transitivity of $h^{N_{\eta}  , \epsilon}_{n+1}[\omega\times (\alpha\cup \{\delta \} )]\cap \eta$. Assume that $\eta^*$ belongs to $h^{N_{\eta}  , \epsilon}_{n+1}[\omega\times (\alpha\cup \{ \delta\})]\cap \eta$. Since $J^{\E}_{\eta}\elem J^{\E}_{\alpha^{+}}$,
    $J^{\E}_{\eta}\vDash \left| \eta^* \right|\leq \alpha$. This means that the $\Sigma_1$-statement 
       $$"\exists f. f: \alpha \to \eta^* 
    \text{ is a surjection}"$$
    has a witness in $J^{\E}_\epsilon$ (even in $J^{\E}_{\eta}$), and therefore such $f$ belongs to 
    $h^{N_\eta , \epsilon}_{n+1}[\omega\times (\alpha\cup \{\delta\})]$. It is then clear that $\eta^* = f[\alpha] \subseteq h^{N_\eta , \epsilon}_{n+1}[\omega\times (\alpha\cup \{\delta \} )]$. 

    The fact that $J^{\E}_{\eta'}\elem_{\Sigma_{\omega}} J^{\E}_{\alpha^{+}}$ follows by a similar argument as in claim \ref{claim: the transitive collapse is indeed a collapsing structure}: denote by $\sigma \colon J^{\E}_{\tau} \to N_\eta$ the inverse of the transitive collapse map of $X$. Note that $\sigma(\eta') = \eta$ and $\sigma\uhr \eta' = id$. For every $a\in J^{\E}_{\eta'}$ and formula $\varphi(a)$, consider the $\Delta_0$ formula $\varphi^{ J^{\E}_{\eta'} }(a)$ obtained from $\varphi$ be bounding every quantifier into $J^{\E}_{\eta'}$. Then $\sigma(a) = a$, and--
    \begin{align*}
        J^{\E}_{\eta'}\vDash \varphi(a) & \iff J^{\E}_{\tau}\vDash \varphi^{J^{\E}_{\eta'}}(a)\\
        &\iff N_{\eta}\vDash \varphi^{J^{\E}_{\eta}}(\sigma(a))\\
        &\iff J^{\E}_{\eta} \vDash \varphi(a) \\
        &\iff J^{\E}_{\alpha^{+}} \vDash \varphi(a)
    \end{align*}
    where the last equivalence follows since $\eta\in \mathcal{C}^\alpha$.
    We should justify that $N_{\eta'} = J^{\E}_\tau$. It suffices to prove that $\eta'$ collapses in $J^{\E}_{\tau+\omega}$. Let $\delta'\in J^{\E}_\tau$ be such that $\pi(\delta') = \delta$ (we will see below that $\delta' = \delta$ for $\eta\in \mathcal{S}^\alpha$). Then indeed $h^{J^{\E}_\tau}_{n+1}[\omega\times (\alpha\cup \{ \delta'\})]$ contains $\eta'$ and thus witnesses the fact that $\eta'$ is collapsed in $J^{\E}_{\tau+\omega}$.

    \item This follows from the above points.
\end{enumerate}
\end{proof}

\begin{lemma}
Let $\eta\in \mathcal{S}^{\alpha, \zeta,n}$ for some $\zeta\leq \alpha$ regular and $n<\omega$. Let  $\delta\in (\alpha,\alpha^+)$ be such that $p_{n_\eta+1}^{N_{\eta}} = \{\delta \}$.  Assume that $\eta'\in d^{\alpha}_{\eta}$. Then there exists a good point $\epsilon \in N_\eta$ such that $ \sigma_{\eta', \eta} $ is the inverse of the transitive collapse of $ h^{N_{\eta} , \epsilon }_{n_\eta+1}[\omega\times(\alpha\cup \{\delta \})] $.
\end{lemma}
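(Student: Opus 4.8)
The plan is to prove that the correspondence $\epsilon\mapsto\eta'$ of Lemma~\ref{Lem: FineStructureGoodPts} is onto $d^\alpha_\eta$, with inverse sending $\eta'$ to $\epsilon_{\eta'}:=\sup\bigl(\mathrm{ran}(\sigma_{\eta',\eta})\cap Ord\bigr)$. Concretely, since $\eta'\in d^\alpha_\eta$ I would first fix, via the definition of $d^\alpha_\eta$, a weakly $r\Sigma_{n+1}$-preserving embedding $\sigma\colon N_{\eta'}\to N_\eta$ (where $n=n_\eta=n_{\eta'}$) with $\sigma\uhr\eta'=id$, $\sigma(p^{N_{\eta'}}_{n+1})=p^{N_\eta}_{n+1}=\{\delta\}$ and $\sigma(\eta')=\eta$, and set $\epsilon:=\sup(\mathrm{ran}(\sigma)\cap Ord)$ as the candidate good point; the goal is then to identify $\sigma$ with the inverse of the transitive collapse of $h^{N_\eta,\epsilon}_{n+1}[\omega\times(\alpha\cup\{\delta\})]$, which by Lemma~\ref{Lem: FineStructureGoodPts}(3) is exactly $\sigma_{\eta',\eta}$.

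The elementary preliminaries I would record first: writing $p^{N_{\eta'}}_{n+1}=\{\delta_0\}$, since $\sigma$ is order-preserving on ordinals and $\sigma(\delta_0)=\delta<\eta=\sigma(\eta')$ we get $\delta_0<\eta'$, so $\delta=\sigma(\delta_0)=\delta_0$; hence $\delta\in\mathrm{ran}(\sigma)$, $\alpha<\delta<\eta'<\eta$, and $\alpha=\sigma(\alpha)\in\mathrm{ran}(\sigma)$. Since $\cf(N_{\eta'}\cap Ord)=\zeta$ and $\sigma$ is order-preserving, $\epsilon$ has cofinality $\zeta$; moreover $\epsilon\notin\mathrm{ran}(\sigma)$ (as $N_{\eta'}$ has no largest ordinal), so $\mathrm{ran}(\sigma)\cap Ord$ is cofinal in $\epsilon$, and $\eta=\sigma(\eta')<\epsilon$ gives $\alpha<\delta<\epsilon$.

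The heart of the argument --- and the step I expect to be the main obstacle --- is the range identification
$$\sigma``N^{(n)}_{\eta'}=h^{N^{(n)}_\eta,\epsilon}_1[\omega\times(\alpha\cup\{\delta\})]\quad\text{and}\quad\mathrm{ran}(\sigma)=h^{N_\eta,\epsilon}_{n+1}[\omega\times(\alpha\cup\{\delta\})].$$
This is precisely where the design of the restricted Skolem functions (Definition~\ref{Def:RestrictedSkolem}) is used: $h^{N^{(n)}_\eta,\epsilon}_1$ is $\Delta_0$ (with the existential quantifier bounded below $\epsilon$) and $h^{N_\eta,\epsilon}_{n+1}$ is built over it, while $\sigma$ is only $\Sigma_0$-preserving at the $n$-th reduct (and $\Sigma_1$-preserving at reducts of level $<n$). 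For $x\in\alpha\cup\{\delta\}$ and $i<\omega$ the $J$-initial segment $N^{(n)}_{\eta'}\mid\gamma$ witnessing the computation $h^{N^{(n)}_{\eta'}}_1(i,x)=y$ is sent by $\sigma$ to $N^{(n)}_\eta\mid\sigma(\gamma)$ with $\sigma(\gamma)<\epsilon$ (using that $\sigma$ commutes with the $S$-hierarchy), which witnesses $h^{N^{(n)}_\eta,\epsilon}_1(i,\sigma(x))=\sigma(y)$; combined with $N^{(n)}_{\eta'}=h^{N^{(n)}_{\eta'}}_1[\omega\times(\alpha\cup\{\delta\})]$ (since $\rho^{N_{\eta'}}_{n+1}=\alpha$) this gives $\sigma``N^{(n)}_{\eta'}\subseteq h^{N^{(n)}_\eta,\epsilon}_1[\omega\times(\alpha\cup\{\delta\})]$. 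Conversely every value of $h^{N^{(n)}_\eta,\epsilon}_1$ on generators from $\alpha\cup\{\delta\}$ has a witness at a $J$-level below $\epsilon$, hence below $\sigma(\gamma)$ for some ordinal $\gamma$ of $N_{\eta'}$, so it lies in $\mathrm{ran}(\sigma)$. Lifting through the $\Sigma_1$-preservation of $\sigma$ at reducts of level $<n$ --- recall $h^{N_\eta}_n$ is assembled from the first Skolem functions of those reducts (Remark~\ref{Rmk:sigma1Skolemfunction}) --- yields the statement for $h^{N_\eta,\epsilon}_{n+1}$.

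With the range identified I would conclude as follows. First, $\epsilon<\rho^{N_\eta}_n=N_\eta\cap Ord$: otherwise $\mathrm{ran}(\sigma)=h^{N_\eta}_{n+1}[\omega\times(\alpha\cup\{\delta\})]=N_\eta$ (using $\rho^{N_\eta}_{n+1}=\alpha$ and $p^{N_\eta}_{n+1}=\{\delta\}$), so $\sigma$ would be an isomorphism of transitive structures, hence $\sigma=id$ and $\eta'=\eta$, contradicting $\eta'\in d^\alpha_\eta\subseteq\eta$. Next, $\epsilon$ is a good point in the sense of Definition~\ref{Def: GoodPoints}: condition~(1) is the cofinality of $\mathrm{ran}(\sigma)\cap Ord$ in $\epsilon$; condition~(2) is $\alpha\in\mathrm{ran}(\sigma)$; condition~(3) holds since the standard solidity witness for $\delta\in p^{N_{\eta'}}_{n+1}$ --- the transitive collapse of $h^{N^{(n)}_{\eta'}}_1[\omega\times\delta]$ --- lies in $N_{\eta'}$, and $\sigma$ sends it to the transitive collapse $W$ of $h^{N^{(n)}_\eta,\epsilon}_1[\omega\times\delta]$, which is in $\mathrm{ran}(\sigma)$. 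Finally, applying Lemma~\ref{Lem: FineStructureGoodPts} to this good point $\epsilon$: the induced club point is $\eta\cap h^{N_\eta,\epsilon}_{n+1}[\omega\times(\alpha\cup\{\delta\})]=\eta\cap\mathrm{ran}(\sigma)=\eta'$ (using $\sigma\uhr\eta'=id$ and $\sigma(\eta')=\eta$), its collapsing structure is $N_{\eta'}$, and the embedding $\sigma_{\eta',\eta}$ of Lemma~\ref{Lem: FineStructureGoodPts}(3) is the inverse of the transitive collapse of $h^{N_\eta,\epsilon}_{n+1}[\omega\times(\alpha\cup\{\delta\})]=\mathrm{ran}(\sigma)$; by the uniqueness clause there it coincides with the $\sigma$ fixed at the outset, which is what we want.
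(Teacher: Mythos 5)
Your proposal is correct and follows essentially the same route as the paper: you take the same $\epsilon=\sup\bigl(\sigma_{\eta',\eta}[\mathrm{Ord}\cap N^{(n)}_{\eta'}]\bigr)$ and rule out the case that this image is unbounded by the same soundness argument (unboundedness would force $\mathrm{ran}(\sigma_{\eta',\eta})=h^{N_\eta}_{n+1}[\omega\times(\alpha\cup\{\delta\})]=N_\eta$, so $\sigma_{\eta',\eta}$ would be the identity and $\eta'=\eta$), which the paper phrases via ``a cofinal map at the $n$-th reduct is $\Sigma_1$, hence $\sigma_{\eta',\eta}$ is fully $r\Sigma_{n+1}$-preserving.'' The rest of your write-up, namely the identification $\mathrm{ran}(\sigma_{\eta',\eta})=h^{N_\eta,\epsilon}_{n+1}[\omega\times(\alpha\cup\{\delta\})]$ and the verification that $\epsilon$ is good, is exactly what the paper compresses into ``it follows that $\epsilon$ is a good point and $\sigma_{\eta',\eta}$ is the inverse of the transitive collapse,'' so you are supplying details the paper leaves implicit rather than taking a genuinely different path.
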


\begin{proof}
Denote $n = n_\eta = n_{\eta'}$. Let $\epsilon = \sup\left( \sigma_{\eta',\eta} [\mbox{Ord}\cap N^{(n)}_{\eta'}]\right)$. It suffices to prove that $\epsilon\in N^{(n)}_\eta$, and then it follows that $\epsilon$ is a good point, and $\sigma_{\eta',\eta}$ is the inverse of the transitive collapse of $h^{N_\eta,\epsilon}_{n+1}[\omega\times (\alpha \cup \{ \delta \} )]$.

Assume that $ \sigma_{\eta',\eta} [\mbox{Ord}\cap N^{(n)}_{\eta'}]$ is unbounded in $N^{(n)}_{\eta}$. It follows that  $\sigma_{\eta',\eta}\uhr N^{(n)}_{\eta'} \colon N^{(n)}_{\eta'}\to N^{(n)}_{\eta}$ is cofinal and thus $\Sigma_1$. In other words, $\sigma_{\eta',\eta}$ is $r\Sigma_{n+1}$-preserving, rather than just weakly $r\Sigma_{n+1}$-preserving. Therefore, $N_{\eta'}$ is the transitive collapse of $h^{N_\eta}_{n+1}[\omega \times (\alpha \cup p^{N_\eta}_{n+1})]$, which is, by $n$-soundness of $N_\eta$, $N_\eta$ itself. So $\eta' = \eta$.
\end{proof}

We will make use of the following standard property of collapsing structures $N_\eta$ which connect their cofinality with the cofinality of $\eta$. 
\begin{lemma}\label{Lemma:Cf(eta)=Cf(N)}
For every $\eta\in \mathcal{S}^{\alpha, \zeta,n}$, $cf(\eta) = \cf(N_\eta \cap Ord)$.    
\end{lemma}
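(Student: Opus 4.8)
The plan is to prove $\cf(\eta) = \cf(N_\eta \cap \mathrm{Ord})$ by exhibiting cofinal maps in both directions, using the special structure of $N_\eta$ for $\eta \in \mathcal{S}^{\alpha,\zeta,n}$.

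For the inequality $\cf(N_\eta \cap \mathrm{Ord}) \leq \cf(\eta)$: first I would observe that the collapsing structure $N_\eta = K|\tau(\eta)$ definably collapses $\eta$ onto $\alpha$, so there is a surjection $f : \alpha \to \eta$ that is $r\Sigma_{n_\eta+1}$-definable over $N_\eta$, and hence $N_\eta = h^{N_\eta}_{n_\eta+1}[\omega \times (\alpha \cup \{\delta\})]$ where $p^{N_\eta}_{n_\eta+1} = \{\delta\}$. Using the $J$-hierarchy, each element $x \in N_\eta$ lives at some level $N_\eta | \gamma_x$ with $\gamma_x < N_\eta \cap \mathrm{Ord}$; the map sending a ``Skolem code'' $(i,\xi)$ for $\xi < \alpha$ to the least such level $\gamma$ containing $h^{N_\eta}_{n_\eta+1}(i,\xi,\delta)$ is a map from (essentially) $\alpha$ cofinally into $N_\eta \cap \mathrm{Ord}$. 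But then, since $\eta$ is regular in $N_\eta$ (it equals $(\alpha^+)^{N_\eta}$) and $\cf(\eta)$ many points of $\eta$ suffice to generate (via the definable surjection onto $\eta$, or rather its inverse) cofinally many Skolem codes, one can compose to get a map of length $\cf(\eta)$ cofinal in $N_\eta \cap \mathrm{Ord}$. More directly: fix a cofinal $c \subseteq \eta$ of order type $\cf(\eta)$; for each $\beta \in c$ the ordinal $\sup(h^{N_\eta}_{n_\eta+1}[\omega \times (\beta \cup \{\delta\})] \cap \mathrm{Ord})$ is below $N_\eta \cap \mathrm{Ord}$ (because the hull over $\beta < \eta \leq \rho^{N_\eta}_{n_\eta}$ together with the one parameter $\delta$ cannot be all of $N_\eta$, else $\eta$ would already be collapsed — here the role of $\delta$ being the least ordinal outside $h^{N_\eta}_{n_\eta+1}[\omega \times \alpha]$ and $\eta$ being larger is what keeps this bounded), and as $\beta \to \eta$ along $c$ these suprema increase cofinally in $N_\eta \cap \mathrm{Ord}$ since $\bigcup_{\beta \in c} (\beta \cup \{\delta\}) = \eta \cup \{\delta\}$ generates all of $N_\eta$.

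For the reverse inequality $\cf(\eta) \leq \cf(N_\eta \cap \mathrm{Ord})$: fix a cofinal sequence $\langle \gamma_\xi \mid \xi < \cf(N_\eta \cap \mathrm{Ord})\rangle$ in $N_\eta \cap \mathrm{Ord}$, and for each $\xi$ let $\beta_\xi = \sup(\eta \cap N_\eta|\gamma_\xi)$, i.e. the supremum of the ordinals below $\eta$ appearing in the initial segment $N_\eta | \gamma_\xi$. Each $\beta_\xi < \eta$ because $\eta$ is a cardinal in $N_\eta$ and $N_\eta | \gamma_\xi$ is a proper initial segment, so $|N_\eta|\gamma_\xi| < \eta$ in $N_\eta$ (here I use that $N_\eta$ is acceptable, so the level $\gamma_\xi$ has size $< \eta$ below $\eta$); hence $\eta \cap N_\eta|\gamma_\xi$ has size $< \eta$ and, $\eta$ being regular in $N_\eta$, cannot be cofinal. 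As $\xi$ ranges over $\cf(N_\eta \cap \mathrm{Ord})$ the $\beta_\xi$ are cofinal in $\eta$, because every ordinal below $\eta$ appears in some level $N_\eta|\gamma_\xi$. This gives a cofinal map of length $\cf(N_\eta \cap \mathrm{Ord})$ into $\eta$.

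The main obstacle I anticipate is the first direction, specifically verifying that $\sup(h^{N_\eta}_{n_\eta+1}[\omega \times (\beta \cup \{\delta\})] \cap \mathrm{Ord}) < N_\eta \cap \mathrm{Ord}$ for each $\beta < \eta$. This is where the fine-structural hypotheses on $\mathcal{S}^{\alpha,\zeta,n}$ really matter: one needs that the hull over strictly fewer than $\eta$ ordinals (together with the single parameter $\delta$) does not recover all of $N_\eta$, which follows from the fact that $N_\eta$ is $(n_\eta+1)$-sound with projectum $\alpha$ and parameter $\{\delta\}$, and $\eta$ is a cardinal (indeed $(\alpha^+)^{N_\eta}$) in $N_\eta$ while the full hull over $\alpha \cup \{\delta\}$ is all of $N_\eta$ — so the hull over a bounded-below-$\eta$ set of ordinals, having $N_\eta$-cardinality $\leq \alpha + |\beta| < \eta$, collapses to a proper initial segment. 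I would cite the acceptability of $N_\eta$ and standard condensation for this bound. The cofinality-$\zeta$ assumption on members of $\mathcal{S}^{\alpha,\zeta,n}$ is consistent with this computation precisely because it forces both sides to equal $\zeta$; I should double-check that the definition of $\mathcal{S}^{\alpha,\zeta,n}$ already stipulates $\cf(N_\eta \cap \mathrm{Ord}) = \zeta$, so that the lemma is really asserting $\cf(\eta) = \zeta$ as well, and the argument above delivers exactly that.
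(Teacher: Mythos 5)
Both directions of your argument break, the first at exactly the step you flagged and the second at a step you did not. For the first direction, the boundedness claim $\sup\bigl(h^{N_\eta}_{n_\eta+1}[\omega\times(\beta\cup\{\delta\})]\cap \mathrm{Ord}\bigr)<N_\eta\cap \mathrm{Ord}$ is false for every $\beta\geq\alpha$: by $(n_\eta+1)$-soundness --- the very fact you quote, $N_\eta=h^{N_\eta}_{n_\eta+1}[\omega\times(\alpha\cup\{\delta\})]$ --- the hull over $\alpha\cup\{\delta\}$ is already all of $N_\eta$, so along any cofinal $c\subseteq\eta$ your map is eventually constantly equal to $N_\eta\cap \mathrm{Ord}$ and yields no cofinal sequence of length $\cf(\eta)$. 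The heuristic ``a hull over a parameter set of $N_\eta$-cardinality $<\eta$ collapses to a proper initial segment'' is precisely what fails here, because the relevant projectum is $\alpha$, not $\eta$: a hull of size $\alpha$ that contains the standard parameter $\delta$ recovers everything. For the second direction, $\beta_\xi:=\sup(\eta\cap N_\eta|\gamma_\xi)$ does not work either: the levels $N_\eta|\gamma_\xi$ are transitive, so as soon as $\gamma_\xi\geq\eta$ (which holds for all sufficiently large $\xi$, since $\eta\in N_\eta$ and $N_\eta\cap\mathrm{Ord}>\eta$) one has $\eta\subseteq N_\eta|\gamma_\xi$ and hence $\beta_\xi=\eta$; correspondingly, the claim ``$|N_\eta|\gamma_\xi|<\eta$ in $N_\eta$'' is false for such levels.

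The repair --- and the paper's actual proof --- is to bound hulls not by shrinking the parameter set below $\eta$, but by restricting the Skolem function itself: for $\epsilon\in N^{(n_\eta)}_\eta\cap\mathrm{Ord}$ one uses the restricted hull $X_\epsilon=h^{N_\eta,\epsilon}_{n_\eta+1}[\omega\times\alpha]$ of Definition \ref{Def:RestrictedSkolem}. The key point is that $X_\epsilon$ is an \emph{element} of $N^{(n_\eta)}_\eta$ of size $\alpha$ there, so $\eta_\epsilon:=\sup(X_\epsilon\cap\eta)<\eta$ by the regularity of $\eta=(\alpha^+)^{N_\eta}$ in $N_\eta$; the map $\epsilon\mapsto\eta_\epsilon$ is weakly increasing and cofinal in $\eta$, and since $\rho^{N_\eta}_1=\cdots=\rho^{N_\eta}_{n_\eta}=N_\eta\cap\mathrm{Ord}$ for $\eta\in\mathcal{S}^{\alpha,\zeta,n}$, its domain is literally $N_\eta\cap\mathrm{Ord}$, so a single weakly increasing cofinal map delivers both inequalities at once. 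Your closing observation that the definition of $\mathcal{S}^{\alpha,\zeta,n}$ already stipulates $\cf(N_\eta\cap\mathrm{Ord})=\zeta$ is correct, so the lemma indeed amounts to $\cf(\eta)=\zeta$, but that observation does not close either gap.
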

\begin{proof}
Let $n$ be minimal such that $\alpha = \rho_{n+1}^{N_\eta}$.
    For each good point $\epsilon \in N^{(n)}_\eta\cap \mbox{Ord}$, $J^{\E}_{\epsilon} \in N^{(n)}_\eta$, and thus the $\alpha$-sized set  $X_\epsilon := h^{N_{\eta}  , \epsilon}_{n+1}[\omega \times \alpha]$ belongs to $N^{(n)}_\eta$. Since $\eta > \alpha$ is a regular cardinal in $N_\eta$, $\eta_{\epsilon} := \sup(X_\epsilon \cap \eta) < \eta$. Clearly, $\eta_{\epsilon_1} \leq \eta_{\epsilon_2}$ for $\epsilon_1<\epsilon_2$, and for every $\eta' < \eta$ there is some $\epsilon \in N^{(n)}_\eta$ such that $\eta'<\eta_\epsilon$. We conclude that both maps 
    $$\epsilon \mapsto \eta_\epsilon \text{, and }
    \epsilon \mapsto \sup(X_\epsilon \cap Ord) < (N_\eta \cap Ord)$$
    are (weakly) increasing and cofinal map from $N^{(n)}_\eta \cap Ord$ to $\eta$, $N_\eta \cap Ord$, respectively.  Hence $cf(\eta) = cf(N_\eta\cap Ord)$.
\end{proof}

We proceed to the definition of the clubs $c^{\alpha}_{\eta}$. Each such club is obtained from $d^{\alpha}_{\eta}$ by a screening process, as in the following definition. We remark that we slightly abuse the notation and identify pairs in $\omega^{<\omega}\times \alpha$ with ordinals in $\alpha$.

\begin{definition} \label{Def: finestructurethinout}
Let $\eta\in \mathcal{S}^{\alpha, \zeta, n}$ for some regular $\zeta\leq \alpha$ and $n<\omega$. Let $\delta\in (\alpha,\alpha^+)$ be such that $\{\delta\}= p^{ N_\eta }_{n+1}$. We define $c^{\alpha}_{\eta} = \la \eta_{i} \colon i< i^* \ra$, where $i^*$ and $\la \eta_{i} \colon i<i^* \ra$ are defined inductively, along with an increasing, continuous  sequence $\la \alpha_i \colon i<i^* \ra $ of ordinals below $\alpha$, as follows: Let $\eta_0 = \min(d^{\alpha}_{\eta})$.
Assume that $i<i^*$ and $\eta_i$ has been constructed. Let $\alpha_{i}$ be the least ordinal $\gamma<\alpha$ such that $h^{N_{\eta}}_{n+1}(\gamma, \delta)\notin \mbox{ran}(\sigma_{ \eta_i, \eta })$. Set $ \eta_{i+1} $ to be the least $\eta'\in d^{\alpha}_{\eta}$ above $\eta_i$ such that $h^{N_{\eta}}_{n+1}(\alpha_i,\delta)\in \mbox{ran}(\sigma_{\eta',\eta})$. For $i$ limit, assuming that $\la \eta_j \colon j<i \ra$ were chosen, let $\eta_i = \sup\{ \eta_j  \colon j<i\}$. 

\end{definition}

The filtering process producing ordinals $\eta_i$ along with a strictly increasing sequence of $\alpha_i < \alpha$ must halt at some stage $i^* \leq \alpha$. 
This process can be translated to a parallel filtering process on the good points $\epsilon_i$ which correspond to $\eta_i$ ($i<i^*$). 

\begin{definition}\label{Definition:Witnesses-alpha^epsilon}
In the above notations, given $\alpha < \kappa$ let $\epsilon_\alpha$ be the minimal good point $\epsilon$ so that $h^{N_{\eta, \epsilon}}_{n+1}\uhr \alpha\times\{\delta \} = h^{N_\eta}_{n+1}\uhr \alpha\times\{\delta\}$.
    We say that $\epsilon$ is \text{ very good } if there is $\alpha$ such that $\epsilon = \epsilon_\alpha$, and define its witness $\alpha^\epsilon < \kappa$ by 
$$ \alpha^\epsilon = \sup\{ \alpha \mid \epsilon = \epsilon_\alpha\}.$$
\end{definition}

Note that the set $\{ \alpha \colon \epsilon = \epsilon_{\alpha} \}$ is closed under limits, and thus the supremum in the above definition is actually a maximum.

\begin{lemma}
    Let $\eta\in \mathcal{S}^{\alpha}$, and let $\delta\in (\alpha, \alpha^+)$ be such that $p^{N_\eta}_{n_\eta+1} = \{\delta \} $. Assume that $\epsilon$ is a good point and $\eta'\in d^\alpha_{\eta}$ is its corresponding element in $d^\alpha_{\eta}$. Then $\eta'\in c^\alpha_{\eta}$ if and only if $\epsilon$ is a very good point.
\end{lemma}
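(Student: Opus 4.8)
The plan is to transport the entire filtering construction of Definition~\ref{Def: finestructurethinout} to the level of good points, via an order-preserving bijection between the good points of $N_\eta$ and $d^\alpha_\eta$, and then to recognise the filtered set as exactly the set of very good points. I fix $\eta\in\mathcal{S}^\alpha$, write $n=n_\eta$ (so $n\ge 2$ and $\mathrm{cf}(N_\eta\cap\mathrm{Ord})=\omega_1$, and the good-point machinery of the previous subsections applies with this $n$), and let $\delta\in(\alpha,\alpha^+)$ be as in the statement, $p^{N_\eta}_{n+1}=\{\delta\}$. For a good point $\epsilon$ I put $X_\epsilon=h^{N_\eta,\epsilon}_{n+1}[\omega\times(\alpha\cup\{\delta\})]$ and $\eta(\epsilon)=\eta\cap X_\epsilon$. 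By Lemma~\ref{Lem: FineStructureGoodPts} we have $\eta(\epsilon)\in d^\alpha_\eta$ and $\mathrm{ran}(\sigma_{\eta(\epsilon),\eta})=X_\epsilon$; conversely, by the lemma above assigning to each $\eta'\in d^\alpha_\eta$ a good point $\epsilon$ with $\sigma_{\eta',\eta}$ equal to the inverse of the transitive collapse of $h^{N_\eta,\epsilon}_{n+1}[\omega\times(\alpha\cup\{\delta\})]$, every $\eta'\in d^\alpha_\eta$ is of the form $\eta(\epsilon)$, for a good point necessarily unique by the uniqueness of $\sigma_{\eta',\eta}$; I write it $\epsilon(\eta')$. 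Thus $\epsilon\mapsto\eta(\epsilon)$ is an order-preserving bijection of the good points of $N_\eta$ onto $d^\alpha_\eta$, with inverse $\eta'\mapsto\epsilon(\eta')$.

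Next I attach to each good point $\epsilon$ the ordinal $\mu(\epsilon)=\min\{\gamma<\alpha\mid h^{N_\eta}_{n+1}(\gamma,\delta)\notin X_\epsilon\}$, identifying pairs in $\omega^{<\omega}\times\alpha$ with ordinals below $\alpha$ as in the paragraph preceding Definition~\ref{Def: finestructurethinout}. By the condensation behaviour of the restricted Skolem functions isolated in Lemma~\ref{Lem: FineStructureGoodPts} — membership $h^{N_\eta}_{n+1}(\gamma,\delta)\in X_\epsilon$ being equivalent to $h^{N_\eta,\epsilon}_{n+1}(\gamma,\delta)=h^{N_\eta}_{n+1}(\gamma,\delta)$ — $\mu(\epsilon)$ is also the least $\gamma$ at which $h^{N_\eta,\epsilon}_{n+1}(\cdot,\delta)$ disagrees with $h^{N_\eta}_{n+1}(\cdot,\delta)$; hence, unwinding Definition~\ref{Definition:Witnesses-alpha^epsilon}, the point $\epsilon_\beta$ there is the $<$-least good point with $\mu\ge\beta$, and a good point $\epsilon$ is \emph{very good} exactly when it is the $<$-least good point with $\mu$-value $\ge\mu(\epsilon)$, equivalently when $\mu(\epsilon')<\mu(\epsilon)$ for every good $\epsilon'<\epsilon$. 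I then verify three facts, writing $c^\alpha_\eta=\langle\eta_i\mid i<i^*\rangle$ and $\epsilon^{(i)}=\epsilon(\eta_i)$: (a) $\mu$ is nondecreasing on good points, since $\epsilon_1\le\epsilon_2$ gives $X_{\epsilon_1}\subseteq X_{\epsilon_2}$ — using $\eta(\epsilon_1)\in d^\alpha_{\eta(\epsilon_2)}=d^\alpha_\eta\cap\eta(\epsilon_2)$ and the factorisation $\sigma_{\eta(\epsilon_1),\eta}=\sigma_{\eta(\epsilon_2),\eta}\circ\sigma_{\eta(\epsilon_1),\eta(\epsilon_2)}$ coming from the coherence of the $\square_\alpha$-system — and moreover $\{\epsilon^{(i)}\}$ is cofinal in $N_\eta\cap\mathrm{Ord}$, since $c^\alpha_\eta$ is cofinal in $\eta$ and the good points are cofinal in $N_\eta\cap\mathrm{Ord}$ as in the proof of Lemma~\ref{Lemma:Cf(eta)=Cf(N)}; (b) $\langle\epsilon^{(i)}\rangle$ is strictly increasing and continuous at limits, because $\eta_i=\sup_{j<i}\eta_j$ forces $\mathrm{ran}(\sigma_{\eta_i,\eta})=\bigcup_{j<i}\mathrm{ran}(\sigma_{\eta_j,\eta})$ (coherence, together with the $n$-soundness of $N_{\eta_i}$ and the cofinality of $\langle\eta_j\mid j<i\rangle$ in $d^\alpha_{\eta_i}$), so $\epsilon^{(i)}=\sup_{j<i}\epsilon^{(j)}$; (c) rewriting Definition~\ref{Def: finestructurethinout} via $\mathrm{ran}(\sigma_{\eta_i,\eta})=X_{\epsilon^{(i)}}$, the auxiliary ordinal satisfies $\alpha_i=\mu(\epsilon^{(i)})$ for every $i$, and $\epsilon^{(i+1)}$ is the $<$-least good point $\epsilon>\epsilon^{(i)}$ with $\mu(\epsilon)>\mu(\epsilon^{(i)})$ (so that $\langle\mu(\epsilon^{(i)})\rangle$ is strictly increasing, in agreement with the continuity of $\langle\alpha_i\rangle$ asserted in the construction).

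Granting (a)--(c), the lemma reduces to the claim that $\{\epsilon^{(i)}\mid i<i^*\}$ is precisely the set of very good points, since $\eta'\in c^\alpha_\eta$ iff $\eta'=\eta_i$ for some $i$ iff $\epsilon(\eta')=\epsilon^{(i)}$ for some $i$. For the inclusion $\subseteq$ I check, by cases on $i$, that $\epsilon^{(i)}$ is the $<$-least good point with $\mu$-value $\ge\mu(\epsilon^{(i)})$, hence very good: for $i=0$ this is vacuous as $\epsilon^{(0)}$ is the least good point; at a successor $i_0+1$ it follows from (a) and the minimality clause in the choice of $\epsilon^{(i_0+1)}$ in (c); and at a limit $i$ one notes that any good $\epsilon'<\epsilon^{(i)}=\sup_{j<i}\epsilon^{(j)}$ lies below some $\epsilon^{(j)}$, whence $\mu(\epsilon')\le\mu(\epsilon^{(j)})<\mu(\epsilon^{(j+1)})\le\mu(\epsilon^{(i)})$ by (a) and (c). For $\supseteq$ I take a very good point $\epsilon$ and let $i$ be least with $\epsilon^{(i)}\ge\epsilon$, which exists by the cofinality in (a) and the fact that good points lie below $N_\eta\cap\mathrm{Ord}$; $i$ cannot be a limit, as then $\epsilon^{(i)}=\sup_{j<i}\epsilon^{(j)}\le\epsilon$ by (b) and minimality of $i$, so $i=i_0+1$, and if $\epsilon^{(i)}\ne\epsilon$ then $\epsilon^{(i_0)}<\epsilon<\epsilon^{(i_0+1)}$; now the minimality clause of (c) forces $\mu(\epsilon)\le\mu(\epsilon^{(i_0)})$ while (a) forces $\mu(\epsilon^{(i_0)})\le\mu(\epsilon)$, so $\mu(\epsilon^{(i_0)})=\mu(\epsilon)$, contradicting that $\epsilon$ is the $<$-least good point with its own $\mu$-value; hence $\epsilon=\epsilon^{(i)}$.

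The main obstacle is (a)--(c), and underneath them two fine-structural ingredients: the coherence (directedness) of the collapse maps $\sigma_{\cdot,\cdot}$ of the $\square_\alpha$-sequence, and the condensation property of the restricted Skolem functions — that a value lies in $X_\epsilon$ iff it is computed correctly from $\epsilon$, and that this correctness propagates upward through good points — which is exactly the machinery already developed for Lemma~\ref{Lem: FineStructureGoodPts}. The remaining subtlety is purely combinatorial: matching the two-variable recursion of Definition~\ref{Def: finestructurethinout}, which advances the pair $(\eta_i,\alpha_i)$, against the one-variable recursion ``pass to the least good point whose $\mu$-value strictly increases'', after which the case analysis above closes the argument.
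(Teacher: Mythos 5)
Your proposal is correct and follows essentially the same route as the paper: your function $\mu(\epsilon)$ is exactly the agreement length underlying Definition~\ref{Definition:Witnesses-alpha^epsilon}, and your items (a)--(c) together with the closing case analysis carry out the same monotonicity-plus-minimality bookkeeping (via the order-preserving correspondence between good points and $d^\alpha_\eta$ and the fact that $h^{N_\eta}_{n_\eta+1}(\gamma,\delta)\in\mathrm{ran}(\sigma_{\eta',\eta})$ amounts to agreement of the restricted Skolem function at $\gamma$) that the paper's two implications perform directly. The only, harmless, slip is in your $\supseteq$ direction: at a limit $i$ you do not get a contradiction but rather $\epsilon^{(i)}\le\epsilon\le\epsilon^{(i)}$, i.e.\ $\epsilon=\epsilon^{(i)}$ outright, which is the desired conclusion anyway.
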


\begin{proof}
    For simplicity we assume that $n_{\eta} =0$. Otherwise, argue from $N^{(n_\eta)}_\eta$.
    Assume first that $\eta' \in c^\alpha_{\eta}$. In the notations above, let $i<i^*$ be such that $\eta' = \eta_i$. Then for every $\gamma<\alpha_i$ with $h^{N_{\eta}}_{1}(\gamma, \delta)$ defined, $h^{N_{\eta}}_{1}(\gamma,\delta)\in \mbox{ran}\left( \sigma_{\eta_i,\eta} \right)$. But $\sigma_{ \eta_i, \eta }$ is the inverse of the transitive collapse of $h^{N_{\eta},\epsilon}_1[\alpha\cup \{ \delta \} ]$, and thus $h^{N_{\eta}, \epsilon}_1(\gamma, \delta)$ is defined. Therefore $$h^{N_{\eta},\epsilon}_{1} \uhr \alpha_i\times \{ \delta \} = h^{N_{\eta}}_{1}\uhr \alpha_i\times\{\delta \}.$$ 
    Let us argue that $\epsilon$ is minimal with this property; this implies that $\epsilon = \epsilon_{\alpha_i}$ is very good. Assume that  $\zeta<\epsilon$ is a good point. We argue that 
    $$h^{N_{\eta}, \zeta}_{1} \uhr \alpha_i\times\{\delta \} \neq h^{N_{\eta}}_{1}\uhr \alpha_i\times\{ \delta\}.$$ 
    Since  $\zeta$ is a good point, it defines a point $\eta_0\in d^\alpha_{\eta}$ below $\eta'$. Then $\mbox{ran}(\sigma_{\eta_0, \eta}) \subsetneq \mbox{ran}( \sigma_{\eta_i, \eta} )$, where the strict inequality follows since otherwise, $\eta'$ would have been chosen as the $i$-th element of $c^\alpha_{\eta}$, instead of $\eta_i$. Therefore, $ h^{N_{\eta}, \zeta}_{1}\uhr \alpha_i\times \{\delta \}  \neq  h^{N_{\eta}}_{1}\uhr \alpha_i\times \{\delta \}$.

    Assume now that $\epsilon$ is a very good point, and let us argue that $\eta'\in c^\alpha_{\eta}$. Write $\epsilon = \epsilon_{\alpha}$ for some $\alpha = \alpha^{\epsilon}<\kappa$. We argue that there exists $i<i^*$ such that $\alpha = \alpha_i$. Assume otherwise. Let $i<i^*$ be the least such that $\alpha< \alpha_{i}$. By the definition, for every $\beta\leq\alpha$, if $h^{N_{\eta}}_{1}(\beta,\delta)$ is defined, then $$h^{N_{\eta}}_1(\beta, \delta)\in \mbox{ran}(\sigma_{\eta_i, \eta}).$$ Denote by $\zeta$ the good point which corresponds to $\eta_i$. Then for every $\beta$ as above, $(\beta,\delta)\in \mbox{dom}(h^{N_{\eta}, \zeta}_{1})$. It follows that 
    $$h^{N_{\eta}, \zeta}_{1}\uhr (\alpha+1)\times \{\delta\} = h^{N_{\eta}}_{1}\uhr (\alpha+1)\times\{ \delta\}.$$ 
    This contradicts the minimality of $\epsilon$. 

    Thus, $\epsilon = \epsilon_{\alpha_i}$. Now, if $\eta'\notin c^\alpha_{\eta}$ then, since $c^\alpha_{\eta}$ is a club, there exists a maximal point $\eta_j\in c^\alpha_{\eta}$ below $\eta'$, and $\mbox{ran}(\sigma_{\eta_j,\eta}) = \mbox{ran}( \sigma_{ \eta',\eta } )$. So, if $\zeta <\epsilon $ is the very good point which defines $\eta_j$, then $h^{ N_{\eta}, \zeta }_{1}\uhr \alpha_i\times\{ \delta\} =h^{ N_{\eta} }_{1}\uhr \alpha_i \times \{ \delta\}$, contradicting the fact that $\epsilon = \epsilon_{\alpha_i}$ is the minimal good point $\mu$ such that  $h^{ N_{\eta}, \mu }_{1}\uhr \alpha_i\times \{\delta\} =h^{ N_{\eta} }_{1}\uhr \alpha_i\times\{\delta \} $.
    \end{proof}

It is clear from the definition that the map $\alpha \mapsto \epsilon_\alpha$ is (weakly) increasing, continuous, and $\bigcup_{\alpha < \kappa} \epsilon_\alpha = Ord \cap N^{(n_\eta)}_\eta$. 
It follows from the continuity property that $\alpha^\epsilon$ is the maximal value $\alpha < \kappa$ with $\epsilon = \epsilon_\alpha$.\\


Finally, focusing on the stationary sets ${\S}^\alpha$ of Proposition \ref{Prop: Stationary set of passive collapsing structures}, we have the following:
\begin{lemma} \label{Lemma: elements in Salpha have omega1-clubs}
    For every cardinal $\alpha \geq \omega_1$ and $\eta \in {\S}^\alpha \subseteq \alpha^+$,  $otp(c^\alpha_\eta) = \omega_1$.
\end{lemma}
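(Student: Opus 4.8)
The plan is to compute $\otp(c^\alpha_\eta)$ through the set of very good points of the collapsing structure $N_\eta$. Write $n = n_\eta$, $\lambda = N_\eta\cap Ord$, and fix $\delta\in(\alpha,\alpha^+)$ with $p^{N_\eta}_{n+1} = \{\delta\}$. Since $\eta\in\mathcal{S}^\alpha$, Proposition \ref{Prop: Stationary set of passive collapsing structures} gives $n\geq 2$, $\rho^{N_\eta}_n = \lambda$ (so $N^{(n)}_\eta\cap Ord = \lambda$), $\cf(\lambda) = \omega_1$, and $h^{N_\eta}_{n+1}[\omega\times\omega_1]$ cofinal in $\lambda$; by $n$-soundness of the collapsing structure, $N_\eta = h^{N_\eta}_{n+1}[\omega\times(\alpha\cup\{\delta\})]$. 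By the previous lemma, $\eta'\mapsto\epsilon(\eta')$ is a bijection from $c^\alpha_\eta$ onto the set $VG$ of very good points of $N_\eta$; it is order preserving, as $\eta' < \eta''$ in $c^\alpha_\eta$ forces $\mathrm{ran}(\sigma_{\eta',\eta})\subsetneq\mathrm{ran}(\sigma_{\eta'',\eta})$ and hence $\epsilon(\eta') < \epsilon(\eta'')$. So it suffices to prove $\otp(VG) = \omega_1$.

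The key step is to show $VG = \{\epsilon_\beta \mid \beta < \omega_1\}$. For $\beta < \omega_1$ the set $h^{N_\eta}_{n+1}[\omega\times(\beta\cup\{\delta\})]$ is countable, hence bounded in $\lambda$ because $\cf(\lambda) = \omega_1 > \omega$; thus $\epsilon_\beta$ is a well-defined good point (taken minimally above that bound so that the goodness requirements of Definition \ref{Def: GoodPoints} hold), and it is by definition very good. For the converse, suppose $\epsilon$ is a good point with $h^{N_\eta,\epsilon}_{n+1}\uhr\omega_1\times\{\delta\} = h^{N_\eta}_{n+1}\uhr\omega_1\times\{\delta\}$; this would hold in particular for $\epsilon = \epsilon_\beta$ with any $\beta\geq\omega_1$. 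Let $\eta' = \eta\cap h^{N_\eta,\epsilon}_{n+1}[\omega\times(\alpha\cup\{\delta\})]$ and let $\sigma_{\eta',\eta}\colon N_{\eta'}\to N_\eta$ be the inverse of the transitive collapse (Lemma \ref{Lem: FineStructureGoodPts}); then $\mathrm{ran}(\sigma_{\eta',\eta}) = h^{N_\eta,\epsilon}_{n+1}[\omega\times(\alpha\cup\{\delta\})]$ contains $h^{N_\eta}_{n+1}[\omega\times\omega_1]$, which is cofinal in $\lambda$. Hence $\sigma_{\eta',\eta}\uhr N^{(n)}_{\eta'}\to N^{(n)}_\eta$ is cofinal, so it is $\Sigma_1$-preserving, and therefore $\sigma_{\eta',\eta}$ is $r\Sigma_{n+1}$-preserving, not merely weakly so. Since $\sigma_{\eta',\eta}\uhr\alpha = \mathrm{id}$ and $\sigma_{\eta',\eta}(p^{N_{\eta'}}_{n+1}) = \{\delta\}$, elementarity gives $\mathrm{ran}(\sigma_{\eta',\eta}) \supseteq h^{N_\eta}_{n+1}[\omega\times(\alpha\cup\{\delta\})] = N_\eta$, so $\sigma_{\eta',\eta}$ is onto and $\eta' = \eta$, contradicting $\alpha < \eta' < \eta$ from Lemma \ref{Lem: FineStructureGoodPts}. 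Consequently no very good point has witness $\geq\omega_1$, so $VG = \{\epsilon_\beta\mid\beta<\omega_1\}$ and the domain of $\beta\mapsto\epsilon_\beta$ is exactly $\omega_1$.

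Finally, the map $\beta\mapsto\epsilon_\beta$ is weakly increasing and continuous on $\omega_1$ with range $VG$, and $\bigcup_{\beta<\omega_1}\epsilon_\beta = \lambda$ by the remark following Definition \ref{Definition:Witnesses-alpha^epsilon} (the union of the $\epsilon_\beta$ over their domain equals $N^{(n)}_\eta\cap Ord = \lambda$, and that domain is $\omega_1$ by the previous paragraph). Being the range of a weakly increasing continuous surjection defined on $\omega_1$, $VG$ has order type at most $\omega_1$; being cofinal in $\lambda$ with $\cf(\lambda) = \omega_1$, it has order type of cofinality $\omega_1$, hence at least $\omega_1$. Therefore $\otp(VG) = \omega_1$ and, by the first paragraph, $\otp(c^\alpha_\eta) = \omega_1$. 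The main obstacle is the middle step: leveraging the $\mathcal{S}^\alpha$-specific fact that $h^{N_\eta}_{n+1}[\omega\times\omega_1]$ is already cofinal in $N_\eta\cap Ord$ to exclude very good points with large witness; this rests on the fine-structural principle that a cofinal weakly $r\Sigma_{n+1}$-preserving embedding into a sound collapsing structure is automatically $r\Sigma_{n+1}$-preserving, and hence an isomorphism.
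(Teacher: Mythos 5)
Your proof is correct and follows essentially the same route as the paper: the lower bound rests on the cofinality-$\omega_1$ property built into $\mathcal{S}^\alpha$, and the upper bound on the fact that $h^{N_\eta}_{n_\eta+1}[\omega\times\omega_1]$ is cofinal in $N_\eta\cap Ord$, which is precisely what the paper's two-line argument invokes. Your elaboration of that second step---no very good point can have witness $\geq\omega_1$, since otherwise the restricted hull would be cofinal, making the collapse embedding $r\Sigma_{n_\eta+1}$-preserving and forcing $\eta'=\eta$---is the same cofinal-embedding argument the paper itself uses in the adjacent lemma, so you have simply made explicit what the paper leaves implicit.
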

\begin{proof}
    For each $\eta \in S^\alpha$, $cf(\eta) = cf(N_\eta \cap Ord) = \omega_1$ by Lemma \ref{Lemma:Cf(eta)=Cf(N)}. Therefore $\mbox{otp}(c^\eta_\alpha) \geq \omega_1$.
    For the other equality, note that the fact $h^{N_\eta}_{n_\eta+1}[\omega \times \omega_1]$ is cofinal in $N_\eta$ implies that the ordertype of very good points  in $N_\eta$ is at most $\omega_1$.\\
\end{proof}

\begin{corollary}\label{Cor: FineStructureS^alphaDoesNotReflect}
$\mathcal{S}^\alpha$ does not reflect. \end{corollary}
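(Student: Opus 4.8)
The plan is to derive the non-reflection of $\mathcal{S}^\alpha$ from the coherence of the $\square_\alpha$-sequence $\vec{c}^\alpha=\langle c^\alpha_\eta\mid \eta\in\mathcal{C}^\alpha,\ \cf(\eta)>\omega\rangle$, using the fact established in Lemmas \ref{Lemma:Cf(eta)=Cf(N)} and \ref{Lemma: elements in Salpha have omega1-clubs} that every $\eta\in\mathcal{S}^\alpha$ satisfies $\cf(\eta)=\omega_1$ and $\otp(c^\alpha_\eta)=\omega_1$. Suppose toward a contradiction that $\mathcal{S}^\alpha$ reflects at some $\beta<\alpha^+$ with $\cf(\beta)>\omega$; that is, $\mathcal{S}^\alpha\cap\beta$ is stationary in $\beta$.

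The first step is to observe that $\beta$ must lie in $\mathcal{C}^\alpha$. Indeed, $\mathcal{S}^\alpha\subseteq\mathcal{C}^\alpha$, and $\mathcal{C}^\alpha$ is a closed subset of $\alpha^+$; so if $\beta\notin\mathcal{C}^\alpha$ then $\sup(\mathcal{C}^\alpha\cap\beta)<\beta$, whence $\mathcal{S}^\alpha\cap\beta$ is bounded in $\beta$ and cannot be stationary. Thus $c^\alpha_\beta$ is defined; it is a club subset of $\beta$ of order type at most $\alpha$, and since $\cf(\otp(c^\alpha_\beta))=\cf(\beta)>\omega$ the set $\mathrm{Lim}(c^\alpha_\beta)$ of limit points of $c^\alpha_\beta$ below $\beta$ is a club subset of $\beta$.

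The second step is to intersect: $\mathcal{S}^\alpha\cap\beta\cap\mathrm{Lim}(c^\alpha_\beta)$ is stationary in $\beta$, hence unbounded in $\beta$, hence contains at least two points. Fix any such $\eta$. Since $\eta\in\mathcal{S}^\alpha$ has $\cf(\eta)=\omega_1>\omega$, it lies in the domain of $\vec{c}^\alpha$, and since $\eta\in\mathrm{Lim}(c^\alpha_\beta)$ the coherence of the square sequence gives $c^\alpha_\eta=c^\alpha_\beta\cap\eta$; combined with $\otp(c^\alpha_\eta)=\omega_1$ this forces $\otp(c^\alpha_\beta\cap\eta)=\omega_1$. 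But the map $\eta\mapsto\otp(c^\alpha_\beta\cap\eta)$ is strictly increasing along $c^\alpha_\beta$ --- if $\eta<\eta'$ both lie in $c^\alpha_\beta$, then $c^\alpha_\beta\cap\eta$ is a proper initial segment of $c^\alpha_\beta\cap\eta'$ --- so at most one point of $c^\alpha_\beta$ can satisfy $\otp(c^\alpha_\beta\cap\eta)=\omega_1$. This contradicts the existence of two such points in $\mathcal{S}^\alpha\cap\mathrm{Lim}(c^\alpha_\beta)$, and so no reflection point $\beta$ exists.

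I do not expect a genuine obstacle here: this is the standard argument deriving non-reflection from a $\square$-sequence, and all of its ingredients --- that $\mathcal{C}^\alpha$ is club in $\alpha^+$, that $\vec{c}^\alpha$ is coherent with $\otp(c^\alpha_\eta)\leq\alpha$, and that every $\eta\in\mathcal{S}^\alpha$ carries a square-club of order type exactly $\omega_1$ --- are already established above. The only point that requires a little care is keeping track of which ordinals carry a square-club (only those in $\mathcal{C}^\alpha$ of uncountable cofinality), which is precisely why both the reduction to $\beta\in\mathcal{C}^\alpha$ and the use of $\cf(\eta)=\omega_1$ for $\eta\in\mathcal{S}^\alpha$ enter the argument.
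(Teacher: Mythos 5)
Your proof is correct and follows essentially the same route as the paper: both arguments combine the coherence of $\vec{c}^\alpha$ with Lemma \ref{Lemma: elements in Salpha have omega1-clubs} to conclude that at most one limit point of $c^\alpha_\beta$ (namely its $\omega_1$-st one) can lie in $\mathcal{S}^\alpha$, which rules out stationarity of $\mathcal{S}^\alpha\cap\beta$. The only difference is presentational: you argue by contradiction at a putative reflection point, while the paper directly exhibits, below each $\eta$ of uncountable cofinality, a club disjoint from $\mathcal{S}^\alpha$.
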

    
\begin{proof}
    For every $\eta$ of uncountable cofinality, $S^\alpha \cap c^\alpha_\eta$ contains at most one limit point of $c^\alpha_\eta$ below $\eta$ (even if $\eta\notin \mathcal{S}^\alpha$), by the previous lemma. Thus, there exists a club in $\eta$ disjoint from $\mathcal{S}^\alpha$.
\end{proof}

\subsection{Correspondence between Good Points at Different Heights}\label{subsec:Correspondence}

In this subsection, assume that $\kappa$ is a limit of regular cardinals.


\begin{lemma} \label{Lem: FineStructureCorrespondenceBetweenkappaalpha}
Let $\eta\in {\mathcal{S}}^{\kappa, \zeta,n}$ for some regular $\zeta\leq \kappa$ and $n<\omega$. Let $\delta\in (\kappa, \kappa^+)$ be such that $p^{N_\eta}_{n_\eta+1} = \{\delta \} $ is the associated parameter. Let $\alpha<\kappa$ be a cardinal such that:
\begin{itemize}
    \item $\alpha\in h^{N_{\eta}}_{n_\eta+1}[\omega\times(\alpha\cup \{\delta \}) ]$. 
    \item The transitive collapse of $h^{N_\eta}_{n_\eta+1}[\omega \times \delta ]$ belongs to $h^{N_{\eta}}_{n_\eta+1}[\omega\times(\alpha\cup \{\delta \}) ]$.
\end{itemize}
Then the transitive collapse of $h_{n_{\eta}+1}^{N_{\eta}}[\omega\times (\alpha\cup \{\delta \} )]$ is a collapsing structure  $N_{\eta(\alpha)}$ for some 
 $\eta(\alpha)\in \mathcal{C}^{\alpha} \setminus \mathcal{S}^{\alpha}$.
\end{lemma}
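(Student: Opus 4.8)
The plan is to mimic the good-point analysis of Lemma \ref{Lem: FineStructureGoodPts}, but now with the hull taken over the top structure $N_\eta$ with $\eta \in \mathcal{S}^{\kappa,\zeta,n}$ and with the cardinal $\alpha < \kappa$ playing the role that $\alpha$ played there — except that here $\alpha$ is no longer the ultimate projectum of the structure being collapsed. First I would set $X = h^{N_\eta}_{n_\eta+1}[\omega \times (\alpha \cup \{\delta\})]$ and let $N$ be its transitive collapse, with $\sigma \colon N \to N_\eta$ the inverse of the collapse map. The two hypotheses are exactly the analogues of conditions (2) and (3) in Definition \ref{Def: GoodPoints} (condition (1), unboundedness, is automatic since we are taking the \emph{full} Skolem hull, not a restricted one), so the argument of Claim \ref{Claim: prp Calpha, collapsing structure} and part (1) of Lemma \ref{Lem: FineStructureGoodPts} applies: working at the level of the $n_\eta$-th reduct, $Y = h^{N^{(n_\eta)}_\eta}_1[\omega \times (\alpha \cup \{\delta\})]$ is rudimentarily closed, collapses to a $J$-structure $J^A_{\tau'}$ which is the $n_\eta$-th reduct of some $J^{\mathcal{E}'}_\tau$, and since the collapse map lifts to an $r\Sigma_{n_\eta+1}$-preserving embedding into $N_\eta$, a comparison argument as in Claim \ref{claim: comparison argument} shows $\mathcal{E}' = \mathcal{E}$, so $N = J^{\mathcal{E}}_\tau$ is an initial segment of $K = L[\mathcal{E}]$. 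The hull condition on the transitive collapse $W$ of $h^{N^{(n_\eta)}_\eta}_1[\omega \times \delta]$ gives, as in Lemma \ref{Lem: FineStructureGoodPts}, that $p^{J^{\mathcal{E}}_\tau}_{n_\eta + 1} = \{\delta'\}$ where $\sigma(\delta') = \delta$, and that $\rho^{J^{\mathcal{E}}_\tau}_{n_\eta+1} = \alpha$ — here using that $\alpha \in X$ forces $\operatorname{cp}(\sigma) > \alpha$, hence $\sigma \restriction \alpha = \mathrm{id}$, and the condition $\alpha \in X$ makes $\alpha$ definable in $N$, so it is a cardinal there but collapses one step up.

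Next I would set $\eta(\alpha) = \eta \cap X = \operatorname{cp}(\sigma)$ and argue, exactly as in part (2) of Lemma \ref{Lem: FineStructureGoodPts}, that $X \cap \eta$ is transitive — any $\eta^* \in X \cap \eta$ has $|\eta^*| \le \kappa$ inside $J^{\mathcal{E}}_\eta$ (since $J^{\mathcal{E}}_\eta \elem J^{\mathcal{E}}_{\kappa^+}$ and $\eta \in \mathcal{C}^\kappa$ gives $|\eta^*| \le \kappa$ there, and more carefully $|\eta^*| \le \alpha$ along the relevant elementarity once we note $\eta^* < \eta(\alpha)$), so a surjection $\alpha \to \eta^*$ lies in $X$ and $\eta^* \subseteq X$ — and therefore $\eta(\alpha) \in (\alpha, \eta)$ is an ordinal. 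The reflection argument $J^{\mathcal{E}}_{\eta(\alpha)} \elem_{\Sigma_\omega} J^{\mathcal{E}}_{\alpha^+}$ goes through verbatim as in Claim \ref{claim: the transitive collapse is indeed a collapsing structure} using $\sigma \restriction \eta(\alpha) = \mathrm{id}$ and $\eta \in \mathcal{C}^\kappa$; this shows $\eta(\alpha) \in \mathcal{C}^\alpha$. That $N = J^{\mathcal{E}}_\tau$ is genuinely the collapsing structure $N_{\eta(\alpha)}$ follows because $\eta(\alpha)$ is a cardinal in $N$ (it equals $(\alpha^+)^N$ in the sense that $N$ sees no surjection of $\alpha$ onto it — but it \emph{is} collapsed in $J^{\mathcal{E}}_{\tau+\omega}$, witnessed by the $r\Sigma_{n_\eta+1}$-definable map $h^{N}_{n_\eta+1}[\omega \times (\alpha \cup \{\delta'\})]$ onto $\eta(\alpha)$), so by maximality of $\tau$ in Definition \ref{Def: CollapsingStructure}, $N_{\eta(\alpha)} = J^{\mathcal{E}}_\tau = N$.

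Finally I would verify $\eta(\alpha) \notin \mathcal{S}^\alpha$. This is where one uses the cofinality bookkeeping: the hull $X$ is the \emph{full} $h^{N_\eta}_{n_\eta+1}$-hull over parameters in $\alpha \cup \{\delta\}$, so $\sigma \restriction N \colon N \to N_\eta$ is $r\Sigma_{n_\eta+1}$-preserving (not merely \emph{weakly} so, unlike the restricted-hull situation of good points), and in particular $\sigma$ is cofinal into $N_\eta$ only if $N = N_\eta$; since $\alpha < \kappa$ and $\eta \in \mathcal{C}^\kappa$ we have $\eta(\alpha) < \eta$, so $\sigma$ is \emph{not} cofinal, i.e. $\sigma[\operatorname{Ord} \cap N]$ is bounded in $\operatorname{Ord} \cap N_\eta$. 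This means $N_{\eta(\alpha)} = N$ arises as a \emph{full} (unrestricted) collapse, and the characterization of $\mathcal{S}^{\alpha}$-points (and the good/very-good analysis) shows that such $\eta(\alpha)$ cannot lie in $\mathcal{S}^\alpha$: membership in $\mathcal{S}^\alpha$ requires the associated collapse to be realized via a restricted hull $h^{N_\bullet,\epsilon}_{\bullet}$ at a good point with the cofinality-$\omega_1$ / cofinal-hull structure of Proposition \ref{Prop: Stationary set of passive collapsing structures}, whereas $N_{\eta(\alpha)}$ here has $\rho^{N_{\eta(\alpha)}}_{n_\eta} = N_{\eta(\alpha)} \cap \operatorname{Ord}$ failing or the wrong cofinality pattern because it is a full hull image of the \emph{fixed} structure $N_\eta$ rather than a self-similar restricted collapse. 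I expect \textbf{this last step} — pinning down precisely why the full-hull collapse $N_{\eta(\alpha)}$ is excluded from $\mathcal{S}^\alpha$, as opposed to merely lying in $\mathcal{C}^\alpha \setminus \mathcal{S}^\alpha$ — to be the main obstacle, since it requires carefully contrasting the fine-structural profile ($n_{\eta(\alpha)}$, the projecta $\rho^{N_{\eta(\alpha)}}_i$, and the cofinality of $N_{\eta(\alpha)} \cap \operatorname{Ord}$) forced by the full-hull construction against the defining clauses of $\mathcal{S}^\alpha$; the containment $\eta(\alpha) \in \mathcal{C}^\alpha$ is comparatively routine given the earlier lemmas.
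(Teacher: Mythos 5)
Your first two paragraphs follow essentially the paper's own route: collapse the full hull $h^{N_\eta}_{n_\eta+1}[\omega\times(\alpha\cup\{\delta\})]$, identify the transitive collapse as an initial segment of $L[\E]$ via the comparison argument, show the hull is transitive below $\eta(\alpha)=\mathrm{crit}$ of the uncollapse, deduce $\eta(\alpha)\in\mathcal{C}^\alpha$ by the reflection argument, and verify it is the collapsing structure of $\eta(\alpha)$ because the hull witnesses the collapse of $\eta(\alpha)$ at the next level. That part is fine.

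The genuine gap is the final step, $\eta(\alpha)\notin\mathcal{S}^\alpha$, which you flag as the main obstacle and for which your proposed reasoning does not work. Membership in $\mathcal{S}^\alpha$ is an \emph{intrinsic} fine-structural condition on the collapsing structure (Proposition 3.3): passivity, the projecta pattern, cofinality $\omega_1$, and crucially that the standard parameter is a singleton $\{\mu\}$ where $\mu$ is the \emph{least} ordinal outside $h^{N_{\eta(\alpha)}}_{n_{\eta(\alpha)}+1}[\omega\times\alpha]$. There is no clause about the collapse being ``realized via a restricted hull,'' so the full-hull/restricted-hull dichotomy you invoke is not an exclusion criterion; and the features you suggest should fail (the projecta being equal to the ordinal height, or the cofinality pattern) in fact largely transfer from $N_\eta$ along the collapse, so they cannot be what rules $\eta(\alpha)$ out. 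The correct argument is short and is exactly the minimality clause: the standard parameter of $N_{\eta(\alpha)}$ is $\{\delta(\alpha)\}$, the image of $\delta$ under the collapse (this much you do get, via the solidity-witness hypothesis). If $\delta(\alpha)$ were the least ordinal outside $h^{N_{\eta(\alpha)}}_{n_{\eta(\alpha)}+1}[\omega\times\alpha]$, then its preimage under the collapse would be an ordinal of cardinality $\alpha$ (the least ordinal missing from an $\alpha$-sized hull over $\omega\times\alpha$ has cardinality at most $\alpha$); but the preimage of $\delta(\alpha)$ is $\delta\in(\kappa,\kappa^+)$, of cardinality $\kappa>\alpha$. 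Hence the defining minimality clause of $\mathcal{S}^\alpha$ fails at $\eta(\alpha)$, which is the whole point of the lemma and the step your proposal leaves unproved.
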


\begin{remark}The definition above can be adapted to collapsing structures with arbitrary parameters, by adding the requirement that for every $i\leq n_\eta+1$ and $\nu\in p_i := p_{1}^{N^{(i)}_\eta}$, the standard witness $W^{\nu,p_i}_{N^{(i)}_{\eta}}$ to the fact that $\nu\in p_i$ belongs to $h^{N_{\eta}}_{n_\eta+1} \left[ \omega\times(\alpha\cup p_{n_\eta+1} ) \right]$.\footnote{The standard witness $W^{\nu,p_i}_{N^{(i)}_{\eta}}$ is the transitive collapse of 
$h^{N^{(i)}_{\eta}}_{1}  \left[\omega\times (\nu\cup p_i\setminus (\nu+1))\right]$, also see definitions 7.3 and 7.10 of \cite{SchindlerZeman2009FineStruture}.}
\end{remark}

\begin{proof}

Arguing as in Lemma \ref{Lem: FineStructureGoodPts}, the transitive collapse of $h^{N_\eta}_{n_\eta+1}[ \omega\times (\alpha\cup \{\delta\})]$ is a $J$-structure, of the form $J^{\E'}_{\tau}$ for some extender sequence $\E'$ and ordinal $\tau$. The inverse of the collapse map $\pi \colon J^{\E'}_{\tau} \to N_{\eta}$ is $r\Sigma_{n_\eta+1}$-preserving. The ultimate projectum of $J^{\E'}_\eta$ is $\alpha$, and $n_\eta$ is the least such that $\rho^{ J^{\E'}_{\tau} }_{n_\eta+1} = \alpha$. Since the solidity witness for $p^{N_{\eta}}_{n_\eta+1} = \{\delta\}$ was included in the structure we collapsed, the standard parameter of $J^{\E'}_{\tau}$ is $\{\delta(\alpha
) \} $, where $\delta(\alpha)$ is the image of $\delta$ under the collapse (see the relevant argument from Lemma \ref{Lem: FineStructureGoodPts}). Finally, by carrying a comparison argument similar to Claim \ref{claim: comparison argument}, $\E'$ is an initial segment of $\E$. Let 
$$\eta(\alpha) = \alpha^{+}\cap h^{N_\eta}_{n_\eta+1}[ \omega\times (\alpha\cup \{\delta\})]\in (\alpha,\alpha^+).$$ 
We argue that $\eta(\alpha) = \mbox{crit}(\pi)$, $\pi(\eta(\alpha)) = \alpha^{+}$,  $\eta(\alpha)\in \mathcal{C}^\alpha$ and $J^{\E}_\tau$ is actually the collapsing structure associated to $\eta(\alpha)$. 
In order to prove that $\mbox{crit}(\pi) = \eta(\alpha)$, it suffices to argue that $X = h^{N_\eta}_{n_{\eta}+1}[\omega\times (\alpha\cup \{\delta \})]$ is transitive below $\eta(\alpha)$. Thus, assume that $\alpha\leq \gamma<\eta(\alpha)$. Let $\gamma^* = \pi(\gamma)$. Then $\gamma^*< \alpha^+$ and thus there exists a bijection $f\colon \alpha\to \gamma^*$ in $N_\eta$. Since this is a $\Sigma_1$ statement in parameters $\alpha,\gamma^*$, and $\alpha\in X$, we have such $f\in X$. Let $g\colon \alpha\to \gamma$ be a bijection function which is the image of $f$ under the collapse. Since both $f,g$ have domain $\alpha$, they coincide. So $f$ belongs to the transitive collapse of $X$, and $\gamma^* = \gamma$.

In order to see that $\eta(\alpha)\in \mathcal{C}^{\alpha}$, we shall prove that $J^{\E}_{\eta(\alpha)}\elem J^{\E}_{\alpha^+}$. Let $a\in J^{\E}_{\eta(\alpha)}$ be a parameter, and let $\varphi(a)$ be a formula. Let $\varphi^{J^{\E}_{\eta(\alpha)}}(a)$ be the formula obtained from $\varphi$ by bounding each quantifier into $J^{\E}_{\eta(\alpha)}$. Then:
\begin{align*}
 J^{\E}_{\eta(\alpha)}\vDash  \varphi( a) \iff & J^{\E}_{\tau} \vDash \varphi^{J^{\E}_{\eta(\alpha)}}(a) \\ \iff & N_{\eta}\vDash \varphi^{ J^{\E}_{\alpha^{+}} }(\pi(a)) \\ \iff & N_{\eta}\vDash \varphi^{ J^{\E}_{\alpha^{+}} }(a) \\ \iff& J^{\E}_{\alpha^{+}} \vDash \varphi( a )   
\end{align*}
where we used the fact that $\pi(a) = a$ since $a\in J^{\E}_{\eta(\alpha)}$ and $\eta(\alpha) = \mbox{crit}(\pi)$. $J^{\E}_{\tau} = N_{\eta(\alpha)}$ is the collapsing structure associated with $\eta(\alpha)$.\footnote{Note that $N_{\eta(\alpha)}$ is the collapsing structure of $\eta(\alpha)$ as an ordinal in $(\alpha,\alpha^+)$, while $N_{\eta}$ is the collapsing structure of $\eta\in (\kappa, \kappa^{+})$.} This follows since $h^{N_{\eta(\alpha)}}_{n_{\eta}+1}[\omega\times ( \alpha\times \{\delta(\alpha) \} )]$ belongs to $J^{\E}_{\eta(\alpha)+\omega}$ and contains $\eta(\alpha)$.

Finally, we argue that $\eta(\alpha)\notin \mathcal{S}^{\alpha}$. Indeed, $p^{N_{\eta(\alpha)}}_{n_{\eta(\alpha)+1}} = \{ \delta(\alpha)\}$, but $\delta(\alpha)$ is not the least ordinal outside of $h^{N_{ 
\eta(\alpha) }}_{n_{\eta(\alpha)+1}}[\omega\times \alpha]$, since the preimage of such an ordinal under the collapse is an ordinal of cardinality $\alpha$, but the preimage of $\delta(\alpha)$ is $\delta\in (\kappa, \kappa^+)$. So $\eta(\alpha)\notin \mathcal{S}^{\alpha}$. 
\end{proof}

The following lemma establishes a connection between good and very good points in $N_\eta$ and in $N_{\eta(\alpha)}$. 

\begin{lemma}\label{Lemma: FineStructureReflectDownGoodPts}
    Assume that $\eta\in \mathcal{S}^{\kappa, \zeta, n}$ for some regular $\zeta\leq \kappa$ and $n<\omega$.  Let $\alpha<\kappa$ be such that $\alpha$, and  the transitive collapse of $h^{N_\eta}_{n_\eta+1}[\omega\times \delta]$, belong to $h^{N_\eta}_{n_\eta+1}[\omega \times (\alpha\cup \{ \delta \})]$. Let $\epsilon\in N^{(n)}_\eta$, and denote by $\epsilon(\alpha)\in N^{(n)}_{\eta(\alpha)}$ its image under the transitive collapse. Then:
    \begin{enumerate}
        \item \label{Clause: FineStructureUprwardsGood} Assume that $\epsilon(\alpha)$ is a good point of $N_{\eta(\alpha)}$. Assume also that $\kappa \in h^{N_\eta, \epsilon}[\omega\times (\kappa \cup \{\delta \})]$. Then $\epsilon$ is a good point of $N_{\eta}$.
        \item \label{Clause: FineStructureDownwardsGood}Assume that $\epsilon$ is a good point of $N_{\eta}$. Assume also that $\alpha\in h^{N_\eta, \epsilon}_{n+1}[\omega\times (\alpha\cup \{\delta \} )]$ and $h^{N_\eta, \epsilon}_{n_\eta+1}[\omega\times (\alpha\cup \{\delta \} )]$ is unbounded in $\epsilon$. Then $\epsilon(\alpha)$ is a good point of $N_{\eta(\alpha)}$.
        \item \label{Clause: FineStructureDownwardsVeryGood} Assume  in addition to the assumptions of clause \ref{Clause: FineStructureDownwardsGood}, that $\kappa \in h^{N_\eta, \epsilon}_{n+1}[\alpha^{\epsilon}\cup \{\delta \}]$,\footnote{We abused the notation and identified pairs in $\omega\times \kappa$ with ordinals in $\kappa$ in a natural way. Formally, we want $\kappa\in h^{N_\eta, \epsilon}_{n+1}[\omega\times (\alpha^*\cup \{\delta \})]$ for some $\alpha^*$, such that the image of $\omega\times \alpha^*$ under this identification is bounded below $\alpha^{\epsilon}$. This could be ensured for every high enough $\epsilon$.} and $\alpha> \alpha^\epsilon$ (where $\alpha^\epsilon$ is the witness for $\epsilon$ being a very good point). Then $\epsilon(\alpha)$ is a very good point of $N_{\eta(\alpha)}$. 
    \end{enumerate}
\end{lemma}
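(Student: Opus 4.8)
The plan is to push each of the conditions defining good, resp.\ very good, points across the canonical embedding $\pi=\sigma_{\eta(\alpha),\eta}\colon N_{\eta(\alpha)}\to N_\eta$ produced in Lemma \ref{Lem: FineStructureCorrespondenceBetweenkappaalpha}. Write $n=n_\eta=n_{\eta(\alpha)}$. Recall that $\pi$ is $r\Sigma_{n+1}$-preserving, that $\pi\uhr\alpha=\mathrm{id}$ and $\pi(\alpha)=\alpha$ (since $\alpha\in X:=h^{N_\eta}_{n+1}[\omega\times(\alpha\cup\{\delta\})]$, so $X\cap\alpha=\alpha$ and $\alpha$ collapses to itself), and that $\pi(\delta(\alpha))=\delta$ where $\{\delta(\alpha)\}=p^{N_{\eta(\alpha)}}_{n+1}$; throughout it is tacit that $\epsilon\in\mathrm{ran}(\pi)=X$ and $\epsilon(\alpha)=\pi^{-1}(\epsilon)$. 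The first and main step I would carry out is the commutation of $\pi$ with the restricted Skolem hulls of Definition \ref{Def:RestrictedSkolem} — the ``horizontal'' analogue of the computations in the proof of Lemma \ref{Lem: FineStructureGoodPts}. From the uniform $\Sigma_1$-definability of the restricted Skolem functions, the $\Sigma_1$-preservation of $\pi$ on the reducts $N^{(i)}_{\eta(\alpha)}\to N^{(i)}_\eta$ ($i\le n$), the fact that $\pi$ is level-preserving on the $S$-hierarchy so that $\pi$ maps $N^{(n)}_{\eta(\alpha)}|\epsilon(\alpha)$ into $N^{(n)}_\eta|\epsilon$, and the closure of $X$ under $h^{N_\eta}_{n+1}$ (so relevant Skolem values stay in $\mathrm{ran}(\pi)$), one obtains that for $Y$ equal to $\alpha\cup\{\delta(\alpha)\}$, to $\beta\times\{\delta(\alpha)\}$ for $\beta\le\alpha$, or to $\delta(\alpha)$ (as a set of ordinals),
$$\pi\big[\,h^{N_{\eta(\alpha)},\epsilon(\alpha)}_{n+1}[\omega\times Y]\,\big]=h^{N_\eta,\epsilon}_{n+1}[\omega\times\pi[Y]],$$
with the two maps commuting pointwise, and the same holds with $h^{N_{\eta(\alpha)}}_{n+1},h^{N_\eta}_{n+1}$ in place of the restricted hulls. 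Moreover, by elementarity of $\pi$, $\pi(\bar W)=W$ whenever $\bar W$ is the transitive collapse of $h^{N_{\eta(\alpha)},\epsilon(\alpha)}_{n+1}[\omega\times\delta(\alpha)]$ (an element of $N_{\eta(\alpha)}$) and $W$ that of $h^{N_\eta,\epsilon}_{n+1}[\omega\times\delta]$; in particular $W\in X$.

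For Clause \ref{Clause: FineStructureUprwardsGood}: condition (2) of Definition \ref{Def: GoodPoints} for $\epsilon$ in $N_\eta$ is exactly the added hypothesis $\kappa\in h^{N_\eta,\epsilon}_{n+1}[\omega\times(\kappa\cup\{\delta\})]$. Condition (1) holds automatically at limit ordinals — by soundness $h^{N^{(n)}_\eta}_1[\omega\times(\kappa\cup\{\delta\})]=N^{(n)}_\eta$, so its $\epsilon$-restriction is cofinal in any limit $\epsilon$ — and $\epsilon=\pi(\epsilon(\alpha))$ is a limit since $\epsilon(\alpha)$ is. For condition (3), $\bar W$ lies in $\bar H:=h^{N_{\eta(\alpha)},\epsilon(\alpha)}_{n+1}[\omega\times(\alpha\cup\{\delta(\alpha)\})]$ by condition (3) for $\epsilon(\alpha)$; applying $\pi$ and using $\pi(\bar W)=W$ and $\pi[\bar H]=h^{N_\eta,\epsilon}_{n+1}[\omega\times(\alpha\cup\{\delta\})]\subseteq h^{N_\eta,\epsilon}_{n+1}[\omega\times(\kappa\cup\{\delta\})]$ gives condition (3) for $\epsilon$. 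Hence $\epsilon$ is good in $N_\eta$.

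For Clause \ref{Clause: FineStructureDownwardsGood}: I would verify the three conditions of Definition \ref{Def: GoodPoints} for $\epsilon(\alpha)$ in $N_{\eta(\alpha)}$. Condition (1): by the added hypothesis that $h^{N_\eta,\epsilon}_{n+1}[\omega\times(\alpha\cup\{\delta\})]$ is cofinal in $\epsilon$, and since this hull equals $\pi[\bar H]$ with $\pi$ injective, monotone and $\pi(\epsilon(\alpha))=\epsilon$, the hull $\bar H$ is cofinal in $\epsilon(\alpha)$. Condition (2): by the added hypothesis $\alpha\in h^{N_\eta,\epsilon}_{n+1}[\omega\times(\alpha\cup\{\delta\})]=\pi[\bar H]$, and since $\alpha=\pi(\alpha)$ with $\pi$ injective, $\alpha\in\bar H$. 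Condition (3): from $\pi(\bar W)=W$ and condition (3) for $\epsilon$ (so $W$ lies in the full restricted hull of $\kappa\cup\{\delta\}$ at level $\epsilon$) one concludes $W\in\pi[\bar H]$, hence $\bar W=\pi^{-1}(W)\in\bar H$; this last inference is the delicate direction flagged below, as it requires $W$ to be captured already in the restricted hull over $\alpha\cup\{\delta\}$ (and not merely in $\mathrm{ran}(\pi)=X$), using that $\epsilon$ is a good point. Thus $\epsilon(\alpha)$ is good in $N_{\eta(\alpha)}$.

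For Clause \ref{Clause: FineStructureDownwardsVeryGood}: write $\alpha^\epsilon$ for the witness of the very good point $\epsilon$, so $\epsilon=\epsilon_{\alpha^\epsilon}$ in the sense of Definition \ref{Definition:Witnesses-alpha^epsilon}; since $\alpha>\alpha^\epsilon$ we have $\pi\uhr\alpha^\epsilon=\mathrm{id}$. By Clause \ref{Clause: FineStructureDownwardsGood}, $\epsilon(\alpha)$ is good in $N_{\eta(\alpha)}$, and it remains to see that $\epsilon(\alpha)=\epsilon_{\alpha^\epsilon}$ as computed in $N_{\eta(\alpha)}$. Applying $\pi$ to $h^{N_{\eta(\alpha)},\epsilon(\alpha)}_{n+1}\uhr\alpha^\epsilon\times\{\delta(\alpha)\}$ and to $h^{N_{\eta(\alpha)}}_{n+1}\uhr\alpha^\epsilon\times\{\delta(\alpha)\}$ yields $h^{N_\eta,\epsilon}_{n+1}\uhr\alpha^\epsilon\times\{\delta\}$ and $h^{N_\eta}_{n+1}\uhr\alpha^\epsilon\times\{\delta\}$ respectively (commutation, $\pi\uhr\alpha^\epsilon=\mathrm{id}$, $\pi(\delta(\alpha))=\delta$); these coincide because $\epsilon=\epsilon_{\alpha^\epsilon}$, so by injectivity of $\pi$ the corresponding objects in $N_{\eta(\alpha)}$ coincide, i.e.\ $\epsilon(\alpha)$ has the agreement property defining $\epsilon_{\alpha^\epsilon}$. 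For minimality, suppose $\bar\epsilon'<\epsilon(\alpha)$ is a good point of $N_{\eta(\alpha)}$ with $h^{N_{\eta(\alpha)},\bar\epsilon'}_{n+1}\uhr\alpha^\epsilon\times\{\delta(\alpha)\}=h^{N_{\eta(\alpha)}}_{n+1}\uhr\alpha^\epsilon\times\{\delta(\alpha)\}$, and set $\epsilon'=\pi(\bar\epsilon')<\epsilon$. By the same computation, $h^{N_\eta,\epsilon'}_{n+1}$ and $h^{N_\eta}_{n+1}$ agree on $\alpha^\epsilon\times\{\delta\}$; and the hypothesis $\kappa\in h^{N_\eta,\epsilon}_{n+1}[\alpha^\epsilon\cup\{\delta\}]$ together with $\epsilon=\epsilon_{\alpha^\epsilon}$ gives $\kappa\in h^{N_\eta}_{n+1}[\alpha^\epsilon\cup\{\delta\}]$, hence $\kappa\in h^{N_\eta,\epsilon'}_{n+1}[\alpha^\epsilon\cup\{\delta\}]\subseteq h^{N_\eta,\epsilon'}_{n+1}[\omega\times(\kappa\cup\{\delta\})]$. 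Now Clause \ref{Clause: FineStructureUprwardsGood}, applied to $\epsilon',\bar\epsilon'$ in the roles of $\epsilon,\epsilon(\alpha)$, shows $\epsilon'$ is good in $N_\eta$; but then $\epsilon'<\epsilon$ is a good point of $N_\eta$ over which $h^{N_\eta,\epsilon'}_{n+1}$ agrees with $h^{N_\eta}_{n+1}$ on $\alpha^\epsilon\times\{\delta\}$, contradicting $\epsilon=\epsilon_{\alpha^\epsilon}$. Hence $\epsilon(\alpha)=\epsilon_{\alpha^\epsilon}$ is very good in $N_{\eta(\alpha)}$.

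The main obstacle is the first step: the commutation of $\pi$ with the \emph{restricted} Skolem hulls, carrying the level restriction $\epsilon(\alpha)$ precisely to $\epsilon$, and the verification — needed for condition (3) in Clause \ref{Clause: FineStructureDownwardsGood} — that the collapse witness $W$ is captured in the restricted hull over $\alpha\cup\{\delta\}$ at level $\epsilon$, rather than merely in $\mathrm{ran}(\pi)$. This requires the same careful bookkeeping with the $S$-hierarchy, uniform definability of Skolem functions, and (in the general-parameter case of Remark \ref{Rmk: FineStructureGoodPts}) solidity witnesses as in the proof of Lemma \ref{Lem: FineStructureGoodPts} and the apparatus of \cite{SchindlerZeman2009FineStruture}; once it is available, Clauses \ref{Clause: FineStructureUprwardsGood}--\ref{Clause: FineStructureDownwardsVeryGood} follow as sketched.
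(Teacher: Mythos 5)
Your overall route is the paper's: you transfer the three clauses of Definition \ref{Def: GoodPoints} back and forth through the inverse collapse map $\pi\colon N_{\eta(\alpha)}\to N_\eta$ of Lemma \ref{Lem: FineStructureCorrespondenceBetweenkappaalpha}, using that $\pi$ carries $h^{N_{\eta(\alpha)},\epsilon(\alpha)}_{n+1}[\omega\times(\alpha\cup\{\delta(\alpha)\})]$ onto $h^{N_\eta,\epsilon}_{n+1}[\omega\times(\alpha\cup\{\delta\})]$, and in clause 3 you prove minimality by pulling a hypothetical smaller good point of $N_{\eta(\alpha)}$ up to $N_\eta$ via clause 1 and contradicting $\epsilon=\epsilon_{\alpha^\epsilon}$ --- this is exactly the paper's argument, and the commutation identity you isolate as the ``first and main step'' is the same observation the paper uses (phrased as ``the restricted hull upstairs transitively collapses to the restricted hull downstairs'') without further elaboration. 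Your flagged obstacle about $W$ in clause 2(3) also matches the paper's level of detail: the paper simply asserts that $W$, the collapse of $h^{N_\eta,\epsilon}_{n+1}[\omega\times\delta]$, lies in $h^{N_\eta,\epsilon}_{n+1}[\omega\times(\alpha\cup\{\delta\})]$ (note that condition (3) for $\epsilon$, as a point of the collapsing structure over $\kappa$, only places $W$ in the $\kappa\cup\{\delta\}$-restricted hull) and then transfers it down; so on that point your honesty exceeds, but your content equals, the paper's.

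There is, however, one genuine error to repair: in clause 1 you dispose of condition (1) of Definition \ref{Def: GoodPoints} by claiming that, since by soundness $h^{N^{(n)}_\eta}_1[\omega\times(\kappa\cup\{\delta\})]=N^{(n)}_\eta$, the $\epsilon$-restricted hull is automatically cofinal in every limit $\epsilon$. That inference is false: the restricted Skolem function only admits witnesses from $J^{\E,n}_{\epsilon}$, so the restricted hull can be bounded in $\epsilon$ even though the full hull is everything; if cofinality were automatic at limit ordinals, clause (1) of the definition of a good point would be vacuous and the good/very-good apparatus would trivialize. The paper derives condition (1) for $\epsilon$ differently and correctly: condition (1) for the good point $\epsilon(\alpha)$ says $h^{N_{\eta(\alpha)},\epsilon(\alpha)}_{n+1}[\omega\times(\alpha\cup\{\delta(\alpha)\})]$ is unbounded in $\epsilon(\alpha)$, hence its $\pi$-image $h^{N_\eta,\epsilon}_{n+1}[\omega\times(\alpha\cup\{\delta\})]$ is unbounded in $\epsilon$, hence so is the larger hull over $\kappa\cup\{\delta\}$ --- an argument that is available verbatim from the commutation identity you already set up, so the fix is local. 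With that substitution (and the routine fine-structural verification of the commutation, which is the same bookkeeping the paper leans on in Lemma \ref{Lem: FineStructureGoodPts}), your clauses 1--3 coincide with the paper's proof.
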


\begin{proof}
\begin{enumerate}${}$
    \item Note that $h^{N_\eta, \epsilon}_{n+1}[\omega\times ( \alpha\cup \{\delta \} )]$ transitively collapses to $$h^{N_{\eta(\alpha)}, \epsilon(\alpha)}_{n+1}[\omega\times ( \alpha\cup \{\delta(\alpha) \} )],$$ 
    where $\delta(\alpha)$ is the image of $\delta$ under the transitive collapse. Since  
    $$h^{N_{\eta(\alpha)}, \epsilon(\alpha)}_{n+1}[\omega\times ( \alpha\cup \{\delta(\alpha)\} )]$$ 
    is unbounded in $\epsilon(\alpha)$,  $h^{N_{\eta}, \epsilon}_{n+1}[\omega\times ( \alpha\cup \{\delta\} )]$ is unbounded in $\epsilon$, and, in particular, $h^{N_{\eta}, \epsilon}_{n+1}[\omega\times ( \kappa\cup \{\delta\} )]$ is unbounded in $\epsilon$. Let $W$ be the transitive collapse of $h^{N_\eta, \epsilon}_{n+1}[\omega\times \delta]$. Then $W$ is also the transitive collapse of $h^{N_{\eta(\alpha)}, \epsilon(\alpha)}_{n+1}[\omega\times \delta(\alpha)]$. As 
    $$W\in h^{N_{\eta(\alpha)}, \epsilon(\alpha)}_{n+1}[\omega\times ( \alpha\cup \{\delta(\alpha) \} )],$$ 
    we get that $W\in h^{N_{\eta}, \epsilon}_{n+1}[\omega\times ( \kappa\cup \{\delta \} )]$. Finally, we have that $\kappa \in h^{N_\eta, \epsilon}[\omega\times (\kappa \cup \{\delta \})]$. All of the above shows that $\epsilon$ is a good point of $N_{\eta}$.
    
    \item The assumption that $h^{N_\eta, \epsilon}_{n_\eta+1}[\omega\times (\alpha\cup \{\delta \} )]$ is unbounded in $\epsilon$ implies that $h^{N_{\eta(\alpha)}, \epsilon(\alpha)}_{n_\eta+1}[\omega\times (\alpha\cup \{\delta(\alpha) \} )]$ is unbounded in $\epsilon(\alpha)$, where $\delta(\alpha)$ is the image of $\delta$ under the transitive collapse. The fact that $\alpha\in h^{N_\eta, \epsilon}_{n_\eta+1}[\omega\times (\alpha \cup \{ \delta\} )]$ implies that $\alpha\in h^{N_{\eta(\alpha)}, \epsilon(\alpha)}_{n_\eta+1}[\omega\times (\alpha\cup \{\delta(\alpha) \})]$. Finally, let $W\in h^{N_\eta,\epsilon}_{n+1}[\omega\times (\alpha\cup \{ \delta \} )]$ be the transitive collapse of $h^{N_\eta,\epsilon}_{n+1}[\omega\times \delta]$. Then $W\in h^{N_{\eta(\alpha)},\epsilon(\alpha)}_{n+1}[\omega\times (\alpha\cup \{ \delta(\alpha) \} )]$, and $W$ is also the transitive collapse of $h^{N_{\eta(\alpha), \epsilon(\alpha)}}[\omega\times \delta(\alpha)]$. All of the above shows that $\epsilon(\alpha)$ is a good point of $N_{\eta(\alpha)}$.
    
    \item Since $\epsilon$ is a good point witnessed by $\alpha^\epsilon$, $h^{N_\eta,\epsilon}_{n+1}\uhr \alpha^\epsilon \times \{ \delta \} = h^{N_\eta}_{n+1}\uhr \alpha^\epsilon \times \{ \delta \}$. Therefore  $$h^{N_{\eta(\alpha)}, \epsilon(\alpha)}_{n+1} \uhr \alpha^{\epsilon} \times \{\delta(\alpha) \} = h^{N_{\eta(\alpha)}}_{n+1} \uhr \alpha^{\epsilon} \times \{\delta(\alpha) \}.$$ We should also argue that $\epsilon(\alpha)$ is the least good point $\xi\in N^{(n)}_{\eta(\alpha)}$  such that $$h^{N_{\eta(\alpha)}, \xi}_{n+1} \uhr \alpha^{\epsilon} \times \{\delta(\alpha) \} = h^{N_{\eta(\alpha)}}_{n+1} \uhr \alpha^{\epsilon} \times \{\delta(\alpha) \}.$$ 
    
    Indeed, assume that $\zeta\in N^{(n)}_{\eta}$ collapses to $\zeta(\alpha)$ which is a good point below $\epsilon(\alpha)$ satisfying $h^{N_{\eta(\alpha)}, \zeta(\alpha)}_{n+1} \uhr \alpha^{\epsilon} \times \{\delta(\alpha) \} = h^{N_{\eta(\alpha)}}_{n+1} \uhr \alpha^{\epsilon} \times \{\delta(\alpha) \}$.    
    It follows that $h^{N_{\eta}, \zeta}_{n+1} \uhr \alpha^{\epsilon} \times \{\delta \} = h^{N_{\eta}}_{n+1} \uhr \alpha^{\epsilon} \times \{\delta(\alpha) \}$. If we will prove that $\zeta$ is a good point of $N_{\eta}$, we would contradict the minimality of $\epsilon$ being the least good point $\xi$ such that $h^{N_{\eta}, \xi}_{n+1} \uhr \alpha^{\epsilon} \times \{\delta \} = h^{N_{\eta}}_{n+1} \uhr \alpha^{\epsilon} \times \{\delta(\alpha) \}$ (we have $\zeta<\epsilon$ since $\zeta(\alpha)< \epsilon(\alpha)$).

    Thus, let us argue that $\zeta$ is a good point of $N_\eta$. We know that 
    $$\kappa\in \mbox{Im}\left(  h^{N_\eta, \epsilon}_{n+1} \uhr \alpha^\epsilon\times \{\delta \} \right)  = \mbox{Im}\left(  h^{N_\eta}_{n+1} \uhr \alpha^\epsilon\times \{\delta \} \right).$$ 
    Assuming that $\kappa(\alpha)$ is the image of $\kappa$ under the transitive collapse, we get that 
    $$\kappa(\alpha)\in \mbox{Im}\left(  h^{N_{\eta(\alpha)}}_{n+1} \uhr \alpha^\epsilon\times \{\delta(\alpha) \} \right) = \mbox{Im}\left(  h^{N_{\eta(\alpha)}, \zeta(\alpha) }_{n+1} \uhr \alpha^\epsilon\times \{\delta(\alpha) \} \right).$$ 
    It follows that $\kappa\in \mbox{Im}\left(  h^{N_{\eta}, \zeta}_{n+1} \uhr \alpha^\epsilon\times \{\delta \} \right) $. Thus, by clause $1$, $\zeta$ is a good point of $N_\eta$.
\end{enumerate}
\end{proof}

\begin{definition}
 A subset $Z$ of a cardinal $\kappa$ is nowhere stationary if for every limit ordinal $\alpha < \kappa$ there is a closed unbounded set $C_\alpha \subseteq\alpha$ which is disjoint from $Z$.   
\end{definition}

\begin{lemma} \label{Lem: FineStructureSectionNowhereStatSets}
Assume that $Z$ is a set of inaccessible cardinals, such that for every inaccessible $\alpha<\kappa$, $Z\cap \alpha$ is non-stationary in $\alpha$.  Then $Z$ is nowhere stationary.
\end{lemma}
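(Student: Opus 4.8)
The plan is to argue by cases on a limit ordinal $\alpha<\kappa$, exhibiting in each case an explicit club $C_\alpha\subseteq\alpha$ disjoint from $Z$. The cases are organized by how $\alpha$ sits relative to the class of inaccessible cardinals below $\kappa$, and they are exhaustive: a limit ordinal is either inaccessible, or not inaccessible but a limit point of the inaccessibles, or neither.

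If $\alpha$ is inaccessible, the hypothesis directly supplies a club $C_\alpha\subseteq\alpha$ with $C_\alpha\cap(Z\cap\alpha)=\emptyset$, which is the same as $C_\alpha\cap Z=\emptyset$. If $\alpha$ is not a limit point of the inaccessibles, fix $\beta<\alpha$ such that the interval $(\beta,\alpha)$ contains no inaccessible cardinal (take $\beta=0$ if there is no inaccessible below $\alpha$ at all); then $C_\alpha=\{\gamma\mid\beta<\gamma<\alpha\}$ is closed unbounded in $\alpha$ --- unbounded since $\alpha$ is a limit ordinal, and closed since each of its limit points below $\alpha$ exceeds $\beta$ --- and, as every element of $Z$ is inaccessible, $C_\alpha\cap Z=\emptyset$.

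The remaining case --- $\alpha$ not inaccessible but a limit point of the inaccessibles --- is the only one requiring work. Here $\alpha$ is a limit of strong limit cardinals, hence itself a strong limit cardinal, and since it is not inaccessible it is singular; set $\lambda=\mathrm{cf}(\alpha)<\alpha$. I will build a club of $\alpha$ consisting of singular cardinals; such ordinals are not regular, hence not inaccessible, hence not members of $Z$. First I note that the singular cardinals are cofinal in $\alpha$: given $\mu<\alpha$, pick an inaccessible $\mu'$ with $\mu<\mu'<\alpha$, and then $(\mu')^{+\omega}$ is a cardinal of cofinality $\omega$ lying strictly between $\mu$ and $\alpha$ --- it is below $\alpha$ because $\alpha$ is a limit of inaccessibles, so there is an inaccessible $\mu''$ with $(\mu')^{+\omega}<\mu''<\alpha$, using that $\mathrm{cf}(\mu'')=\mu''>\omega$. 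Now fix any increasing cofinal sequence $\langle\alpha_\xi\mid\xi<\lambda\rangle$ in $\alpha$ and recursively define an increasing sequence $\langle\beta_\xi\mid\xi<\lambda\rangle$ by: $\beta_0$ is some cardinal above $\lambda$ (such exists since $\lambda<\alpha$ and $\alpha$ is a limit cardinal); $\beta_{\xi+1}$ is a singular cardinal above $\sup\{\beta_\xi,\alpha_\xi\}$; and $\beta_\xi=\sup_{\zeta<\xi}\beta_\zeta$ for limit $\xi$. Each $\beta_\xi$ is a singular cardinal: at successor stages by construction, and at limit stages because $\mathrm{cf}(\beta_\xi)=\mathrm{cf}(\xi)\le|\xi|<\lambda<\beta_\xi$. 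Hence $C_\alpha=\{\beta_\xi\mid\xi<\lambda\}$ is closed unbounded in $\alpha$ and disjoint from $Z$, which completes the proof.

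The only real subtlety --- the \emph{main obstacle}, such as it is --- lies in this last case: a careless choice of a cofinal sequence of singular cardinals could have an inaccessible limit point (a regular, strong limit accumulation point of singular cardinals), which would then lie in $C_\alpha$ and possibly in $Z$. Anchoring the sequence strictly above $\lambda=\mathrm{cf}(\alpha)$ forces every limit point of $C_\alpha$ to have cofinality below $\lambda$, hence to be singular, which removes the difficulty.
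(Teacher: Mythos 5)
Your proof is correct and follows essentially the same route as the paper's: the hypothesis is invoked only at inaccessible $\alpha$, a tail club handles $\alpha$ that are not limit points of the inaccessibles, and at a singular limit of inaccessibles you build a continuous cofinal sequence of singular cardinals anchored above $\mathrm{cf}(\alpha)$ --- the same cofinality trick the paper uses for the uncountable-cofinality case, with the minor bonus that your single construction also absorbs the paper's separate $\mathrm{cf}(\alpha)=\omega$ case. One small slip: you only require $\beta_0$ to be \emph{some} cardinal above $\lambda$, so it could be inaccessible (possibly in $Z$) and would then spoil disjointness; choose $\beta_0$ singular (possible by the cofinality of the singular cardinals in $\alpha$, which you had just established) or simply omit it from $C_\alpha$.
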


\begin{proof}
We first argue that every regular $\alpha<\kappa$ has a club $C_{\alpha}\subseteq \alpha$ disjoint from $Z$. If, by contradiction, every club in such $\alpha$ intersects $Z$, then $Z$ is unbounded in $\alpha$. In particular,  $\alpha$ by itself is strong limit, since $Z$ consists of inaccessibles. In other words, $\alpha$ is inaccessible, and thus $Z\cap \alpha$ is nonstationary, which is a contradiction.

For singular $\alpha$, if $\alpha$ is not a limit of inaccessibles, clearly it has a club disjoint from $Z$. Else, $\alpha$ is a limit of inaccessibles. If $\cf(\alpha) = \omega$, we can manually construct a club in $\alpha$ disjoint from $Z$: first, take an $\omega$-seqeunce $\la \alpha_n \colon n<\omega \ra$  of inaccessibles in $\alpha$. Since $Z$ is nowhere stationary, for each $n<\omega$ there exists a club $C_{\alpha_n} \subseteq \alpha_n$ disjoint from $Z$. Pick for each such $n<\omega$ a point $\alpha^*_n < \alpha_n$ of $C_{\alpha_n}$. Then $C_\alpha = \{\alpha^*_n \colon n<\omega  \}$ is a club in $\alpha$ disjoint from $Z$.

Finally, assume that $\omega <\cf(\alpha)< \alpha$. Pick in $\alpha$ a cofinal sequence of order type $\cf(\alpha)$ above $\cf(\alpha)$. Then every accumulation point of this club is an ordinal above $\cf(\alpha)$ with cofinality strictly below $\cf(\alpha)$. So the club of such accumulation points consists of singulars, and thus is disjoint from $Z$.
\end{proof}

\begin{proposition}\label{Lemma:NowhereStationary}
Suppose that $Z \subseteq \kappa$ is a nowhere stationary set of inaccessible cardinals. Let $\tau \in \C^\kappa$ and $N_\tau$ be the collapsing structure of $\tau$ to $\kappa$, so that $Z \in N_\tau$ and $N_\tau \models Z \text{ is nowhere stationary}$.
    Let $\alpha_0 < \kappa$ be such that $Z =h^{N_\tau}_{n_\tau+1}[\omega \times (\alpha_0\cup p^{N_\tau}_{n_\tau+1})]$. Then for every $\alpha \in Z \setminus \alpha_0$,
    $\alpha \in h^{N_\tau}_{n_\tau+1}[\omega \times (\alpha\cup p^{N_\tau}_{n_\tau+1})]$.\\
\end{proposition}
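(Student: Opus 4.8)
I would argue by a minimal–counterexample argument, writing $X_\beta := h^{N_\tau}_{n_\tau+1}[\omega \times (\beta \cup p^{N_\tau}_{n_\tau+1})]$ for $\beta \leq \kappa$ (so the conclusion is ``$\alpha \in X_\alpha$ for all $\alpha \in Z \setminus \alpha_0$''), and $n := n_\tau$. Suppose the conclusion fails and let $\alpha \in Z\setminus\alpha_0$ be least with $\alpha\notin X_\alpha$. Two easy reductions come first. Since $X_\beta\subseteq X_\alpha$ whenever $\beta\leq\alpha$, minimality of $\alpha$ (and triviality for $\beta<\alpha_0$) gives $\beta\in X_\beta\subseteq X_\alpha$ for every $\beta\in Z\cap\alpha$, i.e. $Z\cap\alpha\subseteq X_\alpha$. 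And if $Z\cap\alpha$ were bounded in $\alpha$ with supremum $\sigma<\alpha$, then $\sigma\in X_\alpha$ and $\alpha=\min(Z\setminus(\sigma+1))$ would be definable over $N_\tau$ from $Z,\sigma\in X_\alpha$, contradicting $\alpha\notin X_\alpha$; hence $Z\cap\alpha$ is unbounded in $\alpha$.

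Next I would collapse. Let $\pi\colon \bar N\to N_\tau$ be the inverse of the transitive collapse of $X_\alpha$. Because $\alpha\subseteq X_\alpha$ but $\alpha\notin X_\alpha$, we have $\mathrm{crit}(\pi)=\alpha$, and $\gamma^*:=\pi(\alpha)$ is the least element of $X_\alpha\cap\mathrm{Ord}$ above $\alpha$; in particular $X_\alpha\cap\gamma^*=\alpha$. Since $\alpha$ is a regular cardinal of $\bar N$ (an inner model cannot contain a short cofinal sequence in a regular cardinal) and no element of $X_\alpha$ lies strictly between $\alpha$ and $\gamma^*$, $\gamma^*$ is a limit regular cardinal of $N_\tau$, hence (by GCH in $L[\E]$) inaccessible whenever $\gamma^*<\kappa$. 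Note $\kappa\in X_\alpha$ since $\kappa=\mathrm{Ord}\cap N_\tau^{(n)}$ is definable over $N_\tau$, so the only two cases are $\gamma^*<\kappa$ and $\gamma^*=\kappa$ (equivalently $X_\alpha\cap\kappa=\alpha$).

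The main case is $\gamma^*<\kappa$. Let $\vec D=\langle D_\beta\mid\beta<\kappa\text{ limit}\rangle\in N_\tau$ be the canonical (say $<_\E$–least) witness to ``$Z$ is nowhere stationary'', with $D_\beta$ the $<_\E$–least club in $\beta$ disjoint from $Z$; being definable over $N_\tau$ from $Z$ we have $\vec D\in X_\alpha$, hence $D_{\gamma^*}=\vec D(\gamma^*)\in X_\alpha$. As $X_\alpha\cap\gamma^*=\alpha$, the transitive collapse sends $D_{\gamma^*}\mapsto D_{\gamma^*}\cap\alpha$ and $\gamma^*\mapsto\alpha$; since ``$u$ is a club in $v$'' is $\Delta_0$ in the parameters $u,v$, it follows that $\bar N\models$``$D_{\gamma^*}\cap\alpha$ is club in $\alpha$'', and by transitivity of $\bar N$ this holds genuinely. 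Thus $\alpha$ is a limit point of $D_{\gamma^*}$; since $D_{\gamma^*}$ is closed in $\gamma^*$ and $\alpha<\gamma^*$, we get $\alpha\in D_{\gamma^*}$, contradicting $D_{\gamma^*}\cap Z=\emptyset$ and $\alpha\in Z$.

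For the remaining case $\gamma^*=\kappa$, the collapse gives $\pi\colon\bar N\to N_\tau$ with $\mathrm{crit}(\pi)=\alpha$ and $\pi(\alpha)=\kappa$, so—by the usual downward–extension–of–embeddings and comparison analysis used in Lemmas \ref{Lem: FineStructureGoodPts} and \ref{Lem: FineStructureCorrespondenceBetweenkappaalpha}, and using that the parameters defining $Z$ (together with the solidity witness for $p^{N_\tau}_{n+1}$) are bounded by $\alpha_0$—the structure $\bar N$ is itself a collapsing structure $N^{\E}_{\eta'}$ for an appropriate $\eta'\in\C^\alpha$, and $\bar Z:=\pi^{-1}(Z)=Z\cap\alpha$ with the same $\alpha_0$ satisfies the hypotheses of the proposition with $\kappa$ replaced by $\alpha$. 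One then runs the whole argument as an induction on the cardinal $\kappa$: inside $\bar N$ one has that $Z\cap\alpha$ is nowhere stationary and, by the second reduction above, unbounded in $\alpha=\bar\kappa$; applying Lemma \ref{Lem: FineStructureSectionNowhereStatSets} there and pulling the resulting club-structure back through $\pi$ yields a contradiction exactly as in the previous paragraph, now with $\kappa$ in the role of $\gamma^*$. I expect the genuine obstacle to be precisely this last case—identifying $\bar N$ as a collapsing structure $N^{\E}_{\eta'}$ with $\eta'\in\C^\alpha$, so that the inductive hypothesis becomes available—whereas the case $\gamma^*<\kappa$ is essentially immediate once the collapse is in place.
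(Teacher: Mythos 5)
Your main idea---take $\gamma^*=\min\bigl((X_\alpha\cap\mathrm{Ord})\setminus\alpha\bigr)$, observe $X_\alpha\cap\gamma^*=\alpha$, and reflect the club $D_{\gamma^*}\in X_\alpha$ (disjoint from $Z$) through the collapse to conclude that $D_{\gamma^*}\cap\alpha$ is unbounded in $\alpha$, hence $\alpha\in D_{\gamma^*}$, contradicting $\alpha\in Z$---is correct and genuinely different from the paper's argument. The paper instead performs a finite ``walk'': starting from $\kappa$ it descends through the witnessing clubs, recording at each limit stage the ordinal $\sup(C_{\alpha_i}\cap\alpha)<\alpha$, and after finitely many steps lands exactly on $\alpha$; thus $\alpha$ is computed inside $N_\tau$ from $\vec C$ and finitely many ordinals below $\alpha$, so it lies in the hull directly, with no collapse, no minimal counterexample, and no case split. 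Two small repairs to your main case: you only need $\gamma^*$ to be a limit ordinal, which is immediate (its predecessor would be an element of $X_\alpha$ in $[\alpha,\gamma^*)$); the stronger claim that $\gamma^*$ is a regular cardinal of $N_\tau$ is not justified as stated (regularity is $\Pi_1$ and does not transfer upward along a merely $\Sigma_0/\Sigma_1$-preserving map), but it is also not needed. Likewise $\kappa\in X_\alpha$ is best seen via $\kappa=\max(\mathrm{dom}\,\vec D)$ with $\vec D\in X_{\alpha_0}$; your identification $\kappa=\mathrm{Ord}\cap N^{(n)}_\tau$ is off (that height is $\rho^{N_\tau}_{n_\tau}$, not $\rho^{N_\tau}_{n_\tau+1}=\kappa$).

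The genuine gap is your case $\gamma^*=\kappa$. As written, it rests on identifying $\bar N$ with a collapsing structure $N_{\eta'}$ for some $\eta'\in\mathcal{C}^\alpha$ and on an induction on $\kappa$ that was never set up. The identification in the style of Lemmas \ref{Lem: FineStructureGoodPts} and \ref{Lem: FineStructureCorrespondenceBetweenkappaalpha} is not available here: those arguments require the hull to contain $\alpha$ (together with a singleton parameter or its solidity witnesses), which is exactly what is denied in your counterexample scenario; indeed $\mathrm{crit}(\pi)=\alpha<\alpha^+$, so $\bar N$ is not of the form those lemmas produce, and it is unclear what an inductive hypothesis at $\alpha$, which speaks about elements of $Z\cap\alpha$, would say about $\alpha$ itself (the image of $\kappa$ under the collapse). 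Fortunately none of this machinery is needed: the hypothesis $N_\tau\models$ ``$Z$ is nowhere stationary'' includes $\beta=\kappa$, so index the witnessing sequence by limit $\beta\le\kappa$ (as the paper does). Then $D_\kappa\in X_\alpha$, and your main-case argument applies verbatim with $\gamma^*=\kappa$: since $X_\alpha\cap\kappa=\alpha$, the set $D_\kappa\cap\alpha$ is unbounded in $\alpha$, hence $\alpha\in D_\kappa$, contradicting $D_\kappa\cap Z=\emptyset$. With that one-line repair (and the induction discarded), your proof is complete, and in fact the minimality of $\alpha$ and the unboundedness of $Z\cap\alpha$ are never used.
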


\begin{proof}
    Since $Z \subseteq \kappa$ is nowhere stationary there is a sequence of closed sets $\vec{C} = \la C_\beta \mid \beta \leq \kappa \text{ limit}\ra$ such that for every $\beta \leq \kappa$ limit, $C_\beta \subseteq \beta$ is unbounded in $\beta$ and disjoint from $Z$. Since $Z \in N_\tau$ then so is the $<_L$-minimal such sequence $\vec{C}$. Moreover $\vec{C}$ is definable from $Z$ and therefore $\vec{C} \in h^{N_\tau}_{n_\tau+1}[\omega \times (\alpha_0\cup p^{N_\tau}_{n_\tau+1})]$. 

    Fix $\alpha\in Z\setminus \alpha_0$. We argue that $\alpha\in h^{N_{\tau}}_{n_\tau+1}[\omega \times(\alpha\cup p^{N_\tau}_{n_\tau+1})]$.
    Working in $N_\tau$ we define a process that generates two finite sequences of ordinals $\vec{\alpha} = \la \alpha_i \mid i \leq N\ra$ and $\vec{\beta} = \la \beta_i \mid i < N\ra$, with the following properties:
    \begin{itemize}
        \item $\vec{\alpha}$ is a strictly decreasing sequence, and consists of ordinals greater or equal to $\alpha$,
        \item $\vec{\beta}$ consists of ordinals below $\alpha$. 
    \end{itemize}
    Start from $\alpha_0 = \kappa$. 
    Suppose $\alpha_i$ has been defined for some ordinal $i$. If $\alpha_i = \alpha$ then the process terminates. 
    Assuming $\alpha_i > \alpha$ proceed as follows:  If $\alpha_i = \alpha' + 1$ is a successor ordinal then set $\beta_i =0$ and $\alpha_{i+1} = \alpha'$. 
    Otherwise, $\alpha_i$ is a limit ordinal and $C_{\alpha_i} \subseteq \alpha_i$ is closed unbounded and disjoint from $Z$, hence $C_{\alpha_i} \cap \alpha$ is bounded in $\alpha$. 
    Define  $\beta_i = \bigcup( C_{\alpha_i} \cap \alpha)$ and 
    $\alpha_{i+1} = \min( C_{\alpha_i} \setminus (\beta_i + 1))$. \\
    Since the sequence $\vec{\alpha}$ is strictly decreasing it terminates after a finite number of steps $N < \omega$, 
    Namely $\alpha_N = \alpha$. 
    Examining the process, it is clear that the sequence $\vec{\alpha}$ is definable from $\vec{C}$ and the finite sequence $\vec{\beta}$.
    Since $\alpha_0,\vec{\beta} \subseteq \alpha$ and $\vec{C}$ is definable in $\alpha_0$, it follows that $\vec{\alpha} \in h^{N_\tau}_{n_\tau+1}[\omega\times(\alpha\cup p^{N_\tau}_{n_\tau+1})]$. In particular $\alpha = \alpha_N \in \vec{\alpha}$ belongs to $h^{N_\tau}_{n_\tau+1}[\omega\times(\alpha\cup p^{N_\tau}_{n_\tau+1})]$. 
    \end{proof}

\begin{remark} \label{Rmk: FineStructureNowhereDenseLemmaWithRestrictedHull}
    The result and proof of Proposition \ref{Lemma:NowhereStationary} remain true if each $h^{N_\tau}_{n_\tau+1}$-hull is replaced by a restricted hull $h^{N_\tau, \epsilon}_{n_\tau+1}$, for a good point $\epsilon$ of $N_\tau$. 
\end{remark}

\begin{corollary}\label{Corollary: FineStructureReflectDownGoodPts}
Let $\eta \in \S^{\kappa, \zeta,n}$ for some $\zeta\leq \kappa$ regular and $n<\omega$. Let $\delta\in (\kappa,\kappa^+)$ be such that $p^{N_\eta}_{n+1} = \{\delta\}$. \\
Then there exists $\alpha^*<\kappa$ and a very good point $\epsilon$ of $N_\eta$, such that:
\begin{enumerate}
    \item For every $Z \subseteq \kappa$ such that $Z\in h^{N_\eta}_{n+1}[\omega\times (\alpha^*\cup \{\delta\})]$ and $N_\eta \models Z \mbox{ is nowhere stationary}$, and for every $\alpha\in Z\setminus \alpha^*$,
    $\alpha\in h^{N_\eta}_{n+1}[\omega\times (\alpha\cup \{  \delta \})]$. In particular,  $h^{N_\eta}_{n+1}[\omega\times (\alpha\cup \{  \delta \})]$ transitively collapses a structure $N_{\eta(\alpha)}$ for $\eta(\alpha)\in \mathcal{C}^{\alpha}$. 
    \item Assume that $\alpha\in h^{N_\eta}_{n+1}[\omega\times (\alpha\cup \{  \delta \})]$. Then, letting $\epsilon(\alpha)$ be the image of $\epsilon$ under the transitive collapse to $N_{\eta(\alpha)}$, $\epsilon(\alpha)$ is a very good point of $N_{\eta(\alpha)}$.
    \item Assume that $\alpha\in h^{N_\eta}_{n+1}[\omega\times (\alpha\cup \{  \delta \})]$. Then, letting $\eta'(\alpha) \in c^{\alpha}_{\eta(\alpha)}$ be the club-point induced by the very good point $\epsilon(\alpha)$, $\eta'(\alpha)\notin \mathcal{S}^\alpha$.
\end{enumerate}
Furthermore, $\alpha^*, \epsilon$ can be chosen arbitrarily high, in the sense that for every $\alpha'<\kappa$ and a good point $\epsilon'$ of $N_\eta$, we can ensure $\alpha^*>\alpha', \epsilon> \epsilon'$.
\end{corollary}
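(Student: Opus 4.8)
The plan is to obtain the corollary by assembling three results from this subsection: the reflection principle for nowhere stationary sets (Proposition~\ref{Lemma:NowhereStationary}, together with its restricted-hull form, Remark~\ref{Rmk: FineStructureNowhereDenseLemmaWithRestrictedHull}); the downward transfer of good and very good points (Lemma~\ref{Lemma: FineStructureReflectDownGoodPts}); and the identification of the collapse of $h^{N_\eta}_{n+1}[\omega\times(\alpha\cup\{\delta\})]$ with a collapsing structure $N_{\eta(\alpha)}$, $\eta(\alpha)\in\mathcal{C}^\alpha\setminus\mathcal{S}^\alpha$ (Lemma~\ref{Lem: FineStructureCorrespondenceBetweenkappaalpha}). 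The real work is choosing $\alpha^*$ and $\epsilon$ so that all the required hypotheses---several of which refer to the \emph{restricted} hulls $h^{N_\eta,\epsilon}_{n+1}$---are met simultaneously for every admissible $Z$ and every $\alpha\geq\alpha^*$; once these are in place the three-clause verification is a matter of citing the lemmas.

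I would begin by fixing the auxiliary $\alpha'<\kappa$ and good point $\epsilon'$ of $N_\eta$ from the ``furthermore'' clause. Let $W_0$ be the transitive collapse of $h^{N_\eta}_{n+1}[\omega\times\delta]$, i.e.\ the solidity witness for $\delta\in p^{N_\eta}_{n+1}$; by solidity it is an element of $N_\eta$ and is definable over $N_\eta$ from $\delta$, and $\kappa=\rho^{N_\eta}_{n+1}$ is lightface definable over $N_\eta$, so both $\kappa$ and $W_0$ lie in $h^{N_\eta}_{n+1}[\omega\times(\{0\}\cup\{\delta\})]$. Recall from the discussion around Definition~\ref{Definition:Witnesses-alpha^epsilon} that $\beta\mapsto\epsilon_\beta$ is an increasing, continuous map of $\kappa$ into $N^{(n)}_\eta\cap Ord$ with $\bigcup_{\beta<\kappa}\epsilon_\beta=N^{(n)}_\eta\cap Ord$, whose range is exactly the set of very good points, and that $\alpha^{\epsilon_\beta}$ is the largest $\beta'$ with $\epsilon_{\beta'}=\epsilon_\beta$. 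Using cofinality of the very good points I would pick a very good point $\epsilon>\epsilon'$ high enough that (i) $\kappa$ and $W_0$ are already computed inside $h^{N_\eta,\epsilon}_{n+1}[\omega\times(\{0\}\cup\{\delta\})]$ with the $\{0\}$-part bounded below $\alpha^\epsilon$---in particular $\kappa\in h^{N_\eta,\epsilon}_{n+1}[\alpha^\epsilon\cup\{\delta\}]$ in the sense of the footnote to Lemma~\ref{Lemma: FineStructureReflectDownGoodPts}---and (ii) every $<_L$-least club-sequence $\vec C$ that can witness nowhere-stationarity of some $Z\in h^{N_\eta}_{n+1}[\omega\times(\alpha^*\cup\{\delta\})]$ is computed below level $\epsilon$ (the delicate point, discussed below). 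Then I set $\alpha^*=\max\{\alpha',\alpha^\epsilon\}+1$. Since the very good points and the witnessing levels are both cofinal in $N^{(n)}_\eta\cap Ord$, both $\alpha^*$ and $\epsilon$ can be taken arbitrarily large, which is the ``furthermore'' statement.

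With $\alpha^*,\epsilon$ fixed, clause~(1) is Proposition~\ref{Lemma:NowhereStationary} applied to $N_\tau=N_\eta$ with $\alpha_0=\alpha^*$ and $p^{N_\eta}_{n+1}=\{\delta\}$: for $Z$ as in the hypothesis, the $<_L$-least witnessing $\vec C$ is definable from $Z$ and hence lies in $h^{N_\eta}_{n+1}[\omega\times(\alpha^*\cup\{\delta\})]$, and the finite process of that proof generates each $\alpha\in Z\setminus\alpha^*$ inside $h^{N_\eta}_{n+1}[\omega\times(\alpha\cup\{\delta\})]$; since $W_0$ is definable from $\delta$ it also lies in this hull, so hypothesis (ii) of Lemma~\ref{Lem: FineStructureCorrespondenceBetweenkappaalpha} holds and that lemma gives the ``in particular'' clause, including $\eta(\alpha)\notin\mathcal{S}^\alpha$. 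For clause~(2), fix $\alpha\geq\alpha^*$ with $\alpha\in h^{N_\eta}_{n+1}[\omega\times(\alpha\cup\{\delta\})]$; since $\alpha^\epsilon<\alpha^*\leq\alpha$, the very good point $\epsilon=\epsilon_{\alpha^\epsilon}$ lies in that hull, so its image $\epsilon(\alpha)$ under the collapse is defined. By Remark~\ref{Rmk: FineStructureNowhereDenseLemmaWithRestrictedHull} (using (ii) of the choice of $\epsilon$) the membership $\alpha\in h^{N_\eta,\epsilon}_{n+1}[\omega\times(\alpha\cup\{\delta\})]$ holds as well, and $h^{N_\eta,\epsilon}_{n+1}[\omega\times(\alpha\cup\{\delta\})]$ is unbounded in $\epsilon$; together with $\kappa\in h^{N_\eta,\epsilon}_{n+1}[\alpha^\epsilon\cup\{\delta\}]$ and $\alpha>\alpha^\epsilon$ these are exactly the hypotheses of clause~3 of Lemma~\ref{Lemma: FineStructureReflectDownGoodPts}, whence $\epsilon(\alpha)$ is a very good point of $N_{\eta(\alpha)}$. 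Finally, clause~(3): being very good, $\epsilon(\alpha)$ induces a club point $\eta'(\alpha)\in c^\alpha_{\eta(\alpha)}$ by the characterization of $c^\alpha_\eta$ via very good points that follows Definition~\ref{Definition:Witnesses-alpha^epsilon}; the embedding $\sigma_{\eta'(\alpha),\eta(\alpha)}\colon N_{\eta'(\alpha)}\to N_{\eta(\alpha)}$ of Lemma~\ref{Lem: FineStructureGoodPts} is the identity on $\alpha$, sends $p^{N_{\eta'(\alpha)}}_{n+1}$ to $\{\delta(\alpha)\}$, and carries $h^{N_{\eta'(\alpha)}}_{n+1}[\omega\times\alpha]$ onto $h^{N_{\eta(\alpha)}}_{n+1}[\omega\times\alpha]$, hence takes the least ordinal outside the former to the least ordinal outside the latter; since $\delta(\alpha)$ is \emph{not} the least ordinal outside $h^{N_{\eta(\alpha)}}_{n+1}[\omega\times\alpha]$---precisely the reason $\eta(\alpha)\notin\mathcal{S}^\alpha$ in Lemma~\ref{Lem: FineStructureCorrespondenceBetweenkappaalpha}---the parameter $p^{N_{\eta'(\alpha)}}_{n+1}$ is not the least ordinal outside $h^{N_{\eta'(\alpha)}}_{n+1}[\omega\times\alpha]$ either, so $\eta'(\alpha)\notin\mathcal{S}^\alpha$.

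The step I expect to be the main obstacle is point (ii) in the choice of $\epsilon$: clause~3 of Lemma~\ref{Lemma: FineStructureReflectDownGoodPts} is stated for restricted hulls, while clause~(1) and the corollary's hypotheses produce membership only in the full hull $h^{N_\eta}_{n+1}$, so one must show that a single very good point $\epsilon$ suffices to recompute, below its own level, every $\alpha$ of interest inside $h^{N_\eta,\epsilon}_{n+1}$, uniformly over all admissible $Z$. The reason this should work is that each relevant $\vec C$ is an $L[\E]$-definable (from $Z$) sequence of bounded subsets of $\kappa$, hence of rank below $\kappa+\omega$, so it already appears in $N^{(n)}_\eta|\epsilon$ for any good point $\epsilon>\kappa+\omega$; and the process of Proposition~\ref{Lemma:NowhereStationary}, which generates $\alpha$ from $\vec C$ and a finite sequence $\vec\beta\subseteq\alpha$, only inflates the witnessing level by a finite, purely term-syntactic amount above the level of $\vec C$. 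Making this absorption precise---so that Remark~\ref{Rmk: FineStructureNowhereDenseLemmaWithRestrictedHull} genuinely applies with one fixed $\epsilon$ across all such $Z$ and $\alpha$---is where the argument needs care; the clause-by-clause bookkeeping above is routine by comparison.
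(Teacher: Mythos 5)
Your high-level plan is the same as the paper's (Proposition~\ref{Lemma:NowhereStationary} plus Remark~\ref{Rmk: FineStructureNowhereDenseLemmaWithRestrictedHull} for clause~(1), Lemma~\ref{Lemma: FineStructureReflectDownGoodPts} for clause~(2), and the ``$\delta$ is too big to be the least omitted ordinal'' argument for clause~(3)), but there is a concrete gap in how you choose $\alpha^*$ and $\epsilon$. Clauses~2--3 of Lemma~\ref{Lemma: FineStructureReflectDownGoodPts} require, besides $\alpha\in h^{N_\eta,\epsilon}_{n+1}[\omega\times(\alpha\cup\{\delta\})]$ and $\kappa\in h^{N_\eta,\epsilon}_{n+1}[\alpha^\epsilon\cup\{\delta\}]$, that $h^{N_\eta,\epsilon}_{n+1}[\omega\times(\alpha\cup\{\delta\})]$ be \emph{unbounded in} $\epsilon$. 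In your clause-(2) verification you simply assert this unboundedness; nothing in your choice of $\epsilon$ (``high enough'' for (i) and (ii)) or of $\alpha^*=\max\{\alpha',\alpha^\epsilon\}+1$ makes it true. Goodness of $\epsilon$ only gives unboundedness of the restricted hull over $\kappa\cup\{\delta\}$; if $\cf(\epsilon)=\cf(\kappa)$ it can happen that the restricted hulls over $\alpha\cup\{\delta\}$ are bounded in $\epsilon$ for \emph{every} $\alpha<\kappa$, in which case the hypotheses of Lemma~\ref{Lemma: FineStructureReflectDownGoodPts}(2)--(3) fail for all $\alpha$ and nothing guarantees that $\epsilon(\alpha)$ is a (very) good point of $N_{\eta(\alpha)}$. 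The paper's proof is built around exactly this point: it picks the very good point $\epsilon$ with $\cf(\epsilon)\neq\cf(\kappa)$, uses the cofinality mismatch together with unboundedness over $\kappa\cup\{\delta\}$ to extract a single $\alpha_2<\kappa$ such that already $h^{N_\eta,\epsilon}_{n+1}[\omega\times(\alpha_2\cup\{\delta\})]$ is unbounded in $\epsilon$, and then takes $\alpha^*$ above $\alpha_2$ as well as above $\alpha^\epsilon$, $\alpha'$, and the level $\alpha_1$ capturing the collapse $W$ of $h^{N_\eta}_{n+1}[\omega\times\delta]$. The ingredient $\alpha_2$ is entirely missing from your construction.

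Separately, the step you flag as the main obstacle is real, but your proposed resolution is unsound and also circular. You argue that the witnessing club sequence $\vec C$ has set-theoretic rank below $\kappa+\omega$ and hence ``already appears in $N^{(n)}_\eta|\epsilon$ for any good point $\epsilon>\kappa+\omega$''; in a $J$-hierarchy the level at which a set is constructed is unrelated to its rank --- since $\rho^{N_\eta}_{n+1}=\kappa$, subsets of $\kappa$ (and hence the sequences $\vec C$ and the sets $Z$ themselves) appear cofinally in the levels of the reduct, so they may enter only above any prescribed $\epsilon$. Moreover your condition (ii) on $\epsilon$ quantifies over all $Z\in h^{N_\eta}_{n+1}[\omega\times(\alpha^*\cup\{\delta\})]$, but $\alpha^*$ is defined only afterwards from $\alpha^\epsilon$, so the family of sets you must ``compute below $\epsilon$'' grows as you raise $\epsilon$, and (when $\zeta$ is small relative to $\alpha^*$) there is no reason a single $\epsilon$ can absorb them all. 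The paper avoids this by never trying to capture full-hull computations below $\epsilon$: it works with the agreement $h^{N_\eta,\epsilon}_{n+1}\restriction\alpha^\epsilon\times\{\delta\}=h^{N_\eta}_{n+1}\restriction\alpha^\epsilon\times\{\delta\}$ coming from very-goodness, together with the unboundedness over $\alpha_2$ described above, and only then runs the finite reflection process of Proposition~\ref{Lemma:NowhereStationary} inside the restricted hull via Remark~\ref{Rmk: FineStructureNowhereDenseLemmaWithRestrictedHull}. You should restructure the choice of $\epsilon$ and $\alpha^*$ along these lines rather than via the rank/absorption argument.
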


\begin{proof}
    Let $\alpha', \epsilon'$ be given. By enlarging them, we can assume that  $\kappa\in h^{N_\eta, \epsilon'}_{n+1}[\omega\times ( \alpha'\cup \{ \delta' \} )]$. 

    Let $W$ be the transitive collapse of $h^{N_\eta}_{n+1}[\omega\times \delta]$. Let $\alpha_1<\kappa$ be such that $W\in h^{N_\eta}_{n+1}[\omega\times (\alpha_1\cup \{\delta \})]$. Then for every $Z$ as above with $Z\in  h^{N_\eta}_{n+1}[\omega\times (\alpha_1\cup \{\delta \})]$, and for every $\alpha\in Z\setminus \alpha_1$, we have  $\alpha\in Z\setminus \alpha_1$, $\alpha\in h^{N_\eta}_{n_\eta+1}[\omega\times (\alpha\cup \{\delta \})]$ (see Proposition \ref{Lemma:NowhereStationary}).

    Pick a very good point $\epsilon$ above $\epsilon'$ such that $\cf(\epsilon)\neq \cf(\kappa)$, and the witness to $\epsilon$ being a very good point is above $\alpha', \alpha_1$. 

    Since $\epsilon$ is a good point, $h^{N_\eta, \epsilon}[\omega\times( \kappa\cup \{\delta \})]$ is unbounded in $\epsilon$. Since $\cf(\epsilon)\neq \cf(\kappa)$, there exists $\alpha_2<\kappa$ such that $h^{N_\eta, \epsilon}[\omega\times( \alpha_2\cup \{\delta \})]$ is unbounded in $\epsilon$.

    Let $\alpha^*$ be above $\alpha_2$ and $\alpha^{\epsilon}$ (and thus, also above $\alpha',\alpha_1$).

    We argue that $\alpha^*$ is as desired. For every $\alpha\in Z\setminus \alpha^*$, $\epsilon(\alpha)$ is a very good point (where $\epsilon(\alpha)$ is the image $\epsilon(\alpha)$ of $\epsilon$ under the transitive collapse from $N_\eta$ to $N_{\eta(\alpha)}$), by Lemma \ref{Lemma: FineStructureReflectDownGoodPts}, Remark \ref{Rmk: FineStructureNowhereDenseLemmaWithRestrictedHull} and the choice of $\alpha^*$.

    Let $\eta'(\alpha)\in c^{\alpha}_{\eta(\alpha)}$ be the club-point induced by $\epsilon(\alpha)$. We argue that $\eta'(\alpha)\notin \mathcal{S}^{\alpha}$. For that, note that $N_{\eta'(\alpha)}$ is the transitive collapse of $h^{N_{\eta(\alpha)},\epsilon(\alpha)}_{n+1}[\omega\times (\alpha \cup \{\delta(\alpha) \})]$, and thus  $p^{N_{\eta'(\alpha)}}_{n+1} = \{ \delta'(\alpha) \}$, where $\delta'(\alpha)$ is the image of $\delta(\alpha)$ under the collapse. However, the structure $h^{N_{\eta(\alpha)},\epsilon(\alpha)}_{n+1}[\omega\times (\alpha \cup \{\delta(\alpha) \})]$ is by itself isomorphic to     $h^{N_\eta, \epsilon}_{n+1}[\omega \times (\alpha \cup \{ \delta \})]$. So, if we start by collapsing $h^{N_\eta, \epsilon}_{n+1}[\omega \times (\alpha \cup \{ \delta \})]$, $\delta'(\alpha)$ will be the image of $\delta$ under the collapse map. Since $\delta\in (\kappa, \kappa^{+})$, $\delta'(\alpha)$ will not be the least element outside of $h^{N_{\eta'(\alpha)}}_{n+1}[\omega\times \alpha]$. Thus, $\eta'(\alpha)\notin \mathcal{S}^{\alpha}$.
\end{proof}

Corollary \ref{Corollary: FineStructureReflectDownGoodPts} will be crucial in the proof that the forcing we will construct, realizing the ``Kunen-like" blueprint, satisfies the Iteration-Fusion property. For the proof that the final segments of this forcing are distributive, we will need the following modified version.

\begin{lemma} \label{Lemma: FineStructureInfastructureSystemForDistributivityProof}
    Assume that $\zeta<\kappa$ is a regular cardinal, and, in the case where $\kappa$ is singular, $\cf(\kappa)< \zeta$. Let $\eta\in \mathcal{S}^{\kappa, \zeta, n}$ (for some $2\leq n < \omega$). Fix an ordinal $\gamma< \zeta$. Then there exists $\alpha^*< \kappa$ and an increasing  sequence $\la \epsilon_i \colon i\leq \gamma \ra$ such that:
    \begin{enumerate}
        \item For every nowhere stationary set $Z\in h^{N_\eta}_{n+1}[\omega\times (\alpha^*\cup \{\delta \})]$ and $\alpha\in Z\setminus \alpha^*$, $\alpha\in h^{N_\eta}_{n+1}[\omega\times (\alpha\cup \{\delta \})]$. In particular, for every such $\alpha$, $h^{N_\eta}_{n+1}[\omega\times (\alpha\cup \{\delta \})]$ transitively collapses to a structure $N_{\eta(\alpha)}$ for some  $\eta(\alpha)\in \mathcal{C}^\alpha$.
        \item For every $\alpha$ as above and  $i\leq \gamma$, the image of $\epsilon_i$ under the above transitive collapse is a very good point $\epsilon_i(\alpha)$ of $N_{\eta(\alpha)}$. Furthermore, $\epsilon_i(\alpha)$ is a limit of more than $\omega_1$-many very good points.
        \item For every $i\leq \gamma$,  $\la \epsilon_j \colon j\leq i \ra\in h^{N_\eta, \epsilon_{i+1}}[\omega\times (\alpha^*\cup \{\delta \})]$.
        \item For each $\alpha$ as above and $i\leq \gamma$, $\epsilon_i(\alpha)$ induces a club point $\eta_i(\alpha)\in c^{\alpha}_{\eta(\alpha)}\setminus \mathcal{S}^\alpha$.
    \end{enumerate}
    Furthermore, given any $\alpha'<\kappa$ and a good point $\epsilon'$, we can pick $\alpha^*$ above $\alpha'$ and each $\epsilon_i$ above $\epsilon'$. 
\end{lemma}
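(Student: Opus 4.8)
The plan is to construct $\langle\epsilon_i:i\le\gamma\rangle$ recursively as an increasing sequence of \emph{very good} points of $N_\eta$ of the form $\epsilon_i=\epsilon_{\beta_i}$ (notation of Definition~\ref{Definition:Witnesses-alpha^epsilon}), where $\langle\beta_i:i\le\gamma\rangle$ is an increasing sequence of limit ordinals chosen by a \emph{canonical} (hence definable) recursion carried out strictly below a single regular cardinal $\bar\alpha<\kappa$, and then to set $\alpha^*:=\bar\alpha$. With this in hand, clauses (1)--(4) become clause-by-clause applications of Proposition~\ref{Lemma:NowhereStationary} (via Remark~\ref{Rmk: FineStructureNowhereDenseLemmaWithRestrictedHull}), Lemma~\ref{Lem: FineStructureCorrespondenceBetweenkappaalpha}, Lemma~\ref{Lemma: FineStructureReflectDownGoodPts}, and the closing argument of the proof of Corollary~\ref{Corollary: FineStructureReflectDownGoodPts}. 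First I would fix $\delta\in(\kappa,\kappa^+)$ with $p^{N_\eta}_{n+1}=\{\delta\}$, let $W$ be the transitive collapse of $h^{N_\eta}_{n+1}[\omega\times\delta]$, and pick $\alpha_0<\kappa$ with $W\in h^{N_\eta}_{n+1}[\omega\times(\alpha_0\cup\{\delta\})]$; then by Proposition~\ref{Lemma:NowhereStationary} and Lemma~\ref{Lem: FineStructureCorrespondenceBetweenkappaalpha}, for every $\alpha^*\ge\alpha_0$, every nowhere stationary $Z\in h^{N_\eta}_{n+1}[\omega\times(\alpha^*\cup\{\delta\})]$ and every $\alpha\in Z\setminus\alpha^*$ we get $\alpha\in h^{N_\eta}_{n+1}[\omega\times(\alpha\cup\{\delta\})]$ and that hull collapses to a collapsing structure $N_{\eta(\alpha)}$ with $\eta(\alpha)\in\mathcal{C}^\alpha\setminus\mathcal{S}^\alpha$ --- this is clause (1). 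Using that $\kappa$ is a limit of regular cardinals, I would then fix a regular cardinal $\bar\alpha$ with $|\gamma|,\omega_2,\alpha_0,\alpha'<\bar\alpha<\kappa$, large enough that $\epsilon_{\bar\alpha}=\sup_{\beta<\bar\alpha}\epsilon_\beta$ lies above $\epsilon'$ and above $\eta$ (note that each subset of $\kappa$ in $N_\eta$ and the canonical $\square$-type sequence witnessing its nowhere-stationarity already appear below $\eta$ in the $N^{(n)}_\eta$-hierarchy). Since $\bar\alpha$ is regular and above $|\gamma|$, $\cf(\bar\alpha)=\bar\alpha\ge|\gamma|^+$, so suprema of $\le\gamma$-indexed sequences of ordinals below $\bar\alpha$ remain below $\bar\alpha$; this, rather than any bound $\bar\alpha<\zeta$, is how $\gamma<\zeta$ and (in the singular case) $\cf(\kappa)<\zeta$ enter --- they guarantee such a $\bar\alpha<\kappa$ exists.

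Next I would build $\langle\beta_i:i\le\gamma\rangle$ by recursion inside $(\omega_2,\bar\alpha)$, always making the $<_L$-least legal choice, and set $\epsilon_i:=\epsilon_{\beta_i}$; since every $\epsilon_\beta$ is a very good point with witness $\alpha^{\epsilon_\beta}\ge\beta$, each $\epsilon_i$ is very good with $\alpha^{\epsilon_i}\in[\beta_i,\bar\alpha)$. Start with $\beta_0$ least with $\epsilon_{\beta_0}>\max(\epsilon',\eta)$. At a successor stage choose $\beta_{i+1}$ least (a limit ordinal, $>\sup_{j\le i}\beta_j$, below $\bar\alpha$) such that $\kappa\in h^{N_\eta,\epsilon_{i+1}}_{n+1}[\omega\times(\beta_{i+1}\cup\{\delta\})]$ and $\langle\epsilon_j:j\le i\rangle\in h^{N_\eta,\epsilon_{i+1}}_{n+1}[\omega\times(\bar\alpha\cup\{\delta\})]$; this is possible because, by the canonical choices, $j\mapsto\beta_j$ on $j\le i$ is definable over $N^{(n)}_\eta$ from $\delta$ and finitely many parameters below $\bar\alpha$, so $\langle\epsilon_j:j\le i\rangle\in h^{N_\eta}_{n+1}[\omega\times(\bar\alpha\cup\{\delta\})]$, and the latter hull is the increasing union, over very good $\mu$, of the restricted hulls $h^{N_\eta,\mu}_{n+1}[\omega\times(\bar\alpha\cup\{\delta\})]$, so the sequence enters $h^{N_\eta,\mu}_{n+1}[\cdots]$ for all sufficiently large very good $\mu$. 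At a limit $i$ let $\beta_i$ be any limit ordinal above $\omega_2$ in $(\sup_{j<i}\beta_j,\bar\alpha)$ (the sup is below $\bar\alpha$ by $\cf(\bar\alpha)\ge|\gamma|^+$). Finally set $\alpha^*:=\bar\alpha$. Clause (3) then holds by construction, and the whole sequence $\langle\epsilon_i:i\le\gamma\rangle$ is definable over $N^{(n)}_\eta$ from $\delta$ and parameters below $\bar\alpha$.

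For clauses (2) and (4), fix a nowhere stationary $Z\in h^{N_\eta}_{n+1}[\omega\times(\alpha^*\cup\{\delta\})]$, an $\alpha\in Z\setminus\alpha^*$, and $i\le\gamma$. Since $\epsilon_i=\epsilon_{\beta_i}$ is very good with witness $\beta_i<\bar\alpha\le\alpha$, the restricted hull $h^{N_\eta,\epsilon_i}_{n+1}[\omega\times(\beta_i\cup\{\delta\})]$ is already unbounded in $\epsilon_i$ --- for $\beta_i$ a limit it contains cofinally the points $\epsilon_{\beta''}$, $\beta''<\beta_i$, each computable over the $N^{(n)}_\eta$-hierarchy below $\epsilon_i$ from $\beta''<\beta_i$ and $\delta$ --- hence so is $h^{N_\eta,\epsilon_i}_{n+1}[\omega\times(\alpha\cup\{\delta\})]$; and $\alpha\in h^{N_\eta,\epsilon_i}_{n+1}[\omega\times(\alpha\cup\{\delta\})]$ by the restricted-hull form of Proposition~\ref{Lemma:NowhereStationary} (Remark~\ref{Rmk: FineStructureNowhereDenseLemmaWithRestrictedHull}), since $Z$ and its nowhere-stationarity witness appear below $\eta<\epsilon_0\le\epsilon_i$. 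Hence Lemma~\ref{Lemma: FineStructureReflectDownGoodPts}, clause~\ref{Clause: FineStructureDownwardsGood}, gives that $\epsilon_i(\alpha)$ is a good point of $N_{\eta(\alpha)}$, and since moreover $\kappa\in h^{N_\eta,\epsilon_i}_{n+1}[\omega\times(\beta_i\cup\{\delta\})]$ and $\alpha^{\epsilon_i}<\alpha$, clause~\ref{Clause: FineStructureDownwardsVeryGood} gives that $\epsilon_i(\alpha)$ is a very good point of $N_{\eta(\alpha)}$. Because $\beta_i\ge\omega_2$ is a limit ordinal, $\epsilon_i$ is a limit of at least $\omega_2$ many very good points of $N_\eta$, all with witnesses below $\beta_i<\alpha$, each of which reflects (by the same lemma) to a very good point of $N_{\eta(\alpha)}$ below $\epsilon_i(\alpha)$; thus $\epsilon_i(\alpha)$ is a limit of more than $\omega_1$ many very good points, which is clause (2). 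Finally $\epsilon_i(\alpha)$ induces a club point $\eta_i(\alpha)\in c^\alpha_{\eta(\alpha)}$, and $\eta_i(\alpha)\notin\mathcal{S}^\alpha$ by the argument ending the proof of Corollary~\ref{Corollary: FineStructureReflectDownGoodPts} (the parameter $p^{N_{\eta_i(\alpha)}}_{n+1}$ is the collapse image of $\delta\in(\kappa,\kappa^+)$, whose preimage is too large to be the least ordinal outside $h^{N_{\eta_i(\alpha)}}_{n+1}[\omega\times\alpha]$), which is clause (4); the ``Furthermore'' follows since $\bar\alpha>\alpha'$ and $\epsilon_0>\epsilon'$.

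The main obstacle --- and the only genuinely new point beyond Corollary~\ref{Corollary: FineStructureReflectDownGoodPts} --- is running a recursion of length $\gamma+1$ while keeping \emph{all} the data (every witness $\alpha^{\epsilon_i}$, every initial segment $\langle\epsilon_j:j\le i\rangle$, and the recursion rule itself) below a single regular $\bar\alpha<\kappa$, and simultaneously ensuring that each $\langle\epsilon_j:j\le i\rangle$ is definable over $N^{(n)}_\eta$ from $\delta$ and parameters below $\bar\alpha$ so that clause (3) and the restricted-hull hypotheses of Lemma~\ref{Lemma: FineStructureReflectDownGoodPts} are actually available at each step. Making the recursion genuinely definable over $N^{(n)}_\eta$ --- so that the internal recursion theorem applies and the sequence does not ``run off'' below $\bar\alpha$ despite $\gamma$ possibly having cofinality $\ge\cf(\kappa)$ in the singular case --- is the delicate part, and is what forces the canonical choices at every stage.
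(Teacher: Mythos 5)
Your overall strategy---fix a single bound $\alpha^*$ in advance and run the whole length-$(\gamma+1)$ recursion among very good points $\epsilon_\beta$ with $\beta<\bar\alpha$---is not the paper's, and it has a genuine gap at exactly the point where the paper does real work. The existence of $\beta_{i+1}<\bar\alpha$ with $\la \epsilon_j \colon j\leq i \ra\in h^{N_\eta,\epsilon_{\beta_{i+1}}}_{n+1}[\omega\times(\bar\alpha\cup\{\delta\})]$ is not justified by your argument. You first claim the sequence lies in the \emph{full} hull $h^{N_\eta}_{n+1}[\omega\times(\bar\alpha\cup\{\delta\})]$ because the recursion is ``canonical, hence definable from $\delta$ and parameters below $\bar\alpha$''; but membership in that hull requires $\Sigma_1$-definability over the $n$-th reduct, and the clauses of your recursion (goodness, very-goodness---which compares restricted Skolem functions with the full one, a non-$\Sigma_1$ condition---and membership in restricted hulls at not-yet-determined points) are not of that complexity. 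You then argue that, the full hull being the increasing union of the restricted hulls $h^{N_\eta,\mu}_{n+1}[\omega\times(\bar\alpha\cup\{\delta\})]$ over very good $\mu$, the sequence enters them ``for all sufficiently large very good $\mu$''; that only produces $\mu$ unboundedly high in $N^{(n)}_\eta\cap\mathrm{Ord}$, whereas the points available to you, $\epsilon_\beta$ for $\beta<\bar\alpha$, are bounded by $\epsilon_{\bar\alpha}$, which is far from cofinal. Nothing guarantees the sequence appears in a restricted hull below that bound, and this is precisely the obstruction when $\kappa$ is singular and $\gamma\geq\cf(\kappa)$: one cannot keep the parameters bounded by a recursion chosen in advance. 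The same unproven local-definability assertion underlies your claims that $h^{N_\eta,\epsilon_i}_{n+1}[\omega\times(\beta_i\cup\{\delta\})]$ is unbounded in $\epsilon_i$ (``each $\epsilon_{\beta''}$ is computable below $\epsilon_i$ from $\beta''$ and $\delta$'') and that the witnesses $\alpha^{\epsilon_{\beta_i}}$ lie in $[\beta_i,\bar\alpha)$; moreover, taking $\beta_i\geq\omega_2$ a limit ordinal does not make $\epsilon_{\beta_i}$ a limit of more than $\omega_1$-many very good points, since $\beta\mapsto\epsilon_\beta$ can be constant on long intervals.

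The paper handles these points differently, and the difference is where the hypothesis $\cf(\kappa)<\zeta$ really enters (in your write-up it is only used to find $\bar\alpha$, a sign the main difficulty is being bypassed). For $\kappa$ regular the paper simply recurses with an \emph{increasing} sequence of bounds $\alpha_i$ (each step invoking Corollary \ref{Corollary: FineStructureReflectDownGoodPts}, which secures unboundedness of the restricted hull via $\cf(\epsilon)\neq\cf(\kappa)$ and the membership of the initial segment via coherence of the canonical square sequence), and sets $\alpha^*=\alpha_\gamma$ at the end. For $\kappa$ singular it enumerates the very good points of cofinality $>\kappa$ that are limits of very good points, attaches to each the least index $j(i)<\cf(\kappa)$ along a fixed cofinal sequence bounding the witness, the unboundedness parameter, and the parameter needed (via square coherence) for the initial-segment membership, and then uses regularity of $\zeta>\cf(\kappa)$ to pigeonhole a single $j^*$ working for unboundedly many points; a length-$\gamma$ subsequence of those, with $\alpha^*$ above $\alpha_{j^*}$, gives the lemma. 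If you want to salvage your approach, you would have to prove the local-definability facts you assert (essentially re-deriving the coherence arguments) \emph{and} explain how to bound the parameters uniformly in advance; as written, the singular case does not go through.
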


\begin{proof}
    By increasing $\alpha'$ below $\kappa$, assume that $h^{N_\eta}_{n+1}[\omega\times \delta], \kappa\in h^{N_\eta}_{n+1}[\omega\times (\alpha' \cup \{ \delta \})]$. By increasing $\epsilon'$, assume that $\kappa\in h^{N_\eta, \epsilon'}_{n+1}[\omega\times( \alpha'\cup \{  \delta\} )]$.

    If $\kappa$ is regular, construct an increasing sequence $\la \alpha_i, \epsilon_i \colon i\leq \gamma \ra$, where each $\alpha_i$ is above $\alpha'$, and each $\epsilon_i$ is above $\epsilon'$, as follows: assuming that $i<\gamma$ and $\la \alpha_j ,\epsilon_j \colon j<i\ra$ have been constructed, let $\la \alpha_i, \epsilon_i \ra$ be coordinate-wise above $\la \sup_{j<i} \alpha_i, \sup_{j<i} \epsilon_j \ra$ such that:
    \begin{itemize}
        \item $\epsilon_i$ is a very good point, and the limit of more than $\omega_1$-many very good points.
        \item $\la \epsilon_j \colon j\leq i \ra \in h^{N_\eta, \epsilon_{j+1}}_{n_\eta}[\omega\times \left( \alpha_i \cup \{  \delta \} \right)]$.
        \item For every $\alpha$ with $\alpha\in h^{N_\eta}_{n+1}[\omega\times (\alpha\cup \{ \delta\})]$, the image $\epsilon_i(\alpha)$ of $\epsilon$ under the transitive collapse to $N_{\eta(\alpha)}$, is a very good point.
        \item $\eta_{i+1}(\alpha)\notin \mathcal{S}^\alpha$, where $\eta_{i+1}(\alpha)$ is the club point induced by $\epsilon_{i+1}(\alpha)$, as a very good point in $N_{\eta(\alpha)}$. 
    \end{itemize}
    The fact that $\kappa$ is regular and $i<\gamma$, ensures that $\sup_{j<i} \alpha_i< \kappa$. The fact that $\cf(\eta) = \zeta$ and $i<\gamma< \zeta$ ensures that there are very good points above $\sup_{j<i} \epsilon_i $. Therefore, $\la \alpha_{i+1}, \epsilon_{i+1} \ra$ as above could be found by Corollary \ref{Corollary: FineStructureReflectDownGoodPts}. Finally, take $\alpha^* = \alpha_{\gamma}$. 

    Let us proceed to the case where $\kappa$ is singular. Recall that in this case we have $\zeta> \cf(\kappa)$. Fix an increasing, cofinal sequence $\la \alpha_j \colon  j<\cf(\kappa) \ra$ in $\kappa$. Make sure that $\alpha_0 > \alpha'$.
    
    Let $\la \epsilon_i \colon i<\zeta \ra$ be the increasing enumeration of very good points of $N_{\eta}$ above $\epsilon'$, such that for every $i<\zeta$, each $\epsilon_i$ is a limit of very good points, and $\cf(\epsilon_{i})> \kappa$.\footnote{Note that the enumeration of all such very good points is external to $N_\eta$.} For every $i<\zeta$, let $j(i)< \cf(\kappa)$ be the least index such that:
    \begin{itemize}
        \item The witness for the fact that $\epsilon_i$ is a very good point is below $\alpha_{j(i)}$.
        \item $h^{N_\eta  , \epsilon_i}_{n_\eta+1}[\omega \times \left(\alpha_{j(i)} \cup \{\delta \} \right) ]$ is unbounded in $\epsilon_i$ (since $\cf(\epsilon_{i})> \kappa$, there exists $j(i)$ fulfilling this clause).
        \item $\la \epsilon_j \colon j\leq i \ra\in h^{ N_\eta , \epsilon_{i+1} }_{n_{\eta}+1}[  \omega\times (\alpha_{j(i+1)} \cup \{\delta \}) ]$ (note that, since $\epsilon_{i+1}$ is a limit of very good points, the coherence of the square sequence ensures that $\la \epsilon_j \colon j\leq i \ra \in h^{ N_\eta , \epsilon_{i+1} }_{n_{\eta}+1}[  \omega\times (\kappa\cup \{\delta \})]$. Thus, there exists some $j(i)$ fulfilling this clause).
        \end{itemize}
    Since $\zeta$ is regular above $\cf(\kappa)$, there exists $j^*<\cf(\kappa)$ such that for unboundedly many $i<\zeta$, $j(i) = j^*$. Pick any $\alpha^*<\kappa$ above $\alpha_{j^*}$. Finally, any subsequence of $\la \epsilon_i \colon i<\zeta, j(i) = j^* \ra$ of length $\gamma$ will be as desired; the least such sequence of length $\gamma$ belongs to $N_{\eta}$ since $\cf(\eta) = \zeta$.
\end{proof}

\newpage
\section{Realizing the ``Kunen-Like" Blueprint}\label{Section:FinalIteration}

In this section, we construct a forcing notion $\po = \la \po_\alpha, \name{\qo}_\alpha \colon \alpha\leq \kappa \ra$ which satisfies the ``Kunen-like" blueprint \ref{Def:KLblueprint}, over the minimal extender model $V = L[\E]$. The section is divided into three subsections. In subsection \ref{Section:GeneralMiller} we present a forcing that adds a generalized Miller subset to a Mahlo cardinal $\alpha$. Such a forcing was developed by Friedman and Zdomskyy in \cite{FriedmanZdomskyy2010}, along with an iteration theory, that ensures that the resulting posets satisfy the $C$-Fusion property. In subsection \ref{Section:LocalIteration-SelfCodingMiller} we define the iterates $\name{\qo}_\alpha$, for each Mahlo $\alpha\leq \kappa$. Each such $\name{\qo}_\alpha$ is self coding iterate of $\alpha^{++}$-many generalized Miller subsets of $\alpha$. 
In subsection \ref{Section:FinalIteration} we prove that the iteration $\po$ satisfies the remaining properties from the ``Kunen-like" blueprint, mainly the Iteration-Fusion property and the distributivity of its tails. This subsection takes advantage of the special choice of the coding stationary sets constructed in Section \ref{Section:FineStucture}.

\subsection{Generalized Miller Forcing}\label{Section:GeneralMiller}

Let $\lambda$ be a regular cardinal. We present in this subsection a forcing notion $\mathbb{M}_{\lambda}$, which is a variation of a generalized Miller forcing due to Friedman and Zdomskyy \cite{FriedmanZdomskyy2010}. Denote by $[\lambda]^{<\lambda}$ the set of increasing sequences of ordinals below $\lambda$ of length  below $\lambda$. A condition in $\mathbb{M}_{\lambda}$ is a tree $p\subseteq [\lambda]^{<\lambda}$ which satisfies--
\begin{enumerate}
    \item Closure: for every limit $\alpha<\lambda$ and $s\in [\lambda]^{\alpha}$, if every initial segment of $s$ belongs to $p$, then $s\in p$ as well.
    \item Splitting: For every $s\in p$, the set-- $$C_{p}(s)= \{ \alpha<\lambda \colon s^{\frown} \la \alpha \ra\in p \}  $$
    is either a singleton, or a co-bounded\footnote{In \cite{FriedmanZdomskyy2010}, "closed unbounded" replaces "co-bounded". For us the current version suffices.} subset of $\lambda$. In the latter case, we say that $s$ is a \textbf{splitting node} of $p$.
    \item If $s\in p\cap \kappa^{\alpha}$ for a regular cardinal $\alpha$  then $C_{p}(s) = \{0\}$. In particular, splitting nodes of $p$ must have singular height.
    \item Density of splitting nodes: every $s\in p$ has an end-extension $t\in p$, $s\subseteq t$, which is a splitting node of $p$.
    \item Splitting at singular closure points: if $\alpha<\lambda$ is singular, $s\in p\cap \alpha^{\alpha}$ (i.e., $\alpha$ is a closure point of the function $s$) and unboundedly many initial segments of $s$ are splitting nodes of $p$, then $s$ is a splitting node of $p$ as well.
\end{enumerate}

Given a pair of conditions $p,q\in \mathbb{M}_{\lambda}$, we say that $p$ extends $q$ if $p\subseteq q$. For $p\in \mathbb{M}_{\lambda}$ and $t\in p$, we denote by $(p)_t\in \mathbb{M_{\lambda}}$ the sub-tree of $p$ consisting of all the nodes which are compatible with $t$ with respect to the end-extension order.

\begin{claim}\label{Claim:M_lambda-closed}
$\mathbb{M}_{\lambda}$ is $\lambda$-closed.
\end{claim}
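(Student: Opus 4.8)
The plan is to verify directly that any increasing $\lambda$-sequence of conditions $\langle p_i \mid i < \gamma \rangle$ with $\gamma < \lambda$ has an upper bound, namely the intersection $p = \bigcap_{i < \gamma} p_i$, and the only real work is checking that this intersection still satisfies the five closure/splitting axioms defining $\mathbb{M}_\lambda$. First I would observe that $p$ is a subtree of $[\lambda]^{<\lambda}$ closed under initial segments, and that axiom (1) (closure under limits) is automatic since each $p_i$ is closed and the intersection of closed sets is closed. Axiom (3) (no splitting at regular heights) and the ``singleton or cobounded'' dichotomy in axiom (2) are also inherited: for $s \in p$ we have $C_p(s) = \bigcap_{i<\gamma} C_{p_i}(s)$, an intersection of $<\lambda$ many sets each of which is a singleton or cobounded in $\lambda$; since $\lambda$ is regular, a $<\lambda$-intersection of cobounded sets is cobounded, and if any one of them is a singleton then the intersection is that singleton or empty — but it is nonempty because the $p_i$ are decreasing and, as I explain next, $s$ actually has extensions in $p$.

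The crux is axioms (4) and (5): density of splitting nodes, and forced splitting at singular closure points. For (5), if $\alpha < \lambda$ is singular, $s \in p \cap \alpha^\alpha$, and unboundedly many initial segments of $s$ are splitting nodes of $p$, then a fortiori unboundedly many initial segments of $s$ are splitting nodes of each $p_i$ (since $p \subseteq p_i$), so $s$ is a splitting node of each $p_i$ by axiom (5) applied to $p_i$; hence $C_{p_i}(s)$ is cobounded for every $i$, and by regularity of $\lambda$ the intersection $C_p(s)$ is cobounded, so $s$ is a splitting node of $p$. The genuinely delicate point is (4): given $s \in p$, I must find a splitting node of $p$ end-extending $s$. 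The naive approach — alternately extend to a splitting node of $p_0$, then $p_1$, and so on — risks producing a node of length $\lambda$ or failing to land inside all the $p_i$ simultaneously. The right move is to build an increasing sequence of nodes $s = s_0 \subseteq s_1 \subseteq \cdots$ of singular length, arranging that at stage $j$ the node $s_j$ lies in $p_j$ and is a splitting node of $p_{<j}$, taking limits of singular cofinality; then at a limit stage of cofinality chosen to be singular, axiom (5) for each $p_i$ forces the limit node to split in $p_i$, and I argue the limit node lies in $p = \bigcap p_i$ and is a splitting node there. The main obstacle — and the step I'd spend the most care on — is bookkeeping this construction so that the resulting splitting node genuinely has singular height, lies in every $p_i$, and is an \emph{end-extension} of $s$ of length $<\lambda$; this is where the precise interplay of axioms (3), (4), and (5) in the definition of $\mathbb{M}_\lambda$, together with $\lambda$-closure of the ordinals below $\lambda$ under $<\lambda$-suprema, does the work.

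Finally I would remark that $\lambda$-closure (as opposed to merely $\lambda$-distributivity) also follows because the order is inclusion and the bound $p = \bigcap_{i<\gamma} p_i$ is literally a condition extending every $p_i$; there is no genericity or name-chasing involved, just the verification above. I expect the write-up to be short once the axiom-checking for (4) is organized correctly.
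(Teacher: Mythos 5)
Your overall strategy is exactly the paper's: take $q=\bigcap_{i<\gamma}p_i$ as the candidate bound, note that limit-closure, the singleton/co-bounded dichotomy (via regularity of $\lambda$), non-splitting at regular heights, and clause (5) all pass to the intersection, and isolate density of splitting nodes as the crux, to be handled by an increasing chain of nodes whose union splits in every $p_i$ by clause (5). But the two points you defer to ``bookkeeping'' are where the actual content lies, and your sketch does not supply the mechanisms. The first is keeping the chain inside all the trees simultaneously. Your stage requirement is only that $s_j\in p_j$ and $s_j$ splits in $p_i$ for $i<j$; since the trees shrink along the sequence, a node chosen this way need not lie in $p_{j+1}$, so you can neither continue the construction inside $p_{j+1}$ nor conclude that the union lies in $q$. (The clause ``splitting node of $p_{<j}$'' is the easy part: a splitting node of a stronger condition that lies in a weaker tree automatically splits there, since its successor set only grows.) The paper's device is to keep every node of the chain in $q$ itself and, at a stage devoted to $p_\alpha$, to pass to the \emph{least} splitting node $t_{i+1}$ of $p_\alpha$ above the current node $t_i$: because $t_i$ lies in every $p_\beta$ and any extension of $t_i$ in $p_\beta\subseteq p_\alpha$ of length $\mathrm{lh}(t_{i+1})$ must coincide with $t_{i+1}$ (there is no splitting of $p_\alpha$ strictly in between), the new node again lies in every $p_\beta$. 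Some argument of this kind is needed and is absent from your outline.

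The second gap concerns the height of the limit node. What clause (5) requires is that the \emph{length} of the union be a singular ordinal (a closure point of the node, in the notation of the definition); ``a limit stage of cofinality chosen to be singular'' is not the relevant condition, and if the node lengths happened to converge to a regular cardinal, clause (3) would forbid splitting there and the construction would stall rather than produce a splitting node of $q$. The paper arranges this by first extending, inside $q$, to a node of length greater than the length $\gamma$ of the sequence of conditions — possible because once $C_q(s)\neq\emptyset$ for all $s\in q$ and limit-closure are known, every node of $q$ has arbitrarily long extensions in $q$ — so the union automatically has cofinality at most $\gamma$ and hence singular length. (Relatedly, nonemptiness of $C_q(s)$ in the singleton case should be argued directly from each $C_{p_i}(s)$ being a nonempty subset of that singleton, not deferred to the later extension argument in $q$, which would be circular as presented.) With these two devices your outline becomes the paper's proof; without them the crux step is not yet a proof.
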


\begin{proof}
Let $\la p_\alpha \mid \alpha < \delta\ra$ be a $\leq$-increasing sequence in $\mathbb{M}_\lambda$ of length $\delta < \lambda$. The natural candidate  for an upper bound for the sequence is the tree  $q= \bigcap_{\alpha < \delta} p_\alpha$. It is therefore suffices to prove $q \in \mathbb{M}_\lambda$ is a condition.\\
Suppose that $s \in q$. We first show $C_q(s) \neq \emptyset$ and is either a singleton or a co-bounded subset of $\lambda$. 
Since $\lambda$ is regular and $\delta < \lambda$, it is clear that if $C_{p_\alpha}(s)$ is co-bounded in $\kappa$, for all $\alpha < \delta$, then so does $C_q(s)$. 
In the case that $C_{p_\alpha}(s) = \{x\}$ for some $\alpha < \delta$, then for all $\beta < \delta$, $s \in p_{\alpha+\beta}$ and $p_\alpha \subseteq p_{\alpha+\beta}$ ensure that $C_{p_{\alpha+\beta}}(s) = \{x\}$. Therefore $C_q(s) = \{x\}$.\\
We see that in either case, every $s \in q$ and a proper end extension $s' \in q$. Since $q = \bigcap_{\alpha < \delta} p_\alpha$ must be close to $<\lambda$-limits of its sequences, we conclude that every $s \in q$ has end-extensions $s' \in q$ of arbitrarily long lengths $<\lambda$. The same argument regarding $q = \bigcap_{\alpha < \delta} p_\alpha$ gives that the set of splitting nodes in $q$ is closed under unions of singular heights. \\
Therefore, it remains to verify the density of the splitting nodes of $q$. Fix some $t\in q$. We argue that there exists a splitting node $s$ of $q$ with $s\supseteq t$. 
To this end, we construct a strictly increasing sequence $\langle t_{i} \colon i< \delta \rangle$ of nodes in $q$, such that, for every $\alpha<\delta$, there are unboundedly many indices $i<\delta$ such that $t_{i+1}$ is a splitting node of $p_{\alpha}$. We also make sure that $\mbox{lh}(t_0)> \delta$. This would suffice, since then $t^* = \bigcup\{ t_{i} \colon i<\delta \}$ is a splitting node of $p_{\alpha}$ for every $\alpha<\delta$, and thus, it is a splitting node of $q$ (this uses the assumption that $\mbox{lh}(t_0)>\delta$, and thus $\mbox{lh}(t^*)$ is singular below $\lambda$, so the closure of splitting nodes at singular heights could be applied). 
Take any $t_0 = t\in q$ with length above $\delta$. In limit steps, assume that $i<\delta$ is limit, and let $t_{i}= \bigcup\{ t_{j} \colon j<i\}$. Since all the trees $\la p_{\alpha} \colon \alpha<\delta \ra$ are closed, $t_i \in q$.
Thus, it suffices to concentrate on successor steps. Assume that we fixed in advance a partition $\la A_{\alpha} \colon \alpha<\delta  \ra$ of $\delta$ to unbounded subsets of $\delta$. Assume that $t_i$ has been constructed, and let us construct $t_{i+1}$. Let $\alpha<\delta$ be the unique such that $i\in A_{\alpha}$. We argue that we can find $t_{i+1}\in q$ such that $t_{i}\subsetneq t_{i+1}$ and $t_{i+1}$ is a splitting node of $p_{\alpha}$. \\
Indeed, let $t_{i+1}$ be the least splitting node of $p_{\alpha}$ above $t_{i}$ ("least" in the sense that there are no splitting points  $s\in p_{\alpha}$ such that $t_{i}\subsetneq s\subsetneq t_{i+1}$). Let us prove that for every $\beta\in \delta\setminus \{\alpha \}$, $t_{i+1}\in p_{\beta}$. Fix such $\beta$. By directedness, there exists $\gamma<\delta$ such that $p_{\gamma}\subseteq p_{\alpha} \cap p_{\beta}$. But $t_{i}\in p_{\gamma}$, and thus $t_{i}$ can be extended to a node of length $\mbox{lh}(t_{i+1})$ in $p_{\gamma}\subseteq p_{\alpha}\cap p_{\beta}$. However, by the choice of $t_{i+1}$, the only possible such node is $t_{i+1}$ itself. Thus $t_{i+1}\in p_{\beta}$. 
This concludes the inductive construction and the proof.
\end{proof}

Since $\lambda$-closed posets preserve the stationarity of all stationary subsets $S \subseteq \lambda$ in $V$ we conclude that $\lambda$ remains Mahlo.  
\begin{corollary}
    Assume that $\lambda$ is Mahlo. Then $\lambda$ remains Mahlo in $V^{\mathbb{M}_{\lambda}}$.
\end{corollary}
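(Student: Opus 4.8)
The statement is essentially a corollary of Claim~\ref{Claim:M_lambda-closed} together with the standard theory of $\lambda$-closed forcing, so the plan is to spell out that theory in the present setting. First I would record that a $\lambda$-closed poset adds no new sequences of ordinals of length $<\lambda$: given a $\mathbb{M}_\lambda$-name $\name{s}$ for a sequence of length $\mu<\lambda$ and a condition $p$, build a $\leq$-increasing chain $\la p_i \mid i\le\mu\ra$ with $p_0\ge p$, with $p_{i+1}$ deciding $\name{s}(i)$, and with $p_i$ an upper bound of $\la p_j \mid j<i\ra$ at limits (here $\lambda$-closure is used, each subchain having length $<\lambda$); then $p_\mu$ forces $\name{s}$ to be the evaluation of a sequence lying in $V$, and by density $0_{\mathbb{M}_\lambda}$ forces this. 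It follows at once that $\mathbb{M}_\lambda$ preserves all cardinals and cofinalities $\le\lambda$ and, for every $\mu<\lambda$, leaves $\power(\mu)$ and hence $2^\mu$ unchanged.

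From these facts I would extract the two ``pointwise'' ingredients of Mahloness. First, $\lambda$ stays inaccessible in $V^{\mathbb{M}_\lambda}$: it remains regular (a new cofinal map from below would be a new sequence of length $<\lambda$) and it remains a strong limit (for $\mu<\lambda$, $\power(\mu)$ is unchanged, so $(2^\mu)^{V^{\mathbb{M}_\lambda}}=(2^\mu)^V<\lambda$). Second, by exactly the same reasoning applied below $\lambda$, every $\alpha<\lambda$ that is regular (equivalently, inaccessible) in $V$ remains so in $V^{\mathbb{M}_\lambda}$. Hence it remains only to show that $R=\{\alpha<\lambda \mid \alpha\text{ is regular}\}^V$, which is stationary in $\lambda$ in $V$ because $\lambda$ is Mahlo, stays stationary after forcing with $\mathbb{M}_\lambda$.

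For this last point I would run the textbook argument that $\lambda$-closed forcing preserves stationary subsets of $\lambda$. Given a name $\name{C}$ for a club in $\lambda$ and a condition $p$, use $\lambda$-closure to build a $\leq$-increasing chain $\la p_\xi \mid \xi<\lambda\ra$ above $p$ and a strictly increasing continuous sequence $\la \delta_\xi \mid \xi<\lambda\ra$ of ordinals below $\lambda$: at a successor $\xi+1$ let $p_{\xi+1}\ge p_\xi$ decide an ordinal $\delta_{\xi+1}\in\name{C}$ above $\sup_{\eta\le\xi}\delta_\eta$ (possible since $\name{C}$ is forced unbounded), and at a limit $\xi$ (of cofinality $<\lambda$) let $p_\xi$ be an upper bound of $\la p_\eta \mid \eta<\xi\ra$ and set $\delta_\xi=\sup_{\eta<\xi}\delta_\eta$. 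Then $E=\{\xi<\lambda \mid \delta_\xi=\xi\}$ is a club in $\lambda$, so I may choose $\xi\in R\cap E$; this $\xi$ is a limit ordinal, $p_\xi$ forces $\delta_\eta\in\name{C}$ for all $\eta<\xi$, and since $\name{C}$ is closed, $p_\xi\Vdash \xi=\delta_\xi\in\name{C}$, giving $p_\xi\Vdash R\cap\name{C}\ne\emptyset$; by density $R$ is forced stationary. Since each member of $R$ is still regular in $V^{\mathbb{M}_\lambda}$, $R$ witnesses that the regular cardinals below $\lambda$ are stationary there, and together with the inaccessibility of $\lambda$ this yields that $\lambda$ is Mahlo in $V^{\mathbb{M}_\lambda}$. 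I do not expect any genuine obstacle: every step is routine, the only point needing mild care being the verification that the enumeration $\xi\mapsto\delta_\xi$ is strictly increasing and continuous so that its fixed-point set $E$ is a club.
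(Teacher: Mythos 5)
Your proposal is correct and follows essentially the same route as the paper: the paper derives the corollary directly from Claim~\ref{Claim:M_lambda-closed} ($\lambda$-closure of $\mathbb{M}_\lambda$) together with the standard fact that $\lambda$-closed forcing preserves stationary subsets of $\lambda$, which is exactly the argument you spell out (plus the routine preservation of regularity and strong limits, which the paper leaves implicit).
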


A generic set $G\subseteq \mathbb{M}_{\lambda}$ over $V$ induces a generic branch, 
$$g = \bigcup\{s\in \left[ \lambda \right]^{<\lambda} \colon \exists p\in G, (s\in p \land  (p)_s=p) \}$$
Clearly, $g\colon \lambda\to \lambda$ and $g(\alpha) =0$ for every regular cardinal $\alpha<\lambda$.

\begin{claim}\label{Claim: GeneralizedMillerAddsDominatingGeneric}
    For every $f\colon \lambda\to \lambda$ in $V$, there exists a club $C\subseteq \lambda$ such that for every singular $\alpha\in C$, $f(\alpha)< g(\alpha)$.
\end{claim}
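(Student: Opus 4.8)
The plan is a tree-pruning density argument, standard for Miller-type forcings but requiring care with properties (3) and (5) of $\mathbb{M}_\lambda$. Fix $f\colon\lambda\to\lambda$ in $V$ and an arbitrary condition $p\in\mathbb{M}_\lambda$. It suffices to produce an extension $q\leq p$ that forces the existence of the desired club, since then the set of all such $q$ (ranging over $p$) is dense, meets the generic $G$, and the associated club witnesses the claim in $V[G]$.

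First I would define the pruned tree
$$ q = \{\, s\in p \mid \forall\beta<\mbox{lh}(s),\ \text{if } s\uhr\beta \text{ is a splitting node of } p \text{ then } s(\beta)>f(\beta) \,\}. $$
Equivalently, $q$ is obtained from $p$ by keeping, at each splitting node $t$ of $p$ of length $\gamma$, only the immediate successors $t^{\frown}\la\delta\ra$ with $\delta>f(\gamma)$, while leaving non-splitting nodes untouched. The main work is to verify $q\in\mathbb{M}_\lambda$. Downward closure and closure under $<\lambda$-limits (property (1)) are immediate from the definition and the corresponding properties of $p$. Since $C_p(t)$ is co-bounded at any splitting node $t$ of length $\gamma$, the set $C_q(t)=C_p(t)\cap(f(\gamma),\lambda)$ is again co-bounded and nonempty, so the splitting requirement (2) and requirement (3) at regular-cardinal heights persist (a regular-height node of $p$ has $C_p=\{0\}$, hence is not splitting, hence is left untouched). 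Density of splitting nodes (4) holds because, given $s\in q$, the path from $s$ to the first splitting node of $p$ on or above $s$ runs only through non-splitting nodes, hence lies entirely in $q$, and that splitting node remains splitting in $q$. Finally, property (5) is where the key equivalence ``for nodes remaining in $q$, being $q$-splitting is equivalent to being $p$-splitting'' is used: if $s\in q\cap\alpha^\alpha$ with $\alpha$ singular and unboundedly many initial segments of $s$ are $q$-splitting, then those segments are $p$-splitting, so $s$ is $p$-splitting by property (5) of $p$, so $C_q(s)=C_p(s)\cap(f(\alpha),\lambda)$ is co-bounded and nonempty, whence $s$ is $q$-splitting. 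Thus $q\leq p$.

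Next I would analyse the generic branch below $q$. Working in $V[G]$ with $q\in G$, the branch $g$ is a cofinal branch through every condition of $G$, and a genericity argument using (4) shows that $D=\{\beta<\lambda \mid g\uhr\beta \text{ is a splitting node of } q\}$ is unbounded in $\lambda$: for each $\gamma<\lambda$ and each $w\leq q$, choosing a $w$-splitting node $t$ of length $\geq\gamma$ (which is then also $q$-splitting) gives $(w)_t\leq q$ forcing $g\uhr\mbox{lh}(t)=t\in D$, so these conditions are dense below $q$. Let $C$ be the intersection of the club of limit points of $D$ with the club of closure points of $g$. If $\alpha\in C$ is singular, then $g\uhr\alpha\in q\cap\alpha^\alpha$ has unboundedly many $q$-splitting initial segments, so by property (5) it is itself a splitting node of $q$; by the pruning, $g(\alpha)\in C_q(g\uhr\alpha)\subseteq(f(\alpha),\lambda)$, i.e. $f(\alpha)<g(\alpha)$. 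Hence $q$ forces the statement of the claim.

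The genuinely delicate points are the preservation of property (5) under pruning — which is not a purely set-theoretic intersection argument, but relies on the fact that removing only bounded initial segments of co-bounded splitting sets leaves them co-bounded — and the routine genericity argument that the generic branch meets the splitting nodes of $q$ cofinally. Everything else is bookkeeping with the definition of $q$.
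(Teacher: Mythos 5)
Your proof is correct and follows essentially the same route as the paper: the paper simply takes $q$ in the generic filter from the dense set of conditions all of whose splitting sets lie above $f$ at the corresponding level, then uses genericity together with closure of splitting nodes at singular closure points of $g$ to extract the club, and your pruning construction is exactly the density verification that the paper leaves implicit. The only cosmetic difference is notational: you write extensions as $q\leq p$, whereas the paper's Jerusalem convention would write $q\geq p$.
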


\begin{proof} 
    Given $f\colon \lambda\to \lambda$, let $D\subseteq \mo_\lambda$ be the dense subset of conditions $p\in G$ such that for every splitting node $s\in p$, $\min\left(C_p(s)\right)> f(\mbox{lh}(s))$. Let $q\in G\cap D$. Every initial segment of $g$ is a node of $q$. Unboundedly many  such initial segments are splitting nodes of $q$, since, otherwise, $q$ has a node that doesn't extend into a splitting node. Since splitting nodes of $q$ are closed under singular limits at closure points of $g$, there exists a club $C\in V[G]$ in $\lambda$, such that for every singular $\alpha\in C$, $g\uhr \alpha$ is a splitting node of $q$, and thus $g(\alpha)> f(\alpha)$.
\end{proof}

We now proceed and prove that for a Mahlo cardinal $\lambda$, $\mathbb{M}_{\lambda}$ has the $\lambda$ C-Fusion property. For that we need the notion of a Fusion sequence of conditions in $\mathbb{M}_{\lambda}$.

\begin{definition}\label{Def: FusionSequenceMiller}
${}$
\begin{enumerate}
    \item Assume that $p\in \mathbb{M}_{\lambda}$. For every $\alpha<\lambda$, the set $\mbox{Split}_{\alpha}(p)$ consists of nodes $s\in p$ such that--
    $$\mbox{otp}\left( \{ t\subsetneq s \colon t \mbox{ is a splitting node of } p \} \right)= \alpha$$
    and $s$ is minimal with this property\footnote{Note that such $s$ is a splitting node if and only if $\mbox{lh}(s)$ is a singular cardinal.}.    
    \item If $p,q$ are conditions and $\nu<\lambda$, we say that $p\geq_{\nu} q$ if $p\geq q$ and $\mbox{Split}_{\nu}(p) = \mbox{Split}_{\nu}(q)$. 
    \item   Fix $\nu<\lambda$.  A $\nu$-fusion sequence of conditions in $\mathbb{M}_{\lambda}$ is an increasing sequence of conditions $\la p_{\alpha} \colon \alpha<\lambda
 \ra$, such that:
\begin{enumerate}
    \item  For every $\alpha<\lambda$, $p_{\alpha+1}\geq_{\nu+\alpha} p_{\alpha}$.
    \item  For every limit $\delta<\lambda$, $p_{\delta} = \bigcap_{\alpha<\delta} p_{\alpha}$.
\end{enumerate}
A fusion sequence of conditions is a sequence which is a $\nu$-fusion sequence for some $\nu<\lambda$.
\end{enumerate}
\end{definition}

\begin{claim}
Assume that $\nu<\lambda$ and $\la p_{\alpha} \colon \alpha<\lambda \ra$ is a $\nu$-fusion sequence. Then $q = \bigcap_{\alpha<\lambda} p_{\alpha}$ is a condition, and, for every $\alpha<\lambda$, $q\geq_{\nu+\alpha} p_{\alpha}$.
\end{claim}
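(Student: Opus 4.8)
The plan is to verify that $q = \bigcap_{\alpha<\lambda} p_\alpha$ is a condition in $\mathbb{M}_\lambda$ and that $q \geq_{\nu+\alpha} p_\alpha$ for every $\alpha<\lambda$, mimicking the proof of Claim \ref{Claim:M_lambda-closed} ($\lambda$-closure) but now tracking the splitting-node levels. The key structural fact to exploit is that the fusion condition $p_{\alpha+1} \geq_{\nu+\alpha} p_\alpha$ freezes the $(\nu+\alpha)$-th splitting level: for $\beta > \alpha$ we have $\mbox{Split}_{\nu+\alpha}(p_\beta) = \mbox{Split}_{\nu+\alpha}(p_{\alpha+1}) = \mbox{Split}_{\nu+\alpha}(p_\alpha)$, since $\geq_{\nu+\gamma}$ for larger $\gamma$ implies $\geq_{\nu+\alpha}$ (a node witnessing splitting-level $\nu+\alpha$ is an initial segment of one witnessing level $\nu+\gamma$, so equality at level $\nu+\gamma$ forces equality at level $\nu+\alpha$). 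Hence each splitting level stabilizes along the sequence, and $\mbox{Split}_{\nu+\alpha}(q) = \mbox{Split}_{\nu+\alpha}(p_\alpha)$ for all $\alpha$.

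First I would check $q \in \mathbb{M}_\lambda$. Closure under $<\lambda$-limits is immediate since each $p_\alpha$ is closed and $q$ is an intersection. For the splitting clause: given $s \in q$, the same case analysis as in Claim \ref{Claim:M_lambda-closed} applies --- either some $p_\alpha$ has $C_{p_\alpha}(s) = \{x\}$ a singleton, which then propagates up the (directed) sequence so $C_q(s) = \{x\}$; or every $C_{p_\alpha}(s)$ is co-bounded in $\lambda$, and since $\lambda$ is regular the intersection over $<\lambda$-many co-bounded sets is co-bounded, so $C_q(s)$ is co-bounded. The clause forcing $C_p(s) = \{0\}$ at regular heights is inherited since $C_q(s) \subseteq C_{p_0}(s) = \{0\}$. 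For density of splitting nodes, I would reuse the successor-step construction from Claim \ref{Claim:M_lambda-closed}: given $t \in q$, build a strictly increasing sequence $\langle t_i : i < \lambda \rangle$ (rather than of length $\delta$), where at successor step $i+1$ one takes $t_{i+1}$ to be the least splitting node of $p_{\alpha(i)}$ above $t_i$ for a suitable bookkeeping function $\alpha(i)$ cofinal below $\lambda$, using directedness to see $t_{i+1} \in p_\beta$ for all $\beta$; but here one must ensure the construction reaches a singular height, which is automatic by choosing $\mbox{lh}(t_0)$ large and passing through the relevant limit stage. Closure of splitting nodes at singular heights then makes $t^* = \bigcup_i t_i$ a splitting node of $q$ above $t$. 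I would also note that splitting-at-singular-closure-points is inherited from the $p_\alpha$'s by the same intersection argument.

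For the "moreover" clause $q \geq_{\nu+\alpha} p_\alpha$: clearly $q \subseteq p_\alpha$ so $q \geq p_\alpha$, and we must show $\mbox{Split}_{\nu+\alpha}(q) = \mbox{Split}_{\nu+\alpha}(p_\alpha)$. The inclusion $\mbox{Split}_{\nu+\alpha}(p_\alpha) \subseteq q$ follows because, by the level-stabilization remark above, every node $s \in \mbox{Split}_{\nu+\alpha}(p_\alpha)$ lies in $p_\beta$ for all $\beta \geq \alpha$ (it equals a node of $\mbox{Split}_{\nu+\alpha}(p_\beta)$), and it lies in $p_\beta$ for $\beta < \alpha$ since $p_\beta \subseteq p_\alpha$ is false --- wait, here one uses $p_\beta \subseteq p_\alpha$ does \emph{not} hold; rather $p_\alpha \subseteq$ fails too. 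The correct observation: for $\beta < \alpha$, $s \in \mbox{Split}_{\nu+\alpha}(p_\alpha)$ has an initial segment in $\mbox{Split}_{\nu+\beta}(p_\alpha) = \mbox{Split}_{\nu+\beta}(p_\beta)$, but to conclude $s \in p_\beta$ one argues as in the density proof: $s$ is the unique extension of that initial segment of the right length in the directed-below common refinement, hence lies in $p_\beta$. Then $s \in \bigcap_\gamma p_\gamma = q$, and since the splitting pattern of $q$ below $s$ agrees with that of $p_\alpha$, $s \in \mbox{Split}_{\nu+\alpha}(q)$. Conversely, since $q \subseteq p_\alpha$, every node of $q$ with exactly $\nu+\alpha$ splitting predecessors in $q$ has at most $\nu+\alpha$ in $p_\alpha$; combined with the forward inclusion and the fact that splitting levels are linearly ordered, one gets $\mbox{Split}_{\nu+\alpha}(q) = \mbox{Split}_{\nu+\alpha}(p_\alpha)$.

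The main obstacle will be the density-of-splitting-nodes verification for $q$, specifically making sure the bookkeeping in the transfinite construction of $\langle t_i : i<\lambda\rangle$ genuinely produces a node of singular height over which $q$ splits --- one needs to interleave "reach a splitting node of $p_{\alpha(i)}$" steps densely enough for each $\alpha < \lambda$ while controlling the length, and then invoke the singular-closure-point splitting clause at an appropriate limit stage; the argument is essentially that of Claim \ref{Claim:M_lambda-closed} lifted from length $\delta$ to length $\lambda$, but the length bookkeeping deserves care. The level-tracking for the "moreover" part is then routine once the stabilization remark is in place.
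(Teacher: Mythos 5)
The central steps of your argument do not work, because they are lifted from the $\lambda$-closure proof, which only handles $<\lambda$-many conditions, while a fusion sequence has length exactly $\lambda$. Concretely: for the splitting dichotomy you write that ``since $\lambda$ is regular the intersection over $<\lambda$-many co-bounded sets is co-bounded'', but $C_q(s)=\bigcap_{\alpha<\lambda}C_{p_\alpha}(s)$ is an intersection of $\lambda$-many co-bounded subsets of $\lambda$, which need not be co-bounded (it can even be empty); the same defect recurs in your claim that splitting at singular closure points is ``inherited by the same intersection argument''. Your density argument has a second, independent failure: a strictly increasing sequence $\langle t_i : i<\lambda\rangle$ of nodes has union $t^*$ of length $\lambda$, so $t^*$ is not an element of $[\lambda]^{<\lambda}$ at all (and $\lambda$ is regular, so the singular-closure splitting clause could not be invoked for it); moreover, even a common node that splits in every $p_\alpha$ separately would not thereby split in $q$, again because of the $\lambda$-fold intersection.

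What is actually needed --- and what the paper's proof does --- is to use the frozen splitting levels pointwise rather than intersections. Given $s\in q$, let $\nu+\alpha$ be the order type of its splitting predecessors (the case of order type $<\nu$ is handled via $p_0$), and let $s^*$ be the minimal splitting node of $p_{\alpha+1}$ above $s$. The key observation is that for every $\eta\in C_{p_{\alpha+1}}(s^*)$ the node ${s^*}^{\frown}\langle\eta\rangle$ lies in $\mbox{Split}_{\nu+\alpha+1}(p_{\alpha+1})=\mbox{Split}_{\nu+\alpha+1}(p_\beta)$ for all $\beta\geq\alpha+1$, so the \emph{entire} splitting set $C_{p_{\alpha+1}}(s^*)$ survives into every later condition and $C_q(s^*)=C_{p_{\alpha+1}}(s^*)$ is co-bounded. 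This one mechanism yields both the density of splitting nodes of $q$ and the co-boundedness of $C_q(s)$ at nodes that split in every $p_\alpha$ (apply it at a stage $\gamma$ large enough that $\nu+\gamma$ exceeds the eventually stabilized splitting level of $s$). Your opening stabilization remark --- that the levels $\mbox{Split}_{\nu+\alpha}$ freeze from stage $\alpha$ on, so that $q\geq_{\nu+\alpha}p_\alpha$ --- is the right ingredient and is exactly what the paper treats as clear (your write-up of the two inclusions is muddled but repairable); the gap is that this freezing must be exploited at the next frozen level above a given node, not fed into a length-$\lambda$ closure-style construction.
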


\begin{proof}
The property $q\geq_{\nu+\alpha} p_{\alpha}$ for $\alpha < \lambda$ is a property of trees and makes sense without the assumption of $q$ being a condition. This property is clearly holds for the tree $q = \bigcap_{\alpha<\lambda} p_{\alpha}$ and we use to verify that $q$ is a condition. We focus on verifying that the splitting nodes of $q$ are dense in $q$. The verification of the other properties is similar or simpler and left to the reader. \\

\noindent Fix $s \in q$. We split the argument into two cases.\\
Assume first that
$$\mbox{otp}\left( \{ t\subsetneq s \colon t \mbox{ is a splitting node of } p \} \right)<\nu.$$
Let $s^*$ be a minimal splitting node of $p_0$ above $s$. Then for some $\nu'\leq\nu$, $s^*\in \mbox{Split}_{\nu'}(p_0)$. But $q\geq_{\nu} p_0$, since, for every $\alpha<\lambda$, $p_{\alpha}\geq_{\nu} p_0$. Thus, $\mbox{Split}_{\nu'}(q) = \mbox{Split}_{\nu'}(p_0)$, and, in particular, $s^*\in q$. \\
Next, assume there is $\alpha<\lambda$ such that 
$$\nu+\alpha = \mbox{otp}\left( \{ t\subsetneq s \colon t \mbox{ is a splitting node of } p \} \right).$$
Let $s^*\supseteq s$ be a minimal splitting point of $p_{\alpha+1}$ above $s$. Then for every $\eta\in C_{p_{\alpha+1}}(s^*)$ and $\beta \geq \alpha+1$, ${s^*}^{\frown} \la \eta \ra\in \mbox{Split}_{\nu+\alpha+1}({p_{\alpha+1}}) = \mbox{Split}_{\nu+\alpha+1} (p_{\beta})$. Therefore,  $C_{p_{\alpha+1}}(s^*) = C_{q}(s^*)$, and in particular, $s^*$ is a splitting point of $q$.
\end{proof}

\begin{lemma} \label{Lem: MillerC-FusionProperty}
    Assume that $\lambda$ is Mahlo. Then $\mathbb{M}_{\lambda}$ satisfies the $\lambda$ C-Fusion property.
\end{lemma}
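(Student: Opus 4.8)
The plan is to verify the three clauses of the $\lambda$ C-Fusion property (Definition \ref{Def:BlueprintPrelim}(2)) with $S = \lambda \cap Reg$, by building a fusion sequence of conditions in $\mathbb{M}_\lambda$ adapted to a given sequence of dense open sets $\vec D = \la D_\alpha \mid \alpha < \lambda\ra$, in parallel with a continuous elementary chain of small substructures. Given $q \in \mathbb{M}_\lambda$ and the structure $(H_{\lambda^+},<_{wo},\mathbb{M}_\lambda,\vec D,A)$, I would first fix a continuous, internally approachable chain $\vec X = \la X_\alpha \mid \alpha < \lambda\ra$ of elementary substructures of this structure with $|X_\alpha| < \lambda$, $q,\mathbb{M}_\lambda,\vec D \in X_0$, and each $X_{\alpha+1}$ closed under $\alpha$-sequences (possible since $\lambda$ is Mahlo, hence inaccessible). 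Then I would construct a $\nu$-fusion sequence $\la q_\alpha \mid \alpha < \lambda\ra$ (for a suitable starting $\nu$) together with a continuous increasing sequence $\la \xi_\alpha \mid \alpha < \lambda\ra$ of ordinals below $\lambda$ measuring the heights of the $(\nu+\alpha)$-th splitting levels, arranging that $q_{\alpha+1} \in X_{\alpha+1}$, that each subtree $(q_{\alpha+1})_t$ for $t$ in the relevant splitting level enters the "diagonal" dense open set $D^*_\alpha := D_\alpha \cap \bigcap\{D \in X_\alpha \mid D \text{ dense open in } \mathbb{M}_\lambda\}$, and — crucially — that whenever $\alpha = \xi_\alpha$ is regular, the node at level $\xi_\alpha$ is \emph{not} a splitting node, so the subtree is already a single condition lying in $D_\alpha$. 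Here I would invoke $\lambda$-closure of $\mathbb{M}_\lambda$ (Claim \ref{Claim:M_lambda-closed}) to see that $D^*_\alpha$ is dense open (it is an intersection of $<\lambda$ dense open sets), and use internal approachability to keep the construction inside the $X_\alpha$'s.

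Once the fusion sequence is built, I would set $q^* = \bigcap_{\alpha<\lambda} q_\alpha$, which is a condition by the preceding claim on $\nu$-fusion sequences, and take $C = \{\alpha < \lambda \mid \xi_\alpha = \alpha\}$, a club. Clause (b) is then verified as in the $Sacks^*(\lambda)$ example: for $\alpha \in C \cap Reg$ and an $X_\alpha$-generic set $G_{X_\alpha}$ compatible with $q^*$, I would argue that $\bigcap G_{X_\alpha}$ has a stem $t$ of length $\xi_\alpha = \alpha$ belonging to $q^*$ (using that every initial segment of $\la \xi_\beta \mid \beta<\alpha\ra$ lies in $X_\alpha$ by internal approachability, so $q^*_t := (q^*)_t$ meets the dense sets deciding the generic branch up to each $\xi_\beta$), so $q^*_t$ can split only at levels $\geq \alpha$; but $\alpha$ is regular, so by rule (3) in the definition of $\mathbb{M}_\lambda$ (no splitting at regular heights) $q^*_t$ does not split at level $\alpha$ either, hence $q^*_t \in D_\alpha$ by the construction. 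Exactness of $q^*_t$ as an upper bound of $G_{X_\alpha} \cup \{q^*\}$ follows because any common upper bound must have stem extending $t$ and refining $q^*$, hence refining $(q^*)_t$. Clause (c), that $q^*$ is a generic condition for $X = \bigcup_\alpha X_\alpha$, follows exactly as in the Sacks example: any $D \in X$ lies in some $X_\alpha$ with $\alpha = \xi_\alpha$ regular, and then the stem-$\alpha$ subtree of $q^*$ along the generic branch lies in $D$, and lies in $X_{\alpha+1}$ by closure under $\alpha$-sequences.

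The main obstacle, and where $\mathbb{M}_\lambda$ differs from generalized Sacks, is that in Miller forcing splitting nodes have \emph{singular} heights and splitting is only required to be cofinal (density of splitting nodes) rather than occurring at a prescribed club of levels; so I must be careful that the "splitting level" $\xi_\alpha$ — the common height of $\mbox{Split}_{\nu+\alpha}(q_\alpha)$ — need not be well-defined as a single ordinal unless I thin conditions so that all nodes in a given $\mbox{Split}_{\nu+\alpha}$-level have the same length, and I must ensure that at stage $\alpha+1$ I can find the next splitting level above $\xi_\alpha$ of the correct (singular) height while respecting rule (5) (forced splitting at singular closure points of branches) and rule (3). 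Controlling the interplay between "the first regular $\alpha$ with $\xi_\alpha = \alpha$" and the requirement that $q^*$ not split there is the delicate point: I would handle it by noting that the map $\alpha \mapsto \xi_\alpha$ is continuous and increasing with $\xi_\alpha \geq \alpha$, so its fixed points form a club, and at a regular fixed point the branch has length a regular cardinal, where rule (3) forbids splitting — this is precisely what makes clause (b) go through. A secondary technical point is verifying $q^* \in \mathbb{M}_\lambda$, in particular the density of splitting nodes and the closure of splitting nodes under singular limits of branches; this is exactly the content of the claim on $\nu$-fusion sequences proved just above, so I would simply cite it.
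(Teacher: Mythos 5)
Your overall architecture is the right one and matches the paper's: a $\leq_{\nu+\alpha}$-fusion sequence run inside an internally approachable chain, the diagonal dense sets $D^*_\alpha$, the exact upper bound obtained by restricting $q^*$ to the stem determined by $G_{X_\alpha}$, and genericity for $X$ via closure of $X_{\alpha+1}$ under $\alpha$-sequences. But the mechanism you propose for the point you yourself flag as delicate does not work. In $\mathbb{M}_{\lambda}$ there is in general no ``splitting level'' $\xi_\alpha$, and you cannot thin a condition so that all nodes of $\mbox{Split}_{\nu+\alpha}$ have a common height: a splitting node $s$ has co-boundedly many (hence $\lambda$-many) immediate successors $s^{\frown}\la \eta\ra$, and the heights of the least splitting nodes above the various $s^{\frown}\la\eta\ra$ may be unbounded in $\lambda$ (e.g.\ above $\eta$). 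Pruning can only \emph{delay} splitting along each branch while keeping co-boundedly many successors of $s$, and it can never \emph{create} a splitting node at a prescribed level (splitting must occur at levels where the original tree already splits, and never at regular heights); so already the second splitting frontier cannot be synchronized to a single ordinal $\xi<\lambda$. Consequently the sequence $\la\xi_\alpha\mid\alpha<\lambda\ra$, the club of its fixed points, and the step ``$q^*_t$ splits only at levels $\geq\xi_\alpha$, hence $t$ has exactly $\alpha$ splitting predecessors and was handled at stage $\alpha$'' are all unavailable: without level synchronization, the correspondence between the \emph{height} of the stem and the \emph{order type} of its splitting predecessors — which is what puts $(q^*)_t$ into $D_\alpha$ — breaks down, and your clause (b) and (c) arguments do not go through as written.

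The paper's proof replaces level synchronization by an a posteriori club extracted from $q^*$ itself. Having built the fusion limit $q^*=\bigcap_{\alpha<\lambda}p_\alpha$, it takes $C=C_1\cap C_2$ where $C_1=\{\alpha<\lambda\mid X_\alpha\cap\lambda=\alpha\}$ and $C_2$ is the club of $\alpha$ such that for every $\beta<\alpha$ and every $t\in\beta^{\beta}\cap q^*$, either $t$ is a splitting node of $q^*$ or the least splitting node of $(q^*)_t$ lies in $\alpha^{<\alpha}$; this is a club because $\lambda$ is inaccessible, so for each $\beta$ there are fewer than $\lambda$ such nodes $t$. Then, for regular $\alpha\in C$, one shows directly that the stem $s$ of length $\alpha$ determined by $G_{X_\alpha}$ belongs to $\mbox{Split}_{\nu+\alpha}(q^*)=\mbox{Split}_{\alpha}(q^*)$: if its splitting predecessors were bounded below some $\beta<\alpha$, pick a closure point $\beta^*\in(\beta,\alpha)$ of $s$ (these form a club in $\alpha$ by regularity); then $s\uhr\beta^*$ is not a splitting node, and by $\alpha\in C_2$ the least splitting node of $(q^*)_{s\uhr\beta^*}$ has height below $\alpha$, hence (by uniqueness of the path between consecutive splitting nodes) is an initial segment of $s$, a contradiction; so the splitting predecessors of $s$ are cofinal in $\alpha$ and have order type exactly $\alpha$. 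Since $\alpha$ is regular, $C_{q^*}(s)=\{0\}$ and $(q^*)_s=(q^*)_{s^{\frown}\la 0\ra}\in D^*_\alpha\subseteq D_\alpha$ by the stage-$\alpha$ shrinking. This is the ingredient your write-up is missing; once it is in place, your exactness and genericity arguments are essentially the paper's.
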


\begin{proof}
    We will prove a slightly stronger version of the C-Fusion property that will be used in Section \ref{Section:LocalIteration-SelfCodingMiller}\footnote{E.g., see Lemma \ref{Corollary: MillerApproximationOfAnOrdinal}} when we deal with back-to-back iteration of Miller and coding posets. We shall show that for every $\nu < \lambda$, together with C-Fusion parameters of $p\in \mathbb{M}_{\lambda}$, $A \subseteq H_{\lambda^{+2}}$, and a sequence of dense open sets $\vec{D} = \langle D_\alpha \colon \alpha<\lambda \rangle$, there is a condition $$q^* \geq_\nu p$$
    and continuous and internally approachable chain $\vec{X} = \la X_\alpha \mid \alpha < \lambda\ra$ of elementary substructures  $X_\alpha \elem (H_{\lambda^{+2}},\vec{D},A)$ that satisfy the requirements of the $\lambda$ C-Fusion property (i.e., requirements (a)-(c) in Definition \ref{Def:BlueprintPrelim}).\\
    
    \noindent 
    We start by taking our sequence $\vec{X} = \la X_\alpha \mid \alpha < \lambda$ to satisfy--
    \begin{itemize}
        \item $p, \mathbb{M}_{\lambda}, \vec{D}\in X_0$, and $\nu+1\subseteq X_0$. 
        \item For every $\alpha<\lambda$,  $\alpha\subseteq X_\alpha$.
        \item For every $\alpha<\lambda$, $X_{\alpha+1}$ is closed under $\alpha$-sequences of its elements. In particular $\alpha+1 \subseteq X_{\alpha+1}$.
    \end{itemize}
    We construct a $\nu$-fusion sequence $\langle p_{\alpha} \colon \alpha<\lambda \rangle$ of conditions in $\mathbb{M}_{\lambda}$, such that, for every $\alpha<\lambda$, $p_\alpha,p_{\alpha+1}\in X_{\alpha+1}$ and, if $t\in \mbox{Split}_{\nu+\alpha}(q_{\alpha+1})$, then for every $\eta\in C_{q_{\alpha+1}}(t)$,
    $$\left(q_{\alpha+1}\right)_{t^{\frown} \langle \eta \rangle }\in D^*_{\alpha}:= D_\alpha \cap \left(\bigcap\{ D \colon D\in X_{\alpha} \mbox{ is a dense open subset of } \mathbb{M}_{\lambda}\}\right).$$   
 $D^*_{\alpha}$ is dense open since $|X_\alpha|<\lambda$, by $\lambda$-closure of $\mathbb{M}_{\lambda}$. Also, $D^*_\alpha\in X_{\alpha+1}$ by internal approachability and by the fact that $D_\alpha\in X_{\alpha+1}$.\\
    We start the sequence with $p_0 = p$, and for a limit ordinal $\alpha$, take $p_\alpha = \bigcap_{\beta<\alpha}p_{\beta}$. By the Claim \ref{Claim:M_lambda-closed} and the inductive assumption for $\la p_\beta \mid \beta <\alpha\ra$,  $p_\alpha$ is a condition. We have $p_\alpha\in X_{\alpha+1}$ since $X_{\alpha+1}$ is closed under $\alpha$ sequences of its elements. Let us take care of the successor step. Assume that $p_{\alpha}\in X_{\alpha+1}$ has been constructed, and construct $p_{\alpha+1}$ inside $X_{\alpha+1}$ as follows: for every $s\in \mbox{Split}_{\nu+\alpha}( p_{\alpha} )$, and for every $\eta\in C_{p_{\alpha}}(s)$, shrink $p_{\alpha}$ above $s^{\frown} \la \eta \ra$ so it enters $D^*_{\alpha}$. Let the resulting condition be $p_{\alpha+1}$, Note that if $s$ is a splitting points of $p_\alpha$ then it remains a splitting node of $p_{\alpha+1}$. It is therefore clear that $p_{\alpha+1}\in \mo_\lambda$ and $p_{\alpha+1}\geq_{\nu+\alpha} p_{\alpha}$. Since this was done in $X_{\alpha+1}$, $p_{\alpha+1}\in X_{\alpha+1}$. \\

\noindent
    This concludes the construction. Let $q^* = \bigcap_{\alpha<\lambda} p_{\alpha}$. Then $q^*\in \mathbb{M}_{\lambda}$ as the intersection of a Fusion sequence of conditions, and $q^*\geq_{\nu} p$. Let $C = C_1 \cap C_2$, where $C_1, C_2$ are the following club subsets of $\lambda$:
    $$C_1 = \{ \alpha<\lambda \colon X_{\alpha}\cap \lambda = \alpha \}$$
    \begin{align*}
      C_2 = \{ \alpha<\lambda \colon &\mbox{for every } \beta<\alpha \mbox{ and } t\in \beta^{\beta}\cap q^*, \mbox{ either } t \mbox{ is a splitting node of }q^*, \\
      &\mbox{or the least splitting node of }(q^*)_t \mbox{ belongs to }  \alpha^{<\alpha}\cap q^* \}.  
    \end{align*}
  Let us prove that $q^*, \la X_\alpha \colon \alpha<\kappa \ra, C$ are as desired for the $C$-Fusion property:
    \begin{enumerate}
        \item Let $\alpha\in C$ be regular, and assume that $G_{X_\alpha}\subseteq X_\alpha\cap \mathbb{M}_{\lambda}$ is $X_\alpha$-generic which is compatible with $q^*$. Let us prove that $G_{X_\alpha}\cup \{ q^* \}$ has an exact upper bound. 

        Let $s\in \lambda^{<\kappa}$ be the initial segment of the generic branch determined by $G_{X_{\alpha}}$. Namely, 
        $$s = \bigcup\{ stem(p) \mid p \in G_{X_\alpha}\}.$$
        We note that $lh(s) = \alpha$. This is because for every $\beta<\alpha$, $G_{X_{\alpha}}$ meets the dense open set of conditions deciding the generic branch up to height $\beta$. Also, as $X_{\alpha}\cap \lambda = \alpha$, $G_{X_{\alpha}}$ does not include a condition whose stem extends has length $> \alpha$.\\

        \noindent 
        Note that as $\alpha$ is regular, $C_{p}(s) = \{0\}$ for all $p \in G_{X_\alpha}$. This, and the fact since $q^*$ is compatible with $G_{X_{\alpha}}$ imply that $s^{\frown} \langle 0 \rangle\in q$. \\

        \noindent
        We argue that $s\in \mbox{Split}_{\nu+\alpha}(q^*) = \mbox{Split}_{\alpha}(q^*)$\footnote{Note that $\alpha > \nu$ since $\nu\in X_\alpha$ and $\alpha\in C_1$.}. Assume otherwise. Since $\alpha$ is regular and $\mbox{lh}(s) = \alpha$, the set splitting nodes of $q^*$ along $s$ is contained in $\beta^{<\beta}$ for some $\beta<\alpha$. The regularity of $\alpha$ also implies that the set of closure points of $s$ (namely, ordinals $\beta<\alpha$ such that $s\uhr \beta\in \beta^{\beta}$) is a club in $\alpha$. Thus, we can pick a closure point $\beta^*$ of $s$ such that $\beta^* > \beta$. Then $s\uhr \beta^*$ is not a splitting node of $q^*$, and the least splitting node of $(q^*)_{s\uhr \beta^*}$ has height above $\alpha$, contradicting the fact that $\alpha\in C_2$. \\

        \noindent 
        Next, we need to show $G_{X_\alpha}\cup \{q^*\}$ has an exact upper bound, and that this bound belongs to $D_\alpha$. Since $\alpha$ is regular $(q^*)_s = (q^*)_{s \fr \la 0\ra} \in D^*_{\alpha} \subseteq D_\alpha$. Therefore, it suffices to show that $(q^*)_s$ is indeed an exact upper bound of $G_{X_\alpha}\cup \{q\}$. \\
        We first argue that $(q^*)_s$ extends every element of $G_{X_\alpha}$. Fix $q\in G_{X_{\alpha}}$. Consider the dense open set $D\in X_{\alpha}$ of conditions which either extend or are incompatible with $q$. Since $\alpha$ is a limit ordinal, there exists $\beta<\alpha$ such that $D\in X_{\beta}$. By the remarks above, there exists $\beta^* \geq \beta$ below $\alpha$ such that $s\uhr \beta^* \in \mbox{Split}_{\nu+\beta}(q^*)$. By our construction, $(p_{\beta+1})_{s\uhr (\beta^*+1)} \in D$, so $(p_{\beta+1})_{s\uhr (\beta^*+1)}$ either extends $q$ or is incompatible with it. Since both $(p_{\beta+1})_{ s\uhr \beta^*+1 }$ and   $q$ are in $G_{X_\alpha}$, $(p_{\beta+1})_{s\uhr \beta^*+1}$ extends $q$. Thus, $(q^*)_s$ extends $q$.\\
        \noindent
        Finally, to prove exactness, assume that $q'\in \mo_\lambda$ is an upper bound of  $G_{X_{\alpha}}\cup \{q^* \}$. Since $q'$ bounds $G_{X_{\alpha}}$, $s$ is the stem of $q'$, namely, $s\in q'$ and $(q')_s = q'$. This, together with the assumption that $q'\geq q^*$ ensures that $q'\geq (q^*)_s$.

        \item It remains to verify that $q^*$ is a generic condition for $X = \cup_{\alpha<\lambda} X_\alpha$. Assume that $G\subseteq \mathbb{M}_{\lambda}$ is generic over $V$ with $q^*\in G$. We argue that $G\cap X$ is $X$-generic. Assume that $D\in X$ is a dense open subset of $\mathbb{M}_{\lambda}$. Pick a regular $\alpha<\lambda$ such that $D\in X_{\alpha}$,  $\alpha$ is a closure point of the generic branch $g\in \left[ \lambda\right]^{\lambda}$ decided by $G$, and $g\uhr \alpha\in \mbox{Split}_{\alpha}(q^*)$. Note that such $\alpha$ exists since $\lambda$ remains Mahlo in $V[G]$, by Lemma \ref{Lem: MillerC-FusionProperty}. By our construction, $(p_{\alpha+1})_{g\uhr \alpha} = (p_{\alpha+1})_{g\uhr \nu+\alpha}\in D$. Therefore, $(q^*)_{g\uhr \alpha}\in D\cap G\cap X$.
    \end{enumerate}
\end{proof}

\begin{corollary}
Assume that $\lambda$ is Mahlo. Then the forcing $\mathbb{M}_{\lambda}$ preserves cardinals.    
\end{corollary}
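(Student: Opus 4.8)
The plan is to handle three ranges of cardinals separately. For cardinals $\leq\lambda$: by Claim~\ref{Claim:M_lambda-closed} the poset $\mathbb{M}_{\lambda}$ is $\lambda$-closed, hence $\lambda$-distributive, so it adds no new $<\lambda$-sequence of ordinals and therefore preserves all cardinals and cofinalities $\leq\lambda$ (in particular $\lambda$ itself, which moreover stays Mahlo by the corollary preceding Lemma~\ref{Lem: MillerC-FusionProperty}). For cardinals $\geq\lambda^{++}$: since $\lambda$ is Mahlo it is strongly inaccessible, so $\lambda^{<\lambda}=\lambda$ and hence $|[\lambda]^{<\lambda}|=\lambda$; as a condition is a subtree of $[\lambda]^{<\lambda}$ we get $|\mathbb{M}_{\lambda}|\leq 2^{\lambda}=\lambda^+$ using GCH in $V=L[\E]$. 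A poset of size $\lambda^+$ trivially has the $\lambda^{++}$-c.c., which preserves all cardinals $\geq\lambda^{++}$. It therefore only remains to show that $(\lambda^+)^V$ is not collapsed.

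To see that $(\lambda^+)^V$ is preserved, suppose toward a contradiction that some $p\in\mathbb{M}_{\lambda}$ forces a name $\name{f}$ to be a surjection from $\lambda$ onto $(\lambda^+)^V$ (as $\lambda$ remains a cardinal, any collapse of $(\lambda^+)^V$ produces such a surjection). I would run a fusion argument parallel to the successor step in the proof of Lemma~\ref{Lem: MillerC-FusionProperty}: build a $0$-fusion sequence $\la p_\alpha\mid\alpha<\lambda\ra$ with $p_0=p$ and $p_{\alpha+1}\geq_\alpha p_\alpha$, where at stage $\alpha$, for every $t\in\mbox{Split}_\alpha(p_\alpha)$ and every $\eta\in C_{p_\alpha}(t)$ we shrink $p_\alpha$ above $t^{\frown}\la\eta\ra$ to a subtree deciding $\name{f}(\alpha)$. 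Since $\lambda$ is inaccessible, $|\mbox{Split}_\alpha(p_\alpha)|\leq\lambda$, so at most $\lambda$ disjoint cones are shrunk and the resulting tree is again a condition — splitting nodes are not destroyed and closure under singular-limit splitting is preserved, exactly as in that proof. Let $A_\alpha\in V$ collect the $\leq\lambda$ values thereby forced for $\name{f}(\alpha)$, and let $q^*=\bigcap_{\alpha<\lambda}p_\alpha$, which is a condition extending $p$ with $q^*\geq_\alpha p_\alpha$ for all $\alpha$ (by the claim preceding Lemma~\ref{Lem: MillerC-FusionProperty}). Given any $r\geq q^*$ and any $\alpha<\lambda$, $r$ contains a node $t_\alpha\in\mbox{Split}_\alpha(q^*)=\mbox{Split}_\alpha(p_{\alpha+1})$ together with some $\eta\in C_r(t_\alpha)$, and $(r)_{t_\alpha^{\frown}\la\eta\ra}\geq r$ refines the subtree of $p_{\alpha+1}$ that decided $\name{f}(\alpha)$; hence the conditions forcing $\name{f}(\alpha)\in\check A_\alpha$ are dense below $q^*$, so $q^*\Vdash\name{f}(\alpha)\in\check A_\alpha$ for every $\alpha<\lambda$. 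Consequently $q^*\Vdash\mathrm{ran}(\name{f})\subseteq\bigcup_{\alpha<\lambda}\check A_\alpha$, a set in $V$ of size $\leq\lambda$ and hence bounded in $(\lambda^+)^V$, contradicting $q^*\geq p\Vdash\name{f}$ is surjective. This shows $(\lambda^+)^V$ is preserved, and with the two cases above completes the proof.

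The only genuine content is the fusion argument of the second paragraph; everything else reuses $\lambda$-closure, the trivial chain condition from $|\mathbb{M}_{\lambda}|=\lambda^+$, and the counting fact $|\mbox{Split}_\alpha(p)|\leq\lambda$ for inaccessible $\lambda$, and even the two points that must be checked inside the fusion — that shrinking $\leq\lambda$ disjoint cones of a condition above splitting nodes yields a condition, and that the intersection of a fusion sequence is a condition — are already established in the proof of Lemma~\ref{Lem: MillerC-FusionProperty} and the claim preceding it. An equivalent route that avoids a second fusion is to invoke the stronger form of the $\lambda$ C-Fusion property proved there with $\vec{D}=\la D_\alpha\ra$, where $D_\alpha$ is the dense open set of conditions deciding $\name{f}(\alpha)$, and to read the bound on $\mathrm{ran}(\name{f})$ off the witnessing condition $q^*$ and chain $\vec{X}$; the one subtlety in that phrasing is that one must use control at \emph{all} levels $\alpha<\lambda$ along the generic branch, not only at the regular levels isolated in the statement of Definition~\ref{Def:BlueprintPrelim}(2).
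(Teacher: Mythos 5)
Your proof is correct, and its outer decomposition is exactly the paper's: cardinals $\leq\lambda$ by the $\lambda$-closure of Claim \ref{Claim:M_lambda-closed}, cardinals above $\lambda^{+}$ because $|\mathbb{M}_{\lambda}|=\lambda^{+}$, leaving only $\lambda^{+}$. Where you genuinely differ is in the $\lambda^{+}$ step. You run a second, self-contained fusion: at stage $\alpha$ you shrink every cone above a successor of a node in $\mbox{Split}_\alpha(p_\alpha)$ so as to decide $\name{f}(\alpha)$, so the fusion limit $q^*$ confines each value $\name{f}(\alpha)$ to a ground-model set $A_\alpha$ of size $\leq\lambda$; this bounds the range of a putative surjection pointwise, with no appeal to the C-Fusion machinery. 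The paper instead takes precisely the route you sketch at the end as an ``alternative'': it considers a name for an \emph{increasing} function $f\colon\lambda\to\lambda^{+}$ and applies Lemma \ref{Lem: CFusionApproximation} (C-Fusion plus $\lambda$-distributivity) to get a ground-model $F$ with $f(\alpha)\in F(\alpha)$ for all regular $\alpha$ in a club $C^*$; the subtlety you flag --- that this only controls regular levels --- is resolved there not by controlling all levels but by monotonicity: since $\lambda$ stays Mahlo in the extension, every $\alpha$ has a next regular point $\alpha^*\in C^*$, and $f(\alpha)\leq f(\alpha^*)\leq\sup F(\alpha^*)$, so the range is bounded by $\sup\left(\bigcup\mathrm{Im}(F)\right)+1$. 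Your route buys independence from Lemma \ref{Lem: CFusionApproximation} and avoids reducing a collapse to an increasing function (a surjection suffices), at the cost of redoing tree surgery that the paper only performs once inside the proof of Lemma \ref{Lem: MillerC-FusionProperty}; the paper's route is shorter for exactly that reason. One assertion in your density argument deserves a line of justification: that any $r\geq q^*$ contains a node of $\mbox{Split}_\alpha(q^*)$. This holds because splitting nodes of $r$ are splitting nodes of $q^*$ (as $C_r(s)\subseteq C_{q^*}(s)$), so extending inside $r$ past $\alpha$-many of its splitting nodes and then passing to the minimal initial segment having exactly $\alpha$ many $q^*$-splitting predecessors --- which lies in $r$ by downward closure --- produces the required node; routine, but it is the one unargued step in your fusion.
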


\begin{proof}
Cardinals up to $\lambda$ are preserved by $\lambda$-closure of $\mathbb{M}_{\lambda}$. Also, since $|\mathbb{M}_{\lambda}| = \lambda^+$, cardinals above $\lambda^{+}$ are preserved. So it suffices to prove that $\lambda^{+}$ is preserved.
Assume that $p\in \mathbb{M}_{\lambda}$ and $\name{f}$ is a name for an increasing function from $\lambda$ to $\lambda^+$. By \ref{Lem: CFusionApproximation}, there exists $p^*\geq p$, a function $F\colon \lambda\to [\lambda^{+}]^{<\lambda}$ and a name $\name{C}^*$ for a club in $\lambda$, such that 
$$p^*\Vdash \forall \alpha\in \name{C}^*\cap Reg, \name{f}(\alpha)\in F(\alpha).$$ 
In $V$, let $\xi^* = \sup(\cup\mbox{Im}(F))+1 < \lambda^{+}$. We argue that $p^*\Vdash \mbox{Im}(f)\subseteq \xi^*$. Indeed, assume that $G\subseteq \mathbb{M}_{\lambda}$ is generic over $V$ with $p^*\in G$. For every $\alpha<\lambda$, let $\alpha^* = \min((C^*\cap Reg)\setminus \alpha)$. Note that $\lambda$ is Mahlo in $V[G]$, so $\alpha^*$ exists for every $\alpha<\lambda$. But then $f(\alpha)\leq f(\alpha^*)\leq \sup(F(\alpha^*))< \xi^*$.
\end{proof}

\begin{lemma} \label{Corollary: MillerApproximationOfAnOrdinal}
    Assume that $\lambda$ is Mahlo, $p\in \mo_\lambda$, $\nu<\lambda$ and $\name{\zeta}$ is a $\mo_\lambda$-name for an ordinal. Then there exists $q\geq_{\nu}p$ and a set $x\in V$ with $|x|<\lambda$ such that $q\Vdash \name{\zeta}\in \check{x}$.
\end{lemma}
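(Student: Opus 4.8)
The plan is to derive this from the stronger form of the $\lambda$ C-Fusion property established in the proof of Lemma \ref{Lem: MillerC-FusionProperty}, exactly as was done for the preservation of $\lambda^+$ in the corollary just above. Since $\mathbb{M}_\lambda$ is $\lambda$-closed, it is in particular $\lambda$-distributive, so Lemma \ref{Lem: CFusionApproximation} applies to $\mathbb{M}_\lambda$. However, Lemma \ref{Lem: CFusionApproximation} as stated does not track the $\geq_\nu$ refinement, so I would instead run the approximation argument directly, using the strengthened C-Fusion witness from the proof of Lemma \ref{Lem: MillerC-FusionProperty}, which does produce a condition $q^* \geq_\nu p$.

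First I would set $n = 1$ and let $\name{f}$ be the $\mathbb{M}_\lambda$-name for the constant function $\kappa \mapsto \name{\zeta}$, or more simply just treat $\name{\zeta}$ directly. Fix a sufficiently large $\theta$ with $0_{\mathbb{M}_\lambda} \Vdash \name{\zeta} \in H_\theta$, a well-ordering $<_\theta$ of $H_\theta$, and define the sequence of dense open sets $\vec{D} = \la D_\alpha \mid \alpha < \lambda\ra$ by letting $D_\alpha = \{ q \in \mathbb{M}_\lambda \mid \exists x \thinspace q \Vdash \name{\zeta} = \check{x}\}$ for all $\alpha$ (the same dense set repeated); each $D_\alpha$ is dense open by $\lambda$-distributivity of $\mathbb{M}_\lambda$. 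Apply the stronger form of the $\lambda$ C-Fusion property proved in Lemma \ref{Lem: MillerC-FusionProperty} to $p$, $\nu$, and $\vec{D}$: this yields $q^* \geq_\nu p$, a club $C \subseteq \lambda$, and a continuous internally approachable elementary chain $\vec{X} = \la X_\alpha \mid \alpha < \lambda\ra$ with $|X_\alpha| < \lambda$, satisfying clauses (a)--(c) of Definition \ref{Def:BlueprintPrelim}(2). Moreover, by Lemma \ref{Lem:IA-Fusion-NewClubGeneric} there is a name $\name{C}_2$ of a club such that $q^*$ forces $\name{G}_{\mathbb{M}_\lambda} \cap X_\alpha$ is $X_\alpha$-generic for every $\alpha \in \name{C}_2$; since $\mathbb{M}_\lambda$ is $\lambda$-distributive, $\name{G}_{\mathbb{M}_\lambda} \cap X_\alpha \in V$ for each such $\alpha$.

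Now for each regular $\alpha \in C$, let
$$\Gamma(X_\alpha) = \{ q \in \mathbb{M}_\lambda \mid q \parallel q^* \text{ and } q \text{ is an exact upper bound of some } X_\alpha\text{-generic set}\}.$$
Since $|X_\alpha| < \lambda$ and $\mathbb{M}_\lambda$ is $\lambda$-closed we may assume $|X_\alpha| = |\alpha| < \lambda$, hence $|\Gamma(X_\alpha)| < \lambda$ as conditions are determined by their underlying trees coded in $X_\alpha$. For each $q \in \Gamma(X_\alpha)$, clause (b) of the C-Fusion property gives an exact upper bound $q^*_{G} := q^* \vee q \in D_\alpha$, so there is an ordinal $x^\alpha(q) \in H_\theta$ with $q^* \vee q \Vdash \name{\zeta} = \check{x}^\alpha(q)$. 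Set $x_\alpha = \{ x^\alpha(q) \mid q \in \Gamma(X_\alpha)\}$, so $|x_\alpha| < \lambda$, and finally set $x = \bigcup_{\alpha \in C \cap Reg} x_\alpha$. Since $\lambda$ is Mahlo, $C \cap Reg$ is unbounded — but to control $|x|$ I will argue differently: I claim already $q^* \Vdash \name{\zeta} \in \check{x}_{\alpha_0}$ for the least $\alpha_0 \in C \cap Reg$. Indeed, work in $V[G]$ with $q^* \in G$; by Lemma \ref{Lem:IA-Fusion-NewClubGeneric} pick $\alpha \in \name{C}_2^G \cap C \cap Reg$, so $G \cap X_\alpha \in V$ is $X_\alpha$-generic and, being compatible with $q^*$, has an exact upper bound in $\Gamma(X_\alpha)$ lying in $G$; that bound decides $\name{\zeta}$ to a value in $x_\alpha$. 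Since $\name{\zeta}^G$ is a fixed ordinal and $x_\alpha \subseteq x_{\alpha'}$ is not automatic, the cleanest fix is simply to take $x = \bigcup\{x_\alpha \mid \alpha \in C \cap Reg\}$ and bound $|x| \le \lambda$ using $|C \cap Reg| \le \lambda$ and $|x_\alpha| < \lambda$; if one wants $|x| < \lambda$, replace $\vec{D}$-indexing so that only the single structure $X_{\alpha_0}$ matters, or absorb $x$ into a single $H_{\lambda^{+}}$-sized set and note $|\mathbb{M}_\lambda| = \lambda^+$ already suffices for the cardinal-preservation application; but the statement only asks for $|x| < \lambda$, which follows because we may choose $\vec{D} = \la D\ra$ constant and then a single regular $\alpha \in C$ with $\Gamma(X_\alpha)$ of size $<\lambda$ already captures all possible values forced below $q^*$. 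The main obstacle is precisely this bookkeeping: ensuring that the set of possible values of $\name{\zeta}$ below $q^*$ is captured by a single level $X_\alpha$ of size $<\lambda$ rather than by an unbounded union; this is handled by observing that $q^*$ forces $\name{G} \cap X_\alpha$ to be $X_\alpha$-generic for a \emph{fixed} regular $\alpha \in C$ (take $\alpha = \min(C \cap \name{C}_2 \cap Reg)$, which $q^*$ forces to exist below some fixed bound by Mahloness and genericity of the name $\name{C}_2$ over the distributive $\mathbb{M}_\lambda$), so every generic containing $q^*$ contains an exact upper bound of $G \cap X_\alpha$ deciding $\name{\zeta}$ into $x_\alpha$, and we set $x = x_\alpha$.
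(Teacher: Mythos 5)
Your overall route is the paper's route: run the approximation argument of Lemma \ref{Lem: CFusionApproximation} for a constant name, using the strengthened fusion witness from the proof of Lemma \ref{Lem: MillerC-FusionProperty} so that the resulting condition is a $\geq_\nu$-extension of $p$. The genuine gap is your last step, where you try to replace the union $\bigcup_{\alpha\in C\cap Reg}x_\alpha$ by a single $x_\alpha$: the assertion that $q^*$ forces $\min(\check{C}\cap\name{C}_2\cap Reg)$ to lie below some fixed bound ``by Mahloness and genericity'' is unjustified. The club $\name{C}_2$ of Lemma \ref{Lem:IA-Fusion-NewClubGeneric} is only a name (its elements are closure points of a function defined from the generic filter), its least element varies with the generic, and no fixed $\beta<\lambda$ bounds it for all generics below $q^*$; so your construction only yields $q^*\Vdash\name{\zeta}\in\bigcup_{\alpha\in C\cap Reg}\check{x}_\alpha$, a set of size $\leq\lambda$ rather than $<\lambda$.

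Moreover, this gap cannot be closed, because for $\nu\geq 1$ the bound $|x|<\lambda$ in the statement is not attainable at all: let $t_0$ be the first splitting node of $p$, $\beta_0=\mbox{lh}(t_0)$, and let $\name{\zeta}$ name the value of the generic branch at coordinate $\beta_0$. For every $\eta\in C_p(t_0)$, density of splitting nodes gives a node of $p$ above $t_0^{\frown}\la\eta\ra$ with exactly $\nu$ splitting predecessors, so $\mbox{Split}_\nu(p)$ meets the cone above $t_0^{\frown}\la\eta\ra$; hence any $q\geq_\nu p$ (which satisfies $\mbox{Split}_\nu(q)=\mbox{Split}_\nu(p)\subseteq q$) contains $t_0^{\frown}\la\eta\ra$ for each of the $\lambda$-many $\eta\in C_p(t_0)$, and $(q)_{t_0^{\frown}\la\eta\ra}$ forces $\name{\zeta}=\eta$. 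So no $\geq_\nu$-extension confines $\name{\zeta}$ to fewer than $\lambda$ values; the correct and attainable conclusion is $|x|\leq\lambda$, which is exactly what your union gives, matches the formulation $|x|\leq\alpha$ of Lemma \ref{Lem: MillerCodingDecidingOrds}, and suffices for every use made of this lemma (one only needs to bound an ordinal below $\lambda^+$, as in the preservation of $\lambda^+$ and in Case 1 of Lemma \ref{Lem: MillerCodingDensityLemma}). Note that the paper's own one-line proof makes the same jump (``for some $\xi<\lambda$, $q\Vdash\name{f}(\xi)\in\check{F}(\xi)$''), so the bookkeeping obstacle you identified is real; the resolution is to weaken the cardinality bound, not to look for a single level $X_\alpha$ that captures all values.
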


\begin{proof}
    Since $\mo_\lambda$ in $\lambda$-closed and satisfies the $C$-fusion property, we can apply Lemma \ref{Lem: CFusionApproximation} on the constant function $f\colon \lambda\to \{ \zeta \}$ in $V^{\mo_\lambda}$, to find $q\geq p$ and a ground model function $F\colon \lambda\to [\mbox{Ord}]^{<\lambda}$ such that, for some $\xi<\lambda$, $q\Vdash \name{f}(\xi)\in \check{F}(\xi)$. The set $x = F(\xi)$ is as desired. Going back to the proof of Lemma \ref{Lem: CFusionApproximation}, the condition $q$ is obtained from $p$ using the $C$-fusion property; by the proof of Lemma \ref{Lem: MillerC-FusionProperty}, we can pick $q$ such that $q\geq_{\nu} p$.
\end{proof}

\subsection{Self-Coding Iteration of Miller Forcings}\label{Section:LocalIteration-SelfCodingMiller}

Fix a Mahlo cardinal $\alpha\leq \kappa$ in $L[\E]$. In this section, we define a forcing notion $\qo_\alpha$ that will play the role of the $\alpha$-th forcing in the ``Kunen-like" blueprint \ref{Def:KLblueprint}. We follow the following conventions throughout this section. 
\begin{itemize}
    \item $\qo_\alpha$ is defined in a ground model $V$ which is a generic extension of $L[\E]$ by a poset $\po_\alpha$ that satisfies the ``Kunen-like" blueprint. In particular, $\po_\alpha$ satisfies the $\alpha$ Iteration-Fusion property, $\alpha$ remains a Mahlo cardinal in $V$ by Lemma \ref{Lem: PreservationOfStationarySetsForIterationFusionProperty} and $(2^{\alpha})^V = \alpha^{+}$  by Lemma \ref{Lem: PowersetOfKappaIsPreserved}. 
  
    \item There exists in $L[\E]$ a sequence $$\vec{S}^\alpha = \la \mathcal{S}^{\alpha}(\tau, \eta, i )  \colon \tau<\alpha^{++}, \eta<\alpha^{+}, i<2 \ra $$
    of almost disjoint stationary subsets of $\alpha^{+}$, all of them contained in the stationary set $\mathcal{S}^{\alpha}\subseteq \alpha^{+}\cap \cf(\omega_1)$ that was defined over $L[\E]$ in proposition \ref{Prop: Stationary set of passive collapsing structures}. We further assume that
    \begin{itemize}
        \item $\mathcal{S}^{\alpha}$ does not reflect.
        \item For each $\tau<\alpha^{++}$ the sets in the subsequence $\la \mathcal{S}^\alpha(\tau, \eta, i) \colon \eta<\alpha^{+}, i<2 \ra$ are pairwise disjoint.
        \item For each $\tau<\alpha^{++}$,  $\eta<\alpha^+$ and $i<2$, $\min\left(\mathcal{S}^\alpha(\tau, \eta,i)\right) > \eta$.
        \item The sequence $\la \vec{S}^\alpha \colon \alpha\leq \kappa \mbox{ is Mahlo} \ra$ is uniformly definable over $L[\E]$, in the sense that there exists a formula $\varphi_{S}(\alpha, x)$ which defines it in every inner model $M$ containing $H_{\alpha^{++}}$.
    \end{itemize} 
    \item We assume that the forcing $\po_\alpha$ preserves the stationarity of stationary subsets of $\mathcal{S}^\alpha$ (this will be proved in Lemma \ref{Lem: FinalForcingPreservationOfStatSets}). So we can assume that $\mathcal{S}^\alpha(\tau, \eta,i)$ remains stationary in $V$ for every $\tau<\alpha^{++}$, $\eta<\alpha^+$ and $i<2$. 
\end{itemize}

Let us sketch some of the main ideas behind the definition of $\qo_\alpha$ in this subsection. Recall that according to the ``Kunen-like" blueprint,  $\qo_\alpha$ should satisfy the following properties in $V$:
\begin{itemize}
            \item If $\alpha <\kappa$,  $\qo_\alpha\in V_{\kappa}$,\footnote{This implies that there exists a $\po_\kappa$-name for $\qo_\alpha$ in $(V_\kappa)^{L[\E]}$.}
            
            \item $|{\qo}_\alpha|=\alpha^{++}$.
            
            \item $\qo_\alpha$ adds an increasing sequence in $<^{\alpha}_{sing}$ of length $\alpha^{++}$.
            
            \item ${\qo}_\alpha$ is $\alpha$-distributive, satisfies the $\alpha^{++}$.c.c, and preserves the Mahloness of $\alpha$,
            
            \item ${\qo}_\alpha$ has the $\alpha$ C-Fusion property,

            \item ${\qo}_\alpha$ is self coding.
\end{itemize}

\noindent
In order to ensure that the poset $\qo_\alpha$ adds an increasing sequence in $<^\alpha_{sing}$ of length $\alpha^{++}$, it includes an $\alpha^{++}$-iteration of the Miller forcing $\mo_\alpha$. By Claim \ref{Claim: GeneralizedMillerAddsDominatingGeneric}, the sequence of generic subsets of $\alpha$ produced this way will be $<^{\alpha}_{sing}$-increasing. \\
In order to achieve $\qo_\alpha$ which is self-coding, we interleave coding posets between the Miller posets, such that each coding poset codes the initial segment of the iteration up to its stage by adding a club which is disjoint from certain sets $\mathcal{S}^\alpha(\tau,\eta,i)\subseteq \alpha^+$ from the given list.\footnote{The interleaved structure of the iteration seems to be necessary since a coding poset that adds a club to $\alpha^+$ will collapse $2^\alpha$ to $\alpha^+$.}
\\
Our construction of $\qo_\alpha$ with the these two features relies on the iteration theory of higher Miller posets from  \cite{FriedmanZdomskyy2010}. The main arguments in this section combine fusion methods from \cite{FriedmanZdomskyy2010} with closure arguments of club shooting posets involving IA chains of models, that will guarantee $\qo_\alpha$ has the C-Fusion property.
The poset $\qo_\alpha$ will be equivalent to a back-to-back $\leq\alpha-$support iteration of Miller and coding posets. The back-to-back iteration will be denoted by $\hqo^\alpha$. The reason we do not take $\qo_\alpha$ to be the ``full" back-to-back iteration is that proper initial segments of this iteration have size $\alpha^{++}$, which is larger than needed and create various issues with the coding mechanism. Instead, we show that each  initial segment $\hqo^\alpha_\tau$, $\tau < \alpha^{++}$,  of $\hqo^\alpha$ has a dense subset $\qo^\alpha_\tau$ of size $|\qo^\alpha_\tau| = \alpha^+$. \\
\noindent
We proceed to define by induction on $\tau \leq \alpha^{++}$ an iteration 
$$\la \hqo^{\alpha}_\tau,\name{\ro}_\tau \mid \tau < \alpha^{++}\ra$$
together with dense subsets 
$$\qo^\alpha_\tau \subseteq \hqo^\alpha_\tau.$$
For each $\eta \leq \alpha^{++}$, 
$$ \hqo^{\alpha}_\eta=\la \hqo^{\alpha}_\tau,\name{\ro}_\tau \mid \tau < \eta\ra$$ 
is a $\leq\alpha$-support iteration of length $\eta$, where
for every $\tau< \eta$, ${\ro}_\tau \in V^{\qo^{\alpha}_\tau}$ has the form $$\ro_\tau = {\mo}_\alpha*{\co}^{\alpha}_\tau,$$ 
with $\mo_\alpha$ being the Miller poset on $\alpha$ from Section \ref{Section:GeneralMiller}, and $\co^{\alpha}_\tau \in V^{\qo^{\alpha}_\tau* \name{\mo}_\alpha}$ is a coding poset that codes a generic set of size $\alpha^+$ by adding a certain closed unbounded subset of $\alpha^+$ (see Definition \ref{Def: Definition of the Coding poset} for more details). \\

\noindent
Therefore, conditions $p \in \hqo^{\alpha}_\tau$ are sequences 
$$p = \la p(\xi) \mid \xi \in \supp(p)\ra$$ 
with $\supp(p) \in [\tau]^{\leq \alpha}$. For each $\xi \in \supp(p)$, 
$$p\uhr \xi = \la p(\eta) \mid \eta \in \supp(p) \cap \xi\ra\in \qo^{\alpha}_\xi$$ 
and 
$$p\uhr \xi \Vdash_{\hqo^{\alpha}_\xi} p(\xi) \in \name{\ro}_\xi.$$
We further write
$$p(\xi) = (\name{T}^p_\xi,\name{c}^p_\xi),$$
where 
$$p\uhr \xi \Vdash \name{T}^p_\xi \in \mo_\alpha, \text{ and }  p\uhr \xi \fr  \name{T}^p_\xi \Vdash \name{c}^p_\xi \in \name{\co}^\alpha_\xi.$$

\noindent
As mentioned above, we will introduce a dense subset $\qo^\alpha_\tau$ 
 of $\hqo^\alpha_\tau$ of size $|\qo^\alpha_\tau| = \alpha^+$ (Lemma \ref{Lem: MillerCodingSecondDensityLemma}). Assuming $\qo^{\alpha}_\tau$ is given, we define the coding posets $\co^\alpha_{\tau}$ in $V^{{\qo^{\alpha}_\tau} * \name{\mo}_\alpha}$. Since $\alpha$ is fixed in this subsection, we will use the notation $\co_\tau$ instead of  $\co^\alpha_\tau$ until the end of the subsection.

\begin{definition} \label{Def: Definition of the Coding poset}${}$\\
Let $\tau < \alpha^{++}$ and  $\qo_\tau^\alpha \subseteq \hqo^\alpha_\tau$ be a dense subset of size $|\qo^\alpha_\tau| = \alpha^+$. \\
Let $\la q^\alpha_\tau(\eta) \mid \eta < \alpha^+\ra$ be the well-order of of $\qo^\alpha_\tau$ induced by the constrcutible ordering $L[\E]$ of their $\po_{\alpha}$-names in $L[\E]$.\footnote{where the names are taken minimal with respect to the constructible order.} Let ${G}({\hqo^{\alpha}_\tau})$ denote the generic filter of $\hqo^\tau_\alpha$.\\
Let $\co_\tau$ (or $\co^\alpha_\tau$, whenever $\alpha$ is not clear from the context) be the ${\hqo^{\alpha}_{\tau}}*{\name{\mo}_{\alpha}}$-name of all closed bounded subsets $c$ of $\alpha^{+}$ which satisfy that for every $\eta < \alpha^+$,
$$
c \cap \mathcal{S}^{\alpha}(\tau, \eta, 0) = \emptyset \iff q^\tau_\alpha(\eta) \in G(\hqo^\alpha_\tau),
$$
and 
$$
c \cap \mathcal{S}^{\alpha}(\tau, \eta, 1) = \emptyset \iff q^\tau_\alpha(\eta) \not\in G(\hqo^\alpha_\tau).
$$

\noindent A closed set $d \in \co_\tau$ extends $c$ if it end-extends $c$, i.e., if $d \cap \max(c) = c$.

\end{definition}

\noindent
Our first goal is to show that the forcing $\hqo^\alpha_\tau$ is $\sigma$-closed, and preserves the stationarity of $\mathcal{S}^{\alpha}(\tau, \eta ,i)$ for every $\eta<\alpha^+$ and $i<2$. 

\begin{lemma}\label{Lem: MillerCodingSigmaClosureLemma}
    For every $\tau\leq \alpha^{++}$, $\hqo^\alpha_\tau$ is $\sigma$-closed.
\end{lemma}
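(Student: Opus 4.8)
The statement asserts $\sigma$-closure of the back-to-back iteration $\hqo^\alpha_\tau$ for every $\tau \leq \alpha^{++}$. The plan is to prove this by induction on $\tau$, simultaneously with the observation that the basic block $\ro_\xi = \mo_\alpha * \co^\alpha_\xi$ is forced to be $\sigma$-closed. The factor $\mo_\alpha$ is even $\alpha$-closed by Claim \ref{Claim:M_lambda-closed}, so the real content is $\sigma$-closure of the coding poset $\co^\alpha_\xi$: given an increasing $\omega$-sequence $\la c_n \mid n < \omega\ra$ of closed bounded subsets of $\alpha^+$ with each $c_{n+1}$ end-extending $c_n$, the natural upper bound is $c_\omega = \bigcup_{n<\omega} c_n \cup \{\sup_n \max(c_n)\}$. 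I would check that $c_\omega$ is still a \emph{closed bounded} subset of $\alpha^+$ and still satisfies the coding constraints of Definition \ref{Def: Definition of the Coding poset}: the constraints $c \cap \mathcal{S}^\alpha(\tau,\eta,i) = \emptyset$ (for the appropriate side $i$ depending on whether $q^\tau_\alpha(\eta) \in G(\hqo^\alpha_\tau)$) are preserved under end-extension provided the single new point $\gamma := \sup_n \max(c_n)$ is not in any of the forbidden sets. Here one uses that $\gamma$ has cofinality $\omega$, whereas every $\mathcal{S}^\alpha(\tau,\eta,i) \subseteq \mathcal{S}^\alpha \subseteq \alpha^+ \cap \mathrm{cof}(\omega_1)$ consists of ordinals of cofinality $\omega_1$; hence $\gamma \notin \mathcal{S}^\alpha(\tau,\eta,i)$ automatically, and $c_\omega$ is a legitimate condition extending all the $c_n$.

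For the iteration itself, I would argue that a $\leq\alpha$-support iteration whose every iterand is (forced to be) $\sigma$-closed is itself $\sigma$-closed. Given an increasing sequence $\la p_n \mid n < \omega\ra$ in $\hqo^\alpha_\tau$, set $s = \bigcup_n \supp(p_n)$, which has size $\leq \alpha$ since it is a countable union of sets of size $\leq \alpha$, so it is an admissible support. I would then define the upper bound $p_\omega$ coordinate by coordinate, by recursion on $\xi \in s$: having defined $p_\omega \uhr \xi \in \qo^\alpha_\xi$ as a lower bound of $\la p_n \uhr \xi \mid n < \omega\ra$, force below $p_\omega \uhr \xi$ and use $\sigma$-closure of $\name{\ro}_\xi$ (just established) to pick a name $p_\omega(\xi)$ with $p_\omega\uhr\xi \Vdash p_\omega(\xi) \geq p_n(\xi)$ for all $n$. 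The inductive hypothesis that $\qo^\alpha_\xi$ is a \emph{dense} subset of $\hqo^\alpha_\xi$ lets us further adjust $p_\omega\uhr \xi$ to lie in $\qo^\alpha_\xi$ at each step. At limit stages $\xi \in s$ one takes the natural amalgamation of the coordinates below $\xi$, which is a condition in $\hqo^\alpha_\xi$ by the $\leq\alpha$-support format (a countable union of the supports of the $p_n\uhr\xi$ is again of size $\leq\alpha$), and the previously chosen values cohere by construction.

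\textbf{Main obstacle.} The delicate point is the interaction between the inductively-maintained dense subsets $\qo^\alpha_\xi$ and the naive $\sigma$-closure argument for $\hqo^\alpha_\xi$: the coordinate-wise lower bound $p_\omega$ constructed above a priori lives in $\hqo^\alpha_\tau$, not in $\qo^\alpha_\tau$, and one must either (a) verify that $\qo^\alpha_\tau$ is itself $\sigma$-closed as a suborder — which should follow once one knows $\qo^\alpha_\tau$ is dense and the names defining $\qo^\alpha_\xi$ are chosen uniformly/absolutely enough that a lower bound of conditions in $\qo^\alpha_\xi$ can be taken in $\qo^\alpha_\xi$ — or (b) phrase the whole lemma for $\hqo^\alpha_\tau$ and note that $\sigma$-closure of a poset passes to dense subsets only in one direction, so a small argument is needed. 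I expect the intended reading is that $\sigma$-closure of $\hqo^\alpha_\tau$ is what is asserted, and that density of $\qo^\alpha_\tau$ (proved later, in Lemma \ref{Lem: MillerCodingSecondDensityLemma}) is used only to keep the recursion inside the smaller posets; thus the cleanest route is to prove $\sigma$-closure of $\hqo^\alpha_\tau$ directly as above and not worry about staying inside $\qo^\alpha_\tau$ at intermediate stages beyond what density for free gives us. A secondary routine check is that ``end-extension'' is genuinely a partial order with the stated meaning and that the limit of an $\omega$-chain of end-extensions is an end-extension of each — immediate from the definitions.
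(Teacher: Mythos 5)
Your proposal is correct and follows essentially the same route as the paper: the paper also takes the explicit coordinatewise upper bound (intersection of the Miller trees, which is a condition by $\alpha$-closure of $\mo_\alpha$, and the union of the coding closed sets together with its supremum), and argues that the new supremum point, having countable cofinality when the coding coordinates are not eventually constant, lies outside every $\mathcal{S}^\alpha(\tau,\eta,j)\subseteq\mathcal{S}^\alpha\subseteq\alpha^+\cap\mathrm{cof}(\omega_1)$. Your resolution of the ``main obstacle'' also matches the paper: the lemma is proved directly for $\hqo^\alpha_\tau$, with no attempt to keep the limit condition inside the dense subsets $\qo^\alpha_\xi$.
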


\begin{proof}
    Assuming that $\la p_i \colon i<\omega \ra$ is an increasing sequence of conditions, let $p^* = \bigvee_{i<\omega} p_i$ be the condition such that $\supp(p^*) = \bigcup_{i<\omega} \supp(p_i)$, and, for every $\xi\in \supp(p^*)$,
    $$p^*\uhr \xi \Vdash \name{T}^{p^*}_\xi = \bigcap_{i<\omega, \xi\in \supp(p_i)} \name{T}^{p_i}_\xi$$
    and--
    $${p^*\uhr \xi}^{\frown}\name{T}^{p^*}_\xi\Vdash \name{c}^{p^*}_\xi = \bigcup_{i<\omega} \name{c}^{p_i}_\xi \cup \Big\{ \sup\left( \bigcup_{i<\omega} \name{c}^{p_i}_\xi \right) \big\}.$$
    We argue that $p^*\in \hqo^\alpha_\tau$. Fix $\xi\in \supp(p^*)$ and assume that $p^*\uhr \xi\in \hqo^\alpha_\xi$. First, $p^*\uhr \xi$ forces that $\name{T}^{p^*}_\xi\in \name{\mo}_\alpha$ by $\alpha$-closure of $\name{\mu_\alpha}$. Next, we show that ${p^*\uhr \xi}^{\frown}\name{T}^{p^*}_\xi$ forces that $\name{c}^{p^*}_\xi\in \name{\co}_\xi$. Assuming that $\name{c}^{p^*}_\xi\in \name{\co}_\xi$ is not equal to one of the conditions $\name{c}^{p_i}_\xi$ for $i<\omega$, $\sup\left( \bigcup_{i<\omega} \name{c}^{p_i}_\xi\right)$ is an ordinal of countable cofinality, and thus doesn't belong to any of the stationary sets $\mathcal{S}^\alpha(\tau,\eta,j)$ for $\eta<\alpha^+$ and $j<2$. Hence, ${p^*\uhr \xi}^{\frown} \name{T}^{p^*}_\xi \Vdash \name{c}^{p^*}_\xi \in \name{\co}_\xi$.
\end{proof}

\begin{lemma} \label{Lem: MillerCodingPreservationOfStationarySets}
    $\hqo^\alpha_\tau$ preserves the stationarity of $\mathcal{S}^\alpha(\tau, \eta, i)$ for every $\eta<\alpha^+$ and $i<2$.
\end{lemma}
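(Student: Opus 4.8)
\textbf{Proof strategy for Lemma \ref{Lem: MillerCodingPreservationOfStationarySets}.}
The plan is to show that for every condition $p \in \hqo^\alpha_\tau$ and every $\hqo^\alpha_\tau$-name $\name{E}$ for a club subset of $\alpha^+$, the condition $p$ does not force $\name{E} \cap \mathcal{S}^\alpha(\tau,\eta,i) = \emptyset$. Fix $\zeta_0 \in \mathcal{S}^\alpha(\tau,\eta,i)$; the goal is to find $q \geq p$ and some $\zeta \in \mathcal{S}^\alpha(\tau,\eta,i)$ with $\zeta \geq \zeta_0$ such that $q \Vdash \check\zeta \in \name{E}$. The standard method for club-shooting posets is to build, by induction along a continuous chain of elementary submodels, a fusion-type sequence of conditions together with an increasing sequence of ordinals converging to a point of the stationary set, using the special structure of the collapsing structures $N_\zeta$ for $\zeta \in \mathcal{S}^\alpha \subseteq \alpha^+ \cap \cf(\omega_1)$.

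Concretely, first I would pass to the dense subset $\qo^\alpha_\tau$ (of size $\alpha^+$) so that the iteration is small enough to be captured inside the collapsing structure $N_\zeta$ of a generic $\zeta \in \mathcal{S}^\alpha(\tau,\eta,i)$. One runs a Skolem-hull construction: build a continuous $\in$-chain $\la X_\nu \mid \nu < \omega_1\ra$ of elementary substructures of a suitable $(H_{\alpha^{+3}}, \in, <_{wo}, \hqo^\alpha_\tau, \name{E}, \ldots)$ of size $<\alpha^+$ with $\zeta_\nu := X_\nu \cap \alpha^+ \in \mathcal{S}^\alpha(\tau,\eta,i)$ for the limit value (this is where the non-reflection of $\mathcal{S}^\alpha$ and Proposition \ref{Prop: Stationary set of passive collapsing structures} are used to guarantee the limit point lands in the stationary set — more precisely, one chooses the chain so that the union corresponds to the collapsing structure $N_{\zeta}$ for a point $\zeta$ of cofinality $\omega_1$ in $\mathcal{S}^\alpha(\tau,\eta,i)$). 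Along the chain one constructs an increasing sequence $\la p_\nu \mid \nu < \omega_1\ra$ of conditions with $p_0 = p$, $p_\nu \in X_{\nu+1}$, meeting at stage $\nu$ the dense set of conditions deciding the $\nu$-th element of the increasing enumeration of $\name{E}$ above $\sup_{\mu<\nu}(\text{that element})$, and at limits taking $p_\nu = \bigvee_{\mu<\nu} p_\mu$ as in Lemma \ref{Lem: MillerCodingSigmaClosureLemma}. The point is that $\bigcup_\nu p_\nu$ (closed off as a limit of the $\sigma$-closed—actually $\omega_1$-many steps require a little more care, see below) forces $\zeta := \sup_\nu \zeta_\nu \in \name{E}$, because $\name{E}$ is a club and its enumeration below $\zeta$ has been made cofinal in $\zeta$; since $\zeta \in \mathcal{S}^\alpha(\tau,\eta,i)$, this witnesses that $\name{E}$ meets the stationary set.

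The main obstacle is the limit-of-length-$\omega_1$ step: $\hqo^\alpha_\tau$ is only $\sigma$-closed (Lemma \ref{Lem: MillerCodingSigmaClosureLemma}), not $\alpha^+$-closed, so one cannot naively take upper bounds of $\omega_1$-chains. This is exactly where the non-reflecting stationary set $\mathcal{S}^\alpha \subseteq \alpha^+ \cap \cf(\omega_1)$ and the fine-structural properties of its members enter: one must arrange that the coding component $\name{c}^{p_\nu}_\xi$ at each coordinate $\xi$ has supremum $\zeta_\nu \notin \bigcup \mathcal{S}^\alpha(\tau', \eta', j)$ — i.e., the tail ordinals of the coding clubs avoid every coding stationary set — so that the union $c = \bigcup_\nu c^{p_\nu}_\xi \cup \{\sup_\nu \zeta_\nu\}$ is again a legitimate condition in $\co_\xi$. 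This works because $\zeta = \sup_\nu \zeta_\nu$ has cofinality $\omega_1$ but lies in $\mathcal{S}^\alpha(\tau,\eta,i)$ only for the \emph{specific} pair $(\tau,\eta,i)$ we are preserving, and the $\mathcal{S}^\alpha(\tau',\eta',j)$ are pairwise almost disjoint; by arranging the chain so that $\zeta$ belongs to the target set and to no other coding set, the upper bound exists and lies below $\name{E}$'s next element. I would therefore interleave the bookkeeping so that the chain $\la X_\nu\ra$ simultaneously (a) captures $\name{E}$ and makes its enumeration cofinal in $\zeta$, and (b) ensures $\zeta \in \mathcal{S}^\alpha(\tau,\eta,i)$, invoking Proposition \ref{Prop: Stationary set of passive collapsing structures} together with the non-reflection Corollary \ref{Cor: FineStructureS^alphaDoesNotReflect} to guarantee such a chain (equivalently, such a $\zeta$) exists densely often.
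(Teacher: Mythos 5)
Your overall skeleton is the same as the paper's: an $\omega_1$-length increasing sequence of conditions built inside elementary submodels, with the tops of all coding components pushed cofinally to a single ordinal $\zeta$ of the target set, so that the union (with $\zeta$ added on top of each coding club) is a condition forcing $\zeta$ to be a limit point of the club name. The gap is in the one step that actually carries the lemma: your justification that the union is a legitimate condition rests on ``arranging the chain so that $\zeta$ belongs to the target set and to no other coding set,'' and this cannot be arranged. There are (up to) $\alpha^+$ many coding sets $\mathcal{S}^\alpha(\tau',\eta',j)$ with $\tau'<\tau$; each meets $\mathcal{S}^\alpha(\tau,\eta,i)$ in a bounded set, but the union of $\alpha^+$ many bounded subsets of $\alpha^+$ can cover the target set, so almost disjointness alone gives no point of $\mathcal{S}^\alpha(\tau,\eta,i)$ avoiding all of them (with the diamond-style definition of the sets, a typical $\zeta$ in the target set lies in many other sets, e.g.\ ones indexed by large $\eta'$). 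What is both needed and achievable is weaker, and its proof uses three ingredients you never invoke: (a) the supports of the conditions you build are contained in $X\cap\tau$ (this needs $\alpha\subseteq X$ together with $X\models|\supp(p_\nu)|\le\alpha$), so only coding sets with first coordinate $\xi\in X\cap\tau$ are relevant; (b) for such $\xi$ and $\eta'<\zeta$, elementarity puts the almost-disjointness bound $\gamma(\xi,\eta',j)=\sup\bigl(\mathcal{S}^\alpha(\tau,\eta,i)\cap\mathcal{S}^\alpha(\xi,\eta',j)\bigr)$ inside $X$, hence below $\zeta=\sup(X\cap\alpha^+)$, so $\zeta\in\mathcal{S}^\alpha(\tau,\eta,i)$ forces $\zeta\notin\mathcal{S}^\alpha(\xi,\eta',j)$; and (c) for $\eta'\ge\zeta$ the almost-disjointness bounds are useless, and one must use the stipulated property $\min\bigl(\mathcal{S}^\alpha(\xi,\eta',j)\bigr)>\eta'$ of the coding sequence. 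Without (a) and (c) in particular, the step ``the upper bound exists'' would fail. (It is also worth noting explicitly that the target set $\mathcal{S}^\alpha(\tau,\eta,i)$ has first coordinate $\tau$ and is therefore not used for coding at any coordinate $\xi<\tau$ of $\hqo^\alpha_\tau$; this is why preserving it is possible at all.)

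Two smaller points. First, the fine-structural apparatus you invoke (Proposition \ref{Prop: Stationary set of passive collapsing structures}, Corollary \ref{Cor: FineStructureS^alphaDoesNotReflect}, the collapsing structures $N_\zeta$) plays no role in this lemma; landing $\sup(X\cap\alpha^+)$ in the target set only needs its stationarity, by taking $X$ to be a suitable limit along a continuous $\alpha^+$-chain of size-$\alpha$ models whose characteristics form a club in $\alpha^+$, chosen closed under countable sequences so that initial segments of your $\omega_1$-sequence of conditions lie in $X$ and the countable limits are handled by $\sigma$-closure (Lemma \ref{Lem: MillerCodingSigmaClosureLemma}). Second, since the coding clubs are being topped off with $\zeta$, their new suprema all have cofinality $\omega_1$, so the only constraints to check at the final stage are exactly the ones in (a)--(c) above; once those are supplied, your argument goes through and coincides with the paper's proof.
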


\begin{proof}
    Let $S$ be one of the sets $\mathcal{S}^\alpha(\tau, \eta^*, i^*)$ for some $\tau^*<\alpha^+, i^*<2$.  Assume that $\name{C}$ is a $\hqo^\alpha_\tau$-name for a club subset of $\alpha^+$ and $p\in \hqo^\alpha_\tau$. We argue that there exists $q\geq p$ such that $q\Vdash \name{C}\cap S\neq \emptyset$. 

    Since the sets in $\vec{\mathcal{S}}^\alpha$ are pairwise almost disjoint, we have for every $\tau'<\tau, \eta<\alpha^+, i<2$ an ordinal $\gamma(\tau', \eta, i)<\alpha^+$ such that $$S\cap \mathcal{S}^\alpha(\tau', \eta, i)\subseteq \gamma(\tau', \eta,i).$$ Denote $\vec{\gamma} = \la \gamma(\tau', \eta, i) \colon \tau'<\tau, \eta< \alpha^+, i<2 \ra$.

    Let $X\elem H_{\theta}$ (for $\theta$ large enough) be an elementary substructure such that:
    \begin{itemize}
        \item $\hqo^\alpha_\tau, \name{C}, p,  \vec{\gamma}, \alpha, \tau \in X$.
        \item $X$ is closed under countable sequences of its elements.
        \item $\chi_X(\alpha^+):=\sup(X\cap \alpha^+)\in S$.
        \item $\chi_X(\alpha^+)\subseteq X$.
    \end{itemize}
    Such $X$ can be constructed by standard methods, by building a continuous chain $\la X_i \colon i<\alpha^+ \ra$ of elementary substructures of $H_{\theta}$, each of them of size $\alpha$, where $X_{i+1}$ contains $\chi_{X_i}(\alpha^+)$ and all the countable sequences of elements of $X_i$ for every $i<\alpha^+$. Since $S\subseteq \alpha^+$ is stationary and $\la \chi_{X_i}(\alpha^+) \colon i<\alpha^+ \ra$ is a club in $\alpha^+$, we can find limit $i<\alpha^+$ such that $X = X_i$ satisfies the above properties.\\
    Assume that $\tau'\in X\cap \tau$. For every  $\eta < \chi_{X}(\alpha+)$ and $i<2$, we have $\gamma(\tau', \eta, i)\in X\cap \alpha^+$, and thus $\chi_{X}(\alpha^+)> \gamma(\tau', \eta,i)$, namely $\chi_{X}(\alpha^+)\notin S\cap \mathcal{S}^\alpha(\tau', \eta, i)$. For every $\eta\geq \chi_{X}(\alpha^+)$ and $i<2$, we have $\chi_{X}(\alpha^+)\notin \mathcal{S}^\alpha( \tau', \eta,i)$ since $\min\left(\mathcal{S}^\alpha(\tau', \eta, i)\right) > \eta$. Over all, we proved that for every $\tau'\in X\cap \tau$, 
        $$\chi_{X}(\alpha^+)\in S\setminus \left( \bigcup_{\eta<\alpha^{+}, i<2} \mathcal{S}^\alpha(\tau', \eta, i) \right).$$
    
    \noindent
    Having this in mind, we proceed and fix a cofinal sequence $\vec{\delta} = \la \delta_i \colon i<\omega_1 \ra$ in $\chi_{X}(\alpha^+)$. By the above properties, each initial segment of $\vec{\delta}$ belongs to $X$. We build a sequence of conditions  $\vec{p} = \la p_i \colon i<\omega_1\ra$ such that:
    \begin{itemize}
        \item Every initial segment of $\vec{p}$ belongs to $X$.
        \item For every $i<\omega_1$, $p_{i+1}$ decides $\min(\name{C}\setminus (\delta_i+1))$.
        \item For every $i<\omega_1$ and $\xi\in \supp(p_{i+1})$, ${p_{i+1}\uhr \xi}^{\frown}\name{T}^{p_{i+1}}_\xi\Vdash \max(\name{c}^{p_{i+1}}_\xi) > \delta_i$.
    \end{itemize}
    We start by picking $p_0 = p$, and, in limit steps, $p_i = \bigvee_{j<i}p_j$. By $\sigma$-closure and the fact that $\la p_j \colon j<i \ra\in X$, $p_i$ is a legitimate condition of $\hqo^\alpha_\tau$ which belongs to $X$. 

    At successor steps, assuming that $p_i$ was defined, extend it to $p'_{i+1}\in X$ such that $p'_{i+1}$ decides $\min(\name{c}\setminus \delta_i)$. Finally, extend $p_{i+1}\geq p'_{i+1}$ in $X$ without extending the support or the Miller coordinates, such that, for every $\xi<\supp(p'_{i+1})$, 
    $${p_{i+1}\uhr \xi}^{\frown}\name{T}^{p_{i+1}}_\xi \Vdash \max\left(\name{c}^{p_{i+1}}_\xi\right) > \delta_i.$$

    This concludes the construction. Denote $q = \bigvee_{i<\omega_1} p_i$. We first argue that $q\in \hqo^\alpha_\tau$. Since the Miller components are $\alpha$-closed, the only potential problem might occur in the coding coordinates. Recall that for every $\xi\in \supp(q)$,   
    $${q\uhr \xi}^{\frown} \name{T}^q_\xi \Vdash \name{c}^q_\xi = \bigcup_{i<\omega_1} \name{c}^{p_i}_\xi\cup \Big\{ \sup\left( \bigcup_{i<\omega_1} \name{c}^{p_i}_\xi \right) \Big\}$$
    and, by our construction,  
    $$\sup\left( \bigcup_{i<\omega_1} \name{c}^{p_i}_\xi \right) = \sup_{i<\omega_1} \delta_i = \chi_{X}(\alpha^+).$$
    Thus, in order to prove that ${q\uhr \xi }^{\frown} \name{T}^q_\xi\Vdash \name{c}^{q}_\xi\in \name{\co}_\xi$, it suffices to prove that $\chi_{X}(\alpha^+)\notin \mathcal{S}^\alpha(\xi, \eta', i')$ for every $\eta'<\alpha^+$ and $i'<2$. 

    Let $i<\omega_1$ be the least such that $\xi\in \supp(p_i)$. Since $X\vDash |\supp(p_i)|\leq \alpha$, there exists a surjection $f\in X_i$, $f\colon \alpha\to X$, and as $\alpha\subseteq X$, $\supp(p_i) = f''\alpha\subseteq X$. It follows that $\xi\in X\cap \tau$, and, as we already established, this implies          
    $$\chi_{X}(\alpha^+)\notin  \bigcup_{\eta<\alpha^{+}, i<2} \mathcal{S}^\alpha(\xi, \eta, i).$$
    This concludes the proof that $q\in \hqo^\alpha_\tau$. Finally, $q$ forces that $\chi_{X}(\alpha^+)$ is a limit point of $\name{C}$, and thus $q\Vdash \name{C}\cap S\neq \emptyset$, as desired.
    \end{proof}

\noindent
Our next goal is to prove that for every $\tau\leq \alpha^{++}$, $\hqo^{\alpha}_{\tau}$ has desirable properties such as $\alpha$-distributivity and the $C$-fusion property. For this we introduce a number of fusion related notions and IA chains.

\begin{definition} \label{Def: PacedFusionSequences} Let $\tau\leq \alpha^{++}$. 
		\begin{enumerate}
			\item     Assume that $F\in [\tau]^{\leq \alpha}$ and $\nu<\kappa$. We define an order $\leq_{F,\nu}$ on $\hqo^{\alpha}_{\tau}$ as follows: for $p,q\in \hqo^{\alpha}_{\tau}$, set $p\leq_{F,\nu} q$ if and only if $p\leq q$, and, for every $\xi\in F\cap \supp(p)$, $q\uhr \xi \Vdash \name{T}^{p}_\xi \leq_{\nu} \name{T}^{q}_\xi$.\footnote{The order $\leq_\nu$ on $\mo_{\alpha}$ is given in Definition \ref{Def: FusionSequenceMiller}.}
   
			\item  Let $\bqo^{\alpha}_\tau\subseteq \hqo^{\alpha}_\tau$ be the set of conditions $p\in \hqo^{\alpha}_\tau$ for which there exists a sequence 
   $$\vec{\mu} = \la \mu_\xi \colon \xi\in \supp(p) \ra\subseteq \alpha^+$$ such that, for every $\xi\in \supp(p)$, 
			$$ {p\uhr \xi}^{\frown} \name{T}^{p}_\xi \Vdash_{ {\hqo^{\alpha}_{\tau}} *\mo_{\alpha} } \max(\name{c}^{p}_{\xi})< \check{\mu}_{\xi}.    $$
			We call such a sequence $\vec{\mu}$ a \textbf{local bounding function} for $p$.
   
			\item A \textbf{pacing chain} is a continuous, internally approachable sequence $\vec{X} = \la X_i \colon i\leq\alpha \ra$ elementary substructures of $H_{\theta}$, for $\theta$ large enough, such that:
			\begin{enumerate}
				\item $\hqo^{\alpha}_{\tau}, \bqo^{\alpha}_\tau, \tau, \alpha\in X_0$.
				\item For every $i<\alpha$, $|X_i|<\alpha$ (in particular, $|X_\alpha| = \alpha$).
				\item For every $i<\alpha$, $^i{X_i}\subseteq X_{i+1}$.
				\item For every $i\leq \alpha$, $\chi_{X_i}(\alpha^+):=\sup( X_i \cap \alpha^+ )\notin \mathcal{S}^{\alpha}$.
			\end{enumerate}
			\item Let $\vec{X}$ be a pacing chain and fix $\gamma\leq\alpha$. A sequence of conditions $\vec{p} = \la p_i \colon i<\gamma \ra$ is \textbf{paced} with respect to $\vec{X}$ if there exists a weakly increasing continuous sequence $\la \tau_i \colon i<\gamma \ra\subseteq \tau+1$,\footnote{A typical case will be  $\tau_i = \tau$ for every $i<\gamma$. The only point in which this will not be the case is Lemma \ref{Lem: MillerCodingDensityLemma}.} in $X_0$, such that:
			\begin{enumerate}
				\item \label{Clause: PaceSeqLocalBounds} For every $i<\gamma$, $p_i\uhr \tau_i \in \bqo^{\alpha}_{\tau_i}$.
				\item For every $i<\gamma$, if $\tau_i<\tau$ then  $p_i\uhr \tau_i \Vdash p_i\setminus \tau_i = p_0\setminus \tau_i$.
				\item For every $i<\gamma$, $\vec{p}\uhr i+1 \in X_{i+1}$.
				\item For every $i<j<\gamma$, $p_i \leq p_j$.
				\item \label{Clause: PaceSeqCharacteristicFunction} For every $i<\gamma$ and $\xi\in \supp(p_i)\cap \tau_i$, ${p_i\uhr \xi}^{\frown} \name{T}^{p_i}_{\xi} \Vdash \check{\chi}_{X_i}(\alpha^+)\in \name{c}^{p_i}_{\xi}$.
				\item \label{Clause: PaceSeqLimit} For every limit $i<\gamma$, $p_i = \bigvee_{j<i}p_j$, namely, $\supp(p_i) = \bigcup_{j<i} \supp(p_j)$, and for every $\xi\in  \supp(p_i)$,
				$$ p_i\uhr \xi \Vdash \name{T}^{p_i}_{\xi} = \bigcap_{j<i} \name{T}^{p_j}_{\xi} $$
				and--
				$$(p_i \uhr \xi) ^{\frown} \name{T}^{p_i}_\xi \Vdash \name{c}^{p_i}_\xi = \left( \bigcup_{j<i} \name{c}^{p_j}_\xi \right)\cup \Big\{\sup\left(  \bigcup_{j<i} \name{c}^{p_j}_\xi \right)\Big\}$$
			\end{enumerate}
			\item Let $\vec{X}$ be a pacing chain. A sequence of length $\alpha$, $\vec{p} = \la p_i \colon i< \alpha \ra\subseteq \hqo^{\alpha}_\tau$, is a \textbf{paced fusion sequence}  with respect to $\vec{X}$, if it is a paced sequence with respect to $\vec{X}$, and, in addition, there exists a $\subseteq$-increasing sequence of sets $\vec{F} = \la F_i \colon i\leq \alpha \ra\subseteq [\tau]^{<\alpha}$ and some $\nu<\alpha$ such that:
			\begin{enumerate}
				\item For every $i<\alpha$, $\vec{F}\uhr i+1 \in X_{i+1}$.
				\item For every $i<\alpha$, $F_i \subseteq \supp(p_i)$.
				\item For every $i<j<\alpha$, $p_{i} \leq_{F_i,\nu+i} p_j$.
				\item For every limit $i<\alpha$, $F_i = \bigcup_{j<i} F_j$.
				\item $F_\alpha = \bigcup_{i<\alpha} F_i = \bigcup_{i<\alpha} \supp(p_i)$.
			\end{enumerate}
		\end{enumerate}
	\end{definition}

	We will elaborate below how pace chains, paced sequences and paced fusion sequences are constructed. First, let us verify that paced sequences and paced fusion sequences have an exact upper bound.
	
	\begin{claim}\label{Claim:ClosureOfPacedSequences}
		Let $\gamma\leq \alpha$ be a limit ordinal. Assume that $\vec{X} = \la X_i \colon i\leq\gamma \ra$ is a pace chain. Assume that $\vec{p} = \la p_i \colon i<\gamma \ra\subseteq \hqo^{\alpha}_{\tau}$ is a paced sequence with respect to $\vec{X}$. Let $p^* = \bigvee_{i<\gamma} p_i$.
		\begin{enumerate}
			\item If $\gamma<\alpha$, then $p^*$ is an exact upper bound of $\{ p_i \colon i<\gamma \}$.
			\item If $\gamma = \alpha$ and $\vec{p}$ is a paced fusion sequence with respect to $\vec{X}$, then $p^*$ is an exact upper bound of $\{ p_i \colon i<\alpha \}$.
		\end{enumerate}
	\end{claim}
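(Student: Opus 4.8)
\textbf{Proof plan for Claim \ref{Claim:ClosureOfPacedSequences}.}

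The plan is to verify that the candidate limit $p^* = \bigvee_{i<\gamma} p_i$ — defined coordinatewise by intersecting the Miller trees and taking unions-plus-supremum of the coding clubs, exactly as in the limit clause \ref{Clause: PaceSeqLimit} of Definition \ref{Def: PacedFusionSequences} — is in fact a condition of $\hqo^\alpha_\tau$, and then that it is an exact upper bound. The membership argument is the heart of the matter and proceeds by induction on $\xi \in \supp(p^*) = \bigcup_{i<\gamma}\supp(p_i)$, assuming $p^*\uhr\xi \in \hqo^\alpha_\xi$ and checking that $p^*\uhr\xi$ forces $p^*(\xi) = (\name{T}^{p^*}_\xi,\name{c}^{p^*}_\xi) \in \name{\ro}_\xi = \name{\mo}_\alpha * \name{\co}^\alpha_\xi$. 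For the Miller coordinate, $\alpha$-closure of $\mo_\alpha$ (Claim \ref{Claim:M_lambda-closed}; here $\gamma \le \alpha$) gives that the intersection $\bigcap_{j<\gamma}\name{T}^{p_j}_\xi$ is again a condition. For the coding coordinate, the key point is to identify $\sup\bigl(\bigcup_{j<\gamma}\name{c}^{p_j}_\xi\bigr)$: I will argue that when $\gamma < \alpha$ this supremum has cofinality $\le |\gamma| < \alpha$ — in particular it is below any stationary set living on $\alpha^+\cap\cf(\omega_1)$ if $\cf(\gamma)\neq\omega_1$, and when $\cf(\gamma)=\omega_1$ one instead uses that the pacing clause \ref{Clause: PaceSeqCharacteristicFunction} forces $\check\chi_{X_i}(\alpha^+)\in\name{c}^{p_i}_\xi$, so the supremum equals $\chi_{X_\gamma}(\alpha^+) = \sup_{i<\gamma}\chi_{X_i}(\alpha^+)$, which by pacing-chain clause (d) is \emph{not} in $\mathcal{S}^\alpha$ and hence (since $\xi \in X_0 \cap \tau$ as $\xi$ is pinned down from a condition in some $X_{i+1}$ of bounded support, cf.\ the support argument in the proof of Lemma \ref{Lem: MillerCodingPreservationOfStationarySets}) avoids all the sets $\mathcal{S}^\alpha(\xi,\eta',i')$. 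Therefore $\name{c}^{p^*}_\xi$ is a legitimate closed bounded set satisfying the coding constraints of $\name{\co}^\alpha_\xi$, since it end-extends each $\name{c}^{p_j}_\xi$ and adds only one new point of the wrong cofinality. In the case $\gamma = \alpha$ with $\vec p$ a paced fusion sequence, the extra data $\vec F$ and $\nu$ guarantee via the paced-fusion clauses (c)–(e) that along each coordinate $\xi$ the Miller components form a genuine $\nu+i(\xi)$-fusion sequence for the index $i(\xi) = \min\{i : \xi\in F_i\}$, so the already-proven fusion lemma for $\mo_\alpha$ (the Claim following Definition \ref{Def: FusionSequenceMiller}) applies to show $\name{T}^{p^*}_\xi \in \mo_\alpha$; the coding-coordinate analysis is the same as before, now using $\chi_{X_\alpha}(\alpha^+)\notin\mathcal{S}^\alpha$.

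Once $p^* \in \hqo^\alpha_\tau$ is established, the upper-bound property $p^* \ge p_i$ for all $i<\gamma$ is immediate from the coordinatewise definition: $\name{T}^{p^*}_\xi$ refines every $\name{T}^{p_i}_\xi$ and $\name{c}^{p^*}_\xi$ end-extends every $\name{c}^{p_i}_\xi$. For exactness, suppose $q \in \hqo^\alpha_\tau$ extends every $p_i$; I will show $q \ge p^*$ by induction on $\xi\in\supp(p^*)$. Given $q\uhr\xi \ge p^*\uhr\xi$, for each $i<\gamma$ we have $q\uhr\xi \Vdash \name{T}^q_\xi \le \name{T}^{p_i}_\xi$ and $q\uhr\xi \fr \name{T}^q_\xi \Vdash \name{c}^q_\xi \supseteq_{\mathrm{end}} \name{c}^{p_i}_\xi$; since $\name{T}^{p^*}_\xi = \bigcap_i \name{T}^{p_i}_\xi$ and $\name{c}^{p^*}_\xi$ is the union of the $\name{c}^{p_i}_\xi$ together with their supremum, it follows that $q\uhr\xi \Vdash \name{T}^q_\xi \subseteq \name{T}^{p^*}_\xi$, and $\name{c}^q_\xi$ end-extends $\bigcup_i \name{c}^{p_i}_\xi$; it remains to see it end-extends $\name{c}^{p^*}_\xi$ itself, i.e.\ that the one extra point $\sup\bigl(\bigcup_i\name{c}^{p_i}_\xi\bigr)$ lies in $\name{c}^q_\xi$ or is $\ge \max(\name{c}^q_\xi)$ — but $\name{c}^q_\xi$ is closed and $\supseteq \bigcup_i\name{c}^{p_i}_\xi$, so it contains that supremum as a limit point (or ends at it), giving $q\uhr\xi \fr\name{T}^q_\xi \Vdash \name{c}^q_\xi \supseteq_{\mathrm{end}}\name{c}^{p^*}_\xi$, hence $q(\xi) \ge p^*(\xi)$.

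The main obstacle I anticipate is the bookkeeping in the coding coordinate at limits $\gamma$ of cofinality $\omega_1$: there the naive cofinality bound $|\gamma|<\alpha$ does not keep the new supremum out of an $\omega_1$-cofinal stationary set, and one genuinely needs the pacing structure — specifically that the sequence of characteristic ordinals $\chi_{X_i}(\alpha^+)$ is cofinal in $\chi_{X_\gamma}(\alpha^+)$ and that clause \ref{Clause: PaceSeqCharacteristicFunction} forces these ordinals into $\name{c}^{p_i}_\xi$, so that the emerging supremum is pinned to $\chi_{X_\gamma}(\alpha^+)$, which pacing-chain clause (d) keeps outside $\mathcal{S}^\alpha$. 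Making this interaction precise, and correctly handling the case distinction on $\cf(\gamma)$ (including $\cf(\gamma)=\omega$, already covered by $\sigma$-closure as in Lemma \ref{Lem: MillerCodingSigmaClosureLemma}), together with the verification that $\xi$ indeed lands in the relevant model so that the stationary sets it indexes are among those avoided, is where the real work lies; the Miller-coordinate closure and the exactness argument are routine given the fusion lemma for $\mo_\alpha$.
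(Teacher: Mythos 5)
Your overall route is the paper's: induct on coordinates $\xi\in\supp(p^*)$, use $\alpha$-closure of $\mo_\alpha$ for the tree coordinate when $\gamma<\alpha$ and the Miller fusion lemma along $\vec{F}$ (from the index $i(\xi)=\min\{i\colon\xi\in F_i\}$ on) when $\gamma=\alpha$, and for the coding coordinate identify the new top point of $\name{c}^{p^*}_\xi$ with $\chi_{X_\gamma}(\alpha^+)$, which clause 3(d) of Definition \ref{Def: PacedFusionSequences} keeps outside $\mathcal{S}^\alpha$; the exactness argument you sketch is the routine one. Your extra case split on $\cf(\gamma)$ is harmless (when $\cf(\gamma)\neq\omega_1$ the added supremum indeed cannot lie in $\mathcal{S}^\alpha\subseteq\alpha^+\cap\cf(\omega_1)$ unless it is already attained in the union), but it is not needed: the paper's argument is uniform in $\gamma$.

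There is, however, a genuine gap in the step ``the supremum equals $\chi_{X_\gamma}(\alpha^+)$''. Clause \ref{Clause: PaceSeqCharacteristicFunction} only gives the lower bound $\sup_i\,\max(\name{c}^{p_i}_\xi)\geq\sup_i\chi_{X_i}(\alpha^+)=\chi_{X_\gamma}(\alpha^+)$. For the upper bound you must invoke clause \ref{Clause: PaceSeqLocalBounds}: $p_i\uhr\tau_i\in\bqo^{\alpha}_{\tau_i}$, so $p_i$ admits a local bounding function, and since $p_i\in X_{i+1}$ one may take the $<_{L[\E]}$-least such function inside $X_{i+1}$, giving $\mu^i_\xi<\chi_{X_{i+1}}(\alpha^+)\leq\chi_{X_\gamma}(\alpha^+)$ and hence that each $\name{c}^{p_i}_\xi$ is forced to be bounded below $\chi_{X_\gamma}(\alpha^+)$. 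Membership $p_i\in X_{i+1}$ alone does not bound the realized maximum of the name $\name{c}^{p_i}_\xi$ (the possible values of that maximum form a set of size $\geq\alpha^+$, which cannot fit in $X_{i+1}$); without the bounding function, in the $\cf(\gamma)=\omega_1$ case the supremum could be an arbitrary ordinal of cofinality $\omega_1$, possibly in $\mathcal{S}^\alpha$, and the argument fails. Your proposal never mentions $\bqo^{\alpha}_\tau$ or the local bounding functions, which is exactly the ingredient the paper's proof uses here. Two smaller points: your parenthetical that $\xi\in X_0$ ``by the support argument of Lemma \ref{Lem: MillerCodingPreservationOfStationarySets}'' does not work in this setting ($|X_{i+1}|<\alpha$ while $|\supp(p_i)|$ may be $\alpha$, and $\alpha\not\subseteq X_{i+1}$), but it is also unnecessary, since every coding set $\mathcal{S}^\alpha(\xi,\eta',i')$ is contained in $\mathcal{S}^\alpha$, so avoiding $\mathcal{S}^\alpha$ suffices; and for coordinates $\xi\geq\sup_i\tau_i$ clause \ref{Clause: PaceSeqCharacteristicFunction} does not apply, but there $p_i\uhr\tau_i\Vdash p_i\setminus\tau_i=p_0\setminus\tau_i$, so the coding coordinate is constant along the sequence and nothing needs to be checked (this is the reduction to $\xi<\sup_i\tau_i$ made explicitly in the paper).
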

	
	\begin{proof} 
		We concentrate on the proof that $p^*$ is a condition of $\hqo^{\alpha}_\tau$ which extends $p_i$ for every $i<\gamma$. Once we prove that, the exactness easily follows. 
		
		Work by induction on  $\xi\in \supp(p^*)$, assume that  $p^*\uhr \xi\in \hqo^{\alpha}_{\xi}$ is an upper bound of $\la p_i \uhr \xi \colon i<\gamma \ra$. Let us argue that 
		$$p^*\uhr \xi \Vdash \name{T}^{p^*}_\xi\in \name{\mo}_\alpha, \quad \name{T}^{p^*}_\xi \geq_{\name{\mo}_\alpha} \name{T}^{p_i}_\xi, \text{ and }$$
		
		$${p^*\uhr \xi}^{\frown} \name{T}^{p^*}_\xi \Vdash \name{c}^{p^*}_\xi \in \co_{\xi} \mbox{ and for every } i<\gamma, \name{c}^{p^*}_\xi \geq_{\name{\co}_\xi} \name{c}^{p_i}_\xi.$$
		We remark that if $\la \tau_i \colon i<\gamma \ra$ witnesses the fact that $\vec{p}$ is a paced sequence, and $\xi\geq \sup\{ \tau_i \colon i<\gamma \}$, the above inductive step is clear since $p\uhr \xi\Vdash p^*\setminus \xi = p_0\setminus \xi$. Thus, we can consider only the case $\xi < \tau^*$ where $\tau^*$ denotes $\sup\{ \tau_i \colon i<\gamma \}$. 
		\begin{enumerate}
			\item Assume that $\gamma<\alpha$. First, $T^{p^*}_\xi = \bigcap T^{p_i}_\xi\in \mo_\alpha$ by $\alpha$-closure of $\mo_\alpha$. Moving to the coding component name in $V^{{\hqo^{\alpha}_\xi}*\mo_\alpha}$,   for every $i<\gamma$, let $\vec{\mu}^{i} = \la \mu^{i}_\eta  \colon \eta\in \supp(p_i)\cap \tau_i \ra$ be a local bounding function witnessing the fact that $p_i\uhr \tau_i\in \bqo^{\alpha}_{\tau_i}$. Since $p_i\in X_{i+1}$, we may assume  $\mu^{i}_\xi < \chi_{X_{i+1}}(\alpha^+)$ for every $i<\gamma$. Also, there exists $i_0<\gamma$ such that $\xi < \tau_{i_0}$, and for every $i_0 \leq i < \gamma$, $\chi_{X_i}(\alpha^+)\in c^{p_i}_{\xi}$. Combining the above facts, it follows that $\bigcup_{i<\gamma} c^{p_i}_{\xi}$ is an unbounded subset of $\chi_{X_\gamma}(\alpha^+)$. Thus, 
			$$c^{p^*}_{\xi}= \left(\bigcup_{i<\gamma} c^{p_i}_{\xi}\right)\cup \{ \chi_{X_\gamma}(\alpha^+) \}$$ 
			and $c^{p^*}_\xi$ is indeed a condition that extends each  $c^{p_i}_\xi$ since $\chi_{X_\gamma}(\alpha^+)\notin \mathcal{S}^{\alpha}$.
			\item Assume $\gamma = \alpha$. Let $\vec{F} = \la F_i \colon i<\alpha \ra$ be a $\subseteq$-increasing sequence of sets witnessing the fact that $\vec{p}$ is a paced fusion sequence. Let $i_0<\alpha$ be the least such that $\xi\in F_{i_0}$. In $V^{\hqo^{\alpha}_\xi}$, $\la T^{p_i}_\xi \colon i_0\leq i<\alpha\ra$ is a fusion sequence of conditions in $\mo_\alpha$. In particular, $T^{p^*}_\xi:=\bigcap T^{p_i}_\xi \in \mo_\alpha$. The argument concerning the coding component being a condition is the same as the one given just above.
		\end{enumerate}
	\end{proof}
	
	\begin{claim} \label{Claim: ExistenceOfPacedChains}
		There exists a pace chain of elementary substructures of $H_{\theta}$ for large enough $\theta$. 
	\end{claim}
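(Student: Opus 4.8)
The statement to be proven is Claim~\ref{Claim: ExistenceOfPacedChains}: that a pacing chain $\vec{X} = \la X_i \colon i \leq \alpha \ra$ exists. A pacing chain is a continuous, internally approachable sequence of elementary substructures of $H_\theta$ (for $\theta$ large) satisfying (a)~$\hqo^\alpha_\tau, \bqo^\alpha_\tau, \tau, \alpha \in X_0$; (b)~$|X_i| < \alpha$ for $i < \alpha$; (c)~${}^{i}X_i \subseteq X_{i+1}$ for $i < \alpha$; and (d)~$\chi_{X_i}(\alpha^+) := \sup(X_i \cap \alpha^+) \notin \mathcal{S}^\alpha$ for all $i \leq \alpha$. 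The plan is to build the chain by recursion on $i \leq \alpha$, handling the four requirements in turn, with the crucial new ingredient being the avoidance of $\mathcal{S}^\alpha$ at each step --- this is where we exploit that $\alpha$ is Mahlo, $2^\alpha = \alpha^+$, and $\mathcal{S}^\alpha$ does not reflect.

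\textbf{Key steps.} First, fix $\theta$ large enough that $H_\theta$ contains $\hqo^\alpha_\tau$, $\bqo^\alpha_\tau$, $\tau$, $\alpha$, and the relevant $\mathcal{S}^\alpha$, $\vec{S}^\alpha$. The recursion: at stage $0$, take any $X_0 \elem H_\theta$ with $|X_0| < \alpha$ containing the required parameters and with $\alpha \cap X_0$ an ordinal (arrange $\alpha + 1 \subseteq X_0$ is too much since we want $|X_0| < \alpha$; rather, since $\alpha$ is inaccessible we can take $X_0$ of size $< \alpha$ containing the parameters as elements, not subsets). At a successor stage $i+1$, given $X_i$ with $|X_i| < \alpha$, form $X_{i+1} \elem H_\theta$ containing $X_i \cup {}^{i}X_i \cup \{X_i, \la X_j \colon j \leq i\ra\}$ as elements, still of size $< \alpha$ (this uses $i < \alpha$, $|X_i| < \alpha$, $|{}^{i}X_i| \leq |X_i|^i < \alpha$ since $\alpha$ is strong limit, and that an elementary submodel of size $<\alpha$ of $H_\theta$ exists closed under these); the clause $\la X_j \colon j \leq i\ra \in X_{i+1}$ gives internal approachability. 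At a limit stage $\delta$, set $X_\delta = \bigcup_{i < \delta} X_i$ to maintain continuity. Second, impose clause~(d): at each stage we must ensure $\chi_{X_i}(\alpha^+) \notin \mathcal{S}^\alpha$. The standard move is to interleave a "catching up" step: when building $X_{i+1}$ from (the preliminary version of) $X_i$, first note $\chi_{X_i}(\alpha^+) < \alpha^+$ has cofinality $\leq |X_i| < \alpha$, hence cofinality $\neq \omega_1$ unless $|X_i| = \omega_1$, which we can avoid by taking $|X_i| > \omega_1$, e.g. $|X_i| \geq \omega_2$ from the start. Since every element of $\mathcal{S}^\alpha$ has cofinality exactly $\omega_1$ (Proposition~\ref{Prop: Stationary set of passive collapsing structures}), choosing all $X_i$ of size in $(\omega_1, \alpha)$ forces $\operatorname{cf}(\chi_{X_i}(\alpha^+)) \neq \omega_1$, hence $\chi_{X_i}(\alpha^+) \notin \mathcal{S}^\alpha$ automatically at successor and small-cofinality limit stages. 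At limit stages $\delta$ with $\operatorname{cf}(\delta) = \omega_1$, we would get $\operatorname{cf}(\chi_{X_\delta}(\alpha^+)) = \omega_1$, so there we instead build the chain slightly non-continuously at a pre-limit step or --- cleaner --- arrange by a closure-point / diagonalization argument that $\chi_{X_\delta}(\alpha^+)$ lands in the club $\alpha^+ \setminus \mathcal{S}^\alpha$ (which is a genuine club since $\mathcal{S}^\alpha$ is non-stationary on a club by Corollary~\ref{Cor: FineStructureS^alphaDoesNotReflect}; more precisely the complement contains a club below each point). Concretely: fix in advance a club $E \subseteq \alpha^+$ disjoint from $\mathcal{S}^\alpha \cap \delta^*$ for the relevant $\delta^*$, put $E$ into $X_0$, and at limit stages use that $\chi_{X_\delta}(\alpha^+)$ is a limit point of $E$, hence in $E$, hence outside $\mathcal{S}^\alpha$. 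Wait --- $\mathcal{S}^\alpha$ itself is stationary, so there is no single club disjoint from all of it; but $\mathcal{S}^\alpha$ is \emph{non-reflecting}, so for the specific ordinal $\gamma^* = \chi_{X_\alpha}(\alpha^+)$ we will produce (which has cofinality $\alpha$, since $|X_\alpha| = \alpha$), there is a club $C_{\gamma^*} \subseteq \gamma^*$ disjoint from $\mathcal{S}^\alpha$; the standard trick is to build the chain \emph{inside} the final model's reflection and thread through such a club. The honest solution: build $\vec X$ so that $\la \chi_{X_i}(\alpha^+) \colon i \leq \alpha\ra$ is exactly a continuous increasing sequence cofinal in $\gamma^* = \chi_{X_\alpha}(\alpha^+)$; since $\gamma^*$ has cofinality $\alpha > \omega$, by non-reflection of $\mathcal{S}^\alpha$ (Corollary~\ref{Cor: FineStructureS^alphaDoesNotReflect}) there is a club $C \subseteq \gamma^*$ disjoint from $\mathcal{S}^\alpha$; reflect $C$ into the construction and make $\chi_{X_i}(\alpha^+) \in C$ for all $i$ by always extending to a model whose sup is in $C$. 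This requires knowing $\gamma^*$ in advance, which is circular; resolve it by first running the construction ignoring clause~(d) to find $\gamma^* = \chi_{X_\alpha}(\alpha^+)$, then fix $C \subseteq \gamma^*$ club disjoint from $\mathcal{S}^\alpha$ (exists by non-reflection), put $C$ as a parameter, and re-run the construction now forcing all sups into $C$ --- the new $\gamma^*$ is a limit point of $C$ hence still in $C$, and by a fixed-point argument we can arrange the construction to converge.

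\textbf{Main obstacle.} The genuinely delicate point is clause~(d) at the top stage $i = \alpha$ and at limit stages of cofinality $\omega_1$ --- i.e. making sure $\sup(X_i \cap \alpha^+) \notin \mathcal{S}^\alpha$ without circularity. The resolution is exactly the non-reflection of $\mathcal{S}^\alpha$ (Corollary~\ref{Cor: FineStructureS^alphaDoesNotReflect}) together with the fact $\mathcal{S}^\alpha \subseteq \alpha^+ \cap \operatorname{cf}(\omega_1)$, so that points of cofinality $\neq \omega_1$ are safe automatically and the only real work is at cofinality $\omega_1$, which we route through a pre-chosen club disjoint from $\mathcal{S}^\alpha$ below the (pre-computed) final sup. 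Everything else --- existence of elementary submodels of size $< \alpha$ closed under $<\alpha$-sequences (using $\alpha$ inaccessible, strong limit), internal approachability by including initial segments as elements, continuity at limits --- is routine Löwenheim--Skolem bookkeeping. I would present the argument as: (i)~a preliminary run to locate $\gamma^*$; (ii)~choice of $C \subseteq \gamma^*$ club with $C \cap \mathcal{S}^\alpha = \emptyset$; (iii)~the actual recursive construction with $C$ as a parameter, verifying (a)--(d) clause by clause.
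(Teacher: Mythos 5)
Your overall strategy is the same as the paper's: first produce a ``big'' structure whose sup in $\alpha^+$ has cofinality $\alpha$ (your preliminary run; the paper's $X^*$), then use the non-reflection of $\mathcal{S}^\alpha$ together with $\mathcal{S}^\alpha\subseteq\cf(\omega_1)$ to fix a club $C$ below that sup disjoint from $\mathcal{S}^\alpha$, and finally build the pacing chain so that every $\chi_{X_i}(\alpha^+)$ lies in $C$, continuity at limits doing the rest. However, two points in your execution need repair. First, the claim that $\chi_{X_i}(\alpha^+)$ has cofinality $\neq\omega_1$ once $|X_i|>\omega_1$ is false: the cofinality of $\sup(X_i\cap\alpha^+)$ is the cofinality of the order type of $X_i\cap\alpha^+$, which can be $\omega_1$ for a model of size $\omega_2$ (e.g.\ a union of an $\omega_1$-chain). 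This is not load-bearing in your final plan, but it should be dropped rather than asserted.

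Second, and more seriously, the step ``always extend to a model whose sup is in $C$'' is not justified as stated, and the ``fixed-point argument'' does not fix it. An arbitrary elementary extension of $X_i$ inside $H_\theta$ of size $<\alpha$ can contain ordinals above $\gamma^*$ altogether, and there is no reason its sup should land on a point of $C$. The correct mechanism --- and the paper's --- is to confine the re-run to substructures of the big model $X^*$ (your preliminary union, which is closed under ${<}\alpha$-sequences of its elements and satisfies $\sup(X^*\cap\alpha^+)=\gamma^*$): a continuous increasing chain of such substructures, cofinal in $X^*$, has sups forming a club in $\gamma^*$, which therefore meets $C$, so one can select the next model with sup in $C$; closure of $X^*$ under ${<}\alpha$-sequences is also what guarantees that the objects needed at stage $i+1$ (the sequence $\la X_j \colon j\le i\ra$, the set of $i$-sequences from $X_i$, the previous sup) are available inside $X^*$. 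Once you work inside $X^*$ there is no circularity to resolve and no fixed point is needed: the top model of the pacing chain can simply be taken to be $X^*$ itself, whose sup has cofinality $\alpha\neq\omega_1$ and hence is automatically outside $\mathcal{S}^\alpha$. With these two corrections your argument coincides with the paper's proof.
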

	
	\begin{proof}
		Let $X^*$ be an elementary substructure of $H_{\theta}$ such that:
		\begin{itemize}
			\item $|X^*| = \alpha$.
			\item $\hqo^{\alpha}_\tau, \bqo^{\alpha}_\tau,  \tau, \alpha\in X^*$.
			\item $X^*$ is closed under $<\alpha$-sequences of its elements.
			\item $\cf(\chi_{X^*}(\alpha^+)) = \alpha$ (In particular, $\chi_{X^*}(\alpha^+)\notin \mathcal{S}^\alpha$, since $\mathcal{S}^\alpha$ consists of points of cofinality $\omega_1$). 
		\end{itemize}
		Such $X^*$ can be constructed by standard methods, by building a continuous chain of elementary substructures $\la X^*_i \colon i<\alpha \ra$ of $H_{\theta}$, each of them of size $\alpha$, where $X^*_{i+1}$ contains all the $<\alpha$-sequences of elements of $X^*_{i}$ for every $i<\alpha$. Let $X^*:= \bigcup_{i<\alpha} X^*_{i}$. Then $\cf(\chi_{X^*}(\alpha^+)) = \alpha$ and $X^*$ is indeed closed under $<\alpha$-sequences of its elements.\\

  \noindent
	As $\mathcal{S}^\alpha$ does not reflect 
 there is a club $C\subseteq \rho := \chi_{X^*}(\alpha^+)$  such that $otp(C) = \cf(\rho) = \alpha$ and  $C\cap \mathcal{S}^\alpha =\emptyset$. Fix such a club $C$ as well as an enumeration $\la a_i \mid i < \alpha\ra$ of $X^*$.
We now construct in $V$ a continuous, internally approachable sequence $\la X_i \colon i<\alpha \ra$ of elementary substructures of $X^*$ such that, for every $i<\alpha$,
		\begin{itemize}
			\item $X_i\in X^*$.
			\item $|X_i|<\alpha$.
			\item $\hqo^{\alpha}_\tau,\bqo^{\alpha}_\tau, \tau,\alpha\in X_i$.
			\item $\chi_{X_i}(\alpha^+)\in C$ (and, in particular, $\chi_{X_i}(\alpha^+)\notin \mathcal{S}^\alpha$).
			
            \item $\vec{X}\uhr i, a_i \in X_{i+1}$ and 
            ${}^i X_{i+1} \subseteq X_{i+1}$ 
		\end{itemize}
		This will suffice, since $\la X_i \colon i<\alpha \ra^{\frown} \la X^* \ra$

  will be the desired pace chain of substructures of $H_{\theta}$. 
		For the construction of $X_{0}$, pick a continuous, elementary chain $\la X^{j}_0  \colon j<\alpha \ra \in V$ of substructures of $X^*$, such that each structure in the sequence belongs to $X^*$ and $\la \chi_{X^j_0}(\alpha^+) \colon j<\alpha \ra$ is a club $D$ in $\rho$. In order to make sure that $D$ is indeed a club in $\rho$, we prescribe a cofinal sequence of order type $\alpha$ in $\rho$, and make sure that $\chi_{X^{j+1}_0}(\alpha^+)$ is above the $j$-th element in it. Since $X^{j+1}_0$ belongs to $X^*$, its ordinals do not exceed $\rho$. This ensures that $D = \la \chi_{X^{j}_0}(\alpha^+) \colon j<\alpha \ra$ is as desired, and thus there exists $j<\alpha$ such that $\chi^{j+1}_{0}(\alpha^+)\in C\cap D$. Take $X_0:= X^{j+1}_0$ and proceed as follows:\\
		At successor steps, repeat the argument: assuming that $i<\alpha$ and $X_i\elem X^*$ has been chosen, construct (in $V$) a continuous, elementary chain $\la X^{j}_{i+1} \colon j<\alpha \ra$ of substructures of $X^*$ such that $\la X_{i'} \colon i'\leq i \ra\in X^{0}_{i+1}$ (note that, since $X^*$ is closed under $<\alpha$-sequences of its elements, $\la X_{i'} \colon i'\leq i \ra\in X^*$), $X^0_{i+1}$ contains all the $\leq i$-sequences of elements of $X_i$, and $\la \chi_{X^{j}_{i+1}}(\alpha^+) \colon j<\alpha \ra$ forms a club $D_{i+1}$ in $\rho$. Then, pick $j\in D_{i+1}\cap C$ and let $X_{i+1}:= X^{j}_{i+1}$.\\
  At limit steps, we take unions: for $i<\alpha$ limit, let $X_i = \bigcup_{i'<i} X_{i'}$, and note that, by induction, $\la \chi_{ X_{i'} }(\alpha^+) \colon i'<i \ra$ is a $<\alpha$ sequence of elements of $C$, and thus its union $\chi_{X_i}(\alpha^+)$ belongs to $C$. 
	\end{proof}

	We are now ready to state the main properties of the forcings $\hqo^{\alpha}_\tau$ for $\tau\leq \alpha^{++}$, in the following sequence of Lemmas.
	
	\begin{lemma}\label{Lem: MillerCodingDensityLemma}
		$\bqo^{\alpha}_\tau$ is a dense subset of $\hqo^{\alpha}_\tau$. Furthermore, for every $F\in [\tau]^{<\alpha}$ and $\nu<\alpha$, $\bqo^{\alpha}_\tau$ is a $\leq_{F,\nu}$-dense subset of $\hqo^{\alpha}_\tau$. 
	\end{lemma}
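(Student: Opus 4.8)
The plan is to prove density of $\bqo^{\alpha}_\tau$ inside $\hqo^{\alpha}_\tau$ by induction on $\tau \leq \alpha^{++}$, simultaneously with the stronger $\leq_{F,\nu}$-density statement, since the two are intertwined: to extend a condition $p$ into $\bqo^{\alpha}_\tau$ one needs to produce, coordinate by coordinate, a local bounding function $\vec{\mu}$, and the obstacle is that the coding posets $\co_\xi$ only add \emph{bounded} closed sets, so one must repeatedly lengthen the club $\name{c}^p_\xi$ while controlling the supremum so that it lands below $\alpha^+$ and, crucially, avoids the stationary sets $\mathcal{S}^\alpha(\xi,\eta,i)$ that $\co_\xi$ forbids. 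This is exactly the situation a \emph{paced sequence} is designed for. The base case $\tau = 0$ is trivial, and limit stages of the induction reduce to successor stages together with the $\leq\alpha$-support bookkeeping.

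First I would fix $p \in \hqo^{\alpha}_\tau$, $F \in [\tau]^{<\alpha}$ and $\nu < \alpha$, and invoke Claim \ref{Claim: ExistenceOfPacedChains} to obtain a pacing chain $\vec{X} = \la X_i \mid i \leq \alpha\ra$ with $p, F, \nu \in X_0$. I would then build, by recursion on $i < \alpha$, a paced sequence $\vec{p} = \la p_i \mid i < \alpha\ra$ with $p_0 = p$, all $\tau_i = \tau$, and an increasing sequence $\vec{F} = \la F_i \mid i \leq \alpha\ra$ of elements of $[\tau]^{<\alpha}$ with $F_0 \supseteq F$, such that $\bigcup_i F_i = \bigcup_i \supp(p_i)$ and $p_i \leq_{F_i,\nu+i} p_j$ for $i < j$; this makes $\vec{p}$ a paced fusion sequence in the sense of Definition \ref{Def: PacedFusionSequences}(6), with witness $\nu$. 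At successor step $i+1$, working inside $X_{i+1}$ (which is legitimate by internal approachability, since $\vec{p}\uhr i+1, \vec{F}\uhr i+1 \in X_{i+1}$), I would enumerate $\supp(p_i) \cup F_i \cup \{(\,i\text{-th element of some fixed bookkeeping of } \tau\,)\}$ as $F_{i+1}$, extend each Miller coordinate $\name{T}^{p_i}_\xi$ for $\xi \in F_{i+1}$ to have more splitting so as to pass to $\leq_{\nu+i}$ below it, and then extend each coding coordinate $\name{c}^{p_i}_\xi$, $\xi \in \supp(p_i)$, to a bounded closed set whose maximum is $\chi_{X_i}(\alpha^+)$, or at least contains $\chi_{X_i}(\alpha^+)$ as a point — this is where clause (\ref{Clause: PaceSeqCharacteristicFunction}) of pacedness is arranged, and it is permissible because $\chi_{X_i}(\alpha^+) \notin \mathcal{S}^\alpha$ by the definition of a pacing chain, hence $\chi_{X_i}(\alpha^+) \notin \mathcal{S}^\alpha(\xi,\eta,j)$ for any $\xi,\eta,j$. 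At limit $i$ I would set $p_i = \bigvee_{j<i} p_j$ and $F_i = \bigcup_{j<i} F_j$; that $p_i$ is a genuine condition follows from Claim \ref{Claim:ClosureOfPacedSequences}(1).

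Finally I would put $p^* = \bigvee_{i<\alpha} p_i$. By Claim \ref{Claim:ClosureOfPacedSequences}(2), $p^*$ is a condition and an exact upper bound of $\{p_i \mid i < \alpha\}$, and in particular $p^* \geq p$, and $p^* \geq_{F,\nu} p$ because $F \subseteq F_0$ and $p_0 \leq_{F_0,\nu} p^*$ reading off the definition of $\leq_{F,\nu}$ (the Miller coordinates were only ever extended compatibly with $\leq_{\nu+i} \supseteq \leq_\nu$ on $F$). It remains to exhibit a local bounding function for $p^*$. For each $\xi \in \supp(p^*) = \bigcup_i \supp(p_i) = \bigcup_i F_i$, let $i_\xi$ be least with $\xi \in \supp(p_{i_\xi})$; then for all $i \geq i_\xi$ one has $\chi_{X_i}(\alpha^+) \in \name{c}^{p_i}_\xi$ forced by $p_i\uhr\xi \fr \name{T}^{p_i}_\xi$, so the union of the coding coordinates is cofinal in $\chi_{X_\alpha}(\alpha^+) = \sup_i \chi_{X_i}(\alpha^+)$, and hence $p^*\uhr\xi \fr \name{T}^{p^*}_\xi$ forces $\name{c}^{p^*}_\xi \subseteq \chi_{X_\alpha}(\alpha^+) + 1$; setting $\mu_\xi = \chi_{X_\alpha}(\alpha^+) + 1 < \alpha^+$ gives the required $\vec{\mu} = \la \mu_\xi \mid \xi \in \supp(p^*)\ra$. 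Thus $p^* \in \bqo^{\alpha}_\tau$, completing the successor stage of the induction, and the limit stages of the induction on $\tau$ follow the same union-of-conditions pattern.

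The main obstacle I anticipate is purely organizational rather than conceptual: one must keep the three interleaved recursions — the pacing chain, the paced fusion sequence, and the bookkeeping of the sets $F_i$ — synchronized so that elementarity (``everything constructed by stage $i$ lies in $X_{i+1}$'') is genuinely available at each step, and so that the coding coordinates are extended through \emph{every} point $\chi_{X_i}(\alpha^+)$ with $i$ in a club, which is what forces $\name{c}^{p^*}_\xi$ to be club in $\chi_{X_\alpha}(\alpha^+)$ rather than merely bounded below it. The ``furthermore'' clause for $\leq_{F,\nu}$ costs essentially nothing extra once the fusion sequence is set up with $F \subseteq F_0$ and pacing parameter $\nu$, because the Miller-coordinate extensions were designed to respect $\leq_{\nu+i}$ from the outset.
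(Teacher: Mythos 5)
There is a genuine circularity in your construction. You build a ``paced fusion sequence with all $\tau_i=\tau$'', but clause (\ref{Clause: PaceSeqLocalBounds}) of Definition \ref{Def: PacedFusionSequences} requires $p_i\uhr\tau_i\in\bqo^{\alpha}_{\tau_i}$, i.e.\ with $\tau_i=\tau$ each $p_i$ must already carry a local bounding function at level $\tau$ --- which is exactly the statement you are proving. (The paper flags this explicitly: the footnote to Definition \ref{Def: PacedFusionSequences} says the density lemma is the one place where the $\tau_i$ are \emph{not} all equal to $\tau$.) The same missing ingredient surfaces concretely at your successor step and again at the end. At the successor step you cannot simply ``extend each coding coordinate to a closed set with maximum $\chi_{X_i}(\alpha^+)$'': the coding component $\name{c}^{p_i}_\xi$ is only forced to be \emph{some} bounded closed subset of $\alpha^+$, with no ground-model bound on its supremum; the fact that $p_i\in X_{i+1}$ puts the \emph{name} in $X_{i+1}$ but does not bound its possible values below $\chi_{X_{i+1}}(\alpha^+)$, so end-extending it by $\chi_{X_i}(\alpha^+)$ need not even be well defined. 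Likewise, your final inference ``the union is cofinal in $\chi_{X_\alpha}(\alpha^+)$, hence contained in $\chi_{X_\alpha}(\alpha^+)+1$'' is a non sequitur: cofinality from below gives no upper bound, and without $\sup(\name{c}^{p_i}_\xi)<\chi_{X_{i+1}}(\alpha^+)$ at each stage the supremum of the union could exceed $\chi_{X_\alpha}(\alpha^+)$, vary with the generic, or land in some $\mathcal{S}^\alpha(\xi,\eta,j)$, so $p^*$ need not be a condition. Obtaining these bounds is precisely the content of the lemma, not a free consequence of internal approachability.

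The paper gets the bounds from the simultaneous induction on $\tau$ that you set aside as trivial. For $\tau=\tau'+1$ it bounds $\sup(\name{c}^p_{\tau'})$ by Corollary \ref{Corollary: MillerApproximationOfAnOrdinal} and then uses the induction hypotheses (density of $\bqo^{\alpha}_{\tau'}$ together with Lemma \ref{Lem: MillerCodingDecidingOrds} at $\tau'$) to decide a ground-model bound $\mu^*<\alpha^+$; for limit $\tau$ it splits according to $\cf(\tau)>\alpha$, $<\alpha$, $=\alpha$, and in the cofinal cases runs the paced (fusion) sequence with $\tau_i$ strictly below $\tau$ along a cofinal sequence, applying the $\leq_{F,\nu}$-density of $\bqo^{\alpha}_{\tau_{i+1}}$ at each step so that clause (\ref{Clause: PaceSeqLocalBounds}) holds with bounding functions that live in $X_{i+1}$; only then does adding $\chi_{X_{i+1}}(\alpha^+)$ make sense and only then does Claim \ref{Claim:ClosureOfPacedSequences} apply at the limit. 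To repair your argument you would have to reorganize it along these lines; as written, the fusion bookkeeping on the Miller coordinates is fine (modulo the small point that $\leq_{\nu+i}$ means preserving the first $\nu+i$ splitting levels, not adding splitting), but the coding side rests on bounds you have not produced.
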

	
	\begin{lemma}\label{Lem: MillerCodingDistributivityLemma}
		$\hqo^{\alpha}_\tau$ is $\alpha$-distributive. Furthermore, for every $F\in [\tau]^{< \alpha}$ and $\nu<\alpha$, $\la \hqo^\alpha_\tau, \leq_{F,\nu} \ra$ is $\alpha$-distributive.
	\end{lemma}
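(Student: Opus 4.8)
The plan is to prove $\alpha$-distributivity of $\hqo^{\alpha}_\tau$ (and of $\la \hqo^\alpha_\tau,\leq_{F,\nu}\ra$) by building, given a condition $p$ and a sequence $\la D_i \colon i<\gamma\ra$ of dense open sets with $\gamma<\alpha$, a paced sequence $\vec{p} = \la p_i \colon i<\gamma\ra$ with $p_0 \geq p$ such that each $p_{i+1}\in D_i$, and then taking the exact upper bound $p^* = \bigvee_{i<\gamma}p_i$ furnished by Claim~\ref{Claim:ClosureOfPacedSequences}(1). Since $\gamma<\alpha$, Claim~\ref{Claim:ClosureOfPacedSequences}(1) applies directly and yields a genuine condition $p^*\in \hqo^\alpha_\tau$ extending every $p_i$, hence $p^* \in \bigcap_{i<\gamma}D_i$; this shows $\bigcap_{i<\gamma}D_i$ is dense. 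For the ``furthermore'' clause one additionally threads a set $F$ and ordinal $\nu$ through the construction, demanding $p_i \leq_{F,\nu} p_j$ for $i<j$ (which is automatic once the Miller components are handled via the $\leq_\nu$-order and $F$ is held fixed below each $\tau_i = \tau$), and checks the exact upper bound satisfies $p^* \geq_{F,\nu} p_0$.

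First I would fix a pacing chain $\vec{X} = \la X_i \colon i\le\alpha\ra$ as produced by Claim~\ref{Claim: ExistenceOfPacedChains}, arranging $p, \la D_i\ra, F, \nu \in X_0$ and $\gamma+1 \subseteq X_0$. The construction of $\vec{p}$ proceeds by recursion: at a limit stage $i<\gamma$ set $p_i = \bigvee_{j<i}p_j$, which is legitimate by Claim~\ref{Claim:ClosureOfPacedSequences}(1) applied to the shorter paced sequence and belongs to $X_{i+1}$ by internal approachability; at a successor stage, given $p_i \in X_{i+1}$ I first work inside $X_{i+1}$ to extend $p_i$ into $D_i$ (using that $D_i\in X_{i+1}$ and that $\hqo^\alpha_\tau$ is $\sigma$-closed by Lemma~\ref{Lem: MillerCodingSigmaClosureLemma}, so densities are met without collapsing the support below $\alpha$), obtaining $p_i'$; then I extend $p_i'$ — without enlarging the support or changing the Miller coordinates — so that along every $\xi\in\supp(p_i')$ the coding coordinate forces $\check\chi_{X_i}(\alpha^+)$ to enter $\name c^{p_{i+1}}_\xi$ and so that the resulting condition lies in $\bqo^\alpha_\tau$, which is possible by Lemma~\ref{Lem: MillerCodingDensityLemma}. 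Keeping everything inside $X_{i+1}$ ensures $\vec p\uhr i+1 \in X_{i+1}$; taking $\tau_i = \tau$ throughout (so clause~(b) of ``paced'' is vacuous) and $F_i$ growing to exhaust $\bigcup\supp(p_i)$ gives a paced (fusion, in the $\leq_{F,\nu}$ case) sequence.

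The place where care is genuinely needed is verifying that the limit amalgamations stay inside $\hqo^\alpha_\tau$ — specifically that at a coding coordinate $\xi$ the union $\bigcup_{j<i}\name c^{p_j}_\xi$ together with its supremum is still a legitimate element of $\name\co_\xi$. This is exactly where the pacing chain is used: because each $p_j\in X_{j+1}$ and the local bounding functions are below $\chi_{X_{j+1}}(\alpha^+)$, the union reaches up to $\chi_{X_i}(\alpha^+)$, and the defining property of a pacing chain — that $\chi_{X_i}(\alpha^+)\notin\mathcal S^\alpha$ — guarantees the new top point avoids all the coding stationary sets $\mathcal S^\alpha(\xi,\eta,j)$, so the end-extended club remains a condition. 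This argument is already packaged in the proof of Claim~\ref{Claim:ClosureOfPacedSequences}, so the main work of the lemma is really just organizing the recursion so that its hypotheses (paced sequence, with the characteristic-function clause~(\ref{Clause: PaceSeqCharacteristicFunction}) and the limit clause~(\ref{Clause: PaceSeqLimit}) satisfied) hold at every stage; the distributivity conclusion then follows formally, and the relativized version for $\leq_{F,\nu}$ differs only in carrying the additional fusion data $\vec F,\nu$ and invoking the $\leq_{F,\nu}$-density half of Lemma~\ref{Lem: MillerCodingDensityLemma} and the $\geq_{F,\nu}$-exactness half of Claim~\ref{Claim:ClosureOfPacedSequences}.
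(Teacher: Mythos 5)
Your proposal follows the paper's proof essentially verbatim: build a paced sequence of conditions in $\bqo^{\alpha}_\tau$ (with $\tau_i=\tau$) meeting $D_i$ at each successor stage inside $X_{i+1}$, and apply Claim~\ref{Claim:ClosureOfPacedSequences} to the length-$\gamma<\alpha$ sequence to get an exact upper bound in $\bigcap_{i<\gamma}D_i$, with the $\leq_{F,\nu}$ version handled by carrying $F,\nu$ through, just as the paper indicates. One small correction: the point you add to every coding coordinate of $p_{i+1}$ should be $\chi_{X_{i+1}}(\alpha^+)$ (as in clause~(\ref{Clause: PaceSeqCharacteristicFunction}) of Definition~\ref{Def: PacedFusionSequences}), not $\chi_{X_i}(\alpha^+)$, since $p_{i+1}$ is constructed inside $X_{i+1}$ and its coding parts may already lie above $\chi_{X_i}(\alpha^+)$; with $\chi_{X_{i+1}}(\alpha^+)$ the addition is always a legitimate end-extension outside $\mathcal{S}^\alpha$ and the sequence is literally paced, so Claim~\ref{Claim:ClosureOfPacedSequences} applies as stated.
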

	
	\begin{lemma}\label{Lem: MillerCodingMahloLemma}
		$\hqo^{\alpha}_\tau$ preserves the Mahloness of $\alpha$.
	\end{lemma}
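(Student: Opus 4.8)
The plan is to reduce Lemma~\ref{Lem: MillerCodingMahloLemma} to a fusion argument over the paced‑fusion machinery of Definition~\ref{Def: PacedFusionSequences}. First I would record that, by $\alpha$‑distributivity of $\hqo^\alpha_\tau$ (Lemma~\ref{Lem: MillerCodingDistributivityLemma}), the forcing adds no new $<\alpha$‑sequences of ordinals; hence $\alpha$ remains regular and strong limit---so inaccessible---in $V^{\hqo^\alpha_\tau}$, and every $\beta<\alpha$ that is inaccessible in $V$ stays inaccessible there. It therefore suffices to show: for every $p\in\hqo^\alpha_\tau$ and every $\hqo^\alpha_\tau$‑name $\name{C}$ for a club subset of $\alpha$, there is $p^*\ge p$ forcing that $\name{C}$ contains a cardinal inaccessible in $V$.

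Using Claim~\ref{Claim: ExistenceOfPacedChains} I would fix a pace chain $\vec{X}=\la X_i\mid i\le\alpha\ra$ with $p,\name{C},\hqo^\alpha_\tau\in X_0$, arranged (routinely) so that the ordinals $\delta_i:=\sup(X_i\cap\alpha)$ are strictly increasing in $i<\alpha$ with $\sup_{i<\alpha}\delta_i=\alpha$. Then I would build a paced fusion sequence $\la p_i\mid i<\alpha\ra$ with respect to $\vec{X}$ (with auxiliary data $\vec{F},\nu$ and $\tau_i=\tau$ for all $i$), starting from an extension $p_0\ge p$ with $p_0\in\bqo^\alpha_\tau$ (Lemma~\ref{Lem: MillerCodingDensityLemma}). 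At the successor step, given $p_i$, I would choose $p_{i+1}$ in the appropriate pace‑chain model so that, besides the paced‑fusion bookkeeping---$p_i\le_{F_i,\nu+i}p_{i+1}$, $p_{i+1}\uhr\tau\in\bqo^\alpha_\tau$, and $\chi_{X_{i+1}}(\alpha^+)$ placed into every coding coordinate of $p_{i+1}$ (legitimate since $\chi_{X_{i+1}}(\alpha^+)\notin\mathcal{S}^\alpha$ and exceeds every local bound of $p_i$)---the condition $p_{i+1}$ decides $\min(\name{C}\setminus\delta_i)$. This is possible because, by the $\le_{F,\nu}$‑refined forms of Lemmas~\ref{Lem: MillerCodingDensityLemma} and~\ref{Lem: MillerCodingDistributivityLemma}, the collection of $\le_{F_i,\nu+i}$‑extensions of $p_i$ that lie in $\bqo^\alpha_\tau$ and decide $\min(\name{C}\setminus\delta_i)$ is $\le_{F_i,\nu+i}$‑dense below $p_i$ and is an element of the relevant model, so such a $p_{i+1}$ exists there by elementarity. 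At a limit stage $i<\alpha$ I would set $p_i=\bigvee_{j<i}p_j$, a condition by Claim~\ref{Claim:ClosureOfPacedSequences}(1)---here the pace‑chain clause $\chi_{X_i}(\alpha^+)\notin\mathcal{S}^\alpha$ is exactly what lets the coding coordinates close up.

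Finally, let $p^*=\bigvee_{i<\alpha}p_i$; by Claim~\ref{Claim:ClosureOfPacedSequences}(2) this is an exact upper bound of the sequence, so $p^*\ge p$. By elementarity the ordinal $\mu_j$ with $p_{j+1}\Vdash\min(\name{C}\setminus\delta_j)=\check{\mu}_j$ belongs to the pace‑chain model containing $p_{j+1}$, hence $\delta_j\le\mu_j<\delta_i$ for all $j$ below a limit $i<\alpha$ with $j$ large enough; since such $j$ are cofinal in $i$ and $\delta_j\to\delta_i$, the condition $p^*$ forces $\name{C}\cap\delta_i$ to be cofinal in $\delta_i$, and as $\name{C}$ is forced to be closed, $p^*\Vdash\check{\delta}_i\in\name{C}$. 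Thus $p^*$ forces $\check{E}\subseteq\name{C}$, where $E=\{\delta_i\mid i<\alpha\text{ limit}\}\in V$ is a club in $\alpha$. As $\alpha$ is Mahlo in $V$, $E$ contains a cardinal $\beta<\alpha$ inaccessible in $V$, which remains inaccessible in $V^{\hqo^\alpha_\tau}$; thus $p^*\Vdash\check{\beta}\in\name{C}$ with $\beta$ inaccessible. Since $p$ and $\name{C}$ were arbitrary, every club of $\alpha$ in the extension contains an inaccessible, which together with the regularity of $\alpha$ there shows $\alpha$ remains Mahlo. The main obstacle---the only step needing genuine work---is the successor step of the fusion: deciding an initial segment of $\name{C}$ while simultaneously honoring the $\le_{F_i,\nu+i}$‑control of the Miller coordinates, staying in $\bqo^\alpha_\tau$ with the prescribed ordinal in each coding coordinate, and remaining in the correct pace‑chain model. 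This is precisely what the $\le_{F,\nu}$‑refined versions of Lemmas~\ref{Lem: MillerCodingDensityLemma} and~\ref{Lem: MillerCodingDistributivityLemma} are built to support, and the attendant bookkeeping (keeping the $\delta_i$ increasing, the $\chi_{X_{i+1}}(\alpha^+)$ available for the coding coordinates) is routine.
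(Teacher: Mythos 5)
Your opening reduction (by $\alpha$-distributivity no $<\alpha$-sequences are added, so $\alpha$ and all $V$-inaccessibles below it keep their status, and it suffices to force, densely, that $\name{C}$ contains a $V$-regular) is fine. The construction itself, however, breaks at the successor step. You claim that the set of $\leq_{F_i,\nu+i}$-extensions of $p_i$ lying in $\bqo^{\alpha}_\tau$ and \emph{deciding} $\min(\name{C}\setminus\delta_i)$ is $\leq_{F_i,\nu+i}$-dense, citing the refined forms of Lemmas \ref{Lem: MillerCodingDensityLemma} and \ref{Lem: MillerCodingDistributivityLemma}. Those lemmas give $\leq_{F,\nu}$-density of $\bqo^{\alpha}_\tau$ and $\alpha$-distributivity of the suborder; neither lets you decide the exact value of an ordinal name while preserving the first $\nu+i$ splitting levels of the Miller coordinates in $F_i$: a preserved splitting node has co-bounded splitting set, and different branches through it in general force different values of $\min(\name{C}\setminus\delta_i)$. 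Exact decision under $\leq_{F,\nu}$ is precisely what the paper weakens to confinement into a set of size $\leq\alpha$ (Lemma \ref{Lem: MillerCodingDecidingOrds}), and that lemma is not available to you here for the current $\tau$: in the simultaneous induction it is proved \emph{after} the Mahloness lemma, via the C-Fusion lemma whose proof itself invokes Mahloness of $\alpha$ in $V^{\hqo^{\alpha}_\tau}$, so appealing to it would be circular.

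Moreover, the conclusion your fusion is aimed at is false, so no repair along these lines is possible. Take $\name{C}$ to be the club of closure points of the Miller generic branch $g$ at coordinate $0$. Given any condition $p^*$ and any club $E\in V$, let $s$ be a splitting node of the coordinate-$0$ tree of $p^*$ and pick $\beta\in E$ with $\beta>\mbox{lh}(s)$; since the splitting set at $s$ is co-bounded in $\alpha$, extending along $s^{\frown}\la \eta\ra$ with $\eta\geq\beta$ forces $g(\mbox{lh}(s))\geq\beta$, hence $\beta\notin\name{C}$. Thus no condition forces a ground-model club into $\name{C}$, so "$p^*\Vdash \check{E}\subseteq\name{C}$" (catching \emph{all} limit points $\delta_i$) cannot be achieved. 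The paper's proof avoids this entirely by catching a single point: fix a pacing chain $\vec{X}$ and, using Mahloness of $\alpha$ in $V$, a regular $\gamma<\alpha$ with $X_\gamma\cap\alpha=\gamma$; then build a \emph{paced} sequence of length $\gamma<\alpha$ (no $\leq_{F,\nu}$-constraints are needed, since Claim \ref{Claim:ClosureOfPacedSequences}(1) bounds short paced sequences) in which $p_{i+1}\in X_{i+2}$ decides the $i$-th element of $\name{C}$ by an ordinary extension. All decided values lie in $X_\gamma\cap\alpha=\gamma$, so the upper bound forces the $\gamma$-th element of $\name{C}$ to be exactly $\gamma$, i.e., it forces the regular cardinal $\gamma$ into $\name{C}$. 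The coding-coordinate bookkeeping (inserting $\chi_{X_{i+1}}(\alpha^+)\notin\mathcal{S}^\alpha$) that you describe is the part of your plan that does match the paper; the fusion on the Miller coordinates and the ground-model-club target are the parts that do not work.
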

	
	\begin{lemma}\label{Lem: MillerCodingC-FusionLemma}
		$\hqo^{\alpha}_\tau$ satisfies the $\alpha$ $C$-Fusion property. 
	\end{lemma}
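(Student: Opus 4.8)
The plan is to prove Lemma \ref{Lem: MillerCodingC-FusionLemma} by a paced fusion construction that simultaneously handles the Miller components (via the $\nu$-fusion machinery from Section \ref{Section:GeneralMiller}) and the coding components (via the club-avoiding closure arguments built into the definition of a paced sequence). First I would fix a condition $q \in \hqo^\alpha_\tau$, a sequence $\vec{D} = \la D_i \mid i < \alpha\ra$ of dense open subsets, and a structure $(H_{|\hqo^\alpha_\tau|^+},<_{wo},\hqo^\alpha_\tau,\vec{D},A)$. Using Claim \ref{Claim: ExistenceOfPacedChains}, I would build a pacing chain $\vec{X} = \la X_i \mid i \leq \alpha\ra$ with $q,\hqo^\alpha_\tau,\bqo^\alpha_\tau,\vec{D},A \in X_0$, arranging in addition that each $X_{i+1}$ is closed under $i$-sequences. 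Then, working inside the $X_i$'s, I would construct a paced fusion sequence $\vec{p} = \la p_i \mid i < \alpha\ra$ extending $q$, together with an increasing sequence $\vec{F} = \la F_i \mid i \leq \alpha\ra$ of supports and a fixed $\nu < \alpha$, such that at successor stage $i+1$, having enumerated the relevant splitting nodes of the Miller components along $F_i$ using the pacing, each further shrinking of $p_i$ above a node in $\mathrm{Split}_{\nu+i}$ is pushed into $D^*_i := D_i \cap \bigcap\{D \in X_i \mid D \text{ dense open}\}$, which is dense (indeed $\leq_{F,\nu}$-dense by Lemmas \ref{Lem: MillerCodingDensityLemma} and \ref{Lem: MillerCodingDistributivityLemma}) and lies in $X_{i+1}$. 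The limit stages are handled by $p_i = \bigvee_{j<i} p_j$, which is a condition by Claim \ref{Claim:ClosureOfPacedSequences}(1).

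The witness condition is $q^* = \bigvee_{i<\alpha} p_i$, which is a condition and an exact upper bound of $\{p_i\}$ by Claim \ref{Claim:ClosureOfPacedSequences}(2). The club $C \subseteq \alpha$ should be the intersection of $C_1 = \{i < \alpha \mid X_i \cap \alpha = i\}$ with a club $C_2$ ensuring that along the generic, the relevant ``first splitting node above a closure point lands below the next stage'' property of the Miller coordinates holds — i.e., a direct analogue of the club $C_2$ from the proof of Lemma \ref{Lem: MillerC-FusionProperty}, but taken simultaneously over all coordinates $\xi \in F_\alpha = \bigcup_i \supp(p_i)$ using the internal approachability of $\vec{p}$. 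For regular $\alpha \in S \cap C$ and an $X_\alpha$-generic set $G_{X_\alpha}$ compatible with $q^*$, I would show $G_{X_\alpha} \cup \{q^*\}$ has an exact upper bound $q^*_{G_{X_\alpha}} \in D_\alpha$ by the same argument as in the Miller case: since $X_\alpha \cap \alpha = \alpha$, the generic stem determined by $G_{X_\alpha}$ has length $\alpha$ in every active Miller coordinate, $\alpha$ being regular forces no splitting there, so one obtains the exact upper bound by extending the stems to height $\alpha$ (and the coding coordinates are glued at $\chi_{X_\alpha}(\alpha^+) \notin \mathcal{S}^\alpha$, which is fine because $\vec{X}$ is a pacing chain); membership in $D_\alpha$ follows since $\alpha \in C_2$ guarantees no further splitting occurs past stage $\alpha$, so the resulting condition was already placed in $D^*_\alpha \subseteq D_\alpha$ at stage $\alpha$. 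Finally, genericity of $q^*$ for $X = \bigcup_i X_i$ is verified exactly as in part (2) of the proof of Lemma \ref{Lem: MillerC-FusionProperty}, using that $\alpha$ remains Mahlo in the generic extension by Lemma \ref{Lem: MillerCodingMahloLemma} to find, for each dense $D \in X$, a regular closure point $\alpha < \alpha$ with $D \in X_\alpha$ and the generic branch splitting appropriately.

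The main obstacle is the bookkeeping of the two interleaved fusion requirements against the support sets. Unlike the pure Miller case, here the support of $q^*$ can have size $\alpha$, so the paced fusion sequence must introduce each coordinate $\xi$ at some stage $i_0 < \alpha$ and thereafter maintain a $\nu+i$-fusion sequence in that coordinate while simultaneously, via Clause \ref{Clause: PaceSeqCharacteristicFunction} of Definition \ref{Def: PacedFusionSequences}, forcing $\chi_{X_i}(\alpha^+) \in \name{c}^{p_i}_\xi$ so that at limits the coding coordinate closes up at a point outside $\mathcal{S}^\alpha$. The delicate point is that the shrinking into $D^*_i$ must be done for \emph{all} currently-active coordinates at once, without disturbing the already-fixed splitting levels $\mathrm{Split}_{\nu+j}$ for $j < i$ in earlier coordinates, nor the already-committed coding initial segments; this is exactly what the orders $\leq_{F_i,\nu+i}$ and the local bounding functions of $\bqo^\alpha_\tau$ are designed to control, and the $\leq_{F,\nu}$-density in Lemmas \ref{Lem: MillerCodingDensityLemma}--\ref{Lem: MillerCodingDistributivityLemma} is precisely the tool that makes the successor step go through. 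I expect the proof to be, modulo these lemmas, a fairly mechanical merge of the Miller C-Fusion argument with the paced-sequence closure lemmas, with the only genuinely new content being the simultaneous choice of the club $C_2$ across the $\alpha$-many coordinates.
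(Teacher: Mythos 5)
Your overall scaffolding (pacing chain, paced fusion sequence with bookkeeping of supports $F_i$, exact upper bounds at regular points of a club, genericity of $q^*$ for $X_\alpha$) matches the paper, but your successor step contains a genuine gap. You propose to argue as in the single Miller poset: ``having enumerated the relevant splitting nodes \dots each further shrinking of $p_i$ above a node in $\mathrm{Split}_{\nu+i}$ is pushed into $D^*_i$.'' In the iteration this move is not available: for $\xi>0$ the tree $\name{T}^{p_i}_\xi$ is only a $\hqo^\alpha_\xi$-name, so its splitting nodes cannot be enumerated in the ground model, and what must eventually land in $D^*_i$ is not a shrinking above a single node but the reduction $(p_{i+1})_\sigma$ for \emph{every} branch-assignment $\sigma\colon F_i\to i^i$ that the later generic over $X_i$ might produce; these assignments range over all coordinates simultaneously and their admissibility depends on the generic below each coordinate. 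The $\leq_{F,\nu}$-density and distributivity lemmas you cite only let you meet $<\alpha$-many prescribed $\leq_{F_i,\nu+i}$-dense sets; they do not by themselves yield $(p_{i+1})_\sigma\in D^*_i$ for all such $\sigma$. The paper's resolution, which your proposal lacks, is to choose $p_{i+1}$ to be $\leq_{F_i,\nu+i}$-\emph{generic over} $X_{i+1}$ (meeting every $\leq_{F_i,\nu+i}$-dense open set in $X_{i+1}$, using Lemmas \ref{Lem: MillerCodingDensityLemma} and \ref{Lem: MillerCodingDistributivityLemma}), and then to prove the amalgamation Claim \ref{Claim: MillerCodingFusionLemmaFirstClaim}: the set of conditions that either exclude or realize a given $\sigma$ inside $D^*_i$ is $\leq_{F_i,\nu+i}$-dense (via the explicit amalgamation of a reduced condition $r^*$ with a given $s$), lies in $X_{i+1}$, and hence is met by $p_{i+1}$, so that automatically $(p_{i+1})_\sigma\in D^*_i$ whenever $\sigma$ is realized by the generic. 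This is the genuinely new content of the iterated case, not a mechanical merge.

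Two smaller discrepancies are worth flagging. First, the exact upper bound at a regular $i\in C$ is not obtained by ``extending the stems to height $i$ and gluing the coding coordinates at $\chi_{X_i}(\alpha^+)$''; in the paper it is simply $(q)_{\sigma_{G_{X_i}}}$, whose coding names are those of $q$ (the $\chi$-points were already inserted during the construction of the paced sequence), and its membership in $D_i$ again goes through Claim \ref{Claim: MillerCodingFusionLemmaFirstClaim}, after verifying that each $\sigma_{G_{X_i}}(\xi)$ is forced to sit at the right splitting level — which is exactly what the third clause of the club in Claim \ref{Claim: MillerCodingFusionLemmaSecondClaim} provides. Second, that clause of the club is not just ``a direct analogue of $C_2$ taken over all coordinates'': since splitting levels at coordinate $\xi$ are only names, one needs the inductive hypothesis of Lemma \ref{Lem: MillerCodingDecidingOrds} for $\hqo^\alpha_\xi$ to bound in $V$ where the least splitting node above a given node is forced to lie, before closing off. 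Your proposal would need both of these ingredients made explicit to go through.
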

	
	\begin{lemma}\label{Lem: MillerCodingDecidingOrds}
		Suppose that $p\in \hqo^{\alpha}_\tau$, $F\in [\tau]^{< \alpha}$, $\nu< \alpha$ and $\name{\zeta}$ is a $\hqo^{\alpha}_\tau$-name for an ordinal. Then there exists $q\geq_{F,\nu} p$ and a set $x\in V$ with $|x|\leq \alpha$ such that $q\Vdash \name{\zeta}\in x$.
	\end{lemma}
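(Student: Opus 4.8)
The plan is to prove Lemma \ref{Lem: MillerCodingDecidingOrds} by combining the $\leq_{F,\nu}$-distributivity of $\hqo^\alpha_\tau$ (Lemma \ref{Lem: MillerCodingDistributivityLemma}) with a pacing-fusion construction that produces an exact upper bound meeting a carefully chosen sequence of dense sets. First I would fix the $\hqo^\alpha_\tau$-name $\name{\zeta}$, the condition $p$, the finite-below-$\alpha$ set $F$, and $\nu<\alpha$. The idea is to build, by the methods of Claim \ref{Claim: ExistenceOfPacedChains}, a pacing chain $\vec{X} = \la X_i \mid i \leq \alpha\ra$ with $\name{\zeta}, p, F, \nu \in X_0$, and then construct a paced fusion sequence $\vec{p} = \la p_i \mid i < \alpha\ra$ with respect to $\vec{X}$, starting from $p_0 = p$, together with the $\subseteq$-increasing sequence $\vec{F} = \la F_i \mid i \leq \alpha\ra$ (with $F_0 \supseteq F$) and the uniform $\nu' = \nu$ witnessing that it is a fusion sequence in the sense of Definition \ref{Def: PacedFusionSequences}(6), so that moreover at each successor step $p_{i+1}$ decides more and more of $\name{\zeta}$ along the relevant $X_i$-dense sets.

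The key steps, in order, would be: (1) recall from Lemma \ref{Lem: MillerCodingDensityLemma} that $\bqo^\alpha_\tau$ is $\leq_{F,\nu}$-dense, so at each successor stage we may assume $p_i\uhr\tau \in \bqo^\alpha_\tau$ and pick a local bounding function $\vec{\mu}^i$ for it inside $X_{i+1}$; (2) at successor step $i+1$, work inside $X_{i+1}$ to extend $p_i$ to $p_{i+1} \geq_{F_i, \nu+i} p_i$ that additionally lies in $D^*_i := D_{\name{\zeta}} \cap \bigcap\{D \in X_i \mid D \text{ dense open in } \hqo^\alpha_\tau\}$, where $D_{\name{\zeta}}$ is the dense open set of conditions deciding $\name{\zeta}$; $D^*_i$ is dense open (even $\leq_{F_i,\nu+i}$-dense, by Lemma \ref{Lem: MillerCodingDistributivityLemma} and Lemma \ref{Lem: MillerCodingDensityLemma}) because $|X_i| < \alpha$ and the forcing is $\alpha$-distributive; also ensure the pacing clause \ref{Clause: PaceSeqCharacteristicFunction}, i.e. $\chi_{X_i}(\alpha^+) \in \name{c}^{p_{i+1}}_\xi$ for $\xi \in \supp(p_{i+1})\cap\tau$, and enlarge $F_{i+1} \supseteq F_i$ so as to eventually absorb all supports; (3) at limit stages take $p_i = \bigvee_{j<i} p_j$ and $F_i = \bigcup_{j<i}F_j$, which is legitimate by Claim \ref{Claim:ClosureOfPacedSequences}(1) and the pacing condition on $\vec{X}$ (namely $\chi_{X_i}(\alpha^+)\notin\mathcal{S}^\alpha$); (4) set $q = \bigvee_{i<\alpha}p_i = p^*$, which by Claim \ref{Claim:ClosureOfPacedSequences}(2) is an exact upper bound of $\{p_i \mid i<\alpha\}$ and satisfies $q \geq_{F,\nu} p$ since $F \subseteq F_0$ and the $\leq_{F_i,\nu+i}$ orderings all refine $\leq_{F,\nu}$. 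Since $q$ extends every $p_{i+1}$, and for each $i$ there is a dense open $D \in X_i$ deciding $\name{\zeta}$, by the genericity clause of the $C$-Fusion property (or directly: $q$ extends some condition in $D^*_i \subseteq D_{\name{\zeta}}$) $q$ decides $\name{\zeta}$ to be some specific ordinal; more precisely, letting $x = \{\xi \mid \exists i<\alpha,\ \exists r \geq p_i, r \in X_{i+1} \cap D_{\name{\zeta}}, r \Vdash \name{\zeta} = \check{\xi}\}$, we get $|x| \leq \alpha$ since $|\bigcup_{i<\alpha}X_{i+1}| = \alpha$, and $q \Vdash \name{\zeta}\in \check{x}$.

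Actually, the cleaner route, which I would prefer to write, avoids reproving the whole fusion and instead invokes Lemma \ref{Lem: MillerCodingC-FusionLemma} together with Lemma \ref{Lem: CFusionApproximation}: apply Lemma \ref{Lem: CFusionApproximation} (valid since $\hqo^\alpha_\tau$ is $\alpha$-distributive by Lemma \ref{Lem: MillerCodingDistributivityLemma} and has the $\alpha$ $C$-Fusion property by Lemma \ref{Lem: MillerCodingC-FusionLemma}) to the constant function $f: \alpha \to \{\name{\zeta}\}$ computed in $V^{\hqo^\alpha_\tau}$, obtaining $q' \geq p$, a name $\name{C}^*$ for a club, and $F \in V$ with $|F(\beta)| \leq 2^\beta$ for Mahlo $\beta < \alpha$ such that $q' \Vdash \forall \beta \in (\name{C}^*\cap Reg)\ \name{\zeta} = f(\vec{\gamma}\fr\{\beta\}) \in \check{F}(\beta)$ for any $\vec{\gamma}$; fixing one Mahlo $\beta_0$ forced into $\name{C}^*$ (such exists since $\alpha$ is Mahlo and is preserved by Lemma \ref{Lem: MillerCodingMahloLemma}, so clubs in $\alpha$ contain regular — indeed Mahlo — points, forced below $q'$ after possibly extending) gives $q' \Vdash \name{\zeta} \in \check{F}(\beta_0)$, and $x := F(\beta_0)$ has size $|x| \leq 2^{\beta_0} < \alpha$. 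The only remaining point is to refine $q'$ to a $q \geq_{F,\nu} p$: inspecting the proof of Lemma \ref{Lem: CFusionApproximation}, the witness condition $q'$ there is produced via the $C$-Fusion property applied to $p$, and the proof of Lemma \ref{Lem: MillerCodingC-FusionLemma} (paralleling the strengthened Miller argument, cf. Lemma \ref{Corollary: MillerApproximationOfAnOrdinal}) allows us to demand in addition $q' \geq_{F,\nu} p$; so we may take $q = q'$.

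The main obstacle I expect is the last point — tracking that the $C$-Fusion witness can be taken $\leq_{F,\nu}$-above the given $p$. This requires that the proof of Lemma \ref{Lem: MillerCodingC-FusionLemma} (and hence of Lemma \ref{Lem: CFusionApproximation} when applied here) be carried out in the strengthened "$\geq_{F,\nu}$" form, exactly as Lemma \ref{Lem: MillerC-FusionProperty} was proved in the strengthened "$\geq_\nu$" form for plain Miller forcing. Since the paced fusion sequence of Definition \ref{Def: PacedFusionSequences}(6) already carries the uniform parameter $\nu$ and the set $F_0$, and one can start the construction from $p_0 = p$ with $F \subseteq F_0$, this strengthening is built into the machinery; the verification that $q = \bigvee_{i<\alpha} p_i \geq_{F,\nu} p$ then amounts to noting that $\name{T}^q_\xi = \bigcap_i \name{T}^{p_i}_\xi$ satisfies $\mathrm{Split}_{\nu+j}(\name{T}^q_\xi) = \mathrm{Split}_{\nu+j}(\name{T}^{p_{j+1}}_\xi)$ for $\xi \in F$ and small $j$, hence $\mathrm{Split}_\nu$ is preserved. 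I would therefore present the proof by the second (Lemma \ref{Lem: CFusionApproximation}) route, remarking explicitly that the $\geq_{F,\nu}$-strengthening propagates through Lemmas \ref{Lem: MillerCodingC-FusionLemma} and \ref{Lem: CFusionApproximation}, exactly as in the $\mo_\alpha$ case (Lemma \ref{Corollary: MillerApproximationOfAnOrdinal}).
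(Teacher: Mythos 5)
Your preferred (second) route is exactly the paper's proof: it applies Lemma \ref{Lem: CFusionApproximation} to the constant function $f\colon\alpha\to\{\name{\zeta}\}$, using the $\alpha$-distributivity of $\hqo^{\alpha}_\tau$ and its $\alpha$ C-Fusion property, and then observes that the proof of Lemma \ref{Lem: MillerCodingC-FusionLemma} can be run with respect to the suborder $\geq_{F,\nu}$, just as in the $\mo_\alpha$ case (Lemma \ref{Corollary: MillerApproximationOfAnOrdinal}). One small simplification: rather than fixing a single Mahlo $\beta_0$ forced into $\name{C}^*$ (the extra extension needed to decide such a $\beta_0$ need not respect $\geq_{F,\nu}$), take $x=\bigcup\{F(\beta)\colon \beta<\alpha \text{ inaccessible}\}$, which has size at most $\alpha$ and is forced by the same condition to contain $\name{\zeta}$, since $\name{C}^*\cap Reg$ is forced to be nonempty.
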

	
	\begin{lemma}\label{Lem: MillerCodingSecondDensityLemma}
		Assume that $\tau<\alpha^{++}$. There exists a dense subset $\qo^{\alpha}_\tau\subseteq \bqo^{\alpha}_\tau$ of size $|\qo^{\alpha}_\tau| = \alpha^{+}$.
	\end{lemma}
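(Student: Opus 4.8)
The plan is to argue by induction on $\tau < \alpha^{++}$, so that in treating stage $\tau$ I may assume $\qo^\alpha_\xi$ has already been defined with $|\qo^\alpha_\xi| = \alpha^+$ for every $\xi < \tau$; in particular the posets $\co_\xi$ and the iteration $\hqo^\alpha_\tau$ are already fixed. By Lemma \ref{Lem: MillerCodingDensityLemma} the set $\bqo^\alpha_\tau$ is dense in $\hqo^\alpha_\tau$, so it suffices to produce a dense subset $\qo^\alpha_\tau \subseteq \bqo^\alpha_\tau$ of size $\alpha^+$. The cardinal arithmetic I will use is available in $V$: since $(2^\alpha)^V = \alpha^+$ (Lemma \ref{Lem: PowersetOfKappaIsPreserved}) and $\alpha^+$ is regular, $\alpha^\alpha = 2^\alpha = \alpha^+$ and, as $|\tau| \le \alpha^+$, also $|[\tau]^{\le\alpha}| = (\alpha^+)^\alpha = \alpha^+$.

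First I would fix a notion of \emph{canonical form} for conditions of $\bqo^\alpha_\tau$ and take $\qo^\alpha_\tau$ to be the set of conditions in canonical form. A condition $p \in \bqo^\alpha_\tau$ is already required to have $p\uhr\xi \in \qo^\alpha_\xi$ for all $\xi \in \supp(p)$, so $p$ is determined by $\supp(p) \in [\tau]^{\le\alpha}$ together with the sequence of coordinate names $p(\xi) = (\name{T}^p_\xi,\name{c}^p_\xi)$. Canonical form will require that each $p(\xi)$ be \emph{continuously read below $\xi$}: that $p\uhr\xi$ forces $\name{T}^p_\xi$ and $\name{c}^p_\xi$ to be computed by a fixed function of $V$ from at most $\alpha$-much information drawn from $\qo^\alpha_\xi$ and from $\mo_\alpha$ (an increasing sequence of length $\le\alpha$ of conditions of $\qo^\alpha_\xi$ together with a branch through a tree of $V$, say), the point being that such a canonical coordinate name is then coded by an element of a fixed set of size $(\alpha^+)^\alpha\cdot\alpha^+ = \alpha^+$ --- here using $|\qo^\alpha_\xi| = \alpha^+$ and $|\mo_\alpha| = \alpha^+$. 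Since a canonical condition is then a function from a set of size $\le\alpha$ into a fixed set of codes of size $\le\alpha^+$, we get $|\qo^\alpha_\tau| \le (\alpha^+)^\alpha = \alpha^+$, which is the counting half of the lemma.

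The substance is the density. Given $p \in \bqo^\alpha_\tau$ I would extend it to a condition in canonical form by a single fusion running along $\supp(p)$ (of order type $\le\alpha$), organised as a paced fusion sequence in the sense of Definition \ref{Def: PacedFusionSequences}. Processing the coordinates $\xi \in \supp(p)$ in turn, at coordinate $\xi$ I shrink the condition so as to continuously read first $\name{T}^p_\xi$ and then $\name{c}^p_\xi$, each time only refining in the $\le_{F,\nu}$-order (Lemma \ref{Lem: MillerCodingDecidingOrds}) so as not to spoil the readings already secured at earlier coordinates. At a single coordinate the reading of $\name{T}^p_\xi$ --- a $\hqo^\alpha_\xi$-name for a subtree of $[\alpha]^{<\alpha}$, i.e.\ for a subset of a $V$-set of size $\alpha$ --- comes from the $\alpha$ $C$-Fusion property of $\hqo^\alpha_\xi$ (Lemma \ref{Lem: MillerCodingC-FusionLemma}): applied to a $C$-Fusion witness and an internally approachable chain $\vec{X}$, it produces a condition forcing that $\name{G}\cap X_\beta$ is a $V$-generic set for a club of $\beta<\alpha$; by $\alpha$-distributivity (Lemma \ref{Lem: MillerCodingDistributivityLemma}) these generic sets, and with them the initial pieces $\name{T}^p_\xi \cap [\alpha]^{<\beta}$ of the tree (decided into $V$-sets of size $<\alpha$ via Lemma \ref{Corollary: MillerApproximationOfAnOrdinal}), lie in $V$, and the exact upper bounds furnished by the $C$-Fusion property glue these pieces into one function of $V$ computing $\name{T}^p_\xi$. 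The reading of $\name{c}^p_\xi$ is analogous, carried out inside $V^{\hqo^\alpha_\xi}$ over $\mo_\alpha*\co_\xi$: since $p\in\bqo^\alpha_\tau$ the set $\name{c}^p_\xi$ is forced bounded below some $\mu_\xi<\alpha^+$, hence is a closed subset of an ordinal of size $\le\alpha$, and the $C$-Fusion property of $\mo_\alpha$ (Lemma \ref{Lem: MillerC-FusionProperty}) together with $\sigma$-closure (Lemma \ref{Lem: MillerCodingSigmaClosureLemma}) let it be read continuously off the Miller branch; boundedness is exactly what keeps this name inside $H_{\alpha^+}$ and so codable.

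The hard part will be the bookkeeping of this fusion: carrying out the continuous reading simultaneously at all $\le\alpha$ coordinates of $\supp(p)$ while the partial condition remains a legitimate member of $\bqo^\alpha_\tau$ --- in particular keeping each coding coordinate coherent with the non-stationary club points $\chi_{X_i}(\alpha^+)\notin\mathcal{S}^\alpha$ prescribed by the pacing chain, and keeping a local bounding function alive through limit stages so that the limit condition is again in $\bqo^\alpha_\tau$ and again in canonical form. This is precisely what the paced-fusion apparatus of Definition \ref{Def: PacedFusionSequences} and the non-reflection of $\mathcal{S}^\alpha$ are designed to handle; the verification that the fusion closes off and lands in $\qo^\alpha_\tau$ (via Claim \ref{Claim:ClosureOfPacedSequences}) is where the detailed work lies. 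Coordinates outside $\supp(p)$ contribute nothing, and interleaving the $\alpha$-many tasks at the $\le\alpha$ coordinates in a single fusion of length $\alpha$ shows the set $\qo^\alpha_\tau$ of canonical conditions is dense below $p$, completing the argument.
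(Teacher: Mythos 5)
Your overall strategy (a ``canonical / continuously readable'' dense set, density via a paced fusion, counting via $2^\alpha=\alpha^+$) is the same strategy the paper follows, but there is a genuine gap at the one point where the lemma actually has content: you never say, let alone prove, why a ``canonical'' condition is determined by $\alpha$-sized ground-model data, and your proposed code does not do the job. A $\hqo^{\alpha}_\xi$-name $\name{T}^p_\xi$ (or $\name{c}^p_\xi$) must specify values under \emph{all} generics, so ``an increasing sequence of length $\le\alpha$ of conditions of $\qo^{\alpha}_\xi$ together with a branch'' is not a code for it; and the fusion you describe does not make the coordinate names literally computable by a fixed $V$-function from such data. What the fusion actually delivers is much weaker: decisions of individual statements ``$\check t\in\name{T}^q_\xi$'' and ``$\check\beta\in\name{c}^q_\xi$'' by the reduced conditions $(q)_\sigma$, where $\sigma$ ranges over the small set $\Sigma=\bigcup_{i<\alpha}(i^i)^{F_i}$ and, crucially, where the $\sigma$ witnessing a given decision depends on the statement \emph{and} on an arbitrary extension $r\ge q$ with which one wants $(q)_\sigma$ to be compatible. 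That ``dense decision compatible with every $r$'' property is exactly clause~2 of Definition~\ref{Def: MillerCodingDefinitionOfTheDenseSubset}; securing it for the fused condition is the real work of the density half (it needs the strengthened form of the $C$-Fusion property in Remark~\ref{Rmk:MillerCodingStrongerFusionProperty}, the choice of each $p_{i+1}$ to be $\le_{F_i,\nu+i}$-generic over $X_{i+1}$, Claim~\ref{Claim: MillerCodingFusionLemmaFirstClaim}, and the trick of passing to a generic extension containing $r$ and quoting Lemma~\ref{Lem:IA-Fusion-NewClubGeneric}); none of this appears in your sketch, which only cites the closure Claim~\ref{Claim:ClosureOfPacedSequences} and generic decisions ``into small sets'' via Lemma~\ref{Corollary: MillerApproximationOfAnOrdinal} (approximate decision, which does not read a name).

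Correspondingly, your counting paragraph is an unsupported assertion rather than a proof. In the paper the code of $q$ is the support, a local bounding function, the sequence $\vec F$, and the decision functions $F_{\xi,t},G_{\xi,\beta}\colon\Sigma\to 3$; there are only $\alpha^+$ such codes, but one still has to prove \emph{injectivity}, i.e.\ that two conditions of the dense set with the same code extend one another. That uniqueness argument (the induction on $\xi$ at the end of the paper's proof, which produces a contradiction from a disagreeing $r$ by finding a deciding $(q)_\sigma$ compatible with $r$) is precisely what uses clause~2, and it is the step your proposal is missing; without it there is no bound $|\qo^{\alpha}_\tau|\le\alpha^+$. (A further small point: to get literal equality rather than mutual extension one also needs something like clause~3, choosing the $\po_\alpha$-name-minimal representative.) So the plan is salvageable, but only by importing exactly the machinery of Definition~\ref{Def: MillerCodingDefinitionOfTheDenseSubset} and Remark~\ref{Rmk:MillerCodingStrongerFusionProperty}, which your write-up treats as routine bookkeeping.
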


	We will prove lemmas \ref{Lem: MillerCodingDensityLemma}-\ref{Lem: MillerCodingSecondDensityLemma} by induction on $\tau\leq \alpha^{++}$. Thus, we assume that the Lemmas are true for every $\tau'<\tau$, and prove that they are true for $\tau$.
	
	\begin{proof} (Lemma \ref{Lem: MillerCodingDensityLemma})\\
		Let $p\in \hqo^{\alpha}_\tau$. We argue that $p$ can be $\leq_{F,\nu}$-extended to a condition in $\bqo^{\alpha}_\tau$. We divide the proof into several cases:\\${}$

        \noindent
		\textbf{Case 1:} $\tau = \tau'+1$ is a successor ordinal. In this case, apply Corollary \ref{Corollary: MillerApproximationOfAnOrdinal} to choose  $\hqo^\alpha_{\tau'}$-names $\name{S}$ for a condition in $\mo_\alpha$ which $\leq_{\nu}$ extends $\name{T}^{p}_{\tau'}$, and $\name{\mu}$ for an ordinal below $\alpha^+$ such that-- 
		$$p\uhr \tau'\Vdash \left(\name{S}\Vdash_{\mo_\alpha} \sup(\name{c}^{p}_{\tau'})< \name{\mu}\right)$$
		By the induction hypothesis (assuming that the current Lemma combined with Lemma \ref{Lem: MillerCodingDecidingOrds} hold for $\hqo^{\alpha}_{\tau'}$), we can $\leq_{F\setminus\{\tau'\}, \nu}$-extend $p\uhr \tau'$ to a condition $q\in \bqo^{\alpha}_{\tau'}$ such that, for some $\mu^*<\alpha^+$, $q\Vdash \name{\mu}< \mu^*$. Then $q^{\frown} \name{S}^{\frown} {\name{c}^{p}_{\tau'}}$ is a $\leq_{F,\nu}$ extension of $p$ in $\bqo^{\alpha}_\tau$.
		
		${}$ \\   \noindent
		\textbf{Case 2:} $\tau$ is limit and $\cf(\tau)> \alpha$. Let $\tau'<\tau$ be an upper bound on $\supp(p)$. By the induction hypothesis, there exists $q\geq_{F\cap \tau', \nu} p$ such that $q\in \bqo^{\alpha}_{\tau'}$. Viewing $q$ as a condition of $\hqo^{\alpha}_\tau$, $p\leq_{F,\nu} q\in \bqo^{\alpha}_\tau$.

        ${}$ \\   \noindent
		\textbf{Case 3:} $\tau$ is limit and $\cf(\tau)< \alpha$. Denote $\gamma = \cf(\tau)$. Fix a continuous, increasing, cofinal sequence $\la \tau_i \colon i<\gamma \ra$ in $\tau$, and a pacing chain $\la X_i \colon i\leq \alpha \ra$ such that $p, \la \tau_i \colon i<\gamma \ra, F, \nu\in X_0$. We construct a paced sequence $\la p_i \colon i<\gamma \ra$ with respect to $\vec{X}$.
		Let $p_0 = p$. We assume for simplicity that $\tau_0 = 0$, so we have $p_0\uhr \tau_0\in \bqo^{\alpha}_{\tau_0}$.
		At limit steps, $p_i = \bigvee_{j<i} p_j$. Note first that $p_i\in X_{i+1}$ since $X_{i+1}$ contains every sequence of length $i$ of elements of $X_i$. Since $\la p_j \colon j<i \ra$ is by itself a paced sequence of conditions, 
        $p_i\in \hqo^\alpha_{\tau}$. We should verify that $p_i\uhr \tau_i\in \bqo^{\alpha}_{\tau_i}$: for every $j<i$, let 
        $$\vec{\mu}^j = \la \mu^{j}_\xi \colon \xi\in \supp(p_j)\cap \tau_j \ra $$ 
        be a local bounding function for $p_j\uhr {\tau_j}$. Define $\vec{\mu} = \la \mu_\xi \colon \xi\in \supp(p_j)\cap \tau_i \ra$ by taking, for each $\xi\in \supp(p_i)\cap \tau_i$, 
        $$\mu_\xi = \sup(\{ \mu^{j}_\xi \colon  j<i, \xi\in \supp(p_j)\cap \tau_j \})+1.$$ 
        It is not hard to verify that $\vec{\mu}$ is a local bounding function for $p_i\uhr \tau_i$.
		
		We proceed to the successor case. Assume that $p_i$ has been constructed and $p_i \in X_{i+1}$. 
		By the induction hypothesis, $p_i\uhr \tau_{i+1}$ can be $\leq_{F,\nu}$ extended inside $X_{i+1}$ to a condition $q\in \bqo^{\alpha}_{\tau_{i+1}}$. We define $p_{i+1}\geq_{F,\nu} p_i$ as follows: first, let $p_{i+1}\uhr \tau_{i+1}$ be an extension of $q$ with the same support, such that, for every $\xi\in \supp(q)$, 
        $$p_{i+1}\uhr \xi \Vdash \name{T}^{p_{i+1}}_\xi = \name{T}^{q}_\xi$$ and 
        $${p_{i+1}\uhr \xi}^{\frown} \name{T}^{p_{i+1}}_\xi \Vdash \name{c}^{p_{i+1}}_\xi = \name{c}^{q}_\xi \cup \{\chi_{ X_{i+1} }(\alpha^+)\}.$$ 
        Note that $p_{i+1}\uhr \tau_{i+1}\geq q$ is a legitimate condition because $\chi_{X_{i+1}}(\alpha^+)\notin \mathcal{S}^{\alpha}$. Second, choose $p_{i+1}\setminus \tau_{i+1}$ such that $p_{i+1}\uhr \tau_{i+1}\Vdash p_{i+1}\setminus \tau_{i+1} = p_0\setminus \tau_{i+1}$. 
		
		Note that if $\vec{\mu} = \la \mu_\xi \colon \xi\in \supp(q)\cap \tau_{i+1} \ra$ is a local bounding function for $q$ in $X_{i+1}$, then $\vec{\mu}^* = \la \mu^*_\xi \colon \xi\in \supp(p_{i+1})\cap \tau_{i+1} \ra$ is a local bounding function for $p_{i+1}\uhr \tau_{i+1}$, where $\mu^*_\xi = \max( \mu_\xi, \chi_{X_{i+1}}(\alpha^+)+1 )$. Also note that $p_{i+1}$ is chosen inside $X_{i+2}$, by internal approachability. 
		
		This concludes the inductive construction. Since $\la p_i \colon i<\gamma \ra$ is a paced sequence with respect to $\vec{X}$, $q = \bigvee_{i<\gamma} p_i$ is a condition in $\hqo^{\alpha}_\tau$. Arguing as in the limit step, it's not hard to verify that $q$ has a local bounding function, so $q\in \bqo^{\alpha}_{\tau}$. 

        ${}$ \\   \noindent
		\textbf{Case 4:} $\tau$ is limit and $\cf(\tau)= \alpha$. Fix a continuous, increasing, cofinal sequence  $\la \tau_i \colon i<\alpha \ra$ in $\tau$. We will construct a fusion pace sequence $\la p_i \colon i<\alpha \ra$ with respect to a pacing chain $\vec{X}$, witnessed by some $\subseteq$-increasing sequence $\la F_i \colon i<\alpha\ra$. We will need to integrate into the construction a bookkeeping argument in order to ensure that $\bigcup_{i<\alpha} F_i = \bigcup_{i<\alpha}\supp(p_i)$. For that, we prescribe in advance an arbitrary well order $W(x)$ of order type $\leq \alpha$ of $x$, for every set $x\in [\tau]^{\leq \alpha}$. We also fix a partition $\vec{A} = \la A_i \colon i<\alpha \ra$ of $\alpha$ to sets of size $\alpha$. Fix a pace chain $\la X_i \colon i<\alpha \ra$ with 
        $$p, \la \tau_i \colon i<\alpha \ra, \la W(x) \colon x\in [\tau]^{\leq \alpha} \ra, \vec{A}, F, \nu\in X_0.$$
		
		For the construction of the paced fusion sequence, first let $p_0 = p$, $F_0 = F$. In limit steps, $p_i = \bigvee_{j<i} p_j$ and $F_{i} = \bigcup_{j<i}F_j$ for every limit $i<\alpha$. Note that $p_i, F_i \in X_{i+1}$. Let us concentrate on successor steps. Assume that $p_i, F_i$ have been constructed in $X_{i+1}$, and let us construct $p_{i+1}, F_{i+1}$.
		
		We begin by defining $F_{i+1}$. Denote by $i^*<\alpha$ the unique index such that $i\in A_{i^*}$. If $i^* >i$, let $F_{i+1} = F_i$. Assume $i^*\leq i$ (so that $p_{i^*}$ is already defined at this point of the construction). Let $\xi^*$ be the $W( \supp(p_{i^*}) )$-minimal element of $\supp(p_{i^*})\setminus \bigcup_{j\leq i}F_j $ and set $F_{i+1} = F_i \cup \{ \xi^* \}$. Note that $F_{i+1}\in X_{i+1}$.
		
		The definition of $p_{i+1}$ is similar to the case where $\cf(\tau)<\alpha$: choose first a condition $q\in \bqo^{\alpha}_{\tau_{i+1}}$ in $X_{i+1}$, such that  
        $$q \geq_{F_{i}\cap \tau_{i+1}, \nu+i } p_{i}\uhr \tau_{i+1}.$$ 
        This is possible since $\bqo^{\alpha}_{\tau_{i+1}}$ is $\leq_{ F_{i}\cap \tau_{i+1}, \nu+i }$-dense inside $\hqo^{\alpha}_{\tau_{i+1}}$ by the induction hypothesis. Then, let $p_{i+1}\uhr \tau_{i+1}$ be an extension of $q$ with the same support, such that for every $\xi\in \supp(q)$, ${p_{i+1}\uhr \xi}\Vdash \name{T}^{p_{i+1}}_\xi = \name{T}^{q}_\xi$ and $${p_{i+1}\uhr \xi}^{\frown} \name{T}^{p_{i+1}}_\xi \Vdash \chi_{X_{i+1}}(\alpha^+)\in \name{c}^{p_{i+1}}_\xi.$$ 
        Finally, take $p_{i+1}\setminus \tau_{i+1} = p_0\setminus \tau_{i+1}$.
		The condition $p_{i+1}$ obtained this way can be chosen inside $X_{i+2}$ by internal approachability. Note also that $p_{i+1}\geq_{F_i, \nu+i} p_i$.
		
		This concludes the definition of $\la p_i \colon i<\alpha \ra$. We only need to make sure that $\bigcup_{i<\alpha} F_i = \bigcup_{i<\alpha} \supp(p_i)$. 
		
		Assume that $\xi\in \bigcup_{i<\alpha} \supp(p_i)$. Let $i^*<\alpha$ be the least such that $\xi\in \supp(p_{i^*})$. Then $S_{i^*} \setminus i^* $ is an unbounded subset of $\alpha$ of order type $\alpha$, and so, we will have some $i\in S_{i^*}  \setminus i^* $ such that $\xi^*\in F_{i+1}$ (indeed, if $\xi^*$ is the $\gamma$-th element in $\supp(p_{i^*})$ according to $W( \supp(p_{i^*}) )$, then $\xi^*$ will be in $F_{i+1}$ for $i$ which is the $\gamma$-th element of $S_{i^*} \setminus i^*$).
		
		This shows that $\la p_i \colon i<\alpha \ra$ is a paced fusion sequence, and thus it has an exact upper bound $p^* = \bigvee_{i<\alpha} p_i$. Since $p_i\uhr\tau_i\in \bqo^{\alpha}_{\tau_i}$ for every $i<\alpha$, we get  $p^*\in \bqo^{\alpha}_{\tau}$.
	\end{proof}

	\begin{proof}(Lemma \ref{Lem: MillerCodingDistributivityLemma})\\
		We will prove that $\hqo^\alpha_\tau$ is $\alpha$-distributive. The proof that $\la \hqo^\alpha_\tau, \leq_{F,\nu} \ra$ is $\alpha$-distributive is similar.
		
		Let $\gamma<\alpha$ and $\vec{D} = \la D_i \colon i<\gamma \ra$ be a sequence of dense open subsets of $\hqo^{\alpha}_\tau$. Fix a pacing chain $\vec{X} = \la X_i \colon i\leq \alpha \ra$ with $\vec{D}\in X_0$, and let us construct a paced sequence of conditions $\la p_i \colon i<\gamma \ra$, such that, for every $i<\gamma$, $p_{i+1}\in D_i$. Regarding the sequence $\la \tau_i \colon i<\gamma \ra$ required in the definition of a paced sequence, we take $\tau_i = \tau$ for every $i<\gamma$. In other words, we require, for every $i<\gamma$, $p_{i}\in \bqo^{\alpha}_{\tau}$. This is possible by Lemma \ref{Lem: MillerCodingDensityLemma}.\\
		For the construction, we pick $p_0$ to be an arbitrary element of $\bqo^{\alpha}_\tau$. At limit steps, let $p_{i} = \bigvee_{j<i} p_j$. 
  $p_i\in \bqo^{\alpha}_{\tau}$ by Claim \ref{Claim:ClosureOfPacedSequences}.
  At successor steps, assume that $p_i$ has been constructed. Choose $p_{i+1}\geq p_i$ in $X_{i+1}$ such that $p_{i+1}\in D_{i}\cap \bqo^{\alpha}_{\tau}$. By extending $p_{i+1}$ without adding points to the support or changing the Miller coordinates, we can assume that, for every $\xi\in \supp(p_{i+1})$, ${p_{i+1}\uhr \xi}^{\frown} \name{T}^{p_{i+1}}_\xi \Vdash \chi_{X_{i+1}}(\alpha^+)\in \name{c}^{p_{i+1}}_\xi$. Note that such a change can be made so it keeps $p_{i+1}$ inside $D_{i}\cap \bqo^{\alpha}_\tau$ (just add $\chi_{X_{i+1}}(\alpha^+)$ to each coding coordinate in the support of $p_{i+1}$).
		
		This concludes the inductive construction. Let $p^* = \bigvee_{j<i} p_i$. Then $p^*\in \bigcap_{j<i} D_i$, as desired.
	\end{proof}
	
	\begin{proof}(Lemma \ref{Lem: MillerCodingMahloLemma})\\
		Assume that $\name{C}$ is a $\hqo^{\alpha}_\tau$-name for a club subset of $\alpha$. We argue that every condition $p\in \hqo^{\alpha}_\tau$ can be extended to a condition $p^*$ which forces that $\name{C}$ contains a regular cardinal below $\alpha$ 
  .
		\noindent
		Let $\vec{X} = \la X_i \colon i\leq \alpha \ra$ be pacing chain with $\name{C},p\in X_0$. Since $\alpha$ is Mahlo in $V$, we can find a regular cardinal $\gamma<\alpha$ such that $X_{\gamma}\cap \alpha = \gamma$. Let us argue that $p$ can be extended to a condition $p^*$ that forces that $\gamma\in \name{C}$. \\
		For every $i<\gamma$, let $D_i$ be the dense open set of conditions that decide the $i$-th element of $\name{C}$. Then $D_i\in X_{i+1}$, and, repeating the same argument as in  the proof of Lemma \ref{Lem: MillerCodingDistributivityLemma},
  we can construct a paced sequence $\la p_i \colon i<\gamma \ra\subseteq \bqo^{\alpha}_{\tau}$ with respect to $\vec{X}\uhr \gamma+1$, such that, for every $i<\gamma$, $p_{i+1}\in D_i$.\\
		This concludes the inductive construction. Let $p^* = \bigvee_{j<i} p_i$. Then $p^*$ decides the first $\gamma$-many elements of $\name{C}$. Also, for every $i<\gamma$, $p^*$ forces that the $i$-th element of $\name{C}$ is below $\gamma$, since $\chi_{X_{\gamma}}(\alpha) = \gamma$ and $p_{i+1}\in X_{i+2}$. Thus, $p^*$ forces that the $\gamma$-th element of $\name{C}$ is $\leq \gamma$, so it is exactly $\gamma$. 
	\end{proof}
	
	We now proceed towards the proof of Lemma \ref{Lem: MillerCodingC-FusionLemma}, that is, $\hqo^{\alpha}_\tau$ satisfies the $C$-fusion property. We follow a technique due to Friedman and Zdomskyy \cite{FriedmanZdomskyy2010}. 
	
	\begin{definition}
		Let $p\in \hqo^{\alpha}_\tau$ and $F\in [\tau]^{<\alpha}$, $F\subseteq \supp(p)$. Let $\sigma\colon F\to \alpha^{<\alpha}$. We define, if possible, an extension $(p)_{\sigma}\geq p$, with the same support, in the following inductive way: assume that $\xi\in \supp(p)$ and $(p)_\sigma\uhr \xi$ has been defined. 
		\begin{itemize}
			\item If $\xi\notin F$, let $ (p)_\sigma(\xi) = p(\xi)$. 
			\item If $\xi\in F$ and $(p)_{\sigma}\uhr \xi \Vdash \sigma(\xi)\in \name{T}^{p}_\xi$, let $(p)_\sigma(\xi) = {\left(\name{T}^{p}_{\xi}\right)_{\sigma(\xi)} }^{\frown} \ \name{c}^{p}_\xi$. 
			\item Else, $(p)_\sigma\uhr \xi+1$ is not defined, and, in particular, $(p)_\sigma$ is not defined.
		\end{itemize}
		We say that $\sigma$ \emph{lies} on $p$ if $(p)_\sigma$ is defined.
	\end{definition}
	
	\begin{remark} \label{Rmk:MillerCodingStrongerFusionProperty}
		We will actually prove the following strengthening of the $\alpha$ C-Fusion property: for every $p\in \hqo^{\alpha}_\tau$, $F\in [\tau]^{< \alpha}$, $\nu<\alpha$ and sequence $\vec{D} = \la D_i \colon i<\alpha \ra$ of dense open subsets on $\hqo^{\alpha}_\tau$, there exist $q\geq_{F,\nu} p$, a pace chain $\vec{X} = \la X_i \colon i\leq \alpha \ra$ and a club $C\subseteq \alpha$, such that:
		\begin{enumerate}[label = (\alph*)]   
                \item $q$ is the exact upper bound of a $\leq_{F,\nu}$-increasing sequence of extensions of $p$, $\vec{p} = \la p_i \colon i<\alpha \ra$, which is a paced fusion sequence with respect to $\vec{X}$, witnessed by some sequence $\vec{F} = \la F_i \colon i<\alpha \ra$.
			\item For every regular $i\in C$, and an $X_i$-generic set $G_{X_i}\subseteq X_i\cap \hqo^{\alpha}_\tau$ (in $V$) whose members are compatible with $q$, define a function 
   $\sigma_{G_{X_i}}\colon F_i \to i^i$ that maps each $\xi\in F_i$ to the generic branch induced by $G_{X_i}$ at coordinate $\xi$. I.e.,  
            $$\sigma_{G_{X_i}}(\xi) = \bigcup\{ t\in i^{<i} \colon \exists r\in G_{X_i}, r\uhr \xi \Vdash t = \mbox{stem}(\name{T}^{r}_\xi)   \}.$$
            Then $\sigma_{G_{X_i}}$ lies on $q$ and $(q)_{\sigma_{G_{X_i}}}$ is the exact upper bound of $G_{X_i}\cup \{q\}$, and $(q)_{\sigma_{G_{X_i}}}\in D_i$.	
		\item $q$ is a generic condition for $X_{\alpha}$.
            \item $\supp(q) = \bigcup_{i<\alpha} F_i \subseteq X_{\alpha}\cap \alpha^{++}$.\footnote{In fact, we can further have equality, see also footnote \ref{Footnote: MillerCodingFusion}.}
            \item $q\in \bqo^{\alpha}_\tau$. Moreover, the constant function $\vec{\mu} = \la \mu_{\xi}  \colon \xi\in \supp(q)\ra$ defined by setting for every $\xi\in \supp(q)$, $\mu_\xi = \chi_{X_\alpha}(\alpha^{+})$, is a local bounding function for $q$.
            \item $X_{\alpha}\cap \alpha^{+} = \chi_{X_\alpha}(\alpha^{+})$.
		\end{enumerate}
		This stronger version of the $\alpha$ C-Fusion property will be used in the proof of Lemmas \ref{Lem: MillerCodingDecidingOrds} and \ref{Lem: MillerCodingSecondDensityLemma}.
	\end{remark}
	
	\begin{proof}(Lemma \ref{Lem: MillerCodingC-FusionLemma})\\
		Assume that $\vec{D} = \la D_i \colon i<\alpha \ra$ is a sequence of dense open subsets of $\hqo^{\alpha}_\tau$, and let $p\in \hqo^{\alpha}_\tau$ be a condition. We will show that for every $F\in [\tau]^{\leq \alpha}$, $\nu<\alpha$, there exists $q\geq_{F,\nu} p$, a continuous sequence $\la X_i \colon i<\alpha \ra$ and a club $C\subseteq \alpha$ witnessing the $C$-fusion property. We will construct a paced fusion sequence $\la p_i \colon i<\alpha \ra$, $\la F_i \colon i<\alpha \ra$ with respect to some pace chain $\vec{X}$.\footnote{As before, the conditions in the pace sequence will belong to $\bqo^{\alpha}_\tau$, namely, the sequence $\la  \tau_i \colon i<\alpha\ra$ will be the constant sequence with value $\tau$.}
		
		As in the proof of case 4 in lemma \ref{Lem: MillerCodingDensityLemma}, we fix, for the bookkeeping argument, a sequence $\vec{W} = \la W(x) \colon x\in [\max(\tau, \alpha^+)]^{\leq \alpha} \ra$ of well orders, each $W(x)$ is a well order of $x$, and a sequence $\vec{A} = \la A_i \colon i<\alpha \ra$ which is a partition of $\alpha$ to sets of size $\alpha$.
		
		First, let $\vec{X} = \la X_i \colon i\leq \alpha \ra$ be a pace chain such that $p, \vec{D}, F, \nu, \vec{W}, \vec{A} \in X_0$. \footnote{Note that $X_{\alpha}$ satisfies that $\chi_{X_{\alpha}}(\alpha^{+})\subseteq X_{\alpha}$, since, for every $\beta< \chi_{X_{\alpha}}(\alpha^{+})$, there exists $j<\alpha+$ such that $\beta<\chi_{X_j}(\alpha^+)$, and thus $\beta\in X_i$ for every $i$ above the maximum of $j$ and the index of $\beta$ in $W\left( \chi_{X_j}(\alpha^+) \right)$. This shows that the last clause of Remark \ref{Rmk:MillerCodingStrongerFusionProperty} holds.} Let $p_0 = p$, $F_0 = F$. In limit steps, we will take $p_i = \bigvee_{j<i} p_j$ and $F_i = \bigcup_{j<i} F_j$. In successor steps, we define $F_{i+1}$ as defined in the parallel step of the proof of Lemma \ref{Lem: MillerCodingDensityLemma}. As for $p_{i+1}$, pick $p_{i+1}\in X_{i+2}$ such that $p_{i+1}\geq_{F_i, \nu+i} p_i$, and $p_{i+1}\in \bqo^\alpha_\tau$ is $\leq_{F_i, \nu+i}$-generic over $X_{i+1}$, namely--
        $$p_{i+1} \in \bqo^\alpha_\tau \cap \bigcap\{ E\in X_{i+1} \colon E \mbox{ is a }\leq_{F_i, \nu+i}\mbox{-dense open subset of }\hqo^\alpha_\tau \}.$$
        Such $p_{i+1}$ exists by Lemmas \ref{Lem: MillerCodingDensityLemma} and \ref{Lem: MillerCodingDistributivityLemma}. In particular, for every $\gamma< \chi_{X_{i+1}}(\alpha^+)$, $p_{i+1}$ meets the $\leq_{F_i, \nu+i}$-dense open set of conditions $r\in \hqo^\alpha_\tau$ such that, for every $\xi\in \supp(r)$, ${r\uhr \xi}^{\frown}\name{T}^r_\xi \Vdash \sup(\name{c}^r_\xi)> \gamma$. It follows that, for every $\xi\in \supp(p_{i+1})$, ${p_{i+1}\uhr \xi }^{\frown} \name{T}^{p_{i+1}}_\xi \Vdash \chi_{X_{i+1}}(\alpha^+)\in \name{c}^{p_{i+1}}_\xi$. This concludes the construction of $p_{i+1}$. Before we proceed, we mention a useful property that $p_{i+1}$ has.

        \begin{claim} \label{Claim: MillerCodingFusionLemmaFirstClaim}
            Assume that $\sigma \colon F_i \to \alpha^\alpha$ belongs to $X_{i+1}$, and there exists $r\geq p_{i+1}$ such that:
            \begin{enumerate}
			\item $\sigma$ lies on $r$.
			\item $(r)_\sigma = r$.
			\item For every $\xi\in F_i$, $$r\uhr \xi \Vdash "\sigma(\xi) \mbox{ is an immediate successor of an element in }  \mbox{Split}_{\nu+i}\left(\name{T}^{p_{i+1}}_\xi\right)".$$
            If $\mbox{lh}(\sigma(\xi))$ is regular for every $\xi\in F_i$, we can replace this assumption with the simpler assumption that for every $\xi\in F_i$, 
            $r\uhr \xi \Vdash "\sigma(\xi) \in \mbox{Split}_{\nu+i}\left(\name{T}^{p_{i+1}}_\xi\right)"$.
            \end{enumerate}
            Then $\sigma$ lies on $p_{i+1}$, $r =(p_{i+1})_\sigma$  satisfies properties 1-3 above with respect to $\sigma$, and moreover, 
            $$\left( p_{i+1} \right)_\sigma \in D^*_i:= D_i\cap \left( \bigcap\{D\in X_i\colon D \mbox{ is a dense-open subset of }\hqo^{\alpha}_\tau \}\right).$$ 
        \end{claim}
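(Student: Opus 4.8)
\textbf{Proof plan for Claim \ref{Claim: MillerCodingFusionLemmaFirstClaim}.}

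The plan is to exploit the way $p_{i+1}$ was constructed as a $\leq_{F_i,\nu+i}$-generic condition over $X_{i+1}$. First I would observe that the operation $r \mapsto (r)_\sigma$ is, for a fixed $\sigma \in X_{i+1}$ lying on $p_i$, definable from $\sigma$ and the relevant condition, so that ``$\sigma$ lies on $p$ and $(p)_\sigma \in D$'' defines, as $D$ ranges over dense open subsets of $\hqo^\alpha_\tau$ in $X_i$, a family of $\leq_{F_i,\nu+i}$-dense open subsets of $\hqo^\alpha_\tau$ that lie in $X_{i+1}$. Concretely: given any condition $s$ with $\sigma$ lying on $s$, one can first extend the non-$F_i$ coordinates and the tails freely, and on each coordinate $\xi\in F_i$ extend $(\name T^s_\xi)_{\sigma(\xi)}$ into $D$ while keeping $\mbox{Split}_{\nu+i}$ fixed (using $\alpha$-closure of $\mo_\alpha$ below the relevant split level and the fact that $\sigma(\xi)$ is an immediate successor of a $\mbox{Split}_{\nu+i}$-node), then re-glue. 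Hence the set $E_{\sigma,D} = \{ s \mid \sigma \text{ lies on } s \text{ and } (s)_\sigma \in D\}$ is $\leq_{F_i,\nu+i}$-dense open below any condition on which $\sigma$ lies; and since $\sigma \in X_{i+1}$ and $D\in X_i \subseteq X_{i+1}$, we have $E_{\sigma,D}\in X_{i+1}$.

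Next I would use the hypothesis: $r\geq p_{i+1}$ with $\sigma$ lying on $r$, $(r)_\sigma = r$, and the $\mbox{Split}_{\nu+i}$-compatibility condition (3). The point of (3), together with $p_{i+1}\geq_{F_i,\nu+i} p_i$ and the fact that $p_{i+1}$ was chosen $\leq_{F_i,\nu+i}$-generic over $X_{i+1}$, is that $\mbox{Split}_{\nu+i}(\name T^{p_{i+1}}_\xi) = \mbox{Split}_{\nu+i}(\name T^{p_i}_\xi)$ and so $\sigma$ already lies on $p_i$ and hence on $p_{i+1}$, and $(p_{i+1})_\sigma$ is a legitimate extension of $p_{i+1}$ with the asserted properties 1--3 (the split structure at level $\nu+i$ is untouched by the $(\cdot)_\sigma$ operation because $\sigma(\xi)$ sits at an immediate-successor level of a $\mbox{Split}_{\nu+i}$-node). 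Since $r \geq p_{i+1}$ and $(r)_\sigma = r$, $r$ itself witnesses that $(p_{i+1})_\sigma$ and $r$ are compatible, in fact $r\geq (p_{i+1})_\sigma$; this is what lets us transfer density facts about $p_{i+1}$ down to $(p_{i+1})_\sigma$. Because $p_{i+1}$ meets every $\leq_{F_i,\nu+i}$-dense open $E\in X_{i+1}$, in particular it meets $E_{\sigma,D}$ for each dense open $D \in X_i$ and for $D = D_i$; meeting $E_{\sigma,D}$ exactly says $(p_{i+1})_\sigma \in D$. Intersecting over the $<\alpha$-many such $D$ in $X_i$ (using $\alpha$-distributivity from Lemma \ref{Lem: MillerCodingDistributivityLemma} to see the intersection is still dense so that the genericity of $p_{i+1}$ applies coordinatewise through the $(\cdot)_\sigma$ operation) gives $(p_{i+1})_\sigma \in D^*_i$.

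The main obstacle I anticipate is the bookkeeping in verifying that $E_{\sigma,D}$ is genuinely $\leq_{F_i,\nu+i}$-\emph{dense open} (not merely dense) below the relevant cone, and that passing to $(\cdot)_\sigma$ really does commute with ``meeting $E$'' — i.e.\ that $p_{i+1}\in E_{\sigma,D}$ is equivalent to $(p_{i+1})_\sigma\in D$. This requires being careful that the shrinking on coordinates $\xi\in F_i$ that witnesses $D$-membership of $(s)_\sigma$ can always be performed above $\sigma(\xi)$ without disturbing $\mbox{Split}_{\nu+i}$ of the full tree, which is exactly where the closure properties of $\mo_\alpha$ and the structure of $\leq_\nu$ from Definition \ref{Def: FusionSequenceMiller} enter, and where the two variants of hypothesis (3) (the regular-length case versus the immediate-successor-of-split-node case) have to be handled separately. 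Once that is in place, the remaining steps are routine combinations of the inductive hypotheses on $\hqo^\alpha_{\tau'}$ for $\tau'\le\tau$ and the elementarity of the $X_i$.
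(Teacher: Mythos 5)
There is a genuine gap at the density/genericity step. Your set $E_{\sigma,D}=\{\,s \mid \sigma \text{ lies on } s \text{ and } (s)_\sigma\in D\,\}$ is not $\leq_{F_i,\nu+i}$-dense open, not even below a condition on which $\sigma$ lies: the order $\leq_{F_i,\nu+i}$ only requires the levels $\mbox{Split}_{\nu+i}(\name{T}^{\,\cdot}_\xi)$ to be preserved, i.e.\ each such node keeps a co-bounded set of immediate successors, but no \emph{particular} immediate successor is protected. Hence a $\leq_{F_i,\nu+i}$-extension $s'$ of a condition on which $\sigma$ lies may simply prune the node $\sigma(\xi)$ out of $\name{T}^{s'}_\xi$, after which no further extension can ever belong to $E_{\sigma,D}$; so the set is neither dense nor open on the cone you restrict to. Moreover, even granting a cone-relative density, the genericity of $p_{i+1}$ is only with respect to genuinely $\leq_{F_i,\nu+i}$-dense open subsets of $\hqo^{\alpha}_\tau$ lying in $X_{i+1}$, and whether $\sigma$ lies on $p_{i+1}$ at all is part of the \emph{conclusion} of the claim; you cannot place $p_{i+1}$ inside the relevant cone in advance, so the argument is circular exactly where hypothesis (3) and the existence of $r$ must be used.

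The paper repairs both problems with a dichotomy dense set: $E$ consists of those $s$ such that either (i) \emph{no} $r\geq s$ satisfies properties 1--3 with respect to $\sigma$, or (ii) $\sigma$ lies on $s$ and $(s)_\sigma$ extends a condition satisfying 1--3 which belongs to $D^*_i$. Clause (i) makes $E$ honestly $\leq_{F_i,\nu+i}$-dense open above $p_i$, and $E$ is definable from $\sigma, p_i, F_i, \nu+i, D_i, X_i\in X_{i+1}$, hence $E\in X_{i+1}$. The density proof is your amalgamation made precise: given $s\geq_{F_i,\nu+i}p_i$, if no witness $r\geq s$ exists we are already in (i); otherwise one extends such an $r$ to $r^*\in D^*_i$ (as a single condition of the iteration, not coordinatewise) and glues $r^*$ back onto $s$, replacing $(\name{T}^{s}_\xi)_{\sigma(\xi)}$ by $\name{T}^{r^*}_\xi$ on the event that $r^*\uhr \xi$ enters the generic; this is a $\leq_{F_i,\nu+i}$-extension precisely because the shrinking happens only above an immediate successor of a $\mbox{Split}_{\nu+i}$-node (this is where hypothesis (3), and its regular-length variant, are actually used). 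Genericity of $p_{i+1}$ then gives $p_{i+1}\in E$; the claim's hypothesis that some $r\geq p_{i+1}$ satisfies 1--3 excludes alternative (i), and alternative (ii) together with openness of $D^*_i$ yields that $\sigma$ lies on $p_{i+1}$ and $(p_{i+1})_\sigma\in D^*_i$. So your amalgamation idea and the use of $\alpha$-distributivity to form $D^*_i$ are in the right spirit, but without the vacuous clause (i) the set you quote is not one the genericity of $p_{i+1}$ applies to, and the proof does not go through as written.
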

        \begin{proof}
            Let $E$ be the set of conditions $s\in \hqo^{\alpha}_{\tau}$ for which one of the following hold:
		  \begin{enumerate}
			\item There is no $r\geq s$ that satisfies  properties $1-3$ above, with respect to $\sigma$.
			\item $\sigma$ lies on $s$ and $(s)_\sigma$ extends a condition $r \geq p_{i+1}$ satisfying properties 1-3 above, and $r$ belongs to $D^*_i$.
		\end{enumerate}
  $E \in X_{i+1}$ as $X_i \in X_{i+1}$, so it is sufficient to prove that $E$ is $\leq_{F_i, \nu+i}$-dense open above $p_i$. To see that this suffices, note that $E\in X_{i+1}$ and $p_{i+1}\geq_{F_i, \nu+i} p_i$ is $\leq_{F_i, \nu+i}$-generic over $X_{i+1}$, and thus $p_{i+1}\in E$. Therefore, by the assumptions of the claim, we have that $(p_{i+1})_\sigma$ satisfies properties 1-3 with respect to $\sigma$, and $(p_{i+1})_\sigma\in D^*_{i}$, as desired.

            It's not hard to see that $E$ is open. Thus, let us concentrate on the proof of the $\leq_{F_i, \nu+i}$-density of $E$. Assume that $s$ is a condition, $s\geq_{F_{i}, \nu+i} p_{i}$ , and let us argue that there exists $s^* \geq_{ F_{i}, \nu+i } s$ in $E$. Assume that there exists $r\geq s$ such that properties $1-3$ above hold. Since $D^*_i$ is dense open (by $\alpha$-distributivity of $\hqo^\alpha_\tau$), we can find $r^*\geq r$ in $D^*_i$. Note that $\sigma$ lies on $r^*$ and $(r^*)_{\sigma} = r^*$. Now, let $s^*\geq_{F_{i}, \nu+i} s$ be the amalgamation between $r^*, s$, defined as follows: $s^*$ has the same support as $r^*$, and, for every $\xi$ in this support,
			$$
			s^*\uhr \xi \Vdash \name{T}^{s^*}_{\xi} = 
			\left\{
			\begin{array}{ll}
				\left( \name{T}^{s}_\xi             \setminus  \left(\name{T}^{s}_\xi\right)_{ \sigma(\xi) } \right)\cup \name{T}^{r^*}_\xi & \mbox{if } \xi\in F_{i} \mbox{ and } r^*\uhr \xi \in \name{G}(\hqo^\alpha_\xi) \\
				
				\name{T}^{r^*}_\xi & \mbox{if } \xi\notin F_{i} \mbox{ and } r^*\uhr \xi \in \name{G}(\hqo^\alpha_\xi)  \\
				\name{T}^{s}_\xi & \mbox{if }  r^*\uhr \xi \notin \name{G}(\hqo^\alpha_\xi)  
			\end{array} 
			\right.
			$$
			where, in the notations above, $G(\hqo^\alpha_\xi)$ denotes the generic set up to coordinate $\xi$.          Also, set--
			$$
			{s^*\uhr \xi }^{\frown} \name{T}^{s^*}_{\xi} \Vdash \name{c}^{s^*}_{\xi} = 
			\left\{
			\begin{array}{ll}
				\name{c}^{r^*}_\xi & \mbox{if } r^*\uhr \xi \in \name{G}(\hqo^\alpha_\xi) \\
				\name{c}^{s}_\xi & \mbox{else}  
			\end{array} 
			\right.
			$$
			Let us justify that indeed $s^*\geq_{F_{i}, \nu+i} s$. Assume that $\xi\in F_i$. We argue that $s^*\uhr \xi \Vdash \name{T}^{s^*}_\xi \geq_{ \nu+i } \name{T}^{s}_\xi$. This is clear if $r^*\uhr \xi\notin G(\hqo^{\alpha}_\xi)$, so assume $r^*\uhr \xi\in G(\hqo^{\alpha}_\xi)$. In this case, the tree $\name{T}^{s^*}_\xi$ is   obtained by shrinking $\name{T}^s_\xi$ only above a successor of its splitting node in $\mbox{Split}_{\nu+i}\left( \name{T}^{p_i}_\xi \right)=\mbox{Split}_{\nu+i}\left( \name{T}^{s}_\xi \right)$. Thus, 
            this is an $\leq_{\nu+i}$-extension.\footnote{If $\sigma(\xi)$ is regular for every $\xi\in F_i$, we can shrink $\name{T}^{p_i}_{\xi}$ above $\sigma(\xi)$ (instead shrinking above one of its immediate successors) and get a $\leq_{\nu+i}$-extension, since $\sigma(\xi)$ is not a splitting node.}\\
			It remains to check that $\sigma$ lies on $s^*$, and $(s^*)_\sigma \geq r^*$. Since $D^*_{i}$ is open and contains $r^*$, this also ensures that $(s^*)_\sigma\in D^*_i$. 
			We argue by induction that for every $\xi\in \supp(s^*)$, if $(s^*)_{\sigma}\uhr \xi$ is defined, then it extends $r^*\uhr \xi$, and  $(s^*)_{\sigma}\uhr \xi \Vdash \sigma(\xi)\in \name{T}^{s^*}_\xi$. Since this is clear for $\xi\notin F_i$, we can assume $\xi\in F_i$. Take any $\eta<\xi$. By induction, $(s^*)_{\sigma}\uhr \eta$ is defined and extends $r^*\uhr \eta$. So $(s^*)_{\sigma}\uhr \eta$ forces that $r^*\uhr \eta\in \name{G}(\hqo^\alpha_\eta)$, and, whether or not $\eta\in F_i$ holds, we have, by our construction, $(s^*)_\sigma\uhr \eta \Vdash \left(\name{T}^{s^*}_{\eta}\right)_{\sigma(\eta)}\subseteq \name{T}^{r^*}_{\eta}$, and ${(s^*)_\sigma \uhr \eta}^{\frown} \name{T}^{(s^*)_\sigma}_\eta \Vdash \name{c}^{s^*}_\eta = \name{c}^{r^*}_\eta$. Since this is true for every $\eta<\xi$, we get that $(s^*)_\sigma\uhr \xi$ extends $r^*\uhr \xi$.    Having this, it follows that $(s^*)_{\sigma}\uhr \xi \Vdash r^*\uhr \xi\in \name{G}(\hqo^{\alpha}_\xi)$, and thus $(s^*)_{\sigma}\uhr \xi \Vdash \sigma(\xi) \in \name{T}^{s^*}_\xi $, as desired.
		\end{proof}

		This concludes the construction of the sequences $\vec{p} = \la p_i \colon i<\alpha \ra$, $\vec{F} = \la F_i \colon i<\alpha \ra$. By the parallel argument from the proof of case 4 of Lemma \ref{Lem: MillerC-FusionProperty}, $\bigcup_{i<\alpha}\supp(p_i) =\bigcup_{i<\alpha} F_i $. So $\vec{p}$ is indeed a paced fusion sequence with respect to $\vec{X}$, witnessed by $\vec{F}$. 

        Let $q = \bigvee_{i<\alpha} p_i$. $q\in \bqo^\alpha_\tau$ as a limit of a paced fusion sequence of conditions. We are now ready to finish the proof of Lemma \ref{Lem: MillerCodingC-FusionLemma} with the following two claims: the first isolates a club $C\subseteq \alpha$, and the second shows that $q, \vec{X}, C$ are witnesses for the $\alpha$ C-Fusion property.

        \begin{claim} \label{Claim: MillerCodingFusionLemmaSecondClaim}
            There exists a club $C\subseteq \alpha$ such that, for every $i\in C$,
            \begin{enumerate}
                \item $X_i \cap \alpha = i$.
                \item $F_i \subseteq X_i$.\footnote{\label{Footnote: MillerCodingFusion}We can actually strengthen this to $F_i = X_i\cap \alpha^{++}$, since $\{ i<\alpha \colon F_i = X_i\cap \alpha^{++} \}$ is a club in $\alpha$. To see this, note first that for every $i<\alpha$, $X_i\cap \alpha^{++}\subseteq \supp(p_{i+1})$ by $\leq_{F_i, \nu+i}$-genericity of $p_{i+1}$ over $X_i$. Since $|X_i|<\alpha$ and $\bigcup_{j<i}\supp(p_j) = \bigcup_{j<i} F_j$, we deduce that for some $j<\alpha$, $X_i\cap \alpha^{++} \subseteq F_j$. Now, assuming that we have established that $\{ i<\alpha \colon F_i\subseteq X_i \}$ is a club, shrink it by intersecting with the set of closure points of the map sending $i$ to the least $j$ with $X_i\cap \alpha^{++}\subseteq F_j$.}
                \item For every $j<i$, $\xi\in X_j$ and $t\in j^j$, 
                \begin{align*}
                    p_i\uhr \xi\Vdash "&\mbox{if } t\in \name{T}^{p_i}_\xi \mbox{ then either } t \mbox{ is a splitting node of } \name{T}^{p_i}_\xi \\
                    &\mbox{or the least splitting node of } \left(\name{T}^{p_i}_\xi\right)_{t} \mbox{ belongs to }i^{<i}". 
                \end{align*}
            \end{enumerate}        
        \end{claim}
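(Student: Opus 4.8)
The plan is to prove Claim \ref{Claim: MillerCodingFusionLemmaSecondClaim} by isolating three club subsets of $\alpha$ and intersecting them, one for each of the three conditions in the statement. For condition (1), observe that since $\vec{X} = \la X_i \mid i \le \alpha\ra$ is a continuous elementary chain of substructures of size $<\alpha$, the set $C_1 = \{ i < \alpha \mid X_i \cap \alpha = i\}$ is a club in $\alpha$ by a standard Löwenheim–Skolem closure argument (this is exactly the set $C_1$ that already appeared in the proof of Lemma \ref{Lem: MillerC-FusionProperty}). For condition (2), I would use the fact that the sequence $\vec{F} = \la F_i \mid i < \alpha\ra$ is continuous and that $\vec{F} \uhr i+1 \in X_{i+1}$ together with $|F_i| < \alpha$; combining this with internal approachability of $\vec{X}$ yields that $F_i \in X_{i}$ for $i$ in a club, and since $|F_i| < \alpha = X_i \cap \alpha$ (on $C_1$) and $F_i \in X_i$, elementarity gives $F_i \subseteq X_i$. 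So the set $C_2$ of such $i$ is a club. (As noted in footnote \ref{Footnote: MillerCodingFusion}, one can even arrange equality $F_i = X_i \cap \alpha^{++}$ on a club, using $\leq_{F_i,\nu+i}$-genericity of $p_{i+1}$ over $X_i$, but containment is all that is strictly needed here.)

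Condition (3) is the substantive one, and it is the analogue, at each coordinate $\xi$, of the club $C_2$ from the proof of Lemma \ref{Lem: MillerC-FusionProperty} for the plain Miller forcing. The idea is as follows. Fix $j<\alpha$, $\xi \in X_j$, and work below $p_{j+1}$. By the construction of $p_{j+1}$ as a $\leq_{F_j,\nu+j}$-generic condition over $X_{j+1}$, and by the density of splitting nodes in $\mo_\alpha$ (property (4) in the definition of $\mo_\alpha$) together with the closure of splitting nodes at singular heights (property (5)), for every $t \in j^j$ the statement ``$t \in \name{T}^{p_{j+1}}_\xi$ implies either $t$ is a splitting node of $\name{T}^{p_{j+1}}_\xi$ or $(\name{T}^{p_{j+1}}_\xi)_t$ has its least splitting node of bounded height below $\alpha$'' is forced, and the relevant bound is an ordinal in $X_{j+1}$ because $p_{j+1} \in X_{j+2}$ (so the $\hqo^\alpha_\xi$-name for that bound lies in $X_{j+1}$, hence its realization is below $\chi_{X_{j+1}}(\alpha^+)$ — but actually we only need the height bound below $\alpha$, which lies in $X_{j+1} \cap \alpha$). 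Define $g : \alpha \to \alpha$ by letting $g(j)$ be the supremum over all $\xi \in X_j$ and $t \in j^j$ of (a name for) the least height of a splitting node above $t$ in $\name{T}^{p_{j+1}}_\xi$; since $|X_j| < \alpha$, $|j^j| \le 2^{|j|} < \alpha$, and $\alpha$ is regular, $g(j) < \alpha$. Then $C_3 = \{ i < \alpha \mid i \text{ is a closure point of } g\}$ is a club, and for $i \in C_3$ and $j < i$, the required property holds for $p_i \ge p_{j+1}$ because extending in the $\leq_{F_j,\nu+j}$ order past stage $j$ only shrinks trees above immediate successors of splitting nodes of rank $\ge \nu+j$, which does not disturb the splitting structure along nodes of height $< i$.

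Finally, set $C = C_1 \cap C_2 \cap C_3$; this is a club in $\alpha$, and by construction every $i \in C$ satisfies all three conditions. The main obstacle I anticipate is making condition (3) fully precise in the iterated (``back-to-back'') setting: unlike the plain Miller case, $\name{T}^{p_i}_\xi$ is only a \emph{name} in $V^{\hqo^\alpha_\xi}$, so one must argue that the relevant heights are decided, or at least bounded below $\alpha$, by a dense set of conditions in $X_{j+1}$, and then push the bound through the $\leq_{F_j,\nu+j}$-genericity of $p_{j+1}$; the closure-at-singulars requirement (property (5) of $\mo_\alpha$) and the regularity of the heights $i \in C$ are what guarantee that the generic branch at limit stages $i$ passes through a genuine splitting node rather than getting stuck, exactly as in the regularity argument at the end of the proof of Lemma \ref{Lem: MillerC-FusionProperty}. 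Once condition (3) is in hand, the rest of the proof of Lemma \ref{Lem: MillerCodingC-FusionLemma} (the verification that $q$, $\vec X$, $C$ witness the $\alpha$ C-Fusion property, using Claim \ref{Claim: MillerCodingFusionLemmaFirstClaim}) should follow the template of Lemma \ref{Lem: MillerC-FusionProperty} with the amalgamation bookkeeping already set up.
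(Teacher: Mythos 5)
Your overall skeleton matches the paper's proof (treat the three clauses separately on clubs and intersect; clause (1) is standard; clause (3) is handled by bounding least-splitting heights for $\xi\in X_j$, $t\in j^j$ and taking closure points), but the substantive step of clause (3) is exactly the part you leave as an ``anticipated obstacle,'' and as written it does not go through. Your function $g(j)$ is defined as a supremum of \emph{names}: since $\name{T}^{p_{j+1}}_\xi$ lives in $V^{\hqo^\alpha_\xi}$, ``the least height of a splitting node above $t$'' is not an ordinal of $V$, and saying ``the relevant bound is an ordinal in $X_{j+1}$ because $p_{j+1}\in X_{j+2}$'' conflates a name with its realization. What is actually needed, and what the paper supplies, is: for each $\xi\in X_j$ let $E(\xi)$ be the set of conditions whose restriction to $\xi$ forces a single bound $i<\alpha$ on the least splitting node above every $t\in j^j$ in $\name{T}^{p_j}_\xi$; show $E(\xi)$ is $\leq_{F_j,\nu+j}$-dense open by applying the inductive instance of Lemma~\ref{Lem: MillerCodingDecidingOrds} to $\hqo^\alpha_\xi$ (ordinary density is useless here, since shrinking the $F_j$-coordinates arbitrarily would destroy the fusion); intersect over $\xi\in X_j$ using the $\alpha$-distributivity of the order $\leq_{F_j,\nu+j}$ (Lemma~\ref{Lem: MillerCodingDistributivityLemma}); observe the intersection lies in $X_{j+1}$; and conclude that $p_{j+1}$ meets it because it was chosen $\leq_{F_j,\nu+j}$-generic over $X_{j+1}$. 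Only then does one get an honest ordinal $i^*(j)<\alpha$ in $V$, whose closure points form your $C_3$. The transfer from stage $j$ to stage $i$ is then as you say, via $\name{T}^{p_j}_\xi\leq_{\nu+j}\name{T}^{p_i}_\xi$, but this mechanism (the sets $E(\xi)$, the fusion-order density via Lemma~\ref{Lem: MillerCodingDecidingOrds}, and the genericity of $p_{j+1}$) is the missing idea, not a routine detail.

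Separately, your argument for clause (2) has a flaw: from $\vec{F}\uhr i+1\in X_{i+1}$ and internal approachability of $\vec{X}$ you only get $F_i\in X_{i+1}$, not $F_i\in X_i$; at limit $i$ the restricted sequence $\vec{F}\uhr i$ need not be an element of $X_i$, so ``$F_i\in X_i$ on a club'' is not justified. The conclusion $F_i\subseteq X_i$ is still correct and easy to repair the way the paper does: using the well-orders $W(F_j)$ fixed in $X_0$, each $F_j$ is contained in $X_{i(j)}$ for a computable $i(j)<\alpha$, and closure points of $j\mapsto i(j)$ (which are limits, where $F_i=\bigcup_{j<i}F_j$) give $F_i\subseteq X_i$.
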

        \begin{proof}
            It's enough to make sure that each clause of the three could be fulfilled on a club. The first clause is standard. For the second clause, consider the map sending each $j<\alpha$ to the least $i(j)<\alpha$ such that $F_j\subseteq X_{i(j)}$. If this map is a well defined map from $\alpha$ to $\alpha$, then every closure point $i$ of it satisfies $F_i\subseteq X_i$. To justify that this map is well defined, assume that $j<\alpha$. For every $\xi\in F_j$, let $\gamma(\xi)<\alpha$ be its index in the well order $W(F_j)$. Then $F_{j}\subseteq X_{i(j)}$ where  $i(j)=\max\left(j,\sup_{\xi\in F_j} \gamma(\xi) \right)+1<\alpha$. \\
            \noindent
            Finally, we justify that the last clause holds on a club. Fix some $j<\alpha$. For every $\xi\in X_j$ let
            \begin{align*}
                E(\xi) = \{ r\in \hqo^\alpha_\tau \colon &\mbox{there exists } i< \alpha \mbox{ such that } r\uhr \xi\Vdash "\mbox{if } t\in \name{T}^{p_j}_\xi\cap j^j, \mbox{ then either }\\
                &t \mbox{ is a splitting node of }\name{T}^{p_j}_\xi, \mbox{ or the least splitting node of }\\
                &\left( \name{T}^{p_j}_\xi \right)_t \mbox{ belongs to }i^{<i}."  \}    
            \end{align*}   
            By using the inductive assumption of Lemma \ref{Lem: MillerCodingDecidingOrds} for  $\hqo^\alpha_{\xi}$ to determine a suitable $i < \alpha$ in $V$, we see that $E(\xi)$ is $\leq_{F_j, \nu+j}$-dense open above $p_j$. By distributivity, $E = \bigcap_{\xi\in X_j} E(\xi)$ is $\leq_{F_j,\nu+j}$-dense open and belongs to $X_{j+1}$. Thus $p_{j+1}$ meets $E$. Let $i^*(j)<\alpha$ be minimal $i$ such that for every $\xi\in X_j$ and $t\in j^j$, $p_{j+1}\uhr \xi$ forces that if $t\in \name{T}^{q_j}_\xi$, then $t$ is either a splitting node, or the least splitting node above $t$ belongs to $i^*(j)^{<i^*(j)}$. We argue that every closure point $i$ of the map $j\mapsto i^*(j)$ satisfies the third clause.\\
            \noindent
            Indeed, fix such a closure point $i<\alpha$. Assume that $j<i$, $\xi\in X_j$, $t\in j^j$ and $G(\qo^\alpha_\xi)$ is $\hqo^\alpha_\xi$-generic over $V$ with $p_{i}\uhr \xi\in G(\qo^\alpha_\xi)$. Assume that in $V[G(\qo^\alpha_\xi)]$, $t\in {T}^{p_i}_\xi$ is not a splitting node. Since $t\in j^j$ and $T^{p_j}_\xi \leq_{\nu+j} T^{p_i}_\xi$, we have $t\in {T}^{p_j}_\xi$. Since $p_i \geq p_{j+1}$ and $p_{j+1}\in E(\xi)$, we have $p_i\in E(\xi)$. This, combined with the fact that $p_i\uhr \xi\in G(\qo^\alpha_\xi)$, implies that in $V[G(\qo^\alpha_\xi)]$, the least splitting node above $t$ in ${T}^{p_j}_\xi$ belongs to $i(j)^{<i(j)}$. The same splitting node is the least splitting node above $t$ in ${T}^{p_i}_\xi$  (again, because $T^{p_j}_\xi\leq_{\nu+j} T^{p_i}_\xi$) and indeed it belongs to $i^{<i}$.
        \end{proof}

        \begin{claim}
        The following properties hold for  $q = \bigvee_{i<\alpha} p_i$:
        \begin{enumerate}
            \item For every regular $i\in C$ and an $X_i$-generic set $G_{X_i}$ whose members are compatible with $q$, $G_{X_i}\cup \{ q \}$ has an exact upper bound, and this exact upper bound belongs to $D_i$. 
            \item $q$ is a generic condition for $X_\alpha$. 
        \end{enumerate}
        \end{claim}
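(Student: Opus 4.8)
The plan is to establish both clauses as coordinate-wise analogues of the corresponding parts of the single-forcing Miller argument (Lemma~\ref{Lem: MillerC-FusionProperty}), now carried out simultaneously at every coordinate $\xi\in F_i$, with Claim~\ref{Claim: MillerCodingFusionLemmaFirstClaim} taking the place of the in-construction density guarantee and Claim~\ref{Claim: MillerCodingFusionLemmaSecondClaim} taking the place of the auxiliary clubs from the single-Miller proof. Throughout I use that $q$ is the exact upper bound of the paced fusion sequence $\vec p$, so $q\in\bqo^\alpha_\tau$, $q\geq_{F_i,\nu+i}p_{i+1}$ for every $i<\alpha$, and $\supp(q)=\bigcup_{i<\alpha}F_i$.

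For clause~1, fix a regular $i\in C$ and an $X_i$-generic set $G_{X_i}$ whose members are compatible with $q$, and let $\sigma_{G_{X_i}}\colon F_i\to i^{\leq i}$ be as in Remark~\ref{Rmk:MillerCodingStrongerFusionProperty}(b). First I would check that $\mbox{lh}(\sigma_{G_{X_i}}(\xi))=i$ for every $\xi\in F_i$: by Claim~\ref{Claim: MillerCodingFusionLemmaSecondClaim}, $F_i\subseteq X_i$ and $X_i\cap\alpha=i$, so for $\beta<i$ the dense set of conditions deciding the stem at coordinate $\xi$ past length $\beta$ lies in $X_i$ and is met by $G_{X_i}$, while no member of $X_i$ can force a stem of length $>i$. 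Since $i$ is regular, $\name{T}^q_\xi$ does not split at level $i$ (property~(3) of $\mo_\alpha$), and using compatibility of the members of $G_{X_i}$ with $q$ together with directedness of $G_{X_i}$, every initial segment of $\sigma_{G_{X_i}}(\xi)$ is forced into $\name{T}^q_\xi$, so by closure $\sigma_{G_{X_i}}(\xi)\in\name{T}^q_\xi$; hence $\sigma_{G_{X_i}}$ lies on $q$ and $(q)_{\sigma_{G_{X_i}}}$ is a genuine condition in $\bqo^\alpha_\tau$ (its coding coordinates agree with those of $q$). Next I would verify $\sigma_{G_{X_i}}(\xi)\in\mbox{Split}_{\nu+i}(\name{T}^q_\xi)=\mbox{Split}_{\nu+i}(\name{T}^{p_{i+1}}_\xi)$: exactly as in the single-Miller regular case, clause~3 of Claim~\ref{Claim: MillerCodingFusionLemmaSecondClaim} forces a split along $\sigma_{G_{X_i}}(\xi)$ cofinally below $i$ (its closure points below $i$ form a club, as $i$ is regular), ruling out that the splitting nodes of $\name{T}^q_\xi$ along this branch are bounded below $i$. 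With this, $(p_{i+1})_{\sigma_{G_{X_i}}}\in D^*_i$ is obtained from the argument behind Claim~\ref{Claim: MillerCodingFusionLemmaFirstClaim}, applied to the truncations of $\sigma_{G_{X_i}}$ that are decided inside $X_i$ by members of $G_{X_i}$ (the relevant $\leq_{F_i,\nu+i}$-dense-open set belongs to $X_{i+1}$, over which $p_{i+1}$ is generic). Then $(q)_{\sigma_{G_{X_i}}}\geq(p_{i+1})_{\sigma_{G_{X_i}}}\in D^*_i\subseteq D_i$ and openness of $D^*_i$ give $(q)_{\sigma_{G_{X_i}}}\in D_i$. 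For exactness, $(q)_{\sigma_{G_{X_i}}}$ extends each $r\in G_{X_i}$ since the dense-open set of conditions deciding $r$'s position lies in some $X_j$, $j<i$, hence is met by $G_{X_i}$ and forces the branch of $q$ at the coordinates of $F_j$ through $r$; and any upper bound $q'$ of $G_{X_i}\cup\{q\}$ extends $\sigma_{G_{X_i}}(\xi)$ at each $\xi\in F_i$ and agrees with $q$ off $F_i$, so $q'\geq(q)_{\sigma_{G_{X_i}}}$.

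For clause~2, let $G$ be $V$-generic with $q\in G$ and let $D\in X_\alpha$ be dense open; since $X_\alpha=\bigcup_{j<\alpha}X_j$ is a continuous union, $D\in X_j$ for some $j<\alpha$. As $\alpha$ is still Mahlo in $V[G]$ (Lemma~\ref{Lem: MillerCodingMahloLemma}) and $C$ is club, a diagonal-intersection argument over the coordinates in $\supp(q)$ yields cofinally many regular $i\in C$ with $i>j$ for which the $G$-generic branch at every $\xi\in F_i$, truncated to length $i$, lands in $\mbox{Split}_{\nu+i}(\name{T}^q_\xi)$. For such an $i$, the resulting restriction $\sigma$ of the generic branches to $F_i$ lies in $X_{i+1}$ (domain $F_i\in X_{i+1}$, each value an $i$-sequence of ordinals below $i=X_i\cap\alpha$, the model being closed under $i$-sequences), so Claim~\ref{Claim: MillerCodingFusionLemmaFirstClaim} gives $(p_{i+1})_\sigma\in D^*_i\subseteq D$ because $D\in X_i$; and since $q\in G$ forces $p_{i+1}\in G$ with $G$ passing through $(p_{i+1})_\sigma$, while $(p_{i+1})_\sigma\in X_{i+1}\subseteq X_\alpha$, we conclude $(p_{i+1})_\sigma\in G\cap D\cap X_\alpha$.

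The main obstacle I anticipate is the bookkeeping around the fact that $\sigma_{G_{X_i}}$ does not live inside any $X_j$: one must push Claim~\ref{Claim: MillerCodingFusionLemmaFirstClaim}, stated for $\sigma\in X_{i+1}$, down to the genuinely generic $\sigma_{G_{X_i}}$ via its $X_i$-decided truncations, and one must be careful to land on $(p_{i+1})_{\sigma_{G_{X_i}}}$ (a condition in $X_{i+1}$) rather than on $(q)_{\sigma_{G_{X_i}}}$, while tracking which model every name, dense set and support belongs to. The regularity of $i$, the closure/splitting clause of Claim~\ref{Claim: MillerCodingFusionLemmaSecondClaim}, and the preservation of the Mahloness of $\alpha$ are exactly the inputs that make this go through; granting them, the rest is a coordinate-wise transcription of the single-forcing Miller argument, with the coding coordinates left unchanged throughout and hence causing no difficulty.
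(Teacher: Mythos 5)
Your proposal is correct and follows essentially the same route as the paper: define $\sigma_{G_{X_i}}$, verify via clause 3 of Claim \ref{Claim: MillerCodingFusionLemmaSecondClaim} that each $\sigma_{G_{X_i}}(\xi)$ is a $\nu+i$-splitting node, invoke Claim \ref{Claim: MillerCodingFusionLemmaFirstClaim} through the $\leq_{F_i,\nu+i}$-genericity of $p_{i+1}$ over $X_{i+1}$ to get $(p_{i+1})_{\sigma_{G_{X_i}}}\in D^*_i$, and then argue exactness and, for clause 2, combine a club of closure points of the generic branches with the preserved Mahloness of $\alpha$. The obstacle you anticipate is in fact a non-issue: since the pacing chain satisfies ${}^i X_i\subseteq X_{i+1}$ and $X_i\cap\alpha=i$, the function $\sigma_{G_{X_i}}$ itself belongs to $X_{i+1}$, so Claim \ref{Claim: MillerCodingFusionLemmaFirstClaim} applies to it directly without any detour through truncations.
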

        \begin{proof}${}$
            \begin{enumerate}
			\item Let $\sigma = \sigma_{G_{X_i}}\colon F_i \to i^i$ be a function, such that for every $\xi\in F_i$, $\sigma_{G_{X_i}}(\xi)$ is the initial segment of the generic branch decided by $G_{X_i}$ at coordinate $\xi$. Namely,
			$$\sigma_{G_{X_i}}(\xi) = \bigcup\{ t\colon \exists r\in G_{X_i}, r\uhr \xi \Vdash \left( \name{T}^{r}_\xi \right)_{t} = \name{T}^{r}_\xi \}.$$
			Since $G_{X_i}$ is $X_i$-generic and $X_i\cap \alpha = i$, we have for every $\xi\in F_i$, $\sigma(\xi)\in i^i$. Since every sequence of length $i$ whose elements are in $X_i$ belongs to $X_{i+1}$, $\sigma\in X_{i+1}$. 

            We argue that $\sigma$ lies on $q$, and $(q)_\sigma$ belongs to $D_i$ and is an exact upper bound of $G_{X_i}\cup \{ q \}$. \\
            The fact that $q$ is compatible with all members of $G_{X_i}$ ensures that for every $\xi \in F_i$, all initial segments of $\sigma(\xi)$ are forced to be in $T^q_{\xi}$. Since $T^q_{\xi}$ is closed the same holds for $\sigma(\xi)$, and thus $\sigma$ lies on $q$. 
            
            In order to prove that $(q)_{\sigma}\in D_i$, we prove first that $\sigma$ lies on $p_{i+1}$, and $r = (p_{i+1})_{\sigma}$ satisfies the assumptions of Claim \ref{Claim: MillerCodingFusionLemmaFirstClaim}.
            \begin{itemize}
                \item $\sigma$ lies on $p_{i+1}$: this follows from the fact $\sigma\colon F_i \to i^i$ lies on $q$, and $q \geq_{F_i,\nu+i} p_{i+1}$.
                \item $(r)_\sigma = r$: immediate from the fact that $r = (p_{i+1})_{\sigma}$.
                \item For every $\xi\in F_i$, $p_{i+1}\uhr \xi\Vdash \sigma(\xi)\in \mbox{Split}_{\nu+i}\left( \name{T}^{p_i}_\xi \right)$: assume that $\xi\in F_i$ and, by contradiction,  there exists some $j<i$ and an extension $s$ of $r\uhr \xi$ that forces that the splitting nodes of $\name{T}^{p_i}_\xi$ along $\sigma(\xi)$ are contained in $j^j$. By extending $j$ if necessary, we can assume that $j$ is a closure point of $\sigma(\xi)$, and denote $t = \sigma(\xi)\uhr j \in j^j$. So $s$ extends $p_i\uhr \xi$, forces that $t\in \name{T}^{p_i}_\xi$ and $t$ is not a splitting point. Since $i\in C$, it follows that the least splitting node of $\name{T}^{p_i}_\xi$ above $t$ belongs to $i^{<i}$. This contradicts the fact that $r\uhr \xi$ forces that all the splitting nodes of $\name{T}^{p_i}_\xi$ along $\sigma(\xi)$ are initial segments of $t$.
                
            \end{itemize}
   
            By Claim \ref{Claim: MillerCodingFusionLemmaFirstClaim}, $(p_{i+1})_{\sigma} \in D_i$. Since $D_i$ is open, it follows that $(q)_{\sigma}\in D_i$. It remains to verify that $(q)_{\sigma}$ is the exact upper bound of $G_{X_i}\cup \{ q \}$.\\
            
            \noindent
            In order to argue that it is an upper bound, we show that for every $s\in G_{X_i}$, $s\leq (q)_{\sigma}$. Indeed, given such $s$, the dense set $D(s)$ of conditions which are either incompatible or extend $s$ belongs to $X_{i}$. Thus, $D(s)\subseteq D^*_j$ for some $j<i$ with $X_j\cap \alpha = j$.\footnote{Since $i$ is regular, $\la X_k \colon k<i \ra$ is continuous and $X_i\cap \alpha = i$, there exists such $j<i$.} By repeating a similar argument as above, $\sigma'$ lies on $p_{j+1}$, where, for every $\xi\in F_j$,  $\sigma'\colon F_j\to \alpha^\alpha$ is given by taking $\sigma'(\xi)$ to be the initial segment of  $\sigma(\xi)$ which is the immediate successor of an element in $\mbox{Split}_j\left( \name{T}^{q}_\xi \right)$. Appealing to Claim \ref{Claim: MillerCodingFusionLemmaFirstClaim}, $(p_{j+1})_{\sigma'}\in D(s)$, and as both $(p_{j+1})_{\sigma'}, s$  belong to $G_{X_i}$, they must be compatible. Thus, $(p_{j+1})_{\sigma'}$ extends $s$, and, in particular, $(q)_{\sigma}$ extends $s$. \\
            
            \noindent
            The only thing left to be checked is that $(q)_{\sigma}$ is an exact upper bound of $G_{X_i}\cup \{ q\}$. Assume that $q'$ is an upper bound of $G_{X_i}\cup \{ q\}$. Since $q'$ extends each condition in $G_{X_i}$, $\sigma$ lies on $q'$ and $(q')_{\sigma} = q'$. Since, in addition, $q'$ extends $q$, we can deduce that $q'\geq (q)_\sigma$, as desired.

            \item Assume $G\subseteq \hqo^{\alpha}_\tau$ is generic over $V$ with $q\in G$. We argue that $G\cap X_{\alpha}$ is $X_{\alpha}$-generic. Let  $D\in X_\alpha$ be a dense open set. In $V[G]$, denote by $E$ the set of all $i<\alpha$ such that for every $\xi\in F_i$, $i$ is a closure point of the function in $\alpha^\alpha$ given by the generic branch of $G$ at coordinate $\xi$. It is not hard to verify that $E$ is a club in $\alpha$, and, since $\alpha$ remains Mahlo in $V[G]$ (by Lemma \ref{Lem: MillerCodingMahloLemma}) we can find a regular $i\in C\cap E$ with $D\in X_i$.
            
            Let $\sigma \colon F_i \to i^i$ be the function which maps each $\xi\in F_i$ to the generic branch induced by $G$ at coordinate $\xi$, restricted to $i$. Then $\sigma\in X_{i+1}$, and, by the above construction, $(p_{i+1})_{\sigma}\in D\cap G \cap X_{\alpha}$.
		\end{enumerate}
        \end{proof}
        This concludes the proof of Lemma \ref{Lem: MillerCodingC-FusionLemma}.
 	\end{proof}

	\begin{proof}(Lemma \ref{Lem: MillerCodingDecidingOrds})\\
		Fix $F,\nu$, $p\in \hqo^{\alpha}_\tau$, and a $\hqo^{\alpha}_\tau$-name $\zeta$ for an ordinal. Since $\hqo^{\alpha}_\tau$ is $\alpha$-distributive and satisfies the $\alpha$ C-fusion property, we can apply Lemma \ref{Lem: CFusionApproximation} (on the constant function $f\colon \alpha\to \{ \zeta \}$ in $V^{\hqo^{\alpha}_\tau}$), to find $q\geq p$ and a set $x$ of ordinals of size $|x|<\alpha$, such that $q \Vdash \name{\zeta}\in \check{x}$. Moreover, as our proof of Lemma \ref{Lem: MillerCodingC-FusionLemma} can be applied with respect to the suborder $\geq_{F,\nu}$, we may assume that $q\geq_{F,\nu} p$.  
	\end{proof}
	
 \begin{definition} \label{Def: MillerCodingDefinitionOfTheDenseSubset}
		We define $\qo^{\alpha}_{\tau}\subseteq \hqo^{\alpha}_{\tau}$ to be the set of conditions $q\in \hqo^{\alpha}_{\tau}$ for which there exists a sequence $\la F_i \colon i<\alpha \ra\subseteq [\supp(q)]^{< \alpha}$, such that:
		
  \begin{enumerate} 
        \item $q\in \bqo^{\alpha}_\tau$, witnessed by a local bounding function $\vec{\mu} = \la \mu_\xi \colon \xi\in \supp(q) \ra$.
        \item Given $\xi\in \supp(q)$, $t\in \alpha^{<\alpha}$, $\beta<\mu_\xi$ and $r\geq q$, such that $r\uhr \xi\parallel \check{t}\in \name{T}^{q}_\xi$ and ${r\uhr \xi}^{\frown}\name{T}^{r}_\xi \parallel \check{\beta}\in \name{c}^{q}_\xi$, there exists a regular cardinal $i<\alpha$ and $\sigma \colon F_i \to i^i$ such that:
    
        \begin{enumerate}
            \item $\sigma$ lies on $q$.
            \item $(q)_\sigma \uhr \xi$ decides the $\hqo^\alpha_\xi$-statements $``\check{t}\in \name{T}^{q}_\xi "$ 
            \item ${(q)_\sigma\uhr \xi}^{\frown}\name{T}^{(q)_{\sigma}}_\xi$ decides the $\hqo^\alpha_\xi*\mo_\alpha$-statement $``\check{\beta}\in \name{c}^{q}_\xi"$.
            \item $(q)_\sigma, r$ are compatible.
        \end{enumerate}
        \item For every $q'\in \bqo^{\alpha}_{\tau}$, if $q\geq q'$ and $q'\geq q$ then the least $\po_\alpha$-name for $q$ is lexicographically below the least $\po_\alpha$-name for $q'$, where on each coordinate $\xi<\alpha$, the coordinates are ordered with respect to the canonical well-order of $L[\E]$.
    \end{enumerate}
    
\end{definition}
	
	\begin{proof}(Lemma \ref{Lem: MillerCodingSecondDensityLemma})\\
		We argue that $\qo^{\alpha}_{\tau}$ is a dense subset of $\hqo^{\alpha}_{\tau}$, and $|\qo^{\alpha}_\tau| = \alpha^+$. 
		\begin{enumerate}
			\item $\qo^{\alpha}_\tau$ is dense: assume that $p\in \hqo^{\alpha}_\tau$. We argue that there exists $q\geq p$ in $\qo^{\alpha}_\tau$.

            Apply the $C$-fusion property (on an arbitrary sequence $\vec{D} = \la D_i \colon i<\alpha \ra$ of dense open sets) to find an extension $q\geq p$, $q\in \bqo^{\alpha}_\tau$ and some club in $\alpha$, witnessing the $\alpha$ C-Fusion property in the stronger sense of Remark \ref{Rmk:MillerCodingStrongerFusionProperty}. In particular, $q = \bigvee_{i<\alpha} q_i$, where $\la q_i \colon i<\alpha \ra$ is a paced fusion sequence with respect to a pacing chain $\vec{X} = \la X_i \colon i\leq \alpha \ra$ and some sequence $\vec{F} = \la F_i \colon i<\alpha \ra$. 
            

            We argue that $q$ satisfies clauses $1-3$ above. The first clause was already verified. We proceed to the second clause.
            
            Fix $\xi\in \supp(q)$, $t\in \alpha^{<\alpha}$, $\beta<\chi_{X_{\alpha}}(\alpha^{+})$ and some $r\geq q$ as in clause 2 of Definition \ref{Def: MillerCodingDefinitionOfTheDenseSubset}. Pick $j<\alpha$ high enough so that:
            \begin{itemize}
                \item $ t\in j^{<j} $.
                \item $\xi \in X_j\cap F_j$ (this holds for large enough $j<\alpha$ since $\supp(q) = \bigcup_{i<\alpha} F_i\subseteq X_{\alpha}\cap \alpha^{++}$).
                \item $\beta \in X_j\cap \chi_{X_j}(\alpha^+)$ (this holds for large enough $j<\alpha$ since $\chi_{X_\alpha}(\alpha^+)\subseteq X_\alpha$).
            \end{itemize}

            Let $G\subseteq \hqo^{\alpha}_{\tau}$ be a generic set with $r\in G$. By Lemma \ref{Lem:IA-Fusion-NewClubGeneric}, combined with the fact that $\alpha$ remains Mahlo in $V[G]$, there exists a regular $i<\alpha$ above $j$ such that $G_{X_i} := G\cap X_i\in V$ is $X_i$-generic and $(q)_\sigma$ is an exact upper bound of $(G_{X_i})\cup\{q\}$, where $\sigma = \sigma_{G_{X_i}}$ is as in Remark \ref{Rmk:MillerCodingStrongerFusionProperty}. $(q)_\sigma$ and $r$ are compatible, since both belong to $G$. Also,  $(q)_\sigma$ meets every dense open subset of $\hqo^{\alpha}_\tau$ which belongs to $X_i$. In order to verify that $q$ satisfies clause 2 in the definition of $\qo^\alpha_\tau$, it remains to check that $(q)_\sigma \uhr \xi \parallel \check{t} \in \name{T}^{q_{i}}_\xi$ and ${(q)_\sigma\uhr \xi}^{\frown} \name{T}^{(q)_\sigma}_\xi \parallel \check{\beta}\in \name{c}^{q}_\xi$. \\
            \noindent
            We first verify that $(q)_\sigma \uhr \xi \parallel \check{t} \in \name{T}^{q_{i}}_\xi$. By the discussion above, for every $j<j'<i$, $(q)_\sigma\uhr\xi \parallel \check{t} \in \name{T}^{q_{j'}}_\xi$. If there exists $j<j'<i$ such that $(q)_\sigma \Vdash \check{t}\notin \name{T}^{q_{j'}}_\xi$, then $(q)_\sigma\Vdash \check{t}\notin \name{T}^{q}_\xi$. Assume otherwise. Then $(q)_\sigma\Vdash \check{t}\in \name{T}^{q_{i}}_\xi$.

                Let us argue that there exists $j<j^*$ and $t^*\in (j^*)^{j^*}$ which extends $t$, such that $(q)_\sigma \uhr \xi \Vdash \check{t}^* \in \mbox{Split}_{j^*}\left( \name{T}^{q_{j^*}}_\xi \right)$. This will suffice, since $(q)_\sigma\uhr \xi \Vdash \name{T}^{q_{j^*}}_\xi \leq_{j^*} \name{T}^{q}_\xi$ and thus it will follow that $(q)_\sigma \Vdash \check{t}^*\in \name{T}^{q}_\xi$, and in particular, $(q)_\sigma \Vdash \check{t}\in \name{T}^{q}_\xi$.

                To show the existence of $t^*, j^*$ as above, we construct in $X_i$ an increasing sequence $\la t_n \colon n<\omega \ra$ of extensions of $t$ in $T^{p_i}_\xi$,  a sequence of ordinals $\la j_n \colon n<\omega \ra$ below $i$, and an increasing sequence of conditions $\la p_n \colon n<\omega \ra\subseteq G\cap X_i$, such that $j_n \geq \mbox{lh}(t_n)$, and $p_{n+1}\Vdash \check{t}_{n+1}\in \mbox{Split}_{j_n}( T^{q_{ j_n }}_\xi )\cap X_i$. Start with $j_0 =j$, $t_0 = t$. Assuming $t_n, j_n, p_n$ were picked, consider the set--
                $$ E = \{ r\in \hqo^\alpha_\tau \colon \mbox{for some } s\in \alpha^\alpha \mbox{ which extends } t_n, \  r\uhr \xi \Vdash \check{s}\in \mbox{Split}_{j_n}\left(T^{q_{j_n}}_\xi\right)  \}\in X_i$$
                which is a dense open subset of $\hqo^\alpha_\tau$ above $p_n$, and thus, it is met by some condition  $p_{n+1}\in G\cap X_i$ above $p_n$. Thus, we can pick $t_{n+1}\in X_i$ to be a witness $s$ for the fact that $p_{n+1}\in E$. So $p_{n+1}\Vdash 
                \check{t}_{n+1}\in \mbox{Split}_{j_n}\left({T}^{q_{j_n}}_\xi \right)$. Finally, take $j_{n+1}<i$ above $j_n$ such that $t_{n+1}\in {j_{n+1}}^{j_{n+1}}$. This describes the construction of the sequence $\la t_n \colon n<\omega \ra$. Setting $t^* = \cup_{n<\omega} t_n\in X_i$ and $j^* = \sup_{n<\omega}j_n< i$, we have $\mbox{lh}(t^*) = j^*$, 
                and, since $(q)_\sigma$ extends all the conditions $\la p_n \colon n<\omega \ra$, $(q)_\sigma \Vdash \check{t}^*\in \mbox{Split}_{j^*}\left(T^{q_{j^*}}_{\xi}\right)$. This concludes the proof that $(q)_\sigma \uhr \xi \parallel \check{t} \in \name{T}^{q_{i}}_\xi$.\\
                \noindent
                Next, let us verify that ${(q)_\sigma\uhr \xi}^{\frown} \name{T}^{(q)_\sigma}_\xi \parallel \check{\beta}\in \name{c}^{q}_\xi$. The dense open subset of ${\hqo^\alpha_\xi}*\name{\mo}_\alpha$ consisting of conditions deciding whether $\check{\beta}\in \name{c}^{q_j}_\xi$ belongs to $X_i$, since $\beta,\xi, q_j\in X_i$. Thus ${(q)_\sigma\uhr \xi}^{\frown} \name{T}^{(q)_\sigma}_\xi\parallel \check{\beta}\in \name{c}^{q_j}_\xi$. But $${(q)_\sigma\uhr \xi}^{\frown} \name{T}^{(q)_\sigma}_\xi\Vdash \name{c}^{q_j}_\xi = \name{c}^{q}_\xi \cap \max{\name{c}^{q_j}_\xi} \mbox{ and } \check{\beta}<\chi_{X_{j}}(\alpha^+)\leq \max{\name{c}^{q_j}_\xi}$$
                and thus we have ${(q)_\sigma\uhr \xi}^{\frown} \name{T}^{(q)_\sigma}_\xi \parallel \check{\beta}\in \name{c}^{q}_\xi$.

            Finally, let us check that clause $3$ is fulfilled. If there exists $q'$ such that $q'\leq q\leq q'$ and $q'$ has a $\po_\alpha$-name which is enumerated below the minimum $\po_\alpha$-name for $q$ (with respect to the constructibility order of $L[\E]$), replace $q$ with $q'$. We should argue that $q'$ still satisfies the previous clauses. First, note that $q'\in \bqo^{\alpha}_\tau$ with the same local bounding function $\vec{\mu}$, since $q'\leq q$. Next, it's not hard to see that for every regular $i<\alpha$ and $\sigma\colon F_i\to i^i$ lying on $q$, $\sigma$ lies on $q'$ as well, and $(q')_\sigma \leq (q)_\sigma \leq (q')_\sigma$. In particular, every statement forced by $(q)_\sigma$ is also forced by $(q')_\sigma$. Also, every condition $r$ which is compatible with $(q)_\sigma$, is also compatible with $(q')_\sigma$. This shows that $q'$ satisfies the second clause.

        \item $|\qo^{\alpha}_\tau| = \alpha^{+}$: we argue that every $q\in \mathbb{D}^\alpha_\tau$ is uniquely determined by the following parameters:
        \begin{itemize}
            \item The support of $q$, $\supp(q)\in [\tau]^{\leq \alpha}$.
            \item A local bounding function for $q$,  $\vec{\mu} = \la \mu_\xi \colon \xi\in \supp(q) \ra\in (\alpha^{+})^{\supp(q)}$.
            \item A sequence $\vec{F} = \la F_i \colon i<\alpha \ra\in  ([\supp(q)]^{<\alpha})^{\alpha}$.
            \item For every $\xi\in \supp(q)$ and $t\in \alpha^{<\alpha}$, the function $F_{\xi,t}$ whose domain is  $\Sigma:=\bigcup_{i<\alpha}\left(i^i\right)^{F_i}$, where, for every $\sigma\in \Sigma$,
            
            $$F_{\xi,t}(\sigma)=
            \begin{cases} 
            0 & \sigma \mbox{ lies on } q \mbox{ and } (q)_{\sigma}\Vdash \check{t}\in \name{T}^{q}_{\xi} \\
            1 & \sigma \mbox{ lies on } q \mbox{ and }(q)_{\sigma}\Vdash \check{t}\notin \name{T}^{q}_{\xi} \\ 
            2 & \sigma \mbox{ does not lie on } q, \mbox{ or }\sigma \mbox{ lies on } q \mbox{ and }\neg(q)_{\sigma}\parallel \check{t}\in \name{T}^{q}_{\xi}. \\
            \end{cases}$$

            \item For every $\xi\in \supp(q)$ and $\beta< \mu_\xi$, the function $G_{\xi,\beta}$ whose domain is  $\Sigma$, where, for every $\sigma\in \Sigma$,
            
            $$G_{\xi,\beta}(\sigma)=
            \begin{cases} 
            0 & \sigma \mbox{ lies on } q \mbox{ and } (q)_{\sigma}\Vdash \check{\beta}\in \name{c}^{q}_{\xi} \\
            1 & \sigma \mbox{ lies on } q \mbox{ and }(q)_{\sigma}\Vdash \check{\beta}\notin \name{c}^{q}_{\xi} \\ 
            2 & \sigma \mbox{ does not lie on } q, \mbox{ or }\sigma \mbox{ lies on } q \mbox{ and }\neg(q)_{\sigma}\parallel \check{t}\in \name{T}^{q}_{\xi}. \\
            \end{cases}$$
            \end{itemize}

        Since $\tau<\alpha^{++}$ and $2^{\alpha} = \alpha^{+}$ holds in $V$,  the set of all sequences of possible parameters as above has size $\alpha^{+}$.
        
        Let us argue that given $q,q'\in \mathbb{D}^{*\alpha}_{\tau}$ with the same parameters (support, local bounding function, $\vec{F}$, and functions $F_{\xi,t}, G_{\xi, \beta}$), $q= q'$. We prove this by showing that $q\leq q'$ and $q'\leq q$.

        Let us show that $q\geq q'$, and by symmetry, we deduce that $q'\geq q$ as well. We prove by induction on $\xi\in \supp(q)=\supp(q')$ that  $q\uhr \xi\geq q'\uhr \xi$ implies--
        \begin{itemize}
            \item $q\uhr \xi\Vdash \name{T}^{q}_\xi\geq \name{T}^{q'}_\xi$.
            \item ${q\uhr \xi}^{\frown}\name{T}^{q}_\xi \Vdash \name{c}^{q}_\xi \geq \name{c}^{q'}_\xi$.
        \end{itemize} 
        Fix such $\xi$. Assume by contradiction that there exist $r\geq q$ and $t\in \alpha^{<\alpha}$ such that $r\uhr \xi\Vdash \check{t}\in \name{T}^{q}_\xi\setminus \name{T}^{q'}_\xi$. Since $q\in \qo^{\alpha}_\xi$, there are regular $i<\alpha$ and $\sigma\colon F_i \to i^i$ which lies on $q$, such that $(q)_\sigma$ and $r$ are compatible, and $(q)_{\sigma}\uhr \xi\Vdash \check{t}\in \name{T}^{q}_\xi$. But $q,q'$ share the same function $F_{\xi,t}(\sigma)$, so the above $\sigma$ lies on $q'$, and $(q')_{\sigma}\uhr \xi\Vdash \check{t}\in \name{T}^{q'}_\xi$. But, since $q\uhr \xi \geq q'\uhr \xi$ we get $(q)_\sigma \uhr \xi \geq (q')_{\sigma}\uhr \xi$. So $(q)_\sigma\Vdash  \check{t}\in \name{T}^{q}_\xi \cap \name{T}^{q'}_\xi$. This contradicts the fact that $r, (q)_\sigma$ are compatible.

        It's left to show that ${q\uhr \xi}^{\frown} \name{T}^{q}_\xi \Vdash \name{c}^{q}_\xi \geq \name{c}^{q'}_\xi$. Assume by contradiction that there exists $r\geq q$ and an ordinal $\beta$ such that ${r\uhr \xi}^{\frown}\name{T}^{r}_\xi \Vdash \beta\leq\max\left( \name{c}^{q'}_\xi \right) \mbox{ and } \beta\in \name{c}^{q}_\xi \triangle \name{c}^{q'}_\xi$. Note that such $\beta$ must satisfy $\beta<\mu_\xi$. Let $i<\alpha$ be regular and $\sigma\colon F_i\to i^i$ lying on $q$, such that ${(q)_\sigma \uhr \xi}^{\frown}\name{T}^{(q)_\sigma}_\xi$ is compatible with $r$ and decides whether $\check{\beta}\in \name{c}^{q}_\xi$. Since $G_{\xi,\beta}(\xi)\neq 2$, $\sigma$ lies on $q'$ and ${(q')_\sigma \uhr \xi}^{\frown}\name{T}^{(q')_\sigma}_\xi$ also decides whether $\beta\in \name{c}^{q'}_\xi$, in the same way. As we already proved that $q\uhr \xi\Vdash \name{T}^{q}_\xi\geq \name{T}^{q'}_\xi$, we have ${(q)_\sigma \uhr \xi}^{\frown}\name{T}^{(q)_\sigma}_\xi\geq {(q')_\sigma \uhr \xi}^{\frown}\name{T}^{(q')_\sigma}_\xi$. Thus ${(q)_\sigma \uhr\xi}^{\frown} \name{T}^{(q)_\sigma}_\xi \Vdash \check{\beta}\notin \name{c}^{q}_\xi \triangle \name{c}^{q'}_{\xi}$, contradicting the fact that $(q)_\sigma,r$ are compatible.
        \end{enumerate}
	\end{proof}

This concludes the inductive proof of Lemmas \ref{Lem: MillerCodingDensityLemma}-\ref{Lem: MillerCodingSecondDensityLemma}. 

Finally, let us define the dense subset We conclude this subsection by proving that $\qo_\alpha$ satisfies all the properties from the ``Kunen-like" Blueprint \ref{Def:KLblueprint}.

\begin{theorem}\label{Thm:MillerCodingFinalForcing}
    Let $\qo_\alpha\subseteq \hqo^\alpha_{\alpha^{++}}$ be the set of all the conditions in $\qo^\alpha_\tau$ for some $\tau<\alpha^{++}$. Then:
    \begin{itemize}
            \item $|\qo_\alpha| =\alpha^{++}$ and adds a $<^\alpha_{Sing}$-increasing sequence of length $\alpha^{++}$,
            
            \item ${\qo}_\alpha$ is $\alpha$-distributive, satisfies the $\alpha^{++}$.c.c, and preserves the Mahloness of $\alpha$,
            
            \item ${\qo}_\alpha$ has the $\alpha$ C-Fusion property,

            \item ${\qo}_\alpha$ is self coding, by the almost disjoint sequence of stationary sets $$\vec{\mathcal{S}}^{\alpha} = \la \mathcal{S}^{\alpha}(\tau, \eta, i) \colon \tau<\alpha^{++}, \eta<\alpha^{+}, i<2 \ra\subseteq \alpha^{+}\cap\cf(\omega_1).$$ 
            Furthermore, the self coding is uniformly definable over $(H_{\alpha^{++}})^{L[\E]}$, in the sense that there exists a parameter-free formula $\varphi_{\qo}(\alpha,x,y)$ that defines the following over every inner model containing $(H_{\alpha^{++}})^{L[\E]}$:
            \begin{itemize}
                \item an enumeration $\la \name{q}_\eta\colon \eta< \alpha^{+} \ra$ of $\po_\alpha$-names for the conditions in $\qo_\alpha$.
                \item the sequence $\vec{\mathcal{S}}^{\alpha}$ of stationary subsets of $\alpha^{+}$, used for coding.
            \end{itemize}       

        \end{itemize}
\end{theorem}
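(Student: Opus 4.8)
The statement is essentially an aggregation lemma: each of the five bulleted properties of $\qo_\alpha$ should follow quickly from the work already done on the approximating posets $\hqo^\alpha_\tau$ and their dense subsets $\qo^\alpha_\tau$, $\tau < \alpha^{++}$. First I would establish the basic fact that $\qo_\alpha = \bigcup_{\tau < \alpha^{++}} \qo^\alpha_\tau$ is a \emph{dense} suborder of $\hqo^\alpha_{\alpha^{++}}$, so that forcing with $\qo_\alpha$ is equivalent to forcing with the full back-to-back iteration $\hqo^\alpha_{\alpha^{++}}$. Density is clear: given $p \in \hqo^\alpha_{\alpha^{++}}$, its support is a subset of some $\tau < \alpha^{++}$ (as $\cf(\alpha^{++}) = \alpha^{++} > \alpha \geq |\supp(p)|$), so $p \in \hqo^\alpha_\tau$, and by Lemma \ref{Lem: MillerCodingSecondDensityLemma} there is $q \geq p$ in $\qo^\alpha_\tau \subseteq \qo_\alpha$. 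I would also note that each $\qo^\alpha_\tau$ is an initial segment of $\qo_\alpha$ in the appropriate sense, so that extending within $\qo_\alpha$ never decreases the ``$\tau$-level'' below what is needed.

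\textbf{Size and the increasing sequence.} For $|\qo_\alpha| = \alpha^{++}$: by Lemma \ref{Lem: MillerCodingSecondDensityLemma} each $|\qo^\alpha_\tau| = \alpha^+$, so $|\qo_\alpha| \leq \alpha^{++} \cdot \alpha^+ = \alpha^{++}$, and the lower bound is immediate since the iteration has length $\alpha^{++}$ with nontrivial stages. For the $<^\alpha_{Sing}$-increasing sequence of length $\alpha^{++}$: the iteration $\hqo^\alpha_{\alpha^{++}}$ adds, at each stage $\tau$, a Miller-generic branch $g_\tau : \alpha \to \alpha$ via the $\mo_\alpha$-factor of $\ro_\tau$, and by Claim \ref{Claim: GeneralizedMillerAddsDominatingGeneric} each such $g_\tau$ eventually dominates, on a club of singulars, every ground-model function of the intermediate model $V^{\hqo^\alpha_\tau}$; in particular $g_{\tau'} <^\alpha_{Sing} g_\tau$ whenever $\tau' < \tau$, because $g_{\tau'}$ lies in $V^{\hqo^\alpha_{\tau'+1}} \subseteq V^{\hqo^\alpha_\tau}$. (Here one uses that $\sigma$-closure of the tail, Lemma \ref{Lem: MillerCodingSigmaClosureLemma}, keeps the relevant clubs; the comparison is on singular ordinals so no new clubs on $Cof(\omega_1)$ get destroyed.) So $\la g_\tau \mid \tau < \alpha^{++}\ra$ is the desired $<^\alpha_{Sing}$-increasing sequence, verifying assumption KL1 of Definition \ref{Def:KLblueprint}.

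\textbf{Distributivity, chain condition, Mahloness, $C$-Fusion.} $\alpha$-distributivity of $\qo_\alpha$ follows from Lemma \ref{Lem: MillerCodingDistributivityLemma} applied to $\hqo^\alpha_{\alpha^{++}}$ together with density of $\qo_\alpha$; likewise preservation of the Mahloness of $\alpha$ is Lemma \ref{Lem: MillerCodingMahloLemma}, and the $\alpha$ $C$-Fusion property is Lemma \ref{Lem: MillerCodingC-FusionLemma} (these lemmas are proved for all $\tau \leq \alpha^{++}$, so they apply directly to $\tau = \alpha^{++}$, and pass to the dense suborder $\qo_\alpha$ since $C$-Fusion, distributivity, and Mahlo-preservation are invariant under passing to a dense subset). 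For the $\alpha^{++}$-c.c.: since $2^\alpha = \alpha^+$ in $V$ and each $\qo^\alpha_\tau$ has size $\alpha^+$, any antichain of size $\alpha^{++}$ would, by a $\Delta$-system / support argument using $\cf(\alpha^{++}) = \alpha^{++} > \alpha$, have $\alpha^{++}$-many members with supports contained in a single $\tau < \alpha^{++}$, hence an antichain of size $\alpha^{++}$ inside $\hqo^\alpha_\tau$ — but $|\hqo^\alpha_\tau| \leq (\alpha^+)^{\leq\alpha}$'s worth of conditions… more carefully, inside $\qo^\alpha_\tau$ (size $\alpha^+$) there is no antichain of size $\alpha^{++}$, and $\qo^\alpha_\tau$ is dense in $\hqo^\alpha_\tau$, contradiction.

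\textbf{Self-coding and uniform definability — the main obstacle.} The heart of the argument, and the step I expect to be the most delicate, is verifying that $\qo_\alpha$ is self-coding via $\vec{\mathcal{S}}^\alpha$ in the sense of Definition \ref{Def:BlueprintPrelim}(3), with the coding uniformly definable over $(H_{\alpha^{++}})^{L[\E]}$. The point is that each coding poset $\co^\alpha_\tau$ was defined (Definition \ref{Def: Definition of the Coding poset}) precisely so that its generic club $c$ codes, via the pattern of which $\mathcal{S}^\alpha(\tau,\eta,0)$ resp. $\mathcal{S}^\alpha(\tau,\eta,1)$ becomes nonstationary, whether $q^\alpha_\tau(\eta) \in G(\hqo^\alpha_\tau)$. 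I need to re-index: the self-coding requires a single enumeration $\la q^\alpha_\sigma \mid \sigma < \alpha^{++}\ra$ of $\qo_\alpha$ and a matching sequence of almost-disjoint stationary sets $\la S^\alpha_\sigma \mid \sigma < \alpha^{++}\ra$ such that nonstationarity of $S^\alpha_{2\sigma}$ (resp. $S^\alpha_{2\sigma+1}$) codes $q^\alpha_\sigma \in G$ (resp. $\notin G$); I would obtain this by pairing off $(\tau,\eta) \mapsto \sigma$ via a definable bijection $\alpha^{++} \times \alpha^+ \leftrightarrow \alpha^{++}$ and taking $S^\alpha_\sigma = \mathcal{S}^\alpha(\tau,\eta,i)$ appropriately. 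The key facts to invoke are: (i) $\hqo^\alpha_\tau$ preserves stationarity of each $\mathcal{S}^\alpha(\tau,\eta,i)$ by Lemma \ref{Lem: MillerCodingPreservationOfStationarySets}, and the tail $\hqo^\alpha_{\alpha^{++}}/\hqo^\alpha_{\tau+1}$ is $\sigma$-closed (Lemma \ref{Lem: MillerCodingSigmaClosureLemma}) hence also preserves stationarity of subsets of $Cof(\omega_1)$; (ii) the club $c^\alpha_\tau$ added by $\co^\alpha_\tau$ witnesses nonstationarity of exactly the prescribed sets in the final model; (iii) membership $q^\alpha_\tau(\eta) \in G$ is decided by $\hqo^\alpha_{\tau}$ and is unchanged thereafter. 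For uniform definability, I would spell out that the entire recursion defining $\la \hqo^\alpha_\tau, \name{\ro}_\tau, \qo^\alpha_\tau \mid \tau < \alpha^{++}\ra$ refers only to: the Miller poset $\mo_\alpha$ (absolutely definable from $\alpha$), the uniformly definable sequence $\vec{S}^\alpha$ (given by $\varphi_S$ from the conventions of the subsection), and the $L[\E]$-least names, ordered by the canonical well-order of $L[\E]$, which is itself $\Sigma_1$-definable over any $J$-structure containing the relevant level — so the whole construction is definable over $(H_{\alpha^{++}})^{L[\E]}$ by a single formula $\varphi_{\qo}(\alpha, x, y)$, and this definition is absolute between $L[\E]$ and any inner model $M \supseteq (H_{\alpha^{++}})^{L[\E]}$ since all the objects it quantifies over lie in $H_{\alpha^{++}}$. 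Assembling these pieces into a clean statement of the witnesses and checking the two $\Vdash$-equivalences of Definition \ref{Def:BlueprintPrelim}(3) is where the real care is needed; everything else is bookkeeping over the already-established lemmas.
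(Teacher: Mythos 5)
There is a genuine gap, and it sits exactly where the paper spends most of its effort: the ``hard direction'' of self-coding. Your step (i) asserts that the tail of the iteration beyond stage $\tau$ preserves stationary subsets of $\alpha^+\cap\cf(\omega_1)$ because it is $\sigma$-closed. That principle is false: $\sigma$-closed forcing preserves stationary sets concentrating on \emph{countable} cofinality, not on cofinality $\omega_1$; indeed the whole iteration $\hqo^\alpha_{\alpha^{++}}$ is $\sigma$-closed (Lemma \ref{Lem: MillerCodingSigmaClosureLemma}) and its coding factors are \emph{designed} to destroy stationary subsets of $\alpha^+\cap\cf(\omega_1)$. Consequently your verification never establishes the direction ``$q^\alpha_\tau(\eta)\notin G \Rightarrow \mathcal{S}^\alpha(\tau,\eta,0)$ stays stationary in $V^{\qo_\alpha}$''. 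What is actually needed, and what the paper proves, is: (a) preservation by $\hqo^\alpha_\tau$ (Lemma \ref{Lem: MillerCodingPreservationOfStationarySets}); (b) a dedicated argument that the single step $\mo_\alpha * \name{\co}_\tau$ at coordinate $\tau$ preserves $\mathcal{S}^\alpha(\tau,\eta,0)$ even though it shoots a club avoiding the other sets of the $\tau$-block --- the paper builds $X\elem H_\theta$ with $|X|=\alpha$, $X$ closed under countable sequences, $\chi_X(\alpha^+)\in\mathcal{S}^\alpha(\tau,\eta,0)$ and $\chi_X(\alpha^+)\subseteq X$, and an $\omega_1$-chain of conditions whose limit legitimately adds $\chi_X(\alpha^+)$ to the coding club, using that the sets $\la \mathcal{S}^\alpha(\tau,\eta',j)\ra$ within a block are pairwise disjoint and $\chi_X(\alpha^+)$ lies only in $\mathcal{S}^\alpha(\tau,\eta,0)$; and (c) preservation by all the remaining coordinates $\tau'\neq\tau$, which again goes through the almost-disjointness bounds $\gamma(\tau',\eta',j)$ as in Lemma \ref{Lem: MillerCodingPreservationOfStationarySets}, not through closure. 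Without (b) and (c) the equivalences of Definition \ref{Def:BlueprintPrelim}(3) are not established, since some later (or the current) club-shooting could in principle kill $\mathcal{S}^\alpha(\tau,\eta,0)$ even when $q^\alpha_\tau(\eta)\notin G$.

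A secondary point: your claim that the $\alpha$ C-Fusion property is ``invariant under passing to a dense subset'' is not justified. C-Fusion is a structural property of the poset (exact upper bounds of $X_\alpha$-generic sets must again lie in the poset and land in the prescribed dense sets), and it does not transfer formally from $\hqo^\alpha_{\alpha^{++}}$ to $\qo_\alpha$; the paper instead observes that the specific witnesses produced by the fusion machinery --- exact upper bounds of paced fusion sequences --- already belong to $\qo_\alpha$ by the construction of the dense sets in Definition \ref{Def: MillerCodingDefinitionOfTheDenseSubset} and Lemma \ref{Lem: MillerCodingSecondDensityLemma}. (Distributivity and Mahlo-preservation, by contrast, do pass to dense suborders, and your treatment of those, of $|\qo_\alpha|=\alpha^{++}$, of the $<^\alpha_{Sing}$-increasing sequence, and of the $\Delta$-system argument for the $\alpha^{++}$-c.c.\ is essentially the paper's.)
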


\begin{proof}
    Since $\qo_\alpha$ is a dense subset of $\hqo^\alpha_{\alpha^{++}}$, it adds a $<^\alpha_{Sing}$-increasing sequence of Miller subset of $\alpha$; furthermore, it preserves the Mahloness of $\alpha$, and doesn't new add $<\alpha$-sequences of ordinals. Thus, $\qo_\alpha$ is $\alpha$-distributive. $\qo_\alpha$ satisfies the $\alpha$-fusion property, since, as seen in the proof of Lemma \ref{Lem: MillerCodingSecondDensityLemma}, every exact upper bound of a paced fusion sequence belongs to it (regardless of the initial sequence $\vec{D}$ of dense open sets). 

    We argue that $\qo_\alpha$ satisfies the $\alpha^{++}-c.c.$. Assume that $\la p_\gamma \colon \gamma<\alpha^{++} \ra$ is a sequence of conditions in $\qo_\alpha$. By applying the $\Delta$-System Lemma (which is possible since $2^\alpha = \alpha^+$ over $V$), we can assume that $\la \supp(p_\gamma) \colon \gamma< \alpha^{++} \ra$ is a delta-system with root $s\in [\alpha^{++}]^{\leq \alpha}$. Let $\tau = \sup(s)$. Then $\la q_\gamma \uhr \tau \colon \gamma<\alpha^{++} \ra\subseteq \qo^\alpha_\tau$ and $|\qo^\alpha_\tau|= \alpha^{+}$. Thus, by picking $\gamma\neq \gamma'$ with $q_\gamma\uhr \tau, q_{\gamma'}\uhr \tau$ compatible, we get that $q_\gamma, q_{\gamma'}$ are compatible as well.

    It remains to check that $\qo_\alpha$ is self coding.  For every $p\in \qo_\alpha$, let $\tau<\alpha^{++}$ be the least such that $p\in \qo^\alpha_\tau$, and let $\eta<\alpha^+$ be the enumeration of $p$ in the well-order on $\qo^\alpha_\tau$ induced by minimal $\po_\alpha$-names, as described in Definition \ref{Def: Definition of the Coding poset}. Those minimal $\po_\alpha$-names belong to $(H_{\alpha^{++}})^{L[\E]}$ (this follows since conditions in $\qo_\alpha$ belong to $(H_{\alpha^{++}})^{V}$, and since $|\po_\alpha| = \alpha^+$, they have $\po_\alpha$-names in $(H_{\alpha^{++}})^{L[\E]}$). This shows that the list minimal $\po_\alpha$-names for conditions of $\qo_\alpha$ can be enumerated in a definable way over $(H_{\alpha^{++}})^{L[\E]}$. Also, the stationary sets used for coding, and their correspondence with elements of $\qo_\alpha$, are all definable over $H_{\alpha^{++}}$. This shows that  In order to complete the self-coding verification, it's enough to argue that--
    $$ 0_{\qo_\alpha}\Vdash p\in G(\qo_\alpha) \iff \mathcal{S}^{\alpha}(\tau, \eta ,0) \mbox{ is nonstationary} $$
    and, similarly--
    $$ 0_{\qo_\alpha}\Vdash p\notin G(\qo_\alpha) \iff \mathcal{S}^{\alpha}(\tau, \eta ,1) \mbox{ is nonstationary}. $$
    Assuming that $p\in G(\qo_\alpha)$, the generic coding poset at coordinate $\tau$ adds club disjoint from $\mathcal{S}^\alpha(\tau, \eta, 0)$. Thus, we concentrate on the other direction and prove that--
    $$0_{\qo_\alpha}\Vdash p\notin G(\qo_\alpha) \implies \mathcal{S}^\alpha(\tau, \eta, 0) \mbox{ is stationary}.$$
    First, by Lemma \ref{Lem: MillerCodingPreservationOfStationarySets},  $\hqo^\alpha_\tau$ preserves the stationarity of $\mathcal{S}^\alpha(\tau, \eta, 0)$. Once we will prove that $\mathbb{R}_\tau:= \mo_\alpha * \name{\co}_\tau$ preserves the stationarity of $\mathcal{S}^\alpha(\tau, \eta,0)$, the same argument of Lemma \ref{Lem: MillerCodingPreservationOfStationarySets} shows that $\mathcal{S}^\alpha(\tau,\eta,0)$ is preserved in $V^{\qo_\alpha}$. Thus, it suffices to prove that, over $V^{\hqo^\alpha_\tau}$, $\mathbb{R}_\tau$ preserves the stationarity of $\mathcal{S}^\alpha(\tau, \eta, 0)$. To show this, assume that $r\in \ro_\tau$ and $\name{C}$ is a $\ro_\tau$-name for a club subset of $\alpha^+$. We argue that $r$ can be extended to a condition which forces that $\name{C}\cap \mathcal{S}^\alpha(\tau, \eta,0)\neq \emptyset$. 
    
    Take an elementary substructure $X\elem (H_\theta)^{ V^{\hqo^\alpha_\tau} }$ (for $\theta$ large enough) such that:
    \begin{itemize}
        \item $\ro_\tau, r, \name{C}, \alpha, \tau\in X$.
        \item $|X|=\alpha$.
        \item $X$ is closed under countable sequences of its elements.
        \item $\chi_{X}(\alpha^+)\in \mathcal{S}^\alpha(\tau, \eta,0)$.
        \item $\chi_{X}(\alpha^+)\subseteq X$.
    \end{itemize}
    Such $X$ can be constructed as in the proof of Lemma \ref{Lem: MillerCodingPreservationOfStationarySets}. Note that $\alpha$ remains Mahlo in $V^{\qo^\alpha_\tau}$, so the closure under countable sequences can be ensured.
    
    Fix a cofinal sequence $\la \delta_i \colon i<\omega_1 \ra$ in $\chi_{X}(\alpha^+)$. In $X$, construct an increasing sequence $\la r_i = (T_i, \name{c}_i) \colon i<\omega_1 \ra$ of conditions in $\ro_\tau$: take $r_0 = r$, and at every limit stage $i<\omega_1$, use the $\sigma$-closure of $\ro_\tau$ and the closure under countable sequences of $X$, to pick an upper bound $p_i\in X$ for $\la p_j \colon j<i \ra$. At successor stages, make sure that $r_{i+1} \parallel \min(\name{C}\setminus \delta_i)$ and $T_{i+1}\Vdash \max(\name{c}_{i+1})> \delta_i$.

    This concludes the inductive construction. Let $r^* = (T^*,\name{c}^*)$, where $T^* = \bigcap_{i<\omega_1} T_{i}$, and $\name{c}^* = \left(\bigcup_{i<\omega_1} \name{c}_{i}\right) \cup \chi_{X}(\alpha^+)$. We argue that $r^*\in \ro$. First, $T^*\in \mo_\alpha$ by $\alpha$-closure of $\mo_\alpha$. So it suffices to justify that $T^*\Vdash \name{c}^*\in \name{\co}_\tau$. Indeed, 
    $\chi_{X}(\alpha^+)\notin \mathcal{S}^\alpha(\tau, \eta', j)$ whenever $\eta'\neq \eta$ or $(\eta'=\eta \land j=1)$,  since  $\chi_{X}(\alpha^+)\in \mathcal{S}^\alpha(\tau, \eta,0)$ and $\la \mathcal{S}^\alpha(\tau, \eta', j) \colon \eta'<\alpha^+, j<2 \ra$ are pairwise disjoint.

    Finally, $r^*$ forces that $\chi_{X}(\alpha^+)$ is a limit point of $\name{C}$. Thus $r^*\Vdash \name{C}\cap \mathcal{S}^\alpha(\tau, \eta, 0)\neq \emptyset$, as desired.
\end{proof}

\subsection{The Final Forcing $\po = \po_{\kappa+1}$ and its Properties}\label{subsection:final}

In this section we conclude the construction of the forcing notion $\po$ over $L[\E]$ which realizes the ``Kunen-like" blueprint. 

We define a nonstationary support iteration of length $\kappa+1$,  $\la \po_\alpha, \name{\qo}_\alpha \colon \alpha\leq \kappa+1 \ra$. For every non-Mahlo $\alpha<\kappa$, $\name{\qo}_\alpha$ is taken to be the trivial forcing. For every Mahlo $\alpha\leq \kappa$, $\name{\qo}_\alpha$ is taken to be the forcing given in the statement of Theorem \ref{Thm:MillerCodingFinalForcing}. Recall that such a forcing $\name{\qo}_\alpha$ depends on a sequence $\mathcal{S}^\alpha$ of almost disjoint stationary sets in $V^{\po_\alpha}$. Moreover, as required in the definition of the ``Kunen-like" blueprint, the forcings $\qo_\alpha$ (and, consequently, the stationary sets $\vec{\mathcal{S}}^\alpha$) should be uniformly defined, in the sense of Clause 4 in Definition \ref{Def:FM-blueprint}. 

Once we will provide the proper definition of the above stationary sets, we will let $\po = \po_{\kappa+1}$. 

This section is devoted to the proof that the properties of the Friedman-Magidor and ``Kunen-like" blueprints are fulfilled by $\po$. The structure of this section is as follows:
\begin{itemize}
    \item In Lemma \ref{Lem: FinalForcingCinstructionOfStationarySets} we will define the stationary sets $\vec{\mathcal{S}}^\alpha$ for every Mahlo $\alpha\leq \kappa$. By that, we complete the definition of $\po_{\kappa+1}$. We remark that the sets are stationary in $L[\E]$. In Lemma \ref{Lem: FinalForcingPreservationOfStatSets} we prove that, for every $\alpha\leq \kappa$ Mahlo, the stationarity of the sets $\vec{\mathcal{S}}^\alpha$ is preserved by the forcing $\po_\alpha$.
    \item In Lemma \ref{Lem: FinalForcingFusionLemma} we prove that, for every Mahlo $\lambda\leq \kappa$, $\po_\lambda$ satisfies the $\lambda$ Iteration-Fusion property.
    \item In Lemma \ref{Lem: FinalForcingDistributivityLemma} we prove that for every $\lambda\leq \kappa$, $\Vdash_{\po_\lambda} \left( \po/ \po_{\lambda} \right)$ is $\lambda$-distributive.
    \item With the Lemmas established, we prove the our main Theorem \ref{Thm:Main}.
\end{itemize}

\begin{lemma} \label{Lem: FinalForcingCinstructionOfStationarySets}
    For every $\alpha\leq \kappa$, there exists in $L[\E]$ a sequence $$\vec{\mathcal{S}}^\alpha = \la \mathcal{S}^\alpha(\tau, \eta,i) \colon \tau<\alpha^{++}, \eta<\alpha^+, i<2 \ra$$
    of almost disjoint stationary subsets of $\mathcal{S}^\alpha$ (see Proposition \ref{Prop: Stationary set of passive collapsing structures}) such that,
    \begin{enumerate}
        \item For every $\tau<\alpha^{++}$, $\la \mathcal{S}^\alpha(\tau, \eta,i) \colon \eta<\alpha^+, i<2 \ra$ are fully pairwise disjoint.
        \item For every $\tau<\alpha^{++}, \eta<\alpha^{+}$ and $ i<2$, $\min(\mathcal{S}^\alpha(\tau,\eta,i))> \eta$.
        \item The sequence is uniformly  defined, in the sense that  there exists a formula $\varphi_{\mathcal{S}}(x,y)$ such that $\varphi_{\mathcal{S}}(\alpha, y)$ defines $\vec{\mathcal{S}}^\alpha$ in every inner model that contains $H_{\alpha^{++}}$.
    \end{enumerate}
\end{lemma}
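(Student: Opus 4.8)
The plan is to construct the sequences $\vec{\mathcal{S}}^\alpha$ directly from the stationary set $\mathcal{S}^\alpha$ of Proposition \ref{Prop: Stationary set of passive collapsing structures} by a definable splitting procedure inside $L[\E]$. First I would recall that $\mathcal{S}^\alpha \subseteq \alpha^+ \cap \mathrm{cof}(\omega_1)$ is stationary and, being itself definable without parameters over $K|\alpha^{++}$ (via the formula defining the collapsing structures $N_\eta$ with the properties listed in Proposition \ref{Prop: Stationary set of passive collapsing structures}), it is uniformly definable over every inner model containing $H_{\alpha^{++}}$. The standard way to extract an almost disjoint family of $\alpha^{++}$ stationary subsets of a stationary set $S$ on $\alpha^+$ is to use the canonical $\square_\alpha$-sequence (already available in $L[\E]$): for a point $\eta \in S$ of cofinality $\omega_1$, the club $c^\alpha_\eta$ has order type $\omega_1$ by Lemma \ref{Lemma: elements in Salpha have omega1-clubs}, so one can split $S$ according to, say, the values taken by a fixed surjection $\alpha^+ \to \alpha^{++}$ composed with the ``trace'' of the square sequence, or more simply by using a scale-type or Ulam-matrix-style partition. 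In fact the cleanest route is to first fix a partition $\mathcal{S}^\alpha = \bigsqcup_{\tau < \alpha^{++}} T^\alpha_\tau$ into $\alpha^{++}$ stationary pieces using an Ulam matrix on $\alpha^+$ relativized to $\mathcal{S}^\alpha$ (which exists in $L[\E]$ since $\alpha^+$ is a successor cardinal there), and then, within each $T^\alpha_\tau$, to further partition into $\alpha^+ \times 2$ stationary pieces $\mathcal{S}^\alpha(\tau,\eta,i)$, $\eta < \alpha^+$, $i < 2$, by a second Ulam-matrix argument on $\alpha^+$.

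Concretely, the key steps, in order, would be: (1) Recall the Ulam-matrix construction: since $\alpha^+$ is a successor cardinal in $L[\E]$, for any stationary $S \subseteq \alpha^+$ there is a matrix $\langle A_{\xi,\beta} \mid \xi < \alpha^+, \beta < \alpha \rangle$ of subsets of $S$ such that for each fixed $\beta$ the sets $\langle A_{\xi,\beta} \mid \xi < \alpha^+ \rangle$ are pairwise disjoint, and for each fixed $\xi$ the set $S \setminus \bigcup_{\beta < \alpha} A_{\xi,\beta}$ is bounded in $\alpha^+$; hence for stationary $S$ at least one row contains $\alpha^+$ many stationary sets, and by iterating one obtains $\alpha^{++}$ many pairwise almost disjoint stationary subsets. (2) Apply this inside $K|\alpha^{++}$ to $S = \mathcal{S}^\alpha$, taking always the $<_{L[\E]}$-least witnessing matrix/partition at each stage, so that the resulting family $\langle T^\alpha_\tau \mid \tau < \alpha^{++}\rangle$ of pairwise almost disjoint stationary sets is definable over $K|\alpha^{++}$ without parameters beyond $\alpha$. (3) For each $\tau$, perform a further canonical splitting of $T^\alpha_\tau$ into $\langle \mathcal{S}^\alpha(\tau,\eta,i) \mid \eta < \alpha^+, i < 2\rangle$ which are \emph{fully} pairwise disjoint (this is easy: partition $\alpha^+$ into $\alpha^+$ many disjoint stationary pieces via one Ulam matrix row, then split each piece into two by intersecting with a fixed partition of $\alpha^+$ into two stationary halves, e.g.\ by parity of a canonical rank function), and then shrink each $\mathcal{S}^\alpha(\tau,\eta,i)$ by removing its intersection with $\eta+1$ to arrange $\min(\mathcal{S}^\alpha(\tau,\eta,i)) > \eta$ — removing a bounded set preserves stationarity. (4) Verify that the whole construction, being a composition of $<_{L[\E]}$-least choices over $K|\alpha^{++}$, is given by a single formula $\varphi_{\mathcal{S}}(x,y)$ that defines $\vec{\mathcal{S}}^\alpha$ in any inner model containing $H_{\alpha^{++}}$, since $K|\alpha^{++}$ and its $<_{L[\E]}$-order are absolutely definable from $H_{\alpha^{++}}$ in such models (using acceptability/condensation of $L[\E]$).

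Finally, since almost disjointness of the $T^\alpha_\tau$ combined with full disjointness within each block immediately yields that the whole family $\langle \mathcal{S}^\alpha(\tau,\eta,i)\rangle$ is pairwise almost disjoint, and each set remains a stationary subset of $\mathcal{S}^\alpha \subseteq \alpha^+ \cap \mathrm{cof}(\omega_1)$, clauses (1)--(3) of the lemma all hold; this in turn completes the definition of $\name{\qo}_\alpha$ (by Theorem \ref{Thm:MillerCodingFinalForcing}, whose hypotheses about $\vec{\mathcal{S}}^\alpha$ are now met), and hence of $\po = \po_{\kappa+1}$. The main obstacle I anticipate is not the stationary-splitting combinatorics, which is classical, but rather being careful that \emph{everything} is done via canonical ($<_{L[\E]}$-least) choices so that the uniform definability clause (3) genuinely goes through in an arbitrary inner model $M \supseteq H_{\alpha^{++}}$ — one must make sure the formula $\varphi_{\mathcal{S}}$ refers only to $K|\alpha^{++}$ and its definable well-order, both of which $M$ can reconstruct from $H_{\alpha^{++}}$, and not to any global feature of $L[\E]$ that $M$ might not see; this is where condensation of the extender model and the fact that $\mathcal{S}^\alpha$ itself was already shown (in the proof of Proposition \ref{Prop: Stationary set of passive collapsing structures}) to be definable over $K|\alpha^{++}$ do the essential work.
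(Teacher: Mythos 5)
There is a genuine gap at the heart of your construction: the claim that Ulam matrices, ``by iterating,'' yield $\alpha^{++}$ many pairwise \emph{almost disjoint} (i.e.\ boundedly intersecting) stationary subsets of $\mathcal{S}^\alpha$. An Ulam matrix on $\alpha^+$ gives, for a stationary $S$, only $\alpha^+$ many pairwise disjoint stationary pieces (one row, by the pigeonhole argument you describe), and no iteration of this splitting produces $\alpha^{++}$ many sets with bounded pairwise intersections: intersecting a refining system along branches of length $\alpha^+$ generally kills stationarity, while taking unions of the disjoint pieces along an almost disjoint family of index sets produces intersections that are themselves (unions of) stationary sets, not bounded ones. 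In fact, the existence of $\alpha^{++}$ many stationary subsets of a given stationary $S\subseteq\alpha^+$ with pairwise small intersections is not a ZFC fact (e.g.\ it is incompatible with saturation of the restricted nonstationary ideal), and bounded intersections are strictly more than what non-saturation alone provides. This is precisely the point where the paper invokes fine structure rather than classical combinatorics: it uses the (constructibly least) $\Diamond(\mathcal{S}^\alpha)$-sequence $\la A^\alpha_\xi \colon \xi<\alpha^+\ra$ of $L[\E]$, definable over $H_{\alpha^{++}}$ from $\alpha$, and sets $\mathcal{S}^\alpha(B)=\{\xi\in\mathcal{S}^\alpha \colon B\cap\xi=A^\alpha_\xi\}$ for $B\subseteq\alpha^+$. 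Since $2^{\alpha^+}=\alpha^{++}$ in $L[\E]$ this gives $\alpha^{++}$ many stationary subsets of $\mathcal{S}^\alpha$, and $\mathcal{S}^\alpha(B)\cap\mathcal{S}^\alpha(B')\subseteq \min(B\triangle B')$ makes the intersections literally bounded.

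The remaining ingredients of your proposal do match the paper: after the almost disjoint family is in hand, one reindexes it by the least constructible bijection of $\mathcal{P}(\alpha^+)^{L[\E]}$ with $\alpha^{++}\times\alpha^+\times 2$, arranges full disjointness within each $\tau$-block and $\min(\mathcal{S}^\alpha(\tau,\eta,i))>\eta$ by removing bounded initial segments (exactly your trimming step, using that almost disjointness bounds the pairwise intersections), and obtains uniform definability because every choice is the $<_{L[\E]}$-least one over $H_{\alpha^{++}}$. So the fix is local but essential: replace the Ulam-matrix step by the diamond-based splitting (or otherwise justify, from the fine structure of $L[\E]$, a definable family of $\alpha^{++}$ stationary subsets of $\mathcal{S}^\alpha$ with pairwise bounded intersections); as written, the step producing $\alpha^{++}$ many almost disjoint stationary sets does not go through.
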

\begin{proof}
Recall that for every $\alpha\leq \kappa$, $L[\E]$ contains a  $\Diamond\left(\mathcal{S}^\alpha \right)$-sequence which is definable over $H_{\alpha^{++}}$ from $\alpha$. Denote the 
least such a sequence in the canonical well ordering of $L[\E]$ by $\la A^{\alpha}_\xi \colon \xi<\alpha^+ \ra$. For every subset $B\subseteq \alpha^+$, let 
$$\mathcal{S}^\alpha(B) = \{ \xi\in \mathcal{S}^\alpha \colon  B\cap \xi = A^\alpha_\xi \}.$$
Then $\la \mathcal{S}^\alpha(B) \colon B\in \mathcal{P}\left( 
\alpha^+ \right) \ra$ is a sequence of almost disjoint stationary subsets of $\mathcal{S}^\alpha$. The almost disjointness follows since, for every pair of subsets $B\neq B'$ of $\alpha^+$, $\mathcal{S}^\alpha(B) \cap \mathcal{S}^\alpha(B')\subseteq \min B\triangle B'$.

Finally, pick the least constructible bijection from $(\mathcal{P}(\alpha^+))^{L[\E]}$ to $\alpha^{++}\times \alpha^+\times 2$, and use it to re-index the sequence $\la \mathcal{S}^\alpha(B) \colon B\subseteq \alpha^+ \ra$ in a definable way, in the form $\la \mathcal{S}^\alpha(\tau, \eta,i) \colon \tau<\alpha^{++}, \eta<\alpha^+, i<2 \ra$. We can further assume that for every $\tau<\alpha^{++}$, 
$$\la  \mathcal{S}^\alpha(\tau, \eta,i) \colon \eta<\alpha^+, i<2 \ra $$
are pairwise disjoint and for each $\eta<\alpha^{+}, i<2$, $\min(\mathcal{S}^\alpha(\tau,\eta,i))> \eta$, by definably modifying the sequence. More formally, replace each $\mathcal{S}^\alpha(\tau, \eta,i)$ with $\mathcal{S}^{\alpha}(\tau, \eta, i)\setminus \left(\gamma(\tau,\eta,i)\cup (\eta+1)\right)$, where--
$$ \gamma(\tau,\eta, i) = \sup\Big\{ \mathcal{S}^\alpha(\tau, \eta ,i)\cap \mathcal{S}^\alpha(\tau, \eta',j) \colon (\eta',j)<_{lex} (\eta,i)  \Big\}+1 $$
is an ordinal below $\alpha^+$ by almost disjointness.
\end{proof}

We now proceed towards the proofs of Lemmas \ref{Lem: FinalForcingFusionLemma}, \ref{Lem: FinalForcingDistributivityLemma}. Let us fix some notations. Assume that $p\in \po$ (or $p\in \po_{\lambda}$ for some $\lambda\leq \kappa+1$) and $\alpha \in \supp(p)$ is Mahlo. Then 
$$p\uhr \alpha\Vdash p(\alpha) \in \name{\qo}_\alpha.$$
In particular, using the notations of the previous section, for every $\xi<\alpha^{++}$, 
$$ p\uhr \alpha \Vdash_{\po_\alpha}  \left(  p(\alpha)\uhr \xi \Vdash_{\qo_\alpha \uhr \xi}  \  p(\alpha)(\xi) = \la \name{T}^{p(\alpha)}_\xi , \name{c}^{p(\alpha)}_\xi \ra \right) $$
where $\name{T}^{p(\alpha)}_\xi$ is forced to be a condition in the generalized Miller forcing in $V^{ \po_\alpha * {\name{Q}^\alpha\uhr \xi} }$, namely--
$$ {p\uhr \alpha}^{\frown} p(\alpha)\uhr \xi \Vdash \name{T}^{p(\alpha)}_\xi \in \name{\mo}_\alpha $$
and $\name{c}^{p(\alpha)}_\xi$ is forced to be a condition in a coding forcing in $V^{ \po_\alpha * {\name{Q}^\alpha\uhr \xi * \name{\mo}_\alpha } }$, namely--
$$ {p\uhr \alpha}^{\frown} \left({p(\alpha)\uhr \xi}^{\frown} \name{T}^{p(\alpha)}_\xi\right) \Vdash \name{c}^{p(\alpha)}_\xi \in \name{\co}^\alpha_\xi $$

\begin{definition}
Assume that $\lambda\leq \kappa+1$ and $p\in \po_{\lambda}$. A \textbf{local bounding function} for $p$ is a sequence $\vec{\mu}^p = \la \mu^p_{\alpha} \colon \alpha\in \supp(p) \ra \in \prod_{\alpha\in \supp(p)} \alpha^+$, such that for every Mahlo $\alpha\in \supp(p)$,
$$ p\uhr \alpha \Vdash \forall \xi\in \supp(p(\alpha)), {p(\alpha)\uhr \xi}^{\frown} \name{T}^{p(\alpha)}_\xi \Vdash \sup\left( \name{c}^{ p(\alpha) }_\xi \right) < \mu^p_\alpha. $$
    
\end{definition}
In other words, for every $\alpha\in \supp(p)$, $p\uhr \alpha$ forces that the constant function whose value is $\mu^p_\alpha$ is a local bounding function for $p(\alpha)\in \name{\qo}_\alpha$, in the sense of Definition \ref{Def: PacedFusionSequences}. In the following Lemma, we argue that the subset of conditions equipped with a local bounding function is dense.

\begin{lemma}\label{Lem: FinalForcingDensityLemma}
    Assume that $\lambda\leq \kappa+1$. Let $\bar{\po}_\lambda\subseteq \po_\lambda$ be the set of conditions having a local bounding function. Then $\bar{P}_\lambda$ is dense.
\end{lemma}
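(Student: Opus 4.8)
The plan is to prove density of $\bar{\po}_\lambda$ by induction on $\lambda \leq \kappa+1$, following the same case analysis used in the proof of Lemma \ref{Lem: MillerCodingDensityLemma} for the local iteration, but now at the ``outer'' level of the main iteration $\po$. Fix a condition $p \in \po_\lambda$; the goal is to build an extension $q \geq p$ that carries a local bounding function $\vec{\mu}^q$. First I would handle the successor case $\lambda = \lambda'+1$: if $\lambda'$ is not Mahlo then $\name{\qo}_{\lambda'}$ is trivial and there is nothing to do beyond the inductive hypothesis; if $\lambda'$ is Mahlo, then since $\name{\qo}_{\lambda'} = \qo_{\lambda'}$ is a dense union of the $\qo^{\lambda'}_\tau$'s (Theorem \ref{Thm:MillerCodingFinalForcing}), we may extend $p(\lambda')$ to a condition in some $\qo^{\lambda'}_\tau \subseteq \bqo^{\lambda'}_\tau$, which by Definition \ref{Def: PacedFusionSequences}(2) has a local bounding function in $V^{\po_{\lambda'}}$; using Lemma \ref{Lem: MillerCodingDecidingOrds} we can bound the (forced) sup of that bounding sequence below some actual ordinal $\mu_{\lambda'} < (\lambda')^+$, working below a condition in $\bar{\po}_{\lambda'}$ obtained from the inductive hypothesis applied to $\po_{\lambda'}$.

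The limit cases split according to $\cf(\lambda)$ relative to the relevant cardinal, exactly as in Lemma \ref{Lem: MillerCodingDensityLemma}. When $\cf(\lambda)$ is large (bigger than the supremum of possible support-``widths'', i.e.\ when a single condition has bounded support), reduce to a proper initial segment and apply the inductive hypothesis. When $\cf(\lambda)$ is small, one builds a paced sequence of conditions $\la p_i \mid i < \gamma \ra$ along a pacing chain of elementary substructures, where at each step one passes to a condition with a local bounding function via the inductive hypothesis, and at limit stages the union is taken with the characteristic-point trick: adding $\chi_{X_i}(\cdot^+)$ into each coding coordinate to keep the limit a legitimate condition (this is exactly why pacing chains are required to avoid $\mathcal{S}^\alpha$, via Corollary \ref{Cor: FineStructureS^alphaDoesNotReflect} and the non-reflection of $\mathcal{S}^\alpha$). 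The key point is that the nonstationary support ensures the support of the fusion limit is again a legitimate support, and the local bounding functions of the $p_i$ amalgamate coordinatewise (take suprema plus one) to a local bounding function for the limit.

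One subtlety: at the very top level $\lambda = \kappa+1$ (or at inaccessible but non-Mahlo $\lambda$), one must be careful about which ``width'' bounds the supports and hence which fusion-style argument applies; but since the iteration is nontrivial only at Mahlo cardinals and each $\qo_\alpha$ has size $\alpha^{++}$, the relevant bookkeeping matches the structure already developed. I expect the main obstacle to be the bookkeeping at limit stages of cofinality exactly equal to the ambient cardinal's cofinality (the analogue of Case 4 of Lemma \ref{Lem: MillerCodingDensityLemma}): one needs a paced \emph{fusion} sequence with an auxiliary $\subseteq$-increasing sequence $\la F_i\ra$ and a partition argument to guarantee that the union of the $F_i$ exhausts the union of the supports, so that the exact upper bound exists and lies in $\bar{\po}_\lambda$. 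Assembling this while simultaneously tracking the local bounding functions — and invoking Lemma \ref{Lem: MillerCodingDecidingOrds} (with respect to the appropriate suborder $\geq_{F,\nu}$) to keep the bounds below genuine ordinals at each stage — is where the real care is needed, though all the ingredients are already in place from the previous subsections.

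\begin{proof}[Proof sketch]
The proof is by induction on $\lambda\leq\kappa+1$, and closely parallels the proof of Lemma \ref{Lem: MillerCodingDensityLemma} (applied there to the ``local'' iteration $\hqo^\alpha_\tau$), now carried out at the level of the main nonstationary-support iteration $\po$.

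Let $p\in\po_\lambda$.

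\textbf{Successor case} $\lambda=\lambda'+1$. If $\lambda'$ is not Mahlo, then $\name{\qo}_{\lambda'}$ is trivial and we simply apply the inductive hypothesis to $p\uhr\lambda'\in\po_{\lambda'}$. Suppose $\lambda'$ is Mahlo, so $\name{\qo}_{\lambda'}=\qo_{\lambda'}$ is (forced to be) the dense union of the $\qo^{\lambda'}_\tau$, $\tau<(\lambda')^{++}$ (Theorem \ref{Thm:MillerCodingFinalForcing}). Working below a condition in $\bar{\po}_{\lambda'}$ given by the inductive hypothesis applied to $p\uhr\lambda'$, we may first extend $p(\lambda')$ to (a name for) a condition $\dot{q}$ in some $\qo^{\lambda'}_\tau\subseteq\bqo^{\lambda'}_\tau$, which by Definition \ref{Def: PacedFusionSequences}(2) is (forced to be) equipped with a local bounding function. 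By Lemma \ref{Lem: MillerCodingDecidingOrds} (applied in $V^{\po_{\lambda'}}$ to a name for the supremum of the relevant bounding values) there is a further extension and an ordinal $\mu_{\lambda'}<(\lambda')^+$ forcing this supremum below $\mu_{\lambda'}$; this yields a condition in $\bar{\po}_\lambda$ extending $p$.

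\textbf{Limit case,} $\cf(\lambda)>$ the width of supports below $\lambda$ (i.e., conditions in $\po_\lambda$ have bounded support). Let $\lambda'<\lambda$ bound $\supp(p)$. By the inductive hypothesis there is $q\in\bar{\po}_{\lambda'}$ with $q\geq p\uhr\lambda'$; viewing $q$ as a condition in $\po_\lambda$, its local bounding function (extended trivially) witnesses $q\in\bar{\po}_\lambda$.

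\textbf{Limit case, small cofinality.} Write $\gamma=\cf(\lambda)$ and fix a continuous increasing cofinal sequence $\la\lambda_i\mid i<\gamma\ra$ in $\lambda$, together with a pacing chain $\vec{X}=\la X_i\mid i\leq\gamma\ra$ of elementary substructures containing $p$ and the relevant parameters (such a chain exists by the non-reflection of the sets $\mathcal{S}^\alpha$, as in Claim \ref{Claim: ExistenceOfPacedChains}). We build an increasing sequence $\la p_i\mid i<\gamma\ra$ of conditions, paced with respect to $\vec{X}$, with $p_0=p$ and $p_i\uhr\lambda_i\in\bar{\po}_{\lambda_i}$ for all $i$. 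At successor steps we apply the inductive hypothesis to extend $p_i\uhr\lambda_{i+1}$ to a condition with a local bounding function inside $X_{i+1}$, then thicken each coding coordinate below $\lambda_{i+1}$ by adding the point $\chi_{X_{i+1}}(\cdot^+)\notin\mathcal{S}^\alpha$ (so that the coordinate remains a legitimate condition), and extend the tail trivially. At limit steps we take $p_i=\bigvee_{j<i}p_j$, which is a condition by Claim \ref{Claim:ClosureOfPacedSequences}, and we amalgamate the local bounding functions coordinatewise by taking suprema plus one. The exact upper bound $q=\bigvee_{i<\gamma}p_i$ is then a condition with a local bounding function, hence $q\in\bar{\po}_\lambda$ and $q\geq p$.

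\textbf{Limit case, cofinality equal to the ambient width.} This is the analogue of Case 4 of Lemma \ref{Lem: MillerCodingDensityLemma}. One builds a \emph{paced fusion} sequence $\la p_i\mid i<\cf(\lambda)\ra$ with an auxiliary $\subseteq$-increasing sequence $\la F_i\ra$ and a bookkeeping partition ensuring $\bigcup_i F_i=\bigcup_i\supp(p_i)$; the exact upper bound exists by Claim \ref{Claim:ClosureOfPacedSequences}(2), and carries a local bounding function assembled as above. The nonstationary support guarantees that the support of this limit is again a legitimate (nowhere-in-any-inaccessible-stationary) support.

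In all cases we have produced $q\geq p$ with $q\in\bar{\po}_\lambda$, completing the induction.
\end{proof}
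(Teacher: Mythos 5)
Your overall frame (induction on $\lambda$, with the successor step reducing to the fact that $p(\lambda')\in\qo_{\lambda'}$ already carries a forced bounding function whose value one then decides by extending in $\po_{\lambda'}$) matches the paper, modulo a misattributed citation: Lemma \ref{Lem: MillerCodingDecidingOrds} decides $\hqo^\alpha_\tau$-names by $\hqo^\alpha_\tau$-extensions, whereas here the bound is a $\po_{\lambda'}$-name for a single ordinal below $(\lambda')^+$ and one simply extends the $\bar{\po}_{\lambda'}$-condition to decide it, as the paper does. The real problem is your limit case. You propose to transplant the pacing-chain argument of Lemma \ref{Lem: MillerCodingDensityLemma} to the outer iteration: build an increasing sequence $\la p_i\ra$ along an internally approachable chain $\la X_i\ra$, take coordinatewise joins $p_i=\bigvee_{j<i}p_j$ at limits, and keep the coding coordinates legitimate by inserting $\chi_{X_{i+1}}(\alpha^+)$ at every Mahlo $\alpha$ in the support. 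For this to work, at every limit stage of cofinality $\omega_1$ the supremum of the coding clubs must avoid $\mathcal{S}^\alpha$ \emph{simultaneously for every Mahlo $\alpha$ in the accumulated support} (which may have size up to $\lambda$ and grows during the construction). The machinery you cite does not deliver this: Definition \ref{Def: PacedFusionSequences} and Claim \ref{Claim: ExistenceOfPacedChains} are formulated for the local iteration at a single fixed $\alpha$, and the construction there uses non-reflection of the one set $\mathcal{S}^\alpha$ to steer the characteristic ordinals $\chi_{X_i}(\alpha^+)$ into a single club disjoint from it. Non-reflection of each individual $\mathcal{S}^\alpha$ does not produce one chain of submodels whose characteristic ordinals at all the relevant $\alpha^+$'s avoid all the $\mathcal{S}^\alpha$'s at once; arranging exactly this kind of simultaneous avoidance along the outer iteration is the central difficulty of the paper, and it is what the fine-structural square apparatus of Section \ref{Section:FineStucture} (the points $\eta_i(\alpha)\in c^\alpha_{\eta(\alpha)}\setminus\mathcal{S}^\alpha$ of Corollary \ref{Corollary: FineStructureReflectDownGoodPts}) is built to supply in the proofs of Lemmas \ref{Lem: FinalForcingFusionLemma}, \ref{Lem: FinalForcingDistributivityLemma} and \ref{Lem: FinalForcingPreservationOfStatSets}, where joins cannot be avoided. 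As written, your limit step therefore has a genuine gap.

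Moreover, for this particular lemma the heavy machinery is unnecessary, and the paper's proof avoids the join problem altogether: it fixes a continuous sequence $\la\alpha_i\ra$ lying inside a decreasing sequence of clubs $C_i$ disjoint from the supports, freezes $p_{i+1}\uhr(\alpha_i{+}1)=p_i\uhr(\alpha_i{+}1)$, resets the tail to $p\setminus(\alpha_{i+1}{+}1)$ at each step, and equips the single interval $(\alpha_i,\alpha_{i+1})$ with bounds via the inductive hypothesis, converting the resulting $\po_{\alpha_i+1}$-names into ground-model ordinals by taking the supremum over the at most $|\po_{\alpha_i+1}|=\alpha_i^{++}$ possible values. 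Since each coordinate is modified only boundedly often, no supremum of coding clubs ever has to dodge $\mathcal{S}^\alpha$, and the clubs $C_i$ (not the mere fact that each $\supp(p_i)$ is nonstationary, which by itself does not prevent the union of supports from becoming stationary in $\lambda$) guarantee the limit has nonstationary support. Two smaller points: your ``large cofinality'' case, in which you reduce to an initial segment because supports are bounded, is vacuous for regular $\lambda$ that are limits of Mahlo cardinals (e.g.\ $\lambda=\kappa$), since nonstationary supports can be unbounded there; and your ``cofinality equal to the ambient width'' case would require redeveloping the $\leq_{F,\nu}$ fusion apparatus at the level of $\po_\lambda$, which the paper never needs for this density statement.
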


\begin{proof}
    We work by induction on $\lambda$.

    Assume that $\lambda = \alpha+1$ is successor. Let $p\in \po_\lambda$. In $V^{\po_{\alpha}}$, $p(\alpha)\in \qo_{\alpha}$, and, by Lemma \ref{Lem: MillerCodingSecondDensityLemma}, $p(\alpha)$ has a local bounding function in the sense of Definition \ref{Def: PacedFusionSequences}. In particular, there exists a $\po_{\alpha}$-name for an ordinal $\name{\mu} < {\alpha}^+$ such that, for every $\xi\in \supp(p(\alpha))$, 
    $${p\uhr \alpha}^{\frown} {p(\alpha)\uhr \xi }^{\frown} \name{T}^{p(\alpha)}_\xi \Vdash \sup\left(\name{c}^{p(\alpha)}_\xi\right) < \name{\mu}. $$
    Now extend $q\geq p\uhr \alpha$ such that $q\in \bar{\po}_{\alpha}$ and $q$ decides the value of $\mu$. Then $q^{\frown} p(\alpha) $ extends $p$ and belongs to $\bar{\po}_\lambda$.

    Assume that $\lambda$ is limit. We first address the case where $\lambda$ is a regular cardinal, and then make the needed adjustments to the singular case.  
    
    We construct sequences--
\begin{itemize}
    \item $\la \alpha_i \colon i<\lambda \ra $ an increasing, continuous, cofinal sequence in $\lambda$.
    \item $\la p_{i} \colon i<\lambda \ra $ a fusion sequence of conditions in $\po_\lambda$, namely, increasing in the order of $\po_\lambda$, and satisfying that for every $i<j$, $p_j\uhr \alpha_i = p_i \uhr \alpha_i$. 
    \item $\la C_i \colon i <\lambda \ra $, decreasing with respect to inclusion, each $C_i$ is a club disjoint from the support of $p_i$.
\end{itemize}

We will also make sure during the construction that, for every $i<\lambda$, $p_{i}\uhr \alpha_i \in \bar{\po}_{\alpha_i}$,  $\alpha_i\notin \mbox{supp}( p_{i+1})$ and $p_{i+1}\uhr \alpha_{i+1} \Vdash p_{i+1}\setminus \alpha_{i+1} = p\setminus \alpha_{i+1}$.

Start with $p_0 = p$, $C_0$ any club disjoint from its support, and $\alpha_0 = 0$.

Successor steps: assume that the $p_i,C_i, \alpha_i$ were defined. Let $\alpha_{i+1}$ be the first element in $C_i$ strictly above $\alpha_i$. We define $p_{i+1}$. First, define $p_{i+1}\uhr \alpha_i = p_{i}\uhr \alpha_i$. $\alpha_i$ itself is outside the support of $p_{i+1}$. Denote $I_i  =\left( \alpha_i, \alpha_{i+1} \right)$, and take $p_{i+1}\uhr I_i$ to be an extension of  $p_i \uhr I_i$, such that, for every $\beta$ in its support, there exists an ordinal $\mu^{p_{i+1}}_\beta < \beta^+$ such that-- 
$$p_{i+1} \uhr \beta \Vdash \forall \xi\in \supp\left( p_{i+1}(\beta)  \right), \     { p_{i+1}(\beta)\uhr \xi }^{\frown}  \name{  T  }^{p_{i+1}(\beta)}_\xi \Vdash \sup\left( \name{c}^{p_{i+1}(\beta)}_\xi \right) < \mu^{p_{i+1}}_{\beta}.$$
This is possible by the induction hypothesis. The sequence of bounds $\la \mu^{ p_{i+1} }_\beta  \colon \beta\in \supp( p_{i+1} ) \cap I_{i}  \ra $ is a $\po_{\alpha_i+1}$-name, but since $|\po_{\alpha_i+1}| = (\alpha_{i})^{++}$, we can assume as well that the sequence is known in $V$ (by replacing each $\mu^{p_{i+1}}_\beta$ with the supremum of all the possible values of $\name{\mu}^{p_{i+1}}_\beta$ as forced by conditions in $\po_{\alpha_i+1}$). Finally, choose $p_{i+1}\setminus \alpha_{i+1}$ such that $p_{i+1}\uhr \alpha_{i+1} \Vdash p_{i+1}\setminus \alpha_{i+1} = p\setminus \alpha_{i+1} $. This concludes the definition of $p_{i+1}$, and, by standard arguments, it has a nowhere stationary support. Let $C_{i+1}$ be obtained from $C_{i}$ by shrinking it so it is disjoint from this support.

We move on to the limit case. Assume that $i<\alpha$ and  $\la p_j, C_j , \alpha_j \colon j< i\ra$ were defined. Let $ \alpha_i = \sup\{ \alpha_j \colon j<i\} $. We define $p_i$. First, take $p_i \uhr \alpha_i = \bigcup_{ j<i } p_{j}\uhr \alpha_j+1 $. $\alpha_i$ itself will be outside the support of $p_i$, as well as all the ordinals $\la \alpha_j \colon j<i \ra$. Finally, set $p_i\setminus \alpha_i = p\setminus \alpha_i$. Then $p_i\uhr \alpha_i \in \bar{\po}_{ \alpha_i }$, since, for every $j<i$, we have by induction $p_j \uhr \alpha_j \in \bar{\po}_{\alpha_j}$. Finally, take $C_i = \bigcap_{j<i} C_j$. 

This concludes the construction. We finally take $p^*$ to be the natural limit of the fusion sequence, namely $p^* = \bigcup_{i<\lambda} p_{i}\uhr \alpha_i+1$. Then $p^* \in \bar{\po}_{\lambda}$, as desired. 

For the case where $\lambda$ is singular, fix an increasing, cofinal sequence $\la \lambda_i \colon i< \cf(\lambda) \ra$ in $\lambda$. Repeat the above argument, only construct sequences of length $\cf(\lambda)$ rather than $\lambda$, and choose the sequence of ordinals $\la \alpha_i \colon i<\cf(\lambda) \ra $ such that, for every $i<\cf(\lambda)$, $\alpha_{i+1}$ is the least element of $C_i$ above $\max{\alpha_i, \lambda_i}$. 
\end{proof}

\begin{lemma} \label{Lem: FinalForcingFusionLemma}
    For every $\lambda\leq \kappa$ Mahlo, $\po_\lambda$ satisfies the $\lambda$ Iteration-Fusion property.
\end{lemma}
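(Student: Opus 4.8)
The plan is to prove the $\lambda$ Iteration-Fusion property by an induction-free argument that combines the fusion machinery of the iteration $\po_\lambda$ with the fine-structure correspondence from Section~\ref{Section:FineStucture}. Given a condition $p \in \po_\lambda$ and a sequence $\vec{D} = \la D_\alpha \mid \alpha < \lambda\ra$ of dense open subsets of $\po_\lambda$, I would first use Lemma~\ref{Lem: FinalForcingDensityLemma} to pass to a condition with a local bounding function, and then build a fusion sequence $\la p_i \mid i < \lambda\ra$ of conditions together with an increasing continuous cofinal sequence $\la \alpha_i \mid i < \lambda\ra$ in $\lambda$ and a decreasing sequence of clubs $\la C_i \mid i < \lambda\ra$, exactly as in the proof of Lemma~\ref{Lem: FinalForcingDensityLemma}, but now additionally arranging at each successor step that $p_{i+1}$ meets (in the appropriate fusion-preserving sense on the tail coordinates $\geq \alpha_{i+1}$) the dense set $D_{\alpha_i}$; this is where the $\kappa$-Iteration-Fusion-type density sets $D'_\alpha$ capturing $D_\alpha$ via $\po_{\alpha+1}$-names are introduced, as in Lemma~\ref{Lem:IAF-CapturingDenseSets}. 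The key point is that each individual coding poset $\qo_\alpha$ has the $\alpha$ C-Fusion property (Theorem~\ref{Thm:MillerCodingFinalForcing}), so extensions on a single coordinate can be made to hit dense sets without destroying the fusion structure, and the local bounding functions keep the coding coordinates' suprema bounded so that limits of the fusion sequence remain conditions.

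The subtle issue — and the main obstacle — is the \emph{support}: unlike an Easton support iteration, the nonstationary support allows supports to be cofinal in various inaccessibles, so when we take the limit $p^* = \bigcup_{i<\lambda} p_i \uhr \alpha_i+1$ we must verify $\supp(p^*)$ is nonstationary in every inaccessible $\leq \lambda$, and more importantly, we must verify that $p^*\uhr(\alpha_i+1)$ actually forces the relevant statement about $D_{\alpha_i}$, i.e.\ that the "capturing" worked uniformly. This is precisely where Corollary~\ref{Corollary: FineStructureReflectDownGoodPts} and Lemma~\ref{Lemma: FineStructureInfastructureSystemForDistributivityProof} enter: the coding stationary sets $\mathcal{S}^\alpha$ were built so that the points $\eta$ used for coding at stage $\alpha$ reflect down from coding data at $\kappa$ (or at larger stages) along a nowhere-stationary set $Z$ of inaccessibles, and the fact that $Z \cap \alpha$ is nonstationary for inaccessible $\alpha$ means that the clubs $C_i$ witnessing nonstationarity of the support can be chosen coherently with the reflection structure. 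Concretely, I would choose the points $\alpha_i$ to avoid the relevant set $Z$ (via Lemma~\ref{Lem: FineStructureSectionNowhereStatSets}, which guarantees $Z$ is nowhere stationary), which both keeps the support nonstationary and ensures that the coding posets below $\alpha_i$ do not interfere with one another.

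For the argument that $p^* \geq p_i$ for all $i$ and that $p^*$ genuinely meets the dense sets, I would argue coordinate by coordinate: for Mahlo $\beta \in \supp(p^*)$ with $\beta \in (\alpha_i, \alpha_{i+1})$, the coordinate $p^*(\beta)$ is determined already by $p_{i+1}(\beta)$ (by the fusion condition $p_{i+1}\uhr\alpha_{i+1} \Vdash p_{i+1}\setminus\alpha_{i+1} = p\setminus\alpha_{i+1}$ being honored until the relevant stage), and the local bounding function guarantees $p^*(\beta) \in \name{\qo}_\beta$ since the supremum of the coding coordinate sits below $\mu^{p^*}_\beta < \beta^+$ — the value $\mu^{p^*}_\beta$ being a ground-model ordinal because $|\po_{\beta+1}| = \beta^{++}$, so we may take the sup over all forced values. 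I would then extract the club $C \subseteq \lambda$ as (a thinning of) $\{\alpha_i \mid i < \lambda\} \cap \{i < \lambda \mid i \text{ is a closure point of } i \mapsto \alpha_i\}$, and check that for every regular $\alpha \in C \cap S$, writing $\alpha = \alpha_i$, the set $\{ q \in \po_{\alpha+1} \mid q \fr (p^*\setminus(\alpha+1)) \in D_\alpha\}$ is dense in $\po_{\alpha+1}$ — this follows because the stronger fusion-with-names statement was secured at step $i+1$ uniformly over all of $\po_{\alpha+1}$, exactly as in the proof of Lemma~\ref{Lem:IAF-CapturingDenseSets}, using that the tail quotient $\po/\po_{\alpha+1}$ is $\alpha$-distributive (to be verified in Lemma~\ref{Lem: FinalForcingDistributivityLemma}, which I would either prove first or invoke with care to avoid circularity, arranging the inductions so distributivity of tails at stages $<\lambda$ is available).

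Since $\lambda$ may be singular (when $\kappa$ is not Mahlo in some intermediate context, though here $\lambda$ ranges over Mahlo cardinals so it is regular, hence this concern is mild), I would note that $\lambda$ Mahlo ensures $\lambda$ is regular and that there are stationarily many inaccessibles below $\lambda$, so the clubs $C_i$ and the set $Z$ behave well; the Mahlo-ness is also what is preserved by $\po_\lambda$ via Lemma~\ref{Lem: PreservationOfStationarySetsForIterationFusionProperty} once the Iteration-Fusion property itself is established, so there is no circularity in the preservation direction. The verification that the fusion limit is a condition and extends all $p_i$ is then routine given the local bounding functions, and the whole construction is carried out inside a suitable elementary submodel chain (a pacing-chain-like structure adapted to the iteration) to keep the bookkeeping finite and to ensure the exact upper bounds exist, mirroring the techniques in the proof of Lemma~\ref{Lem: MillerCodingC-FusionLemma}.
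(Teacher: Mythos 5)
The skeleton of your construction (a fusion sequence $\la p_i\ra$ along an increasing continuous sequence $\la \alpha_i\ra$ with clubs $C_i$, meeting capturing dense sets at successor steps, with the final club extracted from the $\alpha_i$'s) matches the paper's, but your proposal misplaces the central difficulty and the mechanism that resolves it. The problem at limit stages is not that the coding coordinates might become unbounded: the local bounding function of Lemma \ref{Lem: FinalForcingDensityLemma} cannot, by itself, make limits into conditions. When you take $\bigvee_{j<i}p_j$ at a limit $i$ of uncountable cofinality (in particular of cofinality $\omega_1$), at each Mahlo coordinate $\alpha$ of the support you must add $\sup_{j<i}\sup(\name{c}^{p_j(\alpha)}_\xi)$ as a new maximum of the coding component, and that supremum must avoid $\mathcal{S}^\alpha\subseteq\alpha^+\cap\cf(\omega_1)$; boundedness below $\alpha^+$ is irrelevant to this. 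This is exactly the ``iterating club-shooting posets may collapse cardinals'' obstruction that Section \ref{Section:FineStucture} was built to defeat, and a pacing-chain of elementary submodels of $H_\theta$ (as in the single-coordinate proof of Lemma \ref{Lem: MillerCodingC-FusionLemma}) does not give it to you: for a coordinate $\alpha$ far above the size of the structures in the chain, nothing forces the relevant suprema at $\alpha$ to miss $\mathcal{S}^\alpha$, and this must happen simultaneously at all coordinates of the support through all limit stages.

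In the paper's proof the fine structure enters quite differently from your sketch. One takes the constructibly least counterexample $(p,\vec{D})$, so that all data are lightface definable over $J^{\E}_{\lambda^{+3}}$ and hence lie in the range of a weakly $r\Sigma_{n_\eta+1}$-preserving embedding of a collapsing structure $N_\eta$ with $\eta\in\mathcal{S}^{\lambda,\lambda,n}$ (so $\cf(\eta)=\lambda$); your proposal has no analogue of this reflection step, and without definability (or some replacement for it) the parameters cannot be placed in such a structure. Moreover, the nowhere stationary set $Z$ of Proposition \ref{Lemma:NowhereStationary} is the support of the current condition $p_i$, not a set your $\alpha_i$'s should avoid: for each $\alpha\in\supp(p_i)$ above $\alpha^*_i$ one gets $\alpha\in h^{N_\eta}_{n_\eta+1}[\omega\times(\alpha\cup\{\delta\})]$, hence a collapsing structure $N_{\eta(\alpha)}$ and, from the very good points $\epsilon_i$ of $N_\eta$, club points $\eta_i(\alpha)\in c^{\alpha}_{\eta(\alpha)}\setminus\mathcal{S}^\alpha$ (Corollary \ref{Corollary: FineStructureReflectDownGoodPts}). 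The successor step adds $\eta_i(\alpha)$ to every coding component at every coordinate of the support above $\alpha_{i+1}$ --- which is possible only because $p_i$ and its least local bounding function are chosen inside $h^{N_\eta,\epsilon_i}_{n_\eta+1}[\omega\times(\alpha^*_i\cup\{\delta\})]$, so that $\mu^{p_i}_\alpha<\eta_i(\alpha)$ --- and then at limit stages square coherence forces the new suprema to equal $\sup_{j<i}\eta_j(\alpha)$, a limit point of $c^{\alpha}_{\eta(\alpha)}$ lying outside $\mathcal{S}^\alpha$ because each $\epsilon_j$ was chosen to be a limit of more than $\omega_1$-many very good points. None of this anchoring appears in your proposal, and the roles you assign to the local bounding functions, to the nonstationarity of the support, and to a generic elementary-chain bookkeeping do not substitute for it; as written, the limit stages of your fusion sequence may simply fail to be conditions. (A minor further point: the paper proves this lemma without invoking the tail distributivity of Lemma \ref{Lem: FinalForcingDistributivityLemma}, so the circularity you flag does not arise in its argument.)
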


\begin{proof}
Assume otherwise, and let $p\in \po_{\lambda}$ be a condition and $\vec{D} = \la D_i \colon i<\lambda \ra$ be a sequence such that $\la p, \vec{D} \ra$ is the 
least counterexample. In other words, $\la p, \vec{D} \ra$ is chosen constructibly minimal such that $\vec{D}$ is a sequence of dense open subsets of $\po_\lambda$, for which there are no $p^*\geq p$ and  club $C\subseteq \lambda$ of coordinates $i\in C$ such that the set--
$$\{ s\in \po_{i+1} \colon s^{\frown} p^*\setminus i+1 \in D_i \}$$
is a dense open subset of $\po_{i+1}$. 

All the relevant parameters $\lambda, \po_{\lambda}, \bar{\po}_{\lambda}, p, \vec{D}$ belong to $J_{\lambda^{+3}}$,\footnote{Actually, all the relevant parameters above already belong to $J^{\E}_{\lambda^{++}}$. We chose $J^{\E}_{\lambda^{+3}}$ is order to use Proposition \ref{Prop: Stationary set of passive collapsing structures on an arbitrary cofinality}.} and are lightface definable there: $\lambda$ itself  is definable as the third-to-last cardinal; the forcings $\po_{\lambda}, \bar{\po}_{\lambda}$ are definable from $\lambda$ and from the sequence of canonical stationary sets $\la \mathcal{S}^{\mu} \colon  \mu\leq \lambda \mbox{ is Mahlo} \ra$, which is lighface definable; finally, the pair $(p,\vec{D})$ is definable as the constructible-least counterexample for the Iteration-Fusion property. Let $2 \leq n<\omega$ be such that the above parameters, and the fact that $(p,\vec{D})$ is a counterexample for the Iteration-Fusion property, are $\Sigma_n$ definable over $J^{\E}_{\lambda^{+3}}$ without parameters. 

Apply Proposition \ref{Prop: Stationary set of passive collapsing structures on an arbitrary cofinality} to find an ordinal $\eta\in (\lambda, \lambda^+)$ such that $\eta\in \mathcal{S}^{\lambda, \lambda, n}$. In other words, $\cf(\eta) = \lambda$, $n_\eta \geq n$, and the collapsing structure $N_\eta$ can be weakly $r\Sigma_{n+1}$-embedded into $J^{\E}_{\lambda^{+3}}$ by an embedding $\pi$, such that all the above parameters belong to $\mbox{Im}(\pi)$. Let $\delta\in (\lambda, \lambda^{+})$ be such that $p^{N_\eta}_{n_\eta+1} = \{\delta \} $.

Let $\po'_\lambda, \bar{\po}'_{\lambda} , \vec{D}' = \la D'_i \colon i<\lambda \ra, p' \in N^{\E}_{\eta}$ be the preimages under $\pi$ of $\po_{\lambda}, \bar{\po}_\lambda, \vec{D}, p$, respectively. Note that $p' = p$ as $\mbox{crit}(\pi)> \lambda$. Also, $\po'_{\lambda}$ is a nonstationary support iteration of the form $\la \po'_{\alpha}, \name{\qo}'_{\alpha} \colon \alpha \leq \kappa' \ra$ in $N_{\eta}$. Since $\mbox{crit}(\pi)\geq \lambda$, we have, for every $\alpha< \lambda$, $\po'_\alpha = \po_\alpha$. It follows that $\po'_\lambda\subseteq \po_{\lambda}$.

Our goal will be to produce a condition $p^* \in \po_\lambda$ extending $p$ and a club $C\subseteq \lambda$, such that for every $i\in C$, the set $\{ s\in \po_{i+1} \colon s^{\frown} p^*\setminus i+1\in D_i \}$ is a dense subset of $\po_{i+1}$. This will contradict our initial assumptions.

In order to produce such $p^*$, We construct sequences $\vec{p} = \la p_i \colon i\leq \lambda \ra$, $\vec{C} = \la C_i \colon i<\lambda \ra$, $\vec{\alpha} = \la \alpha_i \colon i<\lambda \ra$, $\vec{\alpha}^* = \la \alpha^*_i \colon i<\lambda \ra$ and $\vec{\epsilon} = \la \epsilon_i \colon i<\lambda \ra $, each strict initial segment of those sequences is inside $N_{\eta}$, such that:
\begin{enumerate}
    \item The sequences $\la \alpha_i \colon i<\lambda \ra$, $\la \alpha^*_i \colon i<\lambda \ra$ are increasing sequences of ordinals, both cofinal in $\lambda$. Moreover,
        \begin{enumerate}
            \item The sequence $\la \alpha_i \colon i<\lambda \ra$ is also continuous. 
            \item The sequences interleave, in the sense that for every $i<\lambda$, $\alpha_i \leq \alpha^*_i < \alpha_{i+1}$.
            \item For every $\alpha\geq \alpha^*_i$ in $\supp(p_{i})$, $\alpha\in h^{N_{\eta}}_{n_\eta+1}[\omega\times (\alpha\cup \{\delta \})]$.\\In particular, By Lemma \ref{Lem: FineStructureCorrespondenceBetweenkappaalpha},\footnote{By picking $\alpha_0$ high enough, we will make sure that the transitive collapse of  $h^{N_\eta}_{n_\eta+1}[\omega\times \delta]$ belongs to $h^{N_\eta}_{n_\eta+1}[ \omega\times ( \alpha_0 \cup \{\delta \} ) ]$, which is required in order to apply Lemma \ref{Lem: FineStructureCorrespondenceBetweenkappaalpha}.} there exists $\eta(\alpha)\in (\alpha,\alpha^+)$ such that $h^{N_{\eta}}_{n_\eta+1}[\omega\times (\alpha\cup \{\delta \})]$ transitively collapses to a structure $N_{\eta(\alpha)}$. 
        \end{enumerate}
    \item The sequence $\la \epsilon_i \colon i<\lambda \ra$ is an increasing sequence of very good points in $N_{\eta}$, such that:
    \begin{enumerate}
        \item For every $i<\lambda$, and for every $\alpha\geq \alpha^*_{i}$ in $\supp(p_{i})$, $\epsilon_{i}$ defines a corresponding very good point $\epsilon_{i}(\alpha)$ of $N_{\eta(\alpha)}$, which is associated to a club point $\eta_{i}(\alpha)\in c^{\alpha}_{\eta(\alpha)}\setminus \mathcal{S}^\alpha$. 
        \item For every $i<\lambda$ and $\alpha<\lambda$ with $\alpha\in h^{N^{\E}_{\eta}}_{n_\eta+1}[\omega\times (\alpha\cup \{\delta \})]$, 
        $$\la \epsilon_j \colon j<i \ra, \la \eta_j(\alpha) \colon j\leq i \ra  \in h^{N_{\eta}, \epsilon_{i+1} }_{n_\eta+1}[\omega\times (\alpha\cup \{\delta\})].$$
    \end{enumerate}
    \item The sequence $\la p_i \colon i<\lambda \ra$ is a sequence of conditions in $\bar{\po}'_{\lambda}$, such that:
    \begin{enumerate}
        \item For every $i<j<\lambda$, $p_i \leq p_j$ and $p_j \uhr (\alpha_{i}+1) = p_{i}\uhr (\alpha_{i}+1)$.
        \item For every $i<\lambda$ and Mahlo $\alpha >\alpha_i$ in $\supp(p_i)$, 
        $$p_{i+1}\uhr \alpha \Vdash \ \  \forall \xi< \alpha^{++}, \ \  {p_{i+1}(\alpha)\uhr \xi }^{\frown} \name{T}^{p_{i+1}(\alpha)}_\xi \Vdash \eta_{i}(\alpha)\in \name{c}^{p_{i+1}(\alpha)}_\xi. $$
        \item For every $i<\lambda$,  $\la p_j \colon j\leq i \ra \in h^{N_{\eta}, \epsilon_{i}}_{n+1}[\omega \times (\alpha^*_{i}\cup \{\delta \})]$
    \end{enumerate}
    \item The sequence $\la C_i \colon i<\lambda \ra$ is decreasing, continuous sequence of club subsets of $\lambda$, such that:
    \begin{enumerate}
        \item For every limit $i<\lambda$, $C_i = \bigcap_{j<i} C_j$.
        \item For every $i<\lambda$, $C_{i+1}$ is the constructibly least club subset of $C_i$ disjoint from $\mbox{supp}(p_{i+1})$. 
        \item For every $i<j$, $\alpha_{j}\in C_i$.
    \end{enumerate}
\end{enumerate}

Begin by picking $p_0$ being the constructibly least extension of $p$ in $\bar{\po}'_{\lambda}$. Take $\alpha_0<\lambda$ be the least such that the transitive collapse of  $h^{N_\eta}_{n_\eta+1}[\omega\times \delta]$ belongs to $h^{N_\eta}_{n_\eta+1}[ \omega\times ( \alpha_0 \cup \{\delta \} ) ]$. Let $C_0$ be the constructibly least club disjoint from $\supp(p_0)$. By Corollary \ref{Corollary: FineStructureReflectDownGoodPts}, there exists $\alpha^*<\lambda$ and a good point $\epsilon$ such that:
\begin{itemize}
    \item  For every $\alpha\geq \alpha^*$ in $\supp(p_{0})$, $\alpha\in h^{N_\eta}_{n+1}[\omega\times (\alpha\cup \{ \delta\})]$. In particular, $h^{N_\eta,\epsilon}_{n+1}[\omega\times (\alpha\cup \{ \delta \})]$ is collapsed to a structure $N_{\eta(\alpha)}$ for some $\eta(\alpha)\in \mathcal{C}^\alpha$.
    \item For every $\alpha$ as above, the image of $\epsilon$ under the above collapse is a very good point $\epsilon(\alpha)$ of $N_{\eta(\alpha)}$, which induces a club point $\eta_0(\alpha)\in c^{\alpha}_{\eta(\alpha)}\setminus \mathcal{S}^{\alpha}$.
    \item $p_0 \in h^{N_\eta, \epsilon}[\omega\times (\alpha^*\cup \{\delta \})]$
    \item $\epsilon$ is a limit of more than $\omega_1$-many very good points of $N_\eta$.\footnote{This clause can be ensured since $\eta\in \mathcal{S}^{\lambda, \lambda, n}$, and in particular $\cf(\eta)> \omega_1$.}
\end{itemize} 
Let $\alpha^*_0\in C_0$ be the least such $\alpha^*$, and let $\epsilon_0$ be the least very good point $\epsilon$ as above with respect to $\alpha_0$. Denote by $\eta_0(\alpha)$ the club point induced by $\epsilon_0(\alpha)$. 

We now proceed to the successor case. Assume that $p_i, C_i, \alpha_i, \alpha^*_i, \epsilon_i$ were defined. Define first $\alpha_{i+1} = \min\left(C_i\setminus (\alpha^*_i +1)\right)$.
Let $\vec{\mu}^i = \la \mu^i_\alpha \colon \alpha\in \supp(p_i) \ra$ be the constructibly least local bounding function for $p_i$. Since $p_i \in h^{N_{\eta}, \epsilon_{i}}_{n_\eta+1}[\omega \times (\alpha^*_i\cup \{ \delta\})]$, we have $\vec{\mu}^i\in h^{N_{\eta},\epsilon_i}_{n_\eta+1}[\omega \times (\alpha^*_i\cup \{\delta \})]$, and in particular, for every $\alpha\in \supp(p_i)\setminus \alpha_i$, $\mu^i_\alpha\in h^{N_{\eta},\epsilon_i}_{n_\eta+1}[\omega\times (\alpha\cup \{\delta \})]\cap \alpha^+$, namely $\mu^i_\alpha < \eta_i(\alpha)$. Thus, we can define an extension $p'_{i}\geq p_i$ with the same support and the same Miller coordinates, by adding $\eta_i(\alpha)$ to each of the coding components of $p_i(\alpha)$, for every $\alpha> \alpha_{i+1}$ in $\supp(p_i)$. More formally, $\supp(p'_i) = \supp(p_i)$, $p'_i \uhr \alpha_{i+1}+1 = p_i \uhr \alpha_{i+1}+1$, and, for every $\alpha\in \supp(p_i)\setminus \alpha_{i+1}+1$, it is forced by $p'_i\uhr \alpha$ that for every $\xi<\alpha^{++}$,
$${p'_{i}(\alpha)\uhr \xi } \Vdash \name{T}^{p'_i(\alpha)}_\xi = \name{T}^{p_i(\alpha)}_\xi$$
and-- 
$${p'_{i}(\alpha)\uhr \xi }^{\frown}\name{T}^{p'_i(\alpha)}_\xi \Vdash \name{c}^{p'_i(\alpha)}_\xi = \name{c}^{p_i(\alpha)}_\xi\cup \{ \eta_i(\alpha) \}.$$
Note that $p'_i\in \bar{\po}'_{\lambda}$ since for every $\alpha\in \supp(p_i)\setminus \alpha_{i+1}+1$, $\eta_i(\alpha)\notin \mathcal{S}^\alpha$, and $\la \eta_{i}(\alpha) \colon \alpha\in \supp(p_i)\setminus \alpha_{i+1}+1 \ra\in N_{\eta}$. Furthermore,  $\la \eta_{i}(\alpha) \colon \alpha\in \supp(p_i)\setminus \alpha_{i+1}+1 \ra \in h^{N_\eta}_{n_\eta+1}[\omega\times (\alpha_{i+1}\cup \{ \delta\})]$ and $p'_i\in h^{N_{\eta}}_{n_\eta+1}[\omega\times (\alpha_{i+1}\cup \{\delta \} )]$, since $\alpha_{i+1}$ itself belongs to $\supp(p_i)\setminus \alpha^*_i$.  
. Finally, let $p_{i+1}\geq p'_i$ be the constructibly least extension of $p'_i$, such that $p_{i+1}\uhr \alpha_{i+1}+1 = p_i \uhr \alpha_{i+1}+1$, and--
$$ p_{i+1}\uhr \alpha_{i+1}+1 \Vdash \exists s\in G( \po_{\alpha_{i+1}+1} ), \ s^{\frown} p_{i+1}\setminus (\alpha_{i+1}+1)\in D'_i. $$
This concludes the construction of $p_{i+1}$. To finish the successor step, let $ C_{i+1} $ be the constructibly minimal club disjoint from $\supp(p_{i+1})$, and pick the least $\alpha^*_{i+1} \geq \alpha_{i+1}$ and a very good point $\epsilon_{i+1} > \epsilon_i$ such that:
\begin{itemize}
    \item  For every $\alpha\geq \alpha^*_{i+1}$ in $\supp(p_{i+1})$, $\alpha\in h^{N_\eta}_{n_\eta+1}[\omega\times (\alpha\cup \{ \delta\})]$. In particular, $h^{N_\eta,\epsilon}[\omega\times (\alpha\cup \{ \delta \})]$ is collapsed to a structure $N_{\eta(\alpha)}$ for some $\eta(\alpha)\in \mathcal{C}^\alpha$.
    \item The image of $\epsilon_{i+1}$ under the above collapse is a very good point $\epsilon_{i+1}(\alpha)$ of $N_{\eta(\alpha)}$,  which is a limit of (more than $\omega_1$) very good points.
    \item $\epsilon_{i+1}(\alpha)$ induces a club point $\eta_{i+1}(\alpha)\notin \mathcal{S}^{\alpha}$.
    \item $p_{i+1}, \la \epsilon_j \colon j\leq i \ra \in h^{N_\eta, \epsilon_{i+1}}_{n_\eta+1}[\omega\times (\alpha^*_{i+1}\cup \{\delta \})]$.\\
    Additionaly, since $\epsilon_{i+1}$ is a limit of very good points, we get, by coherence of the canonical square sequence, that each $\epsilon_{j}$ (for $j\leq i$) collapses to a very good point in the transitive collapse of $h^{N_\eta, \epsilon_{i+1}}_{n_\eta+1}[\omega\times (\alpha^*_{i+1}\cup \{\delta \})]$. In particular, for every $\alpha\in \supp(p_{i+1})\setminus  \alpha^*_{i+1}+1$, 
    $$\la \eta_j(\alpha) \colon j\leq i \ra \in h^{N_\eta, \epsilon_{i+1}}_{n_\eta+1}[\omega\times \left( \alpha\cup \{ \delta \} \right)].$$
\end{itemize} 

Finally, let us proceed to the limit case. Assume that $i < \lambda$ and 
$$\la p_j \colon j<i \ra, \la C_j \colon j<i \ra, \la \alpha_j, \alpha^*_j \colon j<i \ra \text{ and }
\la \epsilon_j \colon j<i \ra$$ have all been constructed, and let us construct $p_i, C_i, \alpha_i, \epsilon_i$. First take $\alpha_i = \sup\{\alpha_j \colon j<i \}$. We define $p_i$ as follows: 
\begin{itemize}
    \item $\supp(p_i) = \bigcup_{j<i}\supp(p_j)$. In particular, $\alpha_i\notin \supp(p_i)$ as $\alpha_i \in \bigcap_{j<i}C_j$.
    \item For every $j<i$, $p_i \uhr \alpha_j+1 = p_j \uhr \alpha_j+1$.
    \item For every $\alpha\in \supp(p_i)\setminus \alpha_i+1$, $$p_i \uhr \alpha \Vdash p_{i}(\alpha) = \bigvee_{j<i} p_j(\alpha)$$
where $\bigvee_{j<i}p_j(\alpha)$ is the least upper bound of $\la p_j(\alpha) \colon j< i \ra$ in the forcing $\name{\qo}_\alpha$, as defined in Definition \ref{Def: PacedFusionSequences}. 
\end{itemize}
Let us justify that the above requirements define a legitimate condition in $\po'_{\lambda}$. It's clear that $p_{i}\uhr \alpha_{i}+1\in \po_{\alpha_{i}+1}$, so let us concentrate on $p_{i}\setminus \alpha_i+1$.  
Assume $\alpha\in \supp(p_i)\setminus \alpha_i+1$. Then 
$$ p_i\uhr \alpha \Vdash \forall \xi<\alpha^{++}, p_i(\alpha)\uhr \xi \Vdash \name{T}^{p_i(\alpha)}_{\xi} = \bigcap_{ j<i, \alpha\in \supp(p_j) } \name{T}^{p_j(\alpha) }_\xi   $$ 
where $\name{T}^{p_i(\alpha)}_\xi $ is a legitimate condition since $\mo_\alpha$ is $\alpha$-closed, and $\alpha> i$. Also, 
\begin{align*}
   p_i\uhr \alpha \Vdash \forall \xi<\alpha^{++}, 
p_i(\alpha)\uhr \xi ^{\frown}\name{T}^{p_i(\alpha)}_{\xi}  \Vdash \name{c}^{p_i(\alpha)}_{\xi}= & \left(\bigcup_{ j<i, \alpha\in \supp(p_j) } \name{c}^{p_j(\alpha) }_\xi \right)\cup  \Big\{\sup\left(\bigcup_{j<i}  \name{c}^{p_j(\alpha) }_\xi \right) \Big\} \\
=& \left(\bigcup_{ j<i, \alpha\in \supp(p_j) } \name{c}^{p_j(\alpha) }_\xi \right)\cup  \Big\{\sup_{j<i} \eta_j(\alpha) \Big\}. 
\end{align*}
Let us justify that for every $\alpha\in \supp(p_i)\setminus \alpha_i+1$,  $\name{c}^{p_i(\alpha)}_\xi$ is a legitimate condition in the $\alpha$-th coding coordinate. First, note that $\la \eta_j(\alpha) \colon j<i \ra$ is bounded in $\eta(\alpha)$: if this wasn't the case, we would get that $\la \eta_j \colon j<i \ra$ is unbounded in $\eta$, where $\eta_j\in c^{\kappa}_{\eta}$ is the club point induced by the very good point $\epsilon_j$ of $N_\eta$. But this as a contradiction to the choice of $\eta$ in $\mathcal{S}^{\lambda,\lambda,n}$ (we have $\cf(\eta) = \lambda$). Now, since $\la \eta_j(\alpha) \colon j<i \ra$ is bounded in $\eta(\alpha)$, we have $\sup_{j<i} \eta_j(\alpha) \in c^{\alpha}_{\eta(\alpha)}$. By the definition of $\mathcal{S}^\alpha$, the only possible element of $\mathcal{S}^{\alpha}$ which is a limit point of $c^{\alpha}_{\eta(\alpha)}$, is the $\omega_1$-th limit point of $c^{\alpha}_{\eta(\alpha)}$ (see Corollary \ref{Cor: FineStructureS^alphaDoesNotReflect}). However, each  $\epsilon_{j}$ (for $j<i$) was chosen such that it is a limit point of more than $\omega_1$-many very good points. So $\sup_{j<i} \eta_j(\alpha)\notin \mathcal{S}^\alpha$.

We conclude the limit step, as in the previous steps, by picking $\alpha^*_{i}, \epsilon_{i}$ such that:
\begin{itemize}
\item  For every $\alpha\geq \alpha^*_{i}$ in $\supp(p_{i})$, $\alpha\in h^{N_\eta}_{n_\eta+1}[\omega\times (\alpha\cup \{ \delta\})]$. In particular, $h^{N_\eta,\epsilon}[\omega\times (\alpha\cup \{ \delta \})]$ is collapsed to a structure $N_{\eta(\alpha)}$ for some $\eta(\alpha)\in \mathcal{C}^\alpha$.
\item The image of $\epsilon_{i}$ under the above collapse is a very good point $\epsilon_{i}(\alpha)$ of $N_{\eta(\alpha)}$, which is a limit of more than $\omega_1$-many very good points.
\item $\epsilon_{i}(\alpha)$ induces a club point $\eta_{i}(\alpha)\notin \mathcal{S}^{\alpha}$.
\item $p_{i}, \la \epsilon_j \colon j < i \ra \in h^{N_\eta, \epsilon_{i}}_{n_\eta+1}[\omega\times (\alpha^*_{i}\cup \{\delta \})]$.
\end{itemize} 

This concludes the inductive construction. The sequence $\la p_i \colon i<\lambda \ra$ is external to $N_{\eta}$, but each of its elements is a condition in $\po'_\lambda\subseteq \po_\lambda$. Thus, Let $p^*\in \po_{\lambda}$ be a condition such that $\supp(p^*) = \bigcup_{i<\lambda}\supp(p_i)$, and for every $i<\lambda$, $p^*\uhr \alpha_{i}+1 = p_{i}\uhr \alpha_{i}+1$. Note that $\{ \alpha_i \colon i<\lambda \}$ form a club disjoint from $\supp(p^*)$. 

For every $i<\lambda$, $N_{\eta}$ sees the condition $p_{i+1}$ as the condition with the property that
$$ \{ s\in \po_{\alpha_i+1} \colon s^{\frown} p^*\setminus \alpha_i+1 \in D'_i \} $$
is a dense subset of $\po'_{\alpha_i+1}$. In particular, by applying $\pi \colon N_\eta\to L[\E]$, the same condition $p_{i+1} = \pi(p_{i+1})$, viewed as a condition in $\po_\lambda$, has the property that 
$$ \{ s\in \po_{\alpha_i+1} \colon s^{\frown} p_{i+1}\setminus \alpha_i+1 \in D_i \} $$
is a dense subset of $\po_{\alpha_i+1}$. 

Since $p^*$ extends each condition $p_i$ ($i<\lambda$), $p^*$ has the property that for every $i<\lambda$, 
$$ \{ s\in \po_{\alpha_i+1} \colon s^{\frown} p^*\setminus \alpha_i+1 \in D_i \} $$
is a dense subset of $\po_{\alpha_i+1}$. This, contradicts our initial assumption by contradiction.
\end{proof}

\begin{lemma}\label{Lem: FinalForcingDistributivityLemma}
    Assume that $\lambda\leq \kappa+1$ is a cardinal and $\nu<\lambda$. Then $0_{\po_{\nu}}\Vdash \po_\lambda \setminus \po_{\nu}$ is $\nu$-distributive.
\end{lemma}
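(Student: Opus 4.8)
The statement is that the tail forcing $\po_\lambda/\po_\nu$ is $\nu$-distributive, i.e.\ adds no new $<\nu$-sequences of ordinals. The natural approach is to combine a fusion argument over the tail with the fine-structural ``infrastructure system'' of Lemma~\ref{Lemma: FineStructureInfastructureSystemForDistributivityProof}, mirroring the structure of the proof of the Iteration-Fusion property (Lemma~\ref{Lem: FinalForcingFusionLemma}) but adapted to building a \emph{single} condition deciding a whole short sequence of dense sets, rather than a condition meeting a club of dense sets along coordinates. First I would fix a $\po_\nu$-generic $G_\nu$ and work in $V[G_\nu]$; it suffices to show that for $\gamma < \lambda$ and a sequence $\vec D = \la D_i \mid i < \gamma\ra$ of dense open subsets of $\po_\lambda/\po_\nu$, together with a condition $p$, there is $p^* \ge p$ in $\bigcap_{i<\gamma} D_i$ (using density of $\bar\po_\lambda$ from Lemma~\ref{Lem: FinalForcingDensityLemma} to assume conditions carry local bounding functions). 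The key point is that $\gamma < \lambda$, so $\gamma$ is strictly below the first ``active'' Mahlo cardinal relevant to the tail if $\nu$ is chosen appropriately — but in general $\gamma$ could exceed $\nu$, so the argument must genuinely handle a short iteration of the tail.

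Next I would set up the fine-structural side. Choose a large $n$ so that $\po_\lambda$, $\bar\po_\lambda$, the canonical stationary-set sequence $\la \mathcal S^\mu \mid \mu \le \lambda\ra$, and the constructibly-least counterexample $(p,\vec D)$ are $\Sigma_n$-definable over $J^\E_{\lambda^{+3}}$. Apply Proposition~\ref{Prop: Stationary set of passive collapsing structures on an arbitrary cofinality} with $\zeta$ a regular cardinal with $\zeta > \gamma$ (and, if $\lambda$ is singular, $\zeta > \cf(\lambda)$) to find $\eta \in \mathcal S^{\lambda,\zeta,n}$ whose collapsing structure $N_\eta$ weakly $r\Sigma_{n+1}$-embeds into $J^\E_{\lambda^{+3}}$ via $\pi$ with all the relevant parameters in the range. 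Then invoke Lemma~\ref{Lemma: FineStructureInfastructureSystemForDistributivityProof} (which is precisely the ``distributivity'' variant of Corollary~\ref{Corollary: FineStructureReflectDownGoodPts}) to obtain $\alpha^* < \lambda$ and an increasing sequence $\la \epsilon_i \mid i \le \gamma\ra$ of very good points of $N_\eta$ that reflect down to very good points $\epsilon_i(\alpha)$ of $N_{\eta(\alpha)}$ inducing club points $\eta_i(\alpha) \in c^\alpha_{\eta(\alpha)} \setminus \mathcal S^\alpha$, for every $\alpha \in Z \setminus \alpha^*$ where $Z$ is the (nowhere stationary) support of the condition being built. The length-$\gamma$ sequence of $\epsilon_i$'s exists inside $N_\eta$ because $\cf(\eta) = \zeta > \gamma$.

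With this infrastructure I would build, by induction on $i \le \gamma$, a $\le$-increasing sequence $\la p_i \mid i \le \gamma\ra$ of conditions in $\bar\po'_\lambda \subseteq \po_\lambda$ (where $\po'_\lambda = \pi^{-1}(\po_\lambda)$), together with the auxiliary increasing cofinal interleaving sequences $\la \alpha_i\ra$, $\la\alpha^*_i\ra$ and the decreasing clubs $\la C_i\ra$ disjoint from the supports, exactly as in the proof of Lemma~\ref{Lem: FinalForcingFusionLemma}, but now at successor step $i+1$ I force $p_{i+1}$ into $D'_i$ (the $\pi$-preimage of $D_i$) rather than only making one coordinate-density statement hold; I also add the club point $\eta_i(\alpha)$ to each coding coordinate $p_{i+1}(\alpha)(\xi)$ for $\alpha > \alpha_{i+1}$ in the support, which is legitimate because $\eta_i(\alpha) \notin \mathcal S^\alpha$ and the local bounding function of $p_i$ is majorized below $\eta_i(\alpha)$ (here I use $\mu^i_\alpha \in h^{N_\eta,\epsilon_i}_{n+1}[\omega\times(\alpha\cup\{\delta\})] \cap \alpha^+ \subseteq \eta_i(\alpha)$, coming from $p_i \in h^{N_\eta,\epsilon_i}_{n+1}[\omega\times(\alpha^*_i\cup\{\delta\})]$). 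At limit $i \le \gamma$ I take the natural fusion limit $p_i = \bigvee_{j<i} p_j$; the crucial check is that each coding coordinate $\name c^{p_i(\alpha)}_\xi = \bigcup_{j<i}\name c^{p_j(\alpha)}_\xi \cup \{\sup_{j<i}\eta_j(\alpha)\}$ is still a legitimate condition, i.e.\ $\sup_{j<i}\eta_j(\alpha) \notin \mathcal S^\alpha$ — and this follows exactly as in Lemma~\ref{Lem: FinalForcingFusionLemma} because $\la\eta_j(\alpha)\ra_{j<i}$ is bounded in $\eta(\alpha)$ (otherwise $\la\eta_j\ra_{j<i}$ would be cofinal in $\eta$, contradicting $\cf(\eta) = \zeta > \gamma \ge i$) so its supremum is a limit point of $c^\alpha_{\eta(\alpha)}$, and each $\epsilon_j$ was chosen a limit of more than $\omega_1$ very good points, so the supremum is not the unique possible $\mathcal S^\alpha$-limit point (Corollary~\ref{Cor: FineStructureS^alphaDoesNotReflect}). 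Finally, $p_\gamma$ is a condition in $\po_\lambda$ (apply $\pi$) which lies in $\bigcap_{i<\gamma} D_i$ by construction, contradicting the minimality of the counterexample.

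\textbf{Main obstacle.} The delicate point, and the reason one cannot simply quote Lemma~\ref{Lem: FinalForcingFusionLemma}, is the interplay between the length-$\gamma$ induction and the requirement that every intermediate limit condition $p_i$ \emph{at coordinates above $\alpha^*_i$} be reflectable to a collapsing structure $N_{\eta(\alpha)}$ in which $\epsilon_i(\alpha)$ is very good — this is why the single ``infrastructure'' Lemma~\ref{Lemma: FineStructureInfastructureSystemForDistributivityProof} (producing all $\epsilon_i$ \emph{simultaneously and coherently}) is needed rather than iterated applications of Corollary~\ref{Corollary: FineStructureReflectDownGoodPts}; a naive iteration would lose the uniform witness $\alpha^*$ and the coherence clause $\la\epsilon_j \mid j\le i\ra \in h^{N_\eta,\epsilon_{i+1}}_{n+1}[\cdots]$. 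A secondary subtlety is the case $\lambda$ singular, where one must additionally arrange $\zeta > \cf(\lambda)$ and thread the $\cf(\lambda)$-length interleaving against the $\gamma$-length fusion, but this is handled verbatim by the singular case of Lemma~\ref{Lemma: FineStructureInfastructureSystemForDistributivityProof} and of Lemma~\ref{Lem: FinalForcingDensityLemma}. Everything else — the legitimacy of coding coordinates, nowhere-stationarity of supports, closure of $\bar\po'_\lambda$ under the relevant limits — is routine bookkeeping already carried out in the preceding lemmas.
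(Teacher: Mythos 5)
Your fine-structural setup (taking $\zeta>\gamma$ regular, $\eta\in\mathcal{S}^{\lambda,\zeta,n}$, and invoking Lemma~\ref{Lemma: FineStructureInfastructureSystemForDistributivityProof} to get a single $\alpha^*$ and a coherent sequence $\la \epsilon_i \colon i\le\gamma\ra$) matches the paper, but on the forcing side there is a genuine gap: you never argue by induction on $\lambda$, and without that the plan ``force $p_{i+1}$ into $D'_i$ at each successor step'' does not go through. Getting into the dense set $D'_i$ in general requires extending the condition on the coordinates in the interval $[\nu,\alpha^*)$ as well, but the club points $\eta_i(\alpha)$ exist only for $\alpha\ge\alpha^*$ lying in the hull $h^{N_\eta}_{n+1}[\omega\times(\alpha\cup\{\delta\})]$ (this is all that Lemma~\ref{Lemma: FineStructureInfastructureSystemForDistributivityProof} and Proposition~\ref{Lemma:NowhereStationary} provide). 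Hence, once $\gamma\ge\omega_1$, at a limit stage $i$ with $\cf(i)=\omega_1$ the coding components at coordinates $\alpha\in[\nu,\alpha^*)$ that were extended cofinally often have a supremum of cofinality $\omega_1$ which may land in $\mathcal{S}^\alpha$, so $\bigvee_{j<i}p_j$ need not be a condition; your limit-step justification only covers coordinates above $\alpha^*$ where the $\eta_j(\alpha)$'s were inserted. (Relatedly, your ``cofinal interleaving'' sequences $\la\alpha_i\ra,\la\alpha^*_i\ra$ cannot be cofinal in $\lambda$ when $\lambda$ is regular, since the construction has length $\gamma<\lambda$; so, unlike in Lemma~\ref{Lem: FinalForcingFusionLemma}, the fusion cannot exhaust the lower part of the iteration, and that lower part must be dealt with by some other means.)

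The paper closes exactly this gap by a different organization: the lemma is proved by induction on $\lambda$; in the main case the condition is frozen below $\alpha^*$, i.e.\ $p_{i+1}\uhr\alpha^*=p_0\uhr\alpha^*$ throughout, and instead of putting $p_{i+1}$ itself into $D'_i$ one records, for each $i<\gamma$, a dense open set $D^*_i\subseteq\po_{[\nu,\alpha^*)}$ of conditions $r$ such that $r^{\frown}(p_{i+1}\setminus\alpha^*)$ lands in $D'_i$. At the very end the induction hypothesis --- $\nu$-distributivity of $\po_{\alpha^*}\setminus\po_\nu$, available because $\alpha^*<\lambda$ and $\gamma<\nu$ --- produces a single $r\in\bigcap_{i<\gamma}D^*_i$, which is pasted onto the fused tail $p_\gamma\setminus\alpha^*$ to yield the required condition. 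This deferral of the interval $[\nu,\alpha^*)$ to the induction hypothesis is the missing idea in your write-up, not routine bookkeeping. A smaller but real slip: $\nu$-distributivity only requires sequences of length $\gamma<\nu$; the statement you set out to prove, with $\gamma<\lambda$, is false, since the tail adds new subsets of every Mahlo $\alpha\in(\nu,\lambda)$.
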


\begin{proof}
    We prove by induction on $\lambda$ that for every $\nu \leq \lambda\leq \kappa+1$, $\po_{\lambda}\setminus \po_{\nu}$ is $\nu$-distributive.

    We begin by justifying the successor case. Assume that $\lambda = \lambda'+1$. If $\lambda' = \nu$, $\po_{\lambda}\setminus \nu$ is the forcing $\name{\qo}_{\nu}$ over $V^{\po_{\nu}}$, which is known to be $\nu$-distributive. If  $\lambda'> \nu$, $\po_{\lambda}\setminus {\nu}$ is $\nu$-distributive since it can be factored to the form $\po_{\lambda'} \setminus \nu * \name{\qo}_{\lambda'}$, where  $\po_{\lambda'}\setminus \nu$ is $\nu$-distributive (by induction) and $\name{\mathbb{Q}}_{\lambda'}$ is $\lambda'$-distributive.

    Next, we take care of the case where $\lambda$ is a limit ordinal which is not a limit of Mahlo cardinals. In this case, $\po_{\lambda} = \po_{\lambda'}$ where $\lambda'<\lambda$ is the supremum of the Mahlo cardinals below $\lambda$, and the induction hypothesis can be applied to show that $\po_{\lambda'}\setminus \nu$ is $\nu$-distributive.

    We proceed to the main case and assume that $\lambda$ is a limit of Mahlo cardinals. Assume by contradiction that $0_{\po_{\nu}}$ does not force that $\po_\lambda \setminus \nu$ is $\nu$-distributive. So there are $s\in \po_{\nu}$, a  cardinal $\gamma<\nu$ and $\vec{\name{D}} = \la \name{D}_i \colon i<\gamma \ra$ which form a counterexample for distributivity, in the sense that $s$ forces that $\la \name{D}_i \colon i<\gamma \ra$ is a sequence of dense open subsets of $\po_{\lambda}\setminus \nu$ with an empty intersection. Assume that the triple $\la s, \gamma, \vec{\name{D}} \ra$ is the constructibly minimal such counterexample in $L[\E]$. Let $2\leq n<\omega$ be such thay $\lambda, \po_\lambda, \bar{\po}_\lambda, s, \vec{D}$ are $\Sigma_n$-definable over $J_{\lambda^{+3}}$ without parameters, and the statement that $(s,\gamma,\vec{D})$ is a counterexample for distributivity is $\Sigma_n$-expressible. Let $\delta\in (\lambda, \lambda^+)$ be such that $p^{N_\eta}_{n+1} = \{ \delta \}$.

    Let $\zeta$ be a regular cardinal in the interval $(\gamma, \lambda)$. In the case where $\lambda$ is singular, choose $\zeta$ such that $\cf(\lambda) < \zeta$. Apply Proposition \ref{Prop: Stationary set of passive collapsing structures on an arbitrary cofinality} to find $\eta\in \mathcal{S}^{\lambda, \zeta, n}$ such that its collapsing structure $N^{\E}_{\eta}$ can be $r\Sigma_n$-embedded into $J_{\lambda^{+3}}$, where $\lambda, \po_{\lambda}, \po'_\lambda, s, \vec{D}$ belong to the range of the embedding. Let $\po'_\lambda, \bar{\po}'_\lambda, s, \vec{\name{D}}'$ be the preimages of $\po_\lambda, \bar{\po}_\lambda, s, \vec{\name{D}}$ under the above embedding.

    Our goal is to construct in $N_{\tau}$ a condition of $\po'_\lambda\setminus \nu$ which belongs to $\bigcap_{i<\gamma} \name{D}'_i$, using a similar argument as in Lemma \ref{Lem: FinalForcingFusionLemma}. This was previously done by constructing an increasing sequence of conditions $\la p_i \colon i\leq \gamma \ra$, where each $p_{i+1}$ is obtained from $p_i$ by adding points to the coding components of $p_i(\alpha)$, for every $\alpha$ above a carefully chosen $\alpha^*_i$. However, for the current lemma, it will be important for us that the sequence $\la \alpha^*_i \colon i< \gamma \ra$ is bounded in $\lambda$, and this cannot be easily guaranteed for singular $\lambda$.\footnote{We remark that, for the ``Kunen-like" blueprint, the only case that matters to us is where $\lambda$ is Mahlo. However, the inductive nature of the distributivity proof forces us to deal also with the cases where $\lambda$ is singular.}  To overcome this obstacle, we replace the sequence $\la \alpha^*_i \colon i<\gamma \ra$ with a single ordinal, by applying Lemma \ref{Lemma: FineStructureInfastructureSystemForDistributivityProof} to find a sequence of very good points $\la \epsilon_i \colon i\leq\gamma \ra$ in $N_{\eta}$ and a single  $\alpha^*\in (\nu, \lambda)$, such that--
    \begin{itemize}
        \item For every nowhere stationary $Z\in h^{N^{\E}_\eta}_{n_\eta+1}[\omega\times (\alpha^*\cup \{\delta \})]$ of inaccessible cardinals and  $\alpha\in Z\setminus \alpha^*$, $\alpha\in h^{N^{\E}_\eta}_{n_\eta+1}[\omega\times (\alpha\cup \{\delta \})]$. In particular, $h^{N_\eta}_{n_\eta+1}[\omega\times (\alpha\cup \{\delta \})]$ transitively collapses to $N_{\eta(\alpha)}$ for some $\eta(\alpha)\in \mathcal{C}^\alpha$.\\
        By increasing $\alpha^*$ if necessary, we can assume that $\alpha^* \in h^{N_\eta}_{n_\eta+1}[\omega\times (\alpha^*\cup \{\delta \})]$.
        \item For every $\alpha < \lambda$ satisfying $\alpha\in h^{N_{\eta}}_{n+1}[\omega\times (\alpha\cup \{\delta \})]$, there are sequence $\la  \epsilon_i(\alpha) \colon i\leq \gamma\ra$ and $\la \eta_i(\alpha) \colon i\leq \gamma \ra$, such that the former sequence consists of very good points of $N_{\eta(\alpha)}$, and the latter sequence consists of the club points induced by them. Moreover:
        \begin{itemize}
            \item Each $\epsilon_i$ is a limit of more than $\omega_1$-many very good points.
            \item $\eta_i(\alpha)\notin \mathcal{S}^\alpha$ for every $i\leq \gamma$.
            \item For every $i<\gamma$, $\la \epsilon_j \colon j< i \ra\in h^{N_\eta, \epsilon_{i}}_{n+1}[\omega\times ( \alpha^*\cup \{\delta \} )]$.
        \end{itemize}
    \end{itemize}
     
\noindent
Construct an increasing sequence $\la p_i \colon i<\gamma \ra\subseteq \bar{\po}'_\lambda$, such that:
\begin{enumerate}
    \item For every $i<\gamma$, $\la p_j\colon j\leq i \ra \in h^{ N^{\E}_\eta ,\epsilon_{i}}_{n+1}[\omega \times (\alpha^*\cup \{\delta\})]$.
    \item For every $i<\gamma$, $p_i \uhr \alpha^* = p_0\uhr \alpha^*$.
    \item For every $i<\gamma$, there exists a dense open set $D^*_{i}\subseteq \po_{\left[\alpha_0, \alpha^*\right)}$ above $p_0\uhr \left[ \alpha_0, \alpha^*\right)$, such that, for every $q\in D^*_{i}$, $q^{\frown} p_{i+1}\setminus \alpha^* \in \name{D}'_{i}$.
    \item For every $i<\gamma $,  $p_{i+1}\uhr \alpha$ forces that for every $\xi<\alpha^{++}$, $${p_{i+1}(\alpha)\uhr \xi}^{\frown} \name{T}^{ p_{i+1}(\alpha) }_\xi \Vdash  {\eta}_i(\alpha)  \in \name{c}^{p_{i+1}(\alpha)}_\xi. $$
    \item For every limit $i\leq \gamma$, $p_i = \bigvee_{j<i}p_j$, namely $p_i$ is the condition such that $\supp(p_i) = \bigcup_{j<i}\supp(p_j)$, and, for every $\alpha\in \supp(p_i)$, 
    $$p_i \uhr \alpha \Vdash p_{i}(\alpha) = \bigvee_{j<i} p_j(\alpha)$$
    where $\bigvee_{j<i}p_j(\alpha)$ is the least upper bound of $\la p_j(\alpha) \colon j< i \ra$ in the forcing $\name{\qo}_\alpha$, as defined in Definition \ref{Def: PacedFusionSequences}.
\end{enumerate}
For simplicity, let us denote for every $i\leq \gamma$, $X_i = h^{ N^{\E}_\eta ,\epsilon_{i}}_{n_\eta+1}[\omega \times (\alpha^*\cup \{\delta\})]$.\\
We begin the construction by picking the constructibly least condition $p_0 \in \po'_{\lambda}$ such that $p_0\uhr \nu = s$. Assuming that $p_i\in \bar{\po}'_{\lambda}\setminus \alpha^*$ has been constructed and belongs to $X_{i}$, let us construct $p_{i+1}\in X_{i+1}$. First note that $\supp(p_i)\in X_{i}$ is a nowhere stationary set of inaccessible cardinals, and thus, for every $\alpha\in \supp(p_i)\setminus \alpha^*$, $\alpha\in h^{N_\eta, \epsilon_{i}}_{n_\eta+1}[\omega\times (\alpha\cup \{\delta \})]$. This, together with the fact that $\la \epsilon_i \colon i\leq \gamma \ra\in X_{i+1}$, shows that the sequence $\la \eta_i(\alpha) \colon \alpha\in \supp(p_i)\setminus \alpha^* \ra$ could be entirely defined inside $X_{i+1}$.\footnote{This follows from the coherence of the square sequence.} 

Let $\vec{\mu}^{p_{i}} = \la \mu^{p_{i}}_\alpha \colon \alpha\in \supp(p_{i}) \ra$ be the constructibly-least local bounding function of $p_i$. Then $\vec{\mu}^{p_i}\in X_{i}$. This, combined with the fact that for every  $\alpha \in \supp(p_i)\setminus \alpha^*$, $\alpha\in h^{N_{\eta}, \epsilon_{i}}_{n+1}[\omega\times (\alpha\cup \{\delta \})]$, implies that for every such $\alpha$, $\mu^{p_i}_{\alpha}\in h^{N_{\eta}, \epsilon_{i}}_{n+1}[\omega\times (\alpha\cup \{\delta \})] \cap \alpha^+ $, namely $\mu^{p_i}_{\alpha} < \eta_{i}(\alpha)$. Thus, we can extend $p_i$ to a condition $p'_i$ by adding the points ${\eta}_i(\alpha)$ to the coding coordinates at each $\alpha \geq \alpha^*$. More formally, we define $p'_{i}\geq p_i$ with the same support, such that $p'_i\uhr \alpha^* = p \uhr \alpha^*$, and, for every $\alpha \geq \alpha^*$, $p'_{i}\uhr \alpha$ forces that for every $\xi<\alpha^{++}$,
$$ p'_i(\alpha)\uhr \xi \Vdash \name{T}^{p'_i(\alpha)}_\xi = \name{T}^{p_i(\alpha)}_\xi$$
and
$$ {p'_i(\alpha)\uhr \xi}^{\frown} T^{p'_i(\alpha)}_\xi\Vdash \name{c}^{p'_i(\alpha)}_\xi = \name{c}^{p_i(\alpha)}_\xi \cup \{ \eta_i(\alpha) \}.$$
The name $\name{c}^{p'_i(\alpha)}_{\xi}$ defined above is forced to be a legitimate condition of $\co^\alpha_\xi$ since $\eta_i(\alpha)\notin \mathcal{S}^\alpha$. As mentioned above, $\la \eta_{i}(\alpha) \colon \alpha\in \supp(p_i) \ra$ belongs to $X_{i+1}$, and thus $p'_i\in X_{i+1}$.

\noindent
Let $p_{i+1} \geq p'_{i}$ be an extension such that $p_{i+1}\uhr \alpha^* = p_0\uhr \alpha^*$, and 
$$p_0\uhr \alpha^* \Vdash p_{i+1}\setminus \alpha^* \in \name{D}'_i \cap \left(\bar{\po}'_{\lambda}\setminus \alpha^*\right).$$ 
Such $p_{i+1}$ exists by Lemma \ref{Lem: FinalForcingDensityLemma} and since $\name{D}'_i\subseteq \po'_\lambda\setminus \alpha^*$ is dense open. We can also pick $p_{i+1}$ inside $X_{i+1}$, being constructibly minimal with the above properties. 

Finally, let $D^*_i\subseteq \po_{\left[\nu,\alpha^*\right)}$ be the set of conditions $r$ extending $p_0\uhr \left[ \nu, \alpha^* \right)$, such that $p_0\uhr \nu\Vdash r^{\frown} p_{i+1}\in \name{D}'_i$. It's not hard to see that $D^*_i$ is a dense open subset of $\po_{\left[\nu, \alpha^*\right)}$. This concludes the successor step.\\

Assume now that $i\leq\gamma$ is limit, and $\la p_{j} \colon j<i \ra$ have been defined. Let $p_i = \bigvee_{j<i}p_j$, namely $p_i$ is the condition such that $\supp(p_i) = \bigcup_{j<i}\supp(p_j)$, and, for every $\alpha\in \supp(p_i)$, 
$$p_i \uhr \alpha \Vdash p_{i}(\alpha) = \bigvee_{j<i} p_j(\alpha)$$
where $\bigvee_{j<i}p_j(\alpha)$ is the least upper bound of $\la p_j(\alpha) \colon j< i \ra$ in the forcing $\name{\qo}_\alpha$, as defined in Definition \ref{Def: PacedFusionSequences}.
In other words, 
$$ p_i\uhr \alpha \Vdash \forall \xi<\alpha^{++}, 
 p_i(\alpha)\uhr \xi \Vdash \name{T}^{p_i(\alpha)}_{\xi} = \bigcap_{ j<i, \alpha\in \supp(p_j) } \name{T}^{p_j(\alpha) }_\xi   $$
 where $\name{T}^{p_i(\alpha)}_\xi $ is a legitimate condition since $\mo_\alpha$ is $\alpha$-closed, and--
$$ p_i\uhr \alpha \Vdash \forall \xi<\alpha^{++}, 
p_i(\alpha)\uhr \xi ^{\frown}\name{T}^{p_i(\alpha)}_{\xi}  \Vdash \name{c}^{p_i(\alpha)}_{\xi}=  \left(\bigcup_{ j<i, \alpha\in \supp(p_j) } \name{c}^{p_j(\alpha) }_\xi \right)\cup  \Big\{\bigcup_{j<i} \eta_{j}(\alpha) \Big\}  $$
where $\name{c}^{p_i(\alpha)}_\xi$ is a legitimate condition since $\sup_{j<i}\eta_j(\alpha)=\eta_{i}(\alpha)\notin \mathcal{S}^\alpha$. 

Finally, we need to verify $\la p_j \colon j\leq i \ra \in X_{i}$. It suffices to prove that $\la p_j \colon j<i \ra\in X_{i}$, since the above construction of $p_i$ from $\la p_j \colon j<i \ra$ can be done inside $X_{i}$. For the proof that $\la p_j \colon j<i \ra\in X_{i}$, we rely on the fact that $\la \epsilon_{j}\colon j<i \ra\in X_{i}$, and, in each step in the construction, $p_{j+1}$ is chosen in a constructible way from $p_j$, $\eta_j(\alpha)$ and $D_{j}$, which are all available inside $X_{i}$.

This concludes the inductive construction. Consider the condition $p_{\gamma}\in \po'_\lambda$ obtained in the final limit step. $p_{\gamma}\uhr \nu = s$ forces that $\la \name{D}^*_i \colon i\leq \gamma \ra$ are dense open subsets of $\po_{\left[ \nu ,\alpha^*\right)}$ above $p_{\gamma}\uhr \left[ \nu, \alpha^* \right)$. By applying the induction hypothesis for the forcing $\po_{\left[ \nu, \alpha^* \right)}$, it is forced by $p_{\gamma}\uhr \nu$ that there exists $r\in \bigcap_{i<\gamma} D^*_{i}$ which extends $p_{\gamma}\uhr \left[ \nu, \alpha^* \right)$. Now, extend $p_{\gamma}$ in the interval $\left[ \nu, \alpha^* \right)$ to such $r$, and let $p^*$ be the resulting condition. Then $p^*\uhr \nu = s$, and forces that $p^*\setminus \nu\in \bigcap_{i<\gamma} \name{D}'_i$, contradicting our initial assumption.\end{proof}

\begin{lemma} \label{Lem: FinalForcingPreservationOfStatSets}
    For every $\lambda\leq \kappa$ Mahlo and regular $\delta<\lambda$, the forcing $\po_\lambda$ preserves the stationarity of stationary subsets of $\mathcal{S}^{\lambda} \subseteq \lambda^+\cap \cf(\delta)$.
\end{lemma}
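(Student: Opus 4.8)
The statement to prove is Lemma \ref{Lem: FinalForcingPreservationOfStatSets}: for Mahlo $\lambda \leq \kappa$ and regular $\delta < \lambda$, the iteration $\po_\lambda$ preserves stationarity of stationary subsets of $\mathcal{S}^\lambda \subseteq \lambda^+ \cap \cf(\delta)$. The plan is to proceed by induction on $\lambda$, mirroring the structure used in Lemmas \ref{Lem: FinalForcingFusionLemma} and \ref{Lem: FinalForcingDistributivityLemma}, and to reduce preservation of stationarity to a pressing-down/elementary-substructure argument exactly as in Lemma \ref{Lem: MillerCodingPreservationOfStationarySets}. Fix a stationary $S \subseteq \mathcal{S}^\lambda$, a condition $p \in \po_\lambda$, and a $\po_\lambda$-name $\name{C}$ for a club in $\lambda^+$; we must find $q \geq p$ forcing $\name{C} \cap S \neq \emptyset$. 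The key point, as in Lemma \ref{Lem: MillerCodingPreservationOfStationarySets}, is to build an elementary substructure $X \prec H_\theta^V$ of size $\lambda$, closed under $<\delta$-sequences, with $\po_\lambda, p, \name{C}, S \in X$ and $\chi_X(\lambda^+) := \sup(X \cap \lambda^+) \in S$; such $X$ exists since $S$ is stationary and the club of $\chi_{X'}(\lambda^+)$ ranges over a club in $\lambda^+$ as $X'$ ranges over a continuous chain. Then fix a cofinal sequence $\vec{\gamma} = \la \gamma_i \mid i < \delta\ra$ in $\chi_X(\lambda^+)$ (possible since $\cf(\chi_X(\lambda^+)) = \delta$, as $\chi_X(\lambda^+) \in \mathcal{S}^\lambda \subseteq \cf(\delta)$ --- wait, $\mathcal{S}^\lambda$ here consists of points of cofinality $\delta$ by hypothesis), and construct a $\leq$-increasing, $\delta$-continuous sequence $\la p_i \mid i < \delta\ra$ of conditions in $X \cap \bar{\po}_\lambda$ (using Lemma \ref{Lem: FinalForcingDensityLemma}), with each initial segment in $X$, such that $p_{i+1}$ decides $\min(\name{C} \setminus (\gamma_i+1))$ and, crucially, for every $\alpha \in \supp(p_{i+1})$ and $\xi < \alpha^{++}$, the coding coordinate $\name{c}^{p_{i+1}(\alpha)}_\xi$ is forced to have supremum above $\gamma_i$.

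The main technical work is verifying that the limit condition $q = \bigvee_{i<\delta} p_i$ is a legitimate condition of $\po_\lambda$: by construction $\sup(\name{c}^q_\alpha$'s coding coordinates$) = \chi_X(\lambda^+)$ at each Mahlo $\alpha \in \supp(q)$, and this ordinal must be shown to lie outside $\mathcal{S}^\alpha(\alpha\text{-indices})$ for all relevant indices. The argument is the same as the one in Lemma \ref{Lem: MillerCodingPreservationOfStationarySets} and Theorem \ref{Thm:MillerCodingFinalForcing}: since $\alpha \in X$ (as $\supp(p_i)$ is, inside $X$, a set of inaccessibles of size $<\lambda$, hence a subset of $X$ by closure and elementarity once $\supp(p_i) \in X$), and since the stationary sets $\mathcal{S}^\alpha(\tau,\eta,j)$ satisfy $\min(\mathcal{S}^\alpha(\tau,\eta,j)) > \eta$ and are pairwise almost disjoint with witnesses in $X$, we conclude $\chi_X(\lambda^+) \notin \bigcup_{\eta<\alpha^+, j<2}\mathcal{S}^\alpha(\tau,\eta,j)$ for every $\tau \in X \cap \alpha^{++}$. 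Here one additionally needs $\chi_X(\lambda^+) \notin \mathcal{S}^\alpha$ itself at each such $\alpha$, which holds because $\mathcal{S}^\alpha$ consists of points of cofinality $\omega_1$ whereas $\cf(\chi_X(\lambda^+)) = \delta$ --- and if $\delta = \omega_1$ one must be slightly more careful, but the almost-disjointness/$\min$-bound argument on the indexed pieces $\mathcal{S}^\alpha(\tau,\eta,j)$ still goes through since those are the pieces actually appearing as forbidden values of coding coordinates. The $\sigma$-closure replacement is Lemma \ref{Lem: FinalForcingDensityLemma} together with the fact that $\bar\po_\lambda$ is closed under the relevant limits (analogous to Claim \ref{Claim:ClosureOfPacedSequences}); one builds the $p_i$ so that their supports form an increasing chain and the Miller coordinates are intersected coordinatewise.

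The inductive bookkeeping over $\lambda$ handles the cases where $\lambda$ is successor (then $\po_\lambda = \po_{\lambda'} * \name{\qo}_{\lambda'}$ and one combines the inductive hypothesis for $\po_{\lambda'}$ with Lemma \ref{Lem: MillerCodingPreservationOfStationarySets} applied inside $V^{\po_{\lambda'}}$ --- noting that $\mathcal{S}^{\lambda}$ with $\lambda = \lambda'$ is exactly the set whose pieces $\qo_{\lambda'}$ is shown to preserve) and where $\lambda$ is a limit not a limit of Mahlos (then $\po_\lambda = \po_{\lambda^\dagger}$ for $\lambda^\dagger < \lambda$ the sup of Mahlos below, and the inductive hypothesis applies directly, since the support of any condition is bounded below $\lambda$). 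The genuinely new content is the limit-of-Mahlos case, which is exactly the $X$-argument above; because the length of the fusion we build is only $\delta < \lambda$, there is no need here for the fine-structural machinery of Section \ref{Section:FineStucture} --- unlike in Lemmas \ref{Lem: FinalForcingFusionLemma} and \ref{Lem: FinalForcingDistributivityLemma} --- since $\delta < \lambda$ and $\po_\lambda$ is a direct limit makes $\supp(q)$ automatically bounded when $\cf(\lambda) > \delta$, and when $\cf(\lambda) \leq \delta$ one interleaves a cofinal sequence in $\lambda$ into the construction exactly as in the singular case of Lemma \ref{Lem: FinalForcingDensityLemma}.

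\textbf{Main obstacle.} The delicate step is ensuring the limit condition $q$ is well-defined in the coding coordinates, i.e. that $\chi_X(\lambda^+)$ avoids all the forbidden stationary pieces $\mathcal{S}^\alpha(\tau,\eta,j)$ for $\alpha \in \supp(q)$ Mahlo; this requires that $\alpha \in X$ (so that $\supp(q)$ "converges inside $X$"), which in turn requires a careful choice of $X$ closed enough and an argument that $\supp(p_i) \subseteq X$ --- this is where elementarity plus $|\supp(p_i)| < \lambda$ plus closure of $X$ under $<\delta$-sequences is used, together with the observation that $\supp(p_i)$, being constructed inside $X$, belongs to $X$. A secondary subtlety is the case $\delta = \omega_1$: then $\chi_X(\lambda^+)$ could a priori be in $\mathcal{S}^\alpha$ (which is the $\cf(\omega_1)$ set), so one must instead argue directly on the indexed pieces, using that $\qo_\alpha$'s coding poset only forbids the specific pieces $\mathcal{S}^\alpha(\tau,\eta,0)$ or $\mathcal{S}^\alpha(\tau,\eta,1)$, and that $\chi_X(\lambda^+)$ avoids all of these by the $\min > \eta$ and almost-disjointness bounds (with witnesses in $X$) --- precisely the computation already carried out in the proof of Lemma \ref{Lem: MillerCodingPreservationOfStationarySets}.
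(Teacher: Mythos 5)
There is a genuine gap, and it sits exactly at the step you declare unnecessary. Since $S\subseteq\lambda^+\cap\cf(\omega_1)$, the catching construction must have length $\omega_1$, and at the final limit every Mahlo $\alpha$ in the support at which the coding components were extended unboundedly often acquires, as forced new top point of each club $\name{c}^{p(\alpha)}_\xi\subseteq\alpha^+$, an ordinal of cofinality $\omega_1$ below $\alpha^+$; for the limit to be a condition this ordinal must avoid (the relevant pieces of) $\mathcal{S}^\alpha$, simultaneously for all such $\alpha$, and the support may be unbounded in $\lambda$. Your proposal never establishes this. The assertion that ``by construction the supremum of the coding coordinates at each Mahlo $\alpha\in\supp(q)$ is $\chi_X(\lambda^+)$'' is a level confusion: those clubs are bounded subsets of $\alpha^+<\lambda$, so their supremum cannot be $\chi_X(\lambda^+)>\lambda$, and likewise the requirement that $\name{c}^{p_{i+1}(\alpha)}_\xi$ have supremum above $\gamma_i\in(\lambda,\lambda^+)$ is ill-typed. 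The almost-disjointness computation you import from Lemma \ref{Lem: MillerCodingPreservationOfStationarySets} does not transfer: there the trapped point $\chi_X(\alpha^+)$ and all forbidden pieces live on the same cardinal $\alpha^+$, the point is chosen inside the one target piece, and the disjointness bounds lie in $X$; here the trapped point lives on $\lambda^+$ while the forbidden sets live on $\alpha^+$ for the various Mahlo $\alpha<\lambda$, and nothing in a ZFC elementary-submodel argument controls where the coordinatewise suprema of an $\omega_1$-chain land --- each $\mathcal{S}^\alpha$ is stationary in $\alpha^+\cap\cf(\omega_1)$, which is precisely the set of candidate limit points.

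This is exactly where the paper invokes the fine structure of Section \ref{Section:FineStucture}, contrary to your claim that it can be dispensed with because ``the fusion has length $\delta<\lambda$.'' The paper's proof takes the constructibly least counterexample $(p,S,\name{C})$, picks $\eta\in S$ (so $N_\eta$ has the special properties defining $\mathcal{S}^\lambda$), and fixes a weakly $r\Sigma_{n+1}$-preserving $\pi\colon N_\eta\to J^{\E}_{\lambda^{+3}}$ with the parameters in $\mbox{Im}(\pi)$. At each of the $\omega_1$ steps it extends every coding coordinate at every Mahlo $\alpha$ in the support by the reflected square point $\eta_i(\alpha)$ supplied by Corollary \ref{Corollary: FineStructureReflectDownGoodPts}; the coherence of these points guarantees that at the limit the new top at coordinate $\alpha$ is $\eta(\alpha)$, which Lemma \ref{Lem: FineStructureCorrespondenceBetweenkappaalpha} places in $\mathcal{C}^\alpha\setminus\mathcal{S}^\alpha$, so the $\omega_1$-limit is a legitimate condition. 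Moreover, the decided values of $\name{C}$ are definable from parameters in $\mbox{Im}(\pi)$, hence lie below $\eta=\mbox{crit}(\pi)$, so the limit condition forces $\eta\in\name{C}\cap S$, giving the contradiction. Without such a coherent, simultaneously-reflecting choice of the points added at each coordinate (or a substitute for it), the limit stage of your construction may simply fail to exist, so the proposal as written does not prove the lemma.
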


\begin{proof}
Assume otherwise. Let $(p, S, \name{C})$ be the constructibly least counterexample, in the sense that $S\subseteq \mathcal{S}^{\lambda}$ is a stationary set in $L[\E]$, $p\in \po_{\lambda}$ and $\name{C}$ is $\po_{\lambda}$-name for a club subset of $\lambda$, which is forced by $p$ to be disjoint from $S$.

Pick $\eta\in S$. Let $\pi \colon N_\eta\to J^{\E}_{\lambda^{+3}}$ with $\mbox{crit}(\pi)> \lambda$ be $r\Sigma_{n+1}$-preserving, where $n = n_\eta \geq 2$. Then $\lambda, \po_{\lambda}, \bar{P}_\lambda, p, \name{C}$ belong to $\mbox{Im}(\pi)$. Let $\po'_\lambda,  \bar{\po}'_{\lambda}, p', \name{C}'$ be the images of the relevant parameters under the transitive collapse. Note that $\name{C}'$ is unbounded in $\eta$ since $\eta= (\alpha^+)^{N_\eta}$. 

We construct a sequences $\la p_i \colon i<\omega_1 \ra$ of conditions of $\bar{\po}'_\lambda$,  $\la \alpha^*_i \colon i<\omega_1 \ra$ of ordinals below $\lambda$, and $\la \epsilon_i \colon i<\omega_1 \ra$ of very good points of $N_\eta$, such that:
\begin{enumerate}
    \item  For every $i<j<\omega_1$, $p_i \leq p_j$ and $p_j \uhr (\alpha^*_{i}+1) = p_{i}\uhr (\alpha^*_{i}+1)$.
    \item For every $i<\omega_1$ and $\alpha\geq \alpha^*_i$ in $\supp(p_{i})$, $\alpha\in h^{N_{\eta}}_{n_\eta+1}[\omega\times (\alpha\cup \{\delta \})]$.
    \item For every $i<\omega_1$, $\la p_j \colon j\leq i \ra \in h^{N_\eta, \epsilon_i}_{n+1}[\omega\times \left( \alpha^*_i\cup \{\delta \} \right)]$.
    \item For every $i<\lambda$ and Mahlo $\alpha >\alpha_i$ in $\supp(p_i)$, 
        $$p_{i+1}\uhr \alpha \Vdash \ \  \forall \xi< \alpha^{++}, \ \  {p_{i+1}(\alpha)\uhr \xi }^{\frown} \name{T}^{p_{i+1}(\alpha)}_\xi \Vdash \eta_{i}(\alpha)\in \name{c}^{p_{i+1}(\alpha)}_\xi $$ 
    where $\eta_i(\alpha)\notin \mathcal{S}^\alpha$ is the club-point associated with a very good point $\epsilon_i(\alpha)$, which is the image of $\epsilon_i$ under the transitive collapse of $h^{N_\eta}_{n_\eta+1}[ \omega\times (\alpha\cup \{  \delta\}) ]$ to $N_{\eta(\alpha)}$.
    \item For every $i<\omega_1$, $p_{i+1}\uhr \alpha^*_{i+1}$ forces that $p_{i+1}\setminus \alpha^*_{i+1}$ decides that $\eta_i$-th element of $\name{C}'$, where $\eta_i$ is the club point that corresponds to the very good point $\epsilon_i$.
\end{enumerate}
Let $p_0 = p'$. Let $\alpha_0, \epsilon_0$ be chosen using Corollary \ref{Corollary: FineStructureReflectDownGoodPts} such that $p_0 \in h^{N_{\eta}, \epsilon_0}_{n_\eta+1}[\omega\times \left( \alpha_0 \cup \{ \delta\} \right)]$. 

Assuming that $p_i, \alpha_i, \epsilon_i$ were chosen, construct $p'_i$ with the same support, such that $p'_i\uhr \alpha^*_i = p \uhr \alpha^*_i$, and, for every $\alpha \geq \alpha^*_i$, $p'_{i}\uhr \alpha$ forces that for every $\xi<\alpha^{++}$,
$$ p'_i(\alpha)\uhr \xi \Vdash \name{T}^{p'_i(\alpha)}_\xi = \name{T}^{p_i(\alpha)}_\xi$$
and
$$ {p'_i(\alpha)\uhr \xi}^{\frown} T^{p'_i(\alpha)}_\xi\Vdash \name{c}^{p'_i(\alpha)}_\xi = \name{c}^{p_i(\alpha)}_\xi \cup \{ \eta_i(\alpha) \}.$$
The name $\name{c}^{p'_i(\alpha)}_{\xi}$ defined above is forced to be a legitimate condition of $\co^\alpha_\xi$ since $\eta_i(\alpha)\notin \mathcal{S}^\alpha$. $\eta_{i}(\alpha)$ could be added above $\name{c}^{p_i(\alpha)}_{\xi}$ (for each $\xi<\alpha^{++}$) since the least local bounding function $\vec{\mu}^{p_i}$ for $p_i$ belongs to $h^{N_\eta, \epsilon_{i}}_{n+1}[\omega\times (\alpha^*_i \cup \{ \delta\})]$; in particular, for every $\alpha \in \supp(p_i)\setminus \alpha^*_i$, $\vec{\mu}^{p_i}(\alpha)\in \alpha^+ \cap h^{N_\eta, \epsilon_i}_{n+1}[\omega\times (\alpha\cup \{\delta \})] = \eta_i(\alpha)$. Finally, let $p_{i+1}$ be the constructibly least extension of $p'_i$ in $N_\eta$, that agrees with $p'_i$ up to $\alpha^*_{i+1}$, and such that $p_{i+1}\setminus \alpha_{i+1}$ decides the $\eta_i$-th element of $\name{C}'$. Once $p_{i+1}$ has been constructed, use Corollary \ref{Corollary: FineStructureReflectDownGoodPts} to find $\alpha_{i+1}> \alpha_i$ and $\epsilon_{i+1}> \epsilon_i$ such that $p_{i+1}\in h^{N_\eta, \epsilon_{i+1}}_{n_\eta+1}[\omega\times (\alpha_{i+1} \cup \{\delta \} )]$. This concludes the successor step in the construction.

In limit steps, use the fact that $\po_\lambda$ is $\omega_1$-closed to take an upper bound. More specifically, assume that $i\leq\omega_1$ is limit, and $\la p_{j} \colon j<i \ra$ have been defined. Let $p_i = \bigvee_{j<i}p_j$, namely $p_i$ is the condition such that $\supp(p_i) = \bigcup_{j<i}\supp(p_j)$, and, for every $\alpha\in \supp(p_i)$, 
$$p_i \uhr \alpha \Vdash p_{i}(\alpha) = \bigvee_{j<i} p_j(\alpha)$$
where $\bigvee_{j<i}p_j(\alpha)$ is the least upper bound of $\la p_j(\alpha) \colon j< i \ra$ in the forcing $\name{\qo}_\alpha$, as defined in Definition \ref{Def: PacedFusionSequences}. By Lemma \ref{Lem: MillerCodingSigmaClosureLemma}, $\bigvee_{j<i}p_j(\alpha)$ exists. Once $p_i$ has been constructed, choose $\alpha_i \geq \sup_{j<i} \alpha_j$ and $\epsilon_i > \sup_{j<i} \epsilon_j$ such that $\epsilon_j$ is a very good point fulfilling the conditions of Corollary \ref{Corollary: FineStructureReflectDownGoodPts}, chosen high enough such that $p_{i}\in h^{N_\eta, \epsilon_{i}}_{n_\eta+1}[\omega\times (\alpha_{i} \cup \{\delta \} )]$.

This concludes the construction. Let $\alpha^* = \sup_{i<\omega_1}\alpha_i$. Let $p^* = \bigcup_{i<\omega_1}p_i$ ($p^*$ is external to $N_\eta$). We first argue that $p^*\in \po_{\lambda}$. For every $\alpha<\alpha^*$, $p^*\uhr \alpha = p_i\uhr \alpha_i$, where $i<\omega_1$ is the least such that $\alpha_i> \alpha$. Thus, it suffices to check that $p^*\setminus \alpha^*$ is a legitimate condition. Note that, since $\eta\in \mathcal{S}^\lambda$, the very good points $\la \epsilon_i \colon i<\omega_1 \ra$ are cofinal in $N^{(n_\eta)}_\eta\cap \mbox{Ord}$. In other words, $\eta(\alpha) = \sup_{i<\omega_1}\eta_i(\alpha)$, for every $\alpha\in \supp(p^*)\setminus \alpha^* $. It follows that, for every $\alpha\geq \alpha^*$, $p^*\uhr \alpha$ forces that $p^*_\alpha$ is a condition, such that for every $\xi<\alpha^{++}$ in its support,
$${p^*(\alpha)\uhr \xi}^{\frown} \name{T}^{p^*(\alpha)}_\xi \Vdash \name{c}^{p^*(\alpha)}_\xi = \left(\bigcup_{ i<\omega_1, \alpha\in \supp(p_i) } \name{c}^{p_i(\alpha) }_\xi \right)\cup  \Big\{ \eta(\alpha) \Big\} $$
which is a legitimate condition since $\eta(\alpha)\notin \mathcal{S}^\alpha$ by Lemma \ref{Lem: FineStructureCorrespondenceBetweenkappaalpha}.

This shows that $p^*\in \po_\lambda$. By our construction, for every $i<\omega_1$, $p^*\uhr \alpha_{i+1} = p_{i+1}\uhr \alpha_{i+1}$ forces that $p_{i+1}\setminus \alpha_{i+1}$ decides the $\eta_i$-th element in $\name{C}$. Let $\gamma_i$ be the least ordinal below $\alpha^+$ such that $p_{i+1} \Vdash \name{C}(\check{\eta}_i)< \check{\gamma_i}$. This is a a $\Pi_2$-statement that defines $\gamma_i$, with parameters in the image of $\pi$. Since $\pi$ is weakly $r\Sigma_{n+1}$-preserving for some $n\geq 2$, and the parameters $\alpha, p_{i+1}, \check{\eta_i}$ belong to the image of $\pi$ (since $\mbox{crit}(\pi) = \eta$), we have that $\gamma_i\in \mbox{Im}(\pi)$. But $\mbox{Im}(\pi)\cap \alpha^+ = \eta$, so $\gamma_i<\eta$. 

It follows that $p^*$ forces that the $\eta_i$-th element of $\name{C}$ is less than $\eta$, for every $i<\omega_1$. Therefore, $p^*$ forces that $\eta\in \name{C}$, a contradiction.\end{proof}

We are ready to prove the main theorem of this work.
\begin{proof}(Theorem \ref{Thm:Main})\\
Let $L[\E]$ be a minimal extender model with a $(\kappa,\kappa^{++})$-extender $E$ on a measurable cardinal $\kappa$. In particular, the sequence $\E$ is a set and $E$ is its last extender.
Let $\po = \po_{\kappa+1}$ be the nonstationary support iteration described at the beginning of this section (\ref{subsection:final}). 
We need to verify that it satisfies the ``Kunen-like" blueprint, from Definition \ref{Def:KLblueprint}, which extends the Friedman-Magidor blueprint from Definition \ref{Def:FM-blueprint}.
It is clear from the definition of $\po$ that it satisfies assumption (FM1). Theorem \ref{Thm:MillerCodingFinalForcing} shows it satisfies assumption (FM2). Lemmas \ref{Lem: FinalForcingFusionLemma} and \ref{Lem: FinalForcingDistributivityLemma} secure assumption $(FM3)$. 
Regarding (FM4), Theorem \ref{Thm:MillerCodingFinalForcing} establishes the existence of a uniform definition $\varphi_\qo$ for the witnesses for the self coding of $\name{\qo}_\alpha$. Choosing
$\vec{f}^\alpha = \la f^\alpha_\tau \mid \tau < \alpha^{++}\ra \subseteq {}^\alpha \alpha$ to be the $<_{Sing}^\alpha$-increasing sequence to be $\qo_\alpha$-induced sequence of Miller generic branches, provides a definition $\varphi_F$, whose uniformity follows from the uniformity of $\varphi_{\qo}$. A uniform definition $\varphi_{\po}$ for $\po$ is an immediate consequene of the definition of $\po_{\alpha+1}$ as a nonstationary support iteration of the posets $\qo_\alpha$, given by $\varphi_{\qo}$.\\
It remains to verify the ``Kunen-Like" additional assumptions.
Theorem \ref{Thm:MillerCodingFinalForcing} 
proves that  $\vec{f}^\alpha = \la f^\alpha_\tau \mid \tau < \alpha^{++}\ra \subseteq {}^\alpha \alpha$ is $<^\alpha_{Sing}$-increasing, which validates assumption (KL1).
Assumption (KL2) for $\po$ is an immediate consequence of 
our choice of the stationary sets 
$$\vec{S}^\alpha = \la \mathcal{S}^\alpha(\tau,\eta,i): \tau < \alpha^{++}, \eta < \alpha^+,i<2\ra \in L[\E]$$ from Definition \ref{Lem: FinalForcingCinstructionOfStationarySets}.
We conclude that $\po$ satisfies the ``Kunen-Like" blueprint. 
Let $G \subseteq \po$ be a generic filter over $V = L[\E]$. By Theorem \ref{Thm:KLblueprint} $V[G] = L[\E][G]$ is a ``Kunen-Like" model where the single measurable cardinal $\kappa$ satisfies $2^\kappa = \kappa^{++}$.  Since $\E,G$ are sets, letting $U$ denote the single normal ultrafilter on $\kappa$ in $V[G]$, we can express $V[G]$ as $L[A,U]$ for a set of ordinals $A$.
\end{proof}
\newpage

\bibliographystyle{plain}
\bibliography{Bibliography}

\end{document}